\newtheoremstyle{citing}
  {3pt}
  {3pt}
  {\itshape}
  {}
  {\bfseries}
  {.}
  {.5em}
  {\thmnote{#3}}
\theoremstyle{citing}
\newtheorem*{citing}{}
\theoremstyle{definition}
\theoremstyle{plain}
\newtheorem{theorem}{Theorem}[section]
\newtheorem{lemma}[theorem]{Lemma}
\newtheorem{corollary}[theorem]{Corollary}
\theoremstyle{remark}
\newtheorem{remark}[theorem]{Remark}
\newtheorem{example}[theorem]{Example}
\theoremstyle{definition}
\newtheorem*{intro_definition}{Definition}
\newtheorem{definition}[theorem]{Definition}
\newtheorem{miniremark}[theorem]{}
\newcounter{counter1}
\newcounter{counter2}
\newcommand{\Var}{\mathbf{V}}     
\newcommand{\RVar}{\mathbf{RV}}   
\newcommand{\IVar}{\mathbf{IV}}   
\newcommand{\Lp}[1]{\mathbf{L}_{#1}}
\newcommand{\Lploc}[1]{\mathbf{L}_{#1}^{\mathrm{loc}}}
\newcommand{\trunc}{\mathbf{T}}
\newcommand{\nat}{\mathscr{P}}
\newcommand{\rel}{\mathbf{R}}
\newcommand{\complex}{\mathbf{C}}
\newcommand{\grass}[2]{\mathbf{G}(#1,#2)}
\newcommand{\oball}[2]{\mathbf{U}(#1,#2)}
\newcommand{\cball}[2]{\mathbf{B}(#1,#2)}
\newcommand{\density}{\boldsymbol{\Theta}}
\newcommand{\unitmeasure}[1]{\boldsymbol{\alpha}(#1)}
\newcommand{\besicovitch}[1]{\boldsymbol{\beta}(#1)}
\newcommand{\isoperimetric}[1]{\boldsymbol{\gamma}(#1)}
\newcommand{\id}[1]{\mathbf{1}_{#1}}
\newcommand{\weakD}{\mathbf{D}}
\newcommand{\derivative}[2]{{#1}\,\weakD{#2}}
\newcommand{\boundary}[2]{{#1}\,\partial{#2}}
\newcommand{\ud}{\ensuremath{\,\mathrm{d}}}
\DeclareMathOperator{\with}{:}
\newcommand{\classification}[3]{{#1} \cap \{ {#2} \with {#3} \}}
\newcommand{\eqclassification}[3]{{(#1)} \cap \{ {#2} \with {#3} \}}
\newcommand{\project}[1]{#1_\natural}
\newcommand{\eqproject}[1]{(#1)_\natural}
\newcommand{\perpproject}[1]{#1_\natural^\perp}
\newcommand{\lIm}{[}
\newcommand{\rIm}{]}
\newcommand{\vdim}{{m}}
\newcommand{\codim}{{n-m}}
\newcommand{\adim}{{n}}
\newcommand{\intertextenum}[1]{\setcounter{counter2}{\value{enumi}}\end{enumerate}#1\begin{enumerate}\setcounter{enumi}{\value{counter2}}}
\newcommand{\printRoman}[1]{\setcounter{counter1}{#1}\Roman{counter1}}
\DeclareMathOperator{\without}{\sim}
\newcommand{\restrict}{\mathop{\llcorner}}
\newcommand{\tint}[2]{{\textstyle\int_{#1}^{#2}}}
\newcommand{\tsum}[2]{{\textstyle\sum_{#1}^{#2}}}
\DeclareMathOperator{\card}{card}
\DeclareMathOperator{\Clos}{Clos}
\DeclareMathOperator{\Bdry}{Bdry}
\newcommand{\measureball}[2]{{#1}\,{#2}}
\DeclareMathOperator{\Nor}{Nor}     
\DeclareMathOperator{\Tan}{Tan}     
\DeclareMathOperator{\spt}{spt}     
\DeclareMathOperator{\im}{im}       
\DeclareMathOperator{\Int}{Int}     
\DeclareMathOperator{\diam}{diam}   
\DeclareMathOperator{\Lip}{Lip}     
\DeclareMathOperator{\dmn}{dmn}     
\DeclareMathOperator{\dist}{dist}   
\DeclareMathOperator{\Hom}{Hom}     
\DeclareMathOperator{\sign}{sign}   
\DeclareMathOperator{\Span}{span}   
\DeclareMathOperator{\ap}{ap}       
\DeclareMathOperator*{\aplim}{\mathrm{ap}\, \lim}   
\newcommand{\Lpnorm}[3]{{#1}_{({#2})}({#3})}
\newcommand{\eqLpnorm}[3]{{(#1)}_{({#2})}({#3})}
\theoremstyle{definition}
\newtheorem{step}{Step}	
\newenvironment{proofinsteps}{\begin{proof}\setcounter{step}{0}}{\end{proof}}
\begin{document}


\title{Weakly differentiable functions on varifolds}
\author{Ulrich Menne\thanks{\textit{AEI publication number:} AEI-2014-059.}}


\maketitle
\begin{abstract}
	The present paper is intended to provide the basis for the study of
	weakly differentiable functions on rectifiable varifolds with locally
	bounded first variation. The concept proposed here is defined by means
	of integration by parts identities for certain compositions with
	smooth functions. In this class the idea of zero boundary values
	is realised using the relative perimeter of superlevel sets. Results
	include a variety of Sobolev Poincar{\'e} type embeddings, embeddings
	into spaces of continuous and sometimes H{\"o}lder continuous
	functions, pointwise differentiability results both of approximate and
	integral type as well as coarea formulae.

	As prerequisite for this study decomposition properties of such
	varifolds and a relative isoperimetric inequality are established.
	Both involve a concept of distributional boundary of a set introduced
	for this purpose.
	
	As applications the finiteness of the geodesic distance associated to
	varifolds with suitable summability of the mean curvature and a
	characterisation of curvature varifolds are obtained.
\end{abstract}
\tableofcontents
\addcontentsline{toc}{section}{\numberline{}Introduction}
\section*{Introduction} \subsection*{Overview}
The main purpose of this paper is to present a concept of weakly
differentiable functions on nonsmooth ``surfaces'' in Euclidean space with
arbitrary dimension and codimension arising in variational problems involving
the area functional. The model used for such surfaces are rectifiable
varifolds whose first variation with respect to area is representable by
integration (that is, in the terminology of Simon \cite[39.2]{MR756417},
rectifiable varifolds with locally bounded first variation). This includes
area minimising rectifiable currents\footnote{See Allard
\cite[4.8\,(4)]{MR0307015}.}, in particular perimeter minimising ``Caccioppoli
sets'', or almost every time slice of Brakke's mean curvature
flow\footnote{See Brakke \cite[\S 4]{MR485012}.} just as well as surfaces
occurring in diffuse interface models\footnote{See for instance Hutchinson and
Tonegawa \cite{MR1803974} or R\"oger and Tonegawa \cite{MR2377408}.} or image
restoration models\footnote{See for example Ambrosio and Masnou
\cite{MR1959769}.}. The envisioned concept should be defined without reference
to an approximation by smooth functions and it should be as broad as possible
so as to still allow for substantial positive results.

In order to integrate well the first variation of the varifold into the
concept of weakly differentiable function, it appeared necessary to provide an
entirely new notion rather than to adapt one of the many concepts of weakly
differentiable functions which have been invented for different purposes. For
instance, to study the support of the varifold as metric space with its
geodesic distance, stronger conditions on the first variation are needed, see
Section \ref{sec:geodesic_distance}.

\subsection*{Description of results} \subsubsection*{Setup and basic results}
To describe the results obtained, consider the following set of hypotheses;
the notation is explained in Section \ref{sec:notation}.
\begin{citing} [General hypothesis]
	Suppose $\vdim$ and $\adim$ are positive integers, $\vdim \leq \adim$,
	$U$ is an open subset of $\rel^\adim$, $V$ is an $\vdim$ dimensional
	rectifiable varifold in $U$ whose first variation $\delta V$ is
	representable by integration.
\end{citing}
The study of weakly differentiable functions is closely related to the study
of connectedness properties of the underlying space or varifold. Therefore it
is instructive to begin with the latter.
\begin{intro_definition} [see \ref{def:indecomposable}]
	If $V$ is as in the general hypothesis, it is called
	\emph{indecomposable} if and only if there is no $\| V \| + \| \delta
	V \|$ measurable set $E$ such that $\| V \| ( E ) > 0$, $\| V \| ( U
	\without E ) > 0$ and $\delta ( V \restrict E \times
	\grass{\adim}{\vdim} ) = ( \delta V ) \restrict E$.
\end{intro_definition}
The basic theorem involving this notion is the following.
\begin{citing} [Decomposition theorem, see \ref{remark:decomp_rep} and \ref{thm:decomposition}]
	If $\vdim$, $\adim$, $U$, and $V$ are as in the general hypothesis,
	then there exists a countable disjointed collection $G$ of Borel sets
	whose union is $U$ such that $V \restrict E \times
	\grass{\adim}{\vdim}$ is nonzero and indecomposable and $\delta ( V
	\restrict E \times \grass{\adim}{\vdim}) = ( \delta V ) \restrict E$
	whenever $E \in G$.
\end{citing}
Employing the following definition, the Borel partition $G$ of $U$ is required
to consist of members whose distributional $V$ boundary vanishes and which
cannot be split nontrivially into smaller Borel sets with that property.
\begin{intro_definition} [see \ref{def:v_boundary}]
	If $\vdim$, $\adim$, $U$, and $V$ are as in the general hypothesis and
	$E$ is $\| V \| + \| \delta V \|$ measurable, then the
	\emph{distributional $V$ boundary of $E$} is defined by
	\begin{gather*}
		\boundary{V}{E} = ( \delta V ) \restrict E - \delta ( V
		\restrict E \times \grass{\adim}{\vdim} ) \in \mathscr{D}' (
		U, \rel^\adim ).
	\end{gather*}
\end{intro_definition}
If the varifold is sufficiently regular a version of the Gauss Green
theorem can be proven which both justifies the terminology and links the
concept of boundary to the one for currents, see
\ref{thm:basic_structure_v_boundary} and \ref{remark:link}.

In the terminology of \ref{def:component} and \ref{def:decomposition} the
varifolds $V \restrict E \times \grass \adim \vdim$ occurring in the
decomposition theorem are components of $V$ and the family $\{ V \restrict E
\times \grass \adim \vdim \with E \in G \}$ is a decomposition of $V$.
However, unlike the decomposition into connected components for topological
spaces, the preceding decomposition is nonunique in an essential way. In fact,
the varifold corresponding to the union of three lines in $\rel^2$ meeting at
the origin at equal angles may also be decomposed into two ``Y-shaped''
varifolds each consisting of three rays meeting at equal angles, see
\ref{remark:nonunique_decomposition}.

The seemingly most natural definition of weakly differentiable function would
be to require an integration by parts identity involving the first variation
of the varifold. However, the resulting class of real valued functions is
neither stable under truncation of its members nor does a coarea formula hold,
see \ref{remark:too_big_sobolev} and
\ref{remark:no_coarea_ineq_for_too_big_sobolev}. Therefore, instead one
requires an integration by parts identity for the composition of the function
in question with smooth functions whose derivative has compact support, see
\ref{def:v_weakly_diff}. Whenever $Y$ is a finite dimensional normed
vectorspace the resulting class of functions of $Y$ valued functions is
denoted by
\begin{gather*}
	\trunc (V,Y).
\end{gather*}
Whenever $f$ belongs to that class there exists a $\| V \|$ measurable $\Hom (
\rel^\adim, Y )$ valued function $\derivative V f$, called the
(generalised) weak derivative of $f$, which is $\| V \|$ almost uniquely
determined by the integration by parts identity. In defining $\trunc (
V,Y )$, it seems natural not to require local summability of
$\derivative{V}{f}$ but only
\begin{gather*}
	\tint{K \cap \{ x \with |f(x)| \leq s \}}{} | \derivative{V}{f} | \ud
	\| V \| < \infty
\end{gather*}
whenever $K$ is a compact subset of $U$ and $0 \leq s < \infty$; this is in
analogy with definition of the space ``$\mathscr{T}_{\mathrm{loc}}^{1,1} ( U
)$'' introduced by B{\'e}nilan, Boccardo, Gallou{\"e}t, Gariepy, Pierre and
V{\'a}zquez in \cite[p.~244]{MR1354907} for the case of Lebesgue measure, see
\ref{remark:comparison_trunc_spaces}. In both cases the letter ``T'' in the
name of the space stands for truncation.

Stability properties under composition (for example truncation) then follow
readily, see \ref{lemma:basic_v_weakly_diff} and \ref{lemma:comp_lip}. Also,
it is evident that members of $\trunc(V,Y)$ may be defined on components
separately, see \ref{thm:tv_on_decompositions}. The space $\trunc
(V,Y)$ is stable under addition of a locally Lipschitzian function but it
is not closed with respect to addition in general, see
\ref{thm:addition}\,\eqref{item:addition:add} and \ref{example:star}. A
similar statement holds for multiplication of functions, see
\ref{thm:addition}\,\eqref{item:addition:mult} and \ref{example:star}.
Adopting an axiomatic viewpoint, consider the class of functions which
satisfies the following three conditions for a given varifold:
\begin{enumerate}
	\item Each function which is constant on the components of some
	decomposition of the varifold belongs to the class.
	\item The class is closed under addition.
	\item The class is closed under truncation.
\end{enumerate}
Then there exists a stationary one dimensional integral varifold in $\rel^2$
such that the associated class necessarily contains characteristic functions
with nonvanishing distributional derivative representable by integration, see
\ref{example:axioms}. These characteristic functions should not belong to a
class of ``weakly differentiable'' functions but rather to a class of
functions of ``bounded variation''. The reason for this phenomenon is the
afore-mentioned nonuniqueness of decompositions of varifolds.

Much of the development of the theory rests on the following basic theorem.
\begin{citing} [Coarea formula, functional analytic form, see
\ref{remark:associated_distribution} and \ref{thm:tv_coarea}]
	Suppose $\vdim$, $\adim$, $U$, and $V$ are as in the general
	hypothesis, $f \in \trunc(V)$, and $E(y) = \{ x \with f(x) > y \}$ for
	$y \in \rel$.

	Then there holds
	\begin{gather*}
		\tint{}{} \left < \phi (x,f(x)), \derivative{V}{f}(x) \right >
		\ud \| V \| x = \tint{}{} \boundary{V}{E(y)} ( \phi (\cdot, y
		) ) \ud \mathscr{L}^1 y, \\
		\tint{}{} g (x,f(x)) | \derivative{V}{f}(x) | \ud \| V \| x =
		\tint{}{} \tint{}{} g (x,y) \ud \| \boundary{V}{E(y)} \| x \ud
		\mathscr{L}^1 y.
	\end{gather*}
	whenever $\phi \in \mathscr{D} (U \times \rel, \rel^\adim )$ and
	$g : U \times \rel \to \rel$ is a continuous function with compact
	support.
\end{citing}
An example of such a development is provided by passing from the notion of
zero boundary values on a relatively open part $G$ of the boundary of $U$ for
sets to a similar notion for weakly differentiable functions. For $\| V \| +
\| \delta V \|$ measurable sets $E$ such that $\boundary VE$ is representable
by integration, the condition is defined in \ref{miniremark:zero_boundary}.
In the special case that $\| V \|$ is associated to $M \cap U$ for some
properly embedded $\vdim$ dimensional submanifold $M$ of $\rel^\adim
\without (( \Bdry U ) \without G )$ of class $2$ without boundary it is
equivalent to $E$ being of locally finite perimeter in $M$ and its
distributional boundary being measure theoretically contained in $U$, see
\ref{example:smooth_case_zero_boundary}. In order to define such a notion for
nonnegative members $f$ of $\trunc (V,\rel)$, one requires for $\mathscr{L}^1$
almost all $0 < y < \infty$ that the set $E(y) = \{ x \with f(x)>y \}$
satisfies the corresponding zero boundary value condition.

This gives rise to the subspaces $\trunc_G ( V )$ of $\trunc (V, \rel ) \cap
\{ f \with f \geq 0 \}$ with $|f| \in \trunc_\varnothing ( V )$ whenever $f
\in \trunc (V,Y)$, see \ref{def:trunc_g} and \ref{remark:trunc}. The space
$\trunc_G (V)$ satisfies useful truncation and closure properties, see
\ref{lemma:trunc_tg} and \ref{lemma:closeness_tg}. Moreover, under a natural
summability hypothesis, the multiplication of a member of $\trunc_G (V)$ by a
nonnegative Lipschitzian function belongs to $\trunc_G (V)$, see
\ref{thm:mult_tg}. Whereas the usage of level sets in the definition of
$\trunc_G (V)$ is tailored to work nicely in the proofs of embedding results
in Section~\ref{sec:embeddings}, the stability property under multiplication
requires a more delicate proof in turn.

\subsubsection*{Embedding results and structural results} To proceed to deeper
results on functions in $\trunc (V,Y)$, the usage of the isoperimetric
inequality for varifolds seems indispensable. The latter works best under the
following additional hypothesis.
\begin{citing} [Density hypothesis]
	Suppose $\vdim$, $\adim$, $U$, and $V$ are as in the general
	hypothesis and satisfy
	\begin{gather*}
		\density^\vdim ( \| V \|, x ) \geq 1 \quad \text{for $\| V \|$
		almost all $x$}.
	\end{gather*}
\end{citing}
The key to prove effective versions of Sobolev Poincar{\'e} type embedding
results is the following theorem.
\begin{citing} [Relative isoperimetric inquality, see \ref{corollary:rel_iso_ineq}]
	Suppose $\vdim$, $\adim$, $U$, and $V$ satisfy the general hypothesis
	and the density hypothesis, $E$ is $\|V \| + \| \delta V \|$
	measurable, $1 \leq Q \leq M < \infty$, $\adim \leq M$, $\Lambda =
	\Gamma_{\ref{thm:rel_iso_ineq}} ( M )$, $0 < r < \infty$,
	\begin{gather*}
		\| V \| (E) \leq (Q-M^{-1}) \unitmeasure{\vdim} r^\vdim, \quad
		\| V \| ( \classification{E}{x}{ \density^\vdim ( \| V \|, x )
		< Q } ) \leq \Lambda^{-1} r^\vdim,
	\end{gather*}
	and $A = \{ x \with \oball{x}{r} \subset U \}$.

	Then there holds
	\begin{gather*}
		\| V \| ( E \cap A)^{1-1/\vdim} \leq \Lambda \big ( \|
		\boundary{V}{E} \| (U) + \| \delta V \| ( E ) \big ),
	\end{gather*}
	where $0^0 = 0$.
\end{citing}
In the special case that $\boundary{V}{E} = 0$, $\| \delta V \| (E) = 0$ and
\begin{gather*}
	\density^\vdim ( \| V \|, x ) \geq Q \quad \text{for $\| V \|$ almost
	all $x$}
\end{gather*}
the varifold $V \restrict E \times \grass{\adim}{\vdim}$ is stationary and the
support of its weight measure, $\spt ( \| V \| \restrict E)$, cannot intersect
$A$ by the monotonicity identity and the upper bound on $\| V \|(E)$. The
value of the theorem lies in quantifying this behaviour. Much of the
usefulness of the result for the purposes of the present paper stems from the
fact that values of $Q$ larger than $1$ are permitted. This allows to
effectively apply the result in neighbourhoods of points where the density
function $\density^\vdim ( \| V \|, \cdot )$ has a value larger than $1$ and
is approximately continuous. The case $Q=1$ is partly contained in a result of
Hutchinson \cite[Theorem 1]{MR1066398} which treats Lipschitzian functions.

If the set $E$ satisfies a zero boundary value condition, see
\ref{miniremark:zero_boundary}, on a relatively open subset $G$ of $\Bdry U$,
the conclusion may be sharpened by replacing $A = \{ x \with \oball{x}{r}
\subset U \}$ by
\begin{gather*}
	A'= U \cap \{ x \with \oball{x}{r} \subset \rel^\adim \without B \},
	\quad \text{where $B = ( \Bdry U ) \without G$}.
\end{gather*}

In order to state a version of the resulting Sobolev Poincar{\'e}
inequalities, recall a less known notation from Federer's treatise on
geometric measure theory, see \cite[2.4.12]{MR41:1976}.
\begin{intro_definition}
	Suppose $\mu$ measures $X$ and $f$ is a $\mu$ measurable function
	with values in some Banach space $Y$.

	Then one defines $\mu_{(p)} (f)$ for $1 \leq p \leq \infty$ by the
	formulae
	\begin{gather*}
		\mu_{(p)}(f) = ( \tint{}{} |f|^p \ud \mu )^{1/p}
		\quad \text{in case $1 \leq p < \infty$}, \\
		\mu_{(\infty)}(f) = \inf \big \{ s \with \text{$s \geq 0$,
		$\mu ( \{ x \with |f(x)| > s \} ) = 0$} \big \}.
	\end{gather*}
\end{intro_definition}
In comparison to the more common notation $\| f \|_{\mathbf{L}_p ( \mu, Y
)}$ it puts the measure in focus and avoids iterated subscripts.

\begin{citing} [Sobolev Poincar{\'e} inequality, zero median version, see
\ref{remark:mod_tv}, \ref{remark:trunc}, and \ref{thm:sob_poin_summary}\,\eqref{item:sob_poin_summary:interior:p=1}]
	Suppose $1 \leq M < \infty$.

	Then there exists a positive, finite number $\Gamma$ with the
	following property.

	If $\vdim$, $\adim$, $U$, and $V$ satisfy the general hypothesis and
	the density hypothesis, $\adim \leq M$, $Y$ is a finite dimensional
	normed vectorspace, $f \in \trunc ( V, Y )$, $1 \leq Q \leq M$, $0 < r
	< \infty$, $E = U \cap \{ x \with f(x) \neq 0 \}$,
	\begin{gather*}
		\| V \| ( E ) \leq ( Q-M^{-1} ) \unitmeasure{\vdim} r^\vdim,
		\\
		\| V \| ( E \cap \{ x \with \density^\vdim ( \| V \|, x ) < Q
		\} ) \leq \Gamma^{-1} r^\vdim, \\
	 	\text{$\beta = \infty$ if $\vdim = 1$}, \quad
		\text{$\beta = \vdim/(\vdim-1)$ if $\vdim > 1$},
	\end{gather*}
	$A = \{ x \with \oball{x}{r} \subset U \}$, then
	\begin{gather*}
		\eqLpnorm{\| V \| \restrict A }{\beta}{f} \leq \Gamma \big (
		\Lpnorm{\| V \|}{1}{ \derivative{V}{f} } + \Lpnorm{\| \delta V
		\|}{1} {f} \big ).
	\end{gather*}
\end{citing}
Again, apart from extending the result to the class $\trunc (V,Y)$, the main
improvement upon known results such as Hutchinson \cite[Theorem 1]{MR1066398}
is the applicability with values $Q > 1$. If $f$ belongs to $\trunc_G(V)$,
then $A$ may be replaced by $A'$ as in the relative isoperimetric inequality.
Not surprisingly, there also exists a version of the Sobolev inequality for
members of $\trunc_{\Bdry U} ( V)$.
\begin{citing} [Sobolev inequality, see \ref{thm:sob_poin_summary}\,\eqref{item:sob_poin_summary:global:p=1}]
	Suppose $1 \leq M < \infty$.

	Then there exists a positive, finite number $\Gamma$ with the
	following property.

	If $\vdim$, $\adim$, $U$, and $V$ satisfy the general hypothesis and the
	density hypothesis, $\adim \leq M$, $f \in \trunc_{\Bdry U} ( V )$,
	\begin{gather*}	
		E = U \cap \{ x \with f(x) \neq 0 \}, \quad \| V \| ( E ) <
		\infty, \\
	 	\text{$\beta = \infty$ if $\vdim = 1$}, \quad
		\text{$\beta = \vdim/(\vdim-1)$ if $\vdim > 1$},
	\end{gather*}
	then there holds
	\begin{gather*}
		\Lpnorm{\| V \|}{\beta}{f} \leq \Gamma \big ( \Lpnorm{\| V
		\|}{1}{ \derivative{V}{f} } + \Lpnorm{\| \delta V \|}{1} {f}
		\big ).
	\end{gather*}
\end{citing}
For Lipschitzian functions Sobolev inequalities were obtained by Allard
\cite[7.1]{MR0307015} and Michael and Simon \cite[Theorem 2.1]{MR0344978} for
varifolds and by Federer in \cite[\S 2]{MR0388226} for rectifiable currents
which are absolutely minimising with respect to a positive, parametric
integrand.

Coming back the Sobolev Poincar{\'e} inequalities, one may also establish a
version with several ``medians''. The number of medians needed is controlled
by the total weight measure of the varifold in a natural way.
\begin{citing} [Sobolev Poincar{\'e} inequality, several medians, see
\ref{thm:sob_poincare_q_medians}\,\eqref{item:sob_poincare_q_medians:p=1}]
	Suppose $1 \leq M < \infty$.

	Then there exists a positive, finite number $\Gamma$ with the
	following property.

	If $\vdim$, $\adim$, $U$, and $V$ satisfy the general hypothesis and
	the density hypothesis, $Y$ is a finite dimensional normed
	vectorspace, $\sup \{ \dim Y, \adim \} \leq M$, $f \in \trunc (V,Y)$,
	$1 \leq Q \leq M$, $N$ is a positive integer, $0 < r < \infty$,
	\begin{gather*}
		\| V \| ( U ) \leq ( Q-M^{-1} ) (N+1) \unitmeasure{\vdim}
		r^\vdim, \\
		\| V \| ( \{ x \with \density^\vdim ( \| V \|, x ) < Q \} )
		\leq \Gamma^{-1} r^\vdim, \\
	 	\text{$\beta = \infty$ if $\vdim = 1$}, \quad
		\text{$\beta = \vdim/(\vdim-1)$ if $\vdim > 1$},
	\end{gather*}	
	and $A = \{ x \with \oball{x}{r} \subset U \}$, then there exists a
	subset $\Upsilon$ of $Y$ such that $1 \leq \card \Upsilon \leq N$
	and
	\begin{gather*}
		\eqLpnorm{\| V \| \restrict A}{\beta}{f_\Upsilon} \leq \Gamma
		N^{1/\beta} \big ( \Lpnorm{\| V \|}{1}{\derivative{V}{f}} +
		\Lpnorm{\| \delta V \|}{1}{f_\Upsilon} \big),
	\end{gather*}
	where $f_\Upsilon (x) = \dist (f(x),\Upsilon)$ for $x \in \dmn f$.
\end{citing}
If $Y = \rel$, the approach of Hutchinson, see \cite[Theorem 3]{MR1066398},
carries over unchanged and, in fact, yields a somewhat sharper estimate, see
\ref{thm:sob_poin_several_med}\,\eqref{item:sob_poin_several_med:p=1}. If
$\dim Y \geq 2$ and $N \geq 2$ the selection procedure for $\Upsilon$ is more
delicate.

In order to precisely state the next result, the concept of approximate
tangent vectors and approximate differentiability (see
\cite[3.2.16]{MR41:1976}) will be recalled.
\begin{intro_definition}
	Suppose $\mu$ measures an open subset $U$ of a normed vectorspace
	$X$, $a \in U$, and $m$ is a positive integer.

	Then $\Tan^m ( \mu, a )$ denotes the closed cone of \emph{$(\mu,m)$
	approximate tangent vectors} at $a$ consisting of all $v \in X$ such
	that
	\begin{gather*}
		\density^{\ast m} ( \mu \restrict \mathbf{E}
		(a,v,\varepsilon), a ) > 0 \quad \text{for every $\varepsilon
		> 0$}, \\
		\text{where $\mathbf{E}(a,v,\varepsilon) = X \cap \{ x
		\with \text{$|r(x-a) -v| < \varepsilon$ for some $r > 0$} \}$}.
	\end{gather*}
	Moreover, if $X$ is an inner product space, then the cone of
	\emph{$(\mu,m)$ approximate normal vectors} at $a$ is defined to be
	\begin{gather*}
		\Nor^m ( \mu, a ) = X \cap \{ u \with \text{$u \bullet v \leq
		0$ for $v \in \Tan^m ( \mu, a )$} \}.
	\end{gather*}
\end{intro_definition}
If $V$ is an $\vdim$ dimensional rectifiable varifold in an open subset $U$ of
$\rel^\adim$, then at $\| V \|$ almost all $a$, $T = \Tan^\vdim ( \| V \|, a
)$ is an $\vdim$ dimensional plane such that
\begin{gather*}
	r^{-\vdim} \tint{}{} f(r^{-1}(x-a)) \ud \| V \| x \to \density^\vdim (
	\| V \|, a ) \tint{T}{} f \ud \mathscr{H}^\vdim \quad \text{as
	$r \to 0+$}
\end{gather*}
whenever $f : U \to \rel$ is a continuous function with compact support.
However, at individual points the requirement that $\Tan^\vdim ( \|V \|, a )$
forms an $\vdim$ dimensional plane differs from the condition that $\|V\|$
admits an $\vdim$ dimensional approximate tangent plane in the sense of Simon
\cite[11.8]{MR756417}.
\begin{intro_definition}
	Suppose $\mu$, $U$, $a$ and $m$ are as in the preceding definition
	and $f$ maps a subset of $X$ into another normed vectorspace $Y$.

	Then $f$ is called \emph{$(\mu,m)$ approximately differentiable} at
	$a$ if and only if there exist $b \in Y$ and a continuous linear
	map $L : X \to Y$ such that
	\begin{gather*}
		\density^\vdim ( \mu \restrict X \without \{ x \with
		|f(x)-b-L (x-a)| \leq \varepsilon |x-a| \}, a ) = 0
		\quad \text{for every $\varepsilon > 0$}.
	\end{gather*}
	In this case $L | \Tan^\vdim ( \mu, a )$ is unique and it is
	called the \emph{$(\mu,m)$ approximate differential} of $f$ at $a$,
	denoted
	\begin{gather*}
		(\mu,m) \ap Df(a).
	\end{gather*}
\end{intro_definition}
Also, the following notation will be convenient, see Almgren
\cite[T.1\,(9)]{MR1777737}.
\begin{intro_definition}
	Whenever $P$ is an $\vdim$ dimensional plane in $\rel^\adim$, the
	orthogonal projection of $\rel^\adim$ onto $P$ will be denoted by
	$\project{P}$.
\end{intro_definition}

The Sobolev Poincar{\'e} inequality with zero median and a suitably chosen
number $Q$ is the key to prove the following structural result for weakly
differentiable functions.
\begin{citing} [Approximate differentiability, see \ref{thm:approx_diff}]
	Suppose $\vdim$, $\adim$, $U$, and $V$ satisfy the general hypothesis
	and the density hypothesis, $Y$ is a finite dimensional normed
	vectorspace, and $f \in \trunc (V,Y)$.

	Then $f$ is $( \| V \|, \vdim )$ approximately differentiable with
	\begin{gather*}
		\derivative{V}{f} (a) = ( \| V \|, \vdim ) \ap Df(a) \circ
		\project{\Tan^\vdim ( \| V \|, a )}
	\end{gather*}
	at $\| V \|$ almost all $a$.
\end{citing}
The preceding assertion consists of two parts. Firstly, it yields that
\begin{gather*}
	\derivative{V}{f}(a) | \Nor^\vdim ( \| V \|, a ) = 0 \quad \text{for
	$\| V \|$ almost all $a$};
\end{gather*}
a property that is not required by the definition. Secondly, it asserts that
\begin{gather*}
	\derivative{V}{f}(a) | \Tan^\vdim ( \| V \|, a ) = ( \| V \|, \vdim )
	\ap D f(a) \quad \text{for $\|V \|$ almost all $a$}.
\end{gather*}
This is somewhat similar to the situation for the generalised mean curvature
of an integral $\vdim$ varifold where it was first obtained by Brakke in
\cite[5.8]{MR485012} that its tangential component vanishes and secondly by
the author in \cite[4.8]{snulmenn.c2} that its normal component is induced by
approximate quantities.

\begin{citing} [Differentiability in Lebesgue spaces, see
\ref{thm:diff_lebesgue_spaces}\,\eqref{item:diff_lebesgue_spaces:m>1=p}]
	Suppose $\vdim$, $\adim$, $U$, and $V$ satisfy the general hypothesis
	and the density hypothesis, $Y$ is a finite dimensional normed
	vectorspace, $f \in \trunc (V,Y) \cap \Lploc{1} ( \| \delta V \|, Y
	)$, and $\derivative Vf \in \Lploc 1 ( \| V \|, \Hom ( \rel^\adim, Y )
	)$.

	If $\vdim > 1$ and $\beta = \vdim/(\vdim-1)$, then
	\begin{gather*}
		\lim_{r \to 0+} r^{-\vdim} \tint{\cball ar}{} ( | f(x)-f(a)-
		\derivative Vf (a) (x-a) | / | x-a| )^\beta \ud \| V \| x = 0
	\end{gather*}
	for $\| V \|$ almost all $a$.
\end{citing}
The result is derived from the approximate differentiability result mainly by
means of the zero median version of the Sobolev Poincar{\'e} inequality.
Another consequence of the approximate differentiability result is the
rectifiability of the distributional boundary of almost all superlevel sets of
weakly differentiable functions which supplements the functional analytic form
of the coarea formula.
\begin{citing} [Coarea formula, measure theoretic form, see \ref{corollary:coarea}]
	Suppose $\vdim$, $\adim$, $U$, and $V$ satisfy the general hypothesis
	and the density hypothesis, $f \in \trunc (V,\rel)$, and $E(y) = \{ x
	\with f(x) > y \}$ for $y \in \rel$.

	Then there exists an $\mathscr{L}^1$ measurable function $W$ with
	values in the weakly topologised space of $\vdim-1$ dimensional
	rectifiable varifolds in $U$ such that for $\mathscr{L}^1$ almost all
	$y$ there holds
	\begin{gather*}
		\Tan^{\vdim-1} ( \| W(y) \|, x ) = \Tan^\vdim ( \| V \|, x )
		\cap \ker \derivative{V}{f} (x) \in \grass{\adim}{\vdim-1}, \\
		\density^{\vdim-1} ( \| W(y) \|, x ) = \density^\vdim ( \| V
		\|, x )
	\end{gather*}
	for $\| W(y) \|$ almost all $x$ and
	\begin{gather*}
		\boundary{V}{E(y)}(\theta) = \tint{}{}
		|\derivative{V}{f}(x)|^{-1} \derivative{V}{f} (x)(\theta(x))
		\ud \| W(y) \| x \quad \text{for $\theta \in \mathscr{D}
		(U,\rel^\adim )$},
	\end{gather*}
	in particular $\| \boundary{V}{E(y)} \| = \| W (y) \|$ for such $y$.
\end{citing}
The proof of an appropriate form of the coarea formula was the original
motivation for the author to establish the new relative isoperimetric
inequality and its corresponding Sobolev Poincar\'e inequalities as well as the
approximate differentiability result. In this respect notice that the proof of
the measure theoretic form of the coarea formula could not be based on the
more elementary functional analytic form of the coarea formula in conjunction
with an extension of the Gauss Green theorem (see \cite[4.5.6]{MR41:1976}) to
sets whose distributional $V$ boundary is representable by integration. In
fact, for general sets $E$ whose distributional $V$ boundary is representable
by integration it may happen that there is no $\vdim-1$ dimensional
rectifiable varifold whose weight measure equals $\| \boundary{V}{E} \|$, see
\ref{remark:no_rectifiable_structure}. This is in contrast to the behaviour of
sets of locally finite perimeter in Euclidean space.

\subsubsection*{Critical mean curvature} Several of the preceding estimates
and structural results may be sharpened in case the generalised mean curvature
satisfies an appropriate summability condition.
\begin{citing} [Mean curvature hypothesis]
	Suppose $\vdim$, $\adim$, $U$ and $V$ are as in the general
	hypothesis and satisfies the following condition.

	If $\vdim > 1$ then for some $h \in \Lploc{\vdim} ( \| V \|,
	\rel^\adim )$ there holds
	\begin{gather*}
		( \delta V ) ( \theta ) = - \tint{}{} h \bullet \theta \ud \|
		V \| \quad \text{for $\theta \in \mathscr{D}(U,\rel^\adim)$}.
	\end{gather*}
	In this case $\psi$ will denote the Radon measure over $U$
	characterised by the condition $\psi (X) = \tint{X}{} |h|^\vdim \ud \|
	V \|$ whenever $X$ is a Borel subset of $U$.
\end{citing}
Clearly, if this condition is satisfied, then the function $h$ is $\| V \|$
almost equal to the generalised mean curvature vector $\mathbf{h}(V,\cdot)$ of
$V$. The exponent $\vdim$ is ``critical'' with respect to homothetic rescaling
of the varifold. Replacing it by a slightly larger number would entail upper
semicontinuity of $\density^\vdim ( \| V \|, \cdot)$ and the applicability of
Allard's regularity theory, see Allard \cite[\S 8]{MR0307015}. In contrast,
replacing the exponent by a slightly smaller number would allow for examples
in which the varifold is locally highly disconnected, see the author
\cite[1.2]{snulmenn.isoperimetric}.

The importance of this hypothesis for the present considerations lies in the
fact that in the relative isoperimetric inequality, the summand ``$\| \delta V
\| ( U )$'' may be dropped provided $V$ satisfies additionally the mean
curvature hypothesis with $\psi (E)^{1/\vdim} \leq \Lambda^{-1}$, see
\ref{corollary:true_rel_iso_ineq}. As a first consequence, one obtains the
following version of the Sobolev Poincar{\'e} inequality.
\begin{citing} [Sobolev Poincar{\'e} inequality, zero median, critical mean
curvature, see \ref{remark:mod_tv}, \ref{remark:trunc},
\ref{thm:sob_poin_summary}\,\eqref{item:sob_poin_summary:interior:q<m=p}\,\eqref{item:sob_poin_summary:interior:p=m<q}]
	Suppose $1 \leq M < \infty$.

	Then there exists a positive, finite number $\Gamma$ with the
	following property.

	If $\vdim$, $\adim$, $U$, $V$, and $\psi$ satisfy the general
	hypothesis, the density hypothesis and the mean curvature hypothesis,
	$\adim \leq M$, $Y$ is a finite dimensional normed vectorspace, $f \in
	\trunc ( V, Y )$, $1 \leq Q \leq M$, $0 < r < \infty$, $E = U \cap \{
	x \with f(x) \neq 0 \}$,
	\begin{gather*}
		\| V \| ( E ) \leq ( Q-M^{-1} ) \unitmeasure{\vdim} r^\vdim,
		\quad \psi ( E ) \leq \Gamma^{-1}, \\
		\| V \| ( E \cap \{ x \with \density^\vdim ( \| V \|, x ) < Q
		\} ) \leq \Gamma^{-1} r^\vdim,
	\end{gather*}
	and $A = \{ x \with \oball{x}{r} \subset U \}$, then the following two
	statements hold:
	\begin{enumerate}
		\item If $1 \leq q < \vdim$, then
		\begin{gather*}
			\eqLpnorm{\| V \| \restrict A}{\vdim q/(\vdim-q)}{f}
			\leq \Gamma (\vdim-q)^{-1} \Lpnorm{\| V \|}{q}{
			\derivative{V}{f} }.
		\end{gather*}
		\item If $1 < \vdim < q \leq \infty$, then
		\begin{gather*}
			\eqLpnorm{\| V \| \restrict A}{\infty}{f} \leq
			\Gamma^{1/(1/\vdim-1/q)} \| V \| ( E)^{1/\vdim-1/q}
			\Lpnorm{\| V \|}{q}{ \derivative{V}{f} }.
		\end{gather*}
	\end{enumerate}
\end{citing}
Even if $Q = q = 1$ and $f$ and $V$ correspond to smooth objects, this
estimate appears to be new; at least, it seems not to be straightforward to
derive from Hutchinson \cite[Theorem 1]{MR1066398}.

A similar result holds for $\vdim = 1$, see
\ref{thm:sob_poin_summary}\,\eqref{item:sob_poin_summary:interior:p=m=1}.
Again, if $f$ belongs to $\trunc_G(V)$, then $A$ may be replaced by $A'$.
Also, appropriate versions of the Sobolev inequality may be furnished, see
\ref{thm:sob_poin_summary}\,\eqref{item:sob_poin_summary:global:p=m=1}--\eqref{item:sob_poin_summary:global:p=m<q}.
The same is true with respect to the Sobolev Poincar{\'e} inequalities with
several medians, see
\ref{thm:sob_poincare_q_medians}\,\eqref{item:sob_poincare_q_medians:p=m=1}--\eqref{item:sob_poincare_q_medians:p=m<q}
and
\ref{thm:sob_poin_several_med}\,\eqref{item:sob_poin_several_med:p=m=1}--\eqref{item:sob_poin_several_med:p=m<q}.

Before stating the stronger differentiability properties that result from the
mean curvature hypothesis, recall the definition of relative differential from
\cite[3.1.21, 3.1.22]{MR41:1976}.
\begin{intro_definition}
	Suppose $X$ and $Y$ are normed vectorspaces, $A \subset X$, and $a \in
	\Clos A$, and $f : A \to Y$.

	Then the \emph{tangent cone} of $A$ at $a$, denoted $\Tan(A,a)$, is
	the set of all $v \in X$ such that for every $\varepsilon > 0$ there
	exist $x \in A$ and $0 < r \in \rel$ with $|x-a| < \varepsilon$ and $|
	r(x-a)-v| < \varepsilon$. Moreover, $f$ is called \emph{differentiable
	relative to $A$ at $a$} if and only if there exist $b \in Y$ and a
	continuous linear map $L : X \to Y$ such that
	\begin{gather*}
		|f(x)-b-L(x-a) |/|x-a| \to 0 \quad \text{as $A \owns x
		\to a$}.
	\end{gather*}
	In this case $L|\Tan(A,a)$ is unique and denoted $Df(a)$.
\end{intro_definition}
\begin{citing} [Differentiability in Lebesgue spaces, critical mean curvature,
see
\protect{\ref{thm:diff_lebesgue_spaces}\allowbreak\eqref{item:diff_lebesgue_spaces:m=p=1}--\eqref{item:diff_lebesgue_spaces:p=m<q}}]
	Suppose $\vdim$, $\adim$, $U$, and $V$ satisfy the general hypothesis,
	the density hypothesis and the mean curvature hypothesis, $Y$ a finite
	dimensional normed vectorspace, $f \in \trunc (V,Y)$, $1 \leq q \leq
	\infty$, and $\derivative Vf \in \Lploc{q} ( \| V \|, \Hom
	(\rel^\adim, Y) )$.

	Then the following two statements hold.
	\begin{enumerate}
		\item If $q < \vdim$ and $\iota = \vdim q/(\vdim-q)$, then
		\begin{gather*}
			\lim_{r \to 0+} r^{-\vdim} \tint{\cball ar}{} ( |
			f(x)-f(a)- \derivative Vf (a)(x-a) | / | x-a| )^\iota
			\ud \| V \| x = 0
		\end{gather*}
		for $\| V \|$ almost all $a$.
		\item If $\vdim =1$ or $\vdim < q$, then there exists a
		subset $A$ of $U$ with $\| V \| ( U \without A ) = 0$ such
		that $f|A$ is differentiable relative to $A$ at $a$ with
		\begin{gather*}
			D (f|A) (a) = \derivative Vf (a) | \Tan^\vdim ( \| V
			\|, a) \quad \text{for $\| V \|$ almost all $a$},
		\end{gather*}
		in particular $\Tan(A,a) = \Tan^\vdim ( \| V \|, a )$ for such
		$a$.
	\end{enumerate}
\end{citing}
Considering $V$ associated to two crossing lines, it is evident that one
cannot expect a function in $\trunc(V,\rel)$ to be $\| V \|$ almost equal to a
continuous function even if $\delta V = 0$ and $\derivative{V}{f} = 0$. Yet,
in case $f$ is continuous, its modulus of continuity may be locally estimated
by its weak derivative. This estimate depends on $V$ but not on $f$ as
formulated in the next theorem.
\begin{citing} [Oscillation estimate, see \ref{thm:mod_continuity}\,\eqref{item:mod_continuity:m>1}]
	Suppose $\vdim$, $\adim$, $U$, and $V$ satisfy the general hypothesis,
	the density hypothesis and the mean curvature hypothesis, $K$ is a
	compact subset of $U$, $0 < \varepsilon \leq \dist ( K, \rel^\adim
	\without U )$, $\varepsilon < \infty$, and $1 < \vdim < q$.

	Then there exists a positive, finite number $\Gamma$ with the
	following property.

	If $Y$ is a finite dimensional normed vectorspace, $f : \spt \| V \|
	\to Y$ is a continuous function, $f \in \trunc (V,Y)$, and $\kappa =
	\sup \{ \eqLpnorm{\| V \| \restrict \oball{a}{\varepsilon}}{q}{
	\derivative{V}{f} } \with a \in K \}$, then
	\begin{gather*}
		|f(x)-f(\chi)| \leq \varepsilon \kappa \quad \text{whenever
		$x,\chi \in K \cap \spt \| V \|$ and $|x-\chi| \leq
		\Gamma^{-1}$}.
	\end{gather*}
\end{citing}
A similar result holds for $\vdim = 1$, see
\ref{thm:mod_continuity}\,\eqref{item:mod_continuity:m=1}. This theorem rests
on the fact that connected components of $\spt \| V \|$ are relatively open in
$\spt \| V \|$, see
\ref{corollary:conn_structure}\,\eqref{item:conn_structure:open}. If a
varifold $V$ satisfies the general hypothesis, the density hypothesis and the
mean curvature hypothesis then the decomposition of $\spt \| V \|$ into
connected components yields a locally finite decomposition into relatively
open and closed subsets whose distributional $V$ boundary vanishes, see
\ref{corollary:conn_structure}\,\eqref{item:conn_structure:finite}--\eqref{item:conn_structure:piece}.
Moreover, any decomposition of the varifold $V$ will refine the decomposition
of the topological space $\spt \| V \|$ into connected components, see
\ref{corollary:conn_structure}\,\eqref{item:conn_structure:union}.

When the amount of total weight measure (``mass'') available excludes the
possibility of two separate sheets, the oscillation estimate may be sharpened
to yield H{\"o}lder continuity even without assuming a priori the continuity
of the function, see \ref{thm:hoelder_continuity}.

\subsubsection*{Two applications} Despite the necessarily rather weak
oscillation estimate in the general case, this estimate is still sufficient to
prove that the geodesic distance between any two points in the same connected
component of the support is finite.
\begin{citing} [Geodesic distance, see \ref{thm:conn_path_finite_length}]
	Suppose $\vdim$, $\adim$, $U$, and $V$ satisfy the general hypothesis,
	the density hypothesis and the mean curvature hypothesis, $C$ is a
	connected component of $\spt \| V \|$, and $a,x \in C$.

	Then there exist $- \infty < b \leq y < \infty$ and a Lipschitzian
	function $g : \{ \upsilon \with b \leq \upsilon \leq y \} \to \spt \|
	V \|$ such that $g(b) = a$ and $g(y) = x$.
\end{citing}
The proof follows a pattern common in theory of metric spaces, see
\ref{remark:metric_spaces}.

Finally, the presently introduced notion of weak differentiability may also be
used to reformulate the defining condition for curvature varifolds, see
\ref{def:curvature_varifold} and \ref{remark:hutchinson_reformulations},
introduced by Hutchinson in \cite[5.2.1]{MR825628}.
\begin{citing} [Characterisation of curvature varifolds, see
\ref{thm:curvature_varifolds}]
	Suppose $\vdim$ and $\adim$ are positive integers, $\vdim \leq \adim$,
	$U$ is an open subset of $\rel^\adim$, $V$ is an $\vdim$ dimensional
	integral varifold in $U$,
	\begin{gather*}
		X = U \cap \{ x \with \Tan^\vdim ( \| V \|, x ) \in
		\grass{\adim}{\vdim} \}, \quad Y = \Hom (\rel^\adim,
		\rel^\adim ) \cap \{ \sigma \with \sigma = \sigma^\ast \},
	\end{gather*}
	and $\tau : X \to Y$ satisfies
	\begin{gather*}
		\tau(x) = \project{\Tan^\vdim ( \| V \|, x )} \quad
		\text{whenever $x \in X$}.
	\end{gather*}

	Then $V$ is a curvature varifold if and only if $\| \delta V \|$ is a
	Radon measure absolutely continuous with respect to $\| V \|$ and $\tau
	\in \trunc(V, Y )$.
\end{citing}
The condition is readily verified to be a necessary one for curvature
varifolds. The key to prove its sufficiency is to relate the mean curvature
vector of $V$ to the weak differential of the tangent plane map $\tau$. This
may be accomplished, for instance, by means of the approximate
differentiability result, see \ref{thm:approx_diff}, applied to $\tau$, in
conjunction with the previously obtained second order rectifiability result
for such varifolds, see \cite[4.8]{snulmenn.c2}.  \subsection*{Possible lines
of further study}
\paragraph{Sobolev spaces.} The results obtained by the author on the area
formula for the Gauss map, see \cite[Theorem 3]{snulmenn.mfo1230}, rest on
several estimates for Lipschitzian solutions of certain linear elliptic
equations on varifolds satisfying the mean curvature hypothesis. The
formulation of these estimates necessitated several ad hoc formulations of
concepts such as zero boundary values which would not seem natural from
the point of view of partial differential equations. In order to avoid
repetition, the author decided to directly build an adequate framework for
these results rather than first publish the Lipschitzian case along with a
proof of the results announced in \cite{snulmenn.mfo1230}.\footnote{Would the
author have fully anticipated the effort needed for such a construction he
might have reconciled himself with some repetition.} The first part of such
framework is provided in the present paper. Continuing this programme, a
notion of Sobolev spaces, complete vectorspaces contained in
$\trunc(V,Y)$ in which locally Lipschitzian functions are suitably dense,
has already been developed but is not included here for length considerations.
\paragraph{Functions of locally bounded variation.} It seems worthwhile to
investigate to which extent results of the present paper for weakly
differentiable functions extend to a class of ``functions of locally bounded
variation.'' A possible definition is suggested in \ref{remark:bv}.
\paragraph{Intermediate conditions on the mean curvature.} The mean curvature
hypothesis may be weakened by replacing $\Lploc{\vdim}$ by $\Lploc{p}$ for
some $1 < p < \vdim$. In view of the Sobolev Poincar{\'e} type inequalities
obtained for height functions by the author in \cite[Theorem
4.4]{snulmenn.poincare}, one might seek for adequate formulations of the
Sobolev Poincar{\'e} inequalities and the resulting differentiability results
for functions belonging to $\trunc(V,Y)$ in these intermediate cases.
This could potentially have applications to structural results for curvature
varifolds, see \ref{remark:curv_flatness} and \ref{remark:curv_to_do}.
Additionally, the case $p = \vdim-1$ seems to be related to the study of the
geodesic distance for indecomposable varifolds, see \ref{remark:topping}.
\paragraph{Multiple valued weakly differentiable functions.} For convergence
considerations it appears useful to extend the concept of weakly
differentiable functions to a more general class of ``multiple-valued''
functions in the spirit of Moser \cite[\S 4]{62659}, see \ref{remark:moser}.
\subsection*{Organisation of the paper} In Section \ref{sec:notation} the
notation is introduced. In Section \ref{sec:tvs} some basic terminology and
results on topological vector spaces are collected. Sections
\ref{sec:distributions} and \ref{sec:monotonicity} contain preliminary results
concerning respectively certain distributions representable by integration and
consequences of the monotonicity identity. Section \ref{sec:distrib_boundary}
introduces the concept of distributional boundary of a set with respect to a
varifold. In Section \ref{sec:decomposition} the decomposition properties for
varifolds are established and Section \ref{sec:rel_iso} contains the relative
isoperimetric inequality. In Sections \ref{sec:basic}--\ref{sec:oscillation}
the theory of weakly differentiable functions is presented. Finally, in
Sections \ref{sec:geodesic_distance} and \ref{sec:curvature_varifolds} the
applications to the study of the geodesic distance associated to certain
varifolds and to curvature varifolds are discussed briefly.

\subsection*{Acknowledgements} The author would like to thank Dr.~Theodora
Bourni, Dr.~Leobardo Rosales and in particular Dr.~S{\l}awomir Kolasi{\'n}ski
for discussions and suggestions concerning this paper.

\section{Notation} \label{sec:notation}
The notation of Federer \cite{MR41:1976} and Allard \cite{MR0307015} will be
used throughout.

\paragraph{Less common symbols.} The set of positive integers is denoted by
$\nat$, see \cite[2.2.6]{MR41:1976}. For the open and closed ball with centre
$a$ and radius $r$ the symbols $\oball{a}{r}$ and $\cball{a}{r}$ are employed,
see \cite[2.8.1]{MR41:1976}. Whenever $f$ is a linear map and $v$ belongs to
its domain the alternate notation $\left <v,f \right>$ for $f(v)$ are used,
see \cite[1.10.1]{MR41:1976}. Inner products, in contrast, are denoted by
``$\bullet$'', see \cite[1.7.1]{MR41:1976}. For integration the alternate
notations $\tint{}{} f \ud \mu$, $\tint{}{} f(x) \ud \mu x$ and $\mu(f)$
are employed, see \cite[2.4.2]{MR41:1976}. Moreover, for evaluating a
distribution $T$ at $\phi$ are alternately denoted by $T(\phi)$ and $T_x (
\phi (x))$, see \cite[4.1.1]{MR41:1976}.

\paragraph{Modifications.} If $f$ is a relation, then $f \lIm A \rIm = \{ y
\with \text{$(x,y) \in f$ for some $x \in A $} \}$ whenever $A$ is a set, see
Kelley \cite[p.~8]{MR0370454}. Following Almgren \cite[T.1\,(9)]{MR1777737},
the symbol $\project P$ will denote the orthogonal projection of $\rel^\adim$
onto $P$ whenever $P$ is a plane in $\rel^\adim$. Extending Federer
\cite[3.2.16]{MR41:1976}, whenever $\mu$ measures an open subset of a normed
vectorspace $X$, $\iota : U \to X$ is the inclusion map, $a \in U$ and $\vdim$
is a positive integer notions of tangent vectors, normal vectors and
differentials of $(\mu,\vdim)$ approximate type will refer to the
corresponding $(\iota_\# \mu,\vdim)$ approximate notion, for instance
$\Tan^\vdim ( \mu, a )$ will denote $\Tan^\vdim ( \iota_\# \mu, a)$.
Following Schwartz, see \cite[Chapitre \printRoman{3}, \S 1]{MR0209834}, the
vectorspace $\mathscr{D}(U,Y)$ is given the (usual) locally convex topology,
see \ref{def:duy}, which differs from the topology employed by Federer in
\cite[4.1.1]{MR41:1976}, see \ref{remark:top_comp}. Moreover, the usage of $\|
S \|$ for distributions $S$ is chosen to be in accordance with Allard
\cite[4.2]{MR0307015} which agrees with Federer's usage in
\cite[4.1.5]{MR41:1976} in most but not in all cases, see
\ref{def:variation_measure} and \ref{remark:allard_vs_federer}.

Extending Allard \cite[2.5\,(2)]{MR0307015}, whenever $M$ is a submanifold of
$\rel^\adim$ of class $2$ and $a \in M$ the \emph{mean curvature vector of $M$
at $a$} is the unique $\mathbf{h} (M,a) \in \Nor (M,a)$ such that
\begin{gather*}
	\project{\Tan(M,a)} \bullet ( Dg(a) \circ \project{\Tan (M,a)}) = - g
	(a) \bullet \mathbf{h}(M,a)
\end{gather*}
whenever $g : M \to \rel^\adim$ is of class $1$ and $g(x) \in \Nor (M,x)$ for
$x \in M$.  If $V$ is an $\vdim$ dimensional varifold in $U$ and $\| \delta V
\|$ is a Radon measure, then the \emph{generalised mean curvature vector of
$V$ at $x$} is the unique $\mathbf{h} (V,x) \in \rel^\adim$ such that
\begin{gather*}
	\mathbf{h} (V,x) \bullet u = - \lim_{r \to 0+} \frac{( \delta V ) (
	b_{x,r} \cdot u)}{\measureball{\| V \|}{\cball{x}{r}}} \quad \text{for
	$u \in \rel^\adim$},
\end{gather*}
where $b_{x,r}$ is the characteristic function of $\cball{x}{r}$; hence $x \in
\dmn \mathbf{h} ( V,\cdot)$ if and only if the above limit exists for every $u
\in \rel^\adim$, see \cite[p.~9]{snulmenn.decay}.

\paragraph{Additional notation.} If $1 \leq p \leq \infty$, $\mu$ is a Radon
measure over a locally compact Hausdorff space $X$, and $Y$ is a Banach space,
then $\Lploc{p} ( \mu, Y )$ consists of all $f$ such that $f \in \Lp{p} (
\mu \restrict K, Y )$ whenever $K$ is a compact subset of $X$. Concerning the
Besicovitch Federer covering theorem, whenever $\adim \in \nat$ the number
$\besicovitch{\adim}$ denotes the least positive integer with the following
property, see Almgren \cite[p.~464]{MR855173}: If $G$ is a family of closed
balls in $\rel^\adim$ with $\sup \{ \diam B \with B \in G \} < \infty$, then
there exist disjointed subfamilies $G_1, \ldots, G_{\besicovitch{\adim}}$ such
that
\begin{gather*}
	\{ x \with \text{$\cball{x}{r} \in G$ for some $0 < r < \infty$} \}
	\subset {\textstyle\bigcup\bigcup \{ G_i \with i = 1, \ldots,
	\besicovitch{\adim} \}}.
\end{gather*}
Concerning the isoperimetric inequality, whenever $\vdim \in \nat$ the
smallest number with the following property is denoted by
$\isoperimetric{\vdim}$, see \cite[2.2--2.4]{snulmenn.isoperimetric}: If
$\adim \in \nat$, $\vdim \leq \adim$, $V \in \RVar_\vdim ( \rel^\adim )$, $\|
V \| ( \rel^\adim ) < \infty$, and $\| \delta V \| ( \rel^\adim ) < \infty$,
then
\begin{gather*}
	\| V \| ( \rel^\adim \cap \{ x \with \density^\vdim ( \| V \|, x )
	\geq 1 \} ) \leq \isoperimetric{\vdim} \| V \| ( \rel^\adim
	)^{1/\vdim} \| \delta V \| ( \rel^\adim ).
\end{gather*}

\paragraph{Definitions in the text.} The notions of \emph{Lusin space},
\emph{locally convex space} and \emph{locally convex topology}, \emph{strict
inductive limit} as well as \emph{final topology} and \emph{locally convex
final topology} are defined in \ref{def:lusin}, \ref{def:locally_convex},
\ref{def:strict_inductive_limit} and \ref{def:final_topologies} respectively.
The topologies on $\mathscr{K} (X)$ and $\mathscr{D}(U,Y)$ are defined in
\ref{def:kx} and \ref{def:duy}. The \emph{restriction} of a distribution
representable by integration to a set will be defined in
\ref{def:restriction_distribution}. In \ref{def:v_boundary}, the notion of
\emph{distributional boundary} of a set with respect to certain varifolds is
defined. The notions of \emph{indecomposability}, \emph{component}, and
\emph{decomposition} for certain varifolds are defined in
\ref{def:indecomposable}, \ref{def:component} and \ref{def:decomposition}
respectively. The notion of \emph{generalised weakly differentiable functions}
with respect to certain varifolds and the corresponding \emph{generalised weak
derivatives}, $\derivative{V}{f}$, and the associated space $\trunc (V,Y)$ are
introduced in \ref{def:v_weakly_diff}. The space $\trunc_G ( V )$ is defined
in \ref{def:trunc_g}. Finally, the notion of \emph{curvature varifold} is
explained in \ref{def:curvature_varifold}.

\paragraph{A convention.} Each statement asserting the existence of a
positive, finite number $\Gamma$ will give rise to a function depending on the
listed parameters whose name is $\Gamma_{\mathrm{x.y}}$, where $\mathrm{x.y}$
denotes the number of the statement. This is a refinement of a concept
employed by Almgren in \cite{MR1777737}.

\section{Topological vector spaces} \label{sec:tvs}
Some basic results on topological vector spaces and Lusin spaces are gathered
here mainly from Schwartz \cite{MR0426084} and Bourbaki \cite{MR910295}.
\begin{definition} [see \protect{\cite[Chapter \printRoman{2}, Definition 2,
p.~94]{MR0426084}}] \label{def:lusin}
	Suppose $X$ is a topological space.

	Then $X$ is called a \emph{Lusin space} if and only if $X$ is a
	Hausdorff topological space and there exists a complete, separable
	metric space $W$ and a continuous univalent map $f : W \to X$ whose
	image is $X$.
\end{definition}
\begin{remark} \label{remark:lusin}
	Any subset of a Lusin space is sequentially separable.
\end{remark}
\begin{definition} [see \protect{\cite[\printRoman{2}, p.~23,
def.~1]{MR910295}}] \label{def:locally_convex}
	A topological vector space is called a \emph{locally convex space} if
	and only if there exists a fundamental system of neighbourhoods of $0$
	that are convex sets; its topology is called \emph{locally convex
	topology}.
\end{definition}
\begin{definition} \label{def:strict_inductive_limit}
	The locally convex spaces form a category; its morphisms are the
	continuous linear maps. An inductive limit in this category is called
	\emph{strict} if and only if the morphisms of the corresponding
	inductive system are homeomorphic embeddings.
\end{definition}
\begin{remark}
	The notion of inductive limit is employed in accordance with
	\cite[p.~67--68]{MR1712872}.
\end{remark}
\begin{definition} [see \protect{\cite[\printRoman{1}, \S 2.4,
prop.~6]{MR1726779}, \cite[\printRoman{2}, p.~27,
prop.~5]{MR910295}}] \label{def:final_topologies}
	Suppose $E$ is a set [vector space], $E_i$ are topological spaces
	[topological vector spaces] and $f_i : E_i \to E$ are functions
	[linear maps] for $i \in I$.

	Then there exists a unique topology [unique locally convex topology]
	on $E$ such that a function [linear map] $g : E \to F$ into a
	topological space [locally convex space] $F$ is continuous if and only
	if $g \circ f_i$ are continuous for $i \in I$. This topology is called
	\emph{final topology} [\emph{locally convex final topology}] on $E$
	with respect to the family $f_i$.
\end{definition}
\begin{definition} [see \protect{\cite[\printRoman{2}, p.~29, Example
\printRoman{2}]{MR910295}}\footnote{In the terminology of
\cite[\printRoman{2}, p.~29, Example \printRoman{2}]{MR910295} a locally
compact Hausdorff space is called a locally compact space.}] \label{def:kx}

	Suppose $X$ is a locally compact Hausdorff space. Consider the
	inductive system consisting of the locally convex spaces
	$\mathscr{K}(X) \cap \{ f \with \spt f \subset K \}$ with the topology
	of uniform convergence corresponding to all compact subsets $K$ of $U$
	and its inclusion maps.

	Then $\mathscr{K}(X)$ endowed with the locally convex final topology
	with respect to the inclusions of the topological vector spaces
	$\mathscr{K} (X) \cap \{ f \with \spt f \subset K \}$ is the inductive
	limit of the above system in the category of locally convex spaces.
\end{definition}
\begin{remark} [see \protect{\cite[\printRoman{2}, p.~29, Example
\printRoman{2}]{MR910295}}]
	The locally convex topology on $\mathscr{K} (X)$ is Hausdorff and
	induces the given topology on each closed subspace $\mathscr{K} ( X)
	\cap \{ f \with \spt f \subset K \}$. Moreover, the space
	$\mathscr{K}(X)^\ast$ of Daniell integrals on $\mathscr{K}(X)$ agrees
	with the space of continuous linear functionals on $\mathscr{K} (X)$.
\end{remark}
\begin{remark}
	If $K(i)$ is a sequence of compact subsets of $X$ with $K(i) \subset
	\Int K(i+1)$ for $i \in \nat$ and $X = \bigcup_{i=1}^\infty K(i)$,
	then $\mathscr{K}(X)$ is the strict inductive limit of the sequence of
	locally convex spaces $\mathscr{K}(X) \cap \{ f \with \spt f \subset
	K(i) \}$.
\end{remark}
\begin{definition} \label{def:duy}
	Suppose $U$ is an open subset of $\rel^\adim$ and $Z$ is a Banach
	space. Consider the inductive system consisting of the locally convex
	spaces $\mathscr{D}_K (U,Z)$ with the topology induced from
	$\mathscr{E}(U,Z)$ corresponding to all compact subsets $K$ of $U$ and
	its inclusion maps.

	Then $\mathscr{D}(U,Z)$ endowed with the locally convex final topology
	with respect to the inclusions of the topological vector spaces
	$\mathscr{D}_K (U,Z)$ is the inductive limit of the above inductive
	system in the category of locally convex spaces.
\end{definition}
\begin{remark} \label{remark:basic}
	The locally convex topology on $\mathscr{D} (U,Z)$ is Hausdorff and
	induces the given topology on each closed subspace
	$\mathscr{D}_K(U,Z)$.
\end{remark}
\begin{remark} \label{remark:ind-limit}
	If $K(i)$ is a sequence of compact subsets of $U$ with $K(i) \subset
	\Int K(i+1)$ for $i \in \nat$ and $U = \bigcup_{i=1}^\infty K(i)$,
	then $\mathscr{D} (U,Z)$ is the strict inductive limit of the sequence
	of locally convex spaces $\mathscr{D}_{K(i)} ( U,Z )$. In particular,
	the convergent sequences in $\mathscr{D}(U,Z)$ are precisely the
	convergent sequences in some $\mathscr{D}_K(U,Z)$, see
	\cite[\printRoman{3}, p.~3, prop.~2; \printRoman{2}, p.~32,
	prop.~9\,(i)\,(ii); \printRoman{3}, p.~5, prop.~6]{MR910295}.
\end{remark}
\begin{remark} \label{remark:tvs}
	The locally convex topology on $\mathscr{D} (U,Z)$ is also the
	inductive limit topology in the category of topological vector spaces,
	see \cite[\printRoman{1}, p.~9, Lemma to prop.~7; \printRoman{2},
	p.~75, exerc.~14]{MR910295}.
\end{remark}
\begin{remark} \label{remark:top_comp}
	Consider the inductive system consisting of the topological spaces
	$\mathscr{D}_K (U,Z)$ corresponding to all compact subsets $K$ of $U$
	and its inclusion maps. Then $\mathscr{D}(U,Z)$ endowed with the final
	topology with respect to the inclusions of the topological vector
	spaces $\mathscr{D}_K (U,Z)$ is the inductive limit of this inductive
	system in the category of topological spaces.

	Denoting this topology by $S$ and the topology described in
	\ref{def:duy} by $T$, the following three statements hold.
	\begin{enumerate}
		\item Amongst all locally convex topologies on
		$\mathscr{D}(U,Z)$, the topology $T$ is characterised by the
		following property: A linear map from $\mathscr{D}(U,Z)$ into
		some locally convex space is $T$ continuous if and only if it
		is $S$ continuous.
		\item The $S$ closed sets are precisely the $T$ sequentially
		closed sets, see \ref{remark:ind-limit}.
		\item If $U$ is nonempty and $\dim Z > 0$, then $S$ is
		strictly finer than $T$, compare Shirai \cite[Th{\'e}or{\`e}me
		5]{MR0105613}.\footnote{The topological preliminaries of
		\cite{MR0105613} are quite general. For the purpose of
		verifying only the cited result, one may replace the terms
		\guillemotleft topologie\guillemotright{} [respectively
		\guillemotleft v{\'e}ritable-topologie\guillemotright{}] with
		operators satisfying conditions (a), (b) and (d) [respectively
		all] of the Kuratowski closure axioms, see
		\cite[p.~43]{MR0370454}. (Extending \cite[Th{\'e}or{\`e}me
		5]{MR0105613} which treats the case $U = \rel^\adim$ and $Z =
		\rel$ to the case stated poses no difficulty.)} In particular,
		in this case $S$ is not compatible with the vector space
		structure of $\mathscr{D}(U,Z)$ by \ref{remark:tvs}.
	\end{enumerate}
\end{remark}
\begin{definition} \label{def:variation_measure} 
	Suppose $U$ is an open subset of $\rel^\adim$, $Z$ is a separable
	Banach space, and $S : \mathscr{D} ( U,Z ) \to \rel$ is linear.

	Then $\| S \|$ is defined to be the largest Borel regular measure over
	$U$ such that
	\begin{gather*}
		\| S \| (G) = \sup \{ S ( \theta ) \with \text{$\theta \in
		\mathscr{D} (U,Z)$ with $\spt \theta \subset G$ and $| \theta
		(x) | \leq 1$ for $x \in U$} \}
	\end{gather*}
	whenever $G$ is an open subset of $U$.\footnote{Existence may be shown
	by use of the techniques occurring in \cite[2.5.14]{MR41:1976}.}
\end{definition}
\begin{remark} \label{remark:allard_vs_federer}
	This concept agrees with \cite[4.1.5]{MR41:1976} in case $\| S \|$ is
	a Radon measure (which is equivalent to $S$ being a distribution in
	$U$ of type $Z$ representable by integration) in which case
	$S(\theta)$ continuous to denote the value of the unique $\|
	S\|_{(1)}$ continuous extension of $S$ to $\Lp{1} ( \| S \|, Z )$ at
	$\theta \in \Lp{1} ( \| S \|, Z )$. In general the concepts differ
	since it may happen that $| S ( \theta ) |
	> \| S \| ( |\theta|)$ for some $\theta \in \mathscr{D} ( U, Z )$; an
	example is provided by taking $U = Z = \rel$ and $S = \partial (
	\boldsymbol{\delta}_0 \wedge \mathbf{e}_1 )$, see
	\cite[4.1.16]{MR41:1976}.
\end{remark}
\begin{definition} \label{def:restriction_distribution}
	Suppose $U$ is an open subset of $\rel^\adim$, $Z$ is a separable
	Banach space, $S \in \mathscr{D}'(U,Z)$ is representable by
	integration, and $A$ is $\| S \|$ measurable.

	Then the \emph{restriction of $S$ to $A$}, $S \restrict A \in
	\mathscr{D}'( U,Z )$, is defined by
	\begin{gather*}
		( S \restrict A ) ( \theta ) = S ( \theta_A ) \quad
		\text{whenever $\theta \in \mathscr{D} (U,Z)$},
	\end{gather*}
	where $\theta_A (x) = \theta (x)$ for $x \in A$ and $\theta_A (x) = 0$
	for $x \in U \without A$.
\end{definition}
\begin{remark}
	This extends the notion for currents in \cite[4.1.7]{MR41:1976}.
\end{remark}
\begin{theorem}
	Suppose $E$ is the strict inductive limit of an increasing sequence of
	locally convex spaces $E_i$ and the spaces $E_i$ are separable,
	complete (with respect to its topological vector space structure) and
	metrisable for $i \in \nat$.\footnote{That is, $E_i$ are separable
	Fr{\'e}chet spaces in the terminology of \cite[\printRoman{2},
	p.~24]{MR910295}.}

	Then $E$ and the dual $E'$ of $E$ endowed with the compact open
	topology are Lusin spaces and whenever $D$ is a dense subset of $E$
	the Borel family of $E'$ is generated by the family
	\begin{gather*}
		\big \{ E' \cap \{ u \with u(x) < t \} \with \text{$x \in D$
		and $t \in \rel$} \big \}.
	\end{gather*}
\end{theorem}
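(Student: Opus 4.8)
The plan is to prove the three assertions---(i) $E$ is a Lusin space, (ii) $E'$ with the compact-open topology is a Lusin space, and (iii) the Borel $\sigma$-algebra of $E'$ is generated by the half-spaces determined by a dense set $D$---in that order, using standard facts about strict inductive limits of Fréchet spaces together with the definition of Lusin space from \ref{def:lusin}.

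For (i), recall that a countable product, a countable disjoint union, and a continuous univalent image of Lusin spaces is Lusin (this is the content of the cited material from Schwartz \cite{MR0426084}, Chapter \printRoman{2}); in particular every separable Fréchet space is a Lusin space, being a complete separable metric space, hence its own witness via the identity map. Since $E$ is the strict inductive limit of the increasing sequence $E_i$, each $E_i$ is a closed subspace of $E$ carrying its own topology, and $E = \bigcup_{i} E_i$ as a set. The natural map $\coprod_{i} E_i \to E$ (disjoint union in the topological category) is continuous because each inclusion $E_i \hookrightarrow E$ is continuous; it is surjective since the $E_i$ exhaust $E$; and although it is not univalent, one repairs this by passing to the disjoint union of the ``shells'' $E_1$ and $E_{i+1} \without E_i$ for $i \in \nat$, each of which is a Borel---indeed, $E_{i+1}\without E_i$ is open in $E_{i+1}$ minus a closed subset, hence a $G_\delta$ in a separable completely metrisable space, hence Polish---subset of a Lusin space and therefore Lusin. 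The resulting map from this countable disjoint union of Lusin spaces to $E$ is then continuous, univalent and surjective, so $E$ is Lusin. The one point needing care here is Hausdorffness of $E$, but this is \ref{remark:basic} (applied with $E_i = \mathscr{D}_{K(i)}(U,Z)$ in spirit, or simply the fact, from \cite{MR910295}, that a strict inductive limit of Hausdorff locally convex spaces is Hausdorff).

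For (ii), the dual $E'$ with the compact-open (= topology of uniform convergence on compacta) topology: since $E$ is a strict inductive limit of Fréchet spaces, it is barrelled, and more to the point $E$ is separable (being Lusin, by \ref{remark:lusin}, or directly: a countable union of separable spaces). Fix a countable dense set $D \subset E$. The evaluation map $E' \to \rel^D$, $u \mapsto (u(x))_{x \in D}$, is injective by density and continuity of the elements of $E'$; it is continuous from the compact-open topology to the product topology on $\rel^D$ since each coordinate functional $u \mapsto u(x)$ is evaluation at a (compact, singleton) set. Its image is a Borel---in fact $\sigma$-compact or at worst Borel---subset of the Polish space $\rel^D$: this is where I expect to lean on the structure theory, namely that the compact-open topology on $E'$ coincides on bounded sets with the topology of pointwise convergence on $D$, and that $E'$ with the compact-open topology is a countable union of metrisable (even Polish) pieces coming from the polars of a fundamental sequence of bounded sets, or alternatively that $E'_c$ is a Lusin space by \cite[Chapter \printRoman{3}]{MR0426084} applied to duals of separable Fréchet spaces and their strict inductive limits. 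Granting the appropriate one of these facts, $E'_c$ maps continuously and injectively onto a Borel subset $B$ of a Polish space; $B$, being Borel in a Polish space, is a standard Borel space and carries a Polish topology finer than its subspace topology, so it is Lusin, and pulling back the witnessing map through the (continuous, univalent) evaluation shows $E'_c$ is Lusin.

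For (iii), observe first that each half-space $\{u : u(x) < t\}$ with $x \in D$, $t \in \rel$ is open---hence Borel---in $E'_c$, since $u \mapsto u(x)$ is continuous; thus the generated $\sigma$-algebra $\mathscr{B}_0$ is contained in the Borel $\sigma$-algebra of $E'_c$. For the reverse inclusion, note that the maps $u \mapsto u(x)$ for $x \in D$ separate points of $E'$ (density of $D$), so they induce an injective map into $\rel^D$; the $\sigma$-algebra $\mathscr{B}_0$ is exactly the pullback of the product Borel $\sigma$-algebra on $\rel^D$, which, $D$ being countable, equals the Borel $\sigma$-algebra of the Polish space $\rel^D$. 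Therefore $\mathscr{B}_0$ is the Borel $\sigma$-algebra of $E'$ for the \emph{initial} topology induced by these evaluations. It remains to see this coincides with the Borel $\sigma$-algebra of the compact-open topology. One inclusion is automatic (the initial topology is coarser). For the other, I would invoke the general principle that on a Lusin space any two comparable Lusin (or even just second-countable-Hausdorff versus Lusin) topologies have the same Borel sets---more precisely, a continuous injection between Lusin spaces is a Borel isomorphism onto its image (Lusin--Souslin), applied to the identity map from $E'_c$ to $E'$ with the initial topology, both shown to be Lusin. I expect the main obstacle to be precisely this last point: verifying that the weaker, pointwise-on-$D$ topology on $E'$ is still Lusin (equivalently, that the evaluation image $B \subset \rel^D$ is Borel), so that Lusin--Souslin applies and forces the Borel structures to agree; everything else is bookkeeping with the permanence properties of Lusin spaces and the elementary topology of strict inductive limits.
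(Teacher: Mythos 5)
The proposal follows the same overall plan as the paper (establish that $E$ and $E'$ are Lusin, then compare Borel families), and parts~(i) and~(iii) are essentially sound, but part~(ii) has a genuine gap that you in fact acknowledge yourself.

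For~(i), your shell decomposition $E = E_1 \cup \bigcup_i (E_{i+1} \without E_i)$ of $E$ into Polish pieces is a more explicit route to the conclusion than the paper's direct citation of \cite[Chapter \printRoman{2}, Corollary 2 to Theorem 5, p.~102]{MR0426084}, and it is correct; the Hausdorffness of $E$ and the fact that each $E_i$ is a closed topological subspace of $E$ come from \cite[\printRoman{2}, p.~32, prop.~9\,(i)\,(ii)]{MR910295}, and the fact that $E_i$ is Polish uses that a metrisable topological vector space admits a translation-invariant metric, so completeness transfers.

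For~(ii) the argument is not actually carried out. You offer two routes --- showing directly that the image of $u \mapsto u|D$ in $\rel^D$ is Borel, or invoking an unnamed result of Schwartz on duals --- and then write ``granting the appropriate one of these facts.'' That is precisely the step that needs proof. The direct route has the difficulty that it is not at all obvious that the image is Borel; the ``polars of a fundamental sequence of bounded sets'' picture you sketch is the wrong decomposition for the \emph{compact-open} topology (those polars are a neighbourhood base for the strong topology, not a countable cover of $E'$ by compact-open-metrisable pieces). The paper instead cites \cite[Chapter 2, Theorem 7, p.~112]{MR0426084}, and --- this is the point you miss --- the verification that makes this theorem applicable is the standard fact about strict inductive limits of Fr{\'e}chet spaces that every compact subset of $E$ already lies in, and is compact in, some $E_i$ \cite[\printRoman{3}, p.~3, prop.~2; \printRoman{2}, p.~32, prop.~9\,(ii); \printRoman{3}, p.~5, prop.~6]{MR910295}. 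Without identifying this hypothesis and checking it, neither of your two routes closes.

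For~(iii), your route via Lusin--Souslin (a continuous injection between Lusin spaces is a Borel isomorphism onto its image) is workable but longer than necessary, because it obliges you to first establish that the initial topology induced by $\iota : E' \to \rel^D$ is itself Lusin --- exactly the Borel-image question you had trouble with in~(ii). The paper instead uses \cite[Chapter \printRoman{2}, Corollary 2 to Theorem 4, p.~101]{MR0426084}, which says directly that a Hausdorff topology coarser than a Lusin topology has the same Borel family (and is automatically Lusin); one then needs only that $\iota$ is continuous and univalent, and that the induced topology is Hausdorff, all of which are immediate. Once you have the right form of the comparison lemma, your~(iii) reduces to the paper's.
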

\begin{proof}
	First, note that $E_i$ is a Lusin space for $i \in \nat$, since it may
	be metrised by a translation invariant metric by \cite[\printRoman{2},
	p.~34, prop.~11]{MR910295}. Second, note that $E$ is Hausdorff and
	induces the given topology on $E_i$ by \cite[\printRoman{2}, p.~32,
	prop.~9\,(i)]{MR910295}. Therefore $E$ is a Lusin space by
	\cite[Chapter \printRoman{2}, Corollary 2 to Theorem 5,
	p.~102]{MR0426084}. Since every compact subset $K$ of $E$ is a compact
	subset of $E_i$ for some $i$ by \cite[\printRoman{3}, p.~3, prop.~2;
	\printRoman{2}, p.~32, prop.~9\,(ii); \printRoman{3}, p.~5,
	prop.~6]{MR910295}, it follows that $E'$ is a Lusin space from
	\cite[Chapter 2, Theorem 7, p.~112]{MR0426084}. Defining the
	continuous, univalent map $\iota : E' \to \rel^D$ by $\iota (u) = u|D$
	for $u \in E'$, the initial topology on $E'$ induced by $\iota$ is a
	Hausdorff topology coarser than the compact open topology, hence their
	Borel families agree by \cite[Chapter \printRoman{2}, Corollary 2 to
	Theorem 4, p.~101]{MR0426084}.
\end{proof}
\begin{example} \label{example:kx_lusin}
	Suppose $X$ is a locally compact Hausdorff space which admits a
	countable base of its topology.

	Then $\mathscr{K}(X)$ with its locally convex topology (see
	\ref{def:kx}) and $\mathscr{K}(X)^\ast$ with its weak topology are
	Lusin spaces and the Borel family of $\mathscr{K}(X)^\ast$ is
	generated by the sets $\mathscr{K}(X)^\ast \cap \{ \mu \with \mu(f) <
	t \}$ corresponding to $f \in \mathscr{K}(X)$ and $t \in \rel$.
	Moreover, the topological space
	\begin{gather*}
		\mathscr{K}(X)^\ast \cap \{ \mu \with \text{$\mu$ is monotone}
		\}
	\end{gather*}
	is homeomorphic to a complete, separable metric space;\footnote{Such
	spaces are termed polish spaces in \cite[Chapter \printRoman{2},
	Definition 1]{MR0426084}.} in fact, choosing a countable dense subset
	$D$ of $\mathscr{K}(X)^+$, the image of its homeomorphic embedding
	into $\rel^D$ is closed.
\end{example}
\begin{example} \label{example:distrib_lusin}
	Suppose $U$ is an open subset of $\rel^\adim$ and $Z$ is a separable
	Banach space.

	Then $\mathscr{D}(U,Z)$ with its locally convex topology (see
	\ref{def:duy}) and $\mathscr{D}'(U,Z)$ with its weak topology are
	Lusin spaces and the Borel family of $\mathscr{D}'(U,Z)$ is generated
	by the sets $\mathscr{D}'(U,Z) \cap \{ S \with S (\theta) < t \}$
	corresponding to $\theta \in \mathscr{D}(U,Z)$ and $t \in \rel$.
	Moreover, recalling \ref{remark:lusin} and \cite[4.1.5]{MR41:1976}, it
	follows that
	\begin{gather*}
		\big ( \mathscr{D}'(U,Z) \times \mathscr{K}(U)^\ast \big )
		\cap \{ ( S, \| S \| ) \with \text{$S$ is representable by
		integration} \}
	\end{gather*}
	is a Borel function whose domain is a Borel set (with respect to the
	weak topology on both spaces).
\end{example}
\section{Distributions on products} \label{sec:distributions}
The purpose of the present section is to separate functional analytic
considerations from those employing properties of the varifold or the weakly
differentiable function in deriving the various coarea type formulae in
\ref{lemma:meas_fct}, \ref{thm:tv_coarea}, \ref{thm:ap_coarea}, and
\ref{corollary:coarea}.
\begin{miniremark} \label{miniremark:distrib_on_products}
	Suppose $U$ and $V$ are open subsets of Euclidean spaces and
	$Z$ is a Banach space. Then the image of $\mathscr{D} (U,\rel) \otimes
	\mathscr{D} ( V,\rel ) \otimes Z$ in $\mathscr{D} ( U \times V, Z )$
	is sequentially dense, compare \cite[1.1.3, 4.1.2, 4.1.3]{MR41:1976}.
\end{miniremark}
\begin{miniremark} \label{miniremark:distrib_on_r}
	If $J$ is an open subset of $\rel$, $R \in \mathscr{D}' ( J, \rel )$
	is representable by integration, and $\| R\|$ is absolutely continuous
	with respect to $\mathscr{L}^1 | \mathbf{2}^J$, then there exists $k
	\in \Lploc{1} ( \mathscr{L}^1 | \mathbf{2}^J )$ such that
	\begin{gather*}
		R ( \omega ) = \tint{J}{} \omega k \ud \mathscr{L}^1 \quad
		\text{whenever $\omega \in \Lp{1} ( \| R \| )$}, \\
		k(y) = \lim_{\varepsilon \to 0+} \varepsilon^{-1} R
		(i_{y,\varepsilon}) \quad\text{for $\mathscr{L}^1$ almost all
		$y \in J$},
	\end{gather*}
	where $i_{y,\varepsilon}$ is the characteristic function of the
	interval $\{ \upsilon \with y < \upsilon \leq y + \varepsilon \}$; in
	fact, localising the problem, one employs \cite[2.5.8]{MR41:1976} to
	construct $k$ satisfying the first condition which implies the second
	one by \cite[2.8.17, 2.9.8]{MR41:1976}.
\end{miniremark}
\begin{lemma} \label{lemma:push_on_product}
	Suppose $\adim \in \nat$, $\mu$ is a Radon measure over an open subset
	$U$ of $\rel^\adim$, $J$ is an open subset of $\rel$, $f$ is a $\mu$
	measurable real valued function, $A = f^{-1} \lIm J \rIm$, $F$ is a
	$\mu$ measurable $\Hom ( \rel^\adim, \rel )$ valued function with
	\begin{gather*}
		\tint{K \cap \{ x \with f (x) \in I \}}{} |F| \ud \mu <
		\infty
	\end{gather*}
	whenever $K$ is a compact subset of $U$ and $I$ is a compact subset of
	$J$, and $T \in \mathscr{D}' ( U \times J, \rel^\adim )$ satisfies
	\begin{gather*}
		T ( \phi ) = \tint A{} \left < \phi (x,f(x)), F(x) \right >
		\ud \mu x \quad \text{for $\phi \in \mathscr{D} ( U \times
		J, \rel^\adim )$}
	\end{gather*}

	Then $T$ is representable by integration and
	\begin{gather*}
		T ( \phi ) = \tint A{} \left < \phi (x,f(x)), F(x) \right >
		\ud \mu x, \quad \tint{}{} g \ud \| T \| = \tint A{} g(x,f(x))
		| F(x)| \ud \mu x
	\end{gather*}
	whenever $\phi \in \Lp{1} ( \| T \|, \rel^\adim )$ and $g$ is an
	$\overline{\rel}$ valued $\| T \|$ integrable function.
\end{lemma}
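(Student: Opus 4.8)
The plan is to identify the variation measure $\| T \|$ with an explicit image measure along the graph of $f$. Define $G : A \to U \times J$ by $G(x) = (x,f(x))$; since $f$ is $\mu$ measurable, $G$ is $\mu$ measurable and univalent, and I would let $\nu$ be the image under $G$ of the measure $|F| \, (\mu \restrict A)$, so that $\nu (S) = \int_A \id{S}(G(x)) |F(x)| \ud \mu x$ for Borel $S \subset U \times J$. The first step is to check that $\nu$ is a Radon measure over $U \times J$: Borel regularity reduces, via Lusin's theorem applied to $G$, to image measures of finite Borel regular measures under continuous maps on compact sets; and $\nu (C) < \infty$ for compact $C$ because any such $C$ lies in a product $K \times I$ with $K$ compact in $U$ and $I$ compact in $J$, so that $\{ x \with G(x) \in C \} \subset K \cap f^{-1} \lIm I \rIm$ and the integrability hypothesis on $F$ applies; in particular $\nu$ is finite on compact subsets of $U \times J$.

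The second step is to prove $\| T \| = \nu$. For open $W \subset U \times J$ the bound $|\left< \phi(G(x)), F(x) \right>| \leq |\phi(G(x))|\,|F(x)|$, taken over $\phi \in \mathscr{D}(U \times J, \rel^\adim)$ with $\spt \phi \subset W$ and $|\phi| \leq 1$, gives $\| T \| (W) \leq \nu(W)$; consequently $\| T \|$ is finite on compact sets, hence, being Borel regular by \ref{def:variation_measure}, a Radon measure, so that $T$ is representable by integration by \ref{remark:allard_vs_federer}. For the reverse bound, fix open $W$ with $\nu(W) < \infty$ and $\varepsilon > 0$, set $B = A \cap \{ x \with G(x) \in W \}$, and choose a $\mu$ measurable $\rel^\adim$ valued $\omega$ with $|\omega(x)| = 1$ and $\left< \omega(x), F(x) \right> = |F(x)|$ whenever $F(x) \neq 0$. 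Applying Lusin's theorem to $\omega|B$ and $f|B$ and using $\int_B |F| \ud \mu < \infty$, select a compact $C \subset B$ with $\int_{B \without C} |F| \ud \mu \leq \varepsilon$ on which $G$ and $\omega$ are continuous; then $G|C$ is a homeomorphism onto the compact set $G \lIm C \rIm \subset W$, so $\omega \circ (G|C)^{-1}$ is continuous $G \lIm C \rIm \to \rel^\adim$ with values in the unit sphere. Extending it by Tietze to a continuous map into the closed unit ball, multiplying by a cut-off in $\mathscr{D}(W,\rel)$ equal to $1$ on $G \lIm C \rIm$, and mollifying with small enough parameter produces $\phi \in \mathscr{D}(U \times J, \rel^\adim)$ with $|\phi| \leq 1$, $\spt \phi \subset W$, and $\delta := \sup \{ |\phi(G(x)) - \omega(x)| \with x \in C \}$ as small as wished. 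Since $\spt \phi \subset W$ forces $A \cap \{ x \with \phi(G(x)) \neq 0 \} \subset B$, splitting the integral defining $T(\phi)$ over $C$ and $A \without C$ yields $T(\phi) \geq (1-\delta) \int_C |F| \ud \mu - \varepsilon \geq (1-\delta)(\nu(W)-\varepsilon) - \varepsilon$; letting $\delta \to 0+$ and then $\varepsilon \to 0+$ gives $\| T \|(W) \geq \nu(W)$. As $\| T \|$ and $\nu$ are Radon measures over $U \times J$ agreeing on open sets, they coincide.

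Granting this, the stated formulas follow. The second is the integration formula for the image measure $\nu = \| T \|$, namely $\int g \ud \| T \| = \int_A g(G(x)) |F(x)| \ud \mu x$, valid first for nonnegative $g$ and hence for $\| T \|$ integrable $g$. For the first, the map $\phi \mapsto \int_A \left< \phi(x,f(x)), F(x) \right> \ud \mu x$ is defined on all of $\Lp{1}(\| T \|, \rel^\adim)$ and is $1$ Lipschitzian there because $\int_A |\phi(G(x))|\,|F(x)| \ud \mu x = \int |\phi| \ud \nu = \Lpnorm{\| T \|}{1}{\phi}$; since it agrees on $\mathscr{D}(U \times J, \rel^\adim)$ with the functional $T$, which is $\Lp{1}(\| T \|, \rel^\adim)$ continuous by \ref{remark:allard_vs_federer}, and $\mathscr{D}(U \times J, \rel^\adim)$ is dense in $\Lp{1}(\| T \|, \rel^\adim)$, the two agree throughout $\Lp{1}(\| T \|, \rel^\adim)$.

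The main obstacle is the reverse inequality $\nu(W) \leq \| T \|(W)$: one has to manufacture smooth, compactly supported test functions that track the direction field $F/|F|$ along the merely $\mu$ measurable graph of $f$. This is exactly why Lusin's theorem (which restores continuity of $f$ and of $\omega$ on a large compact set, turning the graph into a topological embedding), Tietze's extension theorem, and mollification all have to be combined; the remaining steps are routine measure theory.
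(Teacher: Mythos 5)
Your proof is correct and follows essentially the same route as the paper's: both identify $\| T \|$ with the image under the graph map $G(x)=(x,f(x))$ of the measure $|F|\,(\mu\restrict A)$. The only real difference is procedural. The paper, having written $T(\phi) = \int \left< \phi, |F\circ p|^{-1} F\circ p \right> \ud G_\#(\nu|\mathbf{2}^A)$ with the integrand a unit vector field $G_\#(\nu|\mathbf{2}^A)$ almost everywhere, obtains $\| T \| = G_\#(\nu|\mathbf{2}^A)$ at once from Federer's characterisation of the total variation of a distribution representable by integration, together with the image-measure integration formula \cite[2.4.18\,(1)]{MR41:1976}. You instead re-derive the reverse inequality $\nu(W) \leq \| T \|(W)$ from scratch by choosing a measurable unit selection $\omega$ of $F/|F|$, applying Lusin's theorem to $G$ and $\omega$ to obtain a compact set $C$ of almost full $\nu$-measure on which the graph is a topological embedding, transporting $\omega$ to $G\lIm C\rIm$, and extending by Tietze, cutting off, and mollifying to manufacture admissible test functions. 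This is sound, and your remark about why $\nu(W)=\infty$ causes no trouble (inner regularity plus the already-proved case of finite measure) closes the one potential gap. Both arguments buy the same conclusion; yours is somewhat more self-contained, at the cost of re-proving a fact the paper simply cites.
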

\begin{proof}
	Define $p : U \times J \to U$ by
	\begin{gather*}
		p (x,y) = x \quad \text{for $(x,y) \in U \times J$}.
	\end{gather*}
	Define the measure $\nu$ over $U$ by $\nu (B) = \tint{A \cap B}{\ast}
	|F| \ud \mu$ for $B \subset U$. Let $G : A \to U \times J$ be defined
	by $G(x) = (x,f(x))$ for $x \in A$. Employing \cite[2.2.2, 2.2.3,
	2.4.10]{MR41:1976} yields that $\nu \restrict \{ x \with f(x) \in I
	\}$ is a Radon measure whenever $I$ is a compact subset of $J$, hence
	so is the measure $G_\# (\nu|\mathbf{2}^A)$ over $U \times J$ by
	\cite[2.2.2, 2.2.3, 2.2.17, 2.3.5]{MR41:1976}. One deduces that
	\begin{gather*}
		T(\phi) = \tint{}{} \left < \phi, |F \circ p |^{-1} F \circ p
		\right > \ud G_\# (\nu|\mathbf{2}^A) \quad \text{for $\phi \in
		\mathscr{D} (U \times J, \rel^\adim)$};
	\end{gather*}
	in fact, $F | A = F \circ p \circ G$, hence both sides equal
	$\tint{}{} \left < \phi \circ G, |F|^{-1} F \right > \ud \nu$ by
	\cite[2.4.10, 2.4.18\,(1)]{MR41:1976}. Consequently, $\| T \| = G_\# (
	\nu|\mathbf{2}^A)$ and the conclusion follows by means of \cite[2.2.2,
	2.2.3, 2.4.10, 2.4.18\,(1)]{MR41:1976}.
\end{proof}
\begin{theorem} \label{thm:distribution_on_product}
	Suppose $U$ is an open subset of $\rel^\adim$, $J$ is an open subset
	of $\rel$, $Z$ is a separable Banach space, $T \in \mathscr{D}' ( U
	\times J, Z)$ is representable by integration, $R_\theta \in
	\mathscr{D}' ( J, \rel)$ satisfy
	\begin{gather*}
		R_\theta ( \omega ) = T_{(x,y)} ( \omega (y) \theta (x) )
		\quad \text{whenever $\omega \in \mathscr{D} ( J, \rel )$ and
		$\theta \in \mathscr{D} (U,Z )$}.
	\end{gather*}
	and $S(y) : \mathscr{D} ( U, Z ) \to \rel$ satisfy, see
	\ref{miniremark:distrib_on_r},
	\begin{gather*}
		S(y)( \theta ) = \lim_{\varepsilon \to 0+} \varepsilon^{-1}
		R_\theta ( i_{y,\varepsilon} ) \in \rel \quad \text{for
		$\theta \in \mathscr{D} (U, Z )$}
	\end{gather*}
	whenever $y \in J$, that is $y \in \dmn S$ if and only if the limit
	exists and belongs to $\rel$ for $\theta \in \mathscr{D} ( U, Z )$.

	Then $S$ is an $\mathscr{L}^1 \restrict J$ measurable function with
	respect to the weak topology on $\mathscr{D}'(U,Z)$ and the following
	two statements hold.
	\begin{enumerate}
		\item \label{item:distribution_on_product:inequality} If $g$
		is an $\{ u \with 0 \leq u \leq \infty \}$ valued $\| T \|$
		measurable function, then
		\begin{gather*}
			\tint{J}{} \tint{}{} g(x,y) \ud \| S(y) \| x \ud
			\mathscr{L}^1 y \leq \tint{}{} g \ud \| T \|.
		\end{gather*}
		\item \label{item:distribution_on_product:absolute} If $\|
		R_\theta \|$ is absolutely continuous with respect to
		$\mathscr{L}^1 | \mathbf{2}^J$ for $\theta \in
		\mathscr{D}(U,Z)$, then
		\begin{gather*}
			T ( \phi ) = \tint{J}{} S(y) ( \phi (\cdot, y ) ) \ud
			\mathscr{L}^1 y, \quad \tint{J}{} g \ud \| T \| =
			\tint{J}{} \tint{}{} g (x,y) \ud \| S(y) \| x \ud
			\mathscr{L}^1 y.
		\end{gather*}
		whenever $\phi \in \Lp{1} ( \| T \|, Z )$ and $g$ is
		an $\overline{\rel}$ valued $\| T \|$ integrable function.
	\end{enumerate}
\end{theorem}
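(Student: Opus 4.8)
The plan is to reduce everything to the one–variable disintegration result of \ref{miniremark:distrib_on_r} applied componentwise (in $\theta$), together with the monotonicity property of $\|\cdot\|$ from \ref{def:variation_measure}. First I would fix a countable set $D\subset\mathscr{D}(U,Z)$ that is dense in $\mathscr{D}(U,Z)$ and closed under rational linear combinations; such a $D$ exists by \ref{remark:lusin} and \ref{example:distrib_lusin} since $\mathscr{D}(U,Z)$ is a Lusin space. For each $\theta\in D$, \ref{miniremark:distrib_on_r} produces the limit $S(y)(\theta)=\lim_{\varepsilon\to0+}\varepsilon^{-1}R_\theta(i_{y,\varepsilon})$ for $\mathscr{L}^1$ almost all $y\in J$, so off an $\mathscr{L}^1$ null set $N\subset J$ the map $y\mapsto S(y)(\theta)$ is defined and $\mathscr{L}^1\restrict J$ measurable for every $\theta\in D$ simultaneously. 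Linearity of $S(y)$ on $D$ for $y\notin N$ is immediate from linearity of $\theta\mapsto R_\theta$ and of the limit operation; to obtain measurability of $S$ as a map into $\mathscr{D}'(U,Z)$ with the weak topology I would invoke the characterisation of the Borel $\sigma$-algebra of $\mathscr{D}'(U,Z)$ in \ref{example:distrib_lusin}: it is generated by the sets $\{S: S(\theta)<t\}$ with $\theta\in\mathscr{D}(U,Z)$, and by density of $D$ and continuity of each $S(y)$ these may be tested on $\theta\in D$ only, whence $S$ is Borel measurable once each $y\mapsto S(y)(\theta)$, $\theta\in D$, is. (Continuity of $S(y)$, i.e. that $S(y)\in\mathscr{D}'(U,Z)$, comes from part~\eqref{item:distribution_on_product:inequality}, which bounds $|S(y)(\theta)|$ by $\|S(y)\|(|\theta|)$ and hence controls it on each $\mathscr{D}_K(U,Z)$.)

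For part~\eqref{item:distribution_on_product:inequality} I would first treat $g$ of the special form $g(x,y)=\psi(x)\chi_B(y)$ with $\psi\in\mathscr{D}(U,\rel)$ nonnegative, $\psi\le1$, and $B$ a bounded open subinterval of $J$: choosing $\theta\in\mathscr{D}(U,Z)$ with $|\theta(x)|\le\psi(x)$ and estimating $R_\theta$ via the definition of $\|R_\theta\|$ and of $\|T\|$, then passing $\varepsilon\to0+$ using \ref{miniremark:distrib_on_r} and a Vitali/Besicovitch covering of $B$ by the intervals $\{\upsilon: y<\upsilon\le y+\varepsilon\}$, yields $\int_B\|S(y)\|(\psi)\,\mathrm{d}\mathscr{L}^1 y\le\|T\|(\psi\otimes\chi_B)$. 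Supremizing over admissible $\theta$ gives the same with $\|S(y)\|(\psi)$ replaced by the open-set value of $\|S(y)\|$. From here a standard measure-theoretic bootstrap — monotone convergence, then the definition of $\|S(y)\|$ as the largest Borel regular measure with the prescribed values on open sets (\ref{def:variation_measure}), then approximation of general nonnegative $\|T\|$ measurable $g$ from below by simple functions built from products of open sets — upgrades this to the stated inequality $\int_J\int g(x,y)\,\mathrm{d}\|S(y)\|x\,\mathrm{d}\mathscr{L}^1 y\le\int g\,\mathrm{d}\|T\|$. This also shows $\|S(y)\|$ is a Radon measure for $\mathscr{L}^1$ almost all $y$ and that $y\mapsto\|S(y)\|(h)$ is measurable for $h\in\mathscr{K}(U)$, which is what makes the iterated integral meaningful.

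For part~\eqref{item:distribution_on_product:absolute}, the absolute continuity hypothesis lets me apply the \emph{full} conclusion of \ref{miniremark:distrib_on_r}: for each $\theta\in\mathscr{D}(U,Z)$ there is $k_\theta\in\Lploc1(\mathscr{L}^1|\mathbf{2}^J)$ with $R_\theta(\omega)=\int_J\omega k_\theta\,\mathrm{d}\mathscr{L}^1$ and $k_\theta(y)=S(y)(\theta)$ a.e.\ $y$. Hence for $\phi=\sum_i\omega_i\otimes\theta_i$ a finite sum of tensors ($\omega_i\in\mathscr{D}(J,\rel)$, $\theta_i\in\mathscr{D}(U,Z)$), $T(\phi)=\sum_iR_{\theta_i}(\omega_i)=\sum_i\int_J\omega_i(y)S(y)(\theta_i)\,\mathrm{d}\mathscr{L}^1 y=\int_J S(y)(\phi(\cdot,y))\,\mathrm{d}\mathscr{L}^1 y$. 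By \ref{miniremark:distrib_on_products} such tensors are sequentially dense in $\mathscr{D}(U\times J,Z)$, and convergence there is convergence in some $\mathscr{D}_{K\times I}$; the inequality of part~\eqref{item:distribution_on_product:inequality} (with $g=|\phi_j-\phi|$) controls the passage to the limit on both sides, giving $T(\phi)=\int_J S(y)(\phi(\cdot,y))\,\mathrm{d}\mathscr{L}^1 y$ first for $\phi\in\mathscr{D}(U\times J,Z)$ and then, by the usual $\|T\|_{(1)}$-density argument (\ref{remark:allard_vs_federer}), for $\phi\in\Lp1(\|T\|,Z)$. The identity $\int_J g\,\mathrm{d}\|T\|=\int_J\int g(x,y)\,\mathrm{d}\|S(y)\|x\,\mathrm{d}\mathscr{L}^1 y$ for $\|T\|$ integrable $g$ then follows: ``$\le$'' is part~\eqref{item:distribution_on_product:inequality}; for the reverse I would test the open-set value of $\|T\|$ on $G\subset U\times J$ against $\phi$ with $|\phi|\le1$ supported in $G$, disintegrate $T(\phi)$ by the formula just proved, bound $|S(y)(\phi(\cdot,y))|$ by $\|S(y)\|(\{x:(x,y)\in G\})$, and integrate — giving $\|T\|(G)\le\int_J\|S(y)\|(G^y)\,\mathrm{d}\mathscr{L}^1 y$; maximality of $\|T\|$ (\ref{def:variation_measure}) and a monotone class / approximation argument then force equality of the two measures on all $\|T\|$ measurable sets. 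The main obstacle I anticipate is the bookkeeping in part~\eqref{item:distribution_on_product:inequality}: passing from product-form $g$ to arbitrary nonnegative $\|T\|$ measurable $g$ requires care, because $\|S(y)\|$ is only defined through its open-set values and one must check that the ``fiberwise'' measures assemble into something that genuinely bounds $\|T\|$ from the correct side — this is exactly where the maximality clause in \ref{def:variation_measure} and the Besicovitch covering of the half-open intervals in \ref{miniremark:distrib_on_r} do the real work.
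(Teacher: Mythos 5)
Your plan follows the paper's route in its main lines: measurability via the Lusin-space structure of \ref{example:distrib_lusin}; the inequality in \eqref{item:distribution_on_product:inequality} via differentiation of measures and covering arguments; and \eqref{item:distribution_on_product:absolute} by applying \ref{miniremark:distrib_on_r} to the $R_\theta$ on tensor products and passing to the limit using \eqref{item:distribution_on_product:inequality}. Two steps nevertheless contain genuine gaps. First, you invoke \ref{miniremark:distrib_on_r} both for the measurability of $S$ and in the proof of \eqref{item:distribution_on_product:inequality}, but that remark \emph{requires} $\| R_\theta \|$ to be absolutely continuous with respect to $\mathscr{L}^1 | \mathbf{2}^J$, which is not among the hypotheses at that stage --- it is exactly the additional hypothesis of \eqref{item:distribution_on_product:absolute}. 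The facts you actually need (a.e.\ existence and Borel measurability of the one-sided derivates $y \mapsto \lim_{\varepsilon \to 0+} \varepsilon^{-1} R_\theta ( i_{y,\varepsilon} )$, and the Vitali-type estimate relating the derivate of a pushed-forward measure to that measure) do hold without absolute continuity, but they come from Federer's differentiation theory with respect to a Vitali relation, which the paper uses via \cite[2.4.10, 2.8.17, 2.9.7, 2.9.19]{MR41:1976}.

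Second, and more seriously, your argument for $S(y) \in \mathscr{D}' ( U, Z )$ is circular. You write that continuity of $S(y)$ ``comes from part \eqref{item:distribution_on_product:inequality}, which bounds $| S(y)(\theta) |$ by $\| S(y) \| ( |\theta| )$.'' By \ref{remark:allard_vs_federer}, the inequality $| S ( \theta ) | \leq \| S \| ( | \theta | )$ is \emph{not} automatic for a linear map $S : \mathscr{D} (U,Z) \to \rel$; it is equivalent to $S$ being representable by integration, i.e.\ precisely what needs proving. The same issue infects the ``supremizing over admissible $\theta$'' step in your proof of \eqref{item:distribution_on_product:inequality}, which passes a pointwise supremum over $\theta$ under the $\mathscr{L}^1$ integral without justification. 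The way out is the one the paper takes: bound $\| S(y) \| ( f )$ directly, for $\mathscr{L}^1$ almost all $y$, by the derivate $\mathbf{D} ( q_\# \beta, \mathscr{L}^1 | \mathbf{2}^J, V, y )$ of $\beta ( B ) = \tint{B}{\ast} f \circ p \ud \| T \|$ with respect to the Vitali relation of compact intervals, using only that each $R_\theta$ inherits representability by integration from $T$; both the representability of $S(y)$ (hence $S(y) \in \mathscr{D}'(U,Z)$, hence measurability of $S$) and the inequality in \eqref{item:distribution_on_product:inequality} then follow from this single bound, with no appeal to the desired conclusion.
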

\begin{proof}
	Define $p : U \times J \to \rel$ and $q : U \times J \to U$ by
	\begin{gather*}
		p (x,y) = x \quad \text{and} \quad q (x,y) = y \quad \text{for
		$(x,y) \in U \times J$}.
	\end{gather*}
	First, one derives that $S$ is an $\mathscr{L}^1 \restrict J$
	measurable function with values in
	\begin{gather*}
		\mathscr{D}' (U,Z) \cap \{ \Sigma \with \text{$\Sigma$ is
		representable by integration} \},
	\end{gather*}
	where the weak topology on $\mathscr{D}'(U,Z)$ is employed; in fact,
	choosing countable, sequentially dense subsets $C$ and $D$ of
	$\mathscr{K} (U)^+$ and $\mathscr{D}(U,Z)$ respectively (see
	\ref{remark:lusin}, \ref{example:kx_lusin}, and
	\ref{example:distrib_lusin}) and noting that $S(y)$ belongs to set in
	question whenever
	\begin{gather*}
		\lim_{\varepsilon \to 0+} \varepsilon^{-1} \| T \| ( ( f \circ
		p ) ( i_{y,\varepsilon} \circ q ) ) \in \rel, \quad
		\lim_{\varepsilon \to 0+} \varepsilon^{-1} R_\theta
		(i_{y,\varepsilon}) \in \rel
	\end{gather*}
	for $f \in C$ and $\theta \in D$ by means of \ref{remark:ind-limit},
	the assertion follows from \cite[2.9.19]{MR41:1976} and
	\ref{example:distrib_lusin}.

	In order to prove \eqref{item:distribution_on_product:inequality}, one
	may assume $g \in \mathscr{K} (U \times J)^+$. Suppose $f \in
	\mathscr{K} (U)^+$, $h \in \mathscr{K} ( J)^+$, and $g = ( f \circ
	p ) ( h \circ q )$ and $\beta$ denotes the Radon measure over $U \times
	J$ defined by $\beta (B) = \tint{B}{\ast} f\circ p \ud \| T \|$ for
	$B \subset U \times J$. Noting
	\begin{align*}
		& \| S (y) \| ( f ) \leq \mathbf{D} ( q_\# \beta,
		\mathscr{L}^1|\mathbf{2}^J, V, y ) \quad \text{for
		$\mathscr{L}^1$ almost all $y \in J$}, \\
		& \qquad \text{where $V = \{ (b,I) \with \text{$b \in I
		\subset J$ and $I$ is a compact interval} \}$}
	\end{align*}
	and employing \cite[2.4.10, 2.8.17, 2.9.7]{MR41:1976} and
	\ref{example:distrib_lusin}, one infers that
	\begin{align*}
		\tint{J}{} \| S (y) \| ( f ) h(y) \ud \mathscr{L}^1 y & \leq
		\tint{J}{} \mathbf{D} ( q_\# \beta, \mathscr{L}^1|\mathbf{2}^J,
		V, y ) h (y) \ud \mathscr{L}^1 y \\
		& \leq ( q_\# \beta ) ( h ) = \| T \| (g).
	\end{align*}
	An arbitrary $g \in \mathscr{K} (U \times J)^+$ may be approximated
	by a sequence of functions which are nonnegative linear combinations
	of functions of the previously considered type, compare \cite[4.1.2,
	4.1.3]{MR41:1976}.

	To prove the first equation in
	\eqref{item:distribution_on_product:absolute}, it is sufficient to
	exhibit a sequentially dense subset $E$ of $\mathscr{D} (U \times
	J, Z )$ such that
	\begin{gather*}
		T ( \phi ) = \tint{J}{} S(y) (\phi(\cdot,y)) \ud \mathscr{L}^1
		y \quad \text{whenever $\phi \in E$}
	\end{gather*}
	by \eqref{item:distribution_on_product:inequality} and
	\ref{remark:ind-limit}. If $\theta \in \mathscr{D} (U,Z)$, then
	\begin{gather*}
		T_{(x,y)} ( \omega (y) \theta (x) ) = R_\theta ( \omega ) =
		\tint{J}{} S(y)_x ( \omega(y) \theta (x) ) \ud \mathscr{L}^1 y
	\end{gather*}
	for $\omega \in \mathscr{D} ( J, \rel )$ by
	\ref{miniremark:distrib_on_r}. One may now take $E$ to be the image of
	$\mathscr{D} ( J, \rel ) \otimes \mathscr{D} (U,Z)$ in $\mathscr{D}
	( U \times J, Z )$, see \ref{miniremark:distrib_on_products}.

	The second equation in \eqref{item:distribution_on_product:absolute}
	follows from \eqref{item:distribution_on_product:inequality} and the
	first equation in \eqref{item:distribution_on_product:absolute}.
\end{proof}
\section{Monotonicity identity} \label{sec:monotonicity}
The purpose of this section is to derive the modifications and consequences of
the monotonicity identity which will be employed in \ref{thm:decomposition},
\ref{thm:zero_derivative}, \ref{thm:mod_continuity}, and
\ref{thm:conn_path_finite_length}.
\begin{lemma} \label{lemma:calculus}
	Suppose $1 \leq \vdim < \infty$, $0 \leq s < r < \infty$, $0 \leq
	\kappa < \infty$, $I = \{ t \with s < t \leq r \}$, and $f : I \to \{
	y : 0 < y < \infty \}$ is a function satisfying
	\begin{gather*}
		\limsup_{u \to t-} f(u) \leq f (t) \leq f(r) + \kappa
		f(r)^{1/\vdim} \tint{t}{r} u^{-1} f (u)^{1-1/\vdim} \ud
		\mathscr{L}^1 u \quad \text{for $t \in I$}.
	\end{gather*}

	Then there holds
	\begin{gather*}
		f (t) \leq \big ( 1 + \vdim^{-1} \kappa \log ( r/t ) \big
		)^\vdim f (r) \quad \text{for $t \in I$}.
	\end{gather*}
\end{lemma}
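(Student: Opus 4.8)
The plan is to reduce the integral inequality to a differential inequality for an auxiliary function and then integrate it explicitly. Set $c = f(r)$ and $\lambda = \vdim^{-1}\kappa$, and for $t \in I$ define
\begin{gather*}
	g(t) = f(r) + \kappa f(r)^{1/\vdim} \tint{t}{r} u^{-1} f(u)^{1-1/\vdim} \ud \mathscr{L}^1 u,
\end{gather*}
so that the hypothesis reads $\limsup_{u \to t-} f(u) \leq f(t) \leq g(t)$ for $t \in I$ and also $f \leq g$ pointwise on $I$; note $g(r) = f(r)$ and $g$ is nonincreasing. The first step is to observe that $g$ is locally Lipschitzian on $I$ (since $u^{-1} f(u)^{1-1/\vdim}$ is $\mathscr{L}^1$ locally bounded, using $f \leq g$ and $g$ bounded on compact subintervals), hence differentiable $\mathscr{L}^1$ almost everywhere with
\begin{gather*}
	-g'(t) = \kappa f(r)^{1/\vdim} t^{-1} f(t)^{1-1/\vdim} \leq \kappa f(r)^{1/\vdim} t^{-1} g(t)^{1-1/\vdim}
\end{gather*}
for $\mathscr{L}^1$ almost all $t \in I$, where the inequality uses $f(t) \leq g(t)$ together with $1 - 1/\vdim \geq 0$.

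The second step is to integrate this differential inequality. Consider $h(t) = g(t)^{1/\vdim}$; since $g$ is locally Lipschitzian and bounded away from $0$ on compact subintervals of $I$ (indeed $g \geq f(r) > 0$), $h$ is locally Lipschitzian with
\begin{gather*}
	-h'(t) = -\vdim^{-1} g(t)^{1/\vdim - 1} g'(t) \leq \vdim^{-1} \kappa f(r)^{1/\vdim} t^{-1}
\end{gather*}
for $\mathscr{L}^1$ almost all $t \in I$. Integrating from $t$ to $r$ and using the fundamental theorem of calculus for locally Lipschitzian functions (see \cite[2.9.19]{MR41:1976}), together with $h(r) = f(r)^{1/\vdim}$, yields
\begin{gather*}
	g(t)^{1/\vdim} = h(t) \leq f(r)^{1/\vdim} + \vdim^{-1} \kappa f(r)^{1/\vdim} \log(r/t) = f(r)^{1/\vdim} \big( 1 + \vdim^{-1} \kappa \log(r/t) \big)
\end{gather*}
for $t \in I$. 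Raising to the power $\vdim$ gives $g(t) \leq \big(1 + \vdim^{-1}\kappa \log(r/t)\big)^\vdim f(r)$, and since $f(t) \leq g(t)$ the conclusion follows.

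The main point requiring care — and the only place where the stated hypothesis $\limsup_{u\to t-} f(u) \leq f(t)$ is genuinely used rather than just $f \leq g$ — is justifying that $u \mapsto u^{-1} f(u)^{1-1/\vdim}$ is $\mathscr{L}^1$ measurable and locally integrable on $I$, so that $g$ is well-defined and the manipulations above are legitimate; the upper semicontinuity from the left guarantees $f$ is $\mathscr{L}^1$ measurable, and the bound $f \leq g$ with $g$ locally bounded then gives local integrability by a bootstrap (first on a subinterval near $r$ where $g$ is manifestly bounded, then extending leftward). If one prefers to avoid this bootstrap, one can instead fix $s < t_0 < r$, work on $[t_0, r]$ where an a priori bound on $g$ is immediate, obtain the estimate there, and then let $t_0 \to s+$; the explicit form of the bound is independent of $t_0$. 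A minor alternative to the $h = g^{1/\vdim}$ substitution is to apply the integral form of Grönwall's inequality directly to $\phi(t) = g(t)^{1/\vdim}$ after noting $-\phi'(t) \le \vdim^{-1}\kappa t^{-1} f(r)^{1/\vdim}$ is not even of Grönwall type — it is simply an explicit bound — so direct integration as above is the cleanest route.
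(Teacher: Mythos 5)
Your argument is correct in substance but takes a genuinely different route from the paper. The paper argues by continuous induction: for any $y > f(r)$ it sets $\upsilon = \vdim^{-1}\kappa f(r)^{1/\vdim}y^{-1/\vdim}$ and considers the set $J$ of $t \in I$ such that $f(u) \leq (1 + \upsilon\log(r/u))^\vdim y$ whenever $t \leq u \leq r$; it shows $J$ is relatively open and closed in $I$ and contains $r$, hence $J = I$, and the claim follows by letting $y \downarrow f(r)$. You instead pass to the majorant $g$, derive the differential inequality $-g' \leq \kappa f(r)^{1/\vdim} t^{-1} g^{1-1/\vdim}$, linearize via $h = g^{1/\vdim}$, and integrate; this exposes the Gr{\"o}nwall-type structure behind the estimate. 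The trade-off is that your route needs $g$ finite and locally Lipschitz on all of $I$ before the differential inequality can be integrated, whereas the paper's scheme never differentiates or integrates $g$: it inserts the pointwise bound on $f(u)$, available for $u$ to the right of $t$ because $t \in \Clos J$, directly into the integral inequality, so that the finiteness of $\int_t^r u^{-1} f^{1-1/\vdim}\ud\mathscr{L}^1$ is a by-product of the connectedness argument rather than a precondition. The slack $y > f(r)$ is exactly what makes relative openness go through: it gives strict inequality at $t$, and $\limsup_{u\to t-}f(u)\leq f(t)$ then yields a margin to the left of $t$.

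You correctly identify the bootstrap as the load-bearing technical step of your route and as the only place the one-sided semicontinuity hypothesis is genuinely used. It is worth writing out: with $\tau = \inf\{t \in I \with g(t) < \infty\}$, your ODE estimate applied on $(\tau,r]$ together with monotone convergence gives $g(\tau) < \infty$; then $\limsup_{u\to\tau-}f(u) \leq f(\tau) \leq g(\tau)$ makes $f$ bounded on some $(\tau-\delta,\tau)$, hence $g(\tau-\delta) < \infty$, forcing $\tau = s$. Your suggested ``alternative to avoid the bootstrap'' does not in fact avoid it: on $[t_0,r]$ there is no a priori bound on $g$, since restricting the parameter range changes neither the integral $\int_{t_0}^r u^{-1} f^{1-1/\vdim}\ud\mathscr{L}^1$ nor the question of its finiteness, and the hypothesis $f\leq g$ is vacuous wherever $g = \infty$. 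What lets the paper sidestep the issue is not restriction to a subinterval but the relaxation $y > f(r)$ combined with the continuous-induction scheme.
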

\begin{proof}
	Suppose $f (r) < y < \infty$ and $\upsilon = \vdim^{-1} \kappa
	f(r)^{1/\vdim} y^{-1/\vdim}$ and consider the set $J$ of all $t \in I$
	such that
	\begin{gather*}
		f (u) \leq ( 1 + \upsilon \log ( r/u ) )^\vdim y \quad
		\text{whenever $t \leq u \leq r$}.
	\end{gather*}
	Clearly, $J$ is an interval and $r$ belongs to the interior of $J$
	relative to $I$. The same holds for $t$ with $s < t \in \Clos J$
	since
	\begin{gather*}
		\begin{aligned}
			f ( t ) & \leq f ( r ) + \kappa f(r)^{1/\vdim}
			y^{1-1/\vdim} \tint{t}{r} u^{-1} ( 1 + \upsilon \log (
			r/u))^{\vdim-1} \ud \mathscr{L}^1 u \\
			& = f ( r ) + \big ( ( 1 + \upsilon \log ( r/t))^\vdim
			- 1 \big ) y < ( 1 + \upsilon \log ( r/t ))^\vdim y.
		\end{aligned}
	\end{gather*}
	Therefore $I$ equals $J$.
\end{proof}
\begin{theorem} \label{thm:monotonicity}
	Suppose $\vdim, \adim \in \nat$, $\vdim \leq \adim$, $a \in
	\rel^\adim$, $0 < r < \infty$, $V \in \Var_\vdim ( \oball{a}{r}
	)$, and $\varrho \in \mathscr{D} ( \{ s \with 0 < s < r \}, \rel )$.

	Then there holds
	\begin{multline*}
		- \tint{0}{r} \varrho'(s) s^{-\vdim} \measureball{\| V \|}{
		\cball{a}{s} } \ud \mathscr{L}^1 s \\
		\begin{aligned}
			& = \tint{( \oball{a}{r} \without \{ a \} ) \times
			\grass{\adim}{\vdim} }{} \varrho ( |x-a| )
			|x-a|^{-\vdim-2} | \perpproject{P} (x-a) |^2 \ud V
			(x,P) \\
			& \quad - ( \delta V )_x \left ( \big (
			\tint{|x-a|}{r} s^{-\vdim-1} \varrho (s) \ud
			\mathscr{L}^1 s \big ) (x-a) \right ).
		\end{aligned}
	\end{multline*}
\end{theorem}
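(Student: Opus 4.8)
The plan is to test the first variation of $V$ against a suitable radial vector field and then convert the resulting radial integrals into an integral of $s \mapsto s^{-\vdim} \measureball{\| V \|}{\cball{a}{s}}$ against $\varrho'$. Concretely, since $\spt \varrho$ is a compact subset of $\{ s \with 0 < s < r \}$, the function $\gamma$ given by $\gamma(t) = \tint{t}{r} s^{-\vdim-1} \varrho(s) \ud \mathscr{L}^1 s$ for $0 < t < r$ is of class $\infty$, vanishes for $t$ near $r$, and is constant for $t$ near $0$; hence the vector field $g$ with $g(x) = \gamma(|x-a|)(x-a)$ extends to a member of $\mathscr{D}(\oball{a}{r}, \rel^\adim)$, and it is exactly the vector field occurring in the asserted identity.

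The computation I would carry out first is that of the tangential divergence of $g$. From $Dg(x) = \gamma(|x-a|)\,\mathbf{1} + \gamma'(|x-a|)|x-a|^{-1}\,(x-a)\otimes(x-a)$, using $\gamma'(t) = -t^{-\vdim-1}\varrho(t)$ and $|\project{P}(x-a)|^2 = |x-a|^2 - |\perpproject{P}(x-a)|^2$, one obtains for each $P \in \grass{\adim}{\vdim}$ that the trace of $Dg(x)\circ\project{P}$ equals $\vdim\gamma(|x-a|) + |x-a|\gamma'(|x-a|) + \varrho(|x-a|)|x-a|^{-\vdim-2}|\perpproject{P}(x-a)|^2$. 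The first variation formula (see Allard \cite[\S 4]{MR0307015}) then writes $(\delta V)(g) = \tint{}{}(\text{trace of } Dg(x)\circ\project{P})\ud V(x,P)$ as the sum of the integral against $V$ of the $\perpproject{P}$ term — which is precisely the first summand on the right-hand side of the assertion, its integrand being supported where $\min\spt\varrho \leq |x-a| \leq \max\spt\varrho$, hence on a compact subset of $(\oball{a}{r}\without\{a\})\times\grass{\adim}{\vdim}$ — and the integral against $\| V \|$ of the purely radial part $\vdim\gamma(|x-a|) + |x-a|\gamma'(|x-a|)$.

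It then remains to identify $\tint{}{}\big(\vdim\gamma(|x-a|) + |x-a|\gamma'(|x-a|)\big)\ud\| V\| x$ with $\tint{0}{r}\varrho'(s)s^{-\vdim}\measureball{\| V\|}{\cball{a}{s}}\ud\mathscr{L}^1 s$. For this I would push $\| V \|$ forward under $x \mapsto |x-a|$ to a Radon measure $\mu$ over $[0,r)$ with distribution function $F(s) = \mu(\{t \with t \leq s\}) = \measureball{\| V \|}{\cball{a}{s}}$ (see \cite[2.4.10]{MR41:1976}), apply Fubini's theorem to get $\tint{}{}\vdim\gamma(|x-a|)\ud\| V\|x = \vdim\tint{0}{r}s^{-\vdim-1}\varrho(s)F(s)\ud\mathscr{L}^1 s$, and integrate by parts to get $\tint{}{}|x-a|\gamma'(|x-a|)\ud\| V\|x = -\tint{0}{r}t^{-\vdim}\varrho(t)\ud\mu t = -\vdim\tint{0}{r}s^{-\vdim-1}\varrho(s)F(s)\ud\mathscr{L}^1 s + \tint{0}{r}s^{-\vdim}\varrho'(s)F(s)\ud\mathscr{L}^1 s$ — legitimate since $t\mapsto t^{-\vdim}\varrho(t)$ is of class $1$ with compact support in $\{t\with 0 < t < r\}$. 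Adding these and cancelling yields the identity, and rearranging the first variation formula gives the theorem.

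I expect no serious obstacle: this is the classical derivation of the monotonicity identity, organised so as to keep $\varrho$ general. The points needing a little care are the verification that $g$ is of class $\infty$ across $a$ and compactly supported in $\oball{a}{r}$ (so that $g \in \mathscr{D}(\oball{a}{r},\rel^\adim)$ and the first variation formula applies), and the justification of Fubini's theorem and integration by parts against the Stieltjes measure $\mu$, for which one uses that the integrands are absolutely integrable, that $F$ is bounded and nondecreasing, and that $F$ is continuous outside a countable set, so the distinction between $\measureball{\| V\|}{\cball{a}{s}}$ and $\measureball{\| V\|}{\oball{a}{s}}$ is immaterial for $\mathscr{L}^1$ almost all $s$.
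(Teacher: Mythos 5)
Your proposal is correct and follows essentially the same route as the paper's proof: test $\delta V$ against the radial field $g(x)=\gamma(|x-a|)(x-a)$ with $\gamma(t)=\tint{t}{r}s^{-\vdim-1}\varrho(s)\ud\mathscr{L}^1 s$, decompose $\project{P}\bullet Dg$ using $|\project{P}(x-a)|^2 = |x-a|^2 - |\perpproject{P}(x-a)|^2$, and convert the purely radial part into $\tint{0}{r}\varrho'(s)s^{-\vdim}\measureball{\|V\|}{\cball{a}{s}}\ud\mathscr{L}^1 s$ via Fubini. The paper (working with $\omega=-\gamma$ and the field $\theta=-g$) streamlines the last step by observing that $s\mapsto s\omega'(s)+\vdim\omega(s)$ has derivative exactly $s^{-\vdim}\varrho'(s)$, whereas you treat the two radial summands separately by pushforward and Stieltjes integration by parts and let the cancellation happen at the end; the computations are the same in substance.
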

\begin{proof}
	Assume $a=0$ and let $I = \{ s \with - \infty < s < r \}$ and $J = \{
	s \with 0 < s < r \}$.
	
	If $\omega \in \mathscr{E} (I,\rel)$, $\sup \spt \omega < r$, $0
	\notin \spt \omega'$ and $\theta : \oball{a}{r} \to \rel^\adim$ is
	associated to $\omega$ by $\theta(x) = \omega ( |x| ) x$ for $x \in
	\oball{a}{r}$, then $D\theta(0) \bullet \project{P} = \vdim \omega(0)$
	and
	\begin{align*}
		D\theta(x) \bullet \project{P} & = | \project{P} (x) |^2
		|x|^{-1} \omega' (|x|) + \vdim \omega (|x|) \\
		& = - | \perpproject{P} (x) |^2 |x|^{-1} \omega'(|x|) + |x|
		\omega' (|x|) + \vdim \omega ( |x| )
	\end{align*}
	whenever $x \in \rel^\adim$, $0 < |x| < r$, and $P \in
	\grass{\adim}{\vdim}$. Define $\omega, \gamma \in \mathscr{E} (I,\rel)$
	by
	\begin{gather*}
		\omega (s) = - \tint{\sup \{ s, 0 \}}{r} u^{-\vdim-1} \varrho(u)
		\ud \mathscr{L}^1 u \quad \text{and} \quad \gamma (s) = s
		\omega'(s) + \vdim \omega(s)
	\end{gather*}
	for $s \in I$, hence $\sup \spt \gamma \leq \sup \spt \omega < r$, $0
	\notin \spt \omega'$, and
	\begin{gather*}
		\omega'(s) = s^{-\vdim-1} \varrho(s), \quad \omega''(s) = -
		(\vdim+1) s^{-\vdim-2} \varrho(s) + s^{-\vdim-1} \varrho'(s)
	\end{gather*}
	for $s \in J$. Using Fubini's theorem, one computes with $\theta$ as
	before that
	\begin{gather*}
		\begin{aligned}
			& \delta V (\theta) + \tint{( \rel^\adim \times
			\grass{\adim}{\vdim}) \cap \{ (x,P) \with 0 < |x| < r
			\}}{} \varrho (|x|) |x|^{-\vdim-2} | \perpproject{P} (x)
			|^2 \ud V (x,P) \\
			& \quad = \tint{}{} \gamma (|x|) \ud \| V \| x = -
			\tint{}{} \tint{|x|}{r} \gamma '(s) \ud \mathscr{L}^1 s
			\ud \| V \| x = - \tint{0}{r} \gamma'(s) \measureball{\|
			V \|}{\cball{a}{s}} \ud \mathscr{L}^1 s.
		\end{aligned}
	\end{gather*}
	Finally, notice that $\gamma'(s) = s \omega''(s) + ( \vdim+1) \omega'
	(s) = s^{-\vdim} \varrho'(s)$ for $s \in J$.
\end{proof}
\begin{remark}
	This is a slight generalisation of Simon's version of the monotonicity
	identity, see \cite[17.3]{MR756417}, included here for the convenience
	of the reader.
\end{remark}
\begin{miniremark} \label{miniremark:situation_general_varifold}
	Suppose $\vdim, \adim \in \nat$, $\vdim \leq \adim$, $U$ is an
	open subset of $\rel^\adim$, $V \in \Var_\vdim ( U )$, $\| \delta V
	\|$ is a Radon measure, and $\eta ( V, \cdot )$ is a $\| \delta V \|$
	measurable $\mathbf{S}^{\adim-1}$ valued function satisfying
	\begin{gather*}
		( \delta V ) (\theta) = \tint{}{} \eta ( V, x ) \bullet
		\theta(x) \ud \| \delta V \| x \quad \text{for $\theta \in
		\mathscr{D} ( U, \rel^\adim )$},
	\end{gather*}
	see Allard \cite[4.3]{MR0307015}.
\end{miniremark}
\begin{corollary} \label{corollary:monotonicity}
	Suppose $\vdim$, $\adim$, $U$, $V$, and $\eta$ are as in
	\ref{miniremark:situation_general_varifold}.

	Then there holds
	\begin{multline*}
		s^{-\vdim} \measureball{\| V \|}{ \cball as } + \tint{( \cball
		ar \without \cball as )\times \grass{\adim}{\vdim}}{} | x-a
		|^{-\vdim-2} | \perpproject{P} (x-a) |^2 \ud V(x,P) \\
		\begin{aligned}
			& = r^{-\vdim} \measureball{\| V \|}{ \cball ar } \\
			& \quad + \vdim^{-1} \tint{\cball{a}{r}}{} ( \sup \{
			|x-a|,s \}^{-\vdim}- r^{-\vdim}) (x-a) \bullet \eta
			(V,x) \ud \| \delta V \| x
		\end{aligned}
	\end{multline*}
	whenever $a \in \rel^\adim$, $0 < s \leq r < \infty$, and $\cball ar
	\subset U$.
\end{corollary}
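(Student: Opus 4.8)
The plan is to derive the identity by specialising the monotonicity identity of \ref{thm:monotonicity}, letting the test function $\varrho$ there approximate the characteristic function of the half-open interval $\{ t \with s < t \leq r \}$. One may assume $s < r$, since for $s = r$ both sides reduce to $r^{-\vdim} \| V \| ( \cball{a}{r} )$. As $\cball{a}{r}$ is a compact subset of the open set $U$, I would fix $\tilde{r}$ with $r < \tilde{r} < \infty$ and $\oball{a}{\tilde{r}} \subset U$, and apply \ref{thm:monotonicity} with $\tilde{r}$ in place of $r$ and with $W = V \restrict \oball{a}{\tilde{r}} \times \grass{\adim}{\vdim}$ in place of $V$; here $\| W \|$ agrees with $\| V \|$ on Borel subsets of $\oball{a}{\tilde{r}}$, and $( \delta W ) ( \theta ) = ( \delta V ) ( \theta )$ whenever $\theta \in \mathscr{D} ( \oball{a}{\tilde{r}}, \rel^\adim )$ (extending $\theta$ by zero), so that $\| \delta W \|$ is a Radon measure agreeing with $\| \delta V \|$ on $\oball{a}{\tilde{r}}$ and the term $( \delta V )_x ( \dots )$ occurring in \ref{thm:monotonicity} may be rewritten through $\eta ( V, \cdot )$ as in \ref{miniremark:situation_general_varifold}.

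Concretely, for $0 < \varepsilon < \min \{ \tfrac{1}{2} ( \tilde{r} - r ), r - s \}$ I would choose $\varrho_\varepsilon \in \mathscr{D} ( \{ t \with 0 < t < \tilde{r} \}, \rel )$ with $0 \leq \varrho_\varepsilon \leq 1$, with $\varrho_\varepsilon = 1$ on $\{ t \with s + \varepsilon \leq t \leq r \}$, with $\varrho_\varepsilon = 0$ on $\{ t \with 0 < t \leq s \}$ and on $\{ t \with r + \varepsilon \leq t < \tilde{r} \}$, nondecreasing on $\{ t \with s \leq t \leq s + \varepsilon \}$ and nonincreasing on $\{ t \with r \leq t \leq r + \varepsilon \}$, and then let $\varepsilon \to 0+$ termwise in \ref{thm:monotonicity}. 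The left-hand side $- \int_0^{\tilde{r}} \varrho_\varepsilon'(t) t^{-\vdim} \| V \| ( \cball{a}{t} ) \ud \mathscr{L}^1 t$ equals the $( - \varrho_\varepsilon' )$-weighted average of $t \mapsto t^{-\vdim} \| V \| ( \cball{a}{t} )$ over $\{ t \with r < t < r + \varepsilon \}$ minus its $\varrho_\varepsilon'$-weighted average over $\{ t \with s < t < s + \varepsilon \}$, because $\int_s^{s+\varepsilon} \varrho_\varepsilon' \ud \mathscr{L}^1 = 1$ and $\int_r^{r+\varepsilon} \varrho_\varepsilon' \ud \mathscr{L}^1 = - 1$; since $u \mapsto \| V \| ( \cball{a}{u} )$ is nondecreasing and right continuous on $\{ u \with u < \tilde{r} \}$ — continuity of $\| V \|$ from above along the decreasing family of compact sets $\cball{a}{u}$, each of finite $\| V \|$ measure as it lies in $U$ — the left-hand side tends to $r^{-\vdim} \| V \| ( \cball{a}{r} ) - s^{-\vdim} \| V \| ( \cball{a}{s} )$. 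In the curvature-free integral $\varrho_\varepsilon ( |x-a| ) \to \id{\{ t \with s < t \leq r \}} ( |x-a| )$ for every $x$, with the integrand bounded by $|x-a|^{-\vdim}$ on the fixed compact set $\{ x \with s \leq |x-a| \leq \tfrac{1}{2} ( r + \tilde{r} ) \} \subset U$, so dominated convergence produces $\int_{( \cball{a}{r} \without \cball{a}{s} ) \times \grass{\adim}{\vdim}} |x-a|^{-\vdim-2} | \perpproject{P} (x-a) |^2 \ud V ( x, P )$. In the first variation term $\int_{|x-a|}^{\tilde{r}} t^{-\vdim-1} \varrho_\varepsilon(t) \ud \mathscr{L}^1 t$ converges pointwise to $\vdim^{-1} ( \sup \{ |x-a|, s \}^{-\vdim} - r^{-\vdim} )$ for $x \in \cball{a}{r}$ and to $0$ for $x \notin \cball{a}{r}$, and — since $\varrho_\varepsilon$ vanishes near $0$ — is bounded uniformly in $\varepsilon$ and $x$ by $\int_s^{\tilde{r}} t^{-\vdim-1} \ud \mathscr{L}^1 t < \infty$; as $| \eta ( V, \cdot ) | = 1$ and $\| \delta V \|$ is finite on compact subsets of $U$, dominated convergence yields $\vdim^{-1} \int_{\cball{a}{r}} ( \sup \{ |x-a|, s \}^{-\vdim} - r^{-\vdim} ) (x-a) \bullet \eta ( V, x ) \ud \| \delta V \| x$. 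Collecting the three limits and rearranging $r^{-\vdim} \| V \| ( \cball{a}{r} ) - s^{-\vdim} \| V \| ( \cball{a}{s} ) = ( \text{curvature-free integral} ) - \vdim^{-1} ( \text{first variation integral} )$ gives the assertion.

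The only point needing care — and the reason for introducing $\tilde{r} > r$ and for placing the transition layers of $\varrho_\varepsilon$ on $\{ t \with s < t < s + \varepsilon \}$ and $\{ t \with r < t < r + \varepsilon \}$ rather than, say, on $\{ t \with s - \varepsilon < t < s \}$ or $\{ t \with r - \varepsilon < t < r \}$ — is to ensure that in the limit the \emph{closed} balls $\cball{a}{s}$ and $\cball{a}{r}$ appear and the sphere $\{ x \with |x-a| = s \}$ is excluded from the curvature-free integral, matching the statement verbatim; a transition on $\{ t \with r - \varepsilon < t < r \}$, for instance, would yield $\| V \| ( \oball{a}{r} )$ instead. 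All remaining verifications are routine: that $\varrho_\varepsilon$ is an admissible test function for \ref{thm:monotonicity} (smooth, with support the compact set $\{ t \with s \leq t \leq r + \varepsilon \} \subset \{ t \with 0 < t < \tilde{r} \}$), that the relevant balls are compact subsets of $U$ so that $\| V \|$ and $\| \delta V \|$ are finite on them, and that the dominating functions exhibited above are integrable with respect to $V$ and $\| \delta V \|$ respectively.
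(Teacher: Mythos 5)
Your proof is correct and follows exactly the approach the paper takes: the paper's proof reads, in full, ``Letting $\zeta$ approximate the characteristic function of $\{ t \with s < t \leq r \}$, the assertion is a consequence of \ref{thm:monotonicity}.'' You have simply spelled out the approximation in detail — including the useful observations that transitions must lie on $(s,s+\varepsilon)$ and $(r,r+\varepsilon)$ to produce closed balls via right continuity of $t \mapsto \|V\|(\cball{a}{t})$, and that one must first restrict $V$ to an open ball $\oball{a}{\tilde r} \subset U$ before \ref{thm:monotonicity} applies — all of which is sound.
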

\begin{proof}
	Letting $\zeta$ approximate the characteristic function of $\{ t \with
	s < t \leq r \}$, the assertion is a consequence of
	\ref{thm:monotonicity}.
\end{proof}
\begin{remark} \label{remark:monotonicity}
	Using Fubini's theorem, the last summand can be expressed as
	\begin{gather*}
		\tint{s}{r} t^{-\vdim-1} \tint{\cball at}{} (x-a) \bullet \eta
		(V,x) \ud \| \delta V \| x \ud \mathscr{L}^1 t.
	\end{gather*}
\end{remark}
\begin{remark}
	\ref{corollary:monotonicity} and \ref{remark:monotonicity} are
	a minor variations of Simon \cite[17.3, 17.4]{MR756417}.
\end{remark}
\begin{corollary} \label{corollary:density_1d}
	Suppose $\vdim$, $\adim$, $U$, and $V$ are as in
	\ref{miniremark:situation_general_varifold}, $\vdim = 1$, and $X = U
	\cap \{ a \with \| \delta V \| ( \{ a \} ) > 0 \}$.

	Then the following three statements hold.
	\begin{enumerate}
		\item \label{item:density_1d:upper_bound} If $a \in
		\rel^\adim$, $0 < s \leq r < \infty$, and $\cball{a}{r} \subset
		U$, then
		\begin{gather*}
			s^{-1} \measureball{\| V \|}{ \cball as } +
			\tint{(\cball{a}{r} \without \cball as ) \times
			\grass{\adim}{\vdim}}{} |x-a|^{-3} | \perpproject{P}
			(x-a) |^2 \ud V (x,P) \\
			\leq r^{-1} \measureball{\| V \|}{\cball{a}{r}} + \|
			\delta V \| ( \cball{a}{r} \without \{ a \} )
		\end{gather*}
		\item \label{item:density_1d:real} $\density^1 ( \| V
		\|, \cdot )$ is a real valued function whose domain is $U$.
		\item \label{item:density_1d:upper_semi} $\density^1 ( \| V
		\|, \cdot )$ is upper semicontinuous at $a$ whenever $a \in U
		\without X$.
	\intertextenum{If additionally $\density^1 ( \| V \|, x ) \geq 1$ for
	$\| V \|$ almost all $x$, then the following two statements hold.}
		\item \label{item:density_1d:lower_bound} If $a \in \spt \| V
		\|$, then
		\begin{gather*}
			\density^1 ( \| V \|, a ) \geq 1 \quad \text{if $a
			\not \in X$} \qquad \text{and} \qquad \density^1 ( \| V
			\|, a) \geq 1/2 \quad \text{if $a \in X$}.
		\end{gather*}
		\item \label{item:density_1d:local_lower_bound} If $a \in \spt
		\| V \|$, $0 < s \leq r < \infty$, $\oball ar \subset U$, and
		$\| \delta V \| ( \oball ar \without \{ a \} ) \leq
		\varepsilon$, then
		\begin{gather*}
			\measureball{ \| V \|}{ \oball xs } \geq 2^{-1}
			(1-\varepsilon) s \quad \text{whenever $x \in \spt \|
			V \|$ and $|x-a| + s \leq r$}.
		\end{gather*}
	\end{enumerate}
\end{corollary}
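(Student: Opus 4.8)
The plan is to obtain all five assertions from the monotonicity identity in the forms \ref{corollary:monotonicity} and \ref{remark:monotonicity}, specialised to $\vdim = 1$, writing $\eta = \eta(V,\cdot)$ as in \ref{miniremark:situation_general_varifold} and noting $\unitmeasure{1} = 2$. The starting observation is that the integrand $(\sup\{|x-a|,s\}^{-1}-r^{-1})(x-a)\bullet\eta(V,x)$ appearing in \ref{corollary:monotonicity} vanishes at $x=a$ and has modulus at most $1$ for $0 < |x-a| \leq r$; since the term there weighted by $|\perpproject{P}(x-a)|^2$ is nonnegative, this yields \eqref{item:density_1d:upper_bound} immediately, as well as the cruder estimate
\begin{gather*}
	s^{-1} \| V \| ( \cball as ) \leq r^{-1} \| V \| ( \cball ar ) + \| \delta V \| ( \cball ar \without \{ a \} ) \qquad \text{for $0 < s \leq r$ and $\cball ar \subset U$},
\end{gather*}
which I would use repeatedly in what follows.

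For \eqref{item:density_1d:real} I would rewrite \ref{corollary:monotonicity} by means of \ref{remark:monotonicity} in the form
\begin{gather*}
	s^{-1} \| V \| ( \cball as ) + B(s) = r^{-1} \| V \| ( \cball ar ) + \tint{s}{r} t^{-2} \tint{\cball at}{} (x-a) \bullet \eta(V,x) \ud \| \delta V \| x \ud \mathscr{L}^1 t ,
\end{gather*}
where $B(s) = \tint{( \cball ar \without \cball as ) \times \grass{\adim}{1}}{} |x-a|^{-3} |\perpproject{P}(x-a)|^2 \ud V(x,P) \geq 0$ is nondecreasing as $s$ decreases. The crux is that the iterated integral converges absolutely as $s \to 0+$; since the factor $x-a$ annihilates the diagonal, Fubini's theorem gives
\begin{gather*}
	\tint{0}{r} t^{-2} \tint{\cball at \without \{ a \}}{} |x-a| \ud \| \delta V \| x \ud \mathscr{L}^1 t = \tint{\cball ar \without \{ a \}}{} ( 1 - |x-a|/r ) \ud \| \delta V \| x \leq \| \delta V \| ( \cball ar \without \{ a \} ) < \infty ,
\end{gather*}
and it is essential here to retain the weight $|x-a|$ rather than estimating it by $t$. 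The right hand side of the preceding display thus tends to a finite limit, which dominates the bounded nondecreasing quantity $B(s)$ and so forces $B(s)$ to converge as well; hence $s^{-1}\| V \|( \cball as )$ tends to a finite limit, that is $\unitmeasure{1} \density^1(\| V \|, a) = 2 \density^1(\| V \|, a)$ exists in $\rel$ for every $a \in U$. In particular $\| V \|( \{ a \} ) = 0$ for all $a$, which I would record for later.

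For \eqref{item:density_1d:upper_semi}, letting the inner radius tend to $0$ in the cruder estimate applied at a centre $x$ gives $2 \density^1(\| V \|, x) \leq \rho^{-1} \| V \|( \cball x\rho ) + \| \delta V \|( \cball x\rho )$ whenever $\cball x\rho \subset U$; fixing $\rho$, using $\limsup_{x \to a} \| V \|( \cball x\rho ) \leq \| V \|( \cball a\rho )$ and the analogue for $\| \delta V \|$ (a consequence of outer regularity), and then letting $\rho \to 0+$ with $\| \delta V \|( \cball a\rho ) \to \| \delta V \|( \{ a \} ) = 0$ because $a \notin X$, I would conclude $\limsup_{x \to a} \density^1(\| V \|, x) \leq \density^1(\| V \|, a)$. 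For \eqref{item:density_1d:lower_bound} assume $\density^1(\| V \|, x) \geq 1$ for $\| V \|$ almost all $x$. Since $X$ is countable and $\| V \|$ nonatomic, the set $( U \without X ) \cap \{ x \with \density^1(\| V \|, x) \geq 1 \}$ is $\| V \|$ conull, so its closure contains $\spt \| V \|$; thus any $a \in \spt \| V \|$ is a limit of points $b_i \in U \without X$ with $\density^1(\| V \|, b_i) \geq 1$. If $a \notin X$ then \eqref{item:density_1d:upper_semi} gives $\density^1(\| V \|, a) \geq 1$. If $a \in X$, I would rearrange \ref{corollary:monotonicity} at the centre $b_i$, drop the nonnegative term, let the inner radius tend to $0$ and the outer radius increase to $\rho := |b_i - a|$, so that $a \notin \oball{b_i}{\rho} \subset \oball{a}{2\rho}$; this produces $\| V \|( \oball{b_i}{\rho} ) \geq \rho ( 2 - \| \delta V \|( \oball{a}{2\rho} \without \{ a \} ) )$, whence, since $\oball{b_i}{\rho} \subset \oball{a}{2\rho}$ and $2\rho \to 0$, $\limsup_{\sigma \to 0+} \sigma^{-1} \| V \|( \oball a\sigma ) \geq 1$ and therefore $\density^1(\| V \|, a) \geq 1/2$.

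Finally, for \eqref{item:density_1d:local_lower_bound} I would note that \eqref{item:density_1d:lower_bound} already gives $\density^1(\| V \|, z) \geq 1/2$ for every $z \in \spt \| V \|$, and that for any $z \in \spt \| V \|$ and radius $\rho$ such that either $z = a$ (in which case the atom of $\| \delta V \|$ at $a$ contributes nothing to the error term of \ref{corollary:monotonicity}, its factor $x-a$ vanishing there) or $a \notin \oball z\rho \subset \oball ar$, rearranging \ref{corollary:monotonicity} at the centre $z$, dropping the nonnegative term and letting the inner radius tend to $0$ and the outer radius increase to $\rho$ yields $\| V \|( \oball z\rho ) \geq \rho ( 2 \density^1(\| V \|, z) - \| \delta V \|( \oball ar \without \{ a \} ) ) \geq ( 1 - \varepsilon ) \rho$. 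I would then apply this with $z = x$ and $\rho = s$ when $|x-a| \geq s$, and with $z = x$, $\rho = |x-a|$ together with $z = a$, $\rho = s - |x-a|$ (using $\oball a{s-|x-a|} \subset \oball xs$) when $|x-a| < s$; since $\max\{ |x-a|, s-|x-a| \} \geq s/2$, this gives $\| V \|( \oball xs ) \geq 2^{-1}( 1 - \varepsilon ) s$ in every case. The main obstacle is \eqref{item:density_1d:real}: bounding $|x-a|$ by $t$ in the iterated integral would only give $\tint{0}{r} t^{-1} \| \delta V \|( \cball at \without \{ a \} ) \ud \mathscr{L}^1 t$, which can diverge if $\| \delta V \|$ concentrates rapidly near $a$, so everything hinges on the weighted Fubini computation; after that, the offset-ball device needed in \eqref{item:density_1d:lower_bound} for $a \in X$ and in \eqref{item:density_1d:local_lower_bound} to avoid the atom of $\| \delta V \|$ at $a$ is the only other point requiring care.
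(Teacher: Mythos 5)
Your proof of parts \eqref{item:density_1d:upper_bound}--\eqref{item:density_1d:upper_semi}, the first half of \eqref{item:density_1d:lower_bound}, and \eqref{item:density_1d:local_lower_bound} follows essentially the same route as the paper: everything flows from the monotonicity identity \ref{corollary:monotonicity}, the pointwise bound $\big|\sup\{|x-a|,s\}^{-1}-r^{-1}\big|\,|x-a| \leq 1$, and then elementary manipulations. For \eqref{item:density_1d:real} you give a more explicit argument via the iterated-integral form of \ref{remark:monotonicity} and Fubini; the paper simply says \eqref{item:density_1d:upper_bound} ``readily implies'' it. Your argument is sound, but note that you do not actually need \ref{remark:monotonicity}: from \ref{corollary:monotonicity} directly, the remainder integrand is dominated by $1$ and converges pointwise as $s \to 0+$, so dominated convergence already gives a finite limit; the Fubini computation you carry out reproduces the same bound (and indeed $\tint_{|x-a|}^{r} t^{-2}\ud t \cdot |x-a| = 1 - |x-a|/r$ is just the limiting integrand). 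Your case split in \eqref{item:density_1d:local_lower_bound} (using $\max\{|x-a|, s-|x-a|\} \geq s/2$) is mildly different from the paper's three-way split at $s \leq |x-a|$, $2|x-a| \leq s$, and the in-between case, but both are correct and equivalent in spirit.

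The genuinely different step is the second half of \eqref{item:density_1d:lower_bound}, the case $a \in X$. The paper reduces it to the first half by a reflection trick: setting $v = \eta(V,a)$ and reflecting through the hyperplane normal to $v$, it forms $V + f_\# V$; the two atoms of the first variation at $a$ point in opposite directions and cancel, so $a$ is not in the exceptional set for $V + f_\# V$, and the first half applied there gives $2\density^1(\| V \|, a) = \density^1(\| V + f_\# V\|, a) \geq 1$. You instead run an ``offset-ball'' argument: pick $b_i \to a$ with $b_i \notin X$ and $\density^1(\| V \|, b_i) \geq 1$, apply \ref{corollary:monotonicity} at the centre $b_i$ with outer radius tending to $|b_i - a|$ (so that $a$ stays outside), and deduce $\| V \|(\oball{a}{2\rho_i}) \geq \rho_i(2-o(1))$, whence $\density^1(\| V \|, a) \geq 1/2$. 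Both arguments are correct and yield the same constant $1/2$. The paper's reflection argument is more self-contained (it leverages the already-proved easy case via a symmetry of the problem) and localises cleanly using Allard's extension lemma; your offset-ball argument is more elementary, avoids constructing a new varifold, and makes the $\density \geq 1$ hypothesis do the work directly, but requires a small amount of extra care in verifying that $b_i$ with the required properties exist and in handling the interchange of open and closed balls as you pass to the limit, which you do correctly.
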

\begin{proof}
	If $a \in U$, $0 < s < r < \infty$ and $\cball ar \subset U$, then
	\begin{gather*}
		\big | \sup \{ |x-a|, s \}^{-1} - r^{-1} \big | |x-a| \leq 1
		\quad \text{whenever $x \in \cball ar$}.
	\end{gather*}
	Therefore \eqref{item:density_1d:upper_bound} follows from
	\ref{corollary:monotonicity}. \eqref{item:density_1d:upper_bound}
	readily implies \eqref{item:density_1d:real} and
	\eqref{item:density_1d:upper_semi} and the first half of
	\eqref{item:density_1d:lower_bound}. To prove the second half, choose
	$\eta$ as in \ref{miniremark:situation_general_varifold} and consider
	$a \in X$. One may assume $a = 0$ and in view of Allard
	\cite[4.10\,(2)]{MR0307015} also $U = \rel^\adim$. Abbreviating $v =
	\eta (V,0) \in \mathbf{S}^{\adim-1}$ and defining the reflection $f :
	\rel^\adim \to \rel^\adim$ by $f(x) = x- 2 (x \bullet v) v$ for $x \in
	\rel^\adim$, one infers the second half of
	\eqref{item:density_1d:lower_bound} by applying the first half of
	\eqref{item:density_1d:lower_bound} to the varifold $V + f_\# V$.

	If $a$, $s$, $r$, $\varepsilon$ and $x$ satisfy the conditions of
	\eqref{item:density_1d:local_lower_bound}, then
	\begin{gather*}
		\measureball{ \| V \|}{\oball xs } \geq (1-\varepsilon) s
		\quad \text{if either $s \leq |x-a|$ or $x=a$}, \\
		\measureball{ \| V \|}{ \oball xs } \geq \measureball{ \| V
		\|}{ \oball a{s/2} } \geq 2^{-1} (1-\varepsilon) s \quad
		\text{if $2 |x-a| \leq s$}
	\end{gather*}
	by \eqref{item:density_1d:upper_bound} and
	\eqref{item:density_1d:lower_bound}, the case $|x-a| < s < 2|x-a|$
	then follows.
\end{proof}
\begin{corollary} \label{corollary:density_ratio_estimate}
	Suppose $\vdim$, $\adim$, $U$, and $V$ are as in
	\ref{miniremark:situation_general_varifold}, $a \in \rel^\adim$, $0 <
	s < r < \infty$, $\cball{a}{r} \subset U$, $0 \leq \kappa < \infty$,
	and
	\begin{gather*}
		\measureball{\| \delta V \|}{\cball{a}{t}} \leq \kappa r^{-1}
		\| V \| ( \cball{a}{r} )^{1/\vdim} \| V \| ( \cball {a}{t}
		)^{1-1/\vdim} \quad \text{for $s < t < r$},
	\end{gather*}
	where $0^0=1$.

	Then there holds
	\begin{gather*}
		s^{-\vdim} \measureball{\| V \|}{\cball{a}{s}} \leq \big ( 1 +
		\vdim^{-1} \kappa \log ( r/s) \big )^\vdim r^{-\vdim}
		\measureball{\| V \|}{\cball{a}{r}}.
	\end{gather*}
\end{corollary}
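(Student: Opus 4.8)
The plan is to apply \ref{lemma:calculus} to the density ratio function
\[
	f(t) = t^{-\vdim} \| V \| ( \cball{a}{t} ),
\]
using \ref{corollary:monotonicity} together with \ref{remark:monotonicity} to verify its hypothesis. First I would dispose of the degenerate case: if $\| V \| ( \cball{a}{s} ) = 0$ the asserted inequality holds trivially, its right hand side being nonnegative. So assume $\| V \| ( \cball{a}{s} ) > 0$; since $t \mapsto \| V \| ( \cball{a}{t} )$ is nondecreasing, $f$ maps $I = \{ t \with s < t \leq r \}$ into $\{ y \with 0 < y < \infty \}$. I would also record the elementary facts that $t \mapsto \| V \| ( \cball{a}{t} )$ is right continuous (as $\| V \|$ is finite on the compact set $\cball{a}{r}$) and that $\lim_{u \to t-} \| V \| ( \cball{a}{u} ) = \| V \| ( \oball{a}{t} ) \leq \| V \| ( \cball{a}{t} )$; consequently $\limsup_{u \to t-} f(u) \leq f(t)$ for $t \in I$, and $\lim_{u \to s+} f(u) = s^{-\vdim} \| V \| ( \cball{a}{s} )$.

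Next comes the main step. Choose $\eta ( V, \cdot )$ as in \ref{miniremark:situation_general_varifold} and apply \ref{corollary:monotonicity} with its radius ``$s$'' taken to be a variable $t \in I$ and ``$r$'' the given $r$; discarding the nonnegative tilt integral on the left hand side and rewriting the last summand by \ref{remark:monotonicity} yields
\[
	f(t) \leq r^{-\vdim} \| V \| ( \cball{a}{r} ) + \tint{t}{r} u^{-\vdim-1} \tint{\cball{a}{u}}{} (x-a) \bullet \eta (V,x) \ud \| \delta V \| x \ud \mathscr{L}^1 u .
\]
Since $\eta ( V, \cdot )$ is $\mathbf{S}^{\adim-1}$ valued, the inner integral is at most $u \| \delta V \| ( \cball{a}{u} )$, and the hypothesised bound on $\| \delta V \|$ holds for $s < u < r$, hence on the whole range of integration. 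Substituting it and using $\| V \| ( \cball{a}{r} ) = r^\vdim f(r)$ and $\| V \| ( \cball{a}{u} ) = u^\vdim f(u)$, the powers of $u$ and $r$ combine to leave precisely
\[
	f(t) \leq f(r) + \kappa f(r)^{1/\vdim} \tint{t}{r} u^{-1} f(u)^{1-1/\vdim} \ud \mathscr{L}^1 u \quad \text{for $t \in I$} .
\]
This is the hypothesis of \ref{lemma:calculus}, which therefore gives $f(t) \leq ( 1 + \vdim^{-1} \kappa \log ( r/t ) )^\vdim f(r)$ for $t \in I$.

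Finally, letting $t \to s+$ and invoking the right continuity noted in the first paragraph gives
\[
	s^{-\vdim} \| V \| ( \cball{a}{s} ) = \lim_{t \to s+} f(t) \leq \big ( 1 + \vdim^{-1} \kappa \log ( r/s ) \big )^\vdim r^{-\vdim} \| V \| ( \cball{a}{r} ) ,
\]
which is the assertion. The argument is largely bookkeeping; the only place that demands care is the rewriting in the middle step — tracking the weight $u^{-\vdim-1}$ coming from the Fubini form of the curvature term so that, once $\| V \| ( \cball{a}{u} ) = u^\vdim f(u)$ is inserted, exactly the $u^{-1}$ required by \ref{lemma:calculus} survives, and matching the exponents $1/\vdim$ and $1 - 1/\vdim$ of the density ratio hypothesis with those in that lemma. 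I do not expect any genuine obstacle beyond this matching.
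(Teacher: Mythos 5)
Your proof is correct and takes essentially the same route as the paper: combine \ref{corollary:monotonicity} and \ref{remark:monotonicity} to bound the density ratio, verify the hypotheses of \ref{lemma:calculus}, and conclude. The only difference is cosmetic — you dispose of the degenerate case $\|V\|(\cball{a}{s})=0$ up front and pass to $s$ by right continuity, whereas the paper truncates the domain of $f$ at $\sup\{s,\inf\{u \with \|V\|(\cball au)>0\}\}$ to the same effect; both variants silently rely on the same limit at the left endpoint, which you make explicit.
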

\begin{proof}
	Assume $\measureball{\| V \|}{ \cball ar} > 0$ and define $t = \inf
	\{ u \with \measureball{\| V \|}{ \cball au} > 0 \}$ and $f (u) =
	u^{-\vdim} \measureball{\| V \|}{\cball au}$ for $\sup \{s,t\} < u
	\leq r$. Then, in view of \ref{corollary:monotonicity} and
	\ref{remark:monotonicity}, one may apply \ref{lemma:calculus} with $s$
	replaced by $\sup \{ s,t \}$ to infer the conclusion.
\end{proof}
\section{Distributional boundary} \label{sec:distrib_boundary}
In this section the notion of distributional boundary of a set with respect to
certain varifolds is introduced, see \ref{def:v_boundary}. Moreover, a basic
structural theorem is proven, see \ref{thm:basic_structure_v_boundary}, which
allows to compare this notion to a similar one employed by Bombieri and Giusti
in the context of area minimising currents in \cite[Theorem 2]{MR0308945}, see
\ref{remark:bombieri_giusti}.
\begin{definition} \label{def:v_boundary}
	Suppose $\vdim, \adim \in \nat$, $\vdim \leq \adim$, $U$ is an open
	subset of $\rel^\adim$, $V \in \Var_\vdim ( U )$, $\| \delta V \|$ is
	a Radon measure, and $E$ is $\| V \| + \| \delta V \|$ measurable.

	Then the \emph{distributional $V$ boundary of $E$} is given by (see
	\ref{def:restriction_distribution})
	\begin{gather*}
		\boundary{V}{E} = ( \delta V ) \restrict E - \delta ( V
		\restrict E \times \grass{\adim}{\vdim} ) \in \mathscr{D}' (
		U, \rel^\adim ).
	\end{gather*}
\end{definition}
\begin{remark} \label{remark:boundary_of_sum}
	If $W \in \Var_\vdim ( U )$, $\| \delta W \|$ is a Radon measure and
	$E$ is additionally $\| W \| + \| \delta W \|$ measurable, then
	\begin{gather*}
		\boundary {(V+W)}E = \boundary VE + \boundary WE.
	\end{gather*}
\end{remark}
\begin{remark} \label{remark:v_boundary}
	If $E$ and $F$ are $\| V \| + \| \delta V \|$ measurable sets and $E
	\subset F$, then
	\begin{gather*}
		\boundary{V}{(F \without E)} = ( \boundary{V}{F} ) - (
		\boundary{V}{E} ).
	\end{gather*}
\end{remark}
\begin{remark} \label{remark:iterated_boundaries}
	If $\boundary VE$ is representable by integration, $W = V \restrict E
	\times \grass \adim \vdim$, and $F$ is a Borel set, then
	\begin{gather*}
		\boundary V{(E \cap F)} = \boundary WF + ( \boundary VE )
		\restrict F.
	\end{gather*}
\end{remark}
\begin{remark} \label{remark:partition}
	If $G$ is a countable, disjointed collection of $\| V \| + \| \delta V
	\|$ measurable sets with $\boundary{V}{E} = 0$ for $E \in G$ and $\| V
	\| ( U \without \bigcup G ) = 0$, then
	\begin{gather*}
		{\textstyle \| \delta V \| ( U \without \bigcup G ) = 0};
	\end{gather*}
	in fact $\delta V = \sum_{E \in G} \delta
	( V \restrict E \times \grass{\adim}{\vdim} ) = \sum_{E \in G} (
	\delta V) \restrict E = ( \delta V ) \restrict \bigcup G$.
\end{remark}
\begin{example} \label{example:two_crossing_lines}
	Suppose $E$ and $P$ are distinct members of $\grass 2 1$ and $V \in
	\IVar_1 ( \rel^2 )$ is characterised by $\| V \| = \mathscr{H}^1
	\restrict ( E \cup P )$.

	Then $\delta V =0$ and $\boundary VE = 0$ but there exists no sequence
	of locally Lipschitzian functions $f_i : \rel^2 \to \rel$ satisfying
	\begin{gather*}
		\tint{}{} |f_i-f| + | ( \| V \|, 1 ) \ap D f_i | \ud
		\| V \| \to 0 \quad \text{as $i \to \infty$},
	\end{gather*}
	where $f$ is the characteristic function of $E$.
\end{example}
\begin{remark}
	Since it will follow from
	\ref{thm:tv_on_decompositions}\,\eqref{item:tv_on_decompositions:tv}
	that $f$ is a generalised weakly differentiable function with
	vanishing generalised weak derivative, the preceding example shows
	that the theory of generalised weakly differentiable functions cannot
	be developed using approximation by locally Lipschitzian functions.
	Instead, the relevant properties of sets are studied first in Sections
	\ref{sec:distrib_boundary}--\ref{sec:rel_iso} before proceeding to the
	theory of generalised weakly differentiable functions in Sections
	\ref{sec:basic}--\ref{sec:oscillation}.
\end{remark}
\begin{lemma} \label{lemma:capacity}
	Suppose $\adim \in \nat$, $1 \leq \vdim \leq \adim$, $U$ is an open
	subset of $\rel^\adim$, $\mu$ is a Radon measure over $U$, $K$ is a
	compact subset of $U$, $A$ is compact subset of $\Int K$,
	$\mathscr{H}^{\vdim-1} (A) = 0$, and
	\begin{gather*}
		\lim_{r \to 0+} \sup \{ s^{-\vdim} \measureball \mu {\cball
		xs} \with \text{$x \in A$ and $0 < s \leq r$} \} < \infty.
	\end{gather*}

	Then there exists a sequence $f_i \in \mathscr{E} ( U, \rel )$
	satisfying
	\begin{gather*}
		0 \leq f_i \leq 1, \quad A \subset \Int \{ x \with f_i (x) = 0
		\}, \quad \{ x \with f_i (x) < 1 \} \subset K \qquad \text{for
		$i \in \nat$}, \\
		\lim_{i \to \infty} f_i (x) = 1 \quad \text{for
		$\mathscr{H}^{\vdim-1}$ almost all $x \in U$}, \qquad \lim_{i
		\to \infty} \tint{}{} | Df_i | \ud \mu = 0.
	\end{gather*}
\end{lemma}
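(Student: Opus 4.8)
The plan is to construct each $f_i$ as a finite product of standard smooth radial cut-off functions which vanish in a small neighbourhood of $A$, the total gradient cost being controlled by playing the vanishing of the $(\vdim-1)$-dimensional Hausdorff measure of $A$ against the density-ratio bound, and the almost everywhere convergence being obtained by a Borel--Cantelli argument. To begin, the hypothesis supplies $0 < c < \infty$ and $0 < r_0 < \infty$ with $\measureball{\mu}{\cball{x}{s}} \leq c \, s^\vdim$ whenever $x \in A$ and $0 < s \leq r_0$, and I would abbreviate $d = \dist ( A, \rel^\adim \without K )$, noting $d > 0$ because $A$ is compact and $\rel^\adim \without K$ is closed and disjoint from $A$ (the case $A = \varnothing$ being disposed of by $f_i = 1$). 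Fix a profile $\chi \in \mathscr{E} ( \rel, \rel )$ with $0 \leq \chi \leq 1$, $\chi(t) = 0$ for $t \leq 1$, $\chi(t) = 1$ for $t \geq 2$, and $\sup | \chi' | \leq 2$, and for $x \in \rel^\adim$, $0 < \rho < \infty$ put $g_{x,\rho}(z) = \chi ( \rho^{-1} |z-x| )$; then $g_{x,\rho} \in \mathscr{E}(\rel^\adim, \rel)$ (it vanishes near $x$, so the nondifferentiability of $z \mapsto |z-x|$ is immaterial), $0 \leq g_{x,\rho} \leq 1$, $g_{x,\rho} = 0$ on $\cball{x}{\rho}$, $g_{x,\rho} = 1$ off $\oball{x}{2\rho}$, $|Dg_{x,\rho}| \leq 2\rho^{-1}$, and consequently
\[
	\tint{}{} |Dg_{x,2\rho}| \ud \mu \leq \rho^{-1} \measureball{\mu}{\cball{x}{4\rho}} \leq 4^\vdim c \, \rho^{\vdim - 1} \quad \text{whenever $x \in A$ and $4\rho \leq r_0$}.
\]

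Next, for each $i \in \nat$ I would use $\mathscr{H}^{\vdim-1} ( A ) = 0$ together with the compactness of $A$ to select points $x_1, \dots, x_k \in A$ and radii $\rho_1, \dots, \rho_k$ such that $A \subset \bigcup_{l=1}^k \oball{x_l}{2\rho_l}$, $\sum_{l=1}^k \rho_l^{\vdim-1} \leq 2^{-i}$, and $4\rho_l \leq \min \{ r_0, d, 1/i \}$ for each $l$; this is obtained from a cover of $A$ by sets of sufficiently small diameter and sufficiently small $\sum (\diam)^{\vdim-1}$ by replacing each member meeting $A$ with a ball of comparable radius centred at a point of $A$, discarding those disjoint from $A$, and passing to a finite subcover. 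Defining $f_i = \prod_{l=1}^k g_{x_l, 2\rho_l}$ restricted to $U$ and $S_i = \bigcup_{l=1}^k \oball{x_l}{4\rho_l}$, one reads off the required properties: $f_i \in \mathscr{E}(U, \rel)$ and $0 \leq f_i \leq 1$; $f_i$ vanishes on the open set $\bigcup_l \oball{x_l}{2\rho_l}$, which contains $A$, so $A \subset \Int \{ x \with f_i(x) = 0 \}$; $\{ x \with f_i(x) < 1 \} \subset S_i$, and since $4 \rho_l < d$ one has $S_i \subset K$. The product rule together with $0 \leq g_{x_l, 2\rho_l} \leq 1$ gives $|Df_i| \leq \sum_l |Dg_{x_l, 2\rho_l}|$, hence
\[
	\tint{}{} |Df_i| \ud \mu \leq \sum_{l=1}^k \tint{}{} |Dg_{x_l, 2\rho_l}| \ud \mu \leq 4^\vdim c \sum_{l=1}^k \rho_l^{\vdim-1} \leq 4^\vdim c \, 2^{-i},
\]
which tends to $0$ as $i \to \infty$.

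For the almost everywhere convergence I would set $N = \bigcap_{p \in \nat} \bigcup_{i \geq p} S_i$. Since $\{ x \with f_i(x) \neq 1 \} \subset S_i$, every $x \notin N$ satisfies $f_i(x) = 1$ for all large $i$; thus it suffices to show $\mathscr{H}^{\vdim-1}(N) = 0$. For fixed $p \in \nat$ the balls $\oball{x_l}{4\rho_l}$ occurring at the stages $i \geq p$ cover $N$, each has diameter at most $8/p$ by the constraint $4\rho_l \leq 1/i$, and the sum of the $(\vdim-1)$st powers of their radii is at most $4^{\vdim-1} \sum_{i \geq p} 2^{-i} = 4^{\vdim-1} 2^{1-p}$; hence $N$ admits, for every $p$, a cover by sets of diameter at most $8/p$ whose associated sum of $(\vdim-1)$st powers of diameters is at most a fixed multiple of $2^{-p}$, and letting $p \to \infty$ yields $\mathscr{H}^{\vdim-1}(N) = 0$.

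The only step requiring genuine care is the covering in the second paragraph: the three conditions imposed on the balls --- centres lying in $A$ (so that the density-ratio bound applies), radii below $\min \{ r_0, d, 1/i \}$ (needed respectively for that bound, for $S_i \subset K$, and for the diameter control in the Borel--Cantelli estimate), and $\sum_l \rho_l^{\vdim-1}$ decaying geometrically in $i$ (so that this estimate sums to a convergent series) --- must all be arranged simultaneously, which is possible precisely because $\mathscr{H}^{\vdim-1}(A) = 0$ permits covers with arbitrarily small $\sum (\diam)^{\vdim-1}$ at arbitrarily small scale. Everything else is routine verification.
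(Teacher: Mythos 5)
Your proposal is correct and follows essentially the same approach as the paper's proof: cover $A$ by a finite collection of small balls centred in $A$ whose radii satisfy $\sum \rho_l^{\vdim-1}$ small (using $\mathscr{H}^{\vdim-1}(A) = 0$), take $f_i$ to be a product of smooth radial bump functions vanishing on each ball, control $\int |Df_i| \ud\mu$ via the density-ratio hypothesis, and obtain the almost-everywhere convergence by a Borel--Cantelli argument. The paper reduces the construction to a single function per $\varepsilon$ and phrases the smallness of $\{f < 1\}$ in terms of the size-$\infty$ premeasure $\phi_\infty$ for $\mathscr{H}^{\vdim-1}$, which abstracts away the explicit diameter bookkeeping that you carry out directly with the constraint $4\rho_l \leq 1/i$; both routes are equivalent, since $\phi_\infty$ and $\mathscr{H}^{\vdim-1}$ have the same null sets and $\phi_\infty$ is countably subadditive. (Minor slip of no consequence: with $4\rho_l \leq 1/i$ the balls $\oball{x_l}{4\rho_l}$ at stage $i$ have diameter at most $2/i$, not $8/p$; the overestimate still tends to zero.)
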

\begin{proof}
	Let $\phi_\infty$ denote the size $\infty$ approximating measure for
	$\mathscr{H}^{\vdim-1}$ over $\rel^\adim$. Observe that it is
	sufficient to prove that for $\varepsilon > 0$ there exists $f \in
	\mathscr{E} (U,\rel)$ with
	\begin{gather*}
		0 \leq f \leq 1, \quad A \subset \Int \{ x \with f(x)=0 \},
		\quad \{ x \with f(x) < 1 \} \subset K, \\
		\phi_\infty ( \{ x \with f(x) < 1 \} ) < \varepsilon, \quad
		\tint{}{} |Df| \ud \mu < \varepsilon.
	\end{gather*}
	For this purpose assume $A \neq \varnothing$, denote the limit in the
	hypotheses of the lemma by $Q$, and choose $j \in \nat$ and $x_i \in
	A$, $0 < r_i < \infty$ for $i \in \{ 1, \ldots, j \}$ with
	\begin{gather*}
		\measureball \mu {\cball{x_i}{r_i}} \leq (Q+1) r_i^\vdim \quad
		\text{and} \quad \cball{x_i}{r_i} \subset K \quad \text{for $i
		\in \{1, \ldots,j \}$}, \\
		{\textstyle A \subset \bigcup_{i=1}^j \oball{x_i}{r_i/2},
		\quad \sum_{i=1}^j r_i^{\vdim-1} < \varepsilon/\Delta},
	\end{gather*}
	where $\Delta = \sup \{ \unitmeasure{\vdim-1}, 4 (Q+1) \}$. Selecting
	$f_i \in \mathscr{E} ( U, \rel )$ with $0 \leq f_i \leq 1$, $|Df_i|
	\leq 4 r_i^{-1}$, and
	\begin{gather*}
		\oball{x_i}{r_i/2} \subset \{ x \with f_i (x) = 0 \}, \quad \{
		x \with f_i (x) < 1 \} \subset \cball{x_i}{r_i}
	\end{gather*}
	whenever $i \in \{ 1, \ldots, j \}$, one may take $f = \prod_{i=1}^j
	f_i$.
\end{proof}
\begin{miniremark} [see \protect{\cite[1.7.5]{MR41:1976}}]
\label{miniremark:gamma_m}
	Suppose $\vdim, \adim \in \nat$ and $\vdim \leq \adim$. Then one
	defines the linear map $\gamma_\vdim : \Lambda_\vdim \rel^\adim \to
	\Lambda^\vdim \rel^\adim$ by the equation
	\begin{gather*}
		\left < \xi, \gamma_\vdim ( \eta ) \right > = \xi \bullet \eta
		\quad \text{whenever $\xi, \eta \in \Lambda_\vdim
		\rel^\adim$}.
	\end{gather*}
\end{miniremark}
\begin{theorem} \label{thm:basic_structure_v_boundary}
	Suppose $\vdim, \adim \in \nat$, $\vdim \leq \adim$, $U$ is an open
	subset of $\rel^\adim$, $V \in \IVar_\vdim ( U )$, $0 \leq \kappa <
	\infty$, $\| \delta V \| \leq \kappa \| V \|$, $M$ is a relatively
	open subset of $\spt \| V \|$, $\mathscr{H}^{\vdim-1} ( ( \spt \| V \|
	) \without M ) = 0$, $M$ is an $\vdim$ dimensional submanifold of
	class $2$, $\tau : M \to \Hom ( \rel^\adim, \rel^\adim )$ satisfies
	$\tau (x) = \project{\Tan ( M,x ) }$ for $x \in M$, $E$ is a $\| V \|$
	measurable set, and
	\begin{gather*}
		B = M \without \{ x \with \text{$\density^\vdim (
		\mathscr{H}^\vdim \restrict M \cap E, x ) = 0$ or
		$\density^\vdim ( \mathscr{H}^\vdim \restrict M \without E, x
		) = 0$} \}.
	\end{gather*}

	Then the following three statements hold.
	\begin{enumerate}
		\item \label{item:basic_structure_v_boundary:tang_var} If
		$\theta \in \mathscr{D} (U,\rel^\adim)$ then
		\begin{gather*}
			\boundary VE ( \theta ) = - \tint{E \cap M}{} \tau (x)
			\bullet D \left < \theta, \tau \right > (x)
			\density^\vdim ( \| V\|, x ) \ud \mathscr{H}^\vdim x.
		\end{gather*}
		\item \label{item:basic_structure_v_boundary:oriented} If
		$\xi$ is an $\vdim$ vectorfield orienting $M$ and $\gamma_\vdim$
		is as in \ref{miniremark:gamma_m}, then
		\begin{gather*}
			\boundary VE ( \theta ) = (-1)^\vdim \tint{E \cap M}{}
			\left < \xi (x), d ( \theta \mathop{\lrcorner} (
			\gamma_\vdim \circ \xi ) ) (x) \right > \density^\vdim
			( \| V \|, x ) \ud \mathscr{H}^\vdim x
		\end{gather*}
		for $\theta \in \mathscr{D} (U, \rel^\adim )$.
		\item \label{item:basic_structure_v_boundary:representable}
		The distribution $\boundary VE$ is representable by
		integration if and only if
		\begin{gather*}
			\mathscr{H}^{\vdim-1} ( K \cap B ) < \infty \quad
			\text{whenever $K$ is a compact subset of $U$};
		\end{gather*}
		in this case $B$ is $\mathscr{H}^{\vdim-1}$ almost equal to
		$\{ x \with \mathbf{n} (M;E,x) \in \mathbf{S}^{\adim-1} \}$
		and meets every compact subset of $U$ is a
		$(\mathscr{H}^{\vdim-1},\vdim-1)$ rectifiable set and
		\begin{gather*}
			\boundary VE ( \theta ) = - \tint{}{} \mathbf{n}
			(M;E,x) \bullet \theta (x) \density^\vdim ( \| V \|, x
			) \ud \mathscr{H}^{\vdim-1} x
		\end{gather*}
		for $\theta \in \mathscr{D}(U,\rel^\adim)$.
	\end{enumerate}
\end{theorem}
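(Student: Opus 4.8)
The plan is to establish (1) directly and to deduce (2) and (3) from it. The recurring difficulty is that $M$ is where the classical first variation formula and the theory of sets of finite perimeter are available, while the $\mathscr{H}^{\vdim-1}$ negligible set $(\spt\| V \|)\without M$ and the possible variation of the multiplicity have to be handled separately.

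First I record the consequences of the hypotheses that underlie everything. Since $\| \delta V \| \leq \kappa\| V \|$, the measure $\| \delta V \|$ is absolutely continuous with respect to $\| V \|$; as $\mathscr{H}^\vdim((\spt\| V \|)\without M)=0$ and $V$ is rectifiable one has $\| V \|(U\without M)=0$, hence $\| \delta V \|(U\without M)=0$, $\| V \| = \density^\vdim(\| V \|,\cdot)\,\mathscr{H}^\vdim\restrict M$, and $\Tan^\vdim(\| V \|,x)=\Tan(M,x)$ for $\mathscr{H}^\vdim$ almost all $x\in M$. Testing $\delta V$ against $C^1$ vectorfields tangent to $M$ along $M$ and compactly supported there, and using $\| \delta V \| \leq \kappa\| V \|$, shows that the distributional tangential gradient on $M$ of the integer valued function $\density^\vdim(\| V \|,\cdot)$ is represented by a bounded density with respect to $\mathscr{H}^\vdim\restrict M$, whence $\density^\vdim(\| V \|,\cdot)$ is locally constant on $M$. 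Combining this with the divergence theorem on the $C^2$ manifold $M$ and the definition of $\mathbf{h}(M,\cdot)$ yields $(\delta V)(\psi) = -\int_M \psi\bullet\mathbf{h}(M,x)\,\density^\vdim(\| V \|,x)\ud\mathscr{H}^\vdim x$ for every $\psi\in\mathscr{D}(U,\rel^\adim)$ with $(\spt\psi)\cap\spt\| V \|\subset M$.

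To prove (1), let $\theta\in\mathscr{D}(U,\rel^\adim)$, pick a compact $K\subset U$ with $\spt\theta\subset\Int K$, and set $A=K\cap((\spt\| V \|)\without M)$; this is compact with $\mathscr{H}^{\vdim-1}(A)=0$, and the monotonicity identity (see \ref{corollary:monotonicity}) bounds $s^{-\vdim}\| V \|(\cball xs)$ for $x$ in a neighbourhood of $A$. Lemma \ref{lemma:capacity} then supplies $f_i\in\mathscr{E}(U,\rel)$ with $0\leq f_i\leq 1$, $A\subset\Int\{x\with f_i(x)=0\}$, $\{x\with f_i(x)<1\}\subset K$, $f_i\to 1$ $\mathscr{H}^{\vdim-1}$ (hence $\| V \|$ and $\| \delta V \|$) almost everywhere, and $\int|Df_i|\ud\| V \|\to 0$. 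Each $\psi_i=f_i\theta$ satisfies $(\spt\psi_i)\cap\spt\| V \|\subset M$, so using the first variation formula on $M$, the definition of $\delta(V\restrict E\times\grass{\adim}{\vdim})$, and the fact that the mean curvature term of $(\delta V)\restrict E$ cancels the one arising from the normal part of $\tau\bullet D\psi_i$, one computes
\begin{gather*}
	\boundary{V}{E}(\psi_i) = - \int_{E\cap M} \tau(x)\bullet D\langle\psi_i,\tau\rangle(x)\,\density^\vdim(\| V \|,x)\ud\mathscr{H}^\vdim x .
\end{gather*}
Since $\langle\psi_i,\tau\rangle=f_i\langle\theta,\tau\rangle$ and $|D\langle\psi_i,\tau\rangle-f_iD\langle\theta,\tau\rangle|\leq|Df_i|\,|\theta|$, letting $i\to\infty$ — dominated convergence and $\int|Df_i|\ud\| V \|\to 0$ on the right, dominated convergence in $\boundary{V}{E}(\psi_i)=(\delta V)\restrict E(\psi_i)-\delta(V\restrict E\times\grass{\adim}{\vdim})(\psi_i)$ on the left — gives (1). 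Assertion (2) then follows from (1) by the pointwise identity $\tau(x)\bullet D\langle\theta,\tau\rangle(x)=(-1)^{\vdim+1}\langle\xi(x),d(\theta\mathop{\lrcorner}(\gamma_\vdim\circ\xi))(x)\rangle$ valid at $x\in M$, a standard computation with the Cartan formula and \ref{miniremark:gamma_m}.

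For (3), abbreviate $\Theta=\density^\vdim(\| V \|,\cdot)$ and decompose $M$ into the relatively open and closed pieces $M_c=M\cap\{x\with\Theta(x)=c\}$ furnished by the local constancy of $\Theta$. By (1), $\boundary{V}{E}$ tested against functions supported away from $\bigcup_{c'\neq c}M_{c'}$ equals $-c$ times the distributional perimeter functional of $E$ in $M_c$ paired with the tangential part of $\theta$; localising in $C^2$ charts lying in a single $M_c$, and using that $\| \boundary{V}{E} \|$ is finite on relatively compact sets precisely when this perimeter functional is, shows that $\boundary{V}{E}$ is representable by integration if and only if $E$ has locally finite perimeter in $M$. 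Transporting De Giorgi's structure theorem and Federer's criterion from $\rel^\vdim$ to $M$ through $C^2$ charts (cf. \cite[4.5.6, 4.5.11]{MR41:1976}), this in turn holds if and only if $\mathscr{H}^{\vdim-1}(K\cap B)<\infty$ for every compact $K\subset U$, $B$ being exactly the essential boundary of $E$ relative to $M$; and in that case $B$ agrees $\mathscr{H}^{\vdim-1}$ almost everywhere with the $(\mathscr{H}^{\vdim-1},\vdim-1)$ rectifiable reduced boundary of $E$ in $M$, on which $\mathbf{n}(M;E,\cdot)\in\mathbf{S}^{\adim-1}$ is defined, so applying the Gauss Green theorem on each $M_c$ and recombining turns the formula of (1) into $\boundary{V}{E}(\theta) = -\int\mathbf{n}(M;E,x)\bullet\theta(x)\,\Theta(x)\ud\mathscr{H}^{\vdim-1}x$. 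The principal obstacles are exactly these two transfers: extending the smooth identities across the $\mathscr{H}^{\vdim-1}$ negligible set $(\spt\| V \|)\without M$ by means of Lemma \ref{lemma:capacity}, and reducing the measure theory of finite perimeter sets on the weighted $C^2$ manifold $M$ to the Euclidean situation while keeping track of the locally constant multiplicity $\Theta$.
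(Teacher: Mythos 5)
Your proposal is correct and follows essentially the same route as the paper's proof: the pointwise identity $\tau\bullet D\langle\theta,\tau\rangle = (-1)^{\vdim-1}\langle\xi,d(\theta\lrcorner(\gamma_\vdim\circ\xi))\rangle$ for passing between \eqref{item:basic_structure_v_boundary:tang_var} and \eqref{item:basic_structure_v_boundary:oriented}, the cutoff construction of Lemma \ref{lemma:capacity} to excise the $\mathscr{H}^{\vdim-1}$ negligible set $(\spt\|V\|)\without M$, and the reduction of \eqref{item:basic_structure_v_boundary:representable} to Federer \cite[4.5.6, 4.5.11]{MR41:1976} through charts. The one organizational difference: where you derive local constancy of $\density^\vdim(\|V\|,\cdot)$ directly from the tangential first variation estimate and integer-valuedness, the paper instead first treats the special case $M=\spt\|V\|$ connected, where Brakke \cite[5.8]{MR485012} (vanishing of the tangential first variation) feeds Allard's constancy theorem \cite[4.6\,(3)]{MR0307015} to produce a globally constant density, and only then passes to the general $M$ via the capacity approximation; this makes \eqref{item:basic_structure_v_boundary:representable} in the special case an immediate application of Allard's change of variable \cite[4.5]{MR0307015} to the subcase $M=U$, rather than requiring the piecewise constant decomposition of $M$ you sketch. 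The two formulations of the constancy step are mathematically equivalent (your tangential first variation argument is exactly the content of the cited Brakke and Allard results), and both routes carry the same technical load at the approximation step.
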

\begin{proof}
	Notice that the results of Allard \cite[2.5, 4.5--4.7]{MR0307015}
	stated for submanifolds of class $\infty$ have analogous formulations
	for submanifolds of class $2$. The meaning of $\Tan (M,\theta)$, $\Nor
	(M,\theta)$, $\mathbf{G}_\vdim ( M )$, and $\IVar_\vdim ( M )$ defined
	in \cite[2.5, 3.1]{MR0307015} will be extended accordingly.
	
	The following assertion will be proven. \emph{If $\xi$ is as in
	\eqref{item:basic_structure_v_boundary:oriented}, then $\phi =
	\gamma_\vdim \circ \xi$ satisfies
	\begin{gather*}
		\left < \xi, d ( \theta \mathop{\lrcorner} \phi ) (x) \right >
		= (-1)^{\vdim-1} \tau (x) \bullet D \left < \theta, \tau
		\right > (x) \quad \text{for $x \in M$}
	\end{gather*}
	whenever $\theta : U \to \rel^\adim$ is a vectorfield of class $1$;}
	in fact, assuming $\theta | M = \Tan (M,\theta)$ and noting $| \xi (x)
	| = 1$ for $x \in M$, one infers
	\begin{gather*}
		\left < \xi (x), \left < u, D \phi (x)
		\right > \right > = \xi (x) \bullet \left < u, D \xi (x)
		\right > = 0 \quad \text{for $x \in M$, $u \in \Tan (M,x)$},
	\end{gather*}
	hence for $x \in M$ one expresses $\xi (x) = u_1 \wedge \cdots \wedge
	u_\vdim$ for some orthonormal basis $u_1, \ldots, u_\vdim$ of $\Tan (
	M,x )$ and computes
	\begin{gather*}
		\begin{aligned}
			& \left < \xi(x), d ( \theta \mathop{\lrcorner} \phi )
			(x) \right > \\
			& \qquad = \tsum{i=1}{\vdim} (-1)^{i-1} \left < u_1
			\wedge \cdots \wedge u_{i-1} \wedge u_{i+1} \wedge
			\cdots \wedge u_\vdim, \left < u_i, D \theta (x)
			\right > \mathop{\lrcorner} \phi (x) \right > \\
			& \qquad = (-1)^{\vdim-1} \tsum{i=1}{\vdim} \left <
			u_i, D \theta (x) \right > \bullet u_i =
			(-1)^{\vdim-1} \tau (x) \bullet D \theta (x).
		\end{aligned}
	\end{gather*}

	Next, the case $M = \spt \| V \|$ will be considered. In this case one
	may assume $M$ to be connected. Since
	\begin{gather*}
		\delta V ( \theta ) = 0 \quad \text{whenever $\theta \in
		\mathscr{D} (U,\rel^\adim)$ and $\Nor (M,\theta) = 0$}
	\end{gather*}
	for instance by Brakke \cite[5.8]{MR485012} (or
	\cite[4.8]{snulmenn.c2}), Allard \cite[4.6\,(3)]{MR0307015} then
	implies for some $0 < \lambda < \infty$ that
	\begin{gather*}
		V(k) = \lambda \tint M{} k(x,\Tan(M,x)) \ud \mathscr{H}^\vdim
		x \quad \text{for $k \in \mathscr{K} ( U \times \grass \adim
		\vdim )$}.
	\end{gather*}
	Define $W \in \IVar_\vdim ( M )$ by $W(k) = \tint{E \times \grass
	\adim \vdim}{} k \ud V$ for $k \in \mathscr{K} ( \mathbf{G}_\vdim ( M
	) )$ and notice that
	\begin{gather*}
		\begin{aligned}
			& \boundary VE ( \theta ) = - \tint{E}{} \mathbf{h}
			(V,x) \bullet \theta (x) \ud \| V \| x - \tint{E
			\times \grass \adim \vdim}{} \project P \bullet D
			\theta (x) \ud V (x,P) \\
			& \qquad = - \tint{E \times \grass \adim \vdim}{}
			\project P \bullet ( D \Tan (M,\theta)(x) \circ
			\project P ) \ud V(x,P) = - \delta W ( \Tan (M,\theta)
			)
		\end{aligned}
	\end{gather*}
	whenever $\theta \in \mathscr{D} (U,\rel^\adim )$ by Allard
	\cite[2.5\,(2)]{MR0307015}.
	\eqref{item:basic_structure_v_boundary:tang_var} is now evident and
	implies \eqref{item:basic_structure_v_boundary:oriented} by the
	assertion of the preceding paragraph. Concerning
	\eqref{item:basic_structure_v_boundary:representable}, the subcase $M
	= U$ follows from \cite[4.5.6, 4.5.11]{MR41:1976} (recalling also
	\cite[4.1.28\,(5), 4.2.1]{MR41:1976}) to which the case $M = \spt \| V
	\|$ may be reduced by applying Allard \cite[4.5]{MR0307015} to $W$,
	see also Allard \cite[4.7]{MR0307015}.

	To treat the general case, first notice that, in view of Allard
	\cite[5.1\,(3)]{MR0307015}, \ref{lemma:capacity} is applicable with
	$\mu = \| V \|$ and $A = ( \spt \theta ) \cap ( \spt \| V \| )
	\without M$ whenever $\theta \in \mathscr{D} (U, \rel^\adim )$, $K$ is
	a compact subset of $U$, and $\spt \theta \subset \Int K$. The
	resulting functions $f_i \in \mathscr{E} (U,\rel)$ satisfy
	\begin{gather*}
		\tint{}{} | \theta - f_i \theta | + | D \theta - D (f_i\theta)
		| \ud \| V \| + \tint{X}{} | \theta - f_i \theta | \ud
		\mathscr{H}^{\vdim-1} \to 0
	\end{gather*}
	as $i \to \infty$ whenever $X$ is a $\mathscr{H}^{\vdim-1}$ measurable
	subset of $U$ with $\mathscr{H}^{\vdim-1} (X) < \infty$. It follows
	that $\boundary V E ( \theta ) = \lim_{i \to \infty} \boundary VE (f_i
	\theta)$ and, if $\mathscr{H}^{\vdim-1} (K \cap B) < \infty$, then
	\begin{gather*}
		\lim_{i \to \infty} \tint{}{} \mathbf{n} (M;E,x) \bullet (
		\theta (x) - (f_i\theta)(x)) \density^\vdim ( \| V \|, x) \ud
		\mathscr{H}^{\vdim-1} x = 0.
	\end{gather*}
	Therefore one now readily verifies the assertion.
\end{proof}
\begin{remark} \label{remark:link}
	If $S \in \mathscr{R}_\vdim^\mathrm{loc} ( \rel^\adim )$ is absolutely
	area minimising with respect to $\rel^\adim$, $\partial S = 0$, and
	$\codim = 1$, then $V \in \IVar_\vdim ( \rel^\adim )$ characterised by
	$\| S \| = \| V \|$ satisfies the hypotheses of
	\ref{thm:basic_structure_v_boundary}\,\eqref{item:basic_structure_v_boundary:oriented}
	with $U = \rel^\adim$, $\kappa = 0$ for some $M$ and $\xi$ by Allard
	\cite[4.8\,(4)]{MR0307015}, \cite[5.4.15]{MR41:1976}, and Federer
	\cite[Theorem 1]{MR0260981}, and in this case $\| \partial ( S
	\restrict E ) \| = \| \boundary VE \|$ as may be verified using
	\ref{thm:basic_structure_v_boundary}\,\eqref{item:basic_structure_v_boundary:oriented}\,\eqref{item:basic_structure_v_boundary:representable}
	in conjunction with \cite[3.1.19, 4.1.14, 4.1.20, 4.1.30,
	4.5.6]{MR41:1976}.\footnote{Referring additionally to
	\cite[5.3.20]{MR41:1976} and Almgren \cite[5.22]{MR1777737}, the
	hypothesis $\codim = 1$ could have been omitted. However, the author
	has not checked Almgren's result and its consequences will not be used
	in the present paper.}
\end{remark}
\begin{remark}
	Considering the situation $\vdim, \adim \in \nat$, $1 < \vdim <
	\adim$, $U$ is an open subset of $\rel^\adim$, $V \in \IVar_\vdim (
	U)$ and $\delta V = 0$, few properties of $V$ are known to hold near
	$\mathscr{H}^{\vdim-1}$ almost all $x \in \spt \| V \|$.
	Consequently, it appears difficult to obtain a structural description
	similar to
	\ref{thm:basic_structure_v_boundary}\,\eqref{item:basic_structure_v_boundary:representable}
	for $\| V\|$ measurable sets whose distributional $V$ boundary is
	representable by integration in this more general situation. However,
	for $\mathscr{L}^1$ almost all superlevel sets of a real valued
	generalised weakly differential function such a description will be
	proven under even milder hypotheses on $V$ in \ref{corollary:coarea}.
\end{remark}
\section{Decompositions of varifolds} \label{sec:decomposition}
In this section the existence of a decomposition of rectifiable varifolds
whose first variation is representable by integration is established in
\ref{thm:decomposition}. If the first variation is sufficiently well behaved,
this decomposition may be linked to the decomposition of the support of the
weight measure into connected components, see \ref{corollary:conn_structure}.
\begin{miniremark} \label{miniremark:situation_general}
	A useful set of hypotheses is gathered here for later reference.

	Suppose $\vdim, \adim \in \nat$, $\vdim \leq \adim$, $1 \leq p \leq
	\vdim$, $U$ is an open subset of $\rel^\adim$, $V \in \Var_\vdim ( U
	)$, $\| \delta V \|$ is a Radon measure, $\density^\vdim ( \| V \|, x
	) \geq 1$ for $\| V \|$ almost all $x$. If $p > 1$, then suppose
	additionally that $\mathbf{h} ( V, \cdot ) \in \Lploc{p} ( \| V \|,
	\rel^\adim)$ and
	\begin{gather*}
		\delta V (\theta) = - \tint{}{} \mathbf{h} (V,x) \bullet
		\theta(x) \ud \| V \| x \quad \text{for $\theta \in
		\mathscr{D} ( U, \rel^\adim)$}.
	\end{gather*}
	Therefore $V \in \RVar_\vdim (U)$ by Allard
	\cite[5.5\,(1)]{MR0307015}. If $p = 1$ let $\psi = \| \delta V \|$. If
	$p>1$ define a Radon measure $\psi$ over $U$ by $\psi (A) =
	\tint{A}{\ast} | \mathbf{h} ( V, x ) |^p \ud \| V \| x$ for $A \subset
	U$.
\end{miniremark}
\begin{definition} \label{def:indecomposable}
	Suppose $\vdim, \adim \in \nat$, $\vdim \leq \adim$, $U$ is an open
	subset of $\rel^\adim$, $V \in \Var_\vdim ( U )$ and $\| \delta V \|$
	is a Radon measure.

	Then $V$ is called \emph{indecomposable} if there exists no $\| V \| +
	\| \delta V \|$ measurable set $E$ such that
	\begin{gather*}
		\| V \| (E) > 0, \quad \| V \| ( U \without E ) > 0, \quad
		\boundary{V}{E} = 0.
	\end{gather*}
\end{definition}
\begin{remark} \label{remark:indecomposable}
	The same definition results if $E$ is required to be a Borel set.
\end{remark}
\begin{remark}
	If $V$ is indecomposable then so is $\lambda V$ for $0 < \lambda <
	\infty$. This is in contrast to a similar notion employed by Mondino
	in \cite[2.15]{MR3148123}.
\end{remark}
\begin{lemma} \label{lemma:connected}
	Suppose $\vdim, \adim \in \nat$, $\vdim \leq \adim$, $U$ is an open
	subset of $\rel^\adim$, $V \in \Var_\vdim ( U )$, $\| \delta V \|$
	is a Radon measure, $E_0$ and $E_1$ are nonempty, disjoint, relatively
	closed subset of $\spt \| V \|$, and $E_0 \cup E_1 = \spt \| V \|$.

	Then there holds
	\begin{gather*}
		\| V \| ( E_i ) > 0 \quad \text{and} \quad \boundary V{E_i} =
		0
	\end{gather*}
	for $i \in \{ 0,1 \}$. In particular, if $V$ is indecomposable then
	$\spt \| V \|$ is connected.
\end{lemma}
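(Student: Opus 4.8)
The plan is to prove the two asserted properties of the $E_i$ and then deduce connectedness. For the measure estimate, note that since $E_0$ and $E_1$ are disjoint, relatively closed, and cover $\spt \| V \|$, each $E_i$ is also relatively open in $\spt \| V \|$ and, being relatively closed in the closed set $\spt \| V \|$, is a Borel subset of $U$, hence $\| V \| + \| \delta V \|$ measurable. If $x \in E_i$, relative openness provides an open $W \subset U$ with $x \in W$ and $W \cap \spt \| V \| \subset E_i$; since $x \in \spt \| V \|$ one has $\| V \| ( W ) > 0$ and, as $\| V \| ( W \without \spt \| V \| ) = 0$, also $\| V \| ( E_i ) \geq \| V \| ( W \cap \spt \| V \| ) > 0$.

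The substantial point is $\boundary{V}{E_0} = 0$ (then $\boundary{V}{E_1} = 0$ by interchanging the roles of $E_0$ and $E_1$). First record that $\spt \| \delta V \| \subset \spt \| V \|$, since $( \delta V ) ( \theta ) = \tint{}{} \project P \bullet D \theta ( x ) \ud V ( x, P ) = 0$ whenever $\spt \theta$ misses $\spt \| V \|$; hence $\| \delta V \|$, like $\| V \|$, is concentrated on $E_0 \cup E_1$. Now fix $\theta \in \mathscr{D} ( U, \rel^\adim )$. The sets $( \spt \theta ) \cap E_0$ and $( \spt \theta ) \cap E_1$ are disjoint and compact, so one may pick $g \in \mathscr{E} ( U )$ with $0 \leq g \leq 1$, with $g = 1$ on a neighbourhood of $( \spt \theta ) \cap E_0$, and with $g = 0$ on a neighbourhood of $( \spt \theta ) \cap E_1$. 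Then $g \theta = \theta$ at every point of $E_0$ and $g \theta = 0$ at every point of $E_1$, so, using \ref{def:restriction_distribution} and the representation of $\delta V$ by integration from \ref{miniremark:situation_general_varifold} together with the concentration of $\| \delta V \|$ on $E_0 \cup E_1$,
\begin{gather*}
	( \delta V ) ( g \theta ) = \tint{E_0}{} \eta ( V, x ) \bullet \theta ( x ) \ud \| \delta V \| x = ( ( \delta V ) \restrict E_0 ) ( \theta ).
\end{gather*}
On the other hand $g \theta$ vanishes on an open set containing $E_1$ and agrees with $\theta$ on an open set containing $E_0$, whence $D ( g \theta ) = D \theta$ at points of $E_0$ and $D ( g \theta ) = 0$ at points of $E_1$; since $\| V \|$ is concentrated on $E_0 \cup E_1$ this gives
\begin{gather*}
	( \delta V ) ( g \theta ) = \tint{E_0 \times \grass{\adim}{\vdim}}{} \project{P} \bullet D \theta ( x ) \ud V ( x, P ) = \delta ( V \restrict E_0 \times \grass{\adim}{\vdim} ) ( \theta ).
\end{gather*}
Subtracting, $\boundary{V}{E_0} ( \theta ) = 0$. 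I expect the only genuine subtlety to be hidden in the choice of $g$: the cut-off cannot in general be taken locally constant near all of $\spt \| V \|$, only near $( \spt \theta ) \cap \spt \| V \|$, and the whole argument hinges on the observation that this already makes $g \theta$ behave like the (non-smooth) indicator function $\theta_{E_0}$ as seen by $( \delta V ) \restrict E_0$ while simultaneously reproducing $D \theta$ on $E_0$ and annihilating it on $E_1$ as seen by $\delta ( V \restrict E_0 \times \grass{\adim}{\vdim} )$; this works precisely because $E_0$ and $E_1$ are separated.

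Finally, suppose $V$ is indecomposable. If $\spt \| V \|$ were disconnected, one could write $\spt \| V \| = E_0 \cup E_1$ with $E_0, E_1$ nonempty, disjoint, and relatively closed, and the preceding two steps would furnish a $\| V \| + \| \delta V \|$ measurable set $E_0$ with $\| V \| ( E_0 ) > 0$, $\| V \| ( U \without E_0 ) \geq \| V \| ( E_1 ) > 0$, and $\boundary{V}{E_0} = 0$, contradicting \ref{def:indecomposable}. Hence $\spt \| V \|$ is connected.
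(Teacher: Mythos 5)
Your proof is correct. The key idea is the same as in the paper: separate $E_0$ and $E_1$ by a smooth cutoff that is locally constant near $\spt \| V \|$, so that evaluating $( \delta V ) ( g \theta )$ in two ways — once via the representation $( \delta V ) ( \cdot ) = \tint{}{} \eta ( V, \cdot ) \bullet ( \cdot ) \ud \| \delta V \|$ and once via the defining formula $( \delta V ) ( \cdot ) = \tint{}{} \project P \bullet D ( \cdot ) \ud V$ — yields simultaneously $( ( \delta V ) \restrict E_0 ) ( \theta )$ and $\delta ( V \restrict E_0 \times \grass{\adim}{\vdim} ) ( \theta )$, whence $\boundary V {E_0} ( \theta ) = 0$. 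The difference is one of packaging: the paper constructs a single global $w \in \mathscr{E} ( U, \rel )$ with $E_i \subset \Int \{ x \with w ( x ) = i \}$ via \cite[3.1.13]{MR41:1976} and then quotes Allard \cite[4.10\,(1)]{MR0307015}, whereas you construct a cutoff $g$ local to each test field $\theta$ (exploiting that $( \spt \theta ) \cap E_0$ and $( \spt \theta ) \cap E_1$ are disjoint compact sets, so no partition-of-unity machinery is needed) and carry out the computation directly after first recording the elementary but essential fact $\spt \| \delta V \| \subset \spt \| V \|$. Your route is self-contained and avoids two external citations at the mild cost of re-proving, in the special case $D g = 0$ near $\spt \theta \cap \spt \| V \|$, the product-rule content of Allard's lemma; the paper's global $w$ has the cosmetic advantage of being built once for both $E_0$ and $E_1$ and of matching the construction reused in the proof of \ref{thm:decomposition}.
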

\begin{proof}
	Notice that $U \without E_0$ and $U \without E_1$ are open, hence
	\begin{gather*}
		\| V \| ( E_0 ) = \| V \| ( U \without E_1 ) > 0, \quad
		\| V \| ( E_1 ) = \| V \| ( U \without E_0 ) > 0.
	\end{gather*}
	Next, one constructs $w \in \mathscr{E} ( U,\rel )$ such that
	\begin{gather*}
		E_i \subset \Int \{ x \with w(x) = i \} \quad
		\text{for $i = \{ 0, 1 \}$};
	\end{gather*}
	in fact, applying \cite[3.1.13]{MR41:1976} with $\Phi = \{ U \without
	E_0, U \without E_1 \}$, one obtains $h$, $S$ and $v_s$, notices that
	either $\cball s{10h(s)} \subset \rel^\adim \without E_0$ or $\cball
	s{10h(s)} \subset \rel^\adim \without E_1$ whenever $s \in S$, lets $T
	= S \cap \big \{ s \with \cball s{10h(s)} \subset \rel^\adim \without
	E_0 \}$ and takes
	\begin{gather*}
		w (x) = \sum_{t \in T} v_t (x) \quad \text{for $x \in U$}.
	\end{gather*}
	This yields $\boundary{V}{E_i} = 0$ for $i = \{ 0, 1
	\}$ by Allard \cite[4.10\,(1)]{MR0307015}.
\end{proof}
\begin{definition} \label{def:component}
	Suppose $\vdim, \adim \in \nat$, $\vdim \leq \adim$, $U$ is an open
	subset of $\rel^\adim$, $V \in \Var_\vdim ( U )$, and $\| \delta V \|$
	is a Radon measure.

	Then $W$ is called a \emph{component of $V$} if and only if $0 \neq W
	\in \Var_\vdim ( U )$ is indecomposable and there exists a $\| V \| +
	\| \delta V \|$ measurable set $E$ such that
	\begin{gather*}
		W = V \restrict E \times \grass{\adim}{\vdim}, \quad
		\boundary{V}{E} = 0.
	\end{gather*}
\end{definition}
\begin{remark} \label{remark:unique_component}
	Suppose $F$ is a $\| V \| + \| \delta V \|$ measurable set. Then $E$
	is $\| V \| + \| \delta V \|$ almost equal to $F$ if and only if 
	\begin{gather*}
		W = V \restrict F \times \grass{\adim}{\vdim}, \quad
		\boundary{V}{F} = 0.
	\end{gather*}
\end{remark}
\begin{remark} \label{remark:component}
	If $C$ is a connected component of $\spt \| V \|$ and $W$ is a
	component of $V$ with $C \cap \spt \| W \| \neq \varnothing$, then
	$\spt \| W \| \subset C$ by \ref{lemma:connected}.
\end{remark}
\begin{definition} \label{def:decomposition}
	Suppose $\vdim, \adim \in \nat$, $\vdim \leq \adim$, $U$ is an open
	subset of $\rel^\adim$, $V \in \Var_\vdim ( U )$,  $\| \delta V \|$ is
	a Radon measure, and $\Xi \subset \Var_\vdim ( U )$.

	Then $\Xi$ is called a \emph{decomposition of $V$} if and only if the
	following three conditions are satisfied:
	\begin{enumerate}
		\item Each member of $\Xi$ is a component of $V$.
		\item Whenever $W$ and $X$ are distinct members of $\Xi$ there
		exist disjoint $\| V \| + \| \delta V \|$ measurable sets $E$
		and $F$ with $\boundary{V}{E} = 0 = \boundary{V}{F}$ and
		\begin{gather*}
			W = V \restrict E \times \grass{\adim}{\vdim}, \quad X
			= V \restrict F \times \grass{\adim}{\vdim}.
		\end{gather*}
		\item $V ( k ) = \tsum{W \in \Xi}{} W (k)$ whenever $k \in
		\mathscr{K} ( U \times \grass{\adim}{\vdim} )$.
	\end{enumerate}
\end{definition}
\begin{remark} \label{remark:decomp_rep}
	Clearly, $\Xi$ is countable.

	Moreover, using \ref{remark:unique_component} one constructs a
	function $\xi$ mapping $\Xi$ into the class of all Borel subsets of
	$U$ such that distinct members of $\Xi$ are mapped onto disjoint sets
	and
	\begin{gather*}
		W = V \restrict \xi(W) \times \grass{\adim}{\vdim},
		\quad \boundary{V}{\xi(W)} = 0
	\end{gather*}
	whenever $W \in \Xi$. Consequently, in view of \ref{remark:partition},
	one infers
	\begin{gather*}
		{\textstyle ( \| V \| + \| \delta V \| ) \big ( U \without
		\bigcup \im \xi \big ) = 0}.
	\end{gather*}
	Also notice that $\boundary{V}{\left ( \bigcup \xi \lIm N \rIm \right
	)} = 0$ whenever $N \subset \Xi$.
\end{remark}
\begin{remark} \label{remark:decomposition}
	Suppose $\vdim$, $\adim$, $p$, $U$ and $V$ are as in
	\ref{miniremark:situation_general}, $p = \vdim$, and $\Xi$ is a
	decomposition of $W$. Observe that
	\ref{corollary:density_1d}\,\eqref{item:density_1d:local_lower_bound}
	and \cite[2.5]{snulmenn.isoperimetric} imply
	\begin{gather*}
		\card (\Xi \cap \{ W \with K \cap \spt \| W \| \neq
		\varnothing \} ) < \infty
	\end{gather*}
	whenever $K$ is a compact subset of $U$, hence
	\begin{gather*}
		\spt \| V \| = {\textstyle \bigcup \{ \spt \| W \| \with W \in
		\Xi \}}.
	\end{gather*}
	Notice that \cite[1.2]{snulmenn.isoperimetric} readily shows that both
	assertions need not to hold in case $p < \vdim$.
\end{remark}
\begin{theorem} \label{thm:decomposition}
	Suppose $\vdim, \adim \in \nat$, $\vdim \leq \adim$, $U$ is an open
	subset of $\rel^\adim$, $V \in \RVar_\vdim ( U )$, and $\| \delta V
	\|$ is a Radon measure.

	Then there exists a decomposition of $V$.
\end{theorem}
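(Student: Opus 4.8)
The plan is to realise the decomposition as a maximal disjointed family of ``indecomposable based sets'', the whole difficulty being concentrated in showing that nothing is left over.

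First I would record the closure properties of the class $\mathcal{A}$ of those $\|V\|+\|\delta V\|$ measurable sets $E$ with $\boundary VE=0$, taken modulo $\|V\|+\|\delta V\|$ null sets. Taking $F=U$ in \ref{remark:v_boundary} shows $\mathcal{A}$ is closed under complement (note $\boundary VU=0$), the same remark shows $F\without E\in\mathcal{A}$ whenever $E\subset F$ with $E,F\in\mathcal{A}$, and if $E_i\in\mathcal{A}$ are disjointed then $\bigcup_iE_i\in\mathcal{A}$, since $\delta(V\restrict\bigcup_iE_i\times\grass\adim\vdim)=\sum_i\delta(V\restrict E_i\times\grass\adim\vdim)=\sum_i(\delta V)\restrict E_i=(\delta V)\restrict\bigcup_iE_i$ by weak continuity of the first variation together with $\|\delta V\|$ being a Radon measure (compare \ref{remark:partition}). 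Consequently $\mathcal{A}$ is closed under countable monotone unions and intersections, a decreasing intersection being the complement of a disjointed union of ``annuli''. Finally \ref{remark:iterated_boundaries}, applied with $E\in\mathcal{A}$ and $W=V\restrict E\times\grass\adim\vdim$, gives $\boundary V{(E\cap F)}=\boundary WF$ for Borel $F$; hence for $F\subset E$ one has $F\in\mathcal{A}$ if and only if $\boundary WF=0$, so every component of $W$ in the sense of \ref{def:component} is a component of $V$, and the sought decomposition may be built from components of components.

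Next I would consider the collections $G$ of pairwise disjointed (modulo null sets) members $E$ of $\mathcal{A}$ with $\|V\|(E)>0$ for which $V\restrict E\times\grass\adim\vdim$ is indecomposable, ordered by inclusion. Since $\|V\|$ is $\sigma$ finite, any such $G$ is countable, and the union of a chain of such collections is of the same type; Zorn's lemma therefore yields a maximal one, say $G_0$. Put $E_\infty=U\without\bigcup G_0$ (a Borel representative); then $E_\infty\in\mathcal{A}$ by the closure properties. If $\|V\|(E_\infty)=0$ the proof is complete: by \ref{remark:partition} also $\|\delta V\|(E_\infty)=0$, so $V=\sum_{E\in G_0}V\restrict E\times\grass\adim\vdim$ with convergence as required in \ref{def:decomposition}, and $\{V\restrict E\times\grass\adim\vdim:E\in G_0\}$ is a decomposition of $V$. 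By maximality of $G_0$ the identity $\|V\|(E_\infty)=0$ is reduced to the assertion that \emph{every $E\in\mathcal{A}$ with $\|V\|(E)>0$ contains some $F\in\mathcal{A}$ with $\|V\|(F)>0$ such that $V\restrict F\times\grass\adim\vdim$ is indecomposable} — for such an $F\subset E_\infty$ is disjointed from every member of $G_0$ and could be adjoined to it.

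This assertion is the heart of the matter, and I expect the main obstacle. By the above one may replace $V$ by $V\restrict E\times\grass\adim\vdim$ and assume $E=U$, and, after a routine localisation (permissible since $\|V\|$ and $\|\delta V\|$ are Radon), assume $\|V\|(U)+\|\delta V\|(U)<\infty$. If $\sigma:=\inf\{\|V\|(F):F\in\mathcal{A},\ \|V\|(F)>0\}$ is positive the assertion is immediate: any $F\in\mathcal{A}$ with $0<\|V\|(F)<2\sigma$ is indecomposable, since a nontrivial splitting of $F$ would yield two disjointed members of $\mathcal{A}$ each of $\|V\|$ measure at least $\sigma$. The genuine difficulty is the case $\sigma=0$, that is, excluding that $\mathcal{A}$ below $U$ is atom free. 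Were there no positive measure indecomposable based subset, every positive measure member of $\mathcal{A}$ would split nontrivially, so one could build a strictly decreasing sequence $U=E_0\supset E_1\supset\cdots$ in $\mathcal{A}$ with $\|V\|(E_k)\to0$ and $\|V\|(E_k)>0$, whose disjointed annuli $E_k\without E_{k+1}$ lie in $\mathcal{A}$ and exhaust $U$; iterating the splitting on each annulus produces an ever finer ``tree'' of admissible sets along which the mass and the first variation mass are exactly conserved. Turning this soft picture into a contradiction — showing such an indefinitely branching refinement is incompatible with $V$ being a rectifiable varifold of locally finite first variation — is where the geometry enters and where I anticipate the bulk of the work: I would bring in the monotonicity identity \ref{corollary:monotonicity} to control the small pieces, the capacity estimate \ref{lemma:capacity} to discard an $\mathscr{H}^{\vdim-1}$ negligible set to which $\boundary V{\cdot}$ is insensitive, and the structure theorem \ref{thm:basic_structure_v_boundary} for the distributional $V$ boundary, which on the good part renders membership in $\mathcal{A}$ essentially a finite perimeter condition whose associated measure algebra is atomic — equivalently, that a minimal nonzero member of $\mathcal{A}$ below $U$ exists and is automatically indecomposable. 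That step is the crux of the whole argument.
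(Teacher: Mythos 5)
Your high-level architecture is reasonable and the bookkeeping (closure properties of $\mathcal{A}$, Zorn's lemma, the $\sigma$-finiteness argument for countability, \ref{remark:partition} to get $\|\delta V\|(E_\infty)=0$) is correct. The reduction to the single claim that \emph{every positive-$\|V\|$-measure $E\in\mathcal{A}$ contains a positive-measure indecomposable $F\in\mathcal{A}$} is also a legitimate reformulation. But you have not proven that claim — you explicitly leave it unproven, offer only a list of tools you ``would bring in'', and acknowledge it is the crux. That is where the entire content of the theorem lives, so as it stands there is a genuine gap.

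Two remarks on the tools you propose for closing the gap. The capacity lemma \ref{lemma:capacity} and the Gauss--Green structure theorem \ref{thm:basic_structure_v_boundary} both require hypotheses that are simply not available here: \ref{thm:basic_structure_v_boundary} needs $V\in\IVar_\vdim(U)$, $\|\delta V\|\leq\kappa\|V\|$, and a $C^2$ manifold carrying $\|V\|$ off an $\mathscr{H}^{\vdim-1}$-null set — none of which holds for a general $V\in\RVar_\vdim(U)$ with $\|\delta V\|$ merely Radon (no density lower bound, $\|\delta V\|$ possibly singular). So the ``good part is essentially finite perimeter'' picture is not accessible. On the other hand, \ref{corollary:monotonicity}, which you mention in passing, is the correct ingredient, and the paper's proof shows how it actually gets used: for $E\in R$ and a point $a$ where $\density^{\ast\vdim}(\|V\|\restrict E,a)\geq 1/i$, $\oball{a}{2\varepsilon_i}\subset U$ and $\measureball{\|\delta V\|}{\cball ar}\leq\unitmeasure\vdim i\,r^\vdim$ for small $r$, one applies monotonicity to $V\restrict E\times\grass\adim\vdim$ (whose first variation is $(\delta V)\restrict E$ since $\boundary VE=0$) to get the quantitative lower bound $\|V\|(E\cap\cball a{\varepsilon_i})\geq\delta_i$. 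Combined with the fact (from Allard's density theorem and \cite[2.8.18, 2.9.11]{MR41:1976}) that the exceptional set is $\|V\|$-null, this bounds by $\|V\|(\{x:\dist(x,A_i)\leq\varepsilon_i\})/\delta_i$ the number of disjoint members of $R$ with positive mass on $A_i$. This finite multiplicity is the real mechanism; your intuition that ``the geometry enters'' is correct but the argument is unfinished.

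More significantly, the paper does \emph{not} prove the atomicity claim you reduce to. Instead it inductively constructs a nested sequence of Borel partitions $G_1\supset G_2\supset\cdots$ (each refining the previous, with $G_i$ maximizing the count $\card(G_i\cap P_i)$ of pieces with positive mass on $A_i$, a count which is finite by the monotonicity lower bound), shows maximality forces $G_i\subset Q_i$ (sets that cannot be split into two pieces of $P_i$), and then proves that the positive-measure intersections $\bigcap_i E_i$ along nested chains $E_{i+1}\subset E_i\in G_i$ both exhaust $U$ and are indecomposable. This is a genuinely different route: it avoids Zorn's lemma entirely and never needs to exhibit an atom below an arbitrary $E\in\mathcal{A}$. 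It also sidesteps a subtlety in your reduction — that even after localising, turning the absence of arbitrarily small nonzero members of $\mathcal{A}$ into an actual atom requires a compactness/convergence argument you do not supply. Your proposal is in the right neighbourhood, but to be a proof it would need to convert the monotonicity lower bound into the claim you stated, and that conversion is not a small step.
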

\begin{proof}
	Assume $V \neq 0$.

	Denote by $R$ the family of Borel subsets $E$ of $U$ such that
	$\boundary{V}{E} = 0$. Notice that
	\begin{gather*}
		{\textstyle \bigcap_{i=1}^\infty E_i \in R} \qquad
		\text{whenever $E_i$ is a sequence in $R$ with $E_{i+1}
		\subset E_i$ for $i \in \nat$}, \\
		E \in R \quad \text{if and only if} \quad E \without F \in R
		\qquad \text{whenever $E \supset F \in R$}
	\end{gather*}
	by \ref{remark:v_boundary}. Let $P = R \cap \{ E \with \| V \| ( E ) >
	0 \}$. Next, define
	\begin{gather*}
		\delta_i = \unitmeasure{\vdim} 2^{-\vdim-1} i^{-1-2\vdim},
		\quad \varepsilon_i = 2^{-1} i^{-2}
	\end{gather*}
	for $i \in \nat$ and let $A_i$ denote the Borel set of $a \in
	\rel^\adim$ satisfying
	\begin{gather*}
		|a| \leq i, \quad \oball{a}{2\varepsilon_i} \subset U, \quad
		\density^\vdim ( \| V \|, a ) \geq 1/i, \\
		\measureball{\| \delta V \|}{ \cball{a}{r} } \leq
		\unitmeasure{\vdim} i r^\vdim \quad \text{for $0 < r <
		\varepsilon_i$}
	\end{gather*}
	whenever $i \in \nat$. Clearly, $A_i \subset A_{i+1}$ for $i \in \nat$
	and $\| V \| ( U \without \bigcup_{i=1}^\infty A_i ) = 0$ by
	Allard \cite[3.5\,(1a)]{MR0307015} and \cite[2.8.18,
	2.9.5]{MR41:1976}. Moreover, define
	\begin{gather*}
		P_i = R \cap \{ E \with \| V \| ( E \cap A_i ) > 0 \}
	\end{gather*}
	and notice that $P_i \subset P_{i+1}$ for $i \in \nat$ and $P =
	\bigcup_{i=1}^\infty P_i$. One observes the lower bound given by
	\begin{gather*}
		\| V \| ( E \cap \cball{a}{\varepsilon_i} ) \geq \delta_i
	\end{gather*}
	whenever $E \in R$, $i \in \nat$, $a \in A_i$ and $\density^{\ast
	\vdim} ( \| V \| \restrict E, a ) \geq 1/i$; in fact, noting
	\begin{gather*}
		\tint{0}{\varepsilon_i} r^{-\vdim} \measureball{\| \delta ( V
		\restrict E \times \grass{\adim}{\vdim} ) \|}{ \cball{a}{r} }
		\ud \mathscr{L}^1 r \leq \unitmeasure{\vdim} i \varepsilon_i,
	\end{gather*}
	the inequality follows from \ref{corollary:monotonicity} and
	\ref{remark:monotonicity}. Let $Q_i$ denote the set of $E \in P$ such
	that there is no $F$ satisfying
	\begin{gather*}
		F \subset E, \quad F \in P_i, \quad E \without F \in P_i.
	\end{gather*}

	Denote by $\Omega$ the class of Borel partitions $H$ of $U$ with $H
	\subset P$ and let $G_0 = \{ U \} \in \Omega$. The previously observed
	lower bound implies
	\begin{gather*}
		\delta_i \card ( H \cap P_i ) \leq \| V \| ( U \cap \{ x \with
		\dist (x,A_i) \leq \varepsilon_i \} ) < \infty
	\end{gather*}
	whenever $H$ is a disjointed subfamily of $P$, since for each $E \in
	H \cap P_i$ there exists $a \in A_i$ with $\density^\vdim ( \| V \|
	\restrict E, a ) = \density^\vdim ( \| V \|, a ) \geq 1/i$ by
	\cite[2.8.18, 2.9.11]{MR41:1976}, hence
	\begin{gather*}
		\| V \| ( E \cap \{ x \with \dist (x,A_i) \leq \varepsilon_i
		\} ) \geq \| V \| ( E \cap \cball{a}{\varepsilon_i} ) \geq
		\delta_i.
	\end{gather*}
	In particular, such $H$ is countable.

	Next, one inductively (for $i \in \nat$) defines $\Omega_i$ to be the
	class of all $H \in \Omega$ such that every $E \in G_{i-1}$ is the
	union of some subfamily of $H$ and chooses $G_i \in \Omega_i$ such
	that
	\begin{gather*}
		\card ( G_i \cap P_i ) \geq \card ( H \cap P_i ) \quad
		\text{whenever $H \in \Omega_i$}.
	\end{gather*}
	The maximality of $G_i$ implies $G_i \subset Q_i$; in fact, if there
	would exist $E \in G_i \without Q_i$ there would exist $F$ satisfying
	\begin{gather*}
		F \subset E, \quad F \in P_i, \quad E \without F \in P_i
	\end{gather*}
	and $H = ( G_i \without \{ E \} ) \cup \{ F, E \without F \}$ would
	belong to $\Omega_i$ with
	\begin{gather*}
		\card ( H \cap P_i ) > \card ( G_i \cap P_i ).
	\end{gather*}
	Moreover, it is evident that to each $x \in U$ there corresponds a
	sequence $E_i$ uniquely characterised by the requirements $x \in
	\bigcap_{i=1}^\infty E_i$ and $E_{i+1} \subset E_i \in G_i$ for $i \in
	\nat$.

	Define $G = \bigcup_{i=1}^\infty G_i$ and notice that $G$ is
	countable. Define $C$ to be the collection of sets
	$\bigcap_{i=1}^\infty E_i$ with positive $\| V \|$ measure
	corresponding to all sequences $E_i$ with $E_{i+1} \subset E_i \in
	G_i$ for $i \in \nat$. Clearly, $C$ is a disjointed subfamily of $P$,
	hence $C$ is countable. Next, it will be shown that
	\begin{gather*}
		\| V \| ( U \without {\textstyle \bigcup C } ) = 0.
	\end{gather*}
	In view of \cite[2.8.18, 2.9.11]{MR41:1976} it is sufficient to prove
	\begin{gather*}
		A_i \without {\textstyle \bigcup C} \subset {\textstyle
		\bigcup } \big \{ E \cap \{ x \with \density^{\ast \vdim} ( \|
		V \| \restrict E, x ) < \density^{\ast \vdim} ( \| V \|, x )
		\} \with E \in G \big \}
	\end{gather*}
	for $i \in \nat$. For this purpose consider $a \in A_i \without
	\bigcup C$ with corresponding sequence $E_j$. It follows that $\| V \|
	( \bigcap_{j=1}^\infty E_j ) = 0$, hence there exists $j$ with $\| V
	\| ( E_j \cap \cball{a}{\varepsilon_i} ) < \delta_i$ and the lower
	bound implies
	\begin{gather*}
		\density^{\ast \vdim} ( \| V \| \restrict E_j, a ) < 1/i \leq
		\density^\vdim ( \| V \|, a ).
	\end{gather*}

	It remains to prove that each varifold $V \restrict E \times
	\grass{\adim}{\vdim}$ corresponding to $E \in C$ is indecomposable. If
	this were not the case, then there would exist $E = \bigcap_{i=1} E_i
	\in C$ with $E_{i+1} \subset E_i \in G_i$ for $i \in \nat$ and a Borel
	set $F$ such that
	\begin{gather*}
		\| V \| ( E \cap F ) > 0, \quad \| V \| ( E \without F ) > 0,
		\quad \boundary V (E \cap F) = 0
	\end{gather*}
	by \ref{remark:iterated_boundaries} and \ref{remark:indecomposable}.
	This would imply $E \cap F \in P$ and $E \without F \in P$, hence for
	some $i$ also $E \cap F \in P_i$ and $E \without F \in P_i$ which
	would yield
	\begin{gather*}
		E \cap F \subset E_i, \quad E_i \without ( E \cap F ) \in P_i,
	\end{gather*}
	since $E \without F \subset E_i \without ( E \cap F ) \in R$; a
	contradiction to $E_i \in Q_i$.
\end{proof}
\begin{remark} \label{remark:nonunique_decomposition}
	The decomposition of $V$ may be nonunique. In fact,
	considering the six rays
	\begin{gather*}
		R_j = \{ t \exp ( \pi \mathbf{i} j/3) \with 0 < t < \infty
		\} \subset \complex = \rel^2, \quad \text{where $\pi =
		\boldsymbol{\Gamma} (1/2)^2$},
	\end{gather*}
	corresponding to $j \in \{ 0, 1, 2, 3, 4, 5 \}$ and their associated
	varifolds $V_j \in \IVar_1 ( \rel^2 )$ with $\| V_j \| = \mathscr{H}^1
	\restrict R_j$, one notices that $V = \sum_{j=0}^5 V_j \in \IVar_1 (
	\rel^2 )$ is a stationary varifold such that
	\begin{gather*}
		\{ V_0 + V_2 + V_4, V_1 + V_3 + V_5 \} \quad \text{and} \quad
		\{ V_0 + V_3, V_1 + V_4, V_2 + V_5 \}
	\end{gather*}
	are distinct decompositions of $V$.
\end{remark}
\begin{corollary} \label{corollary:conn_structure}
	Suppose $\vdim$, $\adim$, $p$, $U$ and $V$ are as in
	\ref{miniremark:situation_general}, $p = \vdim$, and $\Phi$ is the
	family of all connected components of $\spt \| V \|$.

	Then the following four statements hold.
	\begin{enumerate}
		\item \label{item:conn_structure:union} If $C \in \Phi$,
		then
		\begin{gather*}
			C = {\textstyle \bigcup \{ \spt \| W \| \with W \in \Xi,
			C \cap \spt \| W \| \neq \varnothing \}}
		\end{gather*}
		whenever $\Xi$ is decomposition of $V$.
		\item \label{item:conn_structure:finite} $\card ( \Phi \cap
		\{ C \with C \cap K \neq \varnothing \}) < \infty$ whenever
		$K$ is a compact subset of $U$.
		\item \label{item:conn_structure:open} If $C \in \Phi$, then
		$C$ is open relative to $\spt \| V \|$.
		\item \label{item:conn_structure:piece} If $C \in \Phi$, then
		$\spt ( \| V \| \restrict C ) = C$ and $\boundary{V}{C} = 0$.
	\end{enumerate}
\end{corollary}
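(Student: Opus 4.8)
The plan is to work throughout with a fixed decomposition $\Xi$ of $V$, which exists by \ref{thm:decomposition}. Since $p = \vdim$, \ref{remark:decomposition} applies and gives $\card ( \Xi \cap \{ W \with K \cap \spt \| W \| \neq \varnothing \} ) < \infty$ for every compact $K \subset U$ together with $\spt \| V \| = \bigcup \{ \spt \| W \| \with W \in \Xi \}$. Each $W \in \Xi$ is a nonzero, indecomposable component of $V$, so $\spt \| W \|$ is nonempty and, by \ref{lemma:connected}, connected; hence by \ref{remark:component} there is a unique member $C_W$ of $\Phi$ with $\spt \| W \| \subset C_W$. Now \eqref{item:conn_structure:union} follows: ``$\supset$'' is \ref{remark:component}, and ``$\subset$'' holds because each $x \in C \subset \spt \| V \| = \bigcup_{W \in \Xi} \spt \| W \|$ lies in some $\spt \| W \|$, which then meets $C$ hence is contained in it; as \ref{remark:decomposition} applies to \emph{every} decomposition, this settles \eqref{item:conn_structure:union} for the arbitrary $\Xi$ in its statement. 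For \eqref{item:conn_structure:finite}, the assignment $W \mapsto C_W$ maps $\Xi \cap \{ W \with \spt \| W \| \cap K \neq \varnothing \}$ onto $\Phi \cap \{ C \with C \cap K \neq \varnothing \}$: if $x \in C \cap K$, then $x \in \spt \| W \|$ for some $W \in \Xi$, so $\spt \| W \| \cap K \neq \varnothing$ and $C_W = C$; since the domain is finite by \ref{remark:decomposition}, so is the range.

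For \eqref{item:conn_structure:open}, fix $x \in C$ and $0 < r < \infty$ with $\cball xr \subset U$, and let $W_1, \ldots, W_k$ be the finitely many members of $\Xi$ with $\spt \| W_j \| \cap \cball xr \neq \varnothing$. Put $F = \bigcup \{ \spt \| W_j \| \with C_{W_j} = C \}$ and $F' = \bigcup \{ \spt \| W_j \| \with C_{W_j} \neq C \}$; these are relatively closed in $U$, and $F \subset C$, $F' \cap C = \varnothing$ since distinct members of $\Phi$ are disjoint, so $F \cap F' = \varnothing$. Also $x \in F$, for $x$ lies in some $\spt \| W \|$ with $W \in \Xi$, necessarily among the $W_j$ and with $C_W = C$. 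Hence $x \notin F'$, so $\dist ( x, F' ) > 0$, and choosing $0 < \rho \leq r$ with $\oball x\rho \cap F' = \varnothing$ one obtains
\begin{gather*}
	( \spt \| V \| ) \cap \oball x\rho \subset ( \spt \| V \| ) \cap \cball
	xr = {\textstyle \bigcup_{j=1}^k} \big ( ( \spt \| W_j \| ) \cap \cball
	xr \big ) \subset F \cup F',
\end{gather*}
and intersecting with $\oball x\rho$ discards the $F'$ part, so $( \spt \| V \| ) \cap \oball x\rho \subset F \subset C$. Thus $C$ is open relative to $\spt \| V \|$.

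For \eqref{item:conn_structure:piece}, note that $C$, being a connected component of the relatively closed set $\spt \| V \|$, is relatively closed in $U$, whence $\spt ( \| V \| \restrict C ) \subset C$; conversely, if $x \in C$ then $x \in \spt \| W \|$ for some $W \in \Xi$ with $C_W = C$, and writing $W = V \restrict \xi(W) \times \grass \adim \vdim$ with $\xi$ as in \ref{remark:decomp_rep}, the measure $\| V \| \restrict \xi(W) = \| W \|$ is concentrated on $\spt \| W \| \subset C$, so $\| V \| ( C \cap \oball xr ) \geq \| W \| ( \oball xr ) > 0$ for all $r > 0$ and $x \in \spt ( \| V \| \restrict C )$. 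For $\boundary VC = 0$, set $\Xi_C = \{ W \in \Xi \with C_W = C \}$ and $D = \bigcup \xi \lIm \Xi_C \rIm$, so that $\boundary VD = 0$ by \ref{remark:decomp_rep}; since $\boundary VE$ depends on $E$ only modulo $\| V \| + \| \delta V \|$ null sets (by \ref{def:v_boundary} and \ref{def:restriction_distribution}), it suffices to show $( \| V \| + \| \delta V \| ) ( ( C \without D ) \cup ( D \without C ) ) = 0$. For each $W \in \Xi$ the measures $\| V \| \restrict \xi(W) = \| W \|$ and $\| \delta V \| \restrict \xi(W) = \| \delta ( V \restrict \xi(W) \times \grass \adim \vdim ) \|$ — the latter because $\boundary V{\xi(W)} = 0$ and restriction of a distribution representable by integration restricts its variation measure — are both concentrated on $\spt \| W \|$; hence $( \| V \| + \| \delta V \| ) ( \xi(W) \without \spt \| W \| ) = 0$, so $\xi(W)$ is contained in $\spt \| W \| \subset C_W$ up to a $\| V \| + \| \delta V \|$ null set. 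Combining this with $( \| V \| + \| \delta V \| ) ( U \without \bigcup \im \xi ) = 0$ from \ref{remark:decomp_rep} and with the disjointness of distinct members of $\Phi$ yields $( \| V \| + \| \delta V \| ) ( ( C \without D ) \cup ( D \without C ) ) = 0$, hence the claim. I expect this final step — controlling $\| \delta V \|$ on $( C \without D ) \cup ( D \without C )$ — to be the only point needing real care, as it rests on the standard but non-trivial facts that $\| \delta ( V \restrict \xi(W) \times \grass \adim \vdim ) \| = \| \delta V \| \restrict \xi(W)$ and $\spt \| \delta ( V \restrict \xi(W) \times \grass \adim \vdim ) \| \subset \spt \| W \|$.
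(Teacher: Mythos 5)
Your proof is correct throughout. Parts \eqref{item:conn_structure:union} and \eqref{item:conn_structure:finite} follow essentially the paper's route, and your argument for \eqref{item:conn_structure:open} is a spelled-out version of the same local-finiteness idea; the interesting divergence is in part \eqref{item:conn_structure:piece}.

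For $\boundary{V}{C} = 0$, the paper's argument is topological: from \eqref{item:conn_structure:open}, $C$ and $\spt \| V \| \without C$ are both relatively open and relatively closed in $\spt \| V \|$, so \ref{lemma:connected} (applied to $E_0 = C$, $E_1 = \spt \| V \| \without C$ when the latter is nonempty) immediately yields $\boundary{V}{C} = 0$; no further measure-theoretic bookkeeping is needed, and $\spt ( \| V \| \restrict C ) = C$ follows at once from $C$ being both relatively open and closed. Your argument is instead measure-theoretic: you produce a Borel set $D = \bigcup \xi \lIm \Xi_C \rIm$ with $\boundary{V}{D} = 0$ directly from \ref{remark:decomp_rep}, observe that $\boundary{V}{E}$ is invariant under $\| V \| + \| \delta V \|$-negligible changes of $E$, and then show $C$ and $D$ differ by such a negligible set — a step that, as you correctly flag, hinges on the identities $\| \delta V \| \restrict \xi(W) = \| ( \delta V ) \restrict \xi(W) \| = \| \delta ( V \restrict \xi(W) \times \grass \adim \vdim ) \|$ (the first from $|\eta(V,\cdot)| = 1$ $\| \delta V \|$-a.e., the second from $\boundary{V}{\xi(W)} = 0$) and on $\spt \delta W \subset \spt \| W \|$. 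All of this is valid. Your route is longer and requires more care, but it buys a more explicit picture: it makes visible that each $\xi(W)$ is $\| V \| + \| \delta V \|$-almost contained in a single connected component, i.e.\ that the Borel partition associated to any decomposition of $V$ refines the topological decomposition of $\spt \| V \|$ modulo null sets, which is precisely the content of \eqref{item:conn_structure:union} at the measure level. The paper's appeal to \ref{lemma:connected} is the shorter path and avoids having to revisit the structure of $\xi$ at all.
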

\begin{proof}
	\eqref{item:conn_structure:union} is a consequence of
	\ref{remark:component} and \ref{remark:decomposition}. In view of
	\eqref{item:conn_structure:union} and \ref{thm:decomposition},
	\eqref{item:conn_structure:finite} is a consequence of
	\ref{remark:decomposition}. Next, \eqref{item:conn_structure:finite}
	implies \eqref{item:conn_structure:open}. Finally,
	\eqref{item:conn_structure:open} and \ref{lemma:connected} yield
	\eqref{item:conn_structure:piece}.
\end{proof}
\begin{remark}
	If $V$ is stationary, then \eqref{item:conn_structure:piece} implies
	that $V \restrict C \times \grass{\adim}{\vdim}$ is stationary. This
	fact might prove useful in considerations involving a strong maximum
	principle such as Wickramasekera \cite[Theorem
	1.1]{10.1007/s00526-013-0695-4}.
\end{remark}
\section{Relative isoperimetric inequality} \label{sec:rel_iso}
In this section a general isoperimetric inequality for varifolds satisfying a
lower density bound is established, see \ref{thm:rel_iso_ineq}.  As
corollaries one obtains two relative isoperimetric inequalities under the
relevant conditions on the first variation of the varifold, see
\ref{corollary:rel_iso_ineq} and \ref{corollary:true_rel_iso_ineq}.
\begin{miniremark} \label{miniremark:zero_boundary}
	Suppose $\vdim, \adim \in \nat$, $\vdim \leq \adim$, $U$ is an open
	subset of $\rel^\adim$, $V \in \Var_\vdim (U)$, $\| \delta V \|$ is a
	Radon measure, $\density^\vdim ( \| V \|, x ) \geq 1$ for $\| V \|$
	almost all $x$, $E$ is $\| V \| + \| \delta V \|$ measurable, $B$ is a
	closed subset of $\Bdry U$, and, see
	\ref{def:variation_measure}--\ref{def:restriction_distribution},
	\begin{gather*}
		( \| V \| + \| \delta V \| ) ( E \cap K ) + \| \boundary{V}{E}
		\| ( U \cap K ) < \infty, \\
		\tint{E \times \grass{\adim}{\vdim}}{} \project{P} \bullet D
		\theta (x) \ud V (x,P) =  ( ( \delta V ) \restrict E ) (
		\theta |U ) - ( \boundary{V}{E} ) ( \theta | U )
	\end{gather*}
	whenever $K$ is compact subset of $\rel^\adim \without B$ and $\theta
	\in \mathscr{D} ( \rel^\adim \without B, \rel^\adim )$. Defining $W
	\in \Var_\vdim ( \rel^\adim \without B )$ by
	\begin{gather*}
		W (A) = V ( A \cap ( E \times \grass{\adim}{\vdim} ) ) \quad
		\text{for $A \subset ( \rel^\adim \without B ) \times
		\grass{\adim}{\vdim}$},
	\end{gather*}
	this implies
	\begin{gather*}
		\| \delta W \| (A) \leq \| \delta V \| ( E \cap A ) + \|
		\boundary{V}{E} \| ( U \cap A ) \quad \text{for $A \subset
		\rel^\adim \without B$}.
	\end{gather*}
\end{miniremark}
\begin{example} \label{example:smooth_case_zero_boundary}
	Using
	\ref{thm:basic_structure_v_boundary}\,\eqref{item:basic_structure_v_boundary:tang_var}\,\eqref{item:basic_structure_v_boundary:representable}
	one verifies the following statement. \emph{If $\vdim, \adim\in \nat$,
	$\vdim \leq \adim$, $U$ is an open subset of $\rel^\adim$, $B$ is a
	closed subset of $\Bdry U$, $M$ is an $\vdim$ dimensional submanifold
	of $\rel^\adim$ of class $2$, $M \subset \rel^\adim \without B$, $(
	\Clos M ) \without M \subset B$, $V \in \Var_\vdim ( U )$ and $V' \in
	\Var_\vdim ( \rel^\adim \without B )$ satisfy
	\begin{gather*}
		V(k) = \tint{M \cap U}{} k(x,\Tan(M,x)) \ud \mathscr{H}^\vdim
		x \quad \text{for $k \in \mathscr{K} (U \times \grass \adim
		\vdim )$}, \\
		V'(k) = \tint{M}{} k (x,\Tan(M,x)) \ud \mathscr{H}^\vdim x
		\quad \text{for $k \in \mathscr{K} (( \rel^\adim \without B)
		\times \grass \adim \vdim)$},
	\end{gather*}
	and $E$ is an $\mathscr{H}^\vdim$ measurable subset of $M \cap U$,
	then $E$ satisfies the conditions of \ref{miniremark:zero_boundary} if
	and only if $\boundary {V'} E$ is representable by integration and
	$\mathbf{n} (M;E,x) = 0$ for $\mathscr{H}^{\vdim-1}$ almost all $x \in
	( \rel^\adim \without B ) \without U$; in this case
	\begin{gather*}
		\boundary VE ( \theta | U ) = - \tint U{} \mathbf{n} (M;E,x)
		\bullet \theta (x) \ud \mathscr{H}^{\vdim-1} x \quad \text{for
		$\theta \in \mathscr{D} (\rel^\adim \without B,\rel^\adim)$}.
	\end{gather*}}
	Since in the situation of \ref{miniremark:zero_boundary} there no
	varifold $V'$ available which extends $V$ in a canonical way, the
	condition on $E$ is formulated in terms of the behaviour $W$.
\end{example}
\begin{lemma} \label{lemma:lower_mass_bound_Q}
	Suppose $1 \leq M < \infty$.

	Then there exists a positive, finite number $\Gamma$ with the
	following property.

	If $\vdim, \adim \in \nat$, $\vdim \leq \adim \leq M$, $1 \leq Q \leq
	M$, $a \in \rel^\adim$, $0 < r < \infty$, $W \in \Var_\vdim (
	\oball{a}{r} )$, $\| \delta W \|$ is a Radon measure, $\density^\vdim
	( \| W \|, x ) \geq 1$ for $\| W \|$ almost all $x$, $a \in \spt \| W
	\|$, and
	\begin{gather*}
		\measureball{\| \delta W \|}{\cball{a}{s}} \leq \Gamma^{-1} \|
		W \| ( \cball{a}{s} )^{1-1/\vdim} \quad \text{for $0 < s <
		r$}, \\
		\| W \| ( \{ x \with \density^\vdim ( \| W \|, x ) < Q \} )
		\leq \Gamma^{-1} \measureball{\| W \|}{\oball{a}{r}},
	\end{gather*}
	then there holds
	\begin{gather*}
		\measureball{\| W \|}{\oball{a}{r}} \geq ( Q-M^{-1} )
		\unitmeasure{\vdim} r^\vdim.
	\end{gather*}
\end{lemma}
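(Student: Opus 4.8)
The plan is to prove this by a monotonicity-type argument combined with a continuity/connectedness argument that propagates a density lower bound from the point $a$ outward. Since $a \in \spt \| W \|$ and $\density^\vdim ( \| W \|, x ) \geq 1$ holds $\| W \|$ almost everywhere, the monotonicity identity in the form of \ref{corollary:density_ratio_estimate} (applied with the summability assumption on $\| \delta W \|$) shows that $s^{-\vdim} \| W \| ( \cball as )$ does not drop too fast as $s$ decreases, so we get a lower bound on $\| W \| ( \oball ar )$ of the shape $\unitmeasure\vdim r^\vdim$ times a factor; the difficulty is to upgrade the constant from something near $1$ to the sharp-looking $Q - M^{-1}$, using the hypothesis that the "bad set" $\{ x \with \density^\vdim ( \| W \|, x ) < Q \}$ has small $\| W \|$ measure. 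First I would fix the constant: choose $\Gamma$ large (to be determined by $M$) and work inside $\oball ar$.

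\textbf{Main steps.} First, I would observe that by \ref{corollary:density_ratio_estimate} with $\kappa = \Gamma^{-1}$ (so that the hypothesis $\| \delta W \| ( \cball at ) \leq \Gamma^{-1} \| W \| ( \cball at )^{1-1/\vdim}$ feeds in; note this requires rewriting the assumed bound, which lacks the factor $r^{-1} \| W \| ( \cball ar )^{1/\vdim}$, but that factor is harmless since one may instead directly invoke \ref{lemma:calculus} via \ref{corollary:monotonicity} and \ref{remark:monotonicity}), the density ratio $\density^\vdim ( \| W \|, a )$ — which is at least the liminf of the ratios — is controlled, and more importantly the ratio at scale $r$ is bounded below: $r^{-\vdim} \| W \| ( \oball ar ) \geq ( 1 + \vdim^{-1} \Gamma^{-1} \log(r/s) )^{-\vdim} s^{-\vdim} \| W \| ( \cball as )$ for $s < r$. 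Letting $s \to 0+$ and using $\density^{\ast \vdim}(\| W \|, a) \geq 1$ (which follows because $a \in \spt\|W\|$ together with the standard consequence of monotonicity that the density exists and is $\geq 1$ at support points when the a.e.\ lower bound holds — here I would cite the argument underlying \ref{corollary:density_1d}, or for general $\vdim$ the density semicontinuity from Allard's monotonicity) gives $r^{-\vdim} \| W \| ( \oball ar ) \geq 1$. This already yields the bound with $Q = 1 + M^{-1}$ in place of $Q - M^{-1}$ — not good enough. The second, essential step is to exploit the smallness of the set where the density is below $Q$. On the complement, the density is $\geq Q$; so if most of the mass in $\oball ar$ had density $\geq Q$, an averaged monotonicity argument would give $\| W \|(\oball ar) \geq Q \unitmeasure\vdim r^\vdim$ up to the error from the bad set. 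Concretely, I would integrate the monotonicity identity of \ref{corollary:monotonicity} over a suitable range of radii, or apply \ref{lemma:lower_mass_bound_Q}-type bookkeeping: combine the pointwise lower density bound $\density^\vdim(\|W\|,x) \geq Q$ on the good set with a Vitali/Besicovitch covering of the good set inside $\oball ar$ by balls on which the density ratio is close to $Q$, subtract the contribution $\Gamma^{-1} r^\vdim$ of the bad set, and absorb the first-variation error $\Gamma^{-1} \| W \|(\cball as)^{1-1/\vdim} \leq \Gamma^{-1} (\unitmeasure\vdim r^\vdim \cdot (\text{const}))^{1-1/\vdim}$ into the constant. Choosing $\Gamma$ large compared to $M$, $\unitmeasure\vdim$, and $\besicovitch\adim$ makes all error terms at most $M^{-1} \unitmeasure\vdim r^\vdim$, giving $\| W \|(\oball ar) \geq (Q - M^{-1})\unitmeasure\vdim r^\vdim$.

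\textbf{The main obstacle.} The hard part will be making the covering argument quantitative: one needs, for $\| W \|$ almost every $x$ in the good set, a radius $\rho(x)$ with $\cball x{\rho(x)} \subset \oball ar$ and $\| W \|(\cball x{\rho(x)}) \geq (Q - \text{small})\unitmeasure\vdim \rho(x)^\vdim$, then a disjoint subfamily filling up a definite fraction of $\oball ar$ — but disjoint balls centered in a proper subset of $\oball ar$ need not exhaust $\oball ar$, so one must instead run the argument with $\oball a{r}$ replaced by slightly smaller concentric balls and let the radius tend to $r$, or use that the ratio $s^{-\vdim}\|W\|(\cball as)$ is monotone-up-to-error in $s$ to transfer a lower bound valid at small scales on a large-measure set to scale $r$. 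A clean route, which I would try first, is to avoid covering entirely: apply \ref{corollary:monotonicity} at the center $a$, but note that the left side contains the tilt-excess term $\int_{(\cball ar \without \cball as)\times\grass\adim\vdim} |x-a|^{-\vdim-2}|\perpproject P(x-a)|^2 \ud V$, which is nonnegative and can be discarded; this alone only gives the $Q=1$ bound. To get $Q > 1$ one genuinely needs information away from $a$, so the covering (or an integral-geometric averaging of monotonicity over centers ranging in the good set) seems unavoidable. I expect the proof in the paper to use exactly such an averaging: integrate the inequality of \ref{corollary:monotonicity} in the center variable against $\| W \| \restrict (\text{good set} \cap \oball a{r/2})$, use Fubini and the lower density bound $Q$ on the good set, and bound the bad-set and first-variation remainders by $\Gamma^{-1} r^\vdim$.
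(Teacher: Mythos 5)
The paper's proof is by compactness and contradiction, not by a direct quantitative argument. It supposes no $\Gamma$ works, extracts (after normalising $a=0$, $r=1$ and using \cite[2.5]{snulmenn.isoperimetric} to keep a uniform lower mass bound at $0$) a sequence $W_i$ with $\Gamma_i \to \infty$, and passes to a varifold limit $W$. The limit is stationary, has $0 \in \spt \| W \|$, has $\density^\vdim(\|W\|,x) \geq Q$ for $\|W\|$ almost every $x$ (Allard lower semicontinuity, since the bad-set mass tends to zero), and has $\measureball{\| W \|}{\oball 01} \leq (Q - M^{-1})\unitmeasure\vdim$; upper semicontinuity of density at the support point $0$ of a stationary varifold then gives $\density^\vdim(\|W\|,0) \geq Q$, hence $\measureball{\| W \|}{\oball 01} \geq Q \unitmeasure\vdim$, a contradiction. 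Your attempt at a direct quantitative proof is a genuinely different route, and it could in principle be made to work; but the mechanism you put at its centre does not.

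The covering step is the gap. Disjoint balls $B_i = \cball{x_i}{\rho_i}$ with centres in the good set and $\measureball{\|W\|}{B_i} \gtrsim Q\unitmeasure\vdim\rho_i^\vdim$ give $\measureball{\|W\|}{\oball ar} \gtrsim Q\unitmeasure\vdim\sum_i\rho_i^\vdim$, but $\sum_i\rho_i^\vdim$ has no reason to be comparable to $r^\vdim$ -- the good set could occupy a tiny region. The integral-geometric averaging you mention reduces to the same issue: after Fubini it only pays off if the good set already carries mass near $a$, at which point the averaging is superfluous. What the contradiction proof's upper-semicontinuity step secretly encodes, and what you almost articulate when you note that monotonicity at $a$ alone gives only $Q = 1$, is a \emph{single-good-point} argument: by \cite[2.5]{snulmenn.isoperimetric} one has $\measureball{\|W\|}{\cball a{\varepsilon r}} \geq (2\vdim\isoperimetric\vdim)^{-\vdim}(\varepsilon r)^\vdim$, so once $\Gamma$ is large (depending only on $M$ and $\varepsilon$) the bad set cannot fill $\cball a{\varepsilon r}$ and there is a good point $x$ there; apply monotonicity \emph{centred at $x$} out to radius $(1-\varepsilon)r$ and use $\cball x{(1-\varepsilon)r}\subset\oball ar$ to obtain $\measureball{\|W\|}{\oball ar}\geq(1+\text{err})^{-\vdim}Q\unitmeasure\vdim(1-\varepsilon)^\vdim r^\vdim$, which suffices on choosing $\varepsilon$ and $\Gamma^{-1}$ small relative to $M^{-1}$. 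Two further points you would need to settle: the bad-set hypothesis here is $\Gamma^{-1}\measureball{\|W\|}{\oball ar}$, not $\Gamma^{-1}r^\vdim$, so the two lower bounds must be compared; and the first-variation hypothesis controls $\measureball{\|\delta W\|}{\cball as}$ only for balls centred at $a$, so monotonicity at $x \neq a$ requires transferring that control off-centre, which needs a little bookkeeping that the compactness proof sidesteps entirely. The reward of your route, if completed, would be an explicit $\Gamma(M)$; the paper's compactness route gives none.
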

\begin{proof}
	If the lemma were false for some $M$, there would exist a sequence
	$\Gamma_i$ with $\Gamma_i \to \infty$ as $i \to \infty$, and sequences
	$\vdim_i$, $\adim_i$, $Q_i$, $a_i$, $r_i$, and $W_i$ showing that
	$\Gamma = \Gamma_i$ does not have the asserted property.

	One could assume for some $\vdim, \adim \in \nat$, $1 \leq Q \leq M$
	that $\vdim \leq \adim \leq M$,
	\begin{gather*}
		\vdim = \vdim_i, \quad \adim = \adim_i, \quad  a_i = 0, \quad
		r_i = 1
	\end{gather*}
	for $i \in \nat$ and $Q_i \to Q$ as $i \to \infty$ and, by
	\cite[2.5]{snulmenn.isoperimetric},
	\begin{gather*}
		\measureball{\| W_i \|}{\cball{0}{s}} \geq ( 2 \vdim
		\isoperimetric{\vdim} )^{-\vdim} s^\vdim \quad \text{whenever
		$0 < s < 1$ and $i \in \nat$}.
	\end{gather*}
	Defining $W \in \Var_\vdim ( \rel^\adim \cap \oball{0}{1} )$ to be the
	limit of some subsequence of $W_i$, one would obtain
	\begin{gather*}
		\measureball{\| W \|}{\oball{0}{1}} \leq ( Q - M^{-1} )
		\unitmeasure{\vdim}, \quad 0 \in \spt \| W \|, \quad \delta W
		= 0.
	\end{gather*}
	Finally, using Allard \cite[5.1\,(2), 5.4, 8.6]{MR0307015}, one would
	then conclude that
	\begin{gather*}
		\density^\vdim ( \| W \|, x ) \geq Q \quad \text{for $\| W \|$
		almost all $x$}, \\
		\density^\vdim ( \| W \|, 0 ) \geq Q, \quad \measureball{\| W
		\|}{\oball{0}{1}} \geq Q \unitmeasure{\vdim},
	\end{gather*}
	a contradiction.
\end{proof}
\begin{remark}
	Considering stationary varifolds whose support is contained in two
	affine planes with $\density^\vdim ( \| W \| , a )$ a small positive
	number, shows that the hypotheses ``$\density^\vdim ( \| W \|, x )
	\geq 1$ for $\| W \|$ almost all $x$'' cannot be omitted.
\end{remark}
\begin{remark}
	Even for smooth functions, \ref{lemma:lower_mass_bound_Q} is the key
	observation which -- through the relative isoperimetric inequalities
	\ref{corollary:rel_iso_ineq} and \ref{corollary:true_rel_iso_ineq} --
	leads to Sobolev Poincar{\'e} type estimates which are applicable near
	$\| V \|$ almost all points of $\{ x \with \density^\vdim ( \| V \|, x
	) \geq 2 \}$, see \ref{thm:sob_poin_summary}. For generalised weakly
	differentiable functions, these estimates in turn provide an important
	ingredient for the differentiability results obtained in
	\ref{thm:approx_diff} and \ref{thm:diff_lebesgue_spaces} and the
	coarea formula in \ref{corollary:coarea}.
\end{remark}
\begin{remark} \label{remark:kuwert_schaetzle}
	Taking $Q=1$ in \ref{lemma:lower_mass_bound_Q} (or applying
	\cite[2.6]{snulmenn.isoperimetric}) yields the following proposition:
	\emph{If $\vdim$, $\adim$, $p$, $U$, $V$, and $\psi$ are as in
	\ref{miniremark:situation_general}, $p = \vdim$, $a \in \spt \| V \|$,
	and $\psi ( \{ a \} ) = 0$, then $\density_\ast^\vdim ( \| V \|, a )
	\geq 1$.} If $\| \delta V \|$ is absolutely continuous with respect to
	$\| V \|$ then the condition $\psi ( \{ a \} ) = 0$ is redundant.

	If $\vdim = \adim$ and $f : \rel^\adim \to \{ y \with 1 \leq y <
	\infty \}$ is a weakly differentiable function with $\weakD f \in
	\Lploc{\adim} ( \mathscr{L}^\adim, \Hom ( \rel^\adim, \rel ) )$, then
	the varifold $V \in \RVar_\adim ( \rel^\adim )$ defined by the
	requirement $\| V \| ( B ) = \tint{B}{} f \ud \mathscr{L}^\adim$
	whenever $B$ is a Borel subset of $\rel^\adim$ satisfies the
	conditions of \ref{miniremark:situation_general} with $p = \adim$
	since
	\begin{gather*}
		\delta V ( g) = - \tint{}{} \left < g (x), \weakD (\log \circ
		f ) (x) \right > \ud \| V \| x \quad \text{for $g \in
		\mathscr{D} ( \rel^\adim, \rel^\adim )$}.
	\end{gather*}
	If $\adim > 1$ and $C$ is a countable subset of $\rel^\adim$, one may
	use well known properties of Sobolev functions, in particular example
	\cite[4.43]{MR2424078}, to construct $f$ such that $C \subset \{ x
	\with \density^\adim ( \| V \|, x ) = \infty \}$. It is therefore
	evident that the conditions of \ref{miniremark:situation_general} with
	$p = \vdim>1$ are insufficient to guarantee finiteness or upper
	semicontinuity of $\density^\vdim ( \| V \|, \cdot )$ at each point of
	$U$. However, the following proposition was obtained by Kuwert and
	Sch\"atzle in \cite[Appendix A]{MR2119722}: \emph{If $\vdim$, $\adim$,
	$p$, $U$, and $V$ are as in \ref{miniremark:situation_general}, $p =
	\vdim = 2$, and $V \in \IVar_2 ( U )$, then $\density^2 ( \| V \|,
	\cdot )$ is a real valued, upper semicontinuous function whose domain
	is $U$.}
\end{remark}
\begin{remark}
	The preceding remark is a corrected and extended version of the
	author's remark in \cite[2.7]{snulmenn.isoperimetric} where the last
	two sentences should have referred to integral varifolds.
\end{remark}
\begin{theorem} \label{thm:rel_iso_ineq}
	Suppose $1 \leq M < \infty$.

	Then there exists a positive, finite number $\Gamma$ with the
	following property.

	If $\vdim, \adim \in \nat$, $\vdim \leq \adim \leq M$, $1 \leq Q \leq
	M$, $U$ is an open subset of $\rel^\adim$, $W \in \Var_\vdim ( U )$,
	$S, \Sigma \in \mathscr{D}' ( U, \rel^\adim )$ are representable by
	integration, $\delta W = S + \Sigma$, $\density^\vdim ( \| W \|, x )
	\geq 1$ for $\| W \|$ almost all $x$, $0 < r < \infty$, and
	\begin{gather*}
		\| S \| ( U ) \leq \Gamma^{-1} \qquad \text{if $\vdim = 1$},
		\\
		S(\theta) \leq \Gamma^{-1} \Lpnorm{\|
		W\|}{\vdim/(\vdim-1)}{\theta} \quad \text{for $\theta \in
		\mathscr{D} ( U, \rel^\adim )$} \qquad \text{if $\vdim > 1$},
		\\
		\| W \| ( U ) \leq ( Q-M^{-1} ) \unitmeasure{\vdim} r^\vdim,
		\quad
		\| W \| ( \{ x \with \density^\vdim ( \| W \|, x )
		< Q \} ) \leq \Gamma^{-1} r^\vdim,
	\end{gather*}
	then there holds
	\begin{gather*}
		\| W \| ( \{ x \with \oball{x}{r} \subset U \})^{1-1/\vdim}
		\leq \Gamma \, \| \Sigma \| ( U ),
	\end{gather*}
	where $0^0=0$.
\end{theorem}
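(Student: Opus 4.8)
The plan is to combine \ref{lemma:lower_mass_bound_Q}, the monotonicity identity of Section~\ref{sec:monotonicity}, and the Besicovitch Federer covering theorem. Write $A=\{x:\oball xr\subset U\}$. Since $0^0=0$ one may assume $\|\Sigma\|(U)<\infty$; since $\density^\vdim(\|W\|,\cdot)\ge1$ holds $\|W\|$ almost everywhere, it suffices to work on the set $A'$ of those $a\in A\cap\spt\|W\|$ at which $\density^\vdim(\|W\|,a)$ exists and is at least $1$, because $\|W\|(A)=\|W\|(A')$. One has $\|\delta W\|\le\|S\|+\|\Sigma\|$ and $\|W\|(A)\le\|W\|(U)\le(Q-M^{-1})\unitmeasure\vdim r^\vdim$, and the hypotheses on $S$ enter only through $\|S\|(B)\le\Gamma^{-1}\|W\|(B)^{1-1/\vdim}$ for Borel $B\subset U$: for $\vdim=1$ this is immediate, and for $\vdim>1$ it follows by testing the assumed bound $S(\theta)\le\Gamma^{-1}\Lpnorm{\|W\|}{\vdim/(\vdim-1)}{\theta}$ against functions with $|\theta|\le1$ concentrated near $B$ and invoking H\"older's inequality together with outer regularity. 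The number $\Gamma$ is fixed large at the end in terms of $M$, $\Gamma_{\ref{lemma:lower_mass_bound_Q}}(2M)$, $\besicovitch\adim$ and $\unitmeasure\vdim$, all controlled by $M$ since $\vdim\le\adim\le M$.

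The case $\vdim=1$ is handled directly. If $\|\Sigma\|(U)\ge\Gamma^{-1}$ then $\|W\|(A)^0\le1\le\Gamma\|\Sigma\|(U)$ once $\Gamma\ge1$. If $\|\Sigma\|(U)<\Gamma^{-1}$, then $\|\delta W\|(U)<2\Gamma^{-1}$, so for each $a\in A'$ Corollary~\ref{corollary:density_1d}\,\eqref{item:density_1d:local_lower_bound} (applied with $x=a$, $s=r$) gives $\measureball{\|W\|}{\oball ar}\ge2^{-1}(1-2\Gamma^{-1})r$; hence all hypotheses of \ref{lemma:lower_mass_bound_Q}, with $M$ replaced by $2M$, hold at $a$ for $\Gamma$ large, and its conclusion $\measureball{\|W\|}{\oball ar}\ge(Q-(2M)^{-1})\unitmeasure1 r$ contradicts $\measureball{\|W\|}{\oball ar}\le(Q-M^{-1})\unitmeasure1 r$; thus $A'=\varnothing$ and $\|W\|(A)=0=\|W\|(A)^0$.

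Assume now $\vdim>1$. The heart of the proof is the assertion that there is a positive $\gamma=\gamma(M)$ such that every $a\in A'$ admits a radius $0<\rho(a)<r$ with
\begin{gather*}
	\|\Sigma\|(\cball a{\rho(a)})\ \ge\ \gamma\,\measureball{\|W\|}{\cball a{\rho(a)}}^{\,1-1/\vdim}.
\end{gather*}
One distinguishes two cases by the size of $\measureball{\|W\|}{\oball ar}$. When $\measureball{\|W\|}{\oball ar}\ge\Gamma_{\ref{lemma:lower_mass_bound_Q}}(2M)\,\Gamma^{-1}r^\vdim$, the density hypothesis $\|W\|(\{x:\density^\vdim(\|W\|,x)<Q\})\le\Gamma^{-1}r^\vdim$ yields the corresponding hypothesis of \ref{lemma:lower_mass_bound_Q} (with $M$ replaced by $2M$, applied to $W\restrict\oball ar\times\grass\adim\vdim$) at $a$; since its conclusion would contradict $\measureball{\|W\|}{\oball ar}\le(Q-M^{-1})\unitmeasure\vdim r^\vdim<(Q-(2M)^{-1})\unitmeasure\vdim r^\vdim$, the remaining hypothesis must fail, i.e.\ $\measureball{\|\delta W\|}{\cball as}>\Gamma_{\ref{lemma:lower_mass_bound_Q}}(2M)^{-1}\|W\|(\cball as)^{1-1/\vdim}$ for some $0<s<r$; subtracting $\|S\|(\cball as)\le\Gamma^{-1}\|W\|(\cball as)^{1-1/\vdim}$ gives the estimate with $\rho(a)=s$. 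When $\measureball{\|W\|}{\oball ar}$ is smaller, one selects $\rho(a)$ as the least radius with $\measureball{\|W\|}{\cball a{\rho(a)}}=\delta\,\rho(a)^\vdim$ for a fixed $\delta=\delta(M)\le\tfrac12\unitmeasure\vdim$; this exists in $(0,r)$ because the density ratio exceeds $\delta$ at small radii (as $\density^\vdim(\|W\|,a)\ge1$) but $\measureball{\|W\|}{\cball a\rho}\le\measureball{\|W\|}{\oball ar}<\delta\rho^\vdim$ for $\rho$ near $r$. One then shows, through the monotonicity identity~\ref{corollary:monotonicity} and the density-ratio estimate~\ref{corollary:density_ratio_estimate} — which are tailored precisely to keep the Sobolev-bounded summand $S$ under control — that $\|\Sigma\|$ cannot lie below a dimensionally correct multiple of the mass at \emph{every} scale $<r$, so that the displayed inequality holds for a suitable $\rho(a)$.

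Granting this local estimate, the closed balls $\cball a{\rho(a)}$, $a\in A'$, have bounded radii and cover $A'$, hence split by the Besicovitch Federer covering theorem into disjointed subfamilies $G_1,\dots,G_{\besicovitch\adim}$ whose union still covers $A'$; then, using the local estimate, the superadditivity of $t\mapsto t^{\vdim/(\vdim-1)}$ on $[0,\infty)$, and $\bigcup G_j\subset U$,
\begin{gather*}
	\|W\|(A)\le\sum_{j=1}^{\besicovitch\adim}\sum_{B\in G_j}\measureball{\|W\|}B\le\gamma^{-\vdim/(\vdim-1)}\sum_{j=1}^{\besicovitch\adim}\Big(\sum_{B\in G_j}\|\Sigma\|(B)\Big)^{\vdim/(\vdim-1)}\le\besicovitch\adim\,\gamma^{-\vdim/(\vdim-1)}\,\|\Sigma\|(U)^{\vdim/(\vdim-1)},
\end{gather*}
so that $\|W\|(A)^{1-1/\vdim}\le\besicovitch\adim^{\,1-1/\vdim}\gamma^{-1}\|\Sigma\|(U)$, and one finishes by choosing $\Gamma$ no smaller than this constant and than the finitely many other lower bounds on $\Gamma$ imposed above. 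The main obstacle is the second case of the local estimate: extracting from the monotonicity identity, at a single well-chosen scale, a lower bound on $\|\Sigma\|$ of the correct homogeneity while absorbing $S$ without the logarithmic loss a naive pointwise monotonicity argument would incur — together with the attendant care about open versus closed balls and about a priori infinite weighted integrals, dealt with by discarding $\|W\|$-null sets beforehand.
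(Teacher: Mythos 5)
Your overall architecture — extract a good radius at each point of $A\cap\spt\|W\|$, then cover with the Besicovitch Federer theorem and sum — is the same as the paper's, and your $\vdim=1$ argument as well as your ``case (a)'' for $\vdim>1$ correctly reproduce its mechanism: assume the first variation is controlled at all scales, invoke \ref{lemma:lower_mass_bound_Q} to contradict the mass bound, conclude that the first variation spikes at some scale, then subtract the Sobolev-bounded $\|S\|$. However, your ``case (b)'' — small $\measureball{\|W\|}{\oball ar}$ — is where the proof actually has to do work, and you do not carry it out. You select a scale where the density ratio equals a fixed $\delta$ and then assert that the monotonicity identity \ref{corollary:monotonicity} and the density-ratio estimate \ref{corollary:density_ratio_estimate} force $\|\Sigma\|$ to be large at some scale. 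Those two results cannot deliver this: the monotonicity identity gives $s^{-\vdim}\measureball{\|W\|}{\cball as}\le r^{-\vdim}\measureball{\|W\|}{\cball ar}+\int_s^r t^{-\vdim}\measureball{\|\delta W\|}{\cball at}\,dt$, and a bound $\measureball{\|\delta W\|}{\cball at}\le\lambda\|W\|(\cball at)^{1-1/\vdim}\le\lambda\delta^{1-1/\vdim}t^{\vdim-1}$ feeds in a $\lambda\log(r/s)$ term, exactly the ``logarithmic loss'' you flag yourself; and \ref{corollary:density_ratio_estimate} is only an upper bound on small-scale mass in terms of large-scale mass, so passing $s\to0$ against $\density^\vdim(\|W\|,a)\ge1$ gives nothing as $\log(r/s)\to\infty$.

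The missing ingredient is an isoperimetric-type lower mass bound: the paper invokes \cite[2.5]{snulmenn.isoperimetric}, which says that the density hypothesis $\density^\vdim(\|W\|,\cdot)\ge1$ together with the first-variation bound $\measureball{\|\delta W\|}{\cball at}\le(2\isoperimetric\vdim)^{-1}\|W\|(\cball at)^{1-1/\vdim}$ for $0<t<r$ forces $\measureball{\|W\|}{\oball ar}\ge(2\vdim\isoperimetric\vdim)^{-\vdim}r^\vdim$. This is not a consequence of the monotonicity identity alone; it rests on the isoperimetric inequality (the constant $\isoperimetric\vdim$ of the Notation section), which is precisely what removes the logarithmic factor. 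Once you have that, your case distinction collapses: if no good radius exists, \cite[2.5]{snulmenn.isoperimetric} places you unconditionally in your case (a) with a fixed lower bound on $\measureball{\|W\|}{\oball ar}$, and the paper in fact runs the whole argument as one contradiction — if no $s$ works then by \cite[2.5]{snulmenn.isoperimetric} the ball carries definite mass, so by \ref{lemma:lower_mass_bound_Q} the small-density set must be a definite fraction of $\measureball{\|W\|}{\oball ar}$, and hence $\ge\Delta_1(2\vdim\isoperimetric\vdim)^{-\vdim}r^\vdim$, contradicting the hypothesis $\|W\|(\{x:\density^\vdim(\|W\|,x)<Q\})\le\Gamma^{-1}r^\vdim$ for $\Gamma$ large. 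So: your skeleton is right, but the step you label as the ``main obstacle'' is indeed the proof, and the tool you need to overcome it is the isoperimetric lower mass bound, not the monotonicity identity or \ref{corollary:density_ratio_estimate}.
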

\begin{proof}
	Define
	\begin{gather*}
		\Delta_1 = \Gamma_{\ref{lemma:lower_mass_bound_Q}} ( 2 M
		)^{-1}, \quad \Delta_2 = \inf \{ ( 2
		\isoperimetric{\vdim} )^{-1} \with M \geq \vdim \in \nat
		\}, \\
		\Delta_3 = \Delta_1 \inf \{ ( 2 \vdim
		\isoperimetric{\vdim})^{-\vdim} \with M \geq \vdim \in \nat
		\}, \quad \Delta_4 = \sup \{ \besicovitch{\adim} \with M \geq
		\adim \in \nat \}, \\
		\Delta_5 = (1/2) \inf \{ \Delta_2, \Delta_3 \}, \quad \Gamma =
		\Delta_4 \Delta_5^{-1}.
	\end{gather*}
	Notice that $\isoperimetric{1} \geq 1/2$, hence $2 \Delta_5 \leq
	\Gamma_{\ref{lemma:lower_mass_bound_Q}} ( 2M )^{-1}$.

	Suppose $\vdim$, $\adim$, $Q$, $U$, $W$, $S$, $\Sigma$, and $r$
	satisfy the hypotheses in the body of the theorem with $\Gamma$.

	Abbreviate $A = \{ x \with \oball{x}{r} \subset U \}$. Clearly, if
	$\vdim=1$ then $\| S \| ( U ) \leq \Delta_5$.  Observe, if $\vdim>1$
	then
	\begin{gather*}
		\| S \| (X) \leq \Delta_5 \| W \| ( X )^{1-1/\vdim} \quad
		\text{whenever $X \subset U$}.
	\end{gather*}

	Next, the following assertion will be shown: \emph{If $a \in A \cap
	\spt \| W \|$ then there exists $0 < s < r$ such that
	\begin{gather*}
		\Delta_5 \| W \| ( \cball{a}{s} )^{1-1/\vdim} <
		\measureball{\| \Sigma \|}{ \cball{a}{s} }.
	\end{gather*}}
	Since $\measureball{\| \delta W \|}{\cball{a}{s}} \leq \Delta_5 \|
	W \| ( \cball{a}{s} )^{1-1/\vdim} + \measureball{\| \Sigma
	\|}{\cball{a}{s}}$, it is sufficient to exhibit $0 < s < r$ with
	\begin{gather*}
		2 \Delta_5 \| W \| ( \cball{a}{s} )^{1-1/\vdim} <
		\measureball{\| \delta W \|}{ \cball{a}{s} }.
	\end{gather*}
	As $\measureball{\| W \|}{\oball{a}{r}} \leq \| W \| ( U ) < \big (
	Q-(2M)^{-1} \big ) \unitmeasure{\vdim} r^\vdim$ the nonexistence of
	such $s$ would imply by use of \cite[2.5]{snulmenn.isoperimetric} and
	\ref{lemma:lower_mass_bound_Q}
	\begin{gather*}
		\begin{aligned}
			\Delta_1 ( 2 \vdim \isoperimetric{\vdim} )^{-\vdim}
			r^\vdim & \leq \Delta_1 \measureball{\| W
			\|}{\oball{a}{r}} \\
			& < \| W \| ( \classification{\oball{a}{r}}{x}{
			\density^\vdim ( \| W \|, x ) < Q } ) \leq \Delta_5
			r^\vdim,
		\end{aligned}
	\end{gather*}
	a contradiction.

	By the assertion of the preceding paragraph there exist countable
	disjointed families of closed balls $G_1, \ldots,
	G_{\besicovitch{\adim}}$ such that
	\begin{gather*}
		A \cap \spt \| W \| \subset {\textstyle\bigcup\bigcup} \{ G_i
		\with i=1, \ldots, \besicovitch{\adim} \} \subset U, \\
		\| W \| ( B )^{1-1/\vdim} \leq \Delta_5^{-1} \| \Sigma \| ( B )
		\quad \text{whenever $B \in G_i$ and $i \in \{ 1, \ldots,
		\besicovitch{\adim} \}$}.
	\end{gather*}
	If $\vdim > 1$ then, defining $\beta = \vdim / (\vdim-1)$, one
	estimates
	\begin{gather*}
		\begin{aligned}
			\| W \| (A) & \leq \tsum{i=1}{\besicovitch{\adim}}
			\tsum{B \in G_i}{} \| W \| ( B ) \leq
			\Delta_5^{-\beta} \tsum{i=1}{\besicovitch{\adim}}
			\tsum{B \in G_i}{} \| \Sigma \| ( B )^\beta \\
			& \leq \Delta_5^{-\beta}
			\tsum{i=1}{\besicovitch{\adim}} \big ( \tsum{B \in
			G_i}{} \| \Sigma \| (B) \big )^\beta \leq
			\Delta_5^{-\beta} \besicovitch{\adim} \| \Sigma \| ( U
			)^\beta.
		\end{aligned}
	\end{gather*}
	If $\vdim = 1$ and $\| W \| (A) > 0$ then $\Delta_5 \leq \| \Sigma \|
	(B) \leq \| \Sigma \| ( U )$ for some $B \in \bigcup \{ G_i \with i =
	1, \ldots, \besicovitch{\adim} \}$.
\end{proof}
\begin{corollary} \label{corollary:rel_iso_ineq}
	Suppose $\vdim$, $\adim$, $U$, $V$, $E$, and $B$ are as in
	\ref{miniremark:zero_boundary}, $1 \leq Q \leq M < \infty$, $\adim
	\leq M$, $\Lambda = \Gamma_{\ref{thm:rel_iso_ineq}} ( M )$, $0 < r <
	\infty$, and
	\begin{gather*}
		\| V \| (E) \leq (Q-M^{-1}) \unitmeasure{\vdim} r^\vdim, \quad
		\| V \| ( \classification{E}{x}{ \density^\vdim ( \| V \|, x )
		< Q } ) \leq \Lambda^{-1} r^\vdim.
	\end{gather*}
	
	Then there holds
	\begin{gather*}
		\| V \| ( \classification{E}{x}{ \oball{x}{r} \subset
		\rel^\adim \without B } )^{1-1/\vdim} \leq \Lambda \big ( \|
		\boundary{V}{E} \| (U) + \| \delta V \| ( E ) \big ),
	\end{gather*}
	where $0^0 = 0$.
\end{corollary}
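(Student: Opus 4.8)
The plan is to deduce this from \ref{thm:rel_iso_ineq} applied, on the open set $U' = \rel^\adim \without B$, to the varifold $W$ furnished by \ref{miniremark:zero_boundary}, taking the trivial decomposition $\delta W = S + \Sigma$ with $S = 0$ and $\Sigma = \delta W$. First I would record the elementary identifications. Since $B$ is a subset of $\Bdry U$ one has $B \cap U = \varnothing$, hence $E \subset U \subset U'$, and by the construction of $W$ one has $\|W\| = \|V\| \restrict E$ as Radon measures over $U'$; consequently $\|W\|(U') = \|V\|(E)$ and the set $\{x \with \oball{x}{r} \subset U'\}$ coincides with $\{x \with \oball{x}{r} \subset \rel^\adim \without B\}$, so that the quantity to be estimated is precisely $\|W\|(\{x \with \oball{x}{r} \subset U'\})^{1-1/\vdim}$.

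Next I would verify the hypotheses of \ref{thm:rel_iso_ineq} with the given $M$, $Q$, $r$ and $\Gamma = \Lambda = \Gamma_{\ref{thm:rel_iso_ineq}}(M)$. Representability by integration of $\delta W$ over $U'$ follows from the estimate $\|\delta W\|(A) \leq \|\delta V\|(E \cap A) + \|\boundary{V}{E}\|(U \cap A)$ of \ref{miniremark:zero_boundary} together with the finiteness of its right-hand side on compact subsets of $U'$ which is also asserted there, so $\|\delta W\|$ is a Radon measure; taking $S = 0$ then makes the smallness requirement on $S$ vacuously true, whether $\vdim = 1$ or $\vdim > 1$. For the two density conditions I would invoke the Lebesgue point theorem \cite[2.8.18, 2.9.11]{MR41:1976}: at $\|V\| \restrict E$ almost all $x$ one has $\density^\vdim(\|V\|\restrict E, x) = \density^\vdim(\|V\|, x)$, whence $\density^\vdim(\|W\|, x) \geq 1$ for $\|W\|$ almost all $x$ and, up to a $\|W\|$ null set, $E \cap \{x \with \density^\vdim(\|W\|, x) < Q\} \subset E \cap \{x \with \density^\vdim(\|V\|, x) < Q\}$; therefore $\|W\|(\{x \with \density^\vdim(\|W\|, x) < Q\}) \leq \|V\|(E \cap \{x \with \density^\vdim(\|V\|, x) < Q\}) \leq \Lambda^{-1} r^\vdim$. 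The remaining hypothesis $\|W\|(U') = \|V\|(E) \leq (Q - M^{-1})\unitmeasure{\vdim} r^\vdim$ is one of the assumptions.

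Finally, \ref{thm:rel_iso_ineq} gives $\|W\|(\{x \with \oball{x}{r} \subset U'\})^{1-1/\vdim} \leq \Lambda \|\Sigma\|(U') = \Lambda \|\delta W\|(U')$, and applying the estimate of \ref{miniremark:zero_boundary} once more with $A = U'$ yields $\|\delta W\|(U') \leq \|\delta V\|(E) + \|\boundary{V}{E}\|(U)$, which is the assertion after translating back via $\|W\| = \|V\| \restrict E$. I do not anticipate a genuine obstacle here: the only points requiring care are the measure-theoretic bookkeeping, namely confirming that $\|\delta W\|$ is a Radon measure so that $\delta W$ is representable by integration, and the comparison of the densities of $\|W\| = \|V\| \restrict E$ and of $\|V\|$ on the relevant sets — both immediate from the construction in \ref{miniremark:zero_boundary} and standard differentiation theory. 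The substantive content has already been isolated in \ref{lemma:lower_mass_bound_Q} and \ref{thm:rel_iso_ineq}.
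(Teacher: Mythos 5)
Your proposal is correct and takes exactly the route of the paper's own proof: apply \ref{thm:rel_iso_ineq} on $\rel^\adim \without B$ to the varifold $W$ of \ref{miniremark:zero_boundary} with the trivial splitting $S = 0$, $\Sigma = \delta W$, using \cite[2.8.18, 2.9.11]{MR41:1976} to transfer the density hypotheses from $\|V\|$ to $\|W\| = \|V\| \restrict E$ and the estimate $\|\delta W\|(A) \leq \|\delta V\|(E \cap A) + \|\boundary{V}{E}\|(U \cap A)$ to convert the conclusion back. The paper states this more tersely, but the bookkeeping you carry out is precisely what is implicit in it.
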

\begin{proof}
	Define $W$ as in \ref{miniremark:zero_boundary} and note
	$\density^\vdim ( \| W \|, x ) = \density^\vdim ( \| V \|, x ) \geq 1$
	for $\| W \|$ almost all $x$ by \cite[2.8.18, 2.9.11]{MR41:1976}.
	Therefore applying \ref{thm:rel_iso_ineq} with $U$, $S$, and $\Sigma$
	replaced by $\rel^\adim \without B$, $0$, and $\delta W$ yields the
	conclusion.
\end{proof}
\begin{remark}
	The case $B = \Bdry U$ and $Q = 1$ corresponds to Hutchinson
	\cite[Theorem 1]{MR1066398} which is formulated in the context of
	functions rather than sets.
\end{remark}
\begin{corollary} \label{corollary:true_rel_iso_ineq}
	Suppose $\vdim$, $\adim$, $p$, $U$, $V$, and $\psi$ are as in
	\ref{miniremark:situation_general}, $p = \vdim$, $\adim \leq M$, $E$
	and $B$ are related to $\vdim$, $\adim$, $U$, and $V$, as in
	\ref{miniremark:zero_boundary}, $1 \leq Q \leq M < \infty$, $\Lambda =
	\Gamma_{\ref{thm:rel_iso_ineq}} ( M )$, $0 < r < \infty$, and
	\begin{gather*}
		\| V \| ( E ) \leq (Q-M^{-1}) \unitmeasure{\vdim} r^\vdim,
		\quad \psi (E)^{1/\vdim} \leq \Lambda^{-1} , \\
		\| V \| ( \classification{E}{x}{ \density^\vdim ( \| V \|, x )
		< Q } ) \leq \Lambda^{-1} r^\vdim.
	\end{gather*}

	Then there holds
	\begin{gather*}
		\| V \| ( \classification{E}{x}{ \oball{x}{r} \subset
		\rel^\adim \without B } )^{1-1/\vdim} \leq \Lambda \, \|
		\boundary{V}{E} \| ( U ),
	\end{gather*}
	where $0^0=0$.
\end{corollary}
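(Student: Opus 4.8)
The plan is to imitate the proof of \ref{corollary:rel_iso_ineq}; the only new ingredient is that the contribution of the generalised mean curvature to the first variation of the auxiliary varifold will be absorbed into the summand ``$S$'' of \ref{thm:rel_iso_ineq}, whose hypothesis it meets by H\"older's inequality, rather than into ``$\Sigma$''.

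First I would introduce $W \in \Var_\vdim ( \rel^\adim \without B )$ as in \ref{miniremark:zero_boundary}. As $B \subset \Bdry U$, one has $U \subset \rel^\adim \without B$, so $\| W \| = \| V \| \restrict E$ as a measure over $\rel^\adim \without B$; consequently $\density^\vdim ( \| W \|, x ) = \density^\vdim ( \| V \|, x ) \geq 1$ for $\| W \|$ almost all $x$ by \cite[2.8.18, 2.9.11]{MR41:1976}, and the two density hypotheses transfer to $W$ in the form $\| W \| ( \rel^\adim \without B ) = \| V \| ( E ) \leq ( Q-M^{-1} ) \unitmeasure{\vdim} r^\vdim$ and $\| W \| ( \{ x \with \density^\vdim ( \| W \|, x ) < Q \} ) \leq \| V \| ( \classification{E}{x}{\density^\vdim ( \| V \|, x ) < Q} ) \leq \Lambda^{-1} r^\vdim$. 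Next, the integration by parts identity in \ref{miniremark:zero_boundary} lets me write $\delta W = S + \Sigma$ with $S ( \theta ) = ( ( \delta V ) \restrict E ) ( \theta | U )$ and $\Sigma ( \theta ) = - ( \boundary{V}{E} ) ( \theta | U )$ for $\theta \in \mathscr{D} ( \rel^\adim \without B, \rel^\adim )$; the finiteness statements provided by \ref{miniremark:zero_boundary} guarantee that $S$ and $\Sigma$ are representable by integration with $\| S \| \leq \| \delta V \| \restrict E$ and $\| \Sigma \| ( \rel^\adim \without B ) \leq \| \boundary{V}{E} \| ( U )$.

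The decisive step is to verify the smallness requirement imposed on $S$ by \ref{thm:rel_iso_ineq} with $\Gamma = \Lambda = \Gamma_{\ref{thm:rel_iso_ineq}} ( M )$. If $\vdim = 1$, then $\psi = \| \delta V \|$ and $\| S \| ( \rel^\adim \without B ) \leq \| \delta V \| ( E ) = \psi ( E ) \leq \Lambda^{-1}$, which is the required bound. If $\vdim > 1$, then the mean curvature hypothesis built into \ref{miniremark:situation_general} yields $S ( \theta ) = - \int_E \mathbf{h} ( V,x ) \bullet \theta ( x ) \ud \| V \| x$, and H\"older's inequality with exponents $\vdim$ and $\vdim/(\vdim-1)$, together with $\| W \| = \| V \| \restrict E$, gives $| S ( \theta ) | \leq \psi ( E )^{1/\vdim} \Lpnorm{\| W \|}{\vdim/(\vdim-1)}{\theta} \leq \Lambda^{-1} \Lpnorm{\| W \|}{\vdim/(\vdim-1)}{\theta}$ for every $\theta \in \mathscr{D} ( \rel^\adim \without B, \rel^\adim )$. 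All hypotheses of \ref{thm:rel_iso_ineq} now hold upon replacing $U$ by $\rel^\adim \without B$, so that theorem furnishes $\| W \| ( \{ x \with \oball{x}{r} \subset \rel^\adim \without B \} )^{1-1/\vdim} \leq \Lambda \| \Sigma \| ( \rel^\adim \without B )$, and rewriting both sides through $\| W \| = \| V \| \restrict E$ and $\| \Sigma \| ( \rel^\adim \without B ) \leq \| \boundary{V}{E} \| ( U )$ yields the assertion. I do not expect a genuine obstacle; the only point demanding care is the bookkeeping of which measures are regarded as living over $U$ and which over $\rel^\adim \without B$, exactly as in \ref{miniremark:zero_boundary}.
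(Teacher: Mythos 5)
Your proof is correct and follows essentially the same route as the paper: split $\delta W$ into the mean-curvature part $S(\theta) = ((\delta V) \restrict E)(\theta|U)$ and the boundary part $\Sigma(\theta) = -(\boundary{V}{E})(\theta|U)$, then apply \ref{thm:rel_iso_ineq} over $\rel^\adim \without B$. The one point you pass over quickly — asserting that $S(\theta) = -\tint{E}{}\mathbf{h}(V,\cdot)\bullet\theta\ud\|V\|$ for all $\theta \in \mathscr{D}(\rel^\adim\without B,\rel^\adim)$ — is not literally what \ref{miniremark:situation_general} gives, since that hypothesis only covers test functions with compact support in $U$; one must first note $\|(\delta V)\restrict E\| = \|\delta V\|\restrict E$ and $\theta|U \in \Lp 1(\|\delta V\|\restrict E,\rel^\adim)$, then reduce by approximation to the case $\spt\theta\subset U$, which is exactly the small step the paper's proof records. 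That caveat aside, your explicit H\"older computation is the right way to see that $S$ meets the smallness requirement of \ref{thm:rel_iso_ineq}, and the bookkeeping of $\|W\| = \|V\|\restrict E$ and $\|\Sigma\|(\rel^\adim\without B) \leq \|\boundary{V}{E}\|(U)$ is sound.
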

\begin{proof}
	Define $W$ as in \ref{miniremark:zero_boundary} and note
	$\density^\vdim ( \| W \|, x ) = \density^\vdim ( \| V \|, x ) \geq 1$
	for $\| W \|$ almost all $x$ by \cite[2.8.18, 2.9.11]{MR41:1976}. If
	$\vdim > 1$ then approximation shows
	\begin{gather*}
		( ( \delta V ) \restrict E ) ( \theta | U ) = - \tint{E}{}
		\mathbf{h} (V,x) \bullet \theta (x) \ud \| V \| x \quad
		\text{for $\theta \in \mathscr{D} ( \rel^\adim \without B,
		\rel^\adim )$};
	\end{gather*}
	in fact since $\| ( \delta V ) \restrict E \| = \| \delta V \|
	\restrict E$ and $\theta |U \in \Lp{1} ( \| \delta V \| \restrict E,
	\rel^\adim )$ the problem reduces to the case $\spt \theta \subset U$
	which is readily treated. Therefore applying \ref{thm:rel_iso_ineq}
	with $U$, $S(\theta)$ and $\Sigma( \theta )$ replaced by $\rel^\adim
	\without B$, $(( \delta V ) \restrict E ) ( \theta | U )$ and $- (
	\boundary{V}{E} ) ( \theta | U )$ yields the conclusion.
\end{proof}
\begin{remark}
	Evidently, considering $E=U$, $B = \Bdry U$, $Q=1$, and stationary
	varifolds $V$ such that $\| V \| ( U ) $ is a small positive number
	shows that the intersection with $\{ x \with \oball{x}{r} \subset
	\rel^\adim \without B \}$ cannot be omitted in the conclusion.
\end{remark}
\begin{remark}
	Since estimating $\| \delta V \| ( E )$ by means of H{\"o}lder's
	inequality seems insufficient to derive
	\ref{corollary:true_rel_iso_ineq} from \ref{corollary:rel_iso_ineq} by
	means of ``absorption'', this procedure was implemented at an earlier
	stage and led to the formulation of \ref{thm:rel_iso_ineq}.
\end{remark}
\begin{remark} \label{remark:bombieri_giusti}
	It is instructive to consider the following situation. Suppose $\vdim,
	\adim \in \nat$, $1 \leq \vdim \leq \adim$, $1 \leq M < \infty$, $a
	\in \rel^\adim$, $0 < r < \infty$, $0 \leq \kappa < \infty$, $1 <
	\lambda < \infty$, $U = \oball a{\lambda r}$, $B = \Bdry U$, $A = \{ x
	\with \oball xr \subset \rel^\adim \without B \}$, hence $A = \cball
	a{(\lambda-1)r}$, $V \in \IVar_\vdim (U)$ is a stationary varifold,
	and $E$ is a $\| V \|$ measurable set satisfying the relative
	isoperimetric estimate
	\begin{gather*}
		\inf \{ \| V \| ( A \cap E ), \| V \| ( A \without E )
		\}^{1-1/\vdim} \leq \kappa \| \boundary VE \| (U),
	\end{gather*}
	where $0^0=0$. Then $\| V \| ( A \cap E ) \leq (1-M^{-1}) \| V\| (A)$
	implies
	\begin{gather*}
		\| V \| ( A \cap E )^{1-1/\vdim} \leq M \kappa \| \boundary VE
		\| ( U ).
	\end{gather*}

	Exhibiting a suitable class of $V$ and $E$ such that the
	relative isoperimetric estimate holds with a uniform number $\kappa$
	is complicated by the absence of such an estimate on the
	catenoid.\footnote{See for instance \cite[p.~18]{MR852409} for a
	description of the catenoid.} If $V$ corresponds to an absolutely area
	minimising locally rectifiable current in codimension one, then such
	uniform control was obtained for some $\lambda$ in Bombieri and Giusti
	\cite[Theorem 2]{MR0308945} (and attributed by the authors to
	De~Giorgi); see \ref{remark:link} for the link of the concept of
	distributional boundary employed by Bombieri and Giusti and the one of
	the present paper.

	Finally, the term ``relative isoperimetric inequality'' (or estimate)
	is chosen in accordance with the usage of that term in the case $\| V
	\| = \mathscr{L}^\adim$ by Ambrosio, Fusco and Pallara, see
	\cite[(3.43), p.~152]{MR2003a:49002}.
\end{remark}
\section{Basic properties of weakly differentiable functions}
\label{sec:basic} In this section generalised weakly differentiable functions
are defined in \ref{def:v_weakly_diff}. Properties studied include behaviour
under composition, see \ref{lemma:basic_v_weakly_diff},
\ref{example:composite} and \ref{lemma:comp_lip}, addition and multiplication,
see \ref{thm:addition}\,\eqref{item:addition:add}\,\eqref{item:addition:mult},
and decomposition of the varifold, see \ref{thm:tv_on_decompositions} and
\ref{thm:zero_derivative}. Moreover, coarea formulae in terms of the
distributional boundary of superlevel sets are established, see
\ref{lemma:meas_fct} and \ref{thm:tv_coarea}. A measure theoretic description of
the boundary of the superlevel sets will appear in \ref{corollary:coarea}. The
theory is illustrated by examples in \ref{example:star} and
\ref{example:axioms}.
\begin{lemma} \label{lemma:meas_fct}
	Suppose $\vdim, \adim \in \nat$, $U$ is an open subset of
	$\rel^\adim$, $V \in \Var_\vdim (U)$, $\| \delta V \|$ is a Radon
	measure, $f$ is a real valued $\| V \| + \| \delta V \|$ measurable
	function with $\dmn f \subset U$, and $E(y) = \{ x \with f(x) > y \}$
	for $y \in \rel$.

	Then there exists a unique $T \in \mathscr{D}' (U \times \rel,
	\rel^\adim )$ such that, see \ref{remark:allard_vs_federer},
	\begin{gather*}
		( \delta V ) ( (\omega \circ f ) \theta ) = \tint{}{} \omega (
		f(x)) \project{P} \bullet D \theta (x) \ud V(x,P) + T_{(x,y)}
		( \omega'(y) \theta (x) )
	\end{gather*}
	whenever $\theta \in \mathscr{D} (U,\rel^\adim)$, $\omega \in
	\mathscr{E} ( \rel, \rel )$, $\spt \omega'$ is compact and $\inf \spt
	\omega > - \infty$. Moreover, there holds
	\begin{gather*}
		T ( \phi ) = \tint{}{} \boundary{V}{E(y)} ( \phi ( \cdot, y )
		) \ud \mathscr{L}^1y \quad \text{for $\phi \in \mathscr{D} (U
		\times \rel, \rel^\adim )$}.
	\end{gather*}
\end{lemma}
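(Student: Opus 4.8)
The plan is to construct $T$ by a partition-of-unity argument reducing to the monotonicity-free functional-analytic machinery of Section~\ref{sec:distributions}. First I would observe that the asserted identity for $(\omega\circ f)\theta$ uniquely determines $T$ as a linear functional on the span of products $\omega'(y)\theta(x)$ with $\omega,\theta$ as stated; since this span is sequentially dense in $\mathscr{D}(U\times\rel,\rel^\adim)$ by \ref{miniremark:distrib_on_products} (after noting that functions of the form $\omega'$ with $\spt\omega'$ compact and $\inf\spt\omega>-\infty$ span $\mathscr{D}(\rel,\rel)$ — every $\eta\in\mathscr{D}(\rel,\rel)$ is $\omega'$ for $\omega(y)=\int_{-\infty}^y\eta$), uniqueness is immediate. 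For existence one must check that the prescription is well-defined, i.e.\ that if $\sum_j(\omega_j\circ f)\theta_j$ contributes a vanishing ``boundary term'' then the corresponding sum of $(\delta V)$-terms minus the tangential $V$-integral vanishes; equivalently, one defines $T$ directly via the formula $T_{(x,y)}(\omega'(y)\theta(x))=(\delta V)((\omega\circ f)\theta)-\int\omega(f(x))\project P\bullet D\theta(x)\ud V(x,P)$ and verifies this depends only on the product $\omega'\otimes\theta$, not on $\omega$ itself — which follows because changing $\omega$ by a constant changes $(\omega\circ f)\theta$ by a multiple of $\theta$ and, since $\spt\omega'$ is compact with $\inf\spt\omega>-\infty$, that constant is forced to be $0$ on the relevant range.

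**Identifying $T$ on superlevel sets.** The heart of the matter is the formula $T(\phi)=\int\boundary V{E(y)}(\phi(\cdot,y))\ud\mathscr{L}^1 y$. By density it suffices to prove it for $\phi(x,y)=\omega'(y)\theta(x)$ with $\omega,\theta$ as above. For such $\phi$ the right-hand side is $\int\omega'(y)\,\boundary V{E(y)}(\theta)\ud\mathscr{L}^1 y$, and by the definition of $\boundary V{E(y)}$ this equals
\begin{gather*}
	\tint{}{}\omega'(y)\big((\delta V)\restrict E(y)\big)(\theta)\ud\mathscr{L}^1 y - \tint{}{}\omega'(y)\,\delta\big(V\restrict E(y)\times\grass\adim\vdim\big)(\theta)\ud\mathscr{L}^1 y.
\end{gather*}
For the first integral, $\big((\delta V)\restrict E(y)\big)(\theta)=\int_{\{f(x)>y\}}\eta(V,x)\bullet\theta(x)\ud\|\delta V\|x$ in the notation of \ref{miniremark:situation_general_varifold}, so after interchanging the order of integration via Fubini the $y$-integral of $\omega'(y)$ over $\{y:y<f(x)\}$ telescopes to $-\omega(f(x))$ (using $\inf\spt\omega>-\infty$ so that $\omega$ vanishes below its support), giving $-\int\omega(f(x))\,\eta(V,x)\bullet\theta(x)\ud\|\delta V\|x=(\delta V)((\omega\circ f)\theta)$; here one must justify applying Fubini, which needs $\int_K\int_I|\eta(V,x)\bullet\theta(x)|\,\ud\|\delta V\|x\,\ud\mathscr{L}^1 y<\infty$ — true since $\|\delta V\|$ is Radon and $\omega'$ has compact support. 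For the second integral one argues identically with $\delta(V\restrict E(y)\times\grass\adim\vdim)(\theta)=\int_{\{f(x)>y\}}\project P\bullet D\theta(x)\ud V(x,P)$, producing $\int\omega(f(x))\project P\bullet D\theta(x)\ud V(x,P)$. Subtracting, the right-hand side equals $(\delta V)((\omega\circ f)\theta)-\int\omega(f(x))\project P\bullet D\theta(x)\ud V(x,P)=T_{(x,y)}(\omega'(y)\theta(x))$, which is the left-hand side.

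**Measurability and the anticipated obstacle.** Two technical points need care. First, to run the Fubini interchange cleanly one should verify that $y\mapsto\boundary V{E(y)}(\theta)$ is $\mathscr{L}^1$-measurable and locally integrable against $\omega'$; this follows because $y\mapsto\|V\|(E(y)\cap K)$ and $y\mapsto\|\delta V\|(E(y)\cap K)$ are monotone hence measurable, and the integrands $\mathbf{1}_{\{f>y\}}(x)\,\eta(V,x)\bullet\theta(x)$ and $\mathbf{1}_{\{f>y\}}(x)\,\project P\bullet D\theta(x)$ are jointly measurable in $(x,y)$ since $\{(x,y):f(x)>y\}$ is $\|V\|+\|\delta V\|$-times-$\mathscr{L}^1$ measurable by measurability of $f$. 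Second, one must confirm $T$ extends to a genuine element of $\mathscr{D}'(U\times\rel,\rel^\adim)$, i.e.\ is continuous: but the displayed formula $T(\phi)=\int\boundary V{E(y)}(\phi(\cdot,y))\ud\mathscr{L}^1 y$ itself exhibits $T$ as an integral of continuous functionals with the integration confined to $y\in\spt_y\phi$, and the bound $|\boundary V{E(y)}(\phi(\cdot,y))|\le(\|\delta V\|+\|V\|\cdot\Lip)$-type estimates on the compact support give continuity on each $\mathscr{D}_K(U\times\rel,\rel^\adim)$. I expect the main obstacle to be precisely this bookkeeping of measurability and the Fubini justification — the identity is essentially a rearrangement of definitions, but making the interchange rigorous (and matching it with the $\|\cdot\|$-notation convention of \ref{remark:allard_vs_federer} when $(\delta V)$ is evaluated on functions that are merely $\|\delta V\|$-integrable rather than smooth) is where the care lies.
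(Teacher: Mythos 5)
Your proof is correct and follows essentially the same route as the paper: Fubini's theorem on the product space and sequential density of the span of tensor products $\omega'(y)\theta(x)$ (via \ref{miniremark:distrib_on_products}, noting that every $\eta\in\mathscr{D}(\rel,\rel)$ arises as $\omega'$ for $\omega(y)=\tint{-\infty}{y}\eta$). The paper is slightly more economical in its packaging: it directly defines $T$ by an explicit integral over the graph set $C=\{(x,y)\with f(x)>y\}$ against the product measures $\|\delta V\|\times\mathscr{L}^1$ and $V\times\mathscr{L}^1$, so that $T$ is manifestly a distribution and both displayed identities fall out of one Fubini argument; you instead define $T$ on the span of products and then argue continuity via the integrated boundary formula, which is logically workable but a bit roundabout (the cleanest order is to adopt the integral-over-$y$ expression as the definition, verify continuity from the uniform bound you sketch, and then check the product identity via Fubini). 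Two small slips worth fixing: the telescoping integral is $\tint{\{y<f(x)\}}{}\omega'\,d\mathscr{L}^1=\omega(f(x))$, not $-\omega(f(x))$, and correspondingly the subsequent display should have a $+$ sign rather than $-$; the two sign errors cancel, so your final identity is right, but as written the intermediate step is inconsistent with the conclusion.
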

\begin{proof}
	Define $C = \{ (x,y) \with x \in E(y) \}$ and $g : (U \times
	\grass{\adim}{\vdim} ) \times \rel \to U \times \rel$ by $g((x,P),y) =
	(x,y)$ for $x \in U$, $P \in \grass{\adim}{\vdim}$ and $y \in \rel$.
	Using \cite[2.2.2, 2.2.3, 2.2.17, 2.6.2]{MR41:1976}, one obtains that
	$C$ is $\| V \| \times \mathscr{L}^1$ and $\| \delta V \| \times
	\mathscr{L}^1$ measurable, hence that $g^{-1} \lIm C \rIm$ is $V
	\times \mathscr{L}^1$ measurable since $\| V \| \times \mathscr{L}^1 =
	g_\# ( V \times \mathscr{L}^1 )$.

	Define $p : \rel^\adim \times \rel \to \rel^\adim$ by $p(x,y) = x$ for
	$(x,y) \in \rel^\adim \times \rel$ and let $T \in \mathscr{D} ' ( U
	\times \rel, \rel^\adim )$ be defined by
	\begin{align*}
		T ( \phi ) & = \tint{C}{} \eta (V,x) \bullet \phi (x,y) \ud (
		\| \delta V \| \times \mathscr{L}^1 ) (x,y) \\
		& \phantom{=} \ - \tint{g^{-1} \lIm C \rIm}{} \project{P}
		\bullet ( D \phi (x,y) \circ p^\ast ) \ud (V \times
		\mathscr{L}^1) ((x,P),y)
	\end{align*}
	whenever $\phi \in \mathscr{D} ( U \times \rel, \rel^\adim )$, see
	\ref{miniremark:situation_general_varifold}. Fubini's theorem then
	yields the two equations. The uniqueness of $T$ follows from
	\ref{miniremark:distrib_on_products}.
\end{proof}
\begin{remark}
	Notice that characterising equation for $T$ also holds if the
	requirement $\inf \spt \omega > - \infty$ is dropped.
\end{remark}
\begin{definition} \label{def:v_weakly_diff}
	Suppose $\vdim, \adim \in \nat$, $\vdim \leq \adim$, $U$ is an open
	subset of $\rel^\adim$, $V \in \RVar_\vdim ( U )$, $\| \delta V
	\|$ is a Radon measure, and $Y$ is a finite dimensional normed
	vectorspace.

	Then a $Y$ valued $\| V \| + \| \delta V \|$ measurable function $f$
	with $\dmn f \subset U$ is called \emph{generalised $V$ weakly
	differentiable} if and only if for some $\| V \|$ measurable $\Hom
	(\rel^\adim, Y )$ valued function $F$ the following two conditions
	hold:
	\begin{enumerate}
		\item \label{item:v_weakly_diff:int} If $K$ is a compact
		subset of $U$ and $0 \leq s < \infty$, then
		\begin{gather*}
			\tint{\classification{K}{x}{|f(x)|\leq s}}{} \|F\| \ud
			\| V \| < \infty.
		\end{gather*}
		\item \label{item:v_weakly_diff:partial} If $\theta \in
		\mathscr{D} ( U, \rel^\adim )$, $\gamma \in \mathscr{E} ( Y,
		\rel)$ and $\spt D \gamma$ is compact then
		\begin{gather*}
			\begin{aligned}
				& ( \delta V ) ( ( \gamma \circ f ) \theta ) \\
				& \quad = \tint{}{} \gamma(f(x)) \project{P}
				\bullet D \theta (x) \ud V (x,P) + \tint{}{}
				\left < \theta(x), D\gamma (f(x)) \circ F (x)
				\right > \ud \| V \| x.
			\end{aligned}
		\end{gather*}
	\end{enumerate}
	The function $F$ is $ \| V \|$ almost unique. Therefore, one may
	define the \emph{generalised $V$ weak derivative of $f$} to be the
	function $\derivative{V}{f}$ characterised by $a \in \dmn
	\derivative{V}{f}$ if and only if
	\begin{gather*}
		(\| V \|, C ) \aplim_{x\to a} F (x) = \sigma \quad \text{for
		some $\sigma \in \Hom ( \rel^\adim, Y)$}
	\end{gather*}
	and in this case $\derivative{V}{f}(a) = \sigma$, where $C = \{
	(a,\cball{a}{r}) \with \cball{a}{r} \subset U \}$. Moreover, the set
	of all $Y$ valued generalised $V$ weakly differentiable functions will
	be denoted by $\trunc ( V, Y )$ and $\trunc (V) = \trunc (V,\rel)$.
\end{definition}
\begin{remark} \label{remark:eq_condition_weak_diff}
	Condition \eqref{item:v_weakly_diff:partial} is equivalent to the
	following condition:
	\begin{enumerate}
		\item [\eqref{item:v_weakly_diff:partial}$'$] If $\zeta \in
		\mathscr{D} ( U, \rel )$, $u \in \rel^\adim$, $\gamma \in
		\mathscr{E} ( Y, \rel )$ and $\spt D \gamma$ is compact, then
		\begin{gather*}
			\begin{aligned}
				& ( \delta V ) ( ( \gamma \circ f ) \zeta
				\cdot u ) \\
				& \qquad = \tint{}{} \gamma(f(x)) \left <
				\project{P}(u), D \zeta (x) \right > + \zeta
				(x) \left < u , D\gamma (f(x)) \circ F (x)
				\right > \ud V(x,P).
			\end{aligned}
		\end{gather*}
	\end{enumerate}
	If $\dim Y \geq 2$ or $f$ is locally bounded then one may require
	$\spt \gamma$ to be compact in \eqref{item:v_weakly_diff:partial} or
	\eqref{item:v_weakly_diff:partial}$'$ as $\dim Y \geq 2$ implies that
	$Y \without \cball 0s$ is connected for $0 < s < \infty$.
\end{remark}
\begin{remark} \label{remark:associated_distribution}
	If $f \in \trunc(V)$ then the distribution $T$ associated to $f$ in
	\ref{lemma:meas_fct} is representable by integration and, see
	\ref{remark:ind-limit}, \ref{miniremark:distrib_on_products}, and
	\ref{lemma:push_on_product} with $J = \rel$,
	\begin{gather*}
		\begin{aligned}
			T ( \phi ) & = \tint{}{} \left < \phi (x,f(x)),
			\derivative{V}{f} (x) \right > \ud \| V \| x, \\
			\tint{}{} g \ud \| T \| & = \tint{}{} g(x,f(x)) |
			\derivative{V}{f} (x) | \ud \| V \| x
		\end{aligned}
	\end{gather*}
	whenever $\phi \in \Lp{1} ( \| T \|, \rel^\adim )$ and $g$ is an
	$\overline{\rel}$ valued $\| T \|$ integrable function.
\end{remark}
\begin{remark} \label{remark:lipschitzian_theta}
	If $f \in \trunc (V,Y)$, $\theta : U \to \rel^\adim$ is Lipschitzian
	with compact support, $\gamma : Y \to \rel$ is of class $1$, and
	either $\spt D \gamma$ is compact of $f$ is locally bounded, then
	\begin{multline*}
		( \delta V ) ( ( \gamma \circ f ) \theta ) = \tint{}{} \gamma
		(f(x)) \project P \bullet ( ( \| V \|, \vdim ) \ap D \theta
		(x) \circ \project P ) \ud V(x,P) \\
		+ \tint{}{} \left < \theta (x), D \gamma (f(x)) \circ
		\derivative Vf(x) \right > \ud \| V \| x
	\end{multline*}
	as may be verified by means of approximation and
	\cite[4.5\,(3)]{snulmenn.decay}. Consequently, if $f \in \trunc (V,Y)$
	is locally bounded, $Z$ is a finite dimensional normed vectorspace,
	and $g : Y \to Z$ is of class $1$, then $g \circ f \in \trunc (V,Z)$
	with
	\begin{gather*}
		\derivative V{(g \circ f)} (x) = Dg(f(x)) \circ \derivative
		Vf(x) \quad \text{for $\| V \|$ almost all $x$}.
	\end{gather*}
\end{remark}
\begin{example} \label{example:lipschitzian}
	If $f : U \to Y$ is a locally Lipschitzian function then $f$ is
	generalised $V$ weakly differentiable with
	\begin{gather*}
		\derivative{V}{f}(x) = ( \| V \|, \vdim ) \ap Df (x) \circ
		\project{\Tan^\vdim ( \| V \|, x )} \quad \text{for $\| V\|$
		almost all $x$},
	\end{gather*}
	as may be verified by means of \cite[4.5\,(4)]{snulmenn.decay}.
	Moreover, if $\density^\vdim ( \| V \|, x ) \geq 1$ for $\| V \|$
	almost all $x$, then the equality holds for any $f \in \trunc (V,Y)$
	as will be shown in \ref{thm:approx_diff}.
\end{example}
\begin{remark}
	The prefix ``generalised'' has been chosen in analogy with the notion
	of ``generalised function of bounded variation'' treated in
	\cite[\S 4.5]{MR2003a:49002} originating from De~Giorgi and Ambrosio
	\cite{MR1152641}.
\end{remark}
\begin{remark}
	The usefulness of partial integration identities involving the first
	variation in defining a concept of weakly differentiable functions on
	varifolds has already been ``expected'' by Anzellotti, Delladio and
	Scianna who developed two notions of functions of bounded variation on
	integral currents, see \cite[p.~261]{MR1441622},
\end{remark}
\begin{remark} \label{remark:bv}
	In order to define a concept of ``generalised (real valued) function
	of bounded variation'' with respect to a varifold, it could be of
	interest to study the class of those functions $f$ satisfying the
	hypotheses of \ref{lemma:meas_fct} such that the associated function
	$T$ is representable by integration.
\end{remark}
\begin{remark} \label{remark:moser}
	A concept related to the present one has been proposed by Moser in
	\cite[Definition 4.1]{62659} in the context of curvature varifolds
	(see \ref{def:curvature_varifold} and
	\ref{remark:hutchinson_reformulations}); in fact, it allows for
	certain ``multiple-valued'' functions. In studying convergence of
	pairs of varifolds and weakly differentiable functions, it would seem
	natural to investigate the extension of the present concept to such
	functions. (Notice that the usage of the term ``multiple-valued'' here
	is different but related to the one of Almgren in \cite[\S
	1]{MR1777737}).
\end{remark}
\begin{lemma} \label{lemma:basic_v_weakly_diff}
	Suppose $\vdim$, $\adim$, $U$, $V$, and $Y$ are as in
	\ref{def:v_weakly_diff}, $f \in \trunc (V,Y)$, $Z$ is a finite
	dimensional normed vectorspace, $0 \leq \kappa < \infty$, $\Upsilon$
	is a closed subset of $Y$, $g : Y \to Z$, $H : Y \to \Hom ( Y, Z )$,
	$g_i : Y \to Z$ is a sequence of functions of class $1$, $\spt D g_i
	\subset \Upsilon$, $\Lip g_i \leq \kappa$, $g | \Upsilon$ is proper,
	and
	\begin{gather*}
		g(y) = \lim_{i \to \infty} g_i (y) \quad \text{uniformly
		in $y \in Y$}, \\
		H (y) = \lim_{i \to \infty} D g_i ( y ) \quad \text{for
		$y \in Y$}.
	\end{gather*}
	
	Then $g \circ f \in \trunc (V,Z)$ and
	\begin{gather*}
		\derivative{V}{( g \circ f )} (x) = H ( f (x) ) \circ
		\derivative{V}{f} (x) \quad \text{for $\| V \|$ almost all
		$x$}.
	\end{gather*}
\end{lemma}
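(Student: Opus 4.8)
The plan is to verify the two defining conditions of \ref{def:v_weakly_diff} for $h = g \circ f$ with the candidate weak derivative represented by the $\| V \|$ measurable function $F$ defined by $F(x) = H(f(x)) \circ \derivative{V}{f}(x)$, and then to identify $\derivative{V}{(g \circ f)}$ with $F$ at $\| V \|$ almost all points. First I would record a few elementary facts. Since $\Lip g_i \leq \kappa$ one has $\| Dg_i(y) \| \leq \kappa$ for $y \in Y$, hence $\| H(y) \| \leq \kappa$; since $\spt Dg_i \subset \Upsilon$ one has $Dg_i(y) = 0$ for $y \in Y \without \Upsilon$, hence $H(y) = 0$ there and $F(x) = 0$ whenever $f(x) \notin \Upsilon$. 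As $g | \Upsilon$ is proper, for every compact $L \subset Z$ the set $\Upsilon \cap \{ y \with g(y) \in L \}$ is compact, hence bounded; combining this with $g_i \to g$ uniformly, there are $i_0 \in \nat$ and $0 < t < \infty$ with $\Upsilon \cap \{ y \with g_i(y) \in L \} \subset \cball{0}{t}$ for $i \geq i_0$. Finally, if $\gamma \in \mathscr{E}(Z,\rel)$ with $\spt D\gamma$ compact, then $\gamma$ and $D\gamma$ are bounded (as $\gamma$ is constant off a compact set) and $\spt D(\gamma \circ g_i) \subset \Upsilon \cap \{ y \with g_i(y) \in \spt D\gamma \}$, which by the previous point is compact for $i \geq i_0$; thus $\gamma \circ g_i : Y \to \rel$ is of class $1$ with compact $\spt D(\gamma \circ g_i)$.

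For condition \eqref{item:v_weakly_diff:int}, fix a compact $K \subset U$ and $0 \leq s < \infty$. Properness of $g | \Upsilon$ yields $0 < t < \infty$ with $\Upsilon \cap \{ y \with |g(y)| \leq s \} \subset \cball{0}{t}$, so $\{ x \with |h(x)| \leq s, \, f(x) \in \Upsilon \} \subset \{ x \with |f(x)| \leq t \}$; since $F$ vanishes where $f \notin \Upsilon$ and $|F(x)| \leq \kappa |\derivative{V}{f}(x)|$, one gets $\tint{\classification{K}{x}{|h(x)| \leq s}}{} |F| \ud \| V \| \leq \kappa \tint{\classification{K}{x}{|f(x)| \leq t}}{} |\derivative{V}{f}| \ud \| V \| < \infty$ because $f \in \trunc(V,Y)$.

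For condition \eqref{item:v_weakly_diff:partial}, fix $\theta \in \mathscr{D}(U,\rel^\adim)$ and $\gamma \in \mathscr{E}(Z,\rel)$ with $\spt D\gamma$ compact. For $i \geq i_0$ apply \ref{remark:lipschitzian_theta} to $f$ with the class $1$ function $\gamma \circ g_i$ in place of $\gamma$; as $\theta$ is smooth, the first summand there takes the form appearing in \ref{def:v_weakly_diff} and one obtains
\begin{align*}
	( \delta V )\big( (\gamma \circ g_i \circ f)\, \theta \big) & = \tint{}{} \gamma(g_i(f(x)))\, \project P \bullet D\theta(x) \ud V(x,P) \\ & \quad + \tint{}{} \big\langle \theta(x), \big( D\gamma(g_i(f(x))) \circ Dg_i(f(x)) \big) \circ \derivative{V}{f}(x) \big\rangle \ud \| V \| x .
\end{align*}
Then I would let $i \to \infty$; all terms converge by dominated convergence. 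The left hand side, written via the representation $( \delta V )(\phi) = \tint{}{} \eta(V,x) \bullet \phi(x) \ud \| \delta V \| x$, has integrand converging pointwise (continuity of $\gamma$, $g_i \to g$) and bounded by $(\sup |\gamma|)\, |\theta|$. The first term on the right has integrand converging pointwise and bounded by $(\sup |\gamma|)\, |\project P \bullet D\theta(x)|$. The last term has integrand converging pointwise to $\langle \theta(x), ( D\gamma(g(f(x))) \circ H(f(x)) ) \circ \derivative{V}{f}(x) \rangle$ (using $g_i \to g$, $Dg_i \to H$ pointwise, continuity of $D\gamma$) and, for $i \geq i_0$, is bounded by $(\sup \| D\gamma \|)\, \kappa\, |\theta(x)|\, |\derivative{V}{f}(x)|$ restricted to $\{ x \with |f(x)| \leq t \}$ for a $t$ independent of $i$, since $D\gamma(g_i(f(x))) \circ Dg_i(f(x))$ vanishes unless $f(x) \in \Upsilon \cap \{ y \with g_i(y) \in \spt D\gamma \}$; this dominating function is $\| V \|$ integrable over $\spt \theta$ by \eqref{item:v_weakly_diff:int} for $f$. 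Passing to the limit and using $D\gamma(g(f(x))) \circ H(f(x)) \circ \derivative{V}{f}(x) = D\gamma(h(x)) \circ F(x)$ yields \eqref{item:v_weakly_diff:partial} for $h$ with $F$.

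Hence $g \circ f \in \trunc(V,Z)$ and $F$ is an admissible representing function; since $F$ is $\| V \|$ measurable it coincides with its approximate limit at $\| V \|$ almost every point, so the definition of $\derivative{V}{(g\circ f)}$ gives $\derivative{V}{(g\circ f)}(x) = F(x) = H(f(x)) \circ \derivative{V}{f}(x)$ for $\| V \|$ almost all $x$. I expect the main obstacle to be the bookkeeping forced by the properness hypothesis: one must arrange simultaneously that $\spt D(\gamma \circ g_i)$ is compact for large $i$ (so that \ref{remark:lipschitzian_theta} applies) and that the derivative integrand is, uniformly in $i$, concentrated on a set of the form $\{ x \with |f(x)| \leq t \}$, so that the only finiteness available for $f$, namely \eqref{item:v_weakly_diff:int}, can be invoked in the dominated convergence argument.
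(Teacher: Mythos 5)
Your proof is correct and follows essentially the same route as the paper's: both verify the integrability condition \ref{def:v_weakly_diff}\,\eqref{item:v_weakly_diff:int} by using properness of $g|\Upsilon$ and the bound $|F| \leq \kappa |\derivative{V}{f}|$ supported where $f \in \Upsilon$, and both verify \eqref{item:v_weakly_diff:partial} by applying \ref{remark:lipschitzian_theta} with $\gamma$ replaced by the class $1$ functions $\gamma \circ g_i$ (whose derivative support is eventually contained in a fixed compact set) and then letting $i \to \infty$. You spell out the dominated convergence bookkeeping more explicitly than the paper, which simply says "considering the limit $i\to\infty$ yields the conclusion," but the underlying argument is the same.
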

\begin{proof}
	Note $\Lip g \leq \kappa$, $\| H(y) \| \leq \kappa$ for $y \in Y$, and
	$H| Y \without \Upsilon = 0$, hence
	\begin{gather*}
		\{ y \with | g(y) | \leq s \} \subset \{ y \with H(y) = 0 \}
		\cup (g|\Upsilon)^{-1} \lIm \cball{0}{s} \rIm, \\
		\tint{K \cap \{ x \with |g(f(x))| \leq s \}}{} \|H(f(x)) \circ
		\derivative{V}{f}(x) \| \ud \| V \| x < \infty
	\end{gather*}
	whenever $K$ is a compact subset of $U$ and $0 \leq s < \infty$.

	Suppose $\gamma \in \mathscr{E} ( Z,\rel )$ and $0 \leq s < \infty$
	with $\spt D \gamma \subset \oball{0}{s}$ and $C = ( g |
	\Upsilon)^{-1} \lIm \cball{0}{s} \rIm$. Then $\im \gamma$ is bounded,
	$C$ is compact and
	\begin{gather*}
		Y \cap \{ y \with D \gamma ( g_i (y ) ) \circ D g_i (y) \neq 0
		\} \subset ( g_i | \Upsilon )^{-1} \lIm \spt D \gamma \rIm
		\subset C \quad \text{for large $i$},
	\end{gather*}
	in particular $\spt D ( \gamma \circ g_i ) \subset C$ for such $i$.
	Using \ref{remark:lipschitzian_theta}, it follows
	\begin{multline*}
		( \delta V ) ( ( \gamma \circ g_i \circ f ) \theta) =
		\tint{}{} \gamma(g_i(f(x))) \project{P} \bullet D \theta (x)
		\ud V (x,P) \\
		+ \tint{}{} \left < \theta(x), D\gamma (g_i(f(x))) \circ D g_i
		(f(x)) \circ \derivative{V}{f} (x) \right > \ud \| V \| x
	\end{multline*}
	for $\theta \in \mathscr{D} (U,\rel^\adim)$ and considering the limit
	$i \to \infty$ yields the conclusion.
\end{proof}
\begin{example} \label{example:composite}
	Amongst the functions $g$ and $H$ admitting an approximation as in
	\ref{lemma:basic_v_weakly_diff} are the following:
	\begin{enumerate}
		\item \label{item:composite:scalar_mult} If $L : Y \to Y$ is
		a linear automorphism of $Y$, then $g = L$ and $H = DL$ is
		admissible.
		\item \label{item:composite:add_constant} If $b \in Y$ then $g
		= \boldsymbol{\tau}_b$ with $H = D\boldsymbol{\tau}_b$ is
		admissible.
		\item \label{item:composite:mod} If $Y$ is an inner product
		space and $b \in Y$ then one may take $g$ and $H$ such that $g
		(y) = |y-b|$ for $y \in Y$,
		\begin{gather*}
			\text{$H (y)(v) = |y-b|^{-1} (y-b) \bullet v$ if
			$y \neq b$}, \quad \text{$H (y) = 0$ if $y=b$}
		\end{gather*}
		whenever $v, y \in Y$.
		\item \label{item:composite:1d} If $Y = \rel$ and $b \in
		\rel$ then one may take $g$ and $H$ such that
		\begin{gather*}
			g (y) = \sup \{ y, b \}, \quad \text{$H (y)(v) = v$ if
			$y > b$}, \quad \text{$H (y) = 0$ if $y \leq b$}
		\end{gather*}
		whenever $v,y \in \rel$.
	\end{enumerate}

	\eqref{item:composite:scalar_mult} and
	\eqref{item:composite:add_constant} are trivial.

	To prove \eqref{item:composite:mod}, assume $Y = \rel^l$ for some $l
	\in \nat$ by \eqref{item:composite:scalar_mult}, choose $\varrho \in
	\mathscr{D} ( Y, \rel )^+$ with $\int \varrho \ud \mathscr{L}^l = 1$
	and $\varrho (y) = \varrho (-y)$ for $y \in Y$ and take $\kappa=1$,
	$\Upsilon = Y$, and $g_i = \varrho_{1/i} \ast g$ in
	\ref{lemma:basic_v_weakly_diff} noting $(\varrho_{1/i} \ast g) (b-y) =
	(\varrho_{1/i} \ast g)(b+y)$ for $y \in Y$, hence $D ( \varrho_{1/i}
	\ast g ) (b)=0$.

	To prove \eqref{item:composite:1d}, choose $\varrho \in \mathscr{D} (
	\rel, \rel )^+$ with $\int \varrho \ud \mathscr{L}^1 = 1$, $\spt
	\varrho \subset \cball{0}{1}$ and $\varepsilon = \inf \spt \varrho >
	0$, and take $\kappa=1$, $\Upsilon = \classification{\rel}{y}{y \geq
	b}$, and $g_i = \varrho_{1/i} \ast g$ in
	\ref{lemma:basic_v_weakly_diff} noting $g_i (y) = b$ if $-\infty < y
	\leq b + \varepsilon/i$, hence $D g_i (y) = 0$ for $- \infty < y \leq
	b$.
\end{example}
\begin{lemma} \label{lemma:functional_analysis}
	Suppose $U$ is an open subset of $\rel^\adim$, $\mu$ is a Radon
	measure over $U$, $Y$ is a finite dimensional normed vectorspace, $g
	\in \Lp{1} ( \mu )$, $K$ denotes the set of all $f \in \Lp{1} ( \mu, Y
	)$ such that
	\begin{gather*}
		|f(x)| \leq g ( x) \quad \text{for $\mu$ almost all $x$},
	\end{gather*}
	$L_1 ( \mu, Y ) = \Lp{1} ( \mu , Y ) / \{ f \with \Lpnorm{\mu}{1}{f} =
	0 \}$ is the (usual) quotient Banach space, and $\pi : \Lp{1} ( \mu, Y
	) \to L_1 ( \mu, Y )$ denotes the canonical projection.

	Then $\pi \lIm K \rIm$ with the topology induced by the weak topology
	on $L_1 ( \mu, Y )$ is compact and metrisable.
\end{lemma}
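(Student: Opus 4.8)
The plan is to identify $\pi\lIm K\rIm$ with a subset of the dual of a separable Banach space equipped with its weak-$\ast$ topology, where compactness and metrisability are standard, and then to check that the weak topology on $L_1(\mu,Y)$ restricted to $\pi\lIm K\rIm$ coincides with that weak-$\ast$ topology. Concretely, write $Z$ for the dual of $Y$, so that $L_1(\mu,Y)^\ast = \Lp{\infty}(\mu, Z)$ via the pairing $\langle f, h\rangle = \tint{}{}\langle f(x), h(x)\rangle \ud\mu x$, and $\Lp{\infty}(\mu,Z)$ is itself the dual of the separable Banach space $\Lp{1}(\mu,Z)$ (here separability of $\Lp{1}(\mu,Z)$ uses that $\mu$ is a Radon measure over the open subset $U$ of $\rel^\adim$, hence $\sigma$-finite with a countable base, and that $Z$ is finite dimensional). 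The set $\pi\lIm K\rIm$ is a bounded, convex, balanced subset of $L_1(\mu,Y)$; the first key step is to show it is weakly closed, equivalently norm closed by convexity — this is immediate since $\Lp{1}$ convergence passes to a subsequence converging $\mu$ almost everywhere, so the pointwise bound $|f(x)|\le g(x)$ is preserved.

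The second key step is to realise $\pi\lIm K\rIm$ as a weak-$\ast$ compact subset of $L_1(\mu,Y) \subset \Lp{\infty}(\mu,Z)^{\ast}$ — but this requires care because $\pi\lIm K\rIm$ does not naturally sit inside $\Lp{\infty}(\mu,Z)^{\ast}$; rather the honest move is the reverse. The cleaner route is: let $B$ denote the closed unit ball of $\Lp{\infty}(\mu, Z)$, which is weak-$\ast$ compact by Banach–Alaoglu and, since $\Lp{1}(\mu,Z)$ is separable, metrisable in the weak-$\ast$ topology. Define a map $\Phi : \Lp{1}(\mu,Y)\to \Lp{1}(\mu, Z)^{\ast}$ by $\Phi(f)(h) = \tint{}{}\langle f(x),h(x)\rangle\ud\mu x$; I would check $\Phi$ is injective on $\pi\lIm K\rIm$ (two elements of $K$ inducing the same functional agree $\mu$-a.e.\ by testing against $\Lp{\infty}$ functions, using finite-dimensionality of $Y$) and that $\Phi$ carries the weak topology of $L_1(\mu,Y)$ to the weak-$\ast$ topology of $\Lp{1}(\mu,Z)^{\ast}$ restricted to the image — this is essentially a tautology since the pairing functionals separate points and generate both topologies. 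Finally I would show $\Phi\lIm K\rIm$ is weak-$\ast$ closed in the weak-$\ast$ compact set $g\cdot B' := \{T : |T(h)|\le \tint{}{} g|h|\ud\mu\}$, which follows from the previous paragraph transported through $\Phi$; hence $\Phi\lIm K\rIm$ is weak-$\ast$ compact and metrisable, and $\Phi$ restricted to $\pi\lIm K\rIm$ is a homeomorphism onto it.

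The main obstacle I anticipate is bookkeeping the duality isomorphisms cleanly — specifically ensuring that $\pi\lIm K\rIm$, which a priori lives in $L_1(\mu,Y)$ and not in any dual space, is correctly transported to a weak-$\ast$ compact, weak-$\ast$ metrisable subset of the dual of the separable space $\Lp{1}(\mu,Z)$, and that under this transport "weak topology of $L_1$" becomes exactly "weak-$\ast$ topology". Once that identification is pinned down, compactness is Banach–Alaoglu together with the norm-closedness established in the first step, and metrisability is the standard fact that the weak-$\ast$ topology on a bounded subset of the dual of a separable Banach space is metrisable. One should also note, as a minor point, that $Y$ being finite dimensional means all norms on $Y$ are equivalent, so the choice of norm is immaterial and the pointwise bound $|f(x)|\le g(x)$ interacts well with $\Lp{1}$ integrability of $g$.
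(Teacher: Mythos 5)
Your approach is genuinely different from the paper's, and the underlying idea is sound, but as written there is a well-definedness gap that breaks the argument.

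The paper's route is: $\pi\lIm K\rIm$ is convex and norm-closed in $L_1(\mu,Y)$, hence weakly closed by Mazur's theorem; after reduction to $Y=\rel$, the domination $|f|\le g$ with $g\in\Lp{1}(\mu)$ makes $\pi\lIm K\rIm$ uniformly integrable, so it is relatively weakly compact by the Dunford--Pettis theorem; combined with weak closedness this gives weak compactness, and metrisability follows because a weakly compact subset of a separable Banach space is weakly metrisable. Your plan instead is to exhibit $\pi\lIm K\rIm$ directly as a weak-$\ast$ compact subset of a dual space and invoke Banach--Alaoglu; this is the classical ``change of density'' proof of the dominated case of Dunford--Pettis and, when carried out correctly, is more elementary and self-contained than citing Dunford--Pettis.

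The gap: your map $\Phi\colon\Lp{1}(\mu,Y)\to\Lp{1}(\mu,Z)^\ast$, $\Phi(f)(h)=\tint{}{}\langle f(x),h(x)\rangle\ud\mu x$, is not well-defined. For $f\in\Lp{1}(\mu,Y)$ and $h\in\Lp{1}(\mu,Z)$ the integrand $\langle f,h\rangle$ need not be $\mu$ summable (the product of two $\Lp{1}$ functions is not $\Lp{1}$), and even if it were, the resulting functional would not be bounded on $\Lp{1}(\mu,Z)$ unless $f\in\Lp{\infty}$. Restricting $\Phi$ to $K$ does not help for the same reason: $|\langle f,h\rangle|\le g|h|$, and $g|h|\in\Lp{1}$ fails in general for $g,h\in\Lp{1}$. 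Relatedly, your set $g\cdot B'=\{T\with|T(h)|\le\tint{}{}g|h|\ud\mu\}$ is not a subset of $\Lp{1}(\mu,Z)^\ast$ at all when $g\notin\Lp{\infty}$, so Banach--Alaoglu does not apply to it. The correct version of your idea is to \emph{renormalise by $g$}: send $f\in K$ to $\tilde f$ defined by $\tilde f(x)=f(x)/g(x)$ where $g(x)>0$ and $\tilde f(x)=0$ elsewhere. Then $\tilde f$ lies in the closed unit ball of $\Lp{\infty}(\mu\restrict\{x\with g(x)>0\},Y)$, which is the dual of the separable space $\Lp{1}(\mu\restrict\{x\with g(x)>0\},Z)$, so its unit ball is weak-$\ast$ compact and metrisable by Banach--Alaoglu. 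The pairing identity $\tint{}{}\langle f,h\rangle\ud\mu=\tint{}{}\langle\tilde f,gh\rangle\ud\mu$ for $h\in\Lp{\infty}(\mu,Z)$, together with density of $\{gh\with h\in\Lp{\infty}(\mu,Z)\}$ in $\Lp{1}(\mu\restrict\{x\with g(x)>0\},Z)$, shows that $f\mapsto\tilde f$ carries the weak topology of $L_1(\mu,Y)$ on $\pi\lIm K\rIm$ precisely to the weak-$\ast$ topology on the image, and that the image is the full unit ball (intersected with the weak-$\ast$ closed subspace of functions vanishing a.e.\ off $\{g>0\}$), hence weak-$\ast$ compact. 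With this substitution your argument goes through; your first step (norm-closedness of $\pi\lIm K\rIm$ via a.e.\ convergent subsequences) is correct and is in fact subsumed by the renormalised picture.
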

\begin{proof}
	First, notice that $\pi \lIm K \rIm$ is convex and closed, hence
	weakly closed by \cite[\printRoman{5}.3.13]{MR90g:47001a}. Therefore
	one may assume that $Y = \rel$ as a basis of $Y$ induces a linear
	homeomorphism $L_1 ( \mu, \rel )^{\dim Y} \simeq L_1 ( \mu, Y )$ with
	respect to the weak topologies on $L_1 ( \mu, \rel )$ and $L_1 ( \mu,
	Y )$. Since $L_1 ( \mu, \rel )$ is separable, the conclusion now
	follows combining \cite[\printRoman{4}.8.9, \printRoman{5}.6.1,
	\printRoman{5}.6.3]{MR90g:47001a}.
\end{proof}
\begin{lemma} \label{lemma:comp_lip}
	Suppose $\vdim$, $\adim$, $U$, $V$, and $Y$ are as in
	\ref{def:v_weakly_diff}, $f \in \trunc (V, Y)$, $Z$ is a finite
	dimensional normed vectorspace, $\Upsilon$ is a closed subset of $Y$,
	$c$ is the characteristic function of $f^{-1} \lIm \Upsilon \rIm$, and
	$g : Y \to Z$ is a Lipschitzian function such that $g | \Upsilon$ is
	proper and $g | Y \without \Upsilon$ is locally constant.

	Then $g \circ f \in \trunc (V, Z)$ and
	\begin{gather*}
		\| \derivative{V}{( g \circ f )} (x) \| \leq \Lip ( g )
		c(x) \| \derivative{V}{f} (x) \| \quad \text{for $\| V \|$
		almost all $x$}.
	\end{gather*}
\end{lemma}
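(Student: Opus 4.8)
The plan is to reduce the assertion to the integration by parts identity of \ref{remark:lipschitzian_theta}, applied to the class~$1$ functions $\gamma \circ \varrho_{1/i} \ast g$, and to obtain the weak derivative of $g \circ f$ by means of the weak $\Lp{1}$ compactness furnished by \ref{lemma:functional_analysis}. This is close in spirit to \ref{lemma:basic_v_weakly_diff}; the difference is that under the present hypotheses the mollifications $\varrho_{1/i} \ast g$ need not have pointwise convergent differentials at points of $\Upsilon$ — one may think of a Lipschitzian ``staircase'' over a fat Cantor set $\Upsilon \subset \rel$ — so one cannot produce a limit map $H$ as there, and $\derivative{V}{(g \circ f)}$ has to be extracted by compactness instead. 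Throughout, $g \circ f$ is clearly $\| V \| + \| \delta V \|$ measurable.

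First I would record two elementary facts. Put $d(y) = \dist(y,\Upsilon)$ for $y \in Y$ and $\Upsilon' = Y \cap \{ y \with d(y) \le 1 \}$. Then $g | \Upsilon'$ is proper: were it not, a sequence in $\Upsilon'$ escaping every bounded set along which $g$ stays bounded would, after replacing each of its points by a point of $\Upsilon$ within distance $2$, yield such a sequence in $\Upsilon$, contradicting that $g|\Upsilon$ is proper. Second, with $\varrho$ a standard mollifier and $g_i = \varrho_{1/i} \ast g$, the $g_i$ are of class $\infty$ with $\Lip g_i \le \Lip(g)$ and $\sup \{ |g_i(y)-g(y)| \with y \in Y \} \le \Lip(g)/i$, and since $g$ is locally constant on the open set $Y \without \Upsilon$ one has $D g_i(y) = 0$ whenever $d(y) > 1/i$; in particular $\{ y \with D g_i(y) \neq 0 \} \subset Y \cap \{ y \with d(y) \le 1/i \} \subset \Upsilon'$.

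Next I would construct the weak derivative. Let $G_i(x) = D g_i(f(x)) \circ \derivative{V}{f}(x)$, so $\| G_i(x) \| \le \Lip(g) \| \derivative{V}{f}(x) \|$ and $G_i(x) = 0$ unless $f(x) \in \{ y \with d(y) \le 1/i \}$. For compact $K \subset U$ and $0 \le s < \infty$ the restrictions of the $G_i$ to $\classification{K}{x}{|f(x)| \le s}$ are all dominated there by the fixed $\Lp{1}(\|V\|)$ function $\Lip(g)\, \| \derivative{V}{f} \|$; hence \ref{lemma:functional_analysis}, together with a diagonal argument over an exhaustion of $U$ by compact sets and over $s \to \infty$, yields a subsequence along which $G_i$ converges weakly in $\Lp{1}$, on every such set, to a $\|V\|$ measurable $\Hom(\rel^\adim,Z)$ valued function $G$. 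By \ref{lemma:functional_analysis} again, $\| G(x) \| \le \Lip(g)\, \| \derivative{V}{f}(x) \|$ for $\|V\|$ almost all $x$; moreover, integrating over subsets of $\{ x \with f(x) \notin \Upsilon \}$ and using that the sets $\{ x \with f(x) \in \{ y \with d(y) \le 1/i \} \}$ decrease to $\{ x \with f(x) \in \Upsilon \}$ shows $G(x) = 0$ for $\|V\|$ almost all $x$ with $c(x) = 0$, whence $\| G(x) \| \le \Lip(g)\, c(x)\, \| \derivative{V}{f}(x)\|$. Condition \ref{def:v_weakly_diff}\,\eqref{item:v_weakly_diff:int} for $g \circ f$ with this $G$ then holds because $c(x) = 1$ and $|g(f(x))| \le s$ force $f(x)$ into the compact set $(g|\Upsilon)^{-1}\lIm \cball{0}{s}\rIm$, so $\classification{K}{x}{|g(f(x))| \le s}$ meets $\{ x \with c(x) = 1 \}$ within some $\classification{K}{x}{|f(x)| \le s'}$.

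It remains to verify \ref{def:v_weakly_diff}\,\eqref{item:v_weakly_diff:partial}. Fix $\theta \in \mathscr{D}(U,\rel^\adim)$ and $\gamma \in \mathscr{E}(Z,\rel)$ with $\spt D\gamma$ compact; then $\gamma$ is bounded, $\gamma \circ g_i$ is of class $1$, and, as $\spt D\gamma$ is bounded and $g_i \to g$ uniformly while $g | \Upsilon'$ is proper, $\spt D(\gamma \circ g_i)$ lies in one fixed compact subset of $Y$ for all large $i$. Applying \ref{remark:lipschitzian_theta} to $\theta$ and $\gamma \circ g_i$ and using the chain rule gives, for such $i$,
\begin{gather*}
	( \delta V ) ( ( \gamma \circ g_i \circ f ) \theta ) = \tint{}{} \gamma ( g_i(f(x)) ) \project{P} \bullet D\theta(x) \ud V(x,P) + \tint{}{} \left < \theta(x), D\gamma ( g_i(f(x)) ) \circ G_i(x) \right > \ud \| V \| x .
\end{gather*}
Letting $i \to \infty$ along the chosen subsequence, dominated convergence (using the boundedness of $\gamma$) handles the left hand side and the first integral; for the second integral, $D\gamma$ is uniformly continuous with compact support and $g_i \circ f \to g \circ f$ uniformly, so $D\gamma(g_i(f(\cdot))) \to D\gamma(g(f(\cdot)))$ uniformly, while the integrands vanish outside a fixed set $\classification{\spt\theta}{x}{|f(x)| \le s''}$, so the weak $\Lp{1}$ convergence of $G_i$ to $G$ gives convergence to $\tint{}{} \left < \theta(x), D\gamma(g(f(x))) \circ G(x) \right > \ud \| V \| x$. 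Hence $g \circ f \in \trunc(V,Z)$, $\derivative{V}{(g \circ f)}$ agrees $\|V\|$ almost everywhere with $G$, and the asserted bound has been proven along the way. The main obstacle is exactly the one indicated: $\derivative{V}{(g\circ f)}$ cannot be read off as a pointwise limit of $D g_i$, and ensuring that the limit $G$ vanishes off $f^{-1}\lIm \Upsilon \rIm$ forces the supports of $D g_i$ to shrink to $\Upsilon$, which is why the mollification scale, rather than a fixed enlargement of $\Upsilon$, is used.
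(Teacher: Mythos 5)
Your proof is correct and follows essentially the same route as the paper's: mollify $g$, invoke the class~$1$ case (\ref{remark:lipschitzian_theta}, equivalently \ref{lemma:basic_v_weakly_diff} applied with a constant sequence) to get the integration by parts identity for $g_i \circ f$, extract a weak $\Lp{1}$ limit of the integrands $Dg_i(f(\cdot))\circ\derivative{V}{f}$ by \ref{lemma:functional_analysis} together with a diagonal argument over an exhaustion, and pass to the limit. The one genuine refinement over the paper's write-up concerns the factor $c(x)$: the paper fixes an arbitrary $\varepsilon$, works with the enlargement $B = Y \cap \{ y \with \dist(y,\Upsilon) \leq \varepsilon \}$, and obtains only the bound $\kappa\,b_\varepsilon(x)\,\|\derivative{V}{f}(x)\|$ with $b_\varepsilon$ the characteristic function of $f^{-1}\lIm B\rIm$, leaving implicit the observation that $\|V\|$ almost unique determinacy of $\derivative{V}{(g\circ f)}$ permits taking $\varepsilon \to 0+$ to recover $c$; you instead tie the mollification scale to $1/i$ so that $\{Dg_i \neq 0\}$ shrinks to $\Upsilon$ and check directly that the weak limit vanishes $\|V\|$ almost everywhere off $f^{-1}\lIm\Upsilon\rIm$, which is cleaner and gives $c$ in one step. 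Your identification of the pointwise-nonconvergence of the $Dg_i$ as the obstacle that rules out a direct appeal to \ref{lemma:basic_v_weakly_diff} and forces the compactness argument is exactly right.
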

\begin{proof}
	Suppose $0 < \varepsilon \leq 1$ and abbreviate $\kappa = \Lip g$.

	Define $B = Y \cap \{ y \with \dist (y,\Upsilon) \leq \varepsilon \}$
	and let $b$ denote the characteristic function of $f^{-1} \lIm B
	\rIm$.  Since $g | B$ is proper, one may employ convolution to
	construct $g_i \in \mathscr{E} ( Y, Z )$ satisfying $\Lip g_i \leq
	\kappa$, $\spt Dg_i \subset B$ and
	\begin{gather*}
		\delta_i = \sup \{ | (g - g_i) (y) | \with y \in
		Y \} \to 0 \quad \text{as $i \to \infty$}.
	\end{gather*}
	Therefore, if $\delta_i < \infty$ then $g_i|B$ is proper and
	$g_i \circ f \in \trunc (V,Z)$ with
	\begin{gather*}
		\| \derivative{V}{( g_i \circ f )}(x) \| \leq \kappa \, b(x) \|
		\derivative{V}{f} (x) \| \quad \text{for $\| V \|$ almost all
		$x$}
	\end{gather*}
	by \ref{lemma:basic_v_weakly_diff} with $\Upsilon$, $g$, and $H$
	replaced by $B$, $g_i$, and $Dg_i$. Choose a sequence of compact sets
	$K_j$ such that $K_j \subset \Int K_{j+1}$ for $j \in \nat$ and $U =
	\bigcup_{j=1}^\infty K_j$ and define $E(j) = K_j \cap \{ x \with
	|f(x)| < j \}$ for $j \in \nat$. In view of
	\ref{lemma:functional_analysis}, possibly passing to a subsequence by
	means of a diagonal process, there exist functions $F_j \in \Lp{1} (
	\| V \| \restrict E(j), \Hom ( \rel^\adim, Z ) )$ such that
	\begin{gather*}
		\| F_j(x) \| \leq \kappa \, b(x) \| \derivative{V}{f} (x) \|
		\quad \text{for $\| V \|$ almost all $x \in E_j$}, \\
		\tint{E(j)}{} \left <
		\derivative{V}{(g_i \circ f )},G \right> \ud \| V \| \to
		\tint{E(j)}{} \left< F_j,G\right> \ud \| V \| \quad \text{as
		$i \to \infty$}
	\end{gather*}
	whenever $G \in \Lp{\infty} \big ( \| V \|, \Hom (\rel^\adim, Z)^\ast
	\big )$ and $j \in \nat$. Noting $E_j \subset E_{j+1}$ and $F_j (x) =
	F_{j+1} (x)$ for $\| V \|$ almost all $E(j)$ for $j \in \nat$, one may
	define a $\| V \|$ measurable function $F$ by $F(x) = \lim_{j \to
	\infty} F_j(x)$ whenever $x \in U$.
	
	In order to verify $g \circ f \in \trunc (V,Z)$ with $\derivative{V} (
	g \circ f )(x) = F(x)$ for $\| V \|$ almost all $x$, suppose $\theta
	\in \mathscr{D} ( U, \rel^\adim )$ and $\gamma \in \mathscr{E} ( Z,
	\rel)$ with $\spt D\gamma$ compact. Then there exists $j \in \nat$
	with $\spt \theta \subset K_j$ and $(g|B)^{-1} \lIm \spt D \gamma \rIm
	\subset \oball{0}{j}$.  Define $G_i, G \in \Lp{\infty} \big ( \| V \|,
	\Hom ( \rel^\adim, Z)^\ast \big )$ by the requirements
	\begin{gather*}
		\left <\sigma,G_i(x)\right> = \left < \theta (x), D\gamma (
		g_i(f(x))) \circ \sigma \right>, \quad \left < \sigma, G(x)
		\right > = \left < \theta (x), D\gamma ( g(f(x)) ) \circ
		\sigma \right>
	\end{gather*}
	whenever $x \in \dmn f$ and $\sigma \in \Hom ( \rel^\adim, Z )$,
	hence
	\begin{gather*}
		\Lpnorm{\| V \|}{\infty}{G_i-G} \to 0 \quad \text{as $i \to
		\infty$}.
	\end{gather*}
	Observing $( g_i|B )^{-1} \lIm \spt D \gamma \rIm \subset
	\oball{0}{j}$ for large $i$, one infers
	\begin{gather*}
		\begin{aligned}
			& \tint{}{} \left < \theta (x), D \gamma ( g (f(x)))
			\circ F (x) \right > \ud \| V \| x = \tint{E(j)}{}
			\left <F,G\right> \ud \| V \| \\
			& \qquad = \lim_{i \to \infty} \tint{E(j)}{} \left <
			\derivative{V}{(g_i \circ f)} (x),G_i(x) \right > \ud
			\| V \| x \\
			& \qquad = \lim_{i \to \infty} \tint{}{} \left <
			\theta (x), D\gamma ( g_i(f(x)) \circ
			\derivative{V}{(g_i \circ f )}(x) \right > \ud \| V
			\| x
		\end{aligned}
	\end{gather*}
	as $\left <F,G\right> (x) = 0 = \left < \derivative{V}{( g_i \circ
	f)}, G_i \right > (x) $ for $\| V \|$ almost all $x \in U \without
	E(j)$.
\end{proof}
\begin{remark} \label{remark:mod_tv}
	Taking $\Upsilon = Y$ and $g(y) = |y|$ for $y \in Y$ yields $|f| \in
	\trunc (V)$ and
	\begin{gather*}
		\| \derivative{V}{|f|}(x) \| \leq \| \derivative{V}{f} (x) \|
		\quad \text{for $\| V \|$ almost all $x$}.
	\end{gather*}
\end{remark}
\begin{miniremark} \label{miniremark:trunc}
	\emph{Whenever $Y$ is a finite dimensional normed vectorspace, there
	exists a family of functions $g_s \in \mathscr{D} ( Y, Y
	)$ with $0 < s < \infty$ satisfying
	\begin{gather*}
		g_s (y) = y \quad \text{whenever $y \in Y \cap
		\cball{0}{s}$, $0 < s < \infty$}, \\
		\sup \{ \Lip g_s : 0 < s < \infty \} < \infty;
	\end{gather*}}
	in fact, one may assume $Y = \rel^l$ for some $l \in \nat$, select
	$\omega \in \mathscr{D} ( \rel, \rel )$ such that
	\begin{gather*}
		0 \leq \omega (t) \leq t \quad \text{for $0 \leq t < \infty$},
		\qquad \omega (t) = t \quad \text{for $-1 \leq t \leq 1$},
	\end{gather*}
	define $\omega_s = s \omega \circ \boldsymbol{\mu}_{1/s}$ and $g_s
	\in \mathscr{D} ( Y, Y )$ by
	\begin{gather*}
		g_s (y) = 0 \quad \text{if $y=0$}, \qquad g_s (y) =
		\omega_s (|y|) |y|^{-1} y \quad \text{if $y \neq 0$},
	\end{gather*}
	whenever $y \in Y$ and $0 < s < \infty$, and conclude
	\begin{gather*}
		\omega_s (t) = t \quad \text{for $-s \leq t \leq s$}, \qquad
		\Lip \omega_s = \Lip \omega, \\
		D g_s (y)(v) = \omega_s' ( |y| ) ( |y|^{-1} y ) \bullet v (
		|y|^{-1} y ) + \omega_s ( |y|) |y|^{-1} \big ( v
		- ( |y|^{-1} y ) \bullet v ( |y|^{-1} y ) \big )
	\end{gather*}
	whenever $y \in Y \without \{ 0 \}$, $v \in Y$, and $0 < s < \infty$,
	hence $\Lip g_s \leq 2 \Lip \omega < \infty$.
\end{miniremark}
\begin{lemma} \label{lemma:integration_by_parts}
	Suppose $\vdim$, $\adim$, $U$, $V$, and $Y$ are as in
	\ref{def:v_weakly_diff}, $f \in \trunc (V,Y) \cap \Lploc{1} ( \| V \|
	+ \| \delta V \|, Y )$, $\derivative{V}{f} \in \Lploc{1} ( \| V \|,
	\Hom ( \rel^\adim, Y))$, and $\alpha \in \Hom ( Y, \rel )$.

	Then $\alpha \circ f \in \trunc (V)$ and
	\begin{gather*}
		\derivative{V}{(\alpha \circ f)} (x) = \alpha \circ
		\derivative{V}{f} (x) \quad \text{for $\| V \|$ almost all
		$x$}, \\
		( \delta V ) ( ( \alpha \circ f ) \theta ) = \alpha \big (
		\tint{}{} ( \project{P} \bullet D \theta (x) ) f(x) + \left <
		\theta(x), \derivative{V}{f} (x) \right > \ud V (x,P) \big )
	\end{gather*}
	whenever $\theta \in \mathscr{D} ( U, \rel^\adim )$.
\end{lemma}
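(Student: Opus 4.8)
The plan is to deduce both assertions from the defining conditions of \ref{def:v_weakly_diff} by truncating first in the target space $Y$ and afterwards in $\rel$.

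I would propose $F = \alpha \circ \derivative{V}{f}$ as the weak derivative of $\alpha \circ f$. It is $\| V \|$ measurable with values in $\Hom ( \rel^\adim, \rel )$, and condition \eqref{item:v_weakly_diff:int} of \ref{def:v_weakly_diff} for $\alpha \circ f$ with $F$ holds at once, since $\derivative{V}{f} \in \Lploc{1} ( \| V \|, \Hom ( \rel^\adim, Y ) )$ yields $\tint{K}{} \| F \| \ud \| V \| \leq \Lip ( \alpha ) \tint{K}{} | \derivative{V}{f} | \ud \| V \| < \infty$ for every compact $K \subset U$. For condition \eqref{item:v_weakly_diff:partial} I would invoke the family $g_s \in \mathscr{D} ( Y, Y )$, $0 < s < \infty$, of \ref{miniremark:trunc}: here $g_s ( y ) = y$ for $| y | \leq s$, $| g_s ( y ) | \leq | y |$, and $\| D g_s ( y ) \| \leq \kappa := \sup \{ \Lip g_s \with 0 < s < \infty \} < \infty$, and since $g_s$ has compact support, $\spt D g_s$ is compact and $D g_s ( y )$ is the identity of $Y$ for $| y | < s$; consequently $g_s ( f ( x ) ) \to f ( x )$ and $D g_s ( f ( x ) ) \to \id{Y}$ as $s \to \infty$ for each $x \in \dmn f$. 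Given $\theta \in \mathscr{D} ( U, \rel^\adim )$ and $\gamma \in \mathscr{E} ( \rel, \rel )$ with $\spt D \gamma$ compact, the function $\Gamma_s = \gamma \circ \alpha \circ g_s$ lies in $\mathscr{E} ( Y, \rel )$ and $D \Gamma_s$ vanishes off $\spt D g_s$, so $\spt D \Gamma_s$ is compact; condition \eqref{item:v_weakly_diff:partial} for $f$ applied with $\Gamma_s$ in place of $\gamma$ produces an identity whose three terms I would let $s \to \infty$. On the left $( \Gamma_s \circ f ) \theta \to ( \gamma \circ \alpha \circ f ) \theta$ in $\Lp{1} ( \| \delta V \|, \rel^\adim )$ because $\gamma$ is bounded and $\theta$ has compact support; on the right the first integrand is bounded, uniformly in $s$, by a constant multiple of $| D \theta |$ supported in $\spt \theta \times \grass{\adim}{\vdim}$, and the second by $\kappa \, \Lip ( \alpha ) ( \sup | D \gamma | ) | \theta | \, | \derivative{V}{f} |$, both integrable. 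Dominated convergence then turns the identity into condition \eqref{item:v_weakly_diff:partial} for $\alpha \circ f$ with $F = \alpha \circ \derivative{V}{f}$, whence $\alpha \circ f \in \trunc ( V )$; the $\| V \|$ almost uniqueness of the weak derivative gives $\derivative{V}{( \alpha \circ f )} ( x ) = \alpha \circ \derivative{V}{f} ( x )$ for $\| V \|$ almost all $x$.

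For the integration by parts identity I would write $g = \alpha \circ f \in \trunc ( V )$, note that the hypotheses give $g \in \Lploc{1} ( \| V \| + \| \delta V \| )$ and $\derivative{V}{g} \in \Lploc{1} ( \| V \|, \Hom ( \rel^\adim, \rel ) )$, and pick $\gamma_s \in \mathscr{E} ( \rel, \rel )$ with $\gamma_s ( t ) = t$ for $| t | \leq s$, $| \gamma_s ( t ) | \leq | t |$, $\spt D \gamma_s$ compact, and $\sup \{ \Lip \gamma_s \with 0 < s < \infty \} < \infty$ --- for example those of \ref{miniremark:trunc} with $Y = \rel$. Applying condition \eqref{item:v_weakly_diff:partial} for $g$ with $\gamma_s$ and letting $s \to \infty$ --- using $\gamma_s ( g ( x ) ) \to g ( x )$ with $| \gamma_s ( g ( x ) ) | \leq | g ( x ) |$, $\gamma_s' ( g ( x ) ) \to 1$ with $| \gamma_s' ( g ( x ) ) | \leq \sup_s \Lip \gamma_s$, and the local summability of $g$, $\derivative{V}{g}$ together with the compact support of $\theta$ --- one last dominated convergence argument gives
\begin{gather*}
	( \delta V ) ( g \theta ) = \tint{}{} g ( x ) \project{P} \bullet D \theta ( x ) \ud V ( x, P ) + \tint{}{} \left < \theta ( x ), \derivative{V}{g} ( x ) \right > \ud \| V \| x.
\end{gather*}
Substituting $g = \alpha \circ f$ and $\derivative{V}{g} ( x ) = \alpha \circ \derivative{V}{f} ( x )$, and noting that $( \project{P} \bullet D \theta ( x ) ) f ( x ) + \left < \theta ( x ), \derivative{V}{f} ( x ) \right >$ is a $V$ integrable $Y$ valued function by the summability assumptions, I would pull $\alpha$ out of the integral by its continuity and linearity, arriving at the stated formula.

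The one genuinely delicate point is that $\alpha$ is linear, so $D \alpha \equiv \alpha$ is nowhere compactly supported (unless $Y = \{ 0 \}$) and $\alpha$ need not be proper; hence neither \ref{def:v_weakly_diff} nor \ref{lemma:comp_lip} can be applied to it directly, and the substance of the proof is the two dominated convergence passages above. It is precisely there that the hypotheses $f \in \Lploc{1} ( \| V \| + \| \delta V \|, Y )$ and $\derivative{V}{f} \in \Lploc{1} ( \| V \| )$ are needed --- without them even the terms of the integration by parts identity need not be finite.
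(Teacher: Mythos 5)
Your proof is correct. The paper's own proof is extremely terse --- \emph{``If $f$ is bounded the conclusion follows from \ref{remark:lipschitzian_theta}. The general case may be treated by approximation based on \ref{lemma:basic_v_weakly_diff} and \ref{miniremark:trunc}.''} --- and your argument fills in the details while taking a slightly different route through the same two pillars (the uniformly Lipschitzian truncation family of \ref{miniremark:trunc}, and dominated convergence made available by the extra summability hypotheses). Where the paper truncates $f$ itself (producing the bounded function $g_s \circ f$, applying \ref{remark:lipschitzian_theta} to that, and then passing to the limit via \ref{lemma:basic_v_weakly_diff}), you instead truncate \emph{inside the test function}, applying \ref{def:v_weakly_diff}\,\eqref{item:v_weakly_diff:partial} for $f$ directly with $\Gamma_s = \gamma \circ \alpha \circ g_s$ --- which is legitimate because $\spt D\Gamma_s \subset \spt Dg_s$ is compact --- and taking $s \to \infty$ with the uniform Lipschitz bound and the pointwise eventual equality $D\Gamma_s(y) = D\gamma(\alpha(y)) \circ \alpha$ for $s > |y|$. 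The gain is a more self-contained argument that never invokes \ref{remark:lipschitzian_theta} or \ref{lemma:basic_v_weakly_diff}; the paper's version instead reuses machinery it has already set up. Both settle the integration-by-parts identity the same way: by a second truncation in $\rel$ (your $\gamma_s$) followed by dominated convergence using $f \in \Lploc{1}(\|V\| + \|\delta V\|, Y)$ and $\derivative Vf \in \Lploc{1}(\|V\|, \Hom(\rel^\adim, Y))$, then pulling the continuous linear $\alpha$ out of the $V$-integral. Your closing observation that $\alpha$ itself is neither proper nor compactly-supported in derivative, so neither the definition nor \ref{lemma:comp_lip} applies to it directly, is exactly right and is the reason the extra summability hypotheses appear in the statement.
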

\begin{proof}
	If $f$ is bounded the conclusion follows from
	\ref{remark:lipschitzian_theta}. The general case may be treated by
	approximation based on \ref{lemma:basic_v_weakly_diff} and
	\ref{miniremark:trunc}.
\end{proof}
\begin{remark} \label{remark:comparison_trunc_spaces}
	If $l=1$, $\vdim = \adim$, and $\| V \| ( A ) = \mathscr{L}^\vdim (A)$
	for $A \subset U$, then $f \in \trunc (V)$ if and only if $f$ belongs
	to the class $\mathscr{T}^{1,1}_{\mathrm{loc}} ( U )$ introduced by
	B{\'e}nilan, Boccardo, Gallou{\"e}t, Gariepy, Pierre and V{\'a}zquez
	in \cite[p.~244]{MR1354907} and in this case $\derivative{V}{f}$
	corresponds to ``the derivative $Df$ of $f \in
	\mathscr{T}^{1,1}_{\mathrm{loc}} ( U)$'' of \cite[p.~246]{MR1354907}
	as may be verified by use of
	\ref{lemma:basic_v_weakly_diff},
	\ref{example:composite}\,\eqref{item:composite:scalar_mult}\,\eqref{item:composite:1d}, \ref{lemma:integration_by_parts},
	and \cite[2.1, 2.3]{MR1354907}.
\end{remark}
\begin{theorem} \label{thm:addition}
	Suppose $\vdim$, $\adim$, $U$, $V$, and $Y$ are as in
	\ref{def:v_weakly_diff}, and $f \in \trunc (V, Y)$.

	Then the following four statements hold:
	\begin{enumerate}
		\item \label{item:addition:zero} If $A = \{ x \with f(x) = 0
		\}$, then
		\begin{gather*}
			\derivative{V}{f} (x) = 0 \quad \text{for $\| V \|$
			almost all $x \in A$}.
		\end{gather*}
		\item \label{item:addition:join} If $Z$ is a finite dimensional
		normed vectorspace, $g : U \to Z$ is locally Lipschitzian, and
		$h(x) = (f(x),g(x))$ for $x \in \dmn f$, then $h \in \trunc (V,
		Y \times Z )$ and
		\begin{gather*}
			\derivative{V}{h} (x)(u) = ( \derivative{V}{f}(x)(u),
			\derivative{V}{g} (x)(u)) \quad \text{whenever $u \in
			\rel^\adim$}
		\end{gather*}
		for $\| V \|$ almost all $x$.
		\item \label{item:addition:add} If $g : U \to Y$ is
		locally Lipschitzian, then $f+g \in \trunc (V,Y)$ and
		\begin{gather*}
			\derivative{V}{(f+g)} (x) = \derivative{V}{f} (x) +
			\derivative{V}{g} (x) \quad \text{for $\| V \|$ almost
			all $x$}.
		\end{gather*}
		\item \label{item:addition:mult} If $f \in \Lploc{1} ( \| V
		\|, Y )$, $\derivative{V}{f} \in \Lploc{1} ( \| V \|,
		\Hom ( \rel^\adim, Y ) )$, and $g : U \to \rel$ is
		locally Lipschitzian, then $gf \in \trunc ( V, Y )$ and
		\begin{gather*}
			\derivative{V}{(gf)} (x) = \derivative{V}{g} (x)\,f(x)
			+ g(x) \derivative{V}{f} (x) \quad \text{for $\| V \|$
			almost all $x$}.
		\end{gather*}
	\end{enumerate}
\end{theorem}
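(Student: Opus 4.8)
The plan is to prove \eqref{item:addition:zero} directly from \ref{lemma:basic_v_weakly_diff}, to establish \eqref{item:addition:join} as the central step, and then to deduce \eqref{item:addition:add} and \eqref{item:addition:mult} from \eqref{item:addition:join} by a truncation applied to the auxiliary function occurring in \ref{def:v_weakly_diff}\,\eqref{item:v_weakly_diff:partial}. For \eqref{item:addition:zero}, assuming $Y = \rel^l$, I would choose $\eta_i \in \mathscr{E} ( \rel, \rel )$ with $0 \leq \eta_i \leq 1$, $\eta_i (t) = 0$ for $t \leq 1/i$, $\eta_i (t) = 1$ for $t \geq 2/i$, and $\sup_i \Lip \eta_i < \infty$, and define $g_i : Y \to Y$ by $g_i (y) = \eta_i ( |y| ) y$; then $g_i$ tends to the identity map of $Y$ uniformly, $\sup_i \Lip g_i < \infty$ (since $t | \eta_i'(t) |$ is bounded uniformly in $i$, $t$), and $Dg_i (y) \to \id{Y}$ for $y \neq 0$ while $g_i$ vanishes near $0$, so the function $H$ with $H(y) = \lim_i Dg_i (y)$ satisfies $H(0) = 0$. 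Applying \ref{lemma:basic_v_weakly_diff} with $Z = Y$, $\Upsilon = Y$, $g$ the identity map of $Y$, and this $H$ yields $\derivative{V}{f} (x) = \derivative{V}{(g \circ f)}(x) = H ( f (x) ) \circ \derivative{V}{f} (x)$ for $\| V \|$ almost all $x$; on $A = \{ x \with f(x) = 0 \}$ the right hand side equals $H(0) \circ \derivative{V}{f}(x) = 0$, which is \eqref{item:addition:zero}.

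For \eqref{item:addition:join}, put $F (x)(u) = ( \derivative{V}{f} (x)(u), \derivative{V}{g} (x)(u) )$. Measurability of $h$ is clear, and $\derivative{V}{g}$ is locally $\| V \|$ essentially bounded by \ref{example:lipschitzian}; equipping $Y \times Z$ with the maximum norm one has $\{ x \with |h(x)| \leq s \} \subset \{ x \with |f(x)| \leq s \}$, so $F$ satisfies \ref{def:v_weakly_diff}\,\eqref{item:v_weakly_diff:int}. To verify \ref{def:v_weakly_diff}\,\eqref{item:v_weakly_diff:partial} I would first reduce the test function: excluding the trivial case $\dim Z = 0$, any $\gamma \in \mathscr{E} ( Y \times Z, \rel )$ with $\spt D\gamma$ compact differs by an additive constant — for which the identity is immediate from $( \delta V )( \theta ) = \int \project{P} \bullet D\theta \ud V$ — from an element of $\mathscr{D} ( Y \times Z, \rel )$; then, by \ref{miniremark:distrib_on_products} and dominated convergence, the relevant integrands being supported in $( \spt \theta ) \cap \{ x \with |f(x)| \leq s \}$ for suitable $s$ where $\derivative{V}{f}$ is $\| V \|$ summable, it suffices to treat $\gamma = \gamma_1 \otimes \gamma_2$ with $\gamma_1 \in \mathscr{D} (Y,\rel)$, $\gamma_2 \in \mathscr{D} (Z,\rel)$. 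For such $\gamma$ one applies \ref{remark:lipschitzian_theta} to $f$ with the Lipschitzian vector field $\theta' = ( \gamma_2 \circ g ) \theta$ of compact support and the function $\gamma_1$; using the product and chain rules for $( \| V \|, \vdim ) \ap D\theta'$, that $P = \Tan^\vdim ( \| V \|, x )$ for $V$ almost all $(x,P)$, that $\derivative{V}{g}(x) = ( \| V \|, \vdim ) \ap Dg(x) \circ \project{\Tan^\vdim ( \| V \|, x )}$ annihilates $\perpproject{\Tan^\vdim ( \| V \|, x )}$, and idempotence of orthogonal projections, the resulting identity rearranges into precisely \ref{def:v_weakly_diff}\,\eqref{item:v_weakly_diff:partial} for $h$ and $F$. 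Finally $\derivative{V}{h}$ is the approximate limit regularisation of $F$, and the approximate limit of a pair is the pair of approximate limits where both exist, giving $\derivative{V}{h}(x)(u) = ( \derivative{V}{f}(x)(u), \derivative{V}{g}(x)(u) )$ for $\| V \|$ almost all $x$.

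For \eqref{item:addition:add}, note $( f, g ) \in \trunc ( V, Y \times Y )$ by \eqref{item:addition:join} with $\derivative{V}{(f,g)}(x)(u) = ( \derivative{V}{f}(x)(u), \derivative{V}{g}(x)(u) )$. Given $\theta \in \mathscr{D} ( U, \rel^\adim )$ and $\gamma \in \mathscr{E} (Y,\rel)$ with $\spt D\gamma$ compact — reducing to compactly supported $\gamma$ when $\dim Y \geq 2$ via \ref{remark:eq_condition_weak_diff}, the case $\dim Y = 1$ needing only that the two constant values of $\gamma$ at $\pm\infty$ be split off and treated separately — put $G = \sup \{ |g(x)| \with x \in \spt \theta \}$, choose $\chi \in \mathscr{D} (Y,\rel)$ with $\chi = 1$ on $\cball{0}{G+1}$, and set $\tilde\gamma ( y_1, y_2 ) = \gamma ( y_1 + y_2 ) \chi ( y_2 )$. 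Then $\spt \tilde\gamma$ (hence $\spt D\tilde\gamma$) is compact, $( \tilde\gamma \circ (f,g) ) \theta = ( \gamma \circ (f+g) ) \theta$ as vector fields since $\chi ( g(x) ) = 1$ on $\spt \theta$, and there $D\tilde\gamma ( (f,g)(x) ) = D\gamma ( (f+g)(x) ) \circ \sigma$ with $\sigma (a,b) = a+b$; inserting $\tilde\gamma$ and $\theta$ into \ref{def:v_weakly_diff}\,\eqref{item:v_weakly_diff:partial} for $(f,g)$ and simplifying on $\spt \theta$ gives the asserted identity, while measurability, \ref{def:v_weakly_diff}\,\eqref{item:v_weakly_diff:int} (via $\{ x \with |(f+g)(x)| \leq s \} \cap K \subset \{ x \with |f(x)| \leq s + \sup_K |g| \} \cap K$), and the derivative formula follow as in \eqref{item:addition:join}. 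Statement \eqref{item:addition:mult} follows similarly: by \eqref{item:addition:join}, $( g_k \circ f, g ) \in \trunc ( V, Y \times \rel )$ is locally bounded whenever $g_k$ is a truncating function from \ref{miniremark:trunc}, so composition with the bilinear (hence class $1$) map $(y,t) \mapsto t\,y$ and \ref{remark:lipschitzian_theta} give $g \cdot ( g_k \circ f ) \in \trunc ( V, Y )$ with the expected product formula; the hypotheses $f, \derivative{V}{f} \in \Lploc{1} ( \| V \|, \cdot )$ ensure $g_k \circ f \in \trunc (V,Y)$ and permit passing to the limit $k \to \infty$ in the identities of \ref{def:v_weakly_diff}\,\eqref{item:v_weakly_diff:partial} by dominated convergence, using $g_k \circ f \to f$, $\derivative{V}{(g_k \circ f)} \to \derivative{V}{f}$ and $g \cdot ( g_k \circ f ) \to g f$ in $\Lploc{1} ( \| V \| )$.

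The main obstacle is \eqref{item:addition:join}: although no single ingredient is hard, one must carry the reduction to tensor product test functions on exactly the compact sets where $\derivative{V}{f}$ is $\| V \|$ summable, and then execute the product rule for $( \| V \|, \vdim ) \ap D( ( \gamma_2 \circ g ) \theta )$ and match the tangential and first variation contributions of \ref{remark:lipschitzian_theta} against the two terms of \ref{def:v_weakly_diff}\,\eqref{item:v_weakly_diff:partial} without sign or projection errors; the treatment of the case $\dim Y = 1$ in \eqref{item:addition:add}, where $\gamma$ need not be compactly supported, is a further, more routine, complication.
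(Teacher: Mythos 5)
Parts \eqref{item:addition:zero}, \eqref{item:addition:join}, and \eqref{item:addition:mult} are essentially correct. Your argument for \eqref{item:addition:zero} --- invoking \ref{lemma:basic_v_weakly_diff} with $g = \id{Y}$, $\Upsilon = Y$, and approximants $g_i(y) = \eta_i(|y|) y$ so that the limit $H$ satisfies $H(0) = 0$ while $H(y) = \id{Y}$ for $y \neq 0$, whence $\derivative{V}{f}(x) = H(f(x)) \circ \derivative{V}{f}(x)$ vanishes $\| V \|$ almost everywhere on $A$ --- is a genuinely different and shorter route than the paper's, which first reduces to bounded $f$ and $Y = \rel$ via \ref{miniremark:trunc}, \ref{lemma:basic_v_weakly_diff}, and \ref{lemma:integration_by_parts}, then decomposes $f = f^+ - f^-$ and reads the conclusion off the explicit formulae of \ref{example:composite}\,\eqref{item:composite:1d}. (One slip: your claim $\sup_i \Lip \eta_i < \infty$ is false, since $\Lip \eta_i \gtrsim i$; what matters, as you then say, is the uniform bound on $t\,|\eta_i'(t)|$, hence on $\Lip g_i$, which is correct.) Your \eqref{item:addition:join} and \eqref{item:addition:mult} follow the paper's line.

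The gap is in \eqref{item:addition:add} for $\dim Y = 1$ and $f$ unbounded. You assert this case ``need[s] only that the two constant values of $\gamma$ at $\pm\infty$ be split off and treated separately,'' but for $Y = \rel$ the asymptotic part of a $\gamma$ with compact $\spt D\gamma$ is $c + b\eta$, where $\eta$ is a smooth nondecreasing function with $\eta(-\infty) = 0$, $\eta(+\infty) = 1$, and $\spt \eta'$ compact; the piece $\eta$ is not a constant but a bona fide member of $\mathscr{E}(\rel,\rel)$ with compact $\spt D\eta$ and differing limits at $\pm\infty$. Verifying \ref{def:v_weakly_diff}\,\eqref{item:v_weakly_diff:partial} for $f + g$ with this $\eta$ is not within reach of your $\tilde\gamma$ device: $\tilde\eta(y_1,y_2) = \eta(y_1+y_2)\chi(y_2)$ has $\spt D\tilde\eta$ unbounded in $y_1$ through the term $\eta(y_1+y_2)\,D\chi(y_2)$, so the defining identity for $h = (f,g)$ cannot be applied with $\tilde\eta$; and neither \ref{lemma:basic_v_weakly_diff} nor \ref{lemma:comp_lip} applies to $(y_1,y_2) \mapsto \eta(y_1 + y_2)$, since its restriction to any closed set containing the strip $\{(y_1,y_2) \with y_1 + y_2 \in \spt \eta'\}$ is not proper. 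The device that is needed --- and that the paper uses --- is a truncation of $f$ rather than of $\gamma$: put $f_i = \sup\{\inf\{f,i\},-i\}$, deduce $f_i + g \in \trunc(V)$ from the locally bounded case, and pass to the limit in the integral identity; this works because $|\derivative{V}{f_i}| \leq |\derivative{V}{f}|$, $\derivative{V}{f_i} \to \derivative{V}{f}$ for $\|V\|$ almost all $x$, and for large $i$ the integrand involving $D\gamma$ is confined to $\spt\theta \cap \{x \with |f(x)| \leq s\}$ for a fixed $s$ determined by $\spt D\gamma$ and $\sup_{\spt\theta}|g|$, where $\derivative{V}{f}$ is $\|V\|$ summable. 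This is a necessary ingredient, not a routine detail to be sketched past. For \eqref{item:addition:join}, by contrast, your treatment correctly needs no separate $\dim Y = 1$ case: there the relevant target is $Y\times Z$, and $\dim(Y \times Z) \geq 2$ whenever $\dim Z \geq 1$, the case $\dim Z = 0$ being trivial.
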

\begin{proof} [Proof of \eqref{item:addition:zero}]
	By \ref{miniremark:trunc} in conjunction with
	\ref{lemma:basic_v_weakly_diff} one may assume $f$ to be bounded and
	$\derivative{V}{f} \in \Lploc{1} ( \| V \|, \Hom ( \rel^\adim, Y))$,
	hence by \ref{lemma:integration_by_parts} also $Y = \rel$. In this
	case it follows from \ref{lemma:basic_v_weakly_diff} and
	\ref{example:composite}\,\eqref{item:composite:scalar_mult}\,\eqref{item:composite:1d}
	that $f^+$ and $f^-$ satisfy the same hypotheses as $f$, hence
	\begin{gather*}
		( \delta V ) ( g \theta ) = \tint{}{} ( \project{P} \bullet
		D\theta (x) ) g (x) + \left < \theta (x), \derivative{V}{g}
		(x) \right > \ud V (x,P)
	\end{gather*}
	for $\theta \in \mathscr{D} ( U, \rel )$ and $g \in \{ f, f^+, f^- \}$
	by \ref{lemma:integration_by_parts}. Since $f=f^+-f^-$, this implies
	\begin{gather*}
		\derivative{V}{f} (x) = \derivative{V}{f^+} (x) -
		\derivative{V}{f^-} (x) \quad \text{for $\| V \|$ almost all
		$x$}
	\end{gather*}
	and the formulae derived in
	\ref{example:composite}\,\eqref{item:composite:1d} yield the
	conclusion.
\end{proof}
\begin{proof} [Proof of \eqref{item:addition:join}]
	Assume $\dim Y>0$. Define a $\| V \|$ measurable function $H$ with
	values in $\Hom ( \rel^\adim, Y \times Z)$ by
	\begin{gather*}
		H(x)(u) = ( \derivative{V}{f} (x)(u), \derivative{V}{g} (x)(u)
		) \quad \text{for $u \in \rel^\adim$}
	\end{gather*}
	whenever $x \in \dmn \derivative{V}{f} \cap \dmn \derivative{V}{g}$.
	It will proven that
	\begin{gather*}
		( \delta V ) ( ( \gamma \circ h ) \theta ) = \tint{}{} \gamma
		(h(x)) \project P \bullet D \theta (x) + \left < \theta (x), D
		\gamma ( h (x) ) \circ H (x) \right > \ud V (x,P)
	\end{gather*}
	whenever $\gamma \in \mathscr{D} ( Y \times Z, \rel )$ and $\theta \in
	\mathscr{D} (U,\rel^\adim )$; in fact, in view of
	\ref{remark:ind-limit} and \ref{miniremark:distrib_on_products} the
	problem reduces to the case that, for some $\mu \in \mathscr{D}
	(Y,\rel)$ and some $\nu \in \mathscr{D} (Z,\rel)$,
	\begin{gather*}
		\gamma (y,z) = \mu (y) \nu (z) \quad \text{for $(y,z) \in Y
		\times Z$}
	\end{gather*}
	in which case one computes, using \ref{remark:lipschitzian_theta} and
	\ref{example:lipschitzian},
	\begin{gather*}
		\begin{aligned}
			& ( \delta V ) ( ( \gamma \circ h ) \theta ) = (
			\delta V ) ( ( \mu \circ f ) ( \nu \circ g )
			\theta ) \\
			& \quad = \tint{}{} \mu ( f(x)) \big ( \left <
			\theta (x), D \nu ( g(x) ) \circ
			\derivative{V}{g} (x) \right > + \nu ( g(x))
			\project{P} \bullet D \theta (x) \big ) \ud V (x,P) \\
			& \quad \phantom{=} \ + \tint{}{} \nu ( g (x))
			\left < \theta (x), D \mu ( f (x) ) \circ
			\derivative{V}{f} (x) \right > \ud \| V \| x \\
			& \quad = \tint{}{} \gamma ( h (x) ) \project{P}
			\bullet D \theta (x) + \left < \theta (x), D \gamma (
			h (x) ) \circ H (x) \right > \ud V(x,P).
		\end{aligned}
	\end{gather*}
	If $\dim Y \geq 2$ or $f$ is bounded the conclusion now follows from
	\ref{remark:eq_condition_weak_diff}. Finally, to approximate $f$ in
	case $\dim Y = 1$, one assumes $Y = \rel$ and employs the functions
	$f_i$ defined by $f_i(x) = \sup \{ \inf \{ f(x), i \}, -i \}$ for $x
	\in \dmn f$ and $i \in \nat$ and notices that $f_i \in \trunc (V)$ and
	\begin{gather*}
		| \derivative V{f_i}(x) | \leq | \derivative{V}f(x) |, \qquad
		\derivative V{f_i} (x) \to \derivative Vf(x) \quad \text{as $i
		\to \infty$}, \\
		\text{$|f_i(x)| < |f(x)|$ implies $\derivative V{f_i} (x) =
		0$}
	\end{gather*}
	for $\| V \|$ almost all $x$ by
	\ref{lemma:basic_v_weakly_diff},
	\ref{example:composite}\,\eqref{item:composite:scalar_mult}\,\eqref{item:composite:1d}.
\end{proof}
\begin{proof} [Proof of \eqref{item:addition:add}]
	Assume $\dim Y > 0$ and that $\im g \subset \cball 0t$ for some $0 < t
	< \infty$. Define $h$ as in \eqref{item:addition:join} and let $L : Y
	\times Y \to Y$ denote addition.

	The following assertion will be shown. \emph{If $\gamma \in
	\mathscr{D} (Y,\rel)$ and $\theta \in \mathscr{D} (U,\rel^\adim)$,
	then
	\begin{gather*}
		\begin{aligned}
			& ( \delta V ) ( ( \gamma \circ L \circ h ) \theta ) \\
			& \quad = \tint{}{} \gamma (L(h(x))) \project{P}
			\bullet D \theta (x) + \left <
			\theta(x), D(\gamma \circ L)(h(x)) \circ
			\derivative{V}h (x) \right > \ud V(x,P).
		\end{aligned}
	\end{gather*}}
	For this purpose define $D = Y \times ( Y \cap \cball 0{2t})$ and
	choose $\varrho \in \mathscr{D} (Y, \rel)$ with
	\begin{gather*}
		\cball 0t \subset \Int \{ z \with \varrho (z)= 1 \}, \quad
		\spt \varrho \subset \cball 0{2t}.
	\end{gather*}
	Let $\phi : Y \times Y \to Y$ be defined by
	\begin{gather*}
		\phi (y,z) = \varrho (z) ( \gamma \circ L ) (y,z) \quad
		\text{for $(y,z) \in Y \times Y$}.
	\end{gather*}
	Noting that
	\begin{gather*}
		\text{$L|D$ is proper}, \quad \spt \phi \subset D \cap L^{-1}
		\lIm \spt \gamma \rIm, \quad \text{$\spt D\phi$ is compact},
		\\
		\phi (y,z) = ( \gamma \circ L ) ( y,z) \quad \text{and} \quad
		D \phi (y,z) = D( \gamma \circ L ) ( y,z)
	\end{gather*}
	for $y \in Y$ and $z \in Y \cap \cball 0t$, one uses
	\eqref{item:addition:join} and
	\ref{def:v_weakly_diff}\,\eqref{item:v_weakly_diff:partial} with $f$
	and $\gamma$ replaced by $h$ and $\phi$ to infer the assertion.

	If $\dim Y \geq 2$ or $f$ is bounded the conclusion now follows from
	the assertion of the preceding paragraph in conjunction with
	\ref{remark:eq_condition_weak_diff}. Finally, the case $\dim Y = 1$
	may be treated by means of approximation as in
	\eqref{item:addition:join}.
\end{proof}
\begin{proof} [Proof of \eqref{item:addition:mult}]
	Assume $g$ to be bounded, define $h$ as in \eqref{item:addition:join}
	and let $\mu : Y \times \rel \to Y$ be defined by $\mu (y,t) = ty$ for
	$y \in Y$ and $t \in \rel$. If $f$ is bounded the conclusion follows
	from \eqref{item:addition:join} and \ref{remark:lipschitzian_theta}
	with $f$ and $g$ replaced by $h$ and $\mu$. The general case then
	follows by approximation using \ref{lemma:basic_v_weakly_diff} and
	\ref{miniremark:trunc}.
\end{proof}
\begin{remark}
	The approximation procedure in the proof of \eqref{item:addition:join}
	uses ideas from \cite[4.1.2, 4.1.3]{MR41:1976}.
\end{remark}
\begin{remark}
	The need for some strong restriction on $g$ in
	\eqref{item:addition:join} and \eqref{item:addition:add} will be
	illustrated in \ref{example:star}.
\end{remark}
\begin{miniremark} \label{miniremark:measurablity}
	If $\phi$ is a measure, $A$ is $\phi$ measurable and $B$ is a
	$\phi \restrict A$ measurable subset of $A$, then $B$ is $\phi$
	measurable.
\end{miniremark}
\begin{theorem} \label{thm:tv_on_decompositions}
	Suppose $\vdim$, $\adim$, $U$, $V$, and $Y$ are as in
	\ref{def:v_weakly_diff}, $\Xi$ is a decomposition of $V$, $\xi$ is
	associated to $\Xi$ as in \ref{remark:decomp_rep}, $f_W \in \trunc (W,
	Y )$ for $W \in \Xi$, and
	\begin{gather*}
		{\textstyle f = \bigcup \{ f_W | \xi (W) \with W \in \Xi \},
		\quad F = \bigcup \{ \derivative{W}{f_W} | \xi (W) \with W
		\in \Xi \}}.
	\end{gather*}
	
	Then the following three statements hold:
	\begin{enumerate}
		\item \label{item:tv_on_decompositions:f} $f$ is $\| V \| + \|
		\delta V \|$ measurable.
		\item \label{item:tv_on_decompositions:F} $F$ is $\| V \|$
		measurable.
		\item \label{item:tv_on_decompositions:tv} If $\tint{K \cap \{
		x \with |f(x)| \leq s \}}{} \| F \| \ud \| V \| < \infty$
		whenever $K$ is a compact subset of $U$ and $0 \leq s <
		\infty$, then $f \in \trunc (V,Y)$ and
		\begin{gather*}
			\derivative{V}{f} (x) = F (x) \quad \text{for $\| V
			\|$ almost all $x$}.
		\end{gather*}
	\end{enumerate}
\end{theorem}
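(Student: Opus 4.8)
The plan is to derive all three assertions from the corresponding statements for the individual components $W \in \Xi$, using the identities that relate the triple $(W, f_W, \derivative{W}{f_W})$ to $(V, f, F)$ on the Borel set $\xi(W)$. First I would collect the structural facts: by \ref{remark:decomp_rep} the family $\Xi$ is countable, the sets $\xi(W)$ ($W \in \Xi$) are pairwise disjoint Borel subsets of $U$ with $(\|V\| + \|\delta V\|)(U \without \bigcup \im \xi) = 0$, and $W = V \restrict \xi(W) \times \grass{\adim}{\vdim}$ with $\boundary{V}{\xi(W)} = 0$; hence $\|W\| = \|V\| \restrict \xi(W)$ and, since $\delta W = ( \delta V ) \restrict \xi(W)$ is representable by integration, $\|\delta W\| = \|\delta V\| \restrict \xi(W)$, so $\|W\| + \|\delta W\| = (\|V\| + \|\delta V\|) \restrict \xi(W)$. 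Applying \ref{remark:partition} with $G = \im \xi$ one also obtains $\|\delta V\|(U \without \bigcup \im \xi) = 0$ and $\delta V = \tsum{W \in \Xi}{} \delta W$, so $\|\delta V\| = \tsum{W \in \Xi}{} \|\delta W\|$; moreover, the densities of \ref{miniremark:situation_general_varifold} satisfy $\eta(W, x) = \eta(V, x)$ for $\|\delta W\|$ almost all $x$, for $W \in \Xi$.

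For \eqref{item:tv_on_decompositions:f} and \eqref{item:tv_on_decompositions:F} I would argue on preimages. Given a closed $B \subset Y$ (respectively a Borel $B \subset \Hom(\rel^\adim, Y)$), the set $f_W^{-1} \lIm B \rIm \cap \xi(W)$ (respectively $\derivative{W}{f_W}^{-1} \lIm B \rIm \cap \xi(W)$) is a $((\|V\| + \|\delta V\|) \restrict \xi(W))$ measurable — respectively $(\|V\| \restrict \xi(W))$ measurable — subset of $\xi(W)$, hence $\|V\| + \|\delta V\|$ measurable — respectively $\|V\|$ measurable — by \ref{miniremark:measurablity}. Taking the countable union over $\Xi$ shows $f^{-1} \lIm B \rIm$ (respectively $F^{-1} \lIm B \rIm$) is $\|V\| + \|\delta V\|$ (respectively $\|V\|$) measurable; since $\dmn f = f^{-1} \lIm Y \rIm$ and
\begin{gather*}
	(\|V\| + \|\delta V\|)(U \without \dmn f) \leq (\|V\| + \|\delta V\|)\big( U \without {\textstyle\bigcup \im \xi} \big) + \tsum{W \in \Xi}{} (\|W\| + \|\delta W\|)(U \without \dmn f_W) = 0,
\end{gather*}
this establishes \eqref{item:tv_on_decompositions:f} and \eqref{item:tv_on_decompositions:F}.

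For \eqref{item:tv_on_decompositions:tv} the hypothesis is exactly condition \ref{def:v_weakly_diff}\,\eqref{item:v_weakly_diff:int} for the pair $(f, F)$, so only \ref{def:v_weakly_diff}\,\eqref{item:v_weakly_diff:partial} remains. I would fix $\theta \in \mathscr{D}(U, \rel^\adim)$ and $\gamma \in \mathscr{E}(Y, \rel)$ with $K := \spt \theta$ compact and $\spt D\gamma$ compact — then $\gamma$ is bounded and $\spt D\gamma \subset \cball{0}{s}$ for some $0 \leq s < \infty$ — and start from the identity \ref{def:v_weakly_diff}\,\eqref{item:v_weakly_diff:partial} for $f_W \in \trunc(W, Y)$,
\begin{gather*}
	(\delta W)((\gamma \circ f_W)\theta) = \tint{}{} \gamma(f_W(x)) \project{P} \bullet D\theta(x) \ud W(x,P) + \tint{}{} \left < \theta(x), D\gamma(f_W(x)) \circ \derivative{W}{f_W}(x) \right > \ud \| W \| x.
\end{gather*}
Using $f = f_W$ and $F = \derivative{W}{f_W}$ $\|V\|$ almost everywhere on $\xi(W)$ together with the relations among the measures and densities recorded above, the left side equals $\tint{\xi(W)}{} \eta(V,x) \bullet \gamma(f(x)) \theta(x) \ud \|\delta V\| x$ and the two summands on the right equal $\tint{\xi(W) \times \grass{\adim}{\vdim}}{} \gamma(f(x)) \project{P} \bullet D\theta(x) \ud V(x,P)$ and $\tint{\xi(W)}{} \left < \theta(x), D\gamma(f(x)) \circ F(x) \right > \ud \|V\| x$. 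Extended over all of $U$ (resp. $U \times \grass{\adim}{\vdim}$), the three integrands are dominated, uniformly in $W$, by the $\|\delta V\|$-, $V$- and $\|V\|$-integrable functions $C\,\id{K}$, $C\,\id{K}$ and $L_\gamma\, \id{\classification{K}{x}{|f(x)| \leq s}}\, |\theta|\, \|F\|$ respectively, where $C$ depends only on $\gamma$ and $\theta$ and $L_\gamma$ is a finite upper bound for $\|D\gamma\|$; the $\|V\|$-integrability of the third is precisely what the hypothesis of \eqref{item:tv_on_decompositions:tv} supplies with $K$ as the compact set. Since the $\xi(W)$ are disjoint and $(\|V\| + \|\delta V\|)(U \without \bigcup \im \xi) = 0$, summing the displayed identities over $W \in \Xi$ and using countable additivity of the integral yields
\begin{gather*}
	(\delta V)((\gamma \circ f)\theta) = \tint{}{} \gamma(f(x)) \project{P} \bullet D\theta(x) \ud V(x,P) + \tint{}{} \left < \theta(x), D\gamma(f(x)) \circ F(x) \right > \ud \| V \| x,
\end{gather*}
which is \ref{def:v_weakly_diff}\,\eqref{item:v_weakly_diff:partial} for $(f, F)$. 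Hence $f \in \trunc(V, Y)$, and $\derivative{V}{f} = F$ $\|V\|$ almost everywhere follows because $F$ is an admissible density function in \ref{def:v_weakly_diff}.

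The only genuinely delicate points are the identification of $\|\delta W\|$ and $\eta(W, \cdot)$ with the restrictions of $\|\delta V\|$ and $\eta(V, \cdot)$ — routine once one exploits that $\delta W = (\delta V) \restrict \xi(W)$ is representable by integration — and the three interchanges of summation and integration; among these, only the summand carrying $F$ needs the truncation hypothesis of \eqref{item:tv_on_decompositions:tv} to provide a dominating function, which is where the defining feature of $\trunc$ actually enters.
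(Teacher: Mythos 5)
Your proof is correct and follows the same route as the paper's: parts (1) and (2) by pulling back measurability on each $\xi(W)$ through \ref{miniremark:measurablity} using $\|W\| = \|V\| \restrict \xi(W)$ and $\|\delta W\| = \|\delta V\| \restrict \xi(W)$, and part (3) by summing the defining identity of \ref{def:v_weakly_diff}\,\eqref{item:v_weakly_diff:partial} for $(W, f_W)$ over $W \in \Xi$, with the truncation hypothesis supplying the domination for the term carrying $F$. You merely spell out the absolute-summability justifications that the paper leaves implicit.
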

\begin{proof}
	Clearly, $f | \xi (W) = f_W | \xi (W)$ and $F | \xi (W) =
	\derivative{W}{f_W} | \xi (W)$ for $W \in \Xi$.
	\eqref{item:tv_on_decompositions:f} and
	\eqref{item:tv_on_decompositions:F} readily follow by means of
	\ref{miniremark:measurablity}, since $\| W \| = \| V \| \restrict \xi
	( W )$ and $\| \delta W \| = \| \delta V \| \restrict \xi (W)$ for $W
	\in \Xi$. The hypothesis of \eqref{item:tv_on_decompositions:tv}
	implies
	\begin{gather*}
		\begin{aligned}
			& ( \delta V ) ( ( \gamma \circ f ) \theta ) = \tsum{W
			\in \Xi}{} ( \delta W ) ( ( \gamma \circ f_W ) \theta )
			\\
			& \ = \tsum{W \in \Xi}{} \tint{}{} \gamma ( f_W (x))
			\project{P} \bullet D \theta (x) + \left < \theta (x), D
			\gamma ( f_W ( x ) ) \circ \derivative{W}{f_W} (x)
			\right > \ud W (x,P) \\
			& \ = \tsum{W \in \Xi}{} \tint{\xi(W) \times
			\grass{\adim}{\vdim}}{} \gamma ( f(x) ) \project{P}
			\bullet D \theta (x) + \left < \theta (x), D\gamma
			(f(x)) \circ F(x) \right > \ud V (x,P) \\
			& \ = \tint{}{} \gamma (f(x)) \project{P} \bullet D
			\theta (x) + \left < \theta (x), D \gamma(f(x)) \circ
			F (x) \right > \ud V (x,P)
		\end{aligned}
	\end{gather*}
	whenever $\theta \in \mathscr{D} ( U, \rel^\adim )$, $\gamma \in
	\mathscr{E} ( Y, \rel )$ and $\spt D\gamma$ is compact.
\end{proof}
\begin{example} \label{example:star}
	Suppose $R_j$, $V_j$, and $V$ are as in
	\ref{remark:nonunique_decomposition}. Define $f : \mathbf{C} \to
	\rel$, $g : \mathbf{C} \to \rel$, and $h : \mathbf{C} \to \rel$ by
	\begin{gather*}
		f(x) = 1 \quad \text{if $x \in R_1 \cup R_3 \cup R_5$}, \quad
		f(x) = 0 \quad \text{else}, \\
		g(x) = 1 \quad \text{if $x \in R_1 \cup R_4$}, \qquad g(x) = 0
		\quad \text{else}, \\
		h (x) = (f(x),g(x))
	\end{gather*}
	whenever $x \in \mathbf{C}$.

	Then $f \in \trunc (V_1+V_3+V_5)$, $g \in \trunc (V_1+V_4)$, hence
	$f,g \in \trunc (V)$
	with
	\begin{gather*}
		\derivative{V}{f} (x) = 0 = \derivative{V}{g}(x) \quad
		\text{for $\| V \|$ almost all $x$}
	\end{gather*}
	by \ref{thm:tv_on_decompositions}. However, neither $h \notin \trunc
	(V, \rel \times \rel)$ nor $f+g \in \trunc (V)$ nor $gf \in \trunc
	(V)$; in fact the characteristic function of $R_1$ which equals $gf$
	does not belong to $\trunc (V)$, hence $f+g \notin \trunc (V)$ by
	\ref{lemma:basic_v_weakly_diff} and $h \in \trunc (V,\rel \times
	\rel)$ would imply $f+g \in \trunc (V)$ by
	\ref{remark:lipschitzian_theta} with $f$ and $g$ replaced by $h$ and
	the addition on $\rel$.
\end{example}
\begin{remark} \label{remark:too_big_sobolev}
	Here some properties of the class $\mathbf{W} ( V, Y )$
	consisting of all $f \in \Lploc{1} ( \| V \| + \| \delta V \|,
	Y)$ such that for some $F \in \Lploc{1} ( \| V \|, \Hom (
	\rel^\adim, Y ) )$
	\begin{gather*}
		( \delta V ) ( ( \alpha \circ f ) \theta ) = \alpha \big (
		\tint{}{} ( \project{P} \bullet D \theta (x)) f (x) + \left <
		\theta (x), F(x) \right > \ud V (x,P) \big )
	\end{gather*}
	whenever $\theta \in \mathscr{D} (U, \rel^\adim )$ and $\alpha \in
	\Hom ( Y, \rel )$, associated with $V$ and $Y$ whenever $\vdim$,
	$\adim$, $U$, $V$, and $Y$ are as in \ref{def:v_weakly_diff} will be
	discussed briefly.

	Clearly, $F$ is $\| V \|$ almost unique and $\mathbf{W} (V, Y )$ is a
	vectorspace. Note that
	\begin{gather*}
		\classification{\trunc (V, Y ) \cap \Lploc{1} ( \| V \| + \|
		\delta V \|, Y )}{f} { \derivative{V}{f} \in \Lploc{1} ( \| V
		\|, \Hom ( \rel^\adim, Y ) }
	\end{gather*}
	is contained in $\mathbf{W} (V,Y)$ by
	\ref{lemma:integration_by_parts}.  However, it may happen that $f \in
	\mathbf{W} (V, Y )$ but $\varrho \circ f \notin \mathbf{W} (V,Y)$ for
	some $\varrho \in \mathscr{D} ( Y, \rel )$; in fact the function $f+g$
	constructed in \ref{example:star} provides an example.
\end{remark}
\begin{example} \label{example:axioms}
	The considerations of \ref{example:star} may be axiomatised as
	follows.

	Suppose $\Phi$ denotes the family of stationary one dimensional integral
	varifolds in $\rel^2$ and, whenever $V \in \Phi$, $C(V)$ is a class of
	real valued functions on $\rel^2$ satisfying the following conditions.
	\begin{enumerate}
		\item \label{item:axioms:decomp} If $\Xi$ is a decomposition
		of $V$, $\xi$ is associated to $\Xi$ as in
		\ref{remark:decomp_rep}, $\upsilon : \Xi \to \rel$ and $f :
		\rel^2 \to \rel$ satisfies
		\begin{gather*}
			f(x) = \upsilon (W) \quad \text{if $x \in \xi(W)$,
			$W \in \Xi$}, \qquad f(x) = 0 \quad \text{if $x \in
			\rel^2 \without {\textstyle\bigcup} \im \xi$},
		\end{gather*}
		then $f \in C(V)$.
		\item \label{item:axioms:addition} If $f,g \in C(V)$, then
		$f+g \in C(V)$.
		\item \label{item:axioms:truncation} If $f \in C(V)$, $\omega
		\in \mathscr{E} ( \rel, \rel )$ and $\spt \omega'$ is compact,
		then $\omega \circ f \in C(V)$.  \intertextenum{
		
		Then, using the terminology of
		\ref{remark:nonunique_decomposition}, the characteristic
		function of any ray $R_j$ belongs to $C(V)$. Moreover, the
		same holds, if the conditions \eqref{item:axioms:addition} and
		\eqref{item:axioms:truncation} are replaced by the following
		condition.}
		\item \label{item:axioms:multiplication} If $f,g \in C(V)$,
		then $fg \in C(V)$.
	\end{enumerate}
\end{example}
\begin{lemma} \label{lemma:level_sets}
	Suppose $\vdim$, $\adim$, $U$, and $V$ are as in
	\ref{def:v_weakly_diff}, $f \in \trunc (V)$, $y \in \rel$, and $E = \{
	x \with f(x)>y \}$.

	Then there holds
	\begin{gather*}
		\boundary{V}{E}(\theta) = \lim_{\varepsilon \to 0+}
		\varepsilon^{-1} \tint{\{x \with y < f(x) \leq
		y+\varepsilon\}}{} \left < \theta, \derivative{V}{f} \right >
		\ud \| V \| \quad \text{for $\theta \in \mathscr{D} (U,
		\rel^\adim )$}.
	\end{gather*}
\end{lemma}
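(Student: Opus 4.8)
The plan is to realise the sharp super-level cut-off on the right hand side directly, by composing $f$ with a \emph{Lipschitzian} ramp; slicing the distribution $T$ of \ref{lemma:meas_fct} would, via \ref{miniremark:distrib_on_r} and \ref{remark:associated_distribution}, only identify that side with $\boundary{V}{E}(\theta)$ for $\mathscr{L}^1$ almost all $y$. So, fixing $\theta \in \mathscr{D}(U,\rel^\adim)$ and $0 < \varepsilon < \infty$, I would let $\omega_\varepsilon : \rel \to \rel$ be the Lipschitzian function with $\omega_\varepsilon(t) = \inf \{ 1, \sup \{ 0, \varepsilon^{-1}(t-y) \} \}$ and apply \ref{lemma:basic_v_weakly_diff} with $Y = Z = \rel$, $\Upsilon = \{ t \with y \leq t \leq y+\varepsilon \}$, $\kappa = 2\varepsilon^{-1}$, $g = \omega_\varepsilon$ (proper on the compact set $\Upsilon$), $H(t) = \varepsilon^{-1}$ for $y < t < y+\varepsilon$ and $H(t) = 0$ otherwise, and $g_i$ a sequence of functions of class $1$, constant on $\rel \without \Upsilon$, with $\Lip g_i \leq 2\varepsilon^{-1}$, converging uniformly to $\omega_\varepsilon$ and with $D g_i \to H$ pointwise — obtained as antiderivatives of nonnegative smooth functions of integral $1$ supported in $\{ t \with y < t < y+\varepsilon \}$ tending pointwise to $\varepsilon^{-1}$ there. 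This gives $\omega_\varepsilon \circ f \in \trunc(V)$ with $\derivative{V}{(\omega_\varepsilon \circ f)}(x) = \varepsilon^{-1} \derivative{V}{f}(x)$ for $\| V \|$ almost all $x$ with $y < f(x) < y+\varepsilon$ and $\derivative{V}{(\omega_\varepsilon \circ f)}(x) = 0$ for $\| V \|$ almost all remaining $x$.

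Next, as $\omega_\varepsilon \circ f$ is bounded, hence belongs to $\Lploc{1}(\| V \| + \| \delta V \|, \rel)$, and as the preceding formula together with \ref{def:v_weakly_diff}\,\eqref{item:v_weakly_diff:int} for $f$ shows $\derivative{V}{(\omega_\varepsilon \circ f)} \in \Lploc{1}(\| V \|, \Hom(\rel^\adim, \rel))$, I would invoke \ref{lemma:integration_by_parts} with $\alpha$ the identity map of $\rel$ to obtain
\begin{gather*}
	\varepsilon^{-1} \tint{\{ x \with y < f(x) < y+\varepsilon \}}{} \left < \theta, \derivative{V}{f} \right > \ud \| V \| = ( \delta V )( ( \omega_\varepsilon \circ f ) \theta ) - \tint{}{} ( \omega_\varepsilon \circ f )(x) \, \project{P} \bullet D \theta (x) \ud V (x,P).
\end{gather*}
Applying \ref{thm:addition}\,\eqref{item:addition:zero}\,\eqref{item:addition:add} to $f$ minus the constant $y+\varepsilon$ shows that $\derivative{V}{f}$ vanishes $\| V \|$ almost everywhere on $\{ x \with f(x) = y+\varepsilon \}$, so the domain of integration on the left may be replaced by $\{ x \with y < f(x) \leq y+\varepsilon \}$ without changing the value.

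To conclude, I would let $\varepsilon \to 0+$. Since $\omega_\varepsilon \circ f$ tends pointwise on $\dmn f$ to the characteristic function of $E$ with $0 \leq \omega_\varepsilon \circ f \leq 1$ and $\spt \theta$ is compact, dominated convergence — with respect to $\| \delta V \|$, relative to which $\delta V$ is representable by integration, for the first term on the right, and with respect to $V$ for the second — yields $( \delta V )( ( \omega_\varepsilon \circ f ) \theta ) \to ( ( \delta V ) \restrict E )( \theta )$ and
\begin{gather*}
	\tint{}{} ( \omega_\varepsilon \circ f )(x) \, \project{P} \bullet D \theta (x) \ud V (x,P) \to \tint{E \times \grass{\adim}{\vdim}}{} \project{P} \bullet D \theta (x) \ud V (x,P) = \delta ( V \restrict E \times \grass{\adim}{\vdim} )( \theta ).
\end{gather*}
Hence the left hand side converges and, by \ref{def:v_boundary}, has limit $( ( \delta V ) \restrict E )( \theta ) - \delta ( V \restrict E \times \grass{\adim}{\vdim} )( \theta ) = \boundary{V}{E}( \theta )$, which is the assertion.

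The crux is that the identity is claimed for \emph{every} $y$, not merely $\mathscr{L}^1$ almost every $y$, so a slicing argument does not suffice and the sharp cut-off must be manufactured directly. Composing with the \emph{Lipschitzian}, rather than smooth, ramp does this: \ref{lemma:basic_v_weakly_diff} still applies to such a non-$C^1$ map and identifies $\derivative{V}{(\omega_\varepsilon \circ f)}$ with $\varepsilon^{-1} \derivative{V}{f}$ on the slab $\{ x \with y < f(x) < y+\varepsilon \}$, whereupon \ref{lemma:integration_by_parts} converts the cut-off integral into first-variation terms depending continuously on $\varepsilon$. The only mildly technical ingredients are the construction of the $g_i$ and the two dominated convergence arguments, both routine.
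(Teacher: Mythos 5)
Your proof is correct and follows essentially the same route as the paper's: compose $f$ with the same Lipschitz ramp (your $\omega_\varepsilon$ coincides with the paper's $g_\varepsilon$), identify $\derivative V{(\omega_\varepsilon \circ f)}$ via \ref{lemma:basic_v_weakly_diff}, rewrite the cut-off integral by \ref{lemma:integration_by_parts} with $\alpha = \id\rel$, and pass to the limit $\varepsilon \to 0+$ by dominated convergence. The only cosmetic difference is that you build the $C^1$ approximants $g_i$ by hand inside one application of \ref{lemma:basic_v_weakly_diff}, while the paper assembles the ramp from the already-catalogued items of \ref{example:composite}; the intermediate formula and the final limiting argument are the same.
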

\begin{proof}
	Suppose $\theta \in \mathscr{D} (U,\rel^\adim)$. Define $g_\varepsilon
	: \rel \to \rel$ by $g_\varepsilon (\upsilon) = \varepsilon^{-1} \inf
	\{ \varepsilon , \sup \{ \upsilon-y, 0 \} \}$ whenever $\upsilon \in
	\rel$ and $0 < \varepsilon \leq 1$. Notice that
	\begin{gather*}
		g_\varepsilon (\upsilon) = 0 \quad \text{if $\upsilon \leq
		y$}, \qquad g_\varepsilon (\upsilon) = 1 \quad \text{if
		$\upsilon \geq y+\varepsilon$}, \\
		g_\varepsilon (\upsilon) \uparrow 1 \quad \text{as $\varepsilon
		\to 0+$ if $\upsilon > y$}
	\end{gather*}
	whenever $\upsilon \in \rel$. Consequently, one infers $g_\varepsilon
	\circ f \in \trunc (V)$ with
	\begin{gather*}
		\text{$\derivative{V}{( g_\varepsilon \circ f )} (x) =
		\varepsilon^{-1} \derivative{V}{f} (x)$ if $y < f(x) \leq
		y+\varepsilon$}, \quad \text{$\derivative{V}{(
		g_\varepsilon \circ f )} (x) = 0$ else}
	\end{gather*}
	for $\| V \|$ almost all $x$ from \ref{lemma:basic_v_weakly_diff},
	\ref{example:composite}\,\eqref{item:composite:scalar_mult}\,\eqref{item:composite:add_constant}\,\eqref{item:composite:1d}
	and \ref{thm:addition}\,\eqref{item:addition:zero}. It follows
	\begin{gather*}
		\begin{aligned}
			\boundary{V}{E} ( \theta ) & = \lim_{\varepsilon
			\to 0+} ( \delta V ) ( ( g_\varepsilon \circ f )
			\theta ) - \lim_{\varepsilon \to 0+} \tint{}{} (
			g_\varepsilon \circ f ) (x) \project{P} \bullet D
			\theta (x) \ud V (x,P) \\
			& = \lim_{\varepsilon \to 0+} \varepsilon^{-1}
			\tint{\{ x \with y < f (x) \leq y + \varepsilon \}}{}
			\left < \theta(x), \derivative{V}{f} (x) \right > \ud
			\| V \| x,
		\end{aligned}
	\end{gather*}
	where \ref{lemma:integration_by_parts} with $\alpha$ and $f$ replaced
	by $\id{\rel}$ and $g_\varepsilon \circ f$ was employed.
\end{proof}
\begin{theorem} \label{thm:tv_coarea}
	Suppose $\vdim$, $\adim$, $U$, and $V$ are as in
	\ref{def:v_weakly_diff}, $f \in \trunc(V)$, $T \in \mathscr{D}' ( U
	\times \rel, \rel^\adim )$ satisfies
	\begin{gather*}
		T ( \phi ) = \tint{}{} \left < \phi (x,f(x)),
		\derivative{V}{f} (x) \right > \ud \| V \|x \quad \text{for
		$\phi \in \mathscr{D} (U \times \rel, \rel^\adim )$},
	\end{gather*}
	and $E(y) = \{ x \with f(x) > y \}$ for $y \in \rel$.

	Then $T$ is representable by integration and
	\begin{gather*}
		T ( \phi ) = \tint{}{} \boundary{V}{E(y)} ( \phi (\cdot, y ) )
		\ud \mathscr{L}^1 y, \quad \tint{}{} g \ud \| T \| = \tint{}{}
		\tint{}{} g (x,y) \ud \| \boundary{V}{E(y)} \| x \ud
		\mathscr{L}^1 y
	\end{gather*}
	whenever $\phi \in \Lp{1} ( \| T \|, \rel^\adim )$ and $g$ is an
	$\overline{\rel}$ valued $\| T \|$ integrable function. In particular,
	for $\mathscr{L}^1$ almost all $y$, the distribution $\boundary
	V{E(y)}$ is representable by integration and $\| \boundary V{E(y)} \|
	( U \cap \{ x \with f(x) \neq y \} ) = 0$.
\end{theorem}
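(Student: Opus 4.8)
The plan is to recognise the distribution $T$ of the statement as the one furnished by \ref{lemma:meas_fct}, to apply \ref{thm:distribution_on_product} with $J = \rel$ and $Z = \rel^\adim$ to it, and to identify the resulting slices $S(y)$ with $\boundary{V}{E(y)}$ by means of \ref{lemma:level_sets}.

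First I would record the available inputs. Since $f \in \trunc(V)$, \ref{remark:associated_distribution} shows that the distribution associated to $f$ by \ref{lemma:meas_fct} is representable by integration and satisfies $T(\phi) = \int \langle \phi(x,f(x)), \derivative{V}{f}(x) \rangle \,\ud \|V\| x$ for $\phi \in \Lp{1}(\|T\|,\rel^\adim)$ together with $\int g \,\ud\|T\| = \int g(x,f(x)) |\derivative{V}{f}(x)| \,\ud\|V\| x$ for $\overline{\rel}$ valued $\|T\|$ integrable $g$; in particular this distribution is precisely the $T$ named in the present statement, it is representable by integration, and $\|T\|$ is carried by the graph $\{ (x,f(x)) \with x \in \dmn f \}$ (cf.\ \ref{lemma:push_on_product}). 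From \ref{lemma:meas_fct} itself one also has $T(\phi) = \int \boundary{V}{E(y)}(\phi(\cdot,y)) \,\ud\mathscr{L}^1 y$ for $\phi \in \mathscr{D}(U \times \rel, \rel^\adim)$. Testing this last identity on $\phi(x,y) = \omega(y)\theta(x)$ with $\omega \in \mathscr{D}(\rel,\rel)$ and $\theta \in \mathscr{D}(U,\rel^\adim)$ yields $R_\theta(\omega) = \int_\rel \omega(y)\, \boundary{V}{E(y)}(\theta) \,\ud\mathscr{L}^1 y$, so each $R_\theta$ is representable by integration with $\|R_\theta\|$ absolutely continuous with respect to $\mathscr{L}^1 | \mathbf{2}^\rel$.

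Next I would compute the slices. By the formula for $T$ one has $R_\theta(i_{y,\varepsilon}) = \int_{\{ x \with y < f(x) \leq y+\varepsilon \}} \langle \theta(x), \derivative{V}{f}(x) \rangle \,\ud\|V\| x$, hence $S(y)(\theta) = \lim_{\varepsilon \to 0+} \varepsilon^{-1} \int_{\{ x \with y < f(x) \leq y+\varepsilon \}} \langle \theta, \derivative{V}{f} \rangle \,\ud\|V\|$; by \ref{lemma:level_sets} this limit exists in $\rel$ for every $y \in \rel$ and every $\theta \in \mathscr{D}(U,\rel^\adim)$ and equals $\boundary{V}{E(y)}(\theta)$. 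Therefore $\dmn S = \rel$ and $S(y) = \boundary{V}{E(y)}$ in $\mathscr{D}'(U,\rel^\adim)$, so also $\|S(y)\| = \|\boundary{V}{E(y)}\|$ for every $y$.

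Finally I would assemble the conclusion. Since $\|R_\theta\| \ll \mathscr{L}^1 | \mathbf{2}^\rel$, part \ref{thm:distribution_on_product}\,\eqref{item:distribution_on_product:absolute} applies and, after substituting $S(y) = \boundary{V}{E(y)}$, gives the two displayed coarea identities for all $\phi \in \Lp{1}(\|T\|,\rel^\adim)$ and all $\overline{\rel}$ valued $\|T\|$ integrable $g$. Applying the second identity to a sequence of nonnegative continuous functions of compact support exhausting $U \times \rel$ (equivalently, reading off the corresponding step in the proof of \ref{thm:distribution_on_product}) shows that $\|\boundary{V}{E(y)}\|$ is a Radon measure for $\mathscr{L}^1$ almost all $y$, i.e.\ $\boundary{V}{E(y)}$ is representable by integration for such $y$. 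For the remaining assertion, take $g$ to be the characteristic function of $(U \times \rel) \cap \{ (x,y) \with x \in \dmn f \ \text{and}\ f(x) \neq y \}$, which is $\|T\|$ measurable and $\|T\|$ null because $\|T\|$ is carried by the graph of $f$; then part \ref{thm:distribution_on_product}\,\eqref{item:distribution_on_product:inequality} gives $\int_\rel \int g(x,y) \,\ud\|\boundary{V}{E(y)}\| x \,\ud\mathscr{L}^1 y \leq \int g \,\ud\|T\| = 0$, whence $\|\boundary{V}{E(y)}\| ( U \cap \{ x \with f(x) \neq y \} ) = 0$ for $\mathscr{L}^1$ almost all $y$. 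The only point of the argument that is not pure bookkeeping is the absolute continuity $\|R_\theta\| \ll \mathscr{L}^1$, which rests precisely on the Fubini computation internal to \ref{lemma:meas_fct}; everything else is a matter of combining \ref{remark:associated_distribution}, \ref{lemma:level_sets} and \ref{thm:distribution_on_product}.
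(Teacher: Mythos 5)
Your proposal is correct and follows essentially the same route as the paper: identify $T$ with the distribution of \ref{lemma:meas_fct} via \ref{remark:associated_distribution}, use \ref{lemma:level_sets} to identify the slices $S(y)$ with $\boundary{V}{E(y)}$, and invoke \ref{thm:distribution_on_product}\,\eqref{item:distribution_on_product:inequality}\,\eqref{item:distribution_on_product:absolute} with $J = \rel$, $Z = \rel^\adim$. The only point you gloss over is the $\|T\|$ measurability of the off-graph set in the postscript, for which the paper appeals to $f$ being $\| V \|$ almost equal to a Borel function (Federer 2.3.6); otherwise your treatment is a more explicit version of the paper's argument.
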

\begin{proof}
	Taking \ref{remark:associated_distribution} into account,
	\ref{lemma:meas_fct} yields
	\begin{gather*}
		\tint{}{} \omega \circ f \left < \theta, \derivative{V}{f}
		\right > \ud \| V \| = T_{(x,y)} ( \omega(y) \theta (x) ) =
		\tint{}{} \omega(y) \boundary{V}{E(y)} ( \theta ) \ud
		\mathscr{L}^1 y
	\end{gather*}
	for $\omega \in \mathscr{D} ( \rel,\rel )$ and $\theta \in \mathscr{D}
	(U,\rel^\adim)$. In view of \ref{lemma:level_sets}, the principal
	conclusion is implied by
	\ref{thm:distribution_on_product}\,\eqref{item:distribution_on_product:inequality}\,\eqref{item:distribution_on_product:absolute}
	with $J = \rel$ and $Z = \rel^\adim$. The postscript follows employing
	\ref{remark:associated_distribution} and noting that $( U \times \rel
	) \cap \{ (x,y) \with f(x) \neq y \}$ is $\| T \|$ measurable since
	$f$ is $\| V \|$ almost equal to a Borel function by
	\cite[2.3.6]{MR41:1976}.
\end{proof}
\begin{remark}
	The formulation of \ref{thm:tv_coarea} is modelled on
	\cite[4.5.9\,(13)]{MR41:1976}.
\end{remark}
\begin{remark} \label{remark:no_coarea_ineq_for_too_big_sobolev}
	The equalities in \ref{thm:tv_coarea} are not valid for functions in
	$\mathbf{W} (V)$ with $\derivative{V}{f}$ in the definition of $T$
	replaced by the function $F$ occurring in the definition of $\mathbf{W}
	(V)$, see \ref{remark:too_big_sobolev}; in fact, the function $f+g$
	constructed in \ref{example:star} provides an example.
\end{remark}
\begin{corollary} \label{corollary:boundary_controls_interior}
	Suppose $\vdim$, $\adim$, $U$, $V$, and $Y$ are as in
	\ref{def:v_weakly_diff} and $f \in \trunc (V,Y)$.

	Then there holds
	\begin{gather*}
		\eqLpnorm{\| \delta V \| \restrict X}{\infty}{f} \leq
		\eqLpnorm{\| V \| \restrict X}{\infty}{f}
	\end{gather*}
	whenever $X$ is an open subset of $U$.
\end{corollary}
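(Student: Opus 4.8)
The plan is to obtain the bound from a single use of the integration-by-parts identity in \ref{def:v_weakly_diff}, tested against vector fields supported in $X$ and composed with a function $\gamma$ on $Y$ that vanishes near the ball $\{ y \with |y| \le s \}$, where $s := \eqLpnorm{\|V\|\restrict X}{\infty}{f}$. If $s = \infty$ there is nothing to prove, so suppose $s < \infty$; by the definition of $\mu_{(\infty)}$ this means $\|V\|( X \cap \{ x \with |f(x)| > s \} ) = 0$. Write $\eta(V,\cdot)$ for the $\mathbf{S}^{\adim-1}$ valued density of $\delta V$ with respect to $\|\delta V\|$, as in \ref{miniremark:situation_general_varifold}. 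It then suffices to show $\|\delta V\|( X \cap \{ x \with |f(x)| > t \} ) = 0$ whenever $s < t < \infty$, since letting $t \downarrow s$ gives $\eqLpnorm{\|\delta V\|\restrict X}{\infty}{f} \le s$.

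Fix $t$ with $s < t < \infty$. The first step is to construct $\gamma \in \mathscr{E}(Y,\rel)$ with $0 \le \gamma \le 1$, with $\spt D\gamma$ compact, with $\gamma$ vanishing on an open neighbourhood of $\{ y \with |y| \le s \}$, and with $\gamma(y) > 0$ whenever $|y| > t$; identifying $Y$ with some $\rel^l$, this is achieved by mollifying the Lipschitzian function $y \mapsto \inf\{\sup\{|y|-\tfrac{s+t}{2},0\},1\}$ at a scale below $(t-s)/3$. The second step is to apply condition \eqref{item:v_weakly_diff:partial} of \ref{def:v_weakly_diff} to $f$ with this $\gamma$ and with an arbitrary $\theta \in \mathscr{D}(X,\rel^\adim)$, viewed in $\mathscr{D}(U,\rel^\adim)$. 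Since $\spt D\theta \subset \spt\theta \subset X$ and $|f| \le s$ holds $\|V\|$ almost everywhere on $X$ while $\gamma$ and $D\gamma$ vanish near $\{ y \with |y| \le s \}$, both integrals on the right-hand side of the identity vanish, so, using the representation of $\delta V$ through $\eta(V,\cdot)$,
\begin{gather*}
	\tint{X}{} \gamma(f(x))\, \eta(V,x) \bullet \theta(x) \ud \|\delta V\| x = (\delta V)((\gamma \circ f)\theta) = 0 \quad \text{for every $\theta \in \mathscr{D}(X,\rel^\adim)$}.
\end{gather*}

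Since $\gamma \circ f$ is bounded and $\|\delta V\|$ is a Radon measure, the displayed identity says that the $\rel^\adim$ valued Radon measure with density $\gamma(f(\cdot))\,\eta(V,\cdot)$ with respect to $\|\delta V\|\restrict X$ annihilates $\mathscr{D}(X,\rel^\adim)$, hence is the zero measure; therefore $\gamma(f(x))\,\eta(V,x) = 0$ for $\|\delta V\|$ almost all $x \in X$. As $|\eta(V,x)| = 1$ for $\|\delta V\|$ almost all $x$, this forces $\gamma(f(x)) = 0$, hence $|f(x)| \le t$, for $\|\delta V\|$ almost all $x \in X$, that is $\|\delta V\|( X \cap \{ x \with |f(x)| > t \} ) = 0$. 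Letting $t$ run through a sequence decreasing to $s$ finishes the proof.

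The main points requiring care are minor: $Y$ carries an arbitrary, possibly non-smooth norm, so the mollification step is needed to produce a genuinely smooth $\gamma$, and one must note that the identity of \ref{def:v_weakly_diff} applies to $\theta$ supported only in $X$, which is immediate from $\mathscr{D}(X,\rel^\adim) \subset \mathscr{D}(U,\rel^\adim)$. If one prefers to avoid building $\gamma$ on $Y$, an alternative is to invoke \ref{lemma:comp_lip} with $\Upsilon = Y \cap \{ y \with t \le |y| \le t+1 \}$ and $g(y) = \inf\{\sup\{|y|-t,0\},1\}$, obtaining that $h := g \circ f \in \trunc(V,\rel)$ is bounded with $h = 0$ and $\derivative Vh = 0$ holding $\|V\|$ almost everywhere on $X$, and then to apply \ref{def:v_weakly_diff}\,\eqref{item:v_weakly_diff:partial} to $h$ with a compactly supported $\gamma \in \mathscr{E}(\rel,\rel)$ equal to the identity on $[0,1]$ and $\theta \in \mathscr{D}(X,\rel^\adim)$, concluding exactly as above that $|f| \le t$ holds $\|\delta V\|$ almost everywhere on $X$.
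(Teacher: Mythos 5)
Your proof is correct, and it takes a genuinely different and more elementary route than the paper. The paper invokes \ref{remark:mod_tv} to pass to $|f| \in \trunc(V)$ and then applies the functional-analytic coarea formula \ref{thm:tv_coarea} to deduce that $\boundary V{E(t)} = 0$ for $\mathscr{L}^1$ almost all $t > s$ (where $E(t) = \{x \with |f(x)| > t\}$); since $\|V\|(E(t)) = 0$ for such $t$, this forces $(\delta V) \restrict E(t) = 0$ and hence $\|\delta V\|(E(t)) = 0$. Your argument instead works straight from the defining integration-by-parts identity \ref{def:v_weakly_diff}\,\eqref{item:v_weakly_diff:partial}: choosing $\gamma \in \mathscr{E}(Y,\rel)$ with compact $\spt D\gamma$, vanishing on a neighbourhood of $\cball 0s$ and positive off $\cball 0t$, one has $\gamma \circ f = 0$ and $D\gamma \circ f = 0$ for $\|V\|$ almost all $x \in X$, so for $\theta \in \mathscr{D}(X,\rel^\adim)$ both integrals on the right drop out and $(\delta V)\big((\gamma\circ f)\theta\big) = 0$; the representation of $\delta V$ through $\eta(V,\cdot)$ then yields $\gamma(f(x))\eta(V,x) = 0$ for $\|\delta V\|$ almost all $x \in X$, hence $|f| \le t$ for $\|\delta V\|$ almost all $x \in X$, and letting $t \downarrow s$ finishes. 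Your route uses only the definition and the elementary representation from \ref{miniremark:situation_general_varifold}, and handles general open $X$ without the paper's reduction to $X = U$; the paper's route is shorter on the page because it leverages the coarea machinery that has already been set up, fitting the statement into the distributional-boundary framework. Both are fine; yours is self-contained and perhaps more transparent about why the bound holds.

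One small technical remark worth keeping in mind: the expression $(\delta V)\big((\gamma\circ f)\theta\big)$ on the left of \ref{def:v_weakly_diff}\,\eqref{item:v_weakly_diff:partial} is the value of the $\Lpnorm{\|\delta V\|}{1}{\cdot}$-continuous extension of $\delta V$ (see \ref{remark:allard_vs_federer}), and $(\gamma\circ f)\theta$ is a bounded $\|\delta V\|$-measurable function with compact support, hence lies in $\Lp{1}(\|\delta V\|, \rel^\adim)$; so passing to the integral against $\eta(V,\cdot)\ud\|\delta V\|$ is legitimate. You use this implicitly and it is correct, but it is the one place where the argument goes slightly beyond a literal reading of $\delta V$ as a distribution on $\mathscr{D}(U,\rel^\adim)$.
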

\begin{proof}
	Assume $X = U$ and abbreviate $s = \Lpnorm{\| V \|}{\infty}{f}$.
	Recalling \ref{remark:mod_tv}, one applies \ref{thm:tv_coarea} with
	$f$ replaced by $|f|$ to infer
	\begin{gather*}
		\boundary{V}{E(t)} = 0 \quad \text{for $\mathscr{L}^1$ almost
		all $s < t < \infty$},
	\end{gather*}
	where $E(t) = \{ x \with |f(x)|>t \}$, hence $\| \delta V \| (E(t)) =
	0$ for those $t$.
\end{proof}
\begin{theorem} \label{thm:zero_derivative}
	Suppose $\vdim$, $\adim$, $U$, $V$, and $Y$ are as in
	\ref{def:v_weakly_diff}, $f \in \trunc ( V, Y )$, and
	\begin{gather*}
		\derivative{V}{f} (x) = 0 \quad \text{for $\| V \|$ almost all
		$x$}.
	\end{gather*}

	Then there exists a decomposition $\Xi$ of $V$ and $\upsilon : \Xi \to
	Y$ such that
	\begin{gather*}
		f(x)=\upsilon(W) \quad \text{for $\| W \| + \| \delta W \|$
		almost all $x$}
	\end{gather*}
	whenever $W \in \Xi$.
\end{theorem}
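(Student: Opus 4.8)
The plan is to reduce everything to the following claim: $\boundary V{f^{-1}[B]} = 0$ for every Borel subset $B$ of $Y$. Granting this, the decomposition is obtained by cutting $U$ along the fibres of $f$, the only genuine difficulty being that one must know that $f$ takes countably many values $\| V \|$ almost everywhere.

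\emph{Proof of the claim.} For an open $O \subset Y$ with compact closure, $\phi = \dist ( \cdot, Y \without O ) : Y \to \rel$ is Lipschitzian, vanishes (hence is locally constant) on $Y \without O$, and $\phi | \Clos O$ is proper since $\Clos O$ is compact; so \ref{lemma:comp_lip} with $Z = \rel$ and $\Upsilon = \Clos O$ gives $\phi \circ f \in \trunc ( V )$ with $\| \derivative V{(\phi \circ f)} \| \leq \Lip \phi \cdot c \cdot \| \derivative Vf \| = 0$, where $c$ is the characteristic function of $f^{-1}[\Clos O]$ and the hypothesis $\derivative Vf = 0$ was used. Since $f^{-1}[O] = \{ x \with \phi ( f(x) ) > 0 \}$, \ref{lemma:level_sets} yields $\boundary V{f^{-1}[O]} = 0$. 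Writing a general open $O$ as an increasing union of open sets with compact closure, and using that the family $R$ of Borel sets with vanishing distributional $V$ boundary is closed under increasing countable unions (by \ref{remark:v_boundary} and dominated convergence, as in the proof of \ref{thm:decomposition}), one obtains $f^{-1}[O] \in R$ for all open $O$. The family $\{ B \with f^{-1}[B] \in R \}$ contains $Y$ (because $f^{-1}[Y]$ agrees with $U$ up to a $\| V \| + \| \delta V \|$ null set and $\boundary VU = 0$), is closed under complementation and under finite disjoint unions (again by \ref{remark:v_boundary}), hence is a $\lambda$ system containing the $\pi$ system of open sets, so Dynkin's lemma proves the claim.

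\emph{Construction of the decomposition.} As $\| V \| + \| \delta V \|$ is $\sigma$ finite, only countably many $c \in Y$ satisfy $( \| V \| + \| \delta V \| ) ( f^{-1}[\{c\}] ) > 0$; enumerate them as $c_j$ ($j \in J$), put $D_j = f^{-1}[\{c_j\}]$ and $N = U \without \bigcup_j D_j$. By the claim $\boundary V{D_j} = 0$ and $\boundary VN = 0$, so $V_j = V \restrict D_j \times \grass{\adim}{\vdim}$ and $V_N = V \restrict N \times \grass{\adim}{\vdim}$ are rectifiable varifolds with $\delta V_j = ( \delta V ) \restrict D_j$ and $\delta V_N = ( \delta V ) \restrict N$ Radon measures. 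Applying \ref{thm:decomposition} to each $V_j$ gives a decomposition $\Xi_j$; each member is $V \restrict E \times \grass{\adim}{\vdim}$ with $E$ a Borel subset of $D_j$ and $\boundary {V_j}E = 0$, hence $\boundary VE = 0$ by \ref{remark:iterated_boundaries} together with $\boundary V{D_j} = 0$, so it is a component of $V$ on which $f$ is identically $c_j$. Take $\Xi = \bigcup_j \Xi_j$ with $\upsilon ( W ) = c_j$ for $W \in \Xi_j$; the associated Borel sets are pairwise disjoint since the $D_j$ are, so all requirements of \ref{def:decomposition} hold as soon as $\| V \| ( N ) = 0$ (then $\| \delta V \| ( N ) = 0$ as well by \ref{remark:partition}), and in that case $f = \upsilon ( W )$ holds $\| W \| + \| \delta W \|$ almost everywhere because $\| W \| + \| \delta W \| = ( \| V \| + \| \delta V \| ) \restrict \xi ( W )$.

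\emph{The main obstacle: $\| V_N \| = 0$.} I would argue by contradiction, assuming $\| V_N \| > 0$. By the claim applied to $V_N$ (using $\boundary {V_N}{f^{-1}[B]} = \boundary V{f^{-1}[B]} = 0$), every $f$ preimage of a Borel set has vanishing distributional $V_N$ boundary. Using rectifiability and the approximate continuity of $f$ (see \cite[2.9.13]{MR41:1976}) pick $x_0 \in \spt \| V_N \|$ at which $\density^\vdim ( \| V_N \|, x_0 ) \in \{ t \with 0 < t < \infty \}$ and $f$ is $( \| V_N \|, \vdim )$ approximately continuous. Then $W_\varepsilon = V_N \restrict f^{-1}[\oball{f(x_0)}{\varepsilon}] \times \grass{\adim}{\vdim}$ has $\delta W_\varepsilon = ( \delta V_N ) \restrict f^{-1}[\oball{f(x_0)}{\varepsilon}]$, and approximate continuity gives $\density^\vdim ( \| W_\varepsilon \|, x_0 ) = \density^\vdim ( \| V_N \|, x_0 )$; inserting this into the monotonicity identity \ref{corollary:monotonicity} and letting the inner radius tend to $0$ produces a lower bound for $\| V_N \| ( f^{-1}[\oball{f(x_0)}{\varepsilon}] )$ independent of $\varepsilon$, whereas $\| V_N \| ( f^{-1}[\oball{f(x_0)}{\varepsilon}] )$ decreases to $\| V_N \| ( f^{-1}[\{f(x_0)\}] ) = 0$ as $\varepsilon \to 0+$ because $f ( x_0 )$ is none of the $c_j$. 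The delicate part here is controlling the first variation error term in \ref{corollary:monotonicity} uniformly in $\varepsilon$ -- this rests on the fact that at $\| V_N \|$ almost every $x_0$ the relevant weighted integral of $\| \delta V \|$ near $x_0$ is finite and small, which is the technical heart of the argument.
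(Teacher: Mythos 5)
Your argument is correct in substance but organised quite differently from the paper's. The paper shows $\boundary{V}{B(y,r)} = 0$ for closed balls via \ref{thm:tv_coarea}, then proves directly that $\spt\mu_i$ is finite for $\mu_i = f_\# ( \| V \| \restrict A_i )$ by a quantitative lower bound from the monotonicity identity (on the carefully chosen sets $A_i$ where $\density^\vdim ( \| V \|, a ) \geq 1/i$ and $\measureball{\| \delta V \|}{\cball ar} \leq \unitmeasure\vdim i r^\vdim$), and this simultaneously yields the countability of the value set $\Upsilon$ and the positivity of $\| V \|$ on the fibres. You instead prove the stronger statement $\boundary{V}{f^{-1}\lIm B \rIm} = 0$ for all Borel $B$ by a Dynkin argument, then obtain the countable set of atoms for free from $\sigma$-finiteness, and finally argue by contradiction that the non-atomic part $N$ carries no mass. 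The Dynkin step is more than is needed but perfectly valid (open sets with compact closure via \ref{lemma:comp_lip} and \ref{lemma:level_sets}, then extension; $R$ is a $\lambda$-system since $U \in R$, \ref{remark:v_boundary} gives relative complements, and closure under monotone limits is already used in the proof of \ref{thm:decomposition}). The decomposition assembly via \ref{remark:iterated_boundaries}, \ref{remark:partition}, and \ref{thm:decomposition} on each $V_j$ is also sound (note every atom of $f_\#(\| V \| + \| \delta V \|)$ automatically has $\| V \| ( D_j ) > 0$, since $\boundary V{D_j} = 0$ and $\| V \| ( D_j ) = 0$ would force $\| \delta V \| ( D_j ) = 0$).

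The weak point is exactly the one you flag: the contradiction argument for $\| V_N \| = 0$ is left at the level of a sketch. It is the only place where real work happens, and it requires precisely the setup the paper builds. To run the monotonicity estimate from \ref{corollary:monotonicity} and \ref{remark:monotonicity} for $W_\varepsilon = V_N \restrict f^{-1}\lIm \oball{f(x_0)}\varepsilon\rIm \times \grass\adim\vdim$ with a lower bound on $\| W_\varepsilon \| ( \cball{x_0}r )$ that is uniform in $\varepsilon$, you must pick $x_0$ to be a density point at which (i) $\density^\vdim ( \| V_N \|, x_0 ) = \density^\vdim ( \| V \|, x_0 ) > 0$, (ii) $f$ is $( \| V \|, \vdim )$ approximately continuous, and (iii) $\measureball{\| \delta V \|}{\cball{x_0}s} = O ( s^\vdim )$ for small $s$; since $\| \delta W_\varepsilon \| \leq \| \delta V \|$, the error term $\tint{s}{r} t^{-\vdim-1} \tint{\cball{x_0}t}{} (x-x_0) \bullet \eta (W_\varepsilon,x) \ud \| \delta W_\varepsilon \| x \ud \mathscr{L}^1 t$ is then bounded, uniformly in $\varepsilon$ and $s$, by a constant times $r$, and one can absorb it. This is what the paper encapsulates once and for all in the conditions defining $A_i$ (Allard \cite[3.5\,(1a)]{MR0307015}, \cite[2.8.18, 2.9.5]{MR41:1976}); you should spell it out or cite the analogous part of the proof of \ref{thm:decomposition}. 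Also, a small correction: the parenthetical identity $\boundary{V_N}{f^{-1}\lIm B\rIm} = \boundary{V}{f^{-1}\lIm B\rIm}$ is not literally what \ref{remark:iterated_boundaries} gives; what holds is $\boundary{V_N}{f^{-1}\lIm B\rIm} = \boundary{V}{(N \cap f^{-1}\lIm B\rIm)}$, which vanishes because $N \cap f^{-1}\lIm B\rIm$ is again the $f$-preimage of a Borel set.
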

\begin{proof}
	Define $B(y,r) = \{ x \with | f(x) - y | \leq r \}$ for $y \in Y$
	and $0 \leq r < \infty$. First, one observes that \ref{thm:tv_coarea}
	in conjunction with \ref{lemma:basic_v_weakly_diff},
	\ref{example:composite}\,\eqref{item:composite:add_constant} and
	\ref{remark:mod_tv} implies
	\begin{gather*}
		\boundary{V}{B(y,r)} = 0 \quad \text{for $y \in Y$ and $0
		\leq r < \infty$}.
	\end{gather*}

	Next, a countable subset $\Upsilon$ of $Y$ such that
	\begin{gather*}
		f(x) \in \Upsilon \quad \text{for $\| V \|$ almost all $x$}
	\end{gather*}
	will be constructed. For this purpose define $\delta_i =
	\unitmeasure{\vdim} 2^{-\vdim-1} i^{-1-2\vdim}$, $\varepsilon_i =
	2^{-1} i^{-2}$ and Borel sets $A_i$ consisting of all $a \in
	\rel^\adim$ satisfying
	\begin{gather*}
		|a| \leq i, \quad \oball{a}{2\varepsilon_i} \subset U, \quad
		\density^\vdim ( \| V \|, a ) \geq 1/i, \\
		\measureball{\| \delta V \|}{\cball{a}{s}} \leq
		\unitmeasure{\vdim} is^\vdim \quad \text{for $0 < s <
		\varepsilon_i$}
	\end{gather*}
	whenever $i \in \nat$. Clearly, $A_i \subset A_{i+1}$ for $i \in \nat$
	and $\| V \| ( U \without \bigcup_{i=1}^\infty A_i ) = 0$ by Allard
	\cite[3.5\,(1a)]{MR0307015} and \cite[2.8.18, 2.9.5]{MR41:1976}.
	Abbreviating $\mu_i = f_\# ( \| V \| \restrict A_i )$, one obtains
	\begin{gather*}
		\| V \| ( B(y,r) \cap \{ x \with \dist ( x, A_i ) \leq
		\varepsilon_i \} ) \geq \delta_i
	\end{gather*}
	whenever $y \in \spt \mu_i$, $0 \leq r < \infty$ and $i \in \nat$; in
	fact, assuming $r > 0$ there exists $a \in A_i$ with $\density^\vdim (
	\| V \| \restrict B(y,r), a ) = \density^\vdim ( \| V \|, a ) \geq
	1/i$ by \cite[2.8.18, 2.9.11]{MR41:1976}, hence
	\begin{gather*}
		\tint{0}{\varepsilon_i} s^{-\vdim} \measureball{\| \delta ( V
		\restrict B(y,r) \times \grass{\adim}{\vdim} )
		\|}{\cball{a}{s}} \ud \mathscr{L}^1 s \leq \unitmeasure{\vdim}
		i \varepsilon_i
	\end{gather*}
	implying $\| V \| ( B (y,r) \cap \cball{a}{\varepsilon_i} ) \geq
	\delta_i$ by \ref{corollary:monotonicity} and
	\ref{remark:monotonicity}. As $\{ B(y,0) \with y \in \spt \mu_i \}$ is
	disjointed, one deduces
	\begin{gather*}
		\delta_i \card \spt \mu_i \leq \| V \| ( U \cap \{ x \with
		\dist (x,A_i) \leq \varepsilon_i \} ) < \infty
	\end{gather*}
	and one may take $\Upsilon = \bigcup_{i=1}^\infty \spt \mu_i$, since
	$\mu_i ( Y \without \spt \mu_i ) = 0$.

	Applying \ref{thm:decomposition} to $V \restrict B(y,0) \times
	\grass{\adim}{\vdim}$ for $y \in \Upsilon$ and recalling
	\ref{remark:iterated_boundaries} and \ref{remark:decomp_rep}, one
	constructs a decomposition $\Xi$ of $V$ and a function $\xi$ mapping
	$\Xi$ into the class of all $\| V \| + \| \delta V \|$ measurable sets
	such that distinct members of $\Xi$ are mapped onto disjoint sets,
	\begin{gather*}
		W = V \restrict \xi(W) \times \grass{\adim}{\vdim},
		\quad \boundary{V}{\xi(W)} = 0
	\end{gather*}
	whenever $W \in \Xi$, and each $\xi (W)$ is contained in some
	$B(y,0)$. Defining $\upsilon : \Xi \to Y$ by the requirement $\{
	\upsilon (W) \} = f \lIm \xi ( W ) \rIm$ for $W \in \Xi$ and noting $(
	\| W \| + \| \delta W \| ) ( U \without \xi (W)) = 0$ for $W \in \Xi$,
	the conclusion is now evident.
\end{proof}
\begin{remark}
	Clearly, the second paragraph of the proof has conceptual overlap with
	the second and third paragraph of the proof of
	\ref{thm:decomposition}.
\end{remark}
\section{Zero boundary values} \label{sec:zero}
In this section a notion of zero boundary values for nonnegative weakly
differentiable functions based on the behaviour of superlevel sets is
introduced. Stability of this class under composition, see
\ref{lemma:trunc_tg}, convergence, see \ref{lemma:closeness_tg} and
\ref{remark:closeness_tg}, and multiplication by a nonnegative Lipschitzian
function, see \ref{thm:mult_tg}, are investigated. The deeper parts of this
study rest on a characterisation of such functions in terms of an associated
distribution built from certain distributional boundaries of superlevel sets,
see \ref{thm:char_tg}.
\begin{definition} \label{def:trunc_g}
	Suppose $\vdim, \adim \in \nat$, $\vdim \leq \adim$, $U$ is an open
	subset of $\rel^\adim$, $V \in \RVar_\vdim ( U )$, $\| \delta V \|$ is
	a Radon measure, and $G$ is a relatively open subset of $\Bdry U$.

	Then $\trunc_G (V)$ is defined to be the set of all nonnegative
	functions $f \in \trunc ( V)$ such that with
	\begin{gather*}
		B = ( \Bdry U ) \without G \qquad \text{and} \qquad E(y) = \{
		x \with f(x) > y \} \quad \text{for $0 < y < \infty$}
	\end{gather*}
	the following conditions hold for $\mathscr{L}^1$ almost all $0 < y <
	\infty$, see
	\ref{def:variation_measure}--\ref{def:restriction_distribution},
	\begin{gather*}
		( \| V \| + \| \delta V \| ) ( E(y) \cap K ) + \|
		\boundary{V}{E(y)} \| ( U \cap K ) < \infty, \\
		\tint{E(y) \times \grass{\adim}{\vdim}}{} \project{P} \bullet
		D \theta (x) \ud V(x,P) = ( (\delta V) \restrict E(y) ) (
		\theta | U ) - \boundary{V}{E(y)} ( \theta|U )
	\end{gather*}
	whenever $K$ is a compact subset of $\rel^\adim \without B$ and
	$\theta \in \mathscr{D} ( \rel^\adim \without B, \rel^\adim )$.
\end{definition}
\begin{remark} \label{remark:trunc}
	Notice that $|f| \in \trunc_\varnothing (V)$ whenever $Y$ is a finite
	dimensional normed vectorspace and $f \in \trunc (V,Y)$ by
	\ref{remark:mod_tv} and \ref{thm:tv_coarea}.
\end{remark}
\begin{remark}
	The condition on $E(y)$ has been studied in Section \ref{sec:rel_iso}
	under the supplementary hypothesis on $V$ that $\density^\vdim ( \| V
	\|, x ) \geq 1$ for $\| V \|$ almost all $x$, see
	\ref{miniremark:zero_boundary}. In the present section this hypothesis
	will not occur leaving those properties of $\trunc_G (V)$ employing
	the additionally hypothesis on $V$ to Section \ref{sec:embeddings}.
\end{remark}
\begin{lemma} \label{lemma:boundary}
	Suppose $\vdim$, $\adim$, $U$, $V$, $G$, $B$, and $E(y)$ are as in
	\ref{def:trunc_g}, $f \in \trunc_G (V)$, and
	\begin{gather*}
		\Clos \spt \big ( ( \| V \| + \| \delta V \| ) \restrict E(y)
		\big ) \subset \rel^\adim \without B \quad \text{for
		$\mathscr{L}^1$ almost all $0 < y < \infty$}.
	\end{gather*}

	Then $f \in \trunc_{\Bdry U} ( V )$.
\end{lemma}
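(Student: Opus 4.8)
The plan is to reduce everything to a single good value of $y$: for $\mathscr{L}^1$ almost every $0 < y < \infty$ both the defining conditions of $f \in \trunc_G(V)$ (see \ref{def:trunc_g}) and the support hypothesis hold, and for such $y$ I abbreviate $A = \Clos \spt \big( ( \| V \| + \| \delta V \| ) \restrict E(y) \big)$, a closed subset of $\rel^\adim$ with $A \cap B = \varnothing$. The goal is to verify the two conditions of \ref{def:trunc_g} with $B$ replaced by $( \Bdry U ) \without \Bdry U = \varnothing$, i.e.\ with $\rel^\adim \without B$ replaced by $\rel^\adim$. One first records the trivial but useful fact that $U \cap \Bdry U = \varnothing$, so $U \subset \rel^\adim \without B$; hence the finiteness condition of $\trunc_G(V)$ already gives $\| \boundary{V}{E(y)} \| ( K ) < \infty$ for every compact $K \subset U$, so that $\boundary{V}{E(y)}$ — and, since $\delta V$ is representable by integration, also $( \delta V ) \restrict E(y)$ — is representable by integration.

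The key preliminary step is to show that the three measures $\| V \| \restrict E(y)$, $\| \delta V \| \restrict E(y)$ and $\| \boundary{V}{E(y)} \|$ are all concentrated on $A$. For the first two this is immediate from $\spt \big( ( \| V \| + \| \delta V \| ) \restrict E(y) \big) \subset A$. For the third, given $x \in U \without A$ one picks $r > 0$ with $\oball{x}{r} \subset U \without A$; then $( \| V \| + \| \delta V \| ) ( E(y) \cap \oball{x}{r} ) = 0$, whence for every $\theta \in \mathscr{D} ( \oball{x}{r}, \rel^\adim )$ both $( ( \delta V ) \restrict E(y) ) ( \theta )$ and $\delta ( V \restrict E(y) \times \grass{\adim}{\vdim} ) ( \theta ) = \int_{E(y) \times \grass{\adim}{\vdim}} \project{P} \bullet D \theta \ud V$ vanish, so $\boundary{V}{E(y)} ( \theta ) = 0$ and $x \notin \spt \| \boundary{V}{E(y)} \|$; this yields $\spt \| \boundary{V}{E(y)} \| \subset A \cap U \subset U \without B$. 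Note this avoids having to analyse $\delta ( V \restrict E(y) \times \grass{\adim}{\vdim} )$ directly, which need not be representable by integration.

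With the concentration statements in hand, the finiteness condition is immediate: for compact $K \subset \rel^\adim$ the set $K \cap A$ is a compact subset of $\rel^\adim \without B$, so $( \| V \| + \| \delta V \| ) ( E(y) \cap K \cap A ) + \| \boundary{V}{E(y)} \| ( U \cap K \cap A ) < \infty$ by the $\trunc_G(V)$ condition, and this equals $( \| V \| + \| \delta V \| ) ( E(y) \cap K ) + \| \boundary{V}{E(y)} \| ( U \cap K )$ by concentration on $A$. For the integration by parts identity, given $\theta \in \mathscr{D} ( \rel^\adim, \rel^\adim )$ I would choose $\zeta \in \mathscr{D} ( \rel^\adim \without B, \rel )$ equal to $1$ on a neighbourhood of the compact set $( \spt \theta ) \cap A \subset \rel^\adim \without B$; then $\zeta \theta \in \mathscr{D} ( \rel^\adim \without B, \rel^\adim )$, and $\zeta \theta = \theta$ and $D ( \zeta \theta ) = D \theta$ hold throughout $A$ — on $( \spt \theta ) \cap A$ because $\zeta \equiv 1$ nearby, and on $A \without \spt \theta$ because $\theta$ and $D\theta$ both vanish there, using $\spt D\theta \subset \spt \theta$. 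Applying the identity of \ref{def:trunc_g} to $\zeta \theta$ and then using that $V \restrict E(y) \times \grass{\adim}{\vdim}$, $( \delta V ) \restrict E(y)$ and $\boundary{V}{E(y)}$ are all concentrated over $A$ shows that replacing $\zeta \theta$ by $\theta$ leaves each of the three terms unchanged, which is the required identity.

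Since $f$ is nonnegative and belongs to $\trunc(V)$, the two verified conditions give $f \in \trunc_{\Bdry U}(V)$. The one mildly delicate point is the second paragraph: one must keep the closure $A$, taken in $\rel^\adim$, carefully distinct from the supports of the measures that live only on $U$, and must argue around the possibly non‑representable distribution $\delta ( V \restrict E(y) \times \grass{\adim}{\vdim} )$ by working with its action on test functions rather than its variation measure; once the concentration statements are established the remaining manipulations are routine localisation.
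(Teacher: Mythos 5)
Your proof is correct and takes essentially the same route as the paper's. The paper packages the argument by defining a single distribution $S \in \mathscr{D}'(\rel^\adim,\rel^\adim)$ (the difference of the two sides of the identity) and observing that $\spt S \subset A(y)$ while the $\trunc_G$ conditions give $\spt S \subset B$, hence $S = 0$; your cutoff-function manipulation and the concentration statements for $\|V\| \restrict E(y)$, $\|\delta V\| \restrict E(y)$, and $\|\boundary{V}{E(y)}\|$ are exactly the content hiding behind the claim $\spt S \subset A(y)$, so the two arguments coincide in substance, with yours being a more explicit unpacking.
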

\begin{proof}
	Define $A(y) = \Clos \spt \big ( ( \| V \| + \| \delta V \| )
	\restrict E(y) \big )$ for $0 < y < \infty$. Suppose $y$ satisfies the
	conditions of \ref{def:trunc_g} and $A(y) \subset \rel^\adim \without
	B$. One concludes
	\begin{gather*}
		( \| V\| + \| \delta V \| ) ( E(y) \cap K ) + \| \boundary
		V{E(y)}\| ( U \cap K ) < \infty
	\end{gather*}
	whenever $K$ is a compact subset of $\rel^\adim$. Hence one may
	define $S \in \mathscr{D}' (\rel^\adim,\rel^\adim)$ by
	\begin{gather*}
		S(\theta) = \tint{E(y) \times \grass \adim \vdim}{} \project
		P \bullet D \theta (x) \ud V(x,P) - ( ( \delta V) \restrict
		E(y) ) ( \theta | U ) + \boundary V{E(y)} ( \theta | U )
	\end{gather*}
	whenever $\theta \in \mathscr{D} ( \rel^\adim, \rel^\adim )$. Notice
	that $\spt S \subset A(y) \subset \rel^\adim \without B$. On the other
	hand the conditions of \ref{def:trunc_g} imply $\spt S \subset B$. It
	follows $\spt S = \varnothing$ and $S = 0$.
\end{proof}
\begin{lemma} \label{lemma:restriction_tg}
	Suppose $\vdim$, $\adim$, $U$, $V$, $G$, and $B$ are as in
	\ref{def:trunc_g}, $f \in \trunc_G ( V )$, $X$ is an open subset of
	$\rel^\adim \without B$, $H = X \cap \Bdry U$, and $W = V |
	\mathbf{2}^{( U \cap X ) \times \grass{\adim}{\vdim}}$.

	Then $f | X \in \trunc_H ( W )$.
\end{lemma}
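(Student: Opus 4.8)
The plan is to reduce every assertion to the defining conditions for $f \in \trunc_G(V)$ after identifying the boundary data of the restricted problem; throughout abbreviate $E(y) = \{ x \with f(x) > y \}$ for $0 < y < \infty$ and $B = ( \Bdry U ) \without G$. First one records the relevant point set topology: since $X$ is open, $X \cap \Bdry U = X \cap \Bdry ( U \cap X )$, so $H = X \cap \Bdry ( U \cap X )$ is relatively open in $\Bdry ( U \cap X )$ and $\trunc_H (W)$ is meaningful with $U$, $V$, $G$ replaced by $U \cap X$, $W$, $H$; here $W \in \RVar_\vdim ( U \cap X )$ and $\| \delta W \|$ is a Radon measure over $U \cap X$. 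Writing $B' = \Bdry ( U \cap X ) \without H = \Bdry ( U \cap X ) \without X$ for the set playing the role of $B$ in $\trunc_H (W)$, one has
\begin{gather*}
	\rel^\adim \without B' = X \cup ( \rel^\adim \without \Clos ( U \cap X ) ).
\end{gather*}
Moreover $\| W \| = \| V \| \restrict ( U \cap X )$, and since $\delta W$ agrees with $\delta V$ on test functions in $\mathscr{D} ( U \cap X, \rel^\adim )$, the distribution $\delta W$ is the restriction of $\delta V$ to the open set $U \cap X$, hence representable by integration with $\| \delta W \| = \| \delta V \| \restrict ( U \cap X )$ and the same density $\eta ( V, \cdot )$ on $U \cap X$.

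Next I would check $f|X \in \trunc (W)$ with $\derivative W{(f|X)}$ equal $\| W \|$ almost everywhere to $\derivative Vf | ( U \cap X )$: the function $f|X$ is nonnegative and $\| W \| + \| \delta W \|$ measurable, \ref{def:v_weakly_diff}\,\eqref{item:v_weakly_diff:int} for $W$ coincides with that for $V$ restricted to compact subsets of $U \cap X$, and given $\theta \in \mathscr{D} ( U \cap X, \rel^\adim )$ one extends $\theta$ by zero to an element of $\mathscr{D} ( U, \rel^\adim )$ and invokes \ref{def:v_weakly_diff}\,\eqref{item:v_weakly_diff:partial} for $f \in \trunc(V)$; as the extension and its differential vanish off $U \cap X$, this becomes the corresponding identity for $W$. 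Also, by \ref{thm:tv_coarea} applied to $f \in \trunc(V)$, the distribution $\boundary V{E(y)}$ is representable by integration for $\mathscr{L}^1$ almost all $y$; fix once and for all the $\mathscr{L}^1$ full measure set of $0 < y < \infty$ for which in addition the two conditions of \ref{def:trunc_g} hold for $V$, $E(y)$, $B$.

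For such $y$ put $\tilde E = E(y) \cap X = \{ x \with (f|X)(x) > y \}$. Pairing against $\theta \in \mathscr{D} ( U \cap X, \rel^\adim )$, extended by zero to $U$, one reads off from the definitions that $\boundary W{\tilde E}$ applied to $\theta$ equals $\boundary V{E(y)}$ applied to its extension; hence $\boundary W{\tilde E}$ is the restriction of $\boundary V{E(y)}$ to the open set $U \cap X$, thus representable by integration with $\| \boundary W{\tilde E} \| = \| \boundary V{E(y)} \| \restrict ( U \cap X )$ and the same density. To obtain the finiteness condition of \ref{def:trunc_g} for $W$, let $K$ be a compact subset of $\rel^\adim \without B'$ and set $K'' = K \cap \Clos ( U \cap X )$. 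Then $K''$ is compact and avoids $B$: a point $z \in K'' \cap B$ would satisfy $z \notin U$, hence $z \in \Bdry ( U \cap X )$, and $z \notin X$, hence $z \in \Bdry ( U \cap X ) \without X = B'$, contradicting $z \in K$. Since $\tilde E \cap K \subset E(y) \cap K''$ and $( U \cap X ) \cap K \subset U \cap K''$, the finiteness for $W$ follows from that for $V$ with the compact set $K''$.

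Finally I would verify the integration by parts identity of \ref{def:trunc_g} for $W$, $\tilde E$, $B'$. Given $\theta \in \mathscr{D} ( \rel^\adim \without B', \rel^\adim )$, using the open cover $\rel^\adim \without B' = X \cup ( \rel^\adim \without \Clos ( U \cap X ) )$ of $\spt \theta$ and a smooth partition of unity one writes $\theta = \theta_1 + \theta_2$ with $\spt \theta_1$ a compact subset of $X$ and $\spt \theta_2$ a compact subset of $\rel^\adim \without \Clos ( U \cap X )$; by linearity it suffices to treat $\theta_1$ and $\theta_2$ separately. For $\theta_2$ the identity is trivial, since $\theta_2$ and $D\theta_2$ vanish on $U \cap X \supset \tilde E$, so all three terms are zero. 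For $\theta_1$, since $\spt \theta_1 \subset X \subset \rel^\adim \without B$ one may apply the identity of \ref{def:trunc_g} for $V$, $E(y)$, $B$ with the test function $\theta_1$, regarded by extension by zero as an element of $\mathscr{D} ( \rel^\adim \without B, \rel^\adim )$; using that $\theta_1$ vanishes off $X$ together with $\| \delta W \| = \| \delta V \| \restrict ( U \cap X )$, $\| \boundary W{\tilde E} \| = \| \boundary V{E(y)} \| \restrict ( U \cap X )$ and the matching densities, each of the three terms turns into the corresponding term for $W$, $\tilde E$, $B'$, proving $f|X \in \trunc_H (W)$. The computations are routine; the point requiring care is that in \ref{def:trunc_g} the distributions $( \delta V ) \restrict E(y)$ and $\boundary V{E(y)}$ are paired with restrictions $\theta|U$ which need not have compact support in $U$, so the transfer between $V$ and $W$ must be carried out through their variation measures — and one must therefore know that $\boundary W{\tilde E}$ is representable by integration, which is where \ref{thm:tv_coarea} enters — and the set theoretic identities relating $B'$, $B$, $H$, $X$ and $\Clos ( U \cap X )$ must be gotten exactly right, since the splitting $\theta = \theta_1 + \theta_2$ into a piece supported in $X$ and a piece vanishing near $U \cap X$ rests on them.
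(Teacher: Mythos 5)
Your proof is correct and takes essentially the same approach as the paper's, which packages the verification as the vanishing of a single defect distribution $S \in \mathscr{D}'(\rel^\adim \without B',\rel^\adim)$: the paper shows $\spt S \subset (\rel^\adim \without B') \cap \Clos(U \cap X) \subset X$ while $S(\theta)=0$ whenever $\spt\theta \subset X$, hence $S=0$. Your explicit partition of unity $\theta = \theta_1 + \theta_2$ subordinate to the open cover $X \cup (\rel^\adim \without \Clos(U \cap X))$ of $\rel^\adim \without B'$ unwinds precisely this support argument, relying on the same topological observations and the same transfer of the $V$ identity through the restricted variation measures.
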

\begin{proof}
	Notice that $H = X \cap \Bdry ( U \cap X )$. In particular, $H$ is a
	relatively open subset of $\Bdry ( U \cap X )$. Let $C = ( \Bdry ( U
	\cap X ) ) \without H$ and observe the inclusions
	\begin{gather*}
		( \rel^\adim \without C ) \cap \Clos ( U \cap X ) \subset X
		\subset \rel^\adim \without ( B \cup C ).
	\end{gather*}

	Suppose $0 < y < \infty$ satisfies the conditions of
	\ref{def:trunc_g}. Define $E = \{ x \with f(x) > y \}$ and notice
	that
	\begin{gather*}
		( \| W \| + \| \delta W \| ) ( E \cap X \cap K ) +
		\| \boundary W {(E \cap X)} \| ( U \cap X \cap K ) < \infty
	\end{gather*}
	whenever $K$ is a compact subset of $\rel^\adim \without C$ by the
	first inclusion of the first paragraph. Therefore one may define $S
	\in \mathscr{D}' ( \rel^\adim \without C, \rel^\adim )$ by
	\begin{align*}
		S(\theta) & = \tint{(E \cap X) \times \grass \adim \vdim}{}
		\project P \bullet D \theta (x) \ud W(x,P) \\
		& \phantom = \ - ( ( \delta W ) \restrict E \cap X) ( \theta |
		U \cap X ) + ( \boundary W{(E \cap X)} ) ( \theta | U \cap X )
	\end{align*}
	whenever $\theta \in \mathscr{D} ( \rel^\adim \without C, \rel^\adim
	)$. Notice that $\spt S \subset ( \rel^\adim \without C ) \cap \Clos ( U
	\cap X ) \subset X$. On the other hand the conditions of
	\ref{def:trunc_g} in conjunction with the second inclusion of the
	first paragraph imply
	\begin{align*}
		& S( \theta | \rel^\adim \without C ) \\
		& \qquad = \tint{E \times \grass
		\adim \vdim}{} \project P \bullet D \theta (x) \ud V(x,P) - (
		( \delta V ) \restrict E ) ( \theta | U ) + \boundary VE(
		\theta |U ) = 0
	\end{align*}
	whenever $\theta \in \mathscr{D} ( \rel^\adim, \rel^\adim )$ and $\spt
	\theta \subset X$, hence $X \cap \spt S = \varnothing$. It follows
	$\spt S = \varnothing$ and $S = 0$.
\end{proof}
\begin{remark} \label{remark:restriction_tg}
	Recalling the first paragraph of the proof of
	\ref{lemma:restriction_tg}, one notices that
	\begin{gather*}
		\classification{U}{x}{ \oball{x}{r} \subset X } \subset
		\classification{U \cap X}{x}{\oball{x}{r} \subset \rel^\adim
		\without C } \quad \text{for $0 < r < \infty$};
	\end{gather*}
	this fact will be useful for localisation in
	\ref{thm:sob_poin_summary}\,\eqref{item:sob_poin_summary:interior:p=m<q}.
\end{remark}
\begin{example}
	Suppose $\vdim = \adim = 1$, $U = \rel \without \{ 0 \}$, $V \in
	\RVar_\vdim ( U )$ with $\| V \| ( A ) = \mathscr{L}^1 ( A )$ for $A
	\subset U$, $f : U \to \rel$ with $f(x) = 1$ for $x \in U$, $X =
	\classification{U}{x}{x<0}$, and $W = V | \mathbf{2}^{X \times
	\grass{\adim}{\vdim}}$.

	Then $\delta V = 0$ and $f \in \trunc_{\{ 0 \}} ( V )$ but $f | X
	\notin \trunc_{\{ 0 \}} (W)$.
\end{example}
\begin{remark}
	The preceding example shows that \ref{lemma:restriction_tg} may not be
	sharpened by allowing $H$ to be an arbitrary relatively open subset of
	$\Bdry ( U \cap X )$ contained in $G$.
\end{remark}
\begin{lemma} \label{lemma:trunc_tg}
	Suppose $\vdim$, $\adim$, $U$, $V$, and $G$ are as in
	\ref{def:trunc_g}, $f \in \trunc_G ( V )$, $\Upsilon$ is a closed
	subset of $\{ y \with 0 \leq y < \infty \}$, $g : \{ y \with 0
	\leq y < \infty \} \to \{ z \with 0 \leq z < \infty \}$ is a
	Lipschitzian function such that $g(0)=0$, $g|\Upsilon$ is proper and
	$g | \rel \without \Upsilon$ is locally constant.

	Then $g \circ f \in \trunc_G ( V )$.
\end{lemma}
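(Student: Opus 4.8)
The plan is to reduce the assertion to a statement about superlevel sets, then to express each superlevel set of $g\circ f$ as a countable disjointed union of differences of superlevel sets of $f$, to which \ref{remark:v_boundary} and \ref{def:trunc_g} apply.

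First I would observe that, since $f\geq 0$, one may extend $g$ to a Lipschitzian function $\bar g:\rel\to\{z\with 0\leq z<\infty\}$ with $\bar g(y)=g(0)=0$ for $y\leq 0$; then $\bar g|\Upsilon$ is proper, $\bar g|\rel\without\Upsilon$ is locally constant, $\bar g(0)=0$, and $\bar g\circ f=g\circ f$. Hence \ref{lemma:comp_lip} (with $Y=Z=\rel$) yields $g\circ f\in\trunc(V)$ together with $|\derivative{V}{(g\circ f)}(x)|\leq\Lip(g)\,c(x)\,|\derivative{V}{f}(x)|$ for $\|V\|$ almost all $x$, where $c$ is the characteristic function of $f^{-1}\lIm\Upsilon\rIm$; of course $g\circ f\geq 0$. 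It thus remains, with $B=(\Bdry U)\without G$ and $E'(z)=\{x\with g(f(x))>z\}$, to verify the two conditions of \ref{def:trunc_g} for $E'(z)$ for $\mathscr{L}^1$ almost all $0<z<\infty$.

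Let $N$ be the set of $0<y<\infty$ for which $E(y)=\{x\with f(x)>y\}$ violates one of the conditions of \ref{def:trunc_g} or of \ref{thm:tv_coarea}, or for which $(\|V\|+\|\delta V\|)(\{x\with f(x)=y\})>0$; since $\|V\|+\|\delta V\|$ is a Radon measure, $N$ is $\mathscr{L}^1$ negligible, hence so is $\bar g\lIm N\rIm$, $\bar g$ being Lipschitzian. I would fix $z$ with $0<z<\infty$ and $z\notin\bar g\lIm N\rIm$. As $\bar g$ is continuous, $\bar g(0)=0<z$, and $\bar g=0$ on $\{y\with y\leq 0\}$, the set $\{y\with\bar g(y)>z\}$ is an open subset of $\rel$ contained in $\{y\with 0<y<\infty\}$ with positive infimum; writing it as a disjointed union $\bigcup_i\{y\with a_i<y<b_i\}$ with $0<a_i<b_i\leq\infty$, continuity forces every finite $a_i$ and $b_i$ to lie in $\bar g^{-1}\{z\}$, hence outside $N$. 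Since $f\geq 0$, it follows that $E'(z)=\bigcup_i f^{-1}\lIm\{y\with a_i<y<b_i\}\rIm$, a disjointed union, and, because $\{x\with f(x)=b_i\}$ is $\|V\|+\|\delta V\|$ negligible when $b_i<\infty$, each member equals $E(a_i)\without E(b_i)$ up to a $\|V\|+\|\delta V\|$ negligible set, where $E(\infty)=\varnothing$; consequently $\boundary{V}{f^{-1}\lIm\{y\with a_i<y<b_i\}\rIm}=\boundary{V}{E(a_i)}-\boundary{V}{E(b_i)}$ by \ref{remark:v_boundary}.

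Choosing a level $y_0\notin N$ with $0<y_0<a_i$ for all $i$, one has $E'(z)\subset E(y_0)$, so the first finiteness requirement of \ref{def:trunc_g} for $E'(z)$ follows from that for $E(y_0)$; moreover the members $f^{-1}\lIm\{y\with a_i<y<b_i\}\rIm$ are disjointed with union $E'(z)$ and, on any compact $K\subset\rel^\adim\without B$, carry finite $\|V\|+\|\delta V\|$ measure, so that the integration by parts identity for $E'(z)$ and the relation $\boundary{V}{E'(z)}=\sum_i(\boundary{V}{E(a_i)}-\boundary{V}{E(b_i)})$ follow by countable additivity from the corresponding statements for the sets $E(a_i)$ and $E(b_i)$ furnished by \ref{def:trunc_g}. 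The genuinely delicate point, and the main obstacle, is the remaining finiteness requirement $\|\boundary{V}{E'(z)}\|(U\cap K)<\infty$; here I would estimate $\|\boundary{V}{E'(z)}\|(U\cap K)\leq\sum_i\|\boundary{V}{E(a_i)}-\boundary{V}{E(b_i)}\|(U\cap K)$, integrate the right hand side over $0<z<s$, and bound the result by means of the coarea formula for $\bar g:\rel\to\rel$, using that $D\bar g$ vanishes off $\Upsilon$ and that $g|\Upsilon$ is proper — so that only levels $y$ in the bounded set $\Upsilon\cap g^{-1}\lIm\{w\with 0\leq w\leq s\}\rIm$ occur as endpoints when $0<z<s$ — in combination with the coarea formula \ref{thm:tv_coarea} for $f$ and the conditions satisfied by $f$; discarding a further $\mathscr{L}^1$ negligible set of $z$ then renders the bound finite. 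This establishes $g\circ f\in\trunc_G(V)$.
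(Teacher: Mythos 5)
Your opening steps agree with the paper's: extend $g$ by zero to the negative half--line, apply \ref{lemma:comp_lip} to conclude $g\circ f\in\trunc(V)$, and pass to superlevel sets by decomposing $\{y\with g(y)>z\}$ into its open component intervals with endpoints in $g^{-1}\{z\}$, choosing $z$ outside the $g$--image of the null set of bad levels of $f$. The proposals part ways at what you correctly call the delicate point, and there your argument has a genuine gap.

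You try to establish $\|\boundary{V}{E'(z)}\|(U\cap K)<\infty$ by integrating your estimate over $z$, reducing via coarea (for $g$ and then for $f$) to a bound involving
\begin{gather*}
	\tint{K\cap\{x\with f(x)\in I\}}{}|\derivative{V}{f}|\ud\|V\|
\end{gather*}
for $K$ compact in $\rel^\adim\without B$ and $I$ compact in $\{y\with 0<y<\infty\}$. Finiteness of this quantity is precisely condition \ref{thm:char_tg}\,\eqref{item:char_tg:def}, and it is \emph{not} implied by $f\in\trunc_G(V)$: Definition \ref{def:trunc_g} only requires $\|\boundary{V}{E(y)}\|(U\cap K)<\infty$ for $\mathscr{L}^1$ almost every individual $y$, and the paper adds the integrability as an explicit extra hypothesis where it is needed, for instance in \ref{thm:mult_tg}. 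Moreover the strategy ``integrate in $z$, then discard a null set'' cannot work even in principle, because
\begin{gather*}
	\tint{0}{s}\|\boundary{V}{E'(z)}\|(U\cap K)\ud\mathscr{L}^1 z = \tint{U\cap K\cap\{x\with 0<g(f(x))\leq s\}}{}|\derivative{V}{(g\circ f)}|\ud\|V\|
\end{gather*}
may be infinite while the integrand on the left is finite for almost every $z$. For a concrete instance take $\vdim=\adim=1$, $U=(0,1)$, $\|V\|=\mathscr{L}^1\restrict U$, $G=\Bdry U$, $K=[0,1]$, and let $f:(0,1)\to\{y\with 1\leq y<\infty\}$ be piecewise linear, equal to $1$ except for tents of heights $1/k$ erected over disjoint intervals tending to $0$. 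Then $f\in\trunc_G(V)$ with $\|\boundary{V}{E(1+t)}\|(U)$ of order $1/|t|$ (finite for each $t\neq 0$), but its $y$--integral over any neighbourhood of $1$ diverges; taking $\Upsilon=\{y\with y\geq 1\}$ and $g(y)=\sup\{y-1,0\}$ gives $g\circ f=f-1\in\trunc_G(V)$ while your integral bound is $+\infty$.

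The paper's proof avoids both the infinite sum and the integrability question. By the Banach indicatrix theorem \cite[3.2.3\,(1)]{MR41:1976} applied to $g$ on the sets $\{y\with y\leq i\}$, and since $g|\Upsilon$ is proper and $g$ is locally constant off $\Upsilon$, one has $N(g,c)<\infty$ for $\mathscr{L}^1$ almost every $c$. For such $c$ (also avoiding the $g$--image of the bad levels of $f$), $\{y\with g(y)>c\}$ has only \emph{finitely} many components, so the superlevel set of $g\circ f$ is a finite disjointed difference of ``good'' superlevel sets of $f$, and both conditions of \ref{def:trunc_g} follow by finite additivity and \ref{remark:v_boundary} with no integrability hypothesis at all. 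If you replace your integral estimate by this finiteness observation, the rest of your argument goes through.
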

\begin{proof}
	Let $h = g \cup \{ (y,0) \with - \infty < y < 0 \}$. Since $\Lip h =
	\Lip g$ and $h | \rel \without \Upsilon$ is locally constant,
	\ref{lemma:comp_lip} yields $g \circ f = h \circ f \in \trunc (V)$.

	Define $F$ to be the class of all Borel subsets $Y$ of $\{ y \with 0 <
	y < \infty \}$ such that $E = f^{-1} \lIm Y \rIm$ satisfies
	\begin{gather*}
		( \| V \| + \| \delta V \| ) ( E \cap K ) + \| \boundary{V}{E}
		\| ( U \cap K ) < \infty, \\
		\tint{E \times \grass{\adim}{\vdim}}{} \project{P} \bullet D
		\theta (x) \ud V(x,P) = ( ( \delta V ) \restrict E ) (
		\theta|U ) - ( \boundary{V}{E} ) ( \theta|U )
	\end{gather*}
	whenever $K$ is a compact subset of $\rel^\adim \without B$ and
	$\theta \in \mathscr{D} ( \rel^\adim \without B, \rel^\adim )$. If $Y
	\in F$, $G$ is a finite disjointed subfamily of $F$ and $\bigcup G
	\subset Y$, then $Y \without \bigcup G \in F$; as one readily verifies
	using \ref{remark:v_boundary}. Let $O$ denote the set of $0 < b <
	\infty$ such that either $\{ y \with b < y < \infty \}$ or $\{ y \with
	b \leq y < \infty \}$ does not belong to $F$. Since $( \| V \| + \|
	\delta V \| ) ( f^{-1} \lIm \{ b \} \rIm ) = 0$ for all but countably
	many $b$, one obtains $\mathscr{L}^1 ( O ) = 0$.

	Next, it will be shown that
	\begin{gather*}
		D = g^{-1} \lIm \{ z \with c < z < \infty \} \rIm \in F
	\end{gather*}
	whenever $c$ satisfies $0 < c \in \rel \without g \lIm O \rIm$ and
	$N(g,c) < \infty$. For this purpose assume $D \neq \varnothing$. Since
	$\inf D > 0$ and $\Bdry D \subset \{ y \with g(y) = c \}$, one infers
	that $\Bdry D$ is a finite subset of $\{ b \with 0 < b < \infty \}
	\without O$, in particular $Y = \{ y \with \inf D < y < \infty \} \in
	F$. Let $G$ denote the family of connected components of $Y \without
	D$. Observe that $G$ is a finite disjointed family of possibly
	degenerated closed intervals. Since
	\begin{gather*}
		\Bdry I \subset \Bdry D, \quad I = \{ y \with \inf I \leq y <
		\infty \} \without \{ y \with \sup I < y < \infty \}
	\end{gather*}
	whenever $I \in G$, it follows $G \subset F$ and $D = Y \without
	\bigcup G \in F$.

	Finally, notice that $\mathscr{L}^1$ almost all $0 < c < \infty$
	satisfy $N (g|\{ y \with y \leq i \}, c ) < \infty$ for $i \in \nat$
	by \cite[3.2.3\,(1)]{MR41:1976}, hence $\infty > N(g|\Upsilon,c) =
	N(g,c)$ for such $c$.
\end{proof}
\begin{example}
	Suppose $\vdim = \adim = 1$, $U = \rel \without \{ 0 \}$, and $V \in
	\RVar_\vdim ( U )$ with $\| V \| ( A ) = \mathscr{L}^1 (A)$ for $A
	\subset U$.
	
	Then $\delta V = 0$ and the following two statements hold.
	\begin{enumerate}
		\item If $f = \sign | U$ then $f \in \trunc (V)$ and $|f| \in
		\trunc_{\{0\}} ( V )$ but $f^+ \notin \trunc_{\{0\}} (V)$.
		\item If $y,b \in \rel^2$, $|y|=|b|$, $\nu$ is a norm on
		$\rel^2$, $\nu(y) \neq \nu (b)$, $f(x)=y$ for $0>x \in \rel$
		and $f(x)=b$ for $0<x \in \rel$, then $f \in \trunc(V,\rel^2)$
		and $|f| \in \trunc_{\{0\}} ( V )$ but $\nu \circ f \notin
		\trunc_{\{0\}} ( V )$.
	\end{enumerate}
\end{example}
\begin{remark} \label{remark:signed_functions_with_zero_boundary}
	The preceding example shows that the class $\trunc (V,Y) \cap \{ f
	\with |f| \in \trunc_G (V) \}$, where $Y$ is a finite dimensional
	normed vectorspace, does not satisfy stability properties similar to
	those proven for $\trunc_G (V)$ in \ref{lemma:trunc_tg}.
\end{remark}
\begin{theorem} \label{thm:char_tg}
	Suppose $\vdim$, $\adim$, $U$, $V$, $G$, and $B$ are as in
	\ref{def:trunc_g}, $f \in \trunc (V)$, $f$ is nonnegative, $J = \{ y
	\with 0 < y < \infty \}$, $A = f^{-1} \lIm J \rIm$, $E(y) =
	\classification{U}{x}{f(x) > y }$ for $y \in J$,
	\begin{gather*}
		( \| V \| + \| \delta V \| ) ( K \cap E(y) ) < \infty
	\end{gather*}
	whenever $K$ is a compact subset of $\rel^\adim \without B$ and $y \in
	J$, and the distributions $S(y) \in \mathscr{D}' ( \rel^\adim \without
	B, \rel^\adim )$ and $T \in \mathscr{D}' ( ( \rel^\adim \without B )
	\times J, \rel^\adim )$ satisfy
	\begin{gather*}
		S(y)(\theta) = ( ( \delta V ) \restrict E(y) ) ( \theta | U )
		- \tint{E(y) \times \grass{\adim}{\vdim}}{} \project{P}
		\bullet D \theta (x) \ud V (x,P) \quad \text{for $y \in J$},
		\\
		T ( \phi ) = \tint J{} S(y) ( \phi (\cdot,y) ) \ud
		\mathscr{L}^1 y
	\end{gather*}
	whenever $\theta \in \mathscr{D} ( \rel^\adim \without B, \rel^\adim
	)$ and $\phi \in \mathscr{D} ( ( \rel^\adim \without B ) \times J,
	\rel^\adim )$.

	Then the following three conditions are equivalent:
	\begin{enumerate}
		\item \label{item:char_tg:def} $\tint{K \cap \{ x \with f(x)
		\in I \}}{} | \derivative Vf(x) | \ud \| V \| x < \infty$
		whenever $K$ is a compact subset of $\rel^\adim \without B$
		and $I$ is a compact subset of $J$, and $f \in \trunc_G (V)$.
		\item \label{item:char_tg:slices} $\tint{I}{} \| S(y) \| ( K )
		\ud \mathscr{L}^1 y < \infty$ whenever $K$ is a compact subset
		of $\rel^\adim \without B$ and $I$ is a compact subset of $J$,
		and $\| S(y) \| ( ( \rel^\adim \without B ) \without U ) = 0$
		for $\mathscr{L}^1$ almost all $y \in J$.
		\item \label{item:char_tg:graph} $T$ is representable by
		integration and $\| T \| ( ( ( \rel^\adim \without B ) \without
		U ) \times J ) = 0$.
	\intertextenum{If these conditions are satisfied then the following
	three statements hold:}
		\item \label{item:char_tg:relate_sy_boundary} For
		$\mathscr{L}^1$ almost all $y \in J$, there holds
		\begin{gather*}
			S(y)(\theta) = \boundary V{E(y)} ( \theta|U ) \quad
			\text{whenever $\theta \in \mathscr{D} ( \rel^\adim
			\without B, \rel^\adim )$}.
		\end{gather*}
		\item \label{item:char_tg:T} If $\phi \in \Lp{1} ( \| T \|,
		\rel^\adim )$, then
		\begin{gather*}
			T(\phi) = \tint J{} \boundary V{E(y)} ( \phi
			(\cdot,y) | U ) \ud \mathscr{L}^1 y = \tint A{} \left
			< \phi(x,f(x)), \derivative Vf(x) \right > \ud \| V \|
			x.
		\end{gather*}
		\item \label{item:char_tg:bar_T} If $g$ is an $\overline \rel$
		valued $\| T \|$ integrable function, then
		\begin{align*}
			\tint{}{} g \ud \| T \| & = \tint J{} \tint{}{} g(x,y)
			\ud \| \boundary V{E(y)} \| x \ud \mathscr{L}^1 y \\
			& = \tint A{} g (x,f(x)) | \derivative Vf(x) | \ud \|
			V \| x.
		\end{align*}
	\end{enumerate}
\end{theorem}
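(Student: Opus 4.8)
The backbone of the argument is the coarea formula \ref{thm:tv_coarea} applied to $f \in \trunc(V)$. Since $f$ is nonnegative and lies in $\trunc(V)$, that theorem applies unconditionally: it furnishes the distribution $\tilde T$ over $U \times \rel$ with $\tilde T(\psi) = \int \langle \psi(x,f(x)), \derivative{V}{f}(x) \rangle \ud \|V\| x$, shows $\tilde T$ representable by integration, gives the two identities $\tilde T(\psi) = \int \boundary{V}{E(y)}(\psi(\cdot,y)) \ud \mathscr{L}^1 y$ and $\int g \ud \|\tilde T\| = \int \int g(x,y) \ud \|\boundary{V}{E(y)}\| x \ud \mathscr{L}^1 y$, and yields that $\boundary{V}{E(y)}$ is representable by integration for $\mathscr{L}^1$ almost all $y$. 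Moreover $\|\tilde T\|$ is concentrated on $U \times J$, since $E(y)$ is $\|V\| + \|\delta V\|$ almost all of $\dmn f$ for $y \le 0$, forcing $\boundary{V}{E(y)} = 0$ there. With this in hand I would prove the equivalences in the cycle $\eqref{item:char_tg:def} \Leftrightarrow \eqref{item:char_tg:slices}$ together with $\eqref{item:char_tg:def} \Rightarrow \eqref{item:char_tg:graph} \Rightarrow \eqref{item:char_tg:slices}$, and then read off \eqref{item:char_tg:relate_sy_boundary}--\eqref{item:char_tg:bar_T}.

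Two preliminary observations. First, unwinding \ref{def:v_boundary}, \ref{def:restriction_distribution} and the standing finiteness hypothesis, one sees $S(y)(\vartheta) = \boundary{V}{E(y)}(\vartheta)$ for every $\vartheta \in \mathscr{D}(U,\rel^\adim)$; consequently the content of ``$f \in \trunc_G(V)$'' at the superlevel set $E(y)$ is precisely that $\|\boundary{V}{E(y)}\|(U \cap K) < \infty$ whenever $K$ is a compact subset of $\rel^\adim \without B$, and that the extension identity $S(y)(\theta) = \boundary{V}{E(y)}(\theta|U)$ holds for all $\theta \in \mathscr{D}(\rel^\adim \without B, \rel^\adim)$, the remaining finiteness already being part of the hypothesis. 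Second, because $y \mapsto \id{E(y)}(x)$ is nonincreasing, monotone convergence makes $y \mapsto S(y)(\theta)$ Borel for each $\theta$; running over a countable sequentially dense family of test functions and invoking \ref{example:distrib_lusin} shows $y \mapsto S(y)$ is $\mathscr{L}^1$ measurable into the weakly topologised $\mathscr{D}'(\rel^\adim \without B, \rel^\adim)$ and, on the set where $S(y)$ is representable by integration, that $y \mapsto \|S(y)\|$ is $\mathscr{L}^1$ measurable.

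For $\eqref{item:char_tg:def} \Rightarrow \eqref{item:char_tg:slices}$: by the first observation $S(y)$ is, for $\mathscr{L}^1$ almost all $y$, the trivial extension to $\rel^\adim \without B$ of the representation of $\boundary{V}{E(y)}$, so $\|S(y)\|$ equals $\|\boundary{V}{E(y)}\|$ extended by zero and in particular $\|S(y)\|((\rel^\adim \without B) \without U) = 0$; moreover $\int_I \|S(y)\|(K) \ud \mathscr{L}^1 y = \int_I \|\boundary{V}{E(y)}\|(K \cap U) \ud \mathscr{L}^1 y = \int_{K \cap \{x : f(x) \in I\}} |\derivative{V}{f}| \ud \|V\|$ by the mass identity of \ref{thm:tv_coarea} applied to a continuous compactly supported $g$ approximating $\id{K \times I}$ together with \ref{remark:associated_distribution}, which is finite by hypothesis. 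For $\eqref{item:char_tg:slices} \Rightarrow \eqref{item:char_tg:def}$: for $\mathscr{L}^1$ almost all $y$ the measure $\|S(y)\|$ is locally finite and concentrated on $U$, whence $S(y)$ is representable by integration and, by the first observation together with that concentration, $S(y)(\theta) = \boundary{V}{E(y)}(\theta|U)$; rearranging gives the integration by parts identity of \ref{def:trunc_g}, and since $\|\boundary{V}{E(y)}\|(U \cap K) = \|S(y)\|(U \cap K) < \infty$ and the remaining finiteness is assumed, $f \in \trunc_G(V)$, while the coarea computation run backwards yields the integral bound. For $\eqref{item:char_tg:def} \Rightarrow \eqref{item:char_tg:graph}$: with $\|S(y)\|$ concentrated on $U$ and $S(y) = \boundary{V}{E(y)}$ there, the integrability just obtained permits inserting $\phi|_{U \times J}$ into the $\Lp{1}$ form of \ref{thm:tv_coarea}, giving $T(\phi) = \tilde T(\phi|_{U \times J})$; hence $T$ is representable by integration with $\|T\|$ the trivial extension of $\|\tilde T\|$ (which is concentrated on $U \times J$), so $\|T\|(((\rel^\adim \without B) \without U) \times J) = 0$. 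For $\eqref{item:char_tg:graph} \Rightarrow \eqref{item:char_tg:slices}$: apply \ref{thm:distribution_on_product} with $\rel^\adim \without B$ in place of $U$; since $R_\theta(\omega) = T_{(x,y)}(\omega(y)\theta(x)) = \int_J \omega(y) S(y)(\theta) \ud \mathscr{L}^1 y$, each $\|R_\theta\|$ is absolutely continuous with respect to $\mathscr{L}^1$, so by \ref{miniremark:distrib_on_r} the slice map of that theorem agrees with $S(y)$ for $\mathscr{L}^1$ almost all $y$ (running over a countable sequentially dense family of $\theta$), and part \eqref{item:distribution_on_product:absolute} then yields $\int_I \|S(y)\|(K) \ud \mathscr{L}^1 y = \|T\|(K \times I) < \infty$ and $\int_J \|S(y)\|((\rel^\adim \without B) \without U) \ud \mathscr{L}^1 y = \|T\|(((\rel^\adim \without B) \without U) \times J) = 0$.

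Finally, under the equivalent conditions, \eqref{item:char_tg:relate_sy_boundary} is exactly the extension identity of the first preliminary observation once $\|S(y)\|$ is known to be concentrated on $U$, and \eqref{item:char_tg:T}, \eqref{item:char_tg:bar_T} follow by transporting the two identities of \ref{thm:tv_coarea} and \ref{remark:associated_distribution} through $T(\phi) = \tilde T(\phi|_{U \times J})$ and $\|T\| = \|\tilde T\|$, the portion over $\{x : f(x) \notin J\}$ contributing nothing because $\derivative{V}{f} = 0$ at $\|V\|$ almost all $x$ with $f(x) = 0$ by \ref{thm:addition}\,\eqref{item:addition:zero}. The main difficulty is organisational rather than analytic: keeping straight the ambient open sets $U \subset \rel^\adim \without B$ and the products $U \times \rel$, $(\rel^\adim \without B) \times J$, and in particular verifying that $\phi|_{U \times J}$ --- compactly supported in $\rel^\adim \without B$ but in general not in $U$ --- lies in $\Lp{1}(\|\tilde T\|, \rel^\adim)$ so that the $\Lp{1}$ extensions in \ref{thm:tv_coarea} are applicable, and that the measurability hypotheses needed in \ref{thm:distribution_on_product} and \ref{miniremark:distrib_on_r} are indeed met.
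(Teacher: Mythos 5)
Your proposal is correct and takes essentially the same approach as the paper's proof: both establish $(\ref{item:char_tg:def})\Leftrightarrow(\ref{item:char_tg:slices})$ from the unconditional coarea identity supplied by \ref{remark:associated_distribution} and \ref{thm:tv_coarea}, and both relate the slice condition to $T$ via \ref{thm:distribution_on_product} applied with $U$ replaced by $\rel^\adim\without B$, using \ref{example:distrib_lusin} for the measurability of $y\mapsto S(y)$. The only organisational variation is that you route $(\ref{item:char_tg:def})\Rightarrow(\ref{item:char_tg:graph})$ through the direct identification $T(\phi)=\tilde T(\phi|_{U\times J})$ rather than through $(\ref{item:char_tg:slices})$; the paper instead goes $(\ref{item:char_tg:slices})\Rightarrow(\ref{item:char_tg:graph})$ by noting that the defining formula for $T$ already forces each $\|R_\theta\|$ to be absolutely continuous with respect to $\mathscr{L}^1$ and invoking \ref{thm:distribution_on_product}\,\eqref{item:distribution_on_product:absolute}, which is the same computation packaged abstractly, and then obtains $(\ref{item:char_tg:relate_sy_boundary})$--$(\ref{item:char_tg:bar_T})$ from \ref{def:trunc_g}, \ref{lemma:push_on_product}, and \ref{thm:distribution_on_product}\,\eqref{item:distribution_on_product:absolute} exactly as you do.
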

\begin{proof}
	Notice that $S(y)(\theta) = \boundary V{E(y)} ( \theta | U )$ whenever
	$\theta \in \mathscr{D} ( \rel^\adim \without B, \rel^\adim )$ and
	$\spt \theta \subset U$. From \ref{remark:associated_distribution} and
	\ref{thm:tv_coarea} one infers
	\begin{gather*}
		\tint{K \cap \{ x \with f(x) \in I \}}{} | \derivative Vf(x) |
		\ud \| V \| x = \tint I{} \| \boundary V{E(y)} \| ( U \cap K )
		\ud \mathscr{L}^1 y
	\end{gather*}
	whenever $K$ is a compact subset of $\rel^\adim \without B$ and $I$ is
	a compact subset of $J$. Consequently, \eqref{item:char_tg:def} and
	\eqref{item:char_tg:slices} are equivalent.

	Moreover, one remarks that $S$ is $\mathscr{L}^1 \restrict J$
	measurable by \ref{example:distrib_lusin} and, employing a countable
	sequentially dense subset of $\mathscr{D} ( \rel^\adim \without B,
	\rel^\adim )$, one obtains that, for $\mathscr{L}^1$ almost all $y \in
	J$,
	\begin{gather*}
		S(y)(\theta) = \lim_{\varepsilon \to 0+} \varepsilon^{-1}
		\tint{y}{y+\varepsilon} S ( \upsilon ) ( \theta ) \ud
		\mathscr{L}^1 \upsilon \quad \text{whenever $\theta \in
		\mathscr{D} ( \rel^\adim \without B, \rel^\adim )$}
	\end{gather*}
	by \ref{remark:lusin}, \ref{remark:ind-limit}, and \cite[2.8.17,
	2.9.8]{MR41:1976}. Defining $R_\theta \in \mathscr{D}' ( J,\rel )$ by
	\begin{gather*}
		R_\theta ( \omega ) = T_{(x,y)} ( \omega (y) \theta (x) )
		\quad \text{whenever $\omega \in \mathscr{D} (J,\rel )$ and
		$\theta \in \mathscr{D} ( \rel^\adim \without B, \rel^\adim
		)$},
	\end{gather*}
	one notes that $\| R_\theta \|$ is absolutely continuous with respect
	to $\mathscr{L}^1 | \mathbf{2}^J$ for $\theta \in \mathscr{D} (
	\rel^\adim \without B, \rel^\adim )$. If \eqref{item:char_tg:slices}
	holds, then $T$ is representable by integration and
	\ref{thm:distribution_on_product}\,\eqref{item:distribution_on_product:absolute}
	with $U$ and $Z$ replaced by $\rel^\adim \without B$ and $\rel^\adim$
	yields \eqref{item:char_tg:graph}. Conversely, if
	\eqref{item:char_tg:graph} holds, then \eqref{item:char_tg:slices}
	follows similarly from
	\ref{thm:distribution_on_product}\,\eqref{item:distribution_on_product:inequality}.

	Suppose now \eqref{item:char_tg:def}--\eqref{item:char_tg:graph} hold.
	Then \eqref{item:char_tg:relate_sy_boundary} is evident from
	\ref{def:trunc_g} and implies
	\begin{gather*}
		T ( \phi ) = \tint A{} \left < \phi(x,f(x)), \derivative Vf(x)
		\right > \ud \| V \| x \quad \text{for $\phi \in \mathscr{D} (
		( \rel^\adim \without B ) \times J, \rel^\adim )$}
	\end{gather*}
	by \ref{remark:associated_distribution} and \ref{thm:tv_coarea}.
	Finally, \eqref{item:char_tg:T} and \eqref{item:char_tg:bar_T} follow
	from \ref{lemma:push_on_product} and
	\ref{thm:distribution_on_product}\,\eqref{item:distribution_on_product:absolute}
	with $U$ replaced by $\rel^\adim \without B$.
\end{proof}
\begin{lemma} \label{lemma:closeness_tg}
	Suppose $\vdim$, $\adim$, $U$, $V$, $G$, and $B$ are as in
	\ref{def:trunc_g}, $f \in \trunc (V)$, $f_i$ is a sequence in
	$\trunc_G ( V )$, $J = \{ y \with 0 < y < \infty \}$, and
	\begin{gather*}
		( \| V \| + \| \delta V \| ) ( \classification{K}{x}{ f(x)>b
		} ) < \infty, \\
		f_i \to f \quad \text{as $i \to \infty$ in $( \| V \| + \|
		\delta V \| ) \restrict U \cap K$ measure}, \\
		\varrho ( K, I, b, \delta ) < \infty \quad \text{for $0 <
		\delta < \infty$}, \qquad \varrho ( K, I, b, \delta ) \to 0
		\quad \text{as $\delta \to 0+$}
	\end{gather*}
	whenever $K$ is a compact subset of $\rel^\adim \without B$, $I$ is a
	compact subset of $J$, and $\inf I > b \in J$, where $\varrho (K, I, b,
	\delta )$ denotes the supremum of all numbers
	\begin{gather*}
		\limsup_{i \to \infty} \tint{\classification{K \cap A}{x}{f_i
		(x) \in I}}{} | \derivative{V}{f_i} | \ud \| V \|
	\end{gather*}
	corresponding to $\| V \|$ measurable sets $A$ with $\| V \| (
	\classification{A \cap K}{x}{f(x) > b} ) \leq \delta$.

	Then $f \in \trunc_G ( V )$.
\end{lemma}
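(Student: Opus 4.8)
The plan is to deduce $f\in\trunc_G(V)$ from the characterisation of this property given in \ref{thm:char_tg}; throughout write $E(y)=\{x\with f(x)>y\}$ and $E_i(y)=\{x\with f_i(x)>y\}$. First I reduce to $f\geq 0$: since $f_i\geq 0$, $f_i\to f$ in $(\|V\|+\|\delta V\|)\restrict U\cap K$ measure for compact $K\subset\rel^\adim\without B$, and $U\subset\rel^\adim\without B$, the limit $f$ is nonnegative at $\|V\|+\|\delta V\|$ almost all points, so replacing $f$ by $\sup\{f,0\}\in\trunc(V)$ — which affects neither membership in $\trunc_G(V)$ nor the sets $E(y)$ with $y>0$ up to $\|V\|+\|\delta V\|$ null sets — I may assume $f$ nonnegative. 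Next, applying the hypothesis on $\varrho$ with $A=\rel^\adim$ yields $\limsup_{i\to\infty}\tint{\classification Kx{f_i(x)\in I}}{}|\derivative V{f_i}|\ud\|V\|<\infty$ for every compact $K\subset\rel^\adim\without B$ and every compact $I\subset J$; by a diagonal argument over a countable exhausting family of such pairs $(K,I)$, and after extracting a further subsequence so that $f_i\to f$ at $\|V\|+\|\delta V\|$ almost all points of $U$ (possible because convergence in measure on each member of a countable compact exhaustion of $U$ yields an almost everywhere convergent subsequence), I may assume — passing to a subsequence, which alters neither the hypotheses nor the conclusion — that $\tint{\classification Kx{f_i(x)\in I}}{}|\derivative V{f_i}|\ud\|V\|<\infty$ for all $i$ and all such $K$, $I$. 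Then \ref{thm:char_tg} applies to every $f_i$, and, writing $S_i(y)$ for the slice associated with $f_i$, \ref{thm:char_tg}\,\eqref{item:char_tg:slices} and \eqref{item:char_tg:relate_sy_boundary} give that the restriction of $S_i(y)$ to $U$ equals $\boundary V{E_i(y)}$ and that $\|S_i(y)\|((\rel^\adim\without B)\without U)=0$ for $\mathscr L^1$ almost all $y\in J$.

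Since the hypothesis $(\|V\|+\|\delta V\|)(K\cap E(y))<\infty$ of \ref{thm:char_tg} is, for $f$, the first displayed hypothesis of the present lemma with $b=y$, I may form the slices $S(y)\in\mathscr D'(\rel^\adim\without B,\rel^\adim)$ and the distribution $T$ as in \ref{thm:char_tg}; by that theorem it then suffices to verify condition \eqref{item:char_tg:slices} for $f$. The essential point is that $S_i(y)\to S(y)$ weakly in $\mathscr D'(\rel^\adim\without B,\rel^\adim)$ for $\mathscr L^1$ almost all $y\in J$. To prove this I fix $\theta$ in a countable dense subset of $\mathscr D(\rel^\adim\without B,\rel^\adim)$ and a compact interval $I\subset J$ with $0<b<\inf I$, and put $\Theta(t)=\mathscr L^1(\classification Iy{y<t})$, a bounded Lipschitzian function vanishing on $\{t\with t\leq\inf I\}$. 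Fubini's theorem gives
\[
\tint I{}S_i(y)(\theta)\ud\mathscr L^1 y=\tint{}{}\Theta(f_i(x))\,\eta(V,x)\bullet\theta(x)\ud\|\delta V\|x-\tint{}{}\Theta(f_i(x))\,\project P\bullet D\theta(x)\ud V(x,P),
\]
and the same identity with $f_i$ replaced by $f$. Since $\Theta\circ f_i$ and $\Theta\circ f$ are supported in $\{f_i>b\}$ and $\{f>b\}$, $(\|V\|+\|\delta V\|)(\classification{\spt\theta}x{f(x)>b})<\infty$, and $f_i\to f$ locally in measure, one passes to the limit to obtain $\tint I{}S_i(y)(\theta)\ud\mathscr L^1 y\to\tint I{}S(y)(\theta)\ud\mathscr L^1 y$; letting $I$ range over all compact subintervals of $J$ and invoking the Lebesgue differentiation theorem then gives $S_i(y)(\theta)\to S(y)(\theta)$ for $\mathscr L^1$ almost all $y$, whence the asserted weak convergence.

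Granting this I verify \eqref{item:char_tg:slices}. By weak lower semicontinuity of the variation measure on open sets, $\|S(y)\|(\Omega)\leq\liminf_i\|S_i(y)\|(\Omega)$ for $\mathscr L^1$ almost all $y$ and every open $\Omega$; if $\Clos\Omega$ is a compact subset of $\rel^\adim\without B$, then using $\|S_i(y)\|(\Omega)=\|\boundary V{E_i(y)}\|(\Omega\cap U)$, Fatou's lemma, the coarea formula \ref{thm:tv_coarea} for $f_i$, and the hypothesis on $\varrho$ (with $A=\Omega$),
\[
\tint I{}\|S(y)\|(\Omega)\ud\mathscr L^1 y\leq\liminf_i\tint{\classification\Omega x{f_i(x)\in I}}{}|\derivative V{f_i}|\ud\|V\|\leq\varrho\big(\Clos\Omega,I,b,\|V\|(\classification{\Clos\Omega}x{f(x)>b})\big).
\]
As every compact $K\subset\rel^\adim\without B$ lies in such an $\Omega$, this yields the first assertion of \eqref{item:char_tg:slices}. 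For the second, $\|S(y)\|(\rel^\adim\without\Clos U)=0$ for all $y$, because $E(y)\subset U$ and $D\theta=0$ on $U$ whenever $\spt\theta\subset\rel^\adim\without\Clos U$; and given a compact $L\subset(\Bdry U)\without B$ I apply the displayed estimate with $\Omega=\Omega_r=\{x\with\dist(x,L)<r\}$ for $0<r<\dist(L,B)$, keeping a fixed compact $K$ with $\Clos\Omega_{r_0}\subset K\subset\rel^\adim\without B$ in the place of the first argument of $\varrho$, to get $\tint I{}\|S(y)\|(\Omega_r)\ud\mathscr L^1 y\leq\varrho(K,I,b,\delta(r))$ with $\delta(r)=\|V\|(\classification{\Clos\Omega_r}x{f(x)>b})$. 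Since $\|V\|(L)=0$ and $\|V\|(\classification{\Clos\Omega_{r_0}}x{f(x)>b})<\infty$, $\delta(r)\to 0$ as $r\to 0+$, hence $\varrho(K,I,b,\delta(r))\to 0$; letting $r\to 0+$ and using dominated convergence in $y$ gives $\tint I{}\|S(y)\|(L)\ud\mathscr L^1 y=0$, so $\|S(y)\|(L)=0$ for $\mathscr L^1$ almost all $y$. Exhausting the set $(\Bdry U)\without B$, which is $\sigma$-compact, by countably many such $L$ and letting $I\uparrow J$ then gives $\|S(y)\|((\rel^\adim\without B)\without U)=0$ for $\mathscr L^1$ almost all $y$, completing the verification of \eqref{item:char_tg:slices} and hence, by \ref{thm:char_tg}, the proof that $f\in\trunc_G(V)$.

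The main obstacle I anticipate is the slice convergence: since $K\cap U$ need not have compact closure in $U$ and $(\|V\|+\|\delta V\|)(K\cap U)$ may be infinite, passing to the limit in the two displayed integrals is not a mere dominated convergence argument — one must combine the local convergence in measure of $f_i\to f$ with the tightness coming from $(\|V\|+\|\delta V\|)(\classification{\spt\theta}x{f(x)>b})<\infty$, which controls the region where $f$ is bounded away from $0$, and with a separate estimate, again using finiteness of $(\|V\|+\|\delta V\|)(\classification Kx{f(x)>b})$, showing that the contribution of $\{f\leq b\}$ is negligible. The clause $\varrho(K,I,b,\delta)\to 0$ as $\delta\to 0+$ enters precisely at the boundary estimate above, ruling out concentration of $\derivative V{f_i}$ near $(\Bdry U)\without B$ and so delivering the zero boundary value property of $f$.
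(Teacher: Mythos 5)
Your proposal diverges from the paper's route by attempting to prove pointwise (in $y$) weak convergence $S_i(y)\to S(y)$ of the slices and then using slice-wise lower semicontinuity of the variation measure; the paper instead works directly with the distribution $T$ on the product $(\rel^\adim\without B)\times J$. The slice-convergence step is where the argument breaks down. From the Fubini computation you do correctly obtain $\tint I{}S_i(y)(\theta)\ud\mathscr L^1 y\to\tint I{}S(y)(\theta)\ud\mathscr L^1 y$ for every compact interval $I\subset J$, but the conclusion you draw --- that ``letting $I$ range over all compact subintervals of $J$ and invoking the Lebesgue differentiation theorem then gives $S_i(y)(\theta)\to S(y)(\theta)$ for $\mathscr L^1$ almost all $y$'' --- is false. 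Lebesgue differentiation is a statement about a single locally summable function, not about a sequence; convergence of $\tint_I g_i\ud\mathscr L^1$ to $\tint_I g\ud\mathscr L^1$ for all intervals $I$ does not imply $g_i(y)\to g(y)$ for a.e.\ $y$ (for instance $g_i(y)=\sin(2\pi i y)$ on $J=(0,1)$ satisfies $\tint_I g_i\ud\mathscr L^1\to 0$ for all $I$ yet converges nowhere). Consequently the lower semicontinuity $\|S(y)\|(\Omega)\le\liminf_i\|S_i(y)\|(\Omega)$ for a.e.\ $y$ --- on which both the integrability part of \ref{thm:char_tg}\,\eqref{item:char_tg:slices} and your boundary estimate rest --- is left without justification, and this is a genuine gap.

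The paper sidesteps this entirely: after establishing $T(\phi)=\lim_i\tint_J\boundary V{E_i(y)}(\phi(\cdot,y)|U)\ud\mathscr L^1 y$ (essentially your Fubini-plus-convergence-in-measure step, carried out on the product space), it estimates $\|T\|$ on open product sets $(\Int K\without A)\times\Int I$ directly from the defining supremum of $\|T\|$, the coarea identity \ref{thm:tv_coarea} applied to each $f_i$, and the bound $\varrho(K,I,b,\delta)$, and then concludes via \ref{thm:char_tg}\,\eqref{item:char_tg:def}\,\eqref{item:char_tg:graph}. No a.e.\ convergence of slices is used or needed, because the variation of $T$ on an open set is a supremum of values $T(\phi)$, each of which is already controlled by the $\varrho$ hypothesis through the limit formula. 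Your preliminary reductions (nonnegativity of $f$, subsequence extraction so that \ref{thm:char_tg} is applicable to every $f_i$) and the Fubini and tightness computations are sound, and your argument can be completed by transferring the estimate to the level of $T$ as the paper does --- but as written the proof is incomplete at the slice-convergence step.
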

\begin{proof}
	Define $C = (U \times J) \cap \{ (x,y) \with f(x)>y \}$, $C_i = ( U
	\times J ) \cap \{ (x,y) \with f_i(x) > y \}$ for $i \in \nat$, $E(y)
	= \{ x \with (x,y) \in C \}$ and $E_i(y) = \{ x \with (x,y) \in C_i
	\}$ for $y \in J$ and $i \in \nat$, and $g : ( U \times \grass \adim
	\vdim ) \times J \to U \times J$ by $g((x,P),y) = (x,y)$ for $x \in
	U$, $P \in \grass \adim \vdim$, and $y \in J$. As in
	\ref{lemma:meas_fct} one derives that $C$ and $C_i$ are $\| \delta V
	\| \times \mathscr{L}^1$ measurable and that $g^{-1} \lIm C \rIm$ and
	$g^{-1} \lIm C_i \rIm$ are $V \times \mathscr{L}^1$ measurable.
	Defining $T \in \mathscr{D}' ( \rel^\adim \without B, \rel^\adim )$ as
	in \ref{thm:char_tg}, Fubini's theorem yields that (see
	\ref{miniremark:situation_general_varifold})
	\begin{align*}
		T ( \phi ) & = \tint C{} \eta (V,x) \bullet \phi (x,y) \ud (
		\| \delta V \| \times \mathscr{L}^1 ) (x,y) \\
		& \phantom = \ - \tint{g^{-1} \lIm C \rIm}{} \project P
		\bullet D \phi (\cdot,y) (x) \ud ( V \times \mathscr{L}^1 )
		((x,P),y)
	\end{align*}
	for $\phi \in \mathscr{D} (( \rel^\adim \without B ) \times J,
	\rel^\adim )$. Observe that
	\begin{gather*}
		\big (( \| V \| + \| \delta V \| ) \times \mathscr{L}^1 \big )
		\big ((( U \cap K ) \times I ) \cap C \big ) < \infty, \\
		\lim_{i \to \infty} \big (( \| V \| + \| \delta V \| ) \times
		\mathscr{L}^1 \big ) \big ((( U \cap K ) \times I ) \cap (( C
		\without C_i ) \cup ( C_i \without C ) ) \big ) = 0
	\end{gather*}
	whenever $K$ is a compact subset of $U$ and $I$ a compact subset of
	$J$; in fact, if $b,\varepsilon \in J$, $b+\varepsilon < \inf I$, and
	$A = \{ x \with |f(x)-f_i(x)| > \varepsilon \}$, then
	\begin{gather*}
		( U \times I ) \cap ( C \without C_i ) \subset (A \times I)
		\cup \{ (x,y) \with b < f(x)-\varepsilon \leq y < f(x)
		\}, \\
		( U \times I ) \cap ( C_i \without C ) \subset ( A \times I )
		\cup \{ (x,y) \with b < f(x) \leq y < f(x)+\varepsilon
		\}.
	\end{gather*}
	Consequently, one employs Fubini's theorem and \ref{def:trunc_g} to
	compute
	\begin{align*}
		T ( \phi ) & = \lim_{i \to \infty} \Big ( \tint{C_i}{} \eta
		(V,x) \bullet \phi(x,y) \ud ( \| \delta V \| \times
		\mathscr{L}^1 ) ( x,y) \\
		& \phantom{ = \lim_{i \to \infty} \big (} \ - \tint{g^{-1} \lIm
		C_i \rIm}{} \project P \bullet D \phi (\cdot,y) (x) \ud ( V
		\times \mathscr{L}^1 ) ((x,P),y) \Big ) \\
		& = \lim_{i \to \infty} \tint{J}{} \big ( (( \delta V )
		\restrict E_i(y)) ( \phi ( \cdot,y)|U) \\
		& \phantom{ = \lim_{i \to \infty} \tint{J}{} \big (} \ -
		\tint{E_i (y) \times \grass \adim \vdim}{} \project P \bullet
		D \phi (\cdot,y) (x) \ud V (x,P) \big ) \ud \mathscr{L}^1 y \\
		& = \lim_{i \to \infty} \tint{J}{} \boundary V{E_i(y)} ( \phi
		( \cdot, y ) |U) \ud \mathscr{L}^1 y.
	\end{align*}
	In view of \ref{thm:tv_coarea}, one infers
	\begin{gather*}
		\| T \| \big ( ( ( \Int K ) \without A ) \times \Int I \big )
		\leq \varrho ( K, I, b, \delta )
	\end{gather*}
	whenever $K$ is a compact subset of $\rel^\adim \without B$, $I$ is a
	compact subset of $J$, $\inf I > b \in J$, $0 < \delta < \infty$, $A$
	is a compact subset of $U$, and $\| V \| ( ( K \without A ) \cap E(b)
	) \leq \delta$. In particular, taking $A = \varnothing$ and $\delta$
	sufficiently large, one concludes that $T$ is representable by
	integration and taking $A$ such that $\| V \| ( ( K \without A ) \cap
	E (b) )$ is small yields
	\begin{gather*}
		\| T \| ( ( ( \rel^\adim \without B ) \without U ) \times J )
		= 0.
	\end{gather*}
	The conclusion now follows from
	\ref{thm:char_tg}\,\eqref{item:char_tg:def}\,\eqref{item:char_tg:graph}.
\end{proof}
\begin{remark} \label{remark:closeness_tg}
	The conditions on $\varrho$ are satisfied for instance if for any
	compact subset $K$ of $\rel^\adim \without B$ there holds
	\begin{gather*}
		\begin{aligned}
			\text{either} & \quad \tint{U \cap K}{} |
			\derivative{V}{f} | \ud \| V \| < \infty, \quad
			\lim_{i \to \infty} \eqLpnorm{\| V \| \restrict U \cap
			K}{1}{ \derivative{V}{f} - \derivative{V}{f_i} } = 0,
			\\
			\text{or} & \quad \limsup_{i \to \infty} \eqLpnorm{\|
			V \| \restrict U \cap K}{q}{\derivative{V}{f_i}} <
			\infty \quad \text{for some $1 < q \leq \infty$};
		\end{aligned}
	\end{gather*}
	in fact if $I$ is a compact subset of $J$ and $\inf I > b \in J$ then,
	in the first case,
	\begin{gather*}
		\limsup_{i \to \infty} \tint{\classification{K \cap A}{x}{f_i
		(x) \in I }}{} | \derivative{V}{f_i} | \ud \| V \| \leq
		\tint{\classification{K \cap A}{x}{f(x) > b}}{} |
		\derivative{V}{f} | \ud \| V \|
	\end{gather*}
	whenever $A$ is $\| V \|$ measurable and, in the second case,
	\begin{gather*}
		\varrho ( K, I, b, \delta ) \leq \delta^{1-1/q} \limsup_{i \to
		\infty} \eqLpnorm{\| V \| \restrict U \cap
		K}{q}{\derivative{V}{f_i}} \quad \text{for $0 < \delta <
		\infty$}.
	\end{gather*}
\end{remark}
\begin{miniremark} \label{miniremark:stepfunction}
	If $f$ is a nonnegative $\mathscr{L}^1$ measurable $\overline{\rel}$
	valued function, $O \subset \rel$, $\mathscr{L}^1 ( O ) = 0$,
	$\varepsilon > 0$, and $j \in \nat$, then there exist $b_1, \ldots,
	b_j$ such that 
	\begin{gather*}
		\varepsilon (i-1) < b_i < \varepsilon i \quad \text{and} \quad
		b_i \notin O \qquad \text{for $i=1, \ldots, j$}, \\
		\tsum{i=1}{j} ( b_i-b_{i-1} ) f(b_i) \leq 2 \tint{}{} f \ud
		\mathscr{L}^1,
	\end{gather*}
	where $b_0=0$; in fact, it is sufficient to choose $b_i$ such that
	\begin{gather*}
		\varepsilon (i-1) < b_i < \varepsilon i, \quad b_i \notin O,
		\quad \varepsilon f(b_i) \leq \tint{\varepsilon
		(i-1)}{\varepsilon i} f \ud \mathscr{L}^1
	\end{gather*}
	for $i=1, \ldots, j$, and note $b_i - b_{i-1} \leq 2 \varepsilon$.
\end{miniremark}
\begin{theorem} \label{thm:mult_tg}
	Suppose $\vdim$, $\adim$, $U$, $V$, $G$, and $B$ are as in
	\ref{def:trunc_g}, $f \in \trunc_G ( V )$, $g : U \to \{ y \with 0
	\leq y < \infty \}$, and
	\begin{gather*}
		\tint{U \cap K}{} | f | + | \derivative{V}{f} | \ud \| V \| <
		\infty, \quad \Lip ( g | K ) < \infty
	\end{gather*}
	whenever $K$ is a compact subset of $\rel^\adim \without B$.

	Then $gf \in \trunc_G ( V )$.
\end{theorem}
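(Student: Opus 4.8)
The plan is to verify the hypotheses of \ref{thm:char_tg} for the function $h = gf$ and to establish condition \eqref{item:char_tg:graph} there; by \ref{thm:char_tg}\,\eqref{item:char_tg:graph}\,\eqref{item:char_tg:def} this yields $gf \in \trunc_G(V)$. First I would record that $h := gf$ belongs to $\trunc(V,\rel)$, is nonnegative, and has weak derivative locally summable away from $B$. Indeed, every compact subset of $U$ is a compact subset of $\rel^\adim \without B$, so the hypotheses give $f \in \Lploc{1}(\|V\|,\rel)$ and $\derivative{V}{f} \in \Lploc{1}(\|V\|,\Hom(\rel^\adim,\rel))$, and $g$ is locally Lipschitzian; hence \ref{thm:addition}\,\eqref{item:addition:mult} and \ref{example:lipschitzian} apply and give $h \in \trunc(V,\rel)$ with $\derivative{V}{h}(x) = \derivative{V}{g}(x)\,f(x) + g(x)\,\derivative{V}{f}(x)$ and $\derivative{V}{g}(x) = (\|V\|,\vdim)\ap Dg(x)\circ\project{\Tan^\vdim(\|V\|,x)}$ for $\|V\|$ almost all $x$. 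Writing $\Lambda_K = \sup\{g(x)\with x\in U\cap K\}$ (finite, possibly zero) and $L_K = \Lip(g|K)$ for compact $K\subset\rel^\adim\without B$, one has $\{x\with h(x)>y\}\cap K \subset \{x\with f(x)>y/\Lambda_K\}\cap K$ for $0<y<\infty$ (read as $\varnothing$ if $\Lambda_K=0$), and, since $\{h>0\}\subset\{f>0\}$ and $\|\derivative{V}{g}\|\leq L_K$ holds $\|V\|$ almost everywhere on $U\cap K$,
\begin{gather*}
	\tint{K\cap\{x\with h(x)>0\}}{} |\derivative{V}{h}| \ud\|V\| \leq \Lambda_K \tint{U\cap K}{} |\derivative{V}{f}| \ud\|V\| + L_K \tint{U\cap K}{} |f| \ud\|V\| < \infty .
\end{gather*}
Since $f\in\trunc_G(V)$, \ref{def:trunc_g} gives $(\|V\|+\|\delta V\|)(\{x\with f(x)>y'\}\cap K)<\infty$ for $\mathscr{L}^1$ almost all $y'>0$, hence — by monotonicity in $y'$ — for every $y'>0$; combined with the preceding inclusion this verifies the finiteness hypothesis of \ref{thm:char_tg} for $h$. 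Let $J = \{y\with 0<y<\infty\}$ and let $S_h(y)$, $T_h$ be associated to $h$ as in \ref{thm:char_tg}.

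Next I would reduce the problem to a single integration by parts identity. By \ref{lemma:push_on_product}, applied with $U$, $\mu$, the function there, and $F$ there replaced by $\rel^\adim\without B$, $\|V\|$, $h$ and $\derivative{V}{h}$ (admissible by the displayed bound), the distribution $\phi \mapsto \tint{\{x\with h(x)>0\}}{}\langle\phi(x,h(x)),\derivative{V}{h}(x)\rangle\ud\|V\|x$ on $(\rel^\adim\without B)\times J$ is representable by integration with variation measure $(x\mapsto(x,h(x)))_\#(|\derivative{V}{h}|\,\|V\|\restrict\{h>0\})$, which is carried by $U\times J$; call this distribution $T'$. It therefore suffices to prove $T_h = T'$, since then \ref{thm:char_tg}\,\eqref{item:char_tg:graph} holds. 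By \ref{miniremark:distrib_on_products} and sequential continuity of $T_h$ and $T'$ (the relevant measures being finite on fixed compact product sets, by the first paragraph), it is enough to check $T_h(\omega\otimes\theta) = T'(\omega\otimes\theta)$ for $\omega\in\mathscr{D}(J,\rel)$ and $\theta\in\mathscr{D}(\rel^\adim\without B,\rel^\adim)$, where $(\omega\otimes\theta)(x,y)=\omega(y)\theta(x)$. Letting $\Omega\in\mathscr{E}(\rel,\rel)$ be the antiderivative of $\omega$ with $\Omega(t)=0$ for $t\leq\inf\spt\omega$, Fubini's theorem rewrites the left-hand side as $\tint{}{}(\Omega\circ h)\,\eta(V,\cdot)\bullet\theta\ud\|\delta V\| - \tint{}{}(\Omega\circ h)\,\project{P}\bullet D\theta\ud V$ in the notation of \ref{miniremark:situation_general_varifold}, whereas $\Omega\circ h\in\trunc(V)$ with $\derivative{V}{(\Omega\circ h)} = \omega(h)\,\derivative{V}{h}$ by \ref{lemma:basic_v_weakly_diff} (taking $g_i=\Omega$ there, $\Upsilon=\spt\omega$), so the right-hand side equals $\tint{}{}\langle\theta,\derivative{V}{(\Omega\circ h)}\rangle\ud\|V\|$. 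Thus the target identity is
\begin{gather*}
	\tint{}{}(\Omega\circ h)\,\eta(V,\cdot)\bullet\theta\ud\|\delta V\| - \tint{}{}(\Omega\circ h)\,\project{P}\bullet D\theta\ud V = \tint{}{}\langle\theta,\derivative{V}{(\Omega\circ h)}\rangle\ud\|V\|
\end{gather*}
for every $\theta\in\mathscr{D}(\rel^\adim\without B,\rel^\adim)$; for $\theta$ with $\spt\theta\subset U$ this is immediate from \ref{def:v_weakly_diff}\,\eqref{item:v_weakly_diff:partial} applied to $\Omega\circ h$ together with \ref{miniremark:situation_general_varifold}.

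The main obstacle is to extend the last identity across $G = (\Bdry U)\without B$. Here I would exploit that the corresponding identity holds with $h$ replaced by $f$ — equivalently, $\tint{}{}(\Gamma\circ f)\,\eta(V,\cdot)\bullet\theta\ud\|\delta V\| - \tint{}{}(\Gamma\circ f)\,\project{P}\bullet D\theta\ud V = \tint{}{}\langle\theta,\derivative{V}{(\Gamma\circ f)}\rangle\ud\|V\|$ for $\theta\in\mathscr{D}(\rel^\adim\without B,\rel^\adim)$ and every antiderivative $\Gamma$ of a function in $\mathscr{D}(J,\rel)$, which follows from $f\in\trunc_G(V)$ by the same Fubini computation and \ref{thm:char_tg}\,\eqref{item:char_tg:T} (or, for $\Gamma\geq0$, from \ref{lemma:trunc_tg} and the definition of $\trunc_G$). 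Using $\Omega(h(x)) = \Omega(g(x)f(x)) = \tint{0}{f(x)}{}\omega(g(x)s)\,g(x)\ud\mathscr{L}^1 s$ one rewrites the two $h$-integrals on the left as iterated integrals over $s\in J$ whose $s$-slice is weighted by the bounded, in $x$ locally Lipschitzian function $x\mapsto\omega(g(x)s)\,g(x)$; substituting $\derivative{V}{h} = (\derivative{V}{g})f + g\,\derivative{V}{f}$ splits the right side into a term of the same structure and the term $\tint{}{}\omega(h)\,f\,\langle\theta,\derivative{V}{g}\rangle\ud\|V\|$, the latter being matched by integration by parts for the locally Lipschitzian function $g$ (recalling $\derivative{V}{g} = (\|V\|,\vdim)\ap Dg\circ\project{\Tan^\vdim(\|V\|,\cdot)}$ and \ref{remark:lipschitzian_theta}). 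The $s$-integrals are then reduced to the slicing identities for $f$; the dependence of the weight on $x$ is handled by discretising the range of $g$ by levels chosen, via \ref{miniremark:stepfunction}, so as to avoid the $\mathscr{L}^1$ null set of levels for which those identities fail and to keep the arising Riemann sums of the relevant coarea densities bounded by twice the corresponding integrals, after which one passes to the limit. The error terms vanish because $\Omega$ is Lipschitzian, with $|\Omega(ts)-\Omega(t's)|\leq(\sup|\omega|)\,s\,|t-t'|$, because $f\in\Lploc{1}(\|V\|,\rel)$ and $\derivative{V}{f}\in\Lploc{1}(\|V\|,\Hom(\rel^\adim,\rel))$ near $\rel^\adim\without B$, and because $\|\delta V\|(\{x\with f(x)>y'\}\cap K)<\infty$ for every $y'>0$; alternatively, this limiting step can be organised through \ref{lemma:closeness_tg}. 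Once the identity holds for all $\theta\in\mathscr{D}(\rel^\adim\without B,\rel^\adim)$, the reductions of the previous paragraphs give $T_h=T'$, whence $gf\in\trunc_G(V)$.
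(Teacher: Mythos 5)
Your overall strategy coincides with the paper's: establish $h=gf\in\trunc(V)$ with the requisite finiteness, and verify condition \eqref{item:char_tg:graph} of \ref{thm:char_tg} by a Cavalieri argument over the $f$-levels. Your first paragraph is correct (including the monotonicity argument yielding $(\|V\|+\|\delta V\|)(\{f>y'\}\cap K)<\infty$ for every $y'>0$), and the reduction to tensor-product test functions is sound. The genuine gap lies in the phrase ``the $s$-integrals are then reduced to the slicing identities for $f$.'' To carry this out you need, for $\mathscr{L}^1$ almost every $s$, the defining identity of \ref{def:trunc_g} for $f$ applied to the $x$-dependent, Lipschitzian-but-not-smooth vectorfields $\tilde\theta_s=\omega(g(\cdot)s)\,g(\cdot)\,\theta$. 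That identity is only posited for $\theta\in\mathscr{D}(\rel^\adim\without B,\rel^\adim)$, and its extension to Lipschitzian $\theta$ is not a matter of density: mollification gives $D\theta_\delta\to D\theta$ only $\mathscr{L}^\adim$ almost everywhere, which is useless for the convergence of $\tint{E(s)\times\grass\adim\vdim}{}\project P\bullet D\theta_\delta\ud V$ when $\vdim<\adim$ and $\|V\|$ is singular. The Lipschitz extension in \ref{remark:lipschitzian_theta} concerns only $\theta$ with compact support in $U$, not the slicing identity with $\theta$ supported in $\rel^\adim\without B$. Your fallback of ``discretising the range of $g$'' via \ref{miniremark:stepfunction} does not obviously repair this: a step-function substitute for $g$ is no longer locally Lipschitzian (so \ref{thm:addition}\,\eqref{item:addition:mult} no longer applies), and the resulting superlevel sets of the product are intersections $\{f>y\}\cap\{g>\upsilon\}$ whose distributional boundaries are not directly controlled by the $\trunc_G$ hypothesis on $f$ alone.

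The paper proceeds quite differently at exactly this point, and settles for an inequality rather than the identity $T_h=T'$ you aim for. It discretises $f$ (not $g$) by step functions $f_N$ over a set of good levels, works with $h_N=f_Ng$ and its associated distribution $T_N$, introduces an auxiliary Lipschitzian extension $c$ of $g$ to $U\cup G$, and splits $T_N=\Psi_1+\Psi_2$ where $\Psi_1$ is controlled by the slicing distributions $S(b_i)$ of $f$ (carried by $U$ by hypothesis) and $\Psi_2$ by the coarea distributions $\boundary{W_{b_i}}{\{c>\upsilon\}}$ of the Lipschitzian $c$ (automatically carried by $U$ by \ref{example:lipschitzian} and \ref{remark:trunc}). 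It then estimates both pieces and passes to the limit with \ref{miniremark:stepfunction} to obtain $\|T\|(X\times J)\leq\tint{U\cap X}{}2g|\derivative Vf|+f|\derivative Vg|\ud\|V\|$, which suffices for \ref{thm:char_tg}. To complete your argument you would need either to prove the Lipschitz extension of the $\trunc_G$ slicing condition (a statement not available in the paper), or to restructure the Cavalieri step along the lines just described.
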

\begin{proof}
	Define $h= gf$ and note $h \in \trunc ( V )$ by
	\ref{thm:addition}\,\eqref{item:addition:mult}. Define a function $c$
	by $c = ( ( \rel^\adim \without B ) \times \rel ) \cap  \Clos g$ and
	note
	\begin{gather*}
		\dmn c = U \cup G, \quad c | U = g, \quad \Lip ( c | K ) <
		\infty
	\end{gather*}
	whenever $K$ is a compact subset of $\rel^\adim \without B$. Moreover,
	let
	\begin{gather*}
		J = \{ y \with 0 < y < \infty \}, \quad A =
		\eqclassification{U \times J}{(x,y)}{h(x) > y }.
	\end{gather*}
	and define $p : ( \rel^\adim \without B ) \times J \to \rel^\adim
	\without B$ by
	\begin{gather*}
		p (x,y) = x \quad \text{for $x \in \rel^\adim \without B$ and
		$y \in J$}.
	\end{gather*}
	Noting $( \| V \| + \| \delta V \| ) ( \classification{K}{x}{ h(x) > y
	} ) < \infty$ whenever $K$ is a compact subset of $\rel^\adim \without
	B$ and $y \in J$, the proof may be carried out by showing that the
	distribution $T \in \mathscr{D}' ( ( \rel^\adim \without B) \times J,
	\rel^\adim )$ defined by
	\begin{gather*}
		\begin{aligned}
			T ( \phi ) & = \tint{J}{} \big ( ( ( \delta V )
			\restrict \{ x \with  h(x)  > y \} ) ( \phi ( \cdot, y
			) | U ) \\ & \phantom{= \tint{J}{} \big (} \ -
			\tint{\{ x \with  h(x)  > y \} \times \grass \adim
			\vdim}{} \project{P} \bullet D \phi ( \cdot, y ) (x)
			\ud V (x,P) \big ) \ud \mathscr{L}^1 y
		\end{aligned}
	\end{gather*}
	for $\phi \in \mathscr{D} ( ( \rel^\adim \without B ) \times J,
	\rel^\adim )$ satisfies the conditions of
	\ref{thm:char_tg}\,\eqref{item:char_tg:graph} with $f$ replaced by
	$h$.

	For this purpose, define subsets $D(y)$, $E(\upsilon)$, and
	$F(y,\upsilon)$ of $U \cup G$, varifolds $W_y \in \RVar_\vdim (
	\rel^\adim \without B )$, and distributions $S(y) \in \mathscr{D}' (
	\rel^\adim \without B, \rel^\adim )$ and $\Sigma(y,\upsilon) \in
	\mathscr{D}' ( \rel^\adim \without B, \rel^\adim )$ by
	\begin{gather*}
		D (y) = \{ x \with f(x) > y \}, \quad E (\upsilon) = \{ x
		\with c(x)  > \upsilon \}, \\
		F (y,\upsilon) = D(y) \cap E(\upsilon), \quad W_y ( k ) =
		\tint{D(y) \times \grass{\adim}{\vdim}}{} k \ud V, \\
		S(y) ( \theta ) = ( ( \delta V ) \restrict D(y) ) ( \theta |
		U) - \tint{D(y) \times \grass \adim \vdim}{} \project{P}
		\bullet D \theta (x) \ud V (x,P), \\
		\Sigma(y,\upsilon) ( \theta ) = ( ( \delta V ) \restrict
		F(y,\upsilon) ) ( \theta | U ) - \tint{F(y,\upsilon) \times
		\grass \adim \vdim}{} \project{P} \bullet D \theta (x) \ud V
		(x,P)
	\end{gather*}
	whenever $y,\upsilon \in J$, $k \in \mathscr{K} ( ( \rel^\adim
	\without B ) \times \grass{\adim}{\vdim} )$, and $\theta \in
	\mathscr{D} ( \rel^\adim \without B, \rel^\adim )$. Let $O$ consist of
	all $b \in J$ violating the following condition:
	\begin{gather*}
		\text{$S(b)$ is representable by integration and $\| S (b) \|
		( ( \rel^\adim \without B ) \without U ) = 0$}.
	\end{gather*}
	Note $\mathscr{L}^1 ( J \without O ) = 0$ by \ref{def:trunc_g} and
	\begin{gather*}
		\text{$\| \delta W_b \|$ is a Radon measure}, \quad
		\Sigma(b,y) = S(b) \restrict E(y) + \boundary{W_b}{E(y)}
	\end{gather*}
	whenever $b \in J \without O$ and $y \in J$. One readily verifies by
	means of \ref{example:lipschitzian} in conjunction with
	\cite[2.10.19\,(4), 2.10.43]{MR41:1976} that $c \in \trunc ( W )$ and
	\begin{gather*}
		\derivative{W}{c} (x) = \derivative{V}{g} (x) \quad \text{for
		$\| V \|$ almost all $x \in D$}
	\end{gather*}
	whenever $D$ is $\| V \|$ measurable, $W \in \RVar_\vdim ( \rel^\adim
	\without B )$, $W ( k ) = \tint{D \times \grass{\adim}{\vdim}}{} k \ud
	V$ for $k \in \mathscr{K} ( ( \rel^\adim \without B ) \times
	\grass{\adim}{\vdim} )$ and $\| \delta W \|$ is a Radon measure.

	Whenever $N$ is a nonempty finite subset of $J \without O$ define
	functions $f_N : \dmn f \to \{ y \with 0 \leq y < \infty \}$ and $h_N
	: \dmn f \to \{ y \with 0 \leq y < \infty \}$ by
	\begin{gather*}
		f_N (x) = \sup ( \{ 0 \} \cup ( \classification{N}{y}{ x \in
		D(y) } )), \quad h_N (x) = f_N (x) g(x)
	\end{gather*}
	whenever $x \in \dmn f$ and distributions $\Theta_N \in \mathscr{D}' (
	( \rel^\adim \without B ) \times J, \rel^\adim )$ and $T_N \in
	\mathscr{D}' ( ( \rel^\adim \without B ) \times J, \rel^\adim )$ by
	\begin{gather*}
		\begin{aligned}
			\Theta_N ( \phi ) & = \tint{J}{} \big ( ( ( \delta V )
			\restrict \{ x \with  f_N (x)  > y \} ) ( \phi (
			\cdot, y ) | U ) \\
			& \phantom{= \tint{J}{} \big (} \ - \tint{\{ x \with
			f_N (x) > y \} \times \grass \adim \vdim}{}
			\project{P} \bullet D \phi ( \cdot , y ) ( x ) \ud V
			(x,P) \big ) \ud \mathscr{L}^1 y, \\
			T_N ( \phi ) & = \tint{J}{} \big ( ( ( \delta V )
			\restrict \{ x \with  h_N (x)  > y \} ) ( \phi (
			\cdot, y ) | U) \\
			& \phantom{= \tint{J}{} \big (} \ - \tint{\{ x \with
			h_N (x) > y \} \times \grass \adim \vdim}{}
			\project{P} \bullet D \phi ( \cdot , y ) ( x ) \ud
			V(x,P) \big ) \ud \mathscr{L}^1 y
		\end{aligned}
	\end{gather*}
	whenever $\phi \in \mathscr{D} ( ( \rel^\adim \without B ) \times J,
	\rel^\adim )$.

	Next, it will be shown \emph{if $N$ is a nonempty finite subset of
	$J \without O$ then
	\begin{gather*}
		\| T_N \| ( X \times J ) \leq ( ( p_\# \| \Theta_N \| )
		\restrict X) ( c ) + \tint{U \cap X}{} f | \derivative{V}{g} |
		\ud \| V \|
	\end{gather*}
	whenever $X$ is an open subset of $\rel^\adim \without B$}. For this
	purpose suppose $j \in \nat$ and $0 = b_0 < b_1 < \ldots < b_j <
	\infty$ satisfy $N = \{ b_i \with i = 1, \ldots, j \}$ and notice that
	\begin{gather*}
		f_N (x) = b_i \quad \text{if $x \in D(b_i) \without
		D(b_{i+1})$ for some $i=1, \ldots, j-1$}, \\
		f_N (x) = 0 \quad \text{if $x \in (\dmn f) \without D(b_1)$},
		\qquad f_N (x) = b_j \quad \text{if $x \in D(b_j)$}
	\end{gather*}
	and define distributions $\Psi_1, \Psi_2 \in \mathscr{D}' ( (
	\rel^\adim \without B ) \times J, \rel^\adim )$ by
	\begin{align*}
		\Psi_1 ( \phi ) & = \tint{J}{} \big ( \big ( S (b_1) \restrict
		E(y/b_1) \\
		& \phantom{ = \tint{J}{} \big ( \big ( } \ + \tsum{i=2}{j} (
		S(b_i) \restrict E (y/b_i) \without E (y/b_{i-1}) ) \big ) (
		\phi ( \cdot, y ) ) \big ) \ud \mathscr{L}^1 y, \\
		\Psi_2 ( \phi ) & = \tint{J}{} \big ( \tsum{i=1}{j-1}
		\boundary{( W_{b_i} - W_{b_{i+1}} )}{E(y/b_i)} +
		\boundary{W_{b_j}}{E(y/b_j) } \big ) ( \phi (\cdot, y ) ) \ud
		\mathscr{L}^1 y
	\end{align*}
	for $\phi \in \mathscr{D} ( ( \rel^\adim \without B ) \times J,
	\rel^\adim )$. One computes
	\begin{gather*}
		\Theta_N ( \phi ) = \tsum{i=1}{j} \tint{b_{i-1}}{b_i} S(b_i) (
		\phi ( \cdot, y ) ) \ud \mathscr{L}^1 y \quad \text{for $\phi
		\in \mathscr{D} ( ( \rel^\adim \without B ) \times J,
		\rel^\adim)$}
	\end{gather*}
	and deduces from
	\ref{thm:distribution_on_product}\,\eqref{item:distribution_on_product:absolute}
	with $U$ replaced by $\rel^\adim \without B$ that
	\begin{gather*}
		\| \Theta_N \| ( d ) = \tsum{i=1}{j} \tint{b_{i-1}}{b_i}
		\| S (b_i) \| ( d ( \cdot, y ) ) \ud \mathscr{L}^1 y \quad
	\end{gather*}
	whenever $d$ is an $\overline \rel$ valued $\| \Theta_N \|$ integrable
	function. Noting
	\begin{gather*}
		\{ x \with  h_N (x)  > y \} \cap ( D (b_i) \without D
		(b_{i+1}) ) = F (b_i,y/b_i) \without F(b_{i+1},y/b_i), \\
		\{ x \with  h_N (x)  > y \} \cap ( U \without D(b_1) ) =
		\varnothing, \quad \{ x \with  h_N (x)  > y \} \cap D (b_j) =
		F (b_j,y/b_j)
	\end{gather*}
	for $i=1, \ldots, j-1$ and $y \in J$, one obtains
	\begin{gather*}
		T_N ( \phi ) = \tint{J}{} \big ( \tsum{i=1}{j-1} ( \Sigma
		(b_i,y/b_i) - \Sigma (b_{i+1}, y/b_i ) ) + \Sigma ( b_j,y/b_j
		) \big)( \phi ( \cdot, y )) \ud \mathscr{L}^1 y
	\end{gather*}
	whenever $\phi \in \mathscr{D} ( ( \rel^\adim \without B ) \times J,
	\rel^\adim )$. Computing with the help of \ref{remark:boundary_of_sum}
	that
	\begin{gather*}
		\begin{aligned}
			& \tsum{i=1}{j-1} ( \Sigma(b_i,y/b_i) - \Sigma
			(b_{i+1}, y/b_i)) + \Sigma (b_j, y/b_j) \\
			& \qquad = \tsum{i=1}{j} S (b_i) \restrict E (y/b_i) +
			\tsum{i=1}{j} \boundary{W_{b_i}}{E (y/b_i)} \\
			& \qquad \phantom{=} \ - \tsum{i=1}{j-1} S (b_{i+1})
			\restrict E ( y/b_i ) - \tsum{i=1}{j-1}
			\boundary{W_{b_{i+1}}}{E(y/b_i)} \\
			& \qquad = S(b_1) \restrict E (y/b_1) + \tsum{i=2}{j}
			S (b_i) \restrict ( E ( y/b_i) \without E (y/b_{i-1}))
			\\
			& \qquad \phantom{=} \ + \tsum{i=1}{j-1}
			\boundary{(W_{b_i}-W_{b_{i+1}})}{E ( y/b_i )} +
			\boundary{W_{b_j}}{E(y/b_j)}
		\end{aligned}
	\end{gather*}
	whenever $y \in J$ yields
	\begin{gather*}
		T_N = \Psi_1 + \Psi_2.
	\end{gather*}
	Moreover, the quantity $\| \Psi_1 \| ( X \times J )$ does not exceed
	\begin{gather*}
		\begin{aligned}
			& \tint{J}{} \big ( \| S(b_1) \| \restrict E (y/b_1) +
			\tsum{i=2}{j} \| S (b_i ) \| \restrict ( E (y/b_i)
			\without E(y/b_{i-1}) ) \big ) (X) \ud \mathscr{L}^1 y
			\\
			& \qquad = \tsum{i=1}{j} \tint{J}{} ( \| S (b_i) \|
			\restrict X ) ( E (y/b_i) ) \ud \mathscr{L}^1 y \\
			& \qquad \phantom{=} \ - \tsum{i=2}{j} \tint{J}{} ( \|
			S (b_i) \| \restrict X ) ( E (y/b_{i-1}) ) \ud
			\mathscr{L}^1 y \\
			& \qquad = \tsum{i=1}{j} (b_i-b_{i-1}) ( \| S (b_i) \|
			\restrict X ) ( c ) = ( ( p_\# \| \Theta_N \| )
			\restrict X ) ( c )
		\end{aligned}
	\end{gather*}
	and, using \ref{remark:associated_distribution} and
	\ref{thm:tv_coarea}, the quantity $\| \Psi_2 \| ( X \times J )$ may be
	bounded by
	\begin{gather*}
		\begin{aligned}
			& \tint{J}{} \big ( \tsum{i=1}{j-1} \|
			\boundary{(W_{b_i}-W_{b_{i+1}})}{E (y/b_i)} \| + \|
			\boundary{W_{b_j}}{E(y/b_j)} \| \big ) (X) \ud
			\mathscr{L}^1 y \\
			& \qquad = \tsum{i=1}{j-1} b_i \tint{X}{} |
			\derivative{(W_{b_i}-W_{b_{i+1}})}{c} | \ud \|
			W_{b_i}-W_{b_{i+1}} \| + b_j \tint{X}{} |
			\derivative{W_{b_j}}{c} | \ud \| W_{b_j} \| \\
			& \qquad = \tsum{i=1}{j-1} b_i \tint{X \cap ( D(b_i)
			\without D(b_{i+1}))}{} | \derivative{V}{g} | \ud \| V
			\| + b_j \tint{X \cap D(b_j)}{} | \derivative{V}{g} |
			\ud \| V \| \\
			& \qquad = \tint{U \cap X}{} f | \derivative{V}{g} |
			\ud \| V \|.
		\end{aligned}
	\end{gather*}

	Next, \emph{it will be proven
	\begin{gather*}
		\| T \| ( X \times J ) \leq \tint{U \cap X}{} 2 g |
		\derivative{V}{f} | + f | \derivative{V}{g} | \ud \| V \|
	\end{gather*}
	whenever $X$ is an open subset of $\rel^\adim \without B$}. Recalling
	the formula for $\| \Theta_N \|$, one may use
	\ref{miniremark:stepfunction} with $f(y)$ replaced by $( \| S (y) \|
	\restrict X ) ( c )$ for $y \in J$ to construct a sequence $N(i)$ of
	nonempty finite subsets of $J \without O$ such that
	\begin{gather*}
		( ( p_\# \| \Theta_{N(i)} \| ) \restrict X ) ( c ) \leq 2
		\tint{J}{} ( \| S (y) \| \restrict X ) (  c  ) \ud
		\mathscr{L}^1 y, \\
		\dist ( y, N (i) ) \to 0 \quad \text{as $i \to \infty$ for $y
		\in J$}.
	\end{gather*}
	Define $A(i) = \eqclassification{U \times J}{(x,y)}{ h_{N(i)} (x)  > y
	}$ for $i \in \nat$. Noting
	\begin{gather*}
		h_{N(i)} (x)  \to  h(x)  \quad \text{as $i \to \infty$ for $x
		\in \dmn f$}
	\end{gather*}
	and recalling $( \| V \| + \| \delta V \| ) ( \classification{K}{x}{
	h(x) > y } ) < \infty$ for $y \in J$ whenever $K$ is a compact subset
	of $\rel^\adim \without B$, one infers
	\begin{gather*}
		\big ( ( \| V \| + \| \delta V \| ) \times \mathscr{L}^1 \big)
		( C \cap A \without A (i) ) \to 0
	\end{gather*}
	as $i \to \infty$ whenever $C$ is a compact subset of $( \rel^\adim
	\without B ) \times J$. Since $A(i) \subset A$ for $i \in \nat$, it
	follows by means of Fubini's theorem that
	\begin{gather*}
		T_{N(i)} \to T \quad \text{as $i \to \infty$}
	\end{gather*}
	and, in conjunction with the assertion of the preceding paragraph,
	\begin{gather*}
		\| T \| ( X \times J ) \leq 2 \tint{J}{} ( \| S (y) \|
		\restrict X ) (  c  ) \ud \mathscr{L}^1 y + \tint{U \cap X}{}
		f | \derivative{V}{g} | \ud \| V \|.
	\end{gather*}
	Therefore the assertion of the present paragraph is implied by
	\ref{thm:char_tg}\,\eqref{item:char_tg:def}\,\eqref{item:char_tg:relate_sy_boundary}\,\eqref{item:char_tg:bar_T}.
	
	Finally, the assertion of the preceding paragraph extends to all
	Borel subsets $X$ of $\rel^\adim \without B$ by approximation and the
	conclusion follows.
\end{proof}
\section{Embeddings into Lebesgue spaces} \label{sec:embeddings}
In this section a variety of Sobolev Poincar{\'e} type inequalities for weakly
differentiable functions are established by means of the relative
isoperimetric inequalities \ref{corollary:rel_iso_ineq} and
\ref{corollary:true_rel_iso_ineq}. The key are local estimates under a
smallness condition on set of points where the nonnegative function is
positive, see
\ref{thm:sob_poin_summary}\,\eqref{item:sob_poin_summary:interior}. These
estimates are formulated in such a way as to improve in case the function
satisfies a zero boundary value condition on an open part of the boundary.
Consequently, Sobolev inequalities are essentially a special case, see
\ref{thm:sob_poin_summary}\,\eqref{item:sob_poin_summary:global}. Local
summability results also follow, see \ref{corollary:integrability}. Finally,
versions without the previously hypothesised smallness condition are derived
in \ref{thm:sob_poincare_q_medians} and \ref{thm:sob_poin_several_med}.

The differentiability results which will be derived in \ref{thm:approx_diff}
and \ref{thm:diff_lebesgue_spaces} are based on
\ref{thm:sob_poin_summary}\,\eqref{item:sob_poin_summary:interior} whereas the
oscillation estimates which will be proven in \ref{thm:mod_continuity} and
\ref{thm:hoelder_continuity} employ \ref{thm:sob_poin_several_med}.

\begin{theorem} \label{thm:sob_poin_summary}
	Suppose $1 \leq M < \infty$.

	Then there exists a positive, finite number $\Gamma$ with the
	following property.

	If $\vdim$, $\adim$, $p$, $U$, $V$, and $\psi$ are as in
	\ref{miniremark:situation_general}, $\adim \leq M$, $G$ is a
	relatively open subset of $\Bdry U$, $B = ( \Bdry U ) \without G$, and
	$f \in \trunc_G ( V )$, then the following two statements
	hold:
	\begin{enumerate}
		\item \label{item:sob_poin_summary:interior} Suppose $1 \leq Q
		\leq M$, $0 < r < \infty$, $E = \{ x \with f(x) > 0 \}$,
		\begin{gather*}
			\| V \| ( E ) \leq ( Q-M^{-1} ) \unitmeasure{\vdim}
			r^\vdim, \\
			\| V \| ( E \cap \{ x \with \density^\vdim ( \| V \|,
			x ) < Q \} ) \leq \Gamma^{-1} r^\vdim,
		\end{gather*}
		and $A = U \cap \{ x \with \oball{x}{r} \subset \rel^\adim
		\without B \}$. Then the following four implications hold:
		\begin{enumerate}
			\item \label{item:sob_poin_summary:interior:p=1} If $p
			= 1$, $\beta = \infty$ if $\vdim = 1$ and $\beta =
			\vdim/(\vdim-1)$ if $\vdim > 1$, then
			\begin{gather*}
				\eqLpnorm{\| V \| \restrict A}{\beta}{f} \leq
				\Gamma \big ( \Lpnorm{\| V \|}{1}{
				\derivative{V}{f} } + \| \delta V \|(f) \big
				).
			\end{gather*}
			\item \label{item:sob_poin_summary:interior:p=m=1} If
			$p = \vdim = 1$ and $\psi ( E ) \leq \Gamma^{-1}$,
			then
			\begin{gather*}
				\eqLpnorm{\| V \| \restrict A}{\infty}{f} \leq
				\Gamma \, \Lpnorm{\| V \|}{1}{
				\derivative{V}{f} }.
			\end{gather*}
			\item \label{item:sob_poin_summary:interior:q<m=p} If
			$1 \leq q < \vdim = p$ and $\psi ( E ) \leq
			\Gamma^{-1}$, then
			\begin{gather*}
				\eqLpnorm{\| V \| \restrict A}{\vdim
				q/(\vdim-q)}{f} \leq \Gamma (\vdim-q)^{-1}
				\Lpnorm{\| V \|}{q}{ \derivative{V}{f} }.
			\end{gather*}
			\item \label{item:sob_poin_summary:interior:p=m<q} If
			$1 < p = \vdim < q \leq \infty$ and $\psi ( E ) \leq
			\Gamma^{-1}$, then
			\begin{gather*}
				\eqLpnorm{\| V \| \restrict A}{\infty}{f} \leq
				\Gamma^{1/(1/\vdim-1/q)} \| V \| ( E
				)^{1/\vdim-1/q} \Lpnorm{\| V \|}{q}{
				\derivative{V}{f} }.
			\end{gather*}
		\end{enumerate}
		\item \label{item:sob_poin_summary:global} Suppose $G = \Bdry
		U$, $E = \{ x \with f(x) > 0 \}$, and $\| V \| ( E ) <
		\infty$. Then the following four implications hold:
		\begin{enumerate}
			\item \label{item:sob_poin_summary:global:p=1} If $p
			= 1$, $\beta = \infty$ if $\vdim = 1$ and $\beta =
			\vdim/(\vdim-1)$ if $\vdim > 1$, then
			\begin{gather*}
				\Lpnorm{\| V \|}{\beta}{f} \leq \Gamma \big
				( \Lpnorm{\| V \|}{1}{ \derivative{V}{f} } +
				\| \delta V \|(f) \big ).
			\end{gather*}
			\item \label{item:sob_poin_summary:global:p=m=1} If
			$p = \vdim = 1$ and $\psi ( E ) \leq \Gamma^{-1}$,
			then
			\begin{gather*}
				\Lpnorm{\| V \|}{\infty}{f} \leq \Gamma \,
				\Lpnorm{\| V \|}{1}{ \derivative{V}{f} }.
			\end{gather*}
			\item \label{item:sob_poin_summary:global:q<m=p} If
			$1 \leq q < \vdim = p$ and $\psi ( E ) \leq
			\Gamma^{-1}$, then
			\begin{gather*}
				\Lpnorm{\| V \|}{\vdim q/(\vdim-q)}{f} \leq
				\Gamma (\vdim-q)^{-1} \Lpnorm{\| V \|}{q}{
				\derivative{V}{f} }.
			\end{gather*}
			\item \label{item:sob_poin_summary:global:p=m<q} If
			$1 < p = \vdim < q \leq \infty$ and $\psi ( E ) \leq
			\Gamma^{-1}$, then
			\begin{gather*}
				\Lpnorm{\| V \|}{\infty}{f} \leq
				\Gamma^{1/(1/\vdim-1/q)} \| V \| (
				E)^{1/\vdim-1/q} \Lpnorm{\| V \|}{q}{
				\derivative{V}{f} }.
			\end{gather*}
		\end{enumerate}
	\end{enumerate}
\end{theorem}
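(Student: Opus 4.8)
The plan is to obtain all eight estimates from the relative isoperimetric inequalities \ref{corollary:rel_iso_ineq} and \ref{corollary:true_rel_iso_ineq}, applied to the superlevel sets $E(y) = \{ x \with f(x) > y \}$ with $y > 0$, in combination with the coarea formula \ref{thm:tv_coarea}. First I would record that the global assertions \eqref{item:sob_poin_summary:global:p=1}--\eqref{item:sob_poin_summary:global:p=m<q} of \eqref{item:sob_poin_summary:global} reduce to the corresponding interior assertions of \eqref{item:sob_poin_summary:interior}: taking $G = \Bdry U$ one has $B = \varnothing$ and $A = U$, and since $\density^\vdim ( \| V \|, x ) \geq 1$ for $\| V \|$ almost all $x$ the set $\{ x \with \density^\vdim ( \| V \|, x ) < 1 \}$ is $\| V \|$ null, so that with $Q = 1$ and $r$ large the finiteness hypothesis $\| V \| ( E ) < \infty$ forces the two smallness conditions of \eqref{item:sob_poin_summary:interior}. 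Thus it suffices to treat \eqref{item:sob_poin_summary:interior}.

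For the base cases \eqref{item:sob_poin_summary:interior:p=1} and \eqref{item:sob_poin_summary:interior:p=m=1} I would note $E(y) \subset E$ for $y > 0$, so the $\| V \|$ bounds hypothesised for $E$ are inherited by every $E(y)$, and that $f \in \trunc_G ( V )$ together with \ref{thm:tv_coarea} guarantees that for $\mathscr{L}^1$ almost all $y > 0$ the set $E(y)$ satisfies the conditions of \ref{miniremark:zero_boundary} with $B = ( \Bdry U ) \without G$ and has $\boundary{V}{E(y)}$ representable by integration. Consequently, with $\Lambda = \Gamma_{\ref{thm:rel_iso_ineq}}(M)$, \ref{corollary:rel_iso_ineq} yields $\| V \| ( E(y) \cap A )^{1-1/\vdim} \leq \Lambda ( \| \boundary{V}{E(y)} \| ( U ) + \| \delta V \| ( E(y) ) )$ for almost all $y > 0$, while in case $p = \vdim$ and $\psi ( E ) \leq \Gamma^{-1}$ (hence $\psi ( E(y) )^{1/\vdim} \leq \Lambda^{-1}$ for $\Gamma$ large) \ref{corollary:true_rel_iso_ineq} gives the same bound without the summand $\| \delta V \| ( E(y) )$. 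Integrating in $y$, using $\tint{0}{\infty} \| \boundary{V}{E(y)} \| ( U ) \ud \mathscr{L}^1 y = \Lpnorm{\| V \|}{1}{\derivative{V}{f}}$ and $\tint{0}{\infty} \| \delta V \| ( E(y) ) \ud \mathscr{L}^1 y = \| \delta V \| ( f )$ from \ref{thm:tv_coarea} and Cavalieri's principle, together with the elementary one dimensional inequality
\begin{gather*}
	\big ( \tint{0}{\infty} \beta t^{\beta-1} m(t) \ud \mathscr{L}^1 t
	\big )^{1/\beta} \leq \tint{0}{\infty} m(t)^{1-1/\vdim} \ud
	\mathscr{L}^1 t
\end{gather*}
valid for nonincreasing $m : \{ t \with 0 < t < \infty \} \to \{ s \with 0 \leq s < \infty \}$ with $\max \{ t \with m(t) > 0 \} < \infty$ when $\vdim > 1$ and $\beta = \vdim/(\vdim-1)$ (proved by representing the nonincreasing function $m^{1/\beta}$ as an average of indicator functions of intervals and invoking Minkowski's inequality), the left side being read as $\max \{ t \with m(t) > 0 \}$ and the convention $0^0 = 0$ turning the right side into $\max \{ t \with m(t) > 0 \}$ when $\vdim = 1$, applied to $m(t) = \| V \| ( E(t) \cap A )$, gives \eqref{item:sob_poin_summary:interior:p=1} and its $\vdim = 1$, $\| \delta V \|$ free refinement \eqref{item:sob_poin_summary:interior:p=m=1}.

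The higher integrability estimates \eqref{item:sob_poin_summary:interior:q<m=p} and \eqref{item:sob_poin_summary:interior:p=m<q} would be deduced from the $\| \delta V \|$ free base estimate (available under $p = \vdim$ and $\psi ( E ) \leq \Gamma^{-1}$ by \ref{corollary:true_rel_iso_ineq}) through the substitution of $f^s$ for $f$. For $1 \leq q < \vdim$ one chooses $s = q(\vdim-1)/(\vdim-q) \geq 1$, so that $s\vdim/(\vdim-1) = (s-1)q/(q-1) = \vdim q/(\vdim-q)$; using \ref{lemma:trunc_tg} (composing first with the Lipschitzian truncations of \ref{miniremark:trunc} and passing to the limit) one has $f^s \in \trunc_G ( V )$, and $\derivative{V}{(f^s)} = s f^{s-1} \derivative{V}{f}$ $\| V \|$ almost everywhere by \ref{thm:addition}\,\eqref{item:addition:mult}, whence H{\"o}lder's inequality bounds $\Lpnorm{\| V \|}{1}{\derivative{V}{(f^s)}}$ by $s \eqLpnorm{\| V \|}{q/(q-1)}{f^{s-1}} \Lpnorm{\| V \|}{q}{\derivative{V}{f}}$. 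Inserting $f^s$ into the base estimate, cancelling the common power of the $L^{\vdim q/(\vdim-q)}$ norm of $f$ — legitimate after first replacing $f$ by $\inf \{ f, k \}$ so that this power is finite, the derivative of the truncation being controlled by $\derivative{V}{f}$ uniformly in $k$, and then letting $k \to \infty$ by monotone convergence — and localising via \ref{lemma:restriction_tg} to a bounded open set on which the relevant superlevel sets sit, as anticipated in \ref{remark:restriction_tg}, so that the restricted and unrestricted norms match up, produces $\eqLpnorm{\| V \| \restrict A}{\vdim q/(\vdim-q)}{f} \leq \Lambda s \Lpnorm{\| V \|}{q}{\derivative{V}{f}}$ with $\Lambda s \leq \Gamma ( \vdim - q )^{-1}$ for $\Gamma = \Gamma ( M )$ large. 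For $1 < \vdim < q \leq \infty$ one iterates the substitution along exponents $p_{k+1} = \beta ( 1 + p_k (q-1)/q )$, which tends to $\infty$ exactly because $q > \vdim$, tracks the product of the constants $( \Lambda s_k )^{1/s_k}$, and lets $k \to \infty$, using $\eqLpnorm{\| V \| \restrict A}{p_k}{f} / \| V \| ( E )^{1/p_k} \to \eqLpnorm{\| V \| \restrict A}{\infty}{f}$; the homogeneity of the iteration (consistently with scaling) then forces the factor $\Gamma^{1/(1/\vdim-1/q)} \| V \| ( E )^{1/\vdim-1/q}$ in \eqref{item:sob_poin_summary:interior:p=m<q}. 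The main obstacle is precisely the bookkeeping in this last step: securing at every stage that the compositions $f^s$ and the truncations $\inf \{ f, k \}$ belong to $\trunc_G ( V )$ with the expected weak derivatives, so that the isoperimetric inequality applies to their superlevel sets; carrying out the cancellation of powers and the monotone passage to the limit without circularly presupposing finiteness of the left hand sides; matching the restricted against the unrestricted norms; and, in the Morrey case, summing the geometric series of exponents and constants so as to recover exactly the stated dependence on $\| V \| ( E )$ and $1/\vdim - 1/q$ rather than a merely qualitative $L^\infty$ bound.
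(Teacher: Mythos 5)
Your reduction of the global assertions to the interior ones (taking $Q = 1$, $B = \varnothing$, $A = U$, and $r$ large) is the same as the paper's Step 3, except that the paper applies the interior estimates with $M$ replaced by $\sup\{2,M\}$ so that $Q - M^{-1} > 0$ survives the choice $Q = 1$; a minor omission. Your treatment of the base cases \eqref{item:sob_poin_summary:interior:p=1} and \eqref{item:sob_poin_summary:interior:p=m=1} via the coarea formula \ref{thm:tv_coarea} and the relative isoperimetric inequalities is essentially the paper's Steps 1--2, only you reach the $L^\beta$ bound through a Minkowski-type inequality $\Lpnorm{\mu}{\beta}{f} \leq \tint{0}{\infty} \mu(E(t))^{1/\beta} \ud \mathscr{L}^1 t$ whereas the paper differentiates $g(b) = \eqLpnorm{\| V \| \restrict A}{\beta}{\inf\{f,b\}}$; both are correct. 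For \eqref{item:sob_poin_summary:global:q<m=p} your power substitution $f \mapsto f^{q(\vdim-1)/(\vdim-q)}$ with cancellation of powers is exactly the paper's Step 4.

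The genuine gap is in the two interior higher-integrability cases \eqref{item:sob_poin_summary:interior:q<m=p} and \eqref{item:sob_poin_summary:interior:p=m<q}. When you insert $f^s$ into the base estimate for the interior case, the left side is $\eqLpnorm{\| V \| \restrict A}{\beta}{f^s}$ but the factor $\eqLpnorm{\| V \|}{q/(q-1)}{f^{s-1}}$ produced by H\"older on the right is a norm over all of $U$. ``Cancelling the common power'' is therefore circular: you would be bounding a restricted norm by a constant times the corresponding unrestricted norm. Your proposed fix --- ``localising via \ref{lemma:restriction_tg} \ldots so that the restricted and unrestricted norms match up'' --- does not close this loop; restricting the varifold to a smaller open set shifts the discrepancy but does not remove it, because the interior estimate always produces a strict inclusion between the set carrying the left-hand norm and the set carrying the right-hand one. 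This is not bookkeeping; it is the crux of the interior argument. The paper's Step 5 resolves it by a chain of $\vdim-1$ cutoff functions $g_i$ through a nested sequence $A_1 \supset \cdots \supset A_\vdim$, writes $g_i f \in \trunc_{\Bdry U}(V)$ by \ref{thm:mult_tg} and \ref{lemma:boundary}, applies the \emph{global} estimate \eqref{item:sob_poin_summary:global:q<m=p} to $g_i f$, and uses H\"older to absorb the contribution of $\derivative{V}{g_i}\,f$ into the previously established norm on the larger set $A_i$ --- gaining one integrability exponent per step. Similarly, your Moser-type iteration for \eqref{item:sob_poin_summary:interior:p=m<q} makes no reference to shrinking domains, whereas the paper's Step 6 is a Stampacchia-type induction in which both the threshold levels $s_i$ and the domains $U_i$ (obtained from \ref{lemma:restriction_tg}, \ref{remark:restriction_tg}) are nested, precisely so that the isoperimetric estimate at stage $i+1$ feeds on the measure bound from stage $i$. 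Without this domain-shrinking structure, neither of your two proposed iterations can be made rigorous, and the last sentence of your proposal understates the difficulty by calling it mere bookkeeping.
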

\begin{proofinsteps}
	Denote by \eqref{item:sob_poin_summary:interior:q<m=p}$'$
	[respectively \eqref{item:sob_poin_summary:global:q<m=p}$'$] the
	implication resulting from
	\eqref{item:sob_poin_summary:interior:q<m=p} [respectively
	\eqref{item:sob_poin_summary:global:q<m=p}] through omission of the
	factor $(\vdim-q)^{-1}$ and addition of the requirement $q=1$. It is
	sufficient to construct functions
	$\Gamma_{\eqref{item:sob_poin_summary:interior:p=1}}$,
	$\Gamma_{\eqref{item:sob_poin_summary:interior:p=m=1}}$,
	$\Gamma_{\eqref{item:sob_poin_summary:interior:q<m=p}}$,
	$\Gamma_{\eqref{item:sob_poin_summary:interior:q<m=p}'}$,
	$\Gamma_{\eqref{item:sob_poin_summary:interior:p=m<q}}$,
	$\Gamma_{\eqref{item:sob_poin_summary:global:p=1}}$,
	$\Gamma_{\eqref{item:sob_poin_summary:global:p=m=1}}$,
	$\Gamma_{\eqref{item:sob_poin_summary:global:q<m=p}}$,
	$\Gamma_{\eqref{item:sob_poin_summary:global:q<m=p}'}$, and
	$\Gamma_{\eqref{item:sob_poin_summary:global:p=m<q}}$ corresponding to
	the implications \eqref{item:sob_poin_summary:interior:p=1},
	\eqref{item:sob_poin_summary:interior:p=m=1},
	\eqref{item:sob_poin_summary:interior:q<m=p},
	\eqref{item:sob_poin_summary:interior:q<m=p}$'$,
	\eqref{item:sob_poin_summary:interior:p=m<q},
	\eqref{item:sob_poin_summary:global:p=1},
	\eqref{item:sob_poin_summary:global:p=m=1},
	\eqref{item:sob_poin_summary:global:q<m=p},
	\eqref{item:sob_poin_summary:global:q<m=p}$'$, and
	\eqref{item:sob_poin_summary:global:p=m<q} whose value at $M$ for $1
	\leq M < \infty$ is a positive, finite number such that the respective
	implication is true for $M$ with $\Gamma$ replaced by this value.
	Define
	\begin{gather*}
		\Gamma_{\eqref{item:sob_poin_summary:interior:p=1}} (M) =
		\Gamma_{\eqref{item:sob_poin_summary:interior:p=m=1}} (M) =
		\Gamma_{\eqref{item:sob_poin_summary:interior:q<m=p}'} (M) =
		\Gamma_{\ref{thm:rel_iso_ineq}} ( M ), \quad
		\Gamma_{\eqref{item:sob_poin_summary:global:p=1}} ( M )=
		\Gamma_{\eqref{item:sob_poin_summary:interior:p=1}} ( \sup \{
		2, M \} ), \\
		\Gamma_{\eqref{item:sob_poin_summary:global:p=m=1}} ( M ) =
		\Gamma_{\eqref{item:sob_poin_summary:interior:p=m=1}} ( \sup
		\{ 2, M \} ), \quad
		\Gamma_{\eqref{item:sob_poin_summary:global:q<m=p}'} ( M ) =
		\Gamma_{\eqref{item:sob_poin_summary:interior:q<m=p}'} ( \sup
		\{ 2, M \} ), \\
		\Gamma_{\eqref{item:sob_poin_summary:global:q<m=p}} (M) = M^2
		\Gamma_{\eqref{item:sob_poin_summary:global:q<m=p}'} ( M ), \\
		\Delta_1 (M) = ( \sup \{ 1/2, 1- 1/( 2M^2-1) \})^{1/M},
		\quad \Delta_2 (M) = 2 M / (1-\Delta_1(M)), \\
		\Delta_3 (M) = M \sup \{ \unitmeasure{\vdim}
		\with M \geq \vdim \in \nat \}, \\
		\Delta_4 (M) = \inf \{ \unitmeasure{\vdim} \with M \geq \vdim
		\in \nat \} / 2, \\
		\Delta_5 (M) = \sup \{ 1,
		\Gamma_{\eqref{item:sob_poin_summary:global:q<m=p}} (M)(
		\Delta_2(M) \Delta_3(M) + 1) \}, \\
		\Delta_6 (M) = 4^{M+1} \Delta_3(M)^M \Delta_5(M)^{M+1}, \\
		\Gamma_{\eqref{item:sob_poin_summary:interior:q<m=p}} (M) =
		\sup \{ 2
		\Gamma_{\eqref{item:sob_poin_summary:interior:q<m=p}'} ( 2M),
		\Delta_6 (M) ( 1 + \Delta_3(M)
		\Gamma_{\eqref{item:sob_poin_summary:interior:q<m=p}'} (2M) )
		\}, \\
		\Delta_7 (M) = \sup \{ 1,
		\Gamma_{\eqref{item:sob_poin_summary:interior:q<m=p}'} (2M)
		\}, \\
		\Delta_8 (M) = \inf \big \{ \inf \{ 1, \Delta_3(M)^{-1}
		\Delta_4(M) \Delta_1(M)^M \} ( 1 - \Delta_1(M) )^M, 1/2 \big
		\}, \\
		\Delta_9 ( M ) = 2 \Delta_7(M) \Delta_8(M)^{-1}, \\
		\Gamma_{\eqref{item:sob_poin_summary:interior:p=m<q}} (M) =
		\sup \big \{ \Delta_9 (M), \Delta_1 (M)^{-M}
		\Gamma_{\eqref{item:sob_poin_summary:interior:q<m=p}'} ( 2M )
		\big \}, \\
		\Gamma_{\eqref{item:sob_poin_summary:global:p=m<q}} (M) =
		\Gamma_{\eqref{item:sob_poin_summary:interior:p=m<q}} ( \sup
		\{ 2, M \} )
	\end{gather*}
	whenever $1 \leq M < \infty$.

	In order to verify the asserted properties suppose $1 \leq M < \infty$
	and abbreviate $\delta_i = \Delta_i(M)$ whenever $i = 1, \ldots, 9$.
	In the verification of assertion ($X$) [respectively ($X$)$'$], where
	$X$ is one of \ref{item:sob_poin_summary:interior:p=1},
	\ref{item:sob_poin_summary:interior:p=m=1},
	\ref{item:sob_poin_summary:interior:q<m=p},
	\ref{item:sob_poin_summary:interior:p=m<q},
	\ref{item:sob_poin_summary:global:p=1},
	\ref{item:sob_poin_summary:global:p=m=1},
	\ref{item:sob_poin_summary:global:q<m=p}, and
	\ref{item:sob_poin_summary:global:p=m<q}, [respectively
	\ref{item:sob_poin_summary:interior:q<m=p} and
	\ref{item:sob_poin_summary:global:q<m=p}] it will be assumed that the
	quantities occurring in its hypotheses are defined and satisfy these
	hypotheses with $\Gamma$ replaced by $\Gamma_{(X)} (M)$ [respectively
	$\Gamma_{(X)'} (M)$].

	\begin{step} \label{step:sob_poin_summary:1}
		Verification of the property of
		$\Gamma_{\eqref{item:sob_poin_summary:interior:p=1}}$.
	\end{step}

	Define $E(b) = \{ x \with f(x) > b \}$ for $0 \leq b < \infty$. If
	$\vdim = 1$ then $\Gamma_{\ref{thm:rel_iso_ineq}}(M)^{-1} \leq \|
	\boundary{V}{E(b)} \| (U) + \| \delta V \| ( E(b))$ for
	$\mathscr{L}^1$ almost all $b$ with $0 < b < \eqLpnorm{\| V \|
	\restrict A}{\beta}{f}$ by \ref{corollary:rel_iso_ineq} and the
	conclusion follows from \ref{thm:tv_coarea}. If $\vdim > 1$, define
	\begin{gather*}
		f_b = \inf \{ f, b \}, \quad g (b) = \eqLpnorm{\| V \|
		\restrict A}{\beta}{f_b} \leq b \, \| V \| ( E )^{1/\beta} <
		\infty
	\end{gather*}
	for $0 \leq b < \infty$ and use Minkowski's inequality to conclude
	\begin{gather*}
		0 \leq g (b+y) - g (b) \leq \eqLpnorm{\| V \| \restrict
		A}{\beta}{f_{b+y}-f_b} \leq y \, \| V \| ( A \cap E(b)
		)^{1/\beta}
	\end{gather*}
	for $0 \leq b < \infty$ and $0 < y < \infty$. Therefore $g$ is
	Lipschitzian and one infers from \cite[2.9.19]{MR41:1976} and
	\ref{corollary:rel_iso_ineq} that
	\begin{gather*}
		0 \leq g'(b) \leq \| V \| ( A \cap E(b) )^{1-1/\vdim} \leq
		\Gamma_{\ref{thm:rel_iso_ineq}}(M) \big ( \|
		\boundary{V}{E(b)} \| ( U ) + \| \delta V \| (E(b)) \big)
	\end{gather*}
	for $\mathscr{L}^1$ almost all $0 < b < \infty$, hence $\eqLpnorm{\| V
	\| \restrict A}{\beta}{f} = \lim_{b \to \infty} g (b) =
	\tint{0}{\infty} g' \ud \mathscr{L}^1$ by \cite[2.9.20]{MR41:1976}
	and \ref{thm:tv_coarea} implies the conclusion.

	\begin{step} \label{step:sob_poin_summary:2}
		Verification of the properties of
		$\Gamma_{\eqref{item:sob_poin_summary:interior:p=m=1}}$ and
		$\Gamma_{\eqref{item:sob_poin_summary:interior:q<m=p}'}$.
	\end{step}

	Omitting the terms involving $\delta V$ from the proof of Step
	\ref{step:sob_poin_summary:1} and using
	\ref{corollary:true_rel_iso_ineq} instead of
	\ref{corollary:rel_iso_ineq}, a proof of Step
	\ref{step:sob_poin_summary:2} results.
	
	\begin{step}
		Verification of the properties of
		$\Gamma_{\eqref{item:sob_poin_summary:global:p=1}}$,
		$\Gamma_{\eqref{item:sob_poin_summary:global:p=m=1}}$, and
		$\Gamma_{\eqref{item:sob_poin_summary:global:q<m=p}'}$.
	\end{step}

	Taking $Q = 1$, one may apply
	\eqref{item:sob_poin_summary:interior:p=1},
	\eqref{item:sob_poin_summary:interior:p=m=1}, and
	\eqref{item:sob_poin_summary:interior:q<m=p}$'$ with $M$ replaced by
	$\sup \{ 2, M \}$ and a sufficiently large number $r$.

	\begin{step}
		Verification of the property of
		$\Gamma_{\eqref{item:sob_poin_summary:global:q<m=p}}$.
	\end{step}

	One may assume $f$ to be bounded by \ref{lemma:basic_v_weakly_diff},
	\ref{example:composite}\,\eqref{item:composite:1d}. Noting
	$f^{q(\vdim-1)/(\vdim-q)} \in \trunc_{\Bdry U} ( V )$ by
	\ref{lemma:trunc_tg}, one now applies
	\eqref{item:sob_poin_summary:global:q<m=p}$'$ with $f$ replaced by
	$f^{q(\vdim-1)/(\vdim-q)}$ to deduce the assertion by the method of
	\cite[4.5.15]{MR41:1976}.

	\begin{step}
		Verification of the property of
		$\Gamma_{\eqref{item:sob_poin_summary:interior:q<m=p}}$.
	\end{step}

	One may assume $\Lpnorm{\| V \|}{q}{ \derivative{V}{f} } < \infty$
	and, possibly using homotheties, also $r=1$. Moreover, one may assume
	$f$ to be bounded by \ref{lemma:basic_v_weakly_diff},
	\ref{example:composite}\,\eqref{item:composite:1d}, hence
	\begin{gather*}
		\tint{}{} |f| + | \derivative{V}{f} | \ud \| V \| < \infty
	\end{gather*}
	by \ref{thm:addition}\,\eqref{item:addition:zero} and H\"older's
	inequality. Define
	\begin{gather*}
		r_i = \delta_1 + (i-1)(1-\delta_1)/M, \quad
		A_i = \classification{U}{x}{ \oball{x}{r_i} \subset \rel^\adim
		\without B } \quad
	\end{gather*}
	whenever $i=1, \ldots, \vdim$. Observe that
	\begin{gather*}
		Q-M^{-1} \leq r_1^\vdim \big (Q-(2M)^{-1}\big ), \quad
		r_1^\vdim \geq 1/2, \quad r_\vdim \leq 1, \\
		r_{i+1} - r_i = 2 / \delta_2 \quad \text{for $i=1, \ldots,
		\vdim-1$}.
	\end{gather*}
	Choose $g_i \in \mathscr{E} ( U, \rel)$ with
	\begin{gather*}
		\spt g_i \subset A_i, \qquad g_i(x) = 1 \quad \text{for $x \in
		A_{i+1}$}, \\
		0 \leq g_i (x) \leq 1 \quad \text{for $x \in U$}, \qquad \Lip
		g_i \leq \delta_2
	\end{gather*}
	whenever $i=1, \ldots, \vdim-1$. Notice that $g_i f \in \trunc (V)$ by
	\ref{thm:addition}\,\eqref{item:addition:mult} with
	\begin{gather*}
		\derivative{V}{(g_if)} (x) = \derivative{V}{g_i} (x) f(x) +
		g_i (x) \derivative{V}{f} (x) \quad \text{for $\| V \|$ almost
		all $x$}.
	\end{gather*}
	Moreover, one infers $g_i f \in \trunc_{\Bdry U} ( V )$ by
	\ref{thm:mult_tg} and \ref{lemma:boundary}.

	If $i \in \{ 1, \ldots, \vdim-1 \}$ and $1-(i-1)/\vdim \geq 1/q \geq
	1-i/\vdim$ then by \eqref{item:sob_poin_summary:global:q<m=p} and
	H\"older's inequality
	\begin{gather*}
		\begin{aligned}
			& \eqLpnorm{\| V \| \restrict A_{i+1}}{\vdim
			q/(\vdim-q)}{f} \leq \Lpnorm{\| V \|}{\vdim
			q/(\vdim-q)} { g_i f } \\
			& \qquad \leq
			\Gamma_{\eqref{item:sob_poin_summary:global:q<m=p}}
			(M) (\vdim-q)^{-1} \big ( \delta_2 \eqLpnorm{\| V \|
			\restrict A_i}{q}{f} + \Lpnorm{\| V \|}{q}{
			\derivative{V}{f} } \big ) \\
			& \qquad \leq \delta_5 ( \vdim-q )^{-1} \big (
			\eqLpnorm{\| V \| \restrict A_i }{\vdim/(\vdim-i)} {f}
			+ \Lpnorm{\| V \|}{q}{ \derivative{V}{f} } \big ).
		\end{aligned}
	\end{gather*}
	In particular, replacing $q$ by $\vdim/(\vdim-i)$ and using H\"older's
	inequality in conjunction with
	\ref{thm:addition}\,\eqref{item:addition:zero}, one infers
	\begin{gather*}
		\eqLpnorm{ \| V \| \restrict A_{i+1} }{\vdim/(\vdim-i-1)} {f}
		\leq 2 \delta_3 \delta_5 \big ( \eqLpnorm{\| V \| \restrict
		A_i }{\vdim/(\vdim-i)} {f} + \Lpnorm{\| V \|}{q}
		{\derivative{V}{f}} \big )
	\end{gather*}
	whenever $i \in \{ 1, \ldots, \vdim-2 \}$ and $1-i/\vdim \geq 1/q$,
	choosing $j \in \{ 1, \ldots, \vdim-1 \}$ with $1-(j-1)/\vdim \geq 1/q
	> 1-j/\vdim$ and iterating this $j-1$ times, also
	\begin{gather*}
		\begin{aligned}
			& \eqLpnorm{ \| V \| \restrict A_j }{\vdim/(\vdim-j)}
			{f} \\
			& \qquad \leq ( 4 \delta_3 \delta_5)^{j-1} \big (
			\eqLpnorm{\| V \| \restrict A_1 }{\vdim/(\vdim-1)} {f}
			+ \Lpnorm{\| V \|}{q} {\derivative{V}{f}} \big ).
		\end{aligned}
	\end{gather*}
	Together this yields
	\begin{gather*}
		\begin{aligned}
			& \eqLpnorm{\| V \| \restrict A_{j+1} }{\vdim
			q/(\vdim-q)} {f} \\
			& \qquad \leq \delta_6 (\vdim-q)^{-1} \big (
			\eqLpnorm{\| V \| \restrict A_1}{\vdim/(\vdim-1)} {f}
			+ \Lpnorm{\| V \|}{q}{ \derivative{V}{f} } \big ).
		\end{aligned}
	\end{gather*}
	and the conclusion then follows from
	\eqref{item:sob_poin_summary:interior:q<m=p}$'$ with $M$ and $r$
	replaced by $2M$ and $r_1$.

	\begin{step}
		Verification of the property of
		$\Gamma_{\eqref{item:sob_poin_summary:interior:p=m<q}}$.
	\end{step}

	Assume
	\begin{gather*}
		r=1, \quad \| V \| ( E ) > 0, \quad \Lpnorm{\| V
		\|}{q}{\derivative{V}{f}} < \infty,
	\end{gather*}
	abbreviate $\beta = \vdim / ( \vdim-1 )$ and $\alpha = 1/(1/\vdim-1/q)$,
	and suppose
	\begin{gather*}
		\lambda = \| V \| ( E ), \quad \Lpnorm{\| V \|}{q}{
		\derivative{V}{f} } < \kappa < \infty.
	\end{gather*}
	Notice that $0 < \delta_1 < 1$, $0 < \delta_8 < 1$, $\alpha \geq 1$ and
	$\lambda > 0$, and define
	\begin{gather*}
		r_i = 1 - (1-\delta_1)^i, \quad s_i = \delta_9^\alpha
		\lambda^{1/\vdim-1/q} \kappa ( 1 - 2^{-i} ), \\
		X_i = \classification{\rel^\adim}{x}{\dist (x,B) > r_i },
		\quad U_i = U \cap X_i, \\
		H_i = X_i \cap \Bdry U, \quad C_i = ( \Bdry U_i ) \without
		H_i, \quad W_i = V | \mathbf{2}^{U_i \times
		\grass{\adim}{\vdim} }, \\
		t_i = \| V \| ( \classification{U_i}{x}{f(x)>s_i} ), \\
		f_i (x) = \sup \{ f(x) - s_i, 0 \} \quad \text{whenever $x
		\in \dmn f$}
	\end{gather*}
	whenever $i$ is a nonnegative integer. Notice that
	\begin{gather*}
		r_{i+1} - r_i = \delta_1 (1-\delta_1)^i, \quad U_{i+1} \subset
		U_i, \quad s_{i+1} - s_i = \delta_9^\alpha \lambda^{1/\vdim-1/q}
		\kappa 2^{-i-1} > 0
	\end{gather*}
	whenever $i$ is a nonnegative integer. The conclusion is readily
	deduced from the assertion,
	\begin{gather*}
		t_i \leq \lambda \delta_8^{\alpha i} \quad \text{\emph{whenever $i$
		is a nonnegative integer}},
	\end{gather*}
	which will be proven by induction.

	The case $i=0$ is trivial. To prove the case $i=1$, one applies
	\eqref{item:sob_poin_summary:interior:q<m=p}$'$ with $M$ and $r$
	replaced by $2M$ and $\delta_1$ and H\"older's inequality in
	conjunction with \ref{thm:addition}\,\eqref{item:addition:zero} to
	obtain
	\begin{gather*}
		\eqLpnorm{\| V \| \restrict U_1}{\beta}{f} \leq
		\Gamma_{\eqref{item:sob_poin_summary:interior:q<m=p}'} (2M)
		\Lpnorm{\| V \|}{1}{ \derivative{V}{f} } \leq \delta_7
		\lambda^{1-1/q} \kappa,
	\end{gather*}
	hence
	\begin{gather*}
		t_1^{1-1/\vdim} \leq (s_1-s_0)^{-1} \eqLpnorm{\| V \|
		\restrict U_1}{\beta}{f} \leq \delta_9^{-\alpha} \lambda^{1-1/\vdim} 2
		\delta_7 \leq \lambda^{1-1/\vdim} \delta_8^{\alpha ( 1-1/\vdim )}.
	\end{gather*}
	Assuming the assertion to be true for some $i \in \nat$, notice that
	\begin{gather*}
		\lambda \leq \delta_3, \quad \alpha i \geq 1, \quad t_i \leq \lambda
		\delta_8^{\alpha i} \leq \delta_4 \delta_1^\vdim
		(1-\delta_1)^{i \vdim} \leq (1/2) \unitmeasure{\vdim}
		(r_{i+1}-r_i)^\vdim, \\
		f_i \in \trunc_G ( V ), \quad f_i | X_i \in
		\trunc_{H_i} ( W_i), \\
		H_i = X_i \cap \Bdry U_i, \quad U_{i+1} \subset
		\classification{U_i}{x}{ \oball{x}{r_{i+1}-r_i} \subset
		\rel^\adim \without C_i },
	\end{gather*}
	by \ref{lemma:basic_v_weakly_diff},
	\ref{example:composite}\,\eqref{item:composite:1d} and
	\ref{lemma:restriction_tg}, \ref{remark:restriction_tg} with $f$, $X$,
	$H$, $W$, $C$, and $r$ replaced by $f_i$, $X_i$, $H_i$, $W_i$, $C_i$,
	and $r_{i+1}-r_i$. Therefore
	\eqref{item:sob_poin_summary:interior:q<m=p}$'$ applied with $U$, $V$,
	$Q$, $M$, $G$, $B$, $f$, and $r$ replaced by $U_i$, $X_i$, $1$, $2M$,
	$H_i$, $C_i$, $f_i | X_i$, and $r_{i+1}-r_i$ yields
	\begin{gather*}
		\eqLpnorm{\| V \| \restrict U_{i+1} }{\beta}{f_i} \leq
		\Gamma_{\eqref{item:sob_poin_summary:interior:q<m=p}'} (2M)
		\eqLpnorm{\| V \| \restrict U_i }{1}{ \derivative{V}{f_i} },
	\end{gather*}
	hence, using H\"older's inequality in conjunction with
	\ref{lemma:basic_v_weakly_diff},
	\ref{example:composite}\,\eqref{item:composite:1d}
	and
	\ref{thm:addition}\,\eqref{item:addition:zero},
	\begin{gather*}
		\eqLpnorm{\| V \| \restrict U_{i+1}}{\beta}{f_i} \leq \delta_7
		t_i^{1-1/q} \kappa \leq \delta_7 \lambda^{1-1/q}
		\delta_8^{\alpha i (1-1/q)} \kappa.
	\end{gather*}
	Noting $\delta_9^{-\alpha} 2 \delta_7 \leq \delta_8^{\alpha
	(1-1/\vdim)}$ and $2 \delta_8^{\alpha (1-1/q)} \leq \delta_8^{\alpha
	(1-1/\vdim)}$, it follows
	\begin{gather*}
		\begin{aligned}
			t_{i+1}^{1-1/\vdim} & \leq ( s_{i+1} - s_i )^{-1}
			\eqLpnorm{\| V \| \restrict U_{i+1}}{\beta}{f_i} \\
			& \leq \lambda^{1-1/\vdim} \delta_9^{-\alpha} 2 \delta_7
			\big ( 2 \delta_8^{\alpha(1-1/q)} \big )^i \leq
			\lambda^{1-1/\vdim} \delta_8^{\alpha (1-1/\vdim) (i+1)}
		\end{aligned}
	\end{gather*}
	and the assertion is proven.

	\begin{step}
		Verification of the property of
		$\Gamma_{\eqref{item:sob_poin_summary:global:p=m<q}}$.
	\end{step}

	Taking $Q = 1$, one may apply
	\eqref{item:sob_poin_summary:interior:p=m<q} with $M$ replaced by
	$\sup \{ 2, M \}$ and a sufficiently large number $r$.
\end{proofinsteps}
\begin{remark}
	The role of \ref{corollary:rel_iso_ineq} and
	\ref{corollary:true_rel_iso_ineq} respectively in the Steps
	\ref{step:sob_poin_summary:1} and \ref{step:sob_poin_summary:2} of the
	preceding proof is identical to the one of Allard
	\cite[7.1]{MR0307015} in Allard \cite[7.3]{MR0307015}.

	The method of deduction of \eqref{item:sob_poin_summary:global:q<m=p}
	from \eqref{item:sob_poin_summary:global:q<m=p}$'$ is classical, see
	\cite[4.5.15]{MR41:1976}.

	In the context of Lipschitz functions the method of deduction of
	\eqref{item:sob_poin_summary:interior:q<m=p} from
	\eqref{item:sob_poin_summary:interior:q<m=p}$'$ and
	\eqref{item:sob_poin_summary:global:q<m=p} is outlined by Hutchinson
	in \cite[pp.~59--60]{MR1066398}.

	The iteration procedure employed in the proof
	\eqref{item:sob_poin_summary:interior:p=m<q} bears formal resemblance
	with Stampacchia \cite[Lemma 4.1\,(i)]{MR0251373}. However, here the
	estimate of $t_{i+1}$ in terms of $t_i$ requires a smallness
	hypothesis on $t_i$.
\end{remark}
\begin{corollary} \label{corollary:integrability}
	Suppose $\vdim$, $\adim$, $U$, $V$, and $p$ are as in
	\ref{miniremark:situation_general}, $1 \leq q \leq \infty$, $Y$ is a
	finite dimensional normed vectorspace, $f \in \trunc ( V, Y )$,
	and $\derivative{V}{f} \in \Lploc{q} ( \| V \|, \Hom ( \rel^\adim,
	Y ) )$.

	Then the following four statements hold:
	\begin{enumerate}
		\item \label{item:integrability:m>1} If $\vdim > 1$ and $f \in
		\Lploc{1} ( \| \delta V \|, Y )$, then $f \in
		\Lploc{\vdim/(\vdim-1)} ( \| V \|, Y )$.
		\item \label{item:integrability:m=1} If $\vdim = 1$, then $f
		\in \Lploc{\infty} ( \| V \| + \| \delta V \|, Y )$.
		\item \label{item:integrability:q<m=p} If $1 \leq q < \vdim =
		p$, then $f \in \Lploc{\vdim q/(\vdim-q)} ( \| V \|, Y
		)$.
		\item \label{item:integrability:m=p<q} If $1 < \vdim = p < q
		\leq \infty$, then $f \in \Lploc{\infty} ( \| V \|, Y )$.
	\end{enumerate}
\end{corollary}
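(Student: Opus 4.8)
The plan is to reduce, by way of $|f|$, to the case $Y=\rel$ and $f\geq 0$, then to treat the assertion locally: restrict $V$ to small balls, and on each such ball apply \ref{thm:sob_poin_summary}\,\eqref{item:sob_poin_summary:interior} not to $f$ but to $(f-m)^+$ for a median $m$, so that no a priori summability of $f$ itself enters. First I would note that $|f|\in\trunc(V)$ with $\|\derivative{V}{|f|}(x)\|\leq\|\derivative{V}{f}(x)\|$ for $\|V\|$ almost all $x$ by \ref{remark:mod_tv}, that $|f|\in\Lploc{1}(\|\delta V\|)$ when $f\in\Lploc{1}(\|\delta V\|,Y)$, and that each asserted conclusion for $f$ is equivalent to the one for $|f|$; so assume $Y=\rel$, $f\geq 0$. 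Since the $\Lploc{\infty}(\|\delta V\|)$ part of \eqref{item:integrability:m=1} follows from its $\Lploc{\infty}(\|V\|)$ part by \ref{corollary:boundary_controls_interior}, it remains, with $\beta\in\{\vdim/(\vdim-1),\,\vdim q/(\vdim-q),\,\infty\}$ the exponent appropriate to the case, to exhibit for $\|V\|$ almost every $a\in U$ an open neighbourhood $A_a$ and a finite number $c_a$ with $\eqLpnorm{\|V\|\restrict A_a}{\beta}{f}\leq c_a$; covering a given compact subset of $U$, up to a $\|V\|$ negligible remainder, by a countable subfamily along which the local bounds are summable then finishes the proof.

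To construct $A_a$ at a point $a$ at which $\density^\vdim(\|V\|,\cdot)$ is approximately continuous with finite value — this is the case for $\|V\|$ almost all $a$, and the exceptional set being $\|V\|$ negligible is irrelevant to integrals against $\|V\|$ — I would, for small $0<R<\infty$ with $\cball{a}{R}\subset U$, put $W=V|\mathbf{2}^{\oball{a}{R}\times\grass{\adim}{\vdim}}$. Then $W$ satisfies the hypotheses of \ref{miniremark:situation_general} with the same $p$: rectifiability and the density hypothesis are inherited since $\density^\vdim(\|W\|,\cdot)=\density^\vdim(\|V\|,\cdot)$ on $\oball{a}{R}$, and $\|\delta W\|$ is a Radon measure with $\|\delta W\|=\|\delta V\|\restrict\oball{a}{R}$ because $(\delta W)(\vartheta)=(\delta V)(\vartheta)$ for $\vartheta\in\mathscr{D}(\oball{a}{R},\rel^\adim)$; when $p>1$ likewise $\mathbf{h}(W,\cdot)=\mathbf{h}(V,\cdot)|\oball{a}{R}$ and $\psi_W=\psi\restrict\oball{a}{R}$. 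Let $m$ be a median of $f$ relative to $\|V\|\restrict\oball{a}{R}$, finite since $f<\infty$ $\|V\|$ almost everywhere; then $(f-m)^+|\oball{a}{R}\in\trunc_\varnothing(W)$ by \ref{lemma:comp_lip} (composition with $y\mapsto\sup\{y,m\}-m$), \ref{remark:trunc}, and \ref{lemma:restriction_tg}, and its superlevel set $\{(f-m)^+>0\}$ equals $E:=\oball{a}{R}\cap\{f>m\}$, with $\|W\|(E)\leq\tfrac12\|W\|(\oball{a}{R})$. Choosing $1\leq Q<\density^\vdim(\|V\|,a)$ close to that value (and $Q=1$ if it is $\leq 1$), $M\geq\sup\{\adim,Q\}$, and $r$ a fixed fraction of $R$, one verifies for $R$ small enough — using $\|W\|(\oball{a}{R})\approx\density^\vdim(\|V\|,a)\,\unitmeasure{\vdim}R^\vdim$ and the vanishing at $a$ of the $\|V\|$ density of $\{x\with\density^\vdim(\|V\|,x)<Q\}$ — that
\[
\|W\|(E)\leq(Q-M^{-1})\unitmeasure{\vdim}r^\vdim,\qquad \|W\|(E\cap\{x\with\density^\vdim(\|W\|,x)<Q\})\leq\Gamma^{-1}r^\vdim,
\]
and, when $p=\vdim$, also $\psi_W(E)\leq\Gamma^{-1}$ (possible since $\psi$ has no atoms for $\vdim>1$); hence \ref{thm:sob_poin_summary}\,\eqref{item:sob_poin_summary:interior} applies to $(W,(f-m)^+|\oball{a}{R})$ with $G=\varnothing$, on the set $A_a=\{x\with\oball{x}{r}\subset\oball{a}{R}\}=\cball{a}{R-r}$, a neighbourhood of $a$.

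The resulting estimate bounds $\eqLpnorm{\|V\|\restrict A_a}{\beta}{(f-m)^+}$ by $\Gamma$ times $\eqLpnorm{\|V\|\restrict\oball{a}{R}}{1}{\derivative{V}{f}}+\int_{\oball{a}{R}}f\ud\|\delta V\|$ in case $p=1$, and by a constant times $\eqLpnorm{\|V\|\restrict\oball{a}{R}}{q}{\derivative{V}{f}}$ in case $p=\vdim$ (using the clauses \eqref{item:sob_poin_summary:interior:p=1}, \eqref{item:sob_poin_summary:interior:q<m=p}, \eqref{item:sob_poin_summary:interior:p=m<q} according to the case, with $\beta=\infty$ when $\vdim=1$ or $\vdim=p<q$) — a finite quantity, because $\cball{a}{R}$ is compact in $U$, $\derivative{V}{f}\in\Lploc{q}(\|V\|)$, and $f\in\Lploc{1}(\|\delta V\|)$ in case $p=1$. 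Since $f\leq m+(f-m)^+$ pointwise (as $(m-f)^+\leq m$), it follows that $\eqLpnorm{\|V\|\restrict A_a}{\beta}{f}\leq m\,\|V\|(A_a)^{1/\beta}+\eqLpnorm{\|V\|\restrict A_a}{\beta}{(f-m)^+}=:c_a<\infty$, with the evident modification when $\beta=\infty$; this is the required local bound, and summing over the covering family (then recalling the reductions) gives the conclusion.

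I expect the principal difficulty to be the interplay of the two smallness hypotheses of \ref{thm:sob_poin_summary}\,\eqref{item:sob_poin_summary:interior} with the requirement that $A_a$ be a genuine neighbourhood of $a$: the bound on $\|W\|(E)$ forces $r$ to be comparable to $R$ rather than much smaller, which is feasible only because passing to $(f-m)^+$ makes $\|W\|(E)\leq\tfrac12\|W\|(\oball{a}{R})$ (with plain $f$, or $f-m$, one would be forced to $r\geq R$ and $A_a=\varnothing$), and this in turn drives one to scales on which $\density^\vdim(\|V\|,\cdot)$ is essentially constant and to a value $Q$ strictly below the local density. A secondary nuisance is that the set of points where $\density^\vdim(\|V\|,\cdot)$ is infinite or not approximately continuous (and, for $\vdim=1$, the atoms of $\|\delta V\|$) must be excised before covering; it is $\|V\|$ negligible, hence harmless for the integrals at issue, but the covering family and the radii $R$ near it must be chosen with some care so that the countably many local bounds $c_a$ remain summable.
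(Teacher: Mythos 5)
Your reduction to $Y=\rel$, $f\geq 0$, and the observation that the $\|\delta V\|$ part of \eqref{item:integrability:m=1} follows from \ref{corollary:boundary_controls_interior}, match the paper, and your idea of truncating from below so that the positivity set has small measure before applying \ref{thm:sob_poin_summary}\,\eqref{item:sob_poin_summary:interior} is indeed the correct mechanism. But the route you take from there — localising at $\|V\|$-a.e.\ point $a$, subtracting a median $m(a,R)$ of $f$ over $\oball{a}{R}$, estimating on $A_a$, and then ``covering \dots by a countable subfamily along which the local bounds are summable'' — has a genuine gap precisely at the step you flag as a ``secondary nuisance.'' The local bound is $c_a = m(a,R_a)\,\|V\|(A_a)^{1/\beta} + (\text{SP})_a$. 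Over a countable bounded-overlap cover $\{A_{a_i}\}$ of almost all of $K$, the derivative contribution $\sum_i (\text{SP})_i^\beta$ is indeed controllable (bounded overlap plus $\derivative{V}{f}\in\Lploc{q}$, and $\beta\geq q$). But $\sum_i m_i^\beta\,\|V\|(A_{a_i})$ is not: at an approximate-continuity point of $f$, $m(a,R)\to f(a)$ as $R\to 0+$, so this sum is comparable to $\int_K f^\beta\ud\|V\|$ --- exactly the quantity whose finiteness you are trying to prove. No choice of cover or of the radii fixes this; the medians are unbounded whenever $f$ is, and for $\beta=\infty$ one would need $\sup_i m_i<\infty$, which is the conclusion. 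The argument is circular. A subsidiary complication you create for yourself is the need for $Q$ close to $\density^\vdim(\|V\|,a)$ together with approximate continuity of the density at $a$; this is unavoidable in your localised setting (with $Q=1$ the measure hypothesis forces $r\geq R$ and $A_a=\varnothing$ at high-density points), but it is not needed at all if one does not localise.

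The paper's proof avoids both problems by making a \emph{single} choice of truncation level. One may assume $\|V\|(U)+\psi(U)+\Lpnorm{\|V\|}{q}{\derivative{V}{f}}<\infty$ (and $\Lpnorm{\|\delta V\|}{1}{f}<\infty$ in case \eqref{item:integrability:m>1}), restricting $U$ if necessary since the conclusion is local. Given compact $K\subset U$, fix $r>0$ with $\oball{x}{r}\subset U$ for $x\in K$, and choose $s$ so large that $E=\{x\with f(x)>s\}$ satisfies $\|V\|(E)\leq\tfrac12\unitmeasure{\vdim}r^\vdim$ and $\psi(E)\leq\Gamma_{\ref{thm:sob_poin_summary}}(\sup\{2,\adim\})^{-1}$ --- possible because $f$ is $\|V\|$-a.e.\ finite and $\|V\|(U)$, $\psi(U)$ are finite. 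Compose $f$ with a smooth $g$ that vanishes on $(-\infty,s]$ and equals the identity on $[s+1,\infty)$, note $g\circ f\in\trunc_\varnothing(V)$ with $\{g\circ f>0\}\subset E$, and apply \ref{thm:sob_poin_summary}\,\eqref{item:sob_poin_summary:interior} with $Q=1$, $M=\sup\{2,\adim\}$, and $G=\varnothing$ on $A\supset K$. Since $f\leq s+1+g\circ f$ and $\|V\|(A)<\infty$, the local $\Lp\beta$ bound for $f$ follows with no covering and no medians. The density condition in \ref{thm:sob_poin_summary} is vacuous for $Q=1$, so no approximate-continuity argument is needed. In short: one truncation level for the whole compact set replaces your family of per-point medians, and that is what breaks the circularity.
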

\begin{proof}
	In view of \ref{remark:mod_tv} and \ref{remark:trunc} it is sufficient
	to consider the case $Y = \rel$ and $f \in \trunc_\varnothing (V)$.
	Assume $p = 1$ in case of \eqref{item:integrability:m>1} or
	\eqref{item:integrability:m=1} and define $\psi$ as in
	\ref{miniremark:situation_general}. Moreover, assume $( \| V \| + \psi
	) ( U ) + \Lpnorm{\| V \|}{q}{ \derivative{V}{f} } < \infty$ and, in
	case of \eqref{item:integrability:m>1}, also $\Lpnorm{\| \delta V
	\|}{1}{f} < \infty$. Suppose $K$ is a compact subset of $U$.  Choose
	$0 < r < \infty$ with $\oball{x}{r} \subset U$ for $x \in K$ and $0 <
	s < \infty$ with
	\begin{gather*}
		\| V \| ( E ) \leq (1/2) \unitmeasure{\vdim} r^\vdim, \quad
		\psi ( E ) \leq \Gamma_{\ref{thm:sob_poin_summary}} ( \sup \{
		2, \adim \})^{-1},
	\end{gather*}
	where $E = \{ x \with f(x) > s \}$. Select $g \in \mathscr{E} ( \rel,
	\rel )$ with $g (t) = 0$ if $t \leq s$ and $g (t) = t$ if $t \geq s+1$
	and notice that $g \circ f \in \trunc_\varnothing ( V )$ and
	\begin{gather*}
		\| \derivative{V}{(g \circ f)}(x) \| \leq ( \Lip g ) \|
		\derivative{V}{f} \| ( x ) \quad \text{for $\| V \|$ almost
		all $x$}
	\end{gather*}
	by \ref{lemma:basic_v_weakly_diff} with $\Upsilon = \{ t \with t \leq
	s \}$ and \ref{remark:trunc}. Applying
	\ref{thm:sob_poin_summary}\,\eqref{item:sob_poin_summary:interior}
	with $M$, $G$, $f$, and $Q$ replaced by $\sup \{ 2, \adim \}$,
	$\varnothing$, $g \circ f$, and $1$ and, in case of
	\eqref{item:integrability:m=1}, noting
	\ref{corollary:boundary_controls_interior}, the conclusion follows.
\end{proof}
\begin{miniremark} \label{miniremark:lpnorms}
	\emph{If $1 \leq q \leq p \leq \infty$, $q < \infty$, $\mu$ measures
	$X$, $G$ is a countable collection of $\mu$ measurable $\{ t \with 0
	\leq t \leq \infty \}$ valued functions, $1 \leq \kappa < \infty$,
	\begin{gather*}
		\card ( G \cap \{ g \with g(x) > 0 \} ) \leq \kappa \quad
		\text{for $\mu$ almost all $x$},
	\end{gather*}
	and $f (x) = \sum_{g \in G} g(x)$ for $\mu$ almost all $x$, then 
	\begin{gather*}
		\Lpnorm{\mu}{p}{f} \leq \kappa \big ( \tsum{g \in G}{}
		\Lpnorm{\mu}{p}{g}^q \big )^{1/q}, \quad
		\Lpnorm{\mu}{\infty}{f} \leq \kappa \sup ( \{ 0 \} \cup \{
		\Lpnorm{\mu }{\infty}{g} \with g \in G \} );
	\end{gather*}}
	in fact, abbreviating $h(x) = \sum_{g \in G} g(x)^q$ for $\mu$ almost
	all $x$, one estimates
	\begin{gather*}
		\Lpnorm{\mu}{p}{f}^q \leq \kappa^q \big ( \tsum{g \in G}{}
		\tint{}{}
		g^p \ud \mu \big )^{q/p} \leq \kappa^q \tsum{g \in G}{}
		\Lpnorm{\mu}{p}{g}^q \quad \text{if $p < \infty$}, \\
		\Lpnorm{\mu}{\infty}{f}^q \leq \kappa^q
		\Lpnorm{\mu}{\infty}{h} \leq \kappa^q \tsum{g\in G}{}
		\Lpnorm{\mu}{\infty}{g}^q.
	\end{gather*}
\end{miniremark}
\begin{lemma} \label{lemma:partition_of_unity_normed}
	Suppose $Y$ is a finite dimensional normed vectorspace, $\Phi$ is a
	family of open subsets $Y$, and $h : \bigcup \Phi \to \{ t \with 0 < t
	< \infty \}$ satisfies
	\begin{gather*}
		h(y) = {\textstyle\frac 1{20}} \sup \{ \inf \{ 1 , \dist ( y,
		Y \without \Upsilon ) \} \with \Upsilon \in \Phi \} \quad
		\text{whenever ${\textstyle y \in \bigcup \Phi}$}.
	\end{gather*}

	Then there exists a countable subset $B$ of $\bigcup \Phi$ and for
	each $b \in B$ an associated function $g_b : Y \to \rel$ with the
	following properties.
	\begin{enumerate}
		\item If $y \in B$, then $B_y = B \cap \{ b \with \cball
		b{10h(b)} \cap \cball y{10h(y)} \neq \varnothing \}$ satisfies
		\begin{gather*}
			\card B_y \leq (129)^{\dim Y}.
		\end{gather*}
		\item If $b \in B$, then $0 \leq g_b(y) \leq 1$ for $y \in Y$,
		$\spt g_b \subset \cball b{10h(b)}$, and
		\begin{gather*}
			\Lip ( g_b | \cball y{10h(y)}) \leq (130)^{\dim Y}
			h(y)^{-1} \quad \text{whenever ${\textstyle y \in
			\bigcup \Phi}$}.
		\end{gather*}
		\item If $y \in Y$, then $\sum_{b \in B} g_b(y) = 1$.
	\end{enumerate}
\end{lemma}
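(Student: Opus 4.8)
The plan is to run, in the normed vectorspace $Y$, the classical partition of unity construction underlying Federer \cite[3.1.13]{MR41:1976}: one chooses a suitably separated countable set $B \subset \bigcup \Phi$, erects Lipschitzian ``tent'' functions over the balls $\cball{b}{10h(b)}$, and normalises; since no smoothness of the $g_b$ is demanded, no mollification is needed, and the only genuine work is to tune the radii and the separation so that the combinatorial constants come out as $129^{\dim Y}$ and $130^{\dim Y}$. First I would record the elementary behaviour of $h$. Since each $y \mapsto \inf \{ 1, \dist ( y, Y \without \Upsilon ) \}$ is $1$ Lipschitzian, so is their supremum over $\Upsilon \in \Phi$; hence $h$ is $\tfrac1{20}$ Lipschitzian with $0 < h \leq \tfrac1{20}$ on $\bigcup \Phi$. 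Given $y \in \bigcup \Phi$, choosing $\Upsilon \in \Phi$ with $\inf \{ 1, \dist ( y, Y \without \Upsilon ) \} > 10 h(y)$ yields $\cball{y}{10h(y)} \subset \Upsilon \subset \bigcup \Phi$, and combining this with the Lipschitz estimate for $h$ shows that whenever $\cball{b}{10h(b)}$ meets $\cball{y}{10h(y)}$ one has $\tfrac13 h(y) \leq h(b) \leq 3 h(y)$ and $|b - y| \leq 40 h(y)$.

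Next I would select, using Zorn's lemma, a set $B \subset \bigcup \Phi$ maximal subject to $|b - b'| \geq 2 \sup \{ h(b), h(b') \}$ for distinct $b, b' \in B$. Then $B$ is countable, since for each $k \in \nat$ the points $b \in B$ with $2^{-k} < h(b) \leq 2^{-k+1}$ are $2^{-k+1}$ separated and hence finite in number on any bounded set. Moreover the balls $\cball{b}{6h(b)}$, $b \in B$, cover $\bigcup \Phi$: for $y \in \bigcup \Phi$ maximality provides $b \in B$ with $|y - b| < 2 \sup \{ h(y), h(b) \}$, and the Lipschitz estimate for $h$ forces $|y-b| \leq 6 h(b)$. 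Finally, for $y \in \bigcup \Phi$ the set $B_y = B \cap \{ b \with \cball{b}{10h(b)} \cap \cball{y}{10h(y)} \neq \varnothing \}$ has $\card B_y \leq 121^{\dim Y} \leq 129^{\dim Y}$: by the first paragraph its members lie in $\cball{y}{40h(y)}$ and are pairwise $\tfrac23 h(y)$ separated, so the pairwise disjoint balls $\oball{b}{\tfrac13 h(y)}$, $b \in B_y$, all lie in $\oball{y}{\tfrac{121}3 h(y)}$, and comparing their $\mathscr{L}^{\dim Y}$ measures after fixing a linear isomorphism $Y \simeq \rel^{\dim Y}$ (these measures being in the ratio $121^{\dim Y}$) gives the bound; this proves assertion (1).

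I would then choose, for each $b \in B$, a Lipschitzian $\phi_b : Y \to \rel$ with $0 \leq \phi_b \leq 1$, $\phi_b = 1$ on $\cball{b}{6h(b)}$, $\phi_b = 0$ outside $\cball{b}{10h(b)}$, and $\Lip \phi_b \leq (4h(b))^{-1}$, set $\Sigma = \tsum{b \in B}{} \phi_b$ — locally finite by the overlap estimate and satisfying $\Sigma \geq 1$ on $\bigcup \Phi$ since the cores $\cball{b}{6h(b)}$ cover $\bigcup \Phi$ — and put $g_b = \phi_b / \Sigma$ on $\{ \Sigma > 0 \}$ and $g_b = 0$ elsewhere. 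The inequalities $0 \leq g_b \leq 1$, the support condition $\spt g_b \subset \cball{b}{10h(b)}$, and $\tsum{b \in B}{} g_b = 1$ wherever $\Sigma > 0$ — in particular throughout $\bigcup \Phi$ — are then immediate, settling assertion (3) and the first part of (2). For the Lipschitz estimate in (2) I would fix $y \in \bigcup \Phi$ and work on $\cball{y}{10h(y)} \subset \bigcup \Phi$, where $\Sigma \geq 1$ and only the $\phi_{b'}$ with $b' \in B_y$ are nonzero, each obeying $\Lip \phi_{b'} \leq (4h(b'))^{-1} \leq \tfrac34 h(y)^{-1}$ by the first paragraph; then $\Lip ( \Sigma | \cball{y}{10h(y)} ) \leq \card B_y \cdot \tfrac34 h(y)^{-1}$, and the elementary quotient estimate $\Lip ( g_b | \cball{y}{10h(y)} ) \leq \Lip ( \phi_b | \cball{y}{10h(y)} ) + \Lip ( \Sigma | \cball{y}{10h(y)} )$ — valid because $\Sigma \geq 1$ and $0 \leq \phi_b \leq 1$ there — yields $\Lip ( g_b | \cball{y}{10h(y)} ) \leq ( 1 + 121^{\dim Y} ) \tfrac34 h(y)^{-1} \leq 130^{\dim Y} h(y)^{-1}$.

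The obstacle here is not conceptual but a matter of bookkeeping: the separation factor $2$ in the definition of $B$, the core and support radii $6h(b)$ and $10h(b)$, and the Lipschitz bound $(4h(b))^{-1}$ of $\phi_b$ must be chosen tightly enough that the volume comparison in the second step and the quotient estimate in the third produce exactly the constants $129^{\dim Y}$ and $130^{\dim Y}$. The single non-formal ingredient, used twice, is that in a finite dimensional normed space the ratio of the measures of two balls equals the corresponding power of the ratio of their radii — which is precisely what makes all constants depend on $\dim Y$ alone.
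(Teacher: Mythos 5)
Your proof is correct and is essentially the paper's own argument spelled out: the paper simply delegates to Federer \cite[3.1.13, 3.1.14]{MR41:1976}, noting that the $\rel^n$ construction transports to any finite dimensional normed space once $\mathscr{L}^n$ is replaced by a nonzero translation invariant Radon measure, and then records the extra Lipschitz estimates for $u_b$, $\sigma$, and $v_b = u_b/\sigma$ — the same maximal separated set, tent functions, and quotient estimate $\Lip(g_b|K) \leq \Lip(\phi_b|K) + \Lip(\Sigma|K)$ that you carry out from scratch and whose constants you retune. Your observation that $\sum_{b\in B} g_b = 1$ is obtained on $\bigcup\Phi$ rather than on all of $Y$ is in fact the correct reading of the lemma: since (2) forces $\spt g_b \subset \cball b{10h(b)} \subset \bigcup\Phi$, assertion (3) as literally printed is unattainable whenever $\bigcup\Phi \neq Y$, a harmless overstatement given that the lemma's sole application, \ref{lemma:partition_of_unity}, takes $\Phi$ with $\bigcup\Phi = Y$.
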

\begin{proof}
	Assume $\dim Y > 0$ and observe that \cite[3.1.13]{MR41:1976} remains
	valid with $\rel^m$ and $m$ replaced by $Y$ and $\dim Y$; in fact, one
	modifies the proof by choosing a nonzero translation invariant Radon
	measure $\nu$ over $Y$ and replacing $\unitmeasure \vdim$ by
	$\measureball \nu {\cball 01}$. Then one modifies the proof of
	\cite[3.1.14]{MR41:1976} by taking $B$ to be the set named ``$S$''
	there, $\gamma (t) = \sup \{ 0, \inf \{ 1, 2-t \} \}$ for $t \in
	\rel$, hence $\Lip \mu =1$, and adding the estimates
	\begin{gather*}
		\Lip u_b \leq h(y)^{-1}, \quad \Lip ( \sigma | \cball
		y{10h(y)} ) \leq (129)^{\dim Y} h(y)^{-1}, \\
		\Lip (v_b | \cball y{10h(y)} ) \leq \Lip u_b + \Lip ( \sigma |
		\cball y{10h(y)} ) \leq (130)^{\dim Y} h(y)^{-1}
	\end{gather*}
	whenever $y \in \bigcup \Phi$ and $b \in B_y$; hence one may take $g_b
	= v_b$ for $b \in B$.
\end{proof}
\begin{lemma} \label{lemma:partition_of_unity}
	Suppose $Y$ is a finite dimensional normed vectorspace, $D$ is a
	closed subset of $Y$, and $0 < s < \infty$.
	
	Then there exists a countable family $G$ with the following
	properties.
	\begin{enumerate}
		\item \label{item:partition_of_unity:contained} If $g \in G$,
		then $g : Y \to \rel$ and $\spt g \subset \oball{d}{s}$ for
		some $d \in D$.
		\item \label{item:partition_of_unity:card} If $y \in Y$
		then $\card ( G \cap \{ g \with \cball{y}{s/4} \cap \spt g
		\neq \varnothing \} ) \leq (129)^{\dim Y}$.
		\item \label{item:partition_of_unity:sum} There holds $D
		\subset \Int \big \{ \sum_{g \in G} g(y) = 1 \big \}$ and
		$\sum_{g \in G} g(y) \leq 1$ for $y \in Y$.
		\item \label{item:partition_of_unity:estimate} If $g \in G$,
		then $0 \leq g(y) \leq 1$ for $y \in Y$ and $\Lip g \leq 40
		\cdot (130)^{\dim Y} s^{-1}$.
	\end{enumerate}
\end{lemma}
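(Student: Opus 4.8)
The plan is to reduce to $s=1$ and then build $G$ by multiplying a partition of unity furnished by \ref{lemma:partition_of_unity_normed} by a Lipschitzian cutoff adapted to $D$. For the reduction I would note that it suffices to prove the assertion for $s=1$: granting that case, one applies it to the closed set $\boldsymbol{\mu}_{1/s}[D]$ to obtain a family $G'$, and then $G = \{ g \circ \boldsymbol{\mu}_{1/s} \with g \in G' \}$ works, because the homeomorphism $\boldsymbol{\mu}_{1/s}$ carries $\oball{d}{s}$ to $\oball{\boldsymbol{\mu}_{1/s}(d)}{1}$, scales Lipschitz constants by $1/s$, carries $\cball{y}{s/4}$ to $\cball{\boldsymbol{\mu}_{1/s}(y)}{1/4}$, and commutes with taking interiors.

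Assuming $s=1$, I would apply \ref{lemma:partition_of_unity_normed} with $\Phi = \{ \oball{z}{3/4} \with z \in Y \}$, so that $\bigcup \Phi = Y$ and the associated function $h$ is the constant $3/80$ (since $\dist(y, Y \without \oball{y}{3/4}) = 3/4 < 1$ for every $y$). This produces a countable set $B \subset Y$ and functions $g_b : Y \to \rel$, $b \in B$, with $0 \leq g_b \leq 1$, $\spt g_b \subset \cball{b}{3/8}$, $\sum_{b \in B} g_b = 1$ on $Y$, $\Lip g_b \leq \tfrac{80}{3} (130)^{\dim Y}$ (the uniform local Lipschitz bound on balls of radius $3/8$ yields a global one, as any two points of $Y$ lie on a segment covered by such balls), and $\card ( B \cap \{ b \with \cball{b}{3/8} \cap \cball{y}{3/8} \neq \varnothing \} ) \leq (129)^{\dim Y}$, i.e.\ at most $(129)^{\dim Y}$ of the $b$ lie within distance $3/4$ of any given point.

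Next I would fix a Lipschitzian $\rho : Y \to \{ t \with 0 \leq t \leq 1 \}$ of the form $\rho = \gamma \circ \dist(\cdot, D)$ with $\gamma$ piecewise affine, equal to $1$ on $\{ \dist(\cdot, D) \leq \delta_1 \}$ and to $0$ on $\{ \dist(\cdot, D) \geq \delta_0 \}$, where $\delta_0 = 1/8$ and $\delta_0 - \delta_1 = \tfrac{3}{40} (130)^{-\dim Y}$; then $0 < \delta_1 < \delta_0 < 1/4$ because $\tfrac{3}{40}(130)^{-\dim Y} < 1/8$, and $\Lip \rho \leq (\delta_0-\delta_1)^{-1} = \tfrac{40}{3}(130)^{\dim Y}$. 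I take $G$ to be the set of nonzero functions among $g_b' := \rho\, g_b$, $b \in B$, which is countable. For \eqref{item:partition_of_unity:contained}: if $g_b' \neq 0$, pick $z \in \cball{b}{3/8}$ with $\dist(z,D) < \delta_0$ and $d \in D$ with $|z-d| < \delta_0$; then every point of $\spt g_b' \subset \cball{b}{3/8}$ lies within $3/8 + 3/8 + \delta_0 < 1$ of $d$, so $\spt g_b' \subset \oball{d}{1}$. For \eqref{item:partition_of_unity:card}: if $\cball{y}{1/4}$ meets $\spt g_b' \subset \cball{b}{3/8}$ then $|b-y| \leq 3/8 + 1/4 < 3/4$, so the corresponding $b$ form a subset of the set bounded in the previous paragraph, giving $(129)^{\dim Y}$. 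For \eqref{item:partition_of_unity:sum}: $\sum_{g \in G} g = \rho \sum_{b \in B} g_b = \rho \leq 1$, and $\{ \rho = 1 \} \supset \{ \dist(\cdot,D) < \delta_1 \}$, an open set containing $D$, so $D \subset \Int \{ \sum_{g \in G} g = 1 \}$. For \eqref{item:partition_of_unity:estimate}: $0 \leq g_b' \leq 1$ is clear, and $\Lip g_b' \leq \Lip g_b + \Lip \rho \leq \tfrac{80}{3}(130)^{\dim Y} + \tfrac{40}{3}(130)^{\dim Y} = 40 \cdot (130)^{\dim Y}$, which becomes $40 \cdot (130)^{\dim Y} s^{-1}$ after undoing the rescaling.

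The one delicate point is the bookkeeping of constants. The radius $3/4$ in the choice of $\Phi$ must be small enough that the Lipschitz constant $\tfrac{80}{3}(130)^{\dim Y}$ of the $g_b$ leaves exactly enough room for $\Lip \rho$ to bring the total to $40 \cdot (130)^{\dim Y}$; simultaneously $\delta_0$ must be below $1/4$ so that the dilated supports still sit inside balls $\oball{d}{1}$ centred on $D$; and $\delta_1$ must be positive so that $\{\rho = 1\}$ contains a neighbourhood of $D$. These three requirements are jointly satisfiable precisely because $(130)^{\dim Y} \geq 130$ makes $\delta_0 - \delta_1$ negligible against $\delta_0 = 1/8$, and I would simply record the verification that the stated choices meet all of them.
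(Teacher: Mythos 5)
Your proof is correct and relies on the same foundational Lemma~\ref{lemma:partition_of_unity_normed} as the paper, but composes it differently. The paper applies that lemma once with the $D$-adapted family $\Phi = \{ Y \without D \} \cup \{ \oball d1 \with d \in D \}$, so that the function $h$ is bounded below by $1/40$ everywhere; the desired family is then obtained simply by discarding those $g_b$ whose supports miss $D$, and the Lipschitz constant $40 \cdot (130)^{\dim Y}$ comes entirely from the partition of unity with no cutoff at all. You instead take $\Phi$ translation-invariant (so $h$ is the constant $3/80$), get a uniform partition of unity with the smaller Lipschitz constant $\tfrac{80}{3}(130)^{\dim Y}$ and supports of diameter at most $3/4$, and spend the remaining Lipschitz budget $\tfrac{40}{3}(130)^{\dim Y}$ on a separate distance cutoff $\rho$ localising near $D$. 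Both routes land on the asserted constants. The paper's is more compact, since a single choice of $\Phi$ does all the work; yours cleanly separates a $D$-independent cover from a $D$-dependent localisation, which makes the bookkeeping of the Lipschitz budget more explicit at the cost of a second step. One small slip in your closing remarks on the constants: the radius $3/4$ in $\Phi$ has to be chosen large enough, not small enough, that the resulting $\Lip g_b$ leaves room for $\Lip\rho$ (enlarging that radius increases $h$ and so makes the $g_b$ shallower), and simultaneously small enough that the support diameter $3/4$ plus the cutoff slack $\delta_0$ stays below $1$; the two pressures act in opposite directions and $3/4$ sits in the admissible window.
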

\begin{proof}
	Assume $s=1$. Defining $\Phi = \{ Y \without D \} \cup \{ \oball{d}{1}
	\with d \in D \}$, observe that the function $h$ resulting from
	\ref{lemma:partition_of_unity_normed} satisfies $h(y) \geq
	\frac{1}{40}$ for $y \in Y$, hence one verifies that one can take $G =
	\{ g_b \with D \cap \spt g_b \neq \varnothing \}$.
\end{proof}
\begin{theorem} \label{thm:sob_poincare_q_medians}
	Suppose $1 \leq M < \infty$.

	Then there exists a positive, finite number $\Gamma$ with the
	following property.

	If $\vdim$, $\adim$, $p$, $U$, $V$, and $\psi$ are as in
	\ref{miniremark:situation_general}, $Y$ is a finite dimensional normed
	vectorspace, $\sup \{ \dim Y, \adim \} \leq M$, $f \in \trunc
	(V,Y)$, $1 \leq Q \leq M$, $N \in \nat$, $0 < r < \infty$,
	\begin{gather*}
		\| V \| ( U ) \leq ( Q-M^{-1} ) (N+1) \unitmeasure{\vdim}
		r^\vdim, \\
		\| V \| ( \{ x \with \density^\vdim ( \| V \|, x ) < Q \} )
		\leq \Gamma^{-1} r^\vdim,
	\end{gather*}	
	$A = \{ x \with \oball{x}{r} \subset U \}$, and $f_\Upsilon : \dmn f
	\to \rel$ is defined by
	\begin{gather*}
		f_\Upsilon (x) = \dist (f(x),\Upsilon) \quad \text{whenever $x
		\in \dmn f$, $\varnothing \neq \Upsilon \subset Y$},
	\end{gather*}
	then the following four statements hold.
	\begin{enumerate}
		\item \label{item:sob_poincare_q_medians:p=1} If $p = 1$,
		$\beta = \infty$ if $\vdim = 1$ and $\beta = \vdim/(\vdim-1)$
		if $\vdim > 1$, then there exists a subset $\Upsilon$ of $Y$
		such that $1 \leq \card \Upsilon \leq N$ and
		\begin{gather*}
			\eqLpnorm{\| V \| \restrict A}{\beta}{f_\Upsilon} \leq
			\Gamma N^{1/\beta} \big ( \Lpnorm{\| V
			\|}{1}{\derivative{V}{f}} + \| \delta V \|(f_\Upsilon)
			\big).
		\end{gather*}
		\item \label{item:sob_poincare_q_medians:p=m=1} If $p = \vdim
		= 1$ and $\psi (U) \leq \Gamma^{-1}$, then there exists a
		subset $\Upsilon$ of $Y$ such that $1 \leq \card \Upsilon \leq
		N$ and
		\begin{gather*}
			\eqLpnorm{\| V \| \restrict A}{\infty}{f_\Upsilon} \leq
			\Gamma \, \Lpnorm{\| V \|}{1}{\derivative{V}{f}}.
		\end{gather*}
		\item \label{item:sob_poincare_q_medians:q<m=p} If $1 \leq q <
		\vdim = p$ and $\psi (U) \leq \Gamma^{-1}$, then there exists
		a subset $\Upsilon$ of $Y$ such that $1 \leq \card \Upsilon
		\leq N$ and
		\begin{gather*}
			\eqLpnorm{\| V \| \restrict A}{\vdim
			q/(\vdim-q)}{f_\Upsilon} \leq \Gamma N^{1/q-1/\vdim}
			(\vdim-q)^{-1} \Lpnorm{\| V \|}{q}{\derivative{V}{f}}.
		\end{gather*}
		\item \label{item:sob_poincare_q_medians:p=m<q} If $1 < p =
		\vdim < q \leq \infty$ and $\psi ( U ) \leq \Gamma^{-1}$, then
		there exists a subset $\Upsilon$ of $Y$ such that $1 \leq \card
		\Upsilon \leq N$ and
		\begin{gather*}
			\eqLpnorm{\| V \| \restrict A}{\infty}{f_\Upsilon} \leq
			\Gamma^{1/(1/\vdim-1/q)} r^{1-\vdim/q} \Lpnorm{\| V
			\|}{q}{\derivative{V}{f}}.
		\end{gather*}
	\end{enumerate}
\end{theorem}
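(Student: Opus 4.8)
The plan is to reduce the four assertions to the local zero median Sobolev Poincar\'e inequalities \ref{thm:sob_poin_summary}\,\eqref{item:sob_poin_summary:interior}, applying these to ``cell functions'' obtained by localising $f_\Upsilon$ in the target $Y$ by means of a Lipschitzian partition of unity from \ref{lemma:partition_of_unity} and recombining the pieces through \ref{miniremark:lpnorms}. The only substantially new point is the geometric construction of the finite set $\Upsilon \subset Y$ of ``medians'': for $Y = \rel$ one would sort the values of $f$ and split the superlevel sets of $f_\Upsilon$ into at most $N+1$ pieces, but for $\dim Y \geq 2$ the relevant subset of $Y$ is no longer disconnected by finitely many balls, so instead one covers it at a single well chosen scale, and the unavoidable loss in this covering is exactly the factor $N^{1/\beta}$ (respectively $N^{1/\beta}$, $N^{1/q-1/\vdim}$, $1$) appearing in the four cases. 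As preliminary reductions one may, by \ref{lemma:basic_v_weakly_diff}, \ref{example:composite}\,\eqref{item:composite:scalar_mult}\,\eqref{item:composite:1d} and \ref{miniremark:trunc}, assume $f$ bounded, and — in \eqref{item:sob_poincare_q_medians:q<m=p}, \eqref{item:sob_poincare_q_medians:p=m<q} — that $\Lpnorm{\|V\|}{q}{\derivative{V}{f}} < \infty$, and in the remaining cases that the right hand side is finite; note $\|V\|(U) < \infty$. By \ref{lemma:comp_lip} applied to $y \mapsto \dist(y,\Upsilon)$ (which is proper onto $\rel$ since $\Upsilon$ will be finite) one has $f_\Upsilon \in \trunc_\varnothing(V)$ with $\|\derivative{V}{f_\Upsilon}\| \leq \|\derivative{V}{f}\|$ almost everywhere.

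Abbreviating $c = (Q - M^{-1})\unitmeasure{\vdim}r^\vdim$, form the finite Radon measure $\mu = f_\#\|V\|$ over $Y$, so $\mu(Y) = \|V\|(U) \leq (N+1)c$. Now the selection: fix a scale $0 < s < \infty$, to be specified at the very end so small that the error terms of order $s\,\mu(Y)^{1/\beta}$ estimated below are absorbed into the right hand side (the degenerate case of a vanishing right hand side, where $\derivative{V}{f} = 0$ forces, via \ref{thm:zero_derivative}, a rigid situation, is disposed of separately). Let $S = \{ y \with \mu(\oball{y}{s}) > c \}$, an open set by lower semicontinuity of $y \mapsto \mu(\oball{y}{s})$, take $\Upsilon$ to be a maximal $8s$ separated subset of $S$ if $S \neq \varnothing$ and $\Upsilon = \{ 0 \}$ otherwise; then the balls $\oball{t}{s}$, $t \in \Upsilon$, are pairwise disjoint with $\mu$ measure exceeding $c$, so $\mu(Y) \leq (N+1)c$ gives $1 \leq \card\Upsilon \leq N$, while maximality gives $S \subset \bigcup_{t \in \Upsilon}\oball{t}{8s}$. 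Applying \ref{lemma:partition_of_unity} to the closed set $D = Y \without \bigcup_{t \in \Upsilon}\oball{t}{8s}$ at scale $s$ yields a countable family $\{ g_b \with b \in B \}$ with $\sum_b g_b \leq 1$, $D \subset \Int \{ \sum_b g_b = 1 \}$, overlap bounded by $(129)^{\dim Y}$, $\Lip g_b \leq 40 \cdot (130)^{\dim Y}s^{-1}$, and $\spt g_b \subset \oball{d_b}{s}$ for some $d_b \in D$; since $\dist(d_b, \Upsilon) \geq 8s$ one has $d_b \notin S$, hence $\|V\|(f^{-1}\lIm\spt g_b\rIm) \leq \mu(\oball{d_b}{s}) \leq c$.

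Finally, the decomposition and estimate. Write $f_\Upsilon = \sum_{k=0}^{\infty}\phi_k$ with $\phi_k = \min\{ \sup\{ 0, f_\Upsilon - ks \}, s \}$ — a telescoping sum of ``layers of thickness $s$'' — and $\phi_k = \sum_{b \in B}(g_b \circ f)\phi_k + \rho_k$ with $\rho_k = \phi_k(1 - \sum_b g_b \circ f)$ supported in $f^{-1}\lIm\bigcup_{t \in \Upsilon}\oball{t}{8s}\rIm \cap \{ f_\Upsilon > ks \}$, so that $\rho_k = 0$ for $k \geq 8$. Each $h_{k,b} := (g_b \circ f)\phi_k$ is of the form $G_{k,b} \circ f$ for a Lipschitzian $G_{k,b} \colon Y \to \rel$ with $\spt G_{k,b} \subset \spt g_b$, $\sup G_{k,b} \leq s$ and $\Lip G_{k,b} \leq 2 + 40\cdot(130)^{\dim Y}$ — the boundedness of $\Lip G_{k,b}$ despite $s$ being small is precisely what the layering buys — hence, by \ref{lemma:comp_lip} and \ref{remark:trunc}, $h_{k,b} \in \trunc_\varnothing(V)$ with $\| \derivative{V}{h_{k,b}} \| \leq (\Lip G_{k,b})\,\id{f^{-1}\lIm\spt g_b\rIm}\,\| \derivative{V}{f} \|$ and $\| \derivative{V}{h_{k,b}} \| \leq (\Lip G_{k,b})\,\id{\{ ks < f_\Upsilon < (k+1)s \}}\,\| \derivative{V}{f} \|$ almost everywhere, $\{ h_{k,b} > 0 \} \subset f^{-1}\lIm\spt g_b\rIm$ of $\|V\|$ measure $\leq c$, and, in \eqref{item:sob_poincare_q_medians:p=m=1}--\eqref{item:sob_poincare_q_medians:p=m<q}, $\psi(\{ h_{k,b} > 0 \}) \leq \psi(U) \leq \Gamma^{-1}$. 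Thus \ref{thm:sob_poin_summary}\,\eqref{item:sob_poin_summary:interior}, with $G = \varnothing$ and the same $Q$ (so that its $A$ coincides with the present one), applies to every $h_{k,b}$, via its items \eqref{item:sob_poin_summary:interior:p=1}, \eqref{item:sob_poin_summary:interior:p=m=1}, \eqref{item:sob_poin_summary:interior:q<m=p}, \eqref{item:sob_poin_summary:interior:p=m<q} for the four assertions respectively. One then estimates $\eqLpnorm{\|V\|\restrict A}{\beta}{f_\Upsilon}$ — respectively the $\vdim q/(\vdim-q)$ and $\infty$ norms — by recombining the cell estimates through \ref{miniremark:lpnorms}: for each $k$ the supports of the $g_b$ in $Y$ overlap at most $(129)^{\dim Y}$ times, the layers telescope in $k$ so that $\sum_{k,b}\Lpnorm{\|V\|}{q}{\derivative{V}{h_{k,b}}}$ is controlled by $\Lpnorm{\|V\|}{q}{\derivative{V}{f}}$ and $\sum_{k,b}\|\delta V\|(h_{k,b}) \leq \|\delta V\|(f_\Upsilon)$ since $\sum_{k,b}h_{k,b} \leq f_\Upsilon$, and the count $\card\Upsilon \leq N$ enters only through the covering of $f\lIm U\rIm \without \bigcup_{t}\oball{t}{8s}$ by the $\leq N$ ``essential'' balls and produces the stated power of $N$; moreover, in \eqref{item:sob_poincare_q_medians:p=m<q} the factor $\|V\|(\{h_{k,b}>0\})^{1/\vdim-1/q} \leq (c/\unitmeasure\vdim\cdots)^{1/\vdim-1/q}$ yields the $r^{1-\vdim/q}$. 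The finitely many remainders $\rho_k$ ($k < 8$), bounded by $s$ on a set of $\|V\|$ measure $\leq (N+1)c$, contribute at most $\mathrm{const}\cdot s\,((N+1)c)^{1/\beta}$, which is absorbed by the choice of $s$. The main obstacle throughout is the selection of $\Upsilon$ — achieving simultaneously $\card\Upsilon \leq N$, cell preimages of $\|V\|$ measure $\leq c$, and a number of essential cells comparable to $N$ — together with verifying that the recombination via \ref{miniremark:lpnorms}, with the telescoping layers, actually delivers the precise power of $N$ and the precise $(\vdim-q)^{-1}$, $r^{1-\vdim/q}$ factors demanded by the four cases.
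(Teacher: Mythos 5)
Your overall plan — select $\Upsilon$ by a packing argument in $Y$, localise $f_\Upsilon$ by the Lipschitzian partition of unity of \ref{lemma:partition_of_unity}, and feed the cell functions into \ref{thm:sob_poin_summary}\,\eqref{item:sob_poin_summary:interior} — is close to what the paper does, and the selection step (balls of $\mu = f_\# \|V\|$ measure $> c$ must number at most $N$ since $\mu(Y) \leq (N+1)c$) is essentially the paper's. However, the step in which you layer $f_\Upsilon = \sum_{k\geq 0}\phi_k$ and then recombine the double family $h_{k,b} = (g_b\circ f)\phi_k$ has a genuine gap that the paper avoids by a different route.

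The specific problem is the claimed bound $\| \derivative{V}{h_{k,b}} \| \leq (\Lip G_{k,b})\,\id{\{ ks < f_\Upsilon < (k+1)s \}}\,\| \derivative{V}{f} \|$. Since $\spt g_b \subset \oball{d_b}{s}$ with $\dist(d_b,\Upsilon)\geq 8s$, one has $\gamma = \dist(\cdot,\Upsilon) \geq 7s$ throughout $\spt g_b$; hence for $k+1 \leq 7$ one has $G_{k,b} = s\,g_b$ on $\spt g_b$, and more generally $G_{k,b} = s\,g_b$ wherever $\gamma \geq (k+1)s$. By \ref{lemma:approx_diff}\,\eqref{item:approx_diff:comp} and \ref{thm:approx_diff}, $\derivative V{h_{k,b}} = s\,\derivative V{(g_b\circ f)}$ a.e.\ on $f^{-1}\lIm \spt g_b\rIm \cap \{ f_\Upsilon \geq (k+1)s\}$, and this is nonzero wherever $g_b$ varies and $\derivative Vf \neq 0$ — i.e.\ not at all confined to the layer $\{ks < f_\Upsilon < (k+1)s\}$. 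The set $\Upsilon_{\text{lem}}$ for which \ref{lemma:comp_lip} applies to $G_{k,b}$ is at best $\spt g_b \cap \Clos\{\gamma > ks\}$; the upper cut at $(k+1)s$ cannot be installed because $G_{k,b}$ is not locally constant above it. Without the upper cut, the sum over $k$ for fixed $b$ collects roughly $(\sup_{f^{-1}\lIm\spt g_b\rIm} f_\Upsilon)/s$ comparable contributions, so $\sum_{k,b}\Lpnorm{\|V\|}{q}{\derivative V{h_{k,b}}}^q$ is not controlled by $\Lpnorm{\|V\|}{q}{\derivative Vf}^q$ and the recombination breaks. Note also that the intended use of \ref{miniremark:lpnorms} requires bounded pointwise overlap of the family over which one sums; for fixed $k$ this is $(129)^{\dim Y}$, but over $(k,b)$ jointly the overlap is unbounded (all $k < f_\Upsilon(x)/s$ contribute at a given $x$).

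The paper avoids layering altogether. It applies \ref{thm:sob_poin_summary}\,\eqref{item:sob_poin_summary:interior} to each unlayered cell function $g\circ f$ (where $g$ runs over the partition from \ref{lemma:partition_of_unity} applied to $D = \{ \gamma \geq 2\Delta_6\lambda\}$), and uses the fact that $\sum_g g\circ f = 1$ on $E = f^{-1}\lIm D\rIm$ to deduce a smallness (or vanishing) bound for $\|V\|(X\cap E)$, where $X$ is a slightly shrunken version of $A$. In the endpoint cases \eqref{item:sob_poincare_q_medians:p=m=1} and \eqref{item:sob_poincare_q_medians:p=m<q} this already gives $\|V\|(X\cap E) = 0$, hence $f_\Upsilon \leq 2\Delta_6\lambda$ on $X$. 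In the remaining cases it sets $h = \sup\{0,\gamma - 2\Delta_6\lambda\}$ and applies \ref{thm:sob_poin_summary}\,\eqref{item:sob_poin_summary:interior} once to $h\circ f|X$ — a single zero-median function whose support is $E\cap X$, which is now known to be small. This two-pass argument (first establish smallness of $E$, then apply the zero-median inequality to the tail cutoff) replaces the telescoping entirely, and is what makes the cases with $q>1$ work, since the $L^q\to L^{\vdim q/(\vdim-q)}$ inequality is not a layer-cake estimate. If you remove the layering and follow this scheme, the remainder of your write-up (the $\Upsilon$ selection, the use of $d_b\notin S$ to bound cell masses, the appeal to \ref{lemma:comp_lip} with $\Upsilon_{\text{lem}} = \spt g_b$) is sound.
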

\begin{proof}
	Define
	\begin{gather*}
		\Delta_1 = \sup \{ 1, \Gamma_{\ref{thm:rel_iso_ineq}} ( M )
		\}^M, \quad \Delta_2 = ( \sup \{ 1/2, 1-1/(2M^2-1) \} )^{1/M},
		\\
		\Delta_3 = 40 \cdot ( 130 )^M, \quad \Delta_4 = \Delta_2^{-M}
		\Gamma_{\ref{thm:sob_poin_summary}} ( 2M), \\
		\Delta_5 = (1/2) \inf \{ \unitmeasure{\vdim} \with M \geq
		\vdim \in \nat \}, \\
		\Delta_6 = \sup \{ 1, \Gamma_{\ref{thm:sob_poin_summary}} (
		2M) \} ( \Delta_3^3 + 1 ) \Delta_5^{-1} (1-\Delta_2)^{-M}, \\
		\Delta_7 = \sup \{ \unitmeasure{\vdim} \with M \geq \vdim \in
		\nat \}, \quad \Delta_8 = 4 M \Delta_6 \Delta_7 +
		\Gamma_{\ref{thm:sob_poin_summary}} ( 2M ), \\
		\Delta_9 = M \Delta_7 \Gamma_{\ref{thm:sob_poin_summary}} ( 2M
		), \quad \Gamma = \sup \{ \Delta_1, \Delta_4, \Delta_8, 2
		\Delta_6 ( 1 + \Delta_9 ) \}
	\end{gather*}
	and notice that $\Delta_5 \leq 1 \leq \inf \{ \Delta_6, \Delta_7 \}$.

	In order to verify that $\Gamma$ has the asserted property, suppose
	$\vdim$, $\adim$, $p$, $U$, $V$, $\psi$, $Y$, $f$, $Q$, $N$, $r$, $A$,
	and $f_\Upsilon$ are related to $\Gamma$ as in the body of the
	theorem.  Abbreviate $\iota = \vdim q/(\vdim-q)$ in case of
	\eqref{item:sob_poincare_q_medians:q<m=p} and $\alpha = 1/\vdim-1/q$
	in case of \eqref{item:sob_poincare_q_medians:p=m<q}.  Moreover,
	define
	\begin{align*}
		\kappa & = \Lpnorm{\| V \|}{1}{\derivative{V}{f}} && \quad
		\text{in case of \eqref{item:sob_poincare_q_medians:p=1} or
		\eqref{item:sob_poincare_q_medians:p=m=1}}, \\
		\kappa & = (\vdim-q)^{-1} \Lpnorm{\| V
		\|}{q}{\derivative{V}{f}} && \quad \text{in case of
		\eqref{item:sob_poincare_q_medians:q<m=p}}, \\
		\kappa & = \Delta_9^{1/\alpha} \Lpnorm{\| V
		\|}{q}{\derivative{V}{f}} && \quad \text{in case of
		\eqref{item:sob_poincare_q_medians:p=m<q}}
	\end{align*}
	and assume $r = 1$ and $\dmn f = U$.
	
	First, the \emph{case $\kappa = 0$} will be considered.
	
	For this purpose choose $\Xi$ and $\upsilon$ as in
	\ref{thm:zero_derivative} and let
	\begin{gather*}
		\Pi = \Xi \cap \{ W \with \| W \| ( U ) \leq ( Q-M^{-1} )
		\unitmeasure{\vdim} \}.
	\end{gather*}
	Applying \ref{corollary:rel_iso_ineq} and
	\ref{corollary:true_rel_iso_ineq} with $V$, $E$, and $B$ replaced by
	$W$, $U$, and $\Bdry U$, one infers
	\begin{align*}
		& \| W \| ( A )^{1/\beta} \leq \Delta_1 \, \| \delta W \| ( U
		) && \quad \text{in case of
		\eqref{item:sob_poincare_q_medians:p=1}, where $0^0=0$,}
		\\
		& \| W \| ( A ) = 0 && \quad \text{in case of
		\eqref{item:sob_poincare_q_medians:p=m=1} or
		\eqref{item:sob_poincare_q_medians:q<m=p} or
		\eqref{item:sob_poincare_q_medians:p=m<q}}
	\end{align*}
	whenever $W \in \Pi$. Since $\card ( \Xi \without \Pi ) \leq N$, there
	exists $\Upsilon$ such that
	\begin{gather*}
		\upsilon \lIm \Xi \without \Pi \rIm \subset \Upsilon \subset Y
		\quad \text{and} \quad 1 \leq \card \Upsilon \leq N.
	\end{gather*}
	In case of \eqref{item:sob_poincare_q_medians:p=1}, it follows that,
	using \ref{remark:decomp_rep},
	\begin{align*}
		& \eqLpnorm{\| V \| \restrict A}{\beta}{f_\Upsilon} \leq
		\tsum{W \in \Pi}{} \dist (\upsilon(W),\Upsilon) \| W \|
		(A)^{1/\beta} \\
		& \qquad \leq \Delta_1 \tsum{W \in \Pi}{} \dist
		(\upsilon(W),\Upsilon) \| \delta W \| ( U ) = \Delta_1 \|
		\delta V \|(f_\Upsilon).
	\end{align*}
	In case of \eqref{item:sob_poincare_q_medians:p=m=1} or
	\eqref{item:sob_poincare_q_medians:q<m=p} or
	\eqref{item:sob_poincare_q_medians:p=m<q}, the corresponding estimate
	is trivial.

	Second, the \emph{case $\kappa > 0$} will be considered.

	For this purpose assume $\kappa < \infty$ and define
	\begin{gather*}
		X = \{ x \with \cball{x}{\Delta_2} \subset U \}, \quad s =
		\Delta_6 \kappa, \\
		B = Y \cap \big \{ y \with \| V \| ( f^{-1} \lIm \oball
		ys \rIm ) > (Q-M^{-1}) \unitmeasure{\vdim} \big \}.
	\end{gather*}
	Choose $\Upsilon \subset Y$ satisfying
	\begin{gather*}
		1 \leq \card \Upsilon \leq N \quad \text{and} \quad B \subset
		\{ y \with \dist (y,\Upsilon) < 2s \};
	\end{gather*}
	in fact, if $B \neq \varnothing$ then one may take $\Upsilon$ to be a
	maximal subset of $B$ with respect to inclusion such that $\{ \oball
	ys \with y \in \Upsilon \}$ is disjointed. Abbreviate $\gamma = \dist
	( \cdot, \Upsilon )$ and, in case of
	\eqref{item:sob_poincare_q_medians:p=1}, define
	\begin{gather*}
		\lambda = \kappa + \| \delta V \| ( \gamma \circ f )
	\end{gather*}
	and assume $\lambda < \infty$. In case of
	\eqref{item:sob_poincare_q_medians:p=m=1} or
	\eqref{item:sob_poincare_q_medians:q<m=p} or
	\eqref{item:sob_poincare_q_medians:p=m<q}, define $\lambda = \kappa$.
	Let
	\begin{gather*}
		D = Y \cap \{ y \with \gamma(y) \geq 2 \Delta_6 \lambda \},
		\quad E = f^{-1} \lIm D \rIm.
	\end{gather*}

	Next, \emph{it will be shown that
	\begin{align*}
		& \| V \| ( X \cap E ) \leq (1/2) \unitmeasure{\vdim} (
		1-\Delta_2)^\vdim && \quad \text{in case of
		\eqref{item:sob_poincare_q_medians:p=1} or
		\eqref{item:sob_poincare_q_medians:q<m=p}}, \\
		& \| V \| ( X \cap E ) = 0 && \quad \text{in case of
		\eqref{item:sob_poincare_q_medians:p=m=1} or
		\eqref{item:sob_poincare_q_medians:p=m<q}}.
	\end{align*}}
	To prove this assertion, choose $G$ as in
	\ref{lemma:partition_of_unity} and let $E_g = \{ x \with g(f(x))
	> 0 \}$ for $g \in G$. Since $D \cap B = \varnothing$ as $\Delta_6
	\lambda \geq s$, it follows that
	\begin{gather*}
		\| V \|( E_g ) \leq ( Q-M^{-1} ) \unitmeasure{\vdim} \leq \big
		(Q-(2M)^{-1} \big ) \unitmeasure{\vdim} \Delta_2^\vdim, \\
		\| V \| ( E_g \cap \{ x \with \density^\vdim ( \| V \| , x ) <
		Q \} ) \leq \Gamma^{-1} \leq \Delta_4^{-1} \leq
		\Gamma_{\ref{thm:sob_poin_summary}} ( 2M )^{-1}
		\Delta_2^\vdim, \\
		\psi ( E_g ) \leq \Gamma^{-1} \leq
		\Gamma_{\ref{thm:sob_poin_summary}} ( 2M )^{-1} \quad \text{in
		case of
		\eqref{item:sob_poincare_q_medians:p=m=1} or
		\eqref{item:sob_poincare_q_medians:q<m=p} or
		\eqref{item:sob_poincare_q_medians:p=m<q}}
	\end{gather*}
	whenever $g \in G$. Denoting by $c_g$ the characteristic function of
	$E_g$, applying \ref{lemma:comp_lip} with $\Upsilon$ replaced by $\spt
	g$ and noting \ref{thm:addition}\,\eqref{item:addition:zero} and
	\ref{remark:trunc} yields $g \circ f \in \trunc_\varnothing(V)$ with
	\begin{gather*}
		| \derivative{V}{(g \circ f)} (x) | \leq \Delta_3 s^{-1}
		c_g(x) \| \derivative{V}{f} (x) \| \quad \text{for $\| V \|$
		almost all $x$}
	\end{gather*}
	whenever $g \in G$. Moreover, denoting the function whose domain is
	$U$ and whose value at $x$ equals $\sum_{g \in G} (g \circ f) (x)$ by
	$\sum_{g\in G} g \circ f$, notice that
	\begin{gather*}
		\card ( G \cap \{ g \with x \in E_g \} ) \leq \Delta_3 \quad
		\text{for $x \in U$}, \\
		\big (\tsum{g \in G}{} g \circ f \big )| E = 1, \qquad \spt g
		\subset \{ y \with \gamma(y) \geq \Delta_6 \lambda \} \quad
		\text{for $g \in G$}.
	\end{gather*}
	In case of \eqref{item:sob_poincare_q_medians:p=1}, one estimates,
	using
	\ref{thm:sob_poin_summary}\,\eqref{item:sob_poin_summary:interior:p=1}
	with $M$, $G$, $f$, and $r$ replaced by $2M$, $\varnothing$, $g \circ
	f$, and $\Delta_2$ and \cite[2.4.18\,(1)]{MR41:1976},
	\begin{align*}
		& \| V \| ( X \cap E )^{1/\beta} \leq \eqLpnorm{\| V \|
		\restrict X}{\beta}{ \tsum{g \in G}{} g \circ f } \leq \tsum{g
		\in G}{} \eqLpnorm{\| V \| \restrict X}{\beta}{ g \circ f} \\
		& \qquad \leq \Gamma_{\ref{thm:sob_poin_summary}} ( 2M )
		\tsum{g \in G}{} \big ( \Delta_3 s^{-1} \eqLpnorm{\| V \|
		\restrict E_g}{1}{ \derivative{V}{f} } + \| \delta V \| ( g
		\circ f ) \big ), \\
		& \qquad \leq \Gamma_{\ref{thm:sob_poin_summary}} ( 2M ) \big
		( \Delta_6^{-1} \Delta_3^2 + ( f_\# \| \delta V \| ) ( \{ y
		\with \gamma (y) \geq \Delta_6 \lambda \} ) \big ) \\
		& \qquad \leq \Gamma_{\ref{thm:sob_poin_summary}} ( 2M ) (
		\Delta_3^2 + 1 ) \Delta_6^{-1} \leq \Delta_5 ( 1- \Delta_2)^M
		\leq \big ((1/2) \unitmeasure{\vdim} (1-\Delta_2)^\vdim \big
		)^{1/\beta},
	\end{align*}
	where $0^0=0$. In case of \eqref{item:sob_poincare_q_medians:p=m=1},
	using
	\ref{thm:sob_poin_summary}\,\eqref{item:sob_poin_summary:interior:p=m=1}
	with $M$, $G$, $f$, and $r$ replaced by $2M$, $\varnothing$, $g \circ
	f$, and $\Delta_2$, one estimates
	\begin{align*}
		& \eqLpnorm{\| V \| \restrict X}{\infty}{ \tsum{g \in G}{} g
		\circ f } \leq \tsum{g \in G}{} \eqLpnorm{\| V \| \restrict
		X}{\infty}{g \circ f} \\
		& \qquad \leq \Gamma_{\ref{thm:sob_poin_summary}} ( 2M )
		\tsum{g \in G}{} \Delta_3 s^{-1} \eqLpnorm{ \| V \| \restrict
		E_g }{1}{ \derivative{V}{f} } \\
		& \qquad \leq \Gamma_{\ref{thm:sob_poin_summary}} ( 2M )
		\Delta_3^2 \Delta_6^{-1} \leq \Delta_5 ( 1-\Delta_2 )^M < 1,
	\end{align*}
	hence $\| V \| ( X \cap E ) = 0$. In case of
	\eqref{item:sob_poincare_q_medians:q<m=p}, using
	\ref{miniremark:lpnorms} and
	\ref{thm:sob_poin_summary}\,\eqref{item:sob_poin_summary:interior:q<m=p}
	with $M$, $G$, $f$, and $r$ replaced by $2M$, $\varnothing$, $g \circ
	f$, and $\Delta_2$, one estimates
	\begin{align*}
		& \| V \| ( X \cap E )^{1/\iota} \leq \eqLpnorm{\| V \|
		\restrict X}{\iota}{ \tsum{g \in G}{} g \circ f } \\
		& \qquad \leq \Delta_3 \big ( \tsum{g \in G}{} \eqLpnorm{\| V
		\| \restrict X}{\iota}{ g \circ f }^q \big )^{1/q} \\
		& \qquad \leq \Gamma_{\ref{thm:sob_poin_summary}} ( 2M )
		(\vdim-q)^{-1} \Delta_3^2 s^{-1} \big ( \tsum{g \in G}{}
		\eqLpnorm{\| V \| \restrict E_g}{q}{ \derivative{V}{f} }^q
		\big )^{1/q} \\
		& \qquad \leq \Gamma_{\ref{thm:sob_poin_summary}} ( 2M )
		\Delta_3^3 \Delta_6^{-1} \leq \Delta_5 (1-\Delta_2)^M \leq
		\big ( (1/2) \unitmeasure{\vdim} ( 1-\Delta_2)^\vdim \big
		)^{1/\iota}.
	\end{align*}
	In case of \eqref{item:sob_poincare_q_medians:p=m<q}, using
	\ref{thm:sob_poin_summary}\,\eqref{item:sob_poin_summary:interior:p=m<q}
	with $M$, $G$, $f$, and $r$ replaced by $2M$, $\varnothing$, $g \circ
	f$, and $\Delta_2$ and noting $\Gamma_{\ref{thm:sob_poin_summary}} (
	2M)^{1/\alpha} (M\Delta_7)^\alpha \leq \Delta_9^{1/\alpha}$, one obtains
	\begin{gather*}
		\eqLpnorm{\| V \| \restrict X}{\infty}{g \circ f} \leq
		\Delta_9^{1/\alpha} \Delta_3 s^{-1} \eqLpnorm{\| V \| \restrict
		E_g}{q}{\derivative{V}{f}} \quad \text{for $g \in G$}
	\end{gather*}
	and therefore $\| V \| (X \cap E) = 0$, since if $q < \infty$ then,
	using \ref{miniremark:lpnorms},
	\begin{align*}
		& \eqLpnorm{\| V \| \restrict X}{\infty}{ \tsum{g \in G}{} g
		\circ f } \leq \Delta_3 \big ( \tsum{g \in G}{} \eqLpnorm{\| V
		\| \restrict X}{\infty}{g \circ f}^q \big )^{1/q} \\
		& \qquad \leq \Delta_9^{1/\alpha} \Delta_3^2 s^{-1} \big (
		\tsum{g \in G}{} \eqLpnorm{\| V \| \restrict
		E_g}{q}{\derivative{V}{f}}^q \big)^{1/q} \\
		& \qquad \leq \Delta_3^3 \Delta_6^{-1} \leq \Delta_5
		(1-\Delta_2)^M < 1,
	\end{align*}
	and if $q = \infty$ then $\eqLpnorm{ \| V \| \restrict X}{\infty}{
	\sum_{g \in G} g \circ f} < 1$ follows similarly.

	In case of \eqref{item:sob_poincare_q_medians:p=m=1} or
	\eqref{item:sob_poincare_q_medians:p=m<q}, noting $2 \Delta_6
	\Delta_9^{1/\alpha} \leq \Gamma^{1/\alpha}$ in case of
	\eqref{item:sob_poincare_q_medians:p=m<q}, the conclusion is evident
	from the assertion of the preceding paragraph. Now consider the case
	of \eqref{item:sob_poincare_q_medians:p=1} or
	\eqref{item:sob_poincare_q_medians:q<m=p} and define $h : Y \to \rel$
	by
	\begin{gather*}
		h(y) = \sup \{ 0, \gamma(y)- 2 \Delta_6 \lambda \} \quad
		\text{for $y \in Y$}.
	\end{gather*}
	Notice that $h \circ f \in \trunc_\varnothing \big (V|\mathbf{2}^{X
	\times \grass{\adim}{\vdim}} \big )$ with
	\begin{gather*}
		| \derivative{V}{(h \circ f)} (x) | \leq \| \derivative{V}{f}
		(x) \| \quad \text{for $\| V \|$ almost all $x$}
	\end{gather*}
	by \ref{lemma:comp_lip} with $\Upsilon$ replaced by $Y$ and
	\ref{remark:trunc}. In case of
	\eqref{item:sob_poincare_q_medians:p=1}, applying
	\ref{thm:sob_poin_summary}\,\eqref{item:sob_poin_summary:interior:p=1}
	with $M$, $U$, $V$, $G$, $f$, $Q$, and $r$ replaced by $2M$, $X$, $V |
	\mathbf{2}^{X \times \grass{\adim}{\vdim}}$, $\varnothing$, $h \circ
	f|X$, $1$, and $1-\Delta_2$ implies
	\begin{gather*}
		\eqLpnorm{\| V \| \restrict A}{\beta}{ h \circ f} \leq
		\Gamma_{\ref{thm:sob_poin_summary}} ( 2M ) \big ( \Lpnorm{\| V
		\|}{1}{ \derivative{V}{f} } + \| \delta V \| ( h \circ f )
		\big ), \\
		\eqLpnorm{\| V \| \restrict A}{\beta}{\gamma \circ f} \leq 4 M
		\Delta_6 \Delta_7 N^{1/\beta} \lambda +
		\Gamma_{\ref{thm:sob_poin_summary}} ( 2M) \lambda \leq \Gamma
		N^{1/\beta} \lambda
	\end{gather*}
	since $\| V \| ( A )^{1/\beta} \leq 2 M \Delta_7 N^{1/\beta}$. In case
	of \eqref{item:sob_poincare_q_medians:q<m=p}, applying
	\ref{thm:sob_poin_summary}\,\eqref{item:sob_poin_summary:interior:q<m=p}
	with $M$, $U$, $V$, $G$, $f$, $Q$, and $r$ replaced by $2M$, $X$, $V |
	\mathbf{2}^{X \times \grass{\adim}{\vdim}}$, $\varnothing$, $h \circ
	f|X$, $1$, and $1-\Delta_2$ implies
	\begin{gather*}
		\eqLpnorm{\| V \| \restrict A}{\iota}{ h \circ f} \leq
		\Gamma_{\ref{thm:sob_poin_summary}} ( 2M ) \kappa, \\
		\eqLpnorm{\| V \| \restrict A}{\iota}{ \gamma \circ f} \leq 4 M
		\Delta_6 \Delta_7 N^{1/\iota} \kappa +
		\Gamma_{\ref{thm:sob_poin_summary}} ( 2M ) \kappa \leq \Gamma
		N^{1/\iota} \kappa
	\end{gather*}
	since $\| V \| (A)^{1/\iota} \leq 2M \Delta_7 N^{1/\iota}$.
\end{proof}
\begin{remark}
	Comparing the preceding theorem to previous Sobolev Poincar{\'e}
	inequalities of the author, see \cite[4.4--4.6]{snulmenn.poincare},
	which treated the particular case of $f$ being the orthogonal
	projection onto an $\codim$ dimensional plane in the context of
	integral varifolds, the present approach is significantly more
	general.  Yet, for the case of orthonormal projections the previous
	results give somewhat more precise information as they include, for
	instance, the intermediate cases $1 < p < \vdim$ as well as Lorentz
	spaces.  Evidently, the question arises whether the present theorem
	can be correspondingly extended.
\end{remark}
\begin{theorem} \label{thm:sob_poin_several_med}
	Suppose $\vdim$, $\adim$, $p$, $U$, $V$, and $\psi$ are as in
	\ref{miniremark:situation_general}, $\adim \leq M < \infty$, $N \in
	\nat$, $1 \leq Q \leq M$, $f \in \trunc (V)$, $0 < r < \infty$, $X$ is
	a finite subset of $U$,
	\begin{gather*}
		\| V \| ( U ) \leq ( Q-M^{-1} ) (N+1) \unitmeasure{\vdim}
		r^\vdim, \\
		\| V \| ( \{ x \with \density^\vdim ( \| V \|, x ) < Q \} )
		\leq \Gamma^{-1} r^\vdim,
	\end{gather*}
	and $A = \{ x \with \oball{x}{r} \subset U \}$, then there exists a
	subset $\Upsilon$ of $\rel$ with $1 \leq \card \Upsilon \leq N + \card
	X$ such that the following four statements hold with $g = \dist (
	\cdot, \Upsilon ) \circ f$:
	\begin{enumerate}
		\item \label{item:sob_poin_several_med:p=1} If $p = 1$, $\beta
		= \infty$ if $\vdim = 1$ and $\beta = \vdim/(\vdim-1)$ if
		$\vdim > 1$, then
		\begin{gather*}
			\eqLpnorm{\| V \| \restrict A}{\beta}{g} \leq
			\Gamma_{\ref{thm:sob_poin_summary}} (M) \big (
			\Lpnorm{\| V \|}{1}{ \derivative{V}{f} } + \| \delta V
			\|(g) \big ).
		\end{gather*}
		\item \label{item:sob_poin_several_med:p=m=1} If $p = \vdim =
		1$ and $\psi ( U \without X ) \leq
		\Gamma_{\ref{thm:sob_poin_summary}} (M)^{-1}$, then
		\begin{gather*}
			\eqLpnorm{\| V \| \restrict A}{\infty}{g} \leq
			\Gamma_{\ref{thm:sob_poin_summary}} (M) \, \Lpnorm{\|
			V \|}{1}{ \derivative{V}{f} }.
		\end{gather*}
		\item \label{item:sob_poin_several_med:q<m=p} If $1 \leq q <
		\vdim = p$ and $\psi ( U ) \leq
		\Gamma_{\ref{thm:sob_poin_summary}} (M)^{-1}$, then
		\begin{gather*}
			\eqLpnorm{\| V \| \restrict A}{\vdim q/(\vdim-q)}{g}
			\leq \Gamma_{\ref{thm:sob_poin_summary}} (M)
			(\vdim-q)^{-1} \Lpnorm{\| V \|}{q}{ \derivative{V}{f}
			}.
		\end{gather*}
		\item \label{item:sob_poin_several_med:p=m<q} If $1 < p =
		\vdim < q \leq \infty$ and $\psi ( U ) \leq
		\Gamma_{\ref{thm:sob_poin_summary}} (M)^{-1}$, then
		\begin{gather*}
			\eqLpnorm{\| V \| \restrict A}{\infty}{g} \leq
			\Gamma^{1/(1/\vdim-1/q)} Q^{1/\vdim-1/q} r^{1-\vdim/q}
			\Lpnorm{\| V \|}{q}{ \derivative{V}{f} },
		\end{gather*}
		where $\Gamma = \Gamma_{\ref{thm:sob_poin_summary}} (M) \sup
		\{ \unitmeasure{i} \with M \geq i \in \nat \}$.
	\end{enumerate}
\end{theorem}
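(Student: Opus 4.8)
The plan is to reduce to $r=1$ and $\dmn f = U$, to choose the medians $\Upsilon$ so that the real line is cut into finitely many intervals whose $f$-preimages have small $\|V\|$ measure, and then to apply the zero median estimates \ref{thm:sob_poin_summary}\,\eqref{item:sob_poin_summary:interior} to the truncations of $g = \dist(\cdot,\Upsilon)\circ f$ belonging to those intervals, recombining the resulting inequalities. One takes $\Gamma = \Gamma_{\ref{thm:sob_poin_summary}}(M)$, respectively the multiple of it prescribed in \eqref{item:sob_poin_several_med:p=m<q}, and reads the numerical constant in the density and mean curvature hypotheses as $\Gamma_{\ref{thm:sob_poin_summary}}(M)^{-1}$; observe that, for $x \in U$, one has $\oball xr \subset U$ if and only if $\oball xr \subset \rel^\adim \without \Bdry U$, so the set $A$ coincides with the set of the same name in \ref{thm:sob_poin_summary} taken with $G = \varnothing$.

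For the construction of $\Upsilon$, abbreviate $c = (Q-M^{-1})\unitmeasure\vdim$, so that $\|V\|(U) \leq (N+1)c$. First I would run a greedy subdivision: let $w_1 = \sup \{ t \with \|V\|(\{x \with f(x) < t\}) \leq c \}$ and inductively $w_{i+1} = \sup \{ t \with \|V\|(\{x \with w_i < f(x) < t\}) \leq c \}$, stopping once $\|V\|(\{x \with f(x) > w_i\}) \leq c$ and inserting one arbitrary value should the procedure produce no cut. Left continuity of $t\mapsto\|V\|(\{x\with f(x)<t\})$ gives $\|V\|(\{x \with w_{i-1} < f(x) < w_i\}) \leq c$, while maximality of $w_i$ together with right continuity of $t\mapsto\|V\|(\{x\with w_{i-1}<f(x)\leq t\})$ gives $\|V\|(\{x \with w_{i-1} < f(x) \leq w_i\}) \geq c$, with $w_0 = -\infty$; since these latter sets are pairwise disjoint and the total mass is at most $(N+1)c$, after at most $N$ cuts the tail has $\|V\|$ measure at most $c$, so $k \leq N$. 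Then enlarge $\{w_1,\ldots,w_k\}$ to $\Upsilon = \{w_1,\ldots,w_k\} \cup f\lIm X\rIm$; inserting a value only subdivides an interval, so the $f$-preimage of every complementary open interval still has $\|V\|$ measure at most $c$, while $1 \leq \card\Upsilon \leq N+\card X$ and $X \subset f^{-1}\lIm\Upsilon\rIm$.

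After relabelling $\Upsilon = \{w_1 < \cdots < w_k\}$, for $j = 1,\ldots,k-1$ set $\omega_j(\upsilon) = \sup\{0,\inf\{\upsilon-w_j,w_{j+1}-\upsilon\}\}$ and $\omega_0(\upsilon) = \sup\{0,w_1-\upsilon\}$, $\omega_k(\upsilon) = \sup\{0,\upsilon-w_k\}$, and put $g_j = \omega_j\circ f$. Each $\omega_j$ is $1$-Lipschitzian, vanishes on $\Upsilon$ and outside a closed interval $I_j$ with endpoints in $\Upsilon\cup\{\pm\infty\}$, and is proper on $I_j$, so \ref{lemma:comp_lip} and \ref{remark:trunc} give $g_j \in \trunc_\varnothing(V)$ with $\|\derivative V{g_j}(x)\| \leq \id{f^{-1}\lIm I_j\rIm}(x)\,\|\derivative Vf(x)\|$ for $\|V\|$ almost all $x$; moreover at each point $g(x) = \dist(f(x),\Upsilon)$ equals exactly one of $g_0(x),\ldots,g_k(x)$, the others vanishing, whence $g = \sum_{j=0}^k g_j$, $g^p = \sum_{j=0}^k g_j^p$ $\|V\|$ almost everywhere for $1\leq p<\infty$, $\|V\|(\{x \with g_j(x) > 0\}) \leq c$, and $\{x\with g_j(x)>0\}\cap X = \varnothing$. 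I would then apply the appropriate implication among \ref{thm:sob_poin_summary}\,\eqref{item:sob_poin_summary:interior:p=1}--\eqref{item:sob_poin_summary:interior:p=m<q} to each $g_j$ with $G=\varnothing$, the smallness of $\density^\vdim(\|V\|,\cdot)<Q$ and, where required, of $\psi$ on $\{g_j>0\}$ being granted by the hypotheses and, in part \eqref{item:sob_poin_several_med:p=m=1}, by $\psi(\{g_j>0\}) \leq \psi(U\without X)$. Summing, using $\sum_j a_j^p \leq (\sum_j a_j)^p$ for $1\leq p<\infty$ and taking suprema over $j$ in the cases $\beta=\infty$ or $q>\vdim$, and exploiting that the $I_j$ overlap only at points of $\Upsilon$ where $\derivative Vf = 0$ by \ref{thm:addition}\,\eqref{item:addition:zero}, that the sets $\{g_j>0\}$ are pairwise disjoint, that $\sum_j \|\delta V\|(g_j) = \|\delta V\|(g)$, and that $\sum_j \Lpnorm{\|V\|}{q}{\derivative V{g_j}}^q \leq \Lpnorm{\|V\|}{q}{\derivative Vf}^q$, yields \eqref{item:sob_poin_several_med:p=1}--\eqref{item:sob_poin_several_med:p=m<q}; in \eqref{item:sob_poin_several_med:p=m<q} one additionally bounds $\|V\|(\{g_j>0\})^{1/\vdim-1/q} \leq (Q\unitmeasure\vdim)^{1/\vdim-1/q}$ and absorbs the factor $\unitmeasure\vdim^{1/\vdim-1/q}$ into $\Gamma$.

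The one genuinely delicate step is the construction of $\Upsilon$: the greedy subdivision must be arranged so that atoms of $f_\#\|V\|$ do not force extra cuts, which is why one cuts ``just below'' a value so that an atom landing on a cut lies in no piece, and the budget $\card\Upsilon \leq N+\card X$ must leave room for exactly the $\card X$ additional cuts at $f\lIm X\rIm$ — these being precisely what makes every piece avoid $X$ and hence what lets the weaker hypothesis $\psi(U\without X) \leq \Gamma_{\ref{thm:sob_poin_summary}}(M)^{-1}$ suffice in part \eqref{item:sob_poin_several_med:p=m=1}. Everything after the construction is the classical summation of Sobolev Poincar\'e inequalities over a partition adapted to $f$, as indicated by Hutchinson \cite[Theorem 3]{MR1066398}; the gain over \ref{thm:sob_poincare_q_medians}, namely the absence of a power of $N$, is exactly that for real valued $f$ these pieces tile, so summing the corresponding integrals over them reproduces the global quantity.
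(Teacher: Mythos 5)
Your argument is correct and follows the same route as the paper's: construct a finite cutting set $\Upsilon$ containing $f\lIm X\rIm$ so that every component $I$ of $\rel\without\Upsilon$ has $\|V\|\bigl(f^{-1}\lIm I\rIm\bigr) \leq (Q-M^{-1})\unitmeasure\vdim r^\vdim$, set $f_I = \dist(\cdot,\rel\without I)\circ f$ (your $g_j$), apply \ref{thm:sob_poin_summary}\,\eqref{item:sob_poin_summary:interior} with $G=\varnothing$ to each $f_I$, and recombine via the pointwise disjointness of the $f_I$ and the vanishing of $\derivative Vf$ on $f^{-1}\lIm\Upsilon\rIm$. The only differences are presentational: where the paper merely asserts that a suitable $\Upsilon$ can be chosen, you supply the greedy subdivision and verify the left/right continuity needed to make it work, and where the paper invokes the $\ell^q\hookrightarrow\ell^p$ estimate of \ref{miniremark:lpnorms} by reference, you unwind it by hand.
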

\begin{proof}
	Choose a nonempty subset $\Upsilon$ of $\rel$ satisfying
	\begin{gather*}
		\card \Upsilon \leq N + \card X, \quad f \lIm X \rIm \subset
		\Upsilon, \\
		\| V \| ( U \cap \{ x \with f(x) \in I \} ) \leq (Q-M^{-1})
		\unitmeasure{\vdim} r^\vdim \quad \text{for $I \in \Phi$},
	\end{gather*}
	where $\Phi$ is the family of connected components of $\rel \without
	\Upsilon$.  Notice that
	\begin{gather*}
		\dist (b,\Upsilon) = \dist (b, \rel\without I) \quad
		\text{and} \quad \dist (b,\rel \without J) = 0
	\end{gather*}
	whenever $b \in I \in \Phi$ and $I \neq J \in \Phi$, and
	\begin{gather*}
		\dist (b,\Upsilon) = \dist (b,\rel \without I) = 0 \quad
		\text{whenever $b \in \Upsilon$ and $I \in \Phi$}.
	\end{gather*}
	Defining $f_I = \dist ( \rel \without I ) \circ f$, one infers $f_I
	\in \trunc_\varnothing (V)$ and
	\begin{gather*}
		| \derivative{V}{f} (x) | = | \derivative{V}{f_I}(x) | \quad
		\text{for $\| V \|$ almost all $x \in f^{-1} \lIm I \rIm$}, \\
		\derivative{V}{f} (x) = 0 \quad \text{for $\| V \|$ almost all
		$x \in f^{-1} \lIm \Upsilon \rIm$}, \\
		\derivative{V}{f_I} (x) = 0 \quad \text{for $\| V \|$ almost
		all $x \in f^{-1} \lIm \rel \without I \rIm$}
	\end{gather*}
	whenever $I \in \Phi$ by \ref{lemma:basic_v_weakly_diff},
	\ref{example:composite},
	\ref{thm:addition}\,\eqref{item:addition:zero}, and
	\ref{remark:trunc}.

	The conclusion now will be obtained employing \ref{miniremark:lpnorms}
	and applying
	\ref{thm:sob_poin_summary}\,\eqref{item:sob_poin_summary:interior}
	with $G$ and $f$ replaced by $\varnothing$ and $f_I$ for $I \in \Phi$.
	For instance, in case of \eqref{item:sob_poin_several_med:q<m=p} one
	estimates
	\begin{gather*}
		\begin{aligned}
			\eqLpnorm{\| V \| \restrict A}{\vdim q/(\vdim-q)}{g} &
			\leq \big ( \tsum{I \in \Phi}{} \eqLpnorm{\| V \|
			\restrict A}{\vdim q/(\vdim-q)}{ f_I }^q \big )^{1/q}
			\\
			& \leq \lambda \big ( \tsum{I \in \Phi}{} \Lpnorm{\| V
			\|}{q}{ \derivative{V}{f_I} }^q \big )^{1/q} = \lambda
			\, \Lpnorm{\| V \|}{q}{ \derivative{V}{f} },
		\end{aligned}
	\end{gather*}
	where $\lambda = \Gamma_{\ref{thm:sob_poin_summary}} (M)
	(\vdim-q)^{-1}$. The cases \eqref{item:sob_poin_several_med:p=1} and
	\eqref{item:sob_poin_several_med:p=m=1} follow similarly. Defining
	$\Delta = \sup \{ \unitmeasure{i} \with M \geq i \in \nat \}$ and
	noting $\Delta \geq 1$, hence $\unitmeasure{\vdim}^{(1/\vdim-1/q)^2}
	\leq \Delta^{(1/\vdim-1/q)^2} \leq \Delta$, the same holds for
	\eqref{item:sob_poin_several_med:p=m<q}.
\end{proof}
\begin{remark}
	The method of deduction of \ref{thm:sob_poin_several_med} from
	\ref{thm:sob_poin_summary} is that of Hutchinson \cite[Theorem
	3]{MR1066398} which is derived from Hutchinson \cite[Theorem
	1]{MR1066398}.
\end{remark}
\begin{remark}
	A nonempty choice of $X$ will occur in \ref{thm:mod_continuity}.
\end{remark}
\section{Differentiability properties}
In this section approximate differentiability, see \ref{thm:approx_diff}, and
differentiability in Lebesgue spaces, see \ref{thm:diff_lebesgue_spaces}, are
established for weakly differentiable functions. The primary ingredient is the
Sobolev Poincar{\'e} inequality
\ref{thm:sob_poin_summary}\,\eqref{item:sob_poin_summary:interior}.
\begin{lemma} \label{lemma:approx_diff}
	Suppose $\vdim, \adim \in \nat$, $\vdim \leq \adim$, $U$ is an open
	subset of $U$, $V \in \RVar_\vdim (U)$, $Y$ is a finite dimensional
	normed vectorspace, $f$ is a $\| V \|$ measurable $Y$ valued function,
	and $A$ is the set of points at which $f$ is $( \| V \|, \vdim )$
	approximately differentiable.

	Then the following four statements hold.
	\begin{enumerate}
		\item \label{item:approx_diff:measurable} The set $A$ is $\| V
		\|$ measurable and $( \| V \|, \vdim ) \ap Df(x) \circ
		\project{\Tan^\vdim ( \| V \|, x )}$ depends $\| V \|
		\restrict A$ measurably on $x$.
		\item \label{item:approx_diff:cover} There exists a sequence
		of functions $f_i : U \to Y$ of class $1$ such that
		\begin{gather*}
			\| V \| ( A \without \{ x \with \text{$f(x) = f_i(x)$
			for some $i$} \} ) = 0.
		\end{gather*}
		\item \label{item:approx_diff:unrectifiable} If $g : U \to
		Y$ is locally Lipschitzian, then
		\begin{gather*}
			\| V \| ( U \cap \{ x \with f(x) = g(x) \} \without A
			) = 0.
		\end{gather*}
		\item \label{item:approx_diff:comp} If $g$ is a $\| V \|$
		measurable $Y$ valued function and $B = U \cap \{ x \with
		f(x) = g(x) \}$, then $B \cap A$ is $\| V \|$ almost equal to
		$B \cap \dmn ( \| V \|, \vdim ) \ap Dg$ and
		\begin{gather*}
			( \| V \|, \vdim ) \ap Df(x) = ( \| V \|, \vdim ) \ap
			Dg(x) \quad \text{for $\| V \|$ almost all $x \in
			B \cap A$}.
		\end{gather*}
	\end{enumerate}
\end{lemma}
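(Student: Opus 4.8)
The plan is to reduce everything to the case $Y = \rel$ by composing with linear functionals, and then to exploit Federer's theory of approximate differentiability together with the rectifiability of $\| V \|$.

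First, for \eqref{item:approx_diff:measurable} I would record that $f$ being $(\| V\|, \vdim)$ approximately differentiable at $a$ is, by \cite[3.1.8, 3.2.16]{MR41:1976}, equivalent to $(\| V\|, \vdim) \ap Df(a)$ existing when restricted to $\Tan^\vdim ( \| V\|, a )$, which for $\| V \|$ almost all $a$ is an $\vdim$ dimensional plane depending $\| V \|$ measurably on $a$ since $V \in \RVar_\vdim ( U )$. Approximate differentials are detected by approximate limits of difference quotients over a countable dense set of directions, and approximate limits of $\| V \|$ measurable functions are $\| V \|$ measurably parametrised by \cite[2.9.19]{MR41:1976} and \cite[2.3.6]{MR41:1976}; assembling these gives the measurability of $A$ and of $a \mapsto ( \| V \|, \vdim ) \ap Df(a) \circ \project{\Tan^\vdim ( \| V \|, a )}$. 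One should compose with a basis of $Y^\ast$ to pass from $Y$ valued to real valued functions here.

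For \eqref{item:approx_diff:cover} I would invoke the approximate analogue of Whitney's extension theorem: since $\| V \|$ is a Radon measure and $( \| V \|, \vdim ) \aplim$ of the difference quotient exists at each point of $A$, one applies \cite[3.1.16]{MR41:1976} (the $C^1$ approximation theorem, in its $(\mu, m)$ approximate form, which holds because $\Tan^\vdim ( \| V \|, a )$ is $\vdim$ dimensional for $\| V\|$ almost all $a$) together with \cite[2.3.6, 3.1.4]{MR41:1976} to cover $\| V \|$ almost all of $A$ by countably many sets on each of which $f$ agrees with a function of class $1$. For \eqref{item:approx_diff:unrectifiable}, the point is that a locally Lipschitzian $g : U \to Y$ is itself $( \| V \|, \vdim )$ approximately differentiable $\| V\|$ almost everywhere --- this uses Rademacher's theorem applied on the countably $\vdim$ rectifiable carrier of $\| V\|$, cf.\ \cite[3.2.19, 3.1.6]{MR41:1976} or Example~\ref{example:lipschitzian} --- and at $\| V\|$ almost every point $x$ where $f(x) = g(x)$ and $\density^\vdim ( \| V \| \restrict \{ f = g \}, x ) = \density^\vdim ( \| V \|, x )$, the approximate differential of $f$ exists and equals that of $g$; here one uses that $\| V \| ( \{ f = g \} \cap \{ x \with \density^\vdim ( \| V\| \restrict \{ f = g \}, x ) < \density^\vdim ( \| V \|, x ) \} ) = 0$ by \cite[2.8.18, 2.9.11]{MR41:1976}.

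Finally, \eqref{item:approx_diff:comp} follows by symmetry from the structure already set up: at $\| V \|$ almost all $x \in B$ one has $\density^\vdim ( \| V \| \restrict B, x ) = \density^\vdim ( \| V \|, x )$ by \cite[2.8.18, 2.9.11]{MR41:1976}, and at such $x$ the condition defining $(\| V\|, \vdim)$ approximate differentiability of $f$ and of $g$ involve only the behaviour of $\| V \| \restrict B$ near $x$ (the complementary part carries negligible density), so $x \in A \cap B$ if and only if $x \in \dmn ( \| V\|, \vdim ) \ap Dg \cap B$, with the two approximate differentials agreeing. I expect the main obstacle to be \eqref{item:approx_diff:cover}: one must verify carefully that the covering/extension argument of \cite[3.1.16]{MR41:1976}, which is stated for Lebesgue measure, transfers to an arbitrary Radon measure $\| V \|$ whose approximate tangent cones are $\vdim$ planes --- this is where the hypothesis $V \in \RVar_\vdim ( U )$ is essential and where one has to be precise about which of Federer's results genuinely hold in the $(\mu, m)$ approximate setting.
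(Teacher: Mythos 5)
Your overall strategy is the same as the paper's: reduce to $Y = \rel$ via linear functionals, prove the covering statement~\eqref{item:approx_diff:cover} via Federer's $C^1$ approximation theorem \cite[3.1.16]{MR41:1976}, and handle~\eqref{item:approx_diff:unrectifiable} and~\eqref{item:approx_diff:comp} by density considerations at full-density points of $\{f = g\}$. For the record, the paper proves~\eqref{item:approx_diff:comp} first (from \cite[2.10.19\,(4)]{MR41:1976}) and then derives~\eqref{item:approx_diff:unrectifiable} from~\eqref{item:approx_diff:comp} together with the fact that locally Lipschitzian functions are $(\| V\|, \vdim)$ approximately differentiable $\| V\|$ almost everywhere; you do it in the reverse order, which is logically equivalent. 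For~\eqref{item:approx_diff:measurable} the paper simply cites an earlier lemma of the author, so your sketch of a direct argument is a reasonable substitute.

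The genuine gap is exactly the one you flag at the end, and flagging it does not close it. There is no ready-made ``$(\mu, m)$ approximate'' version of \cite[3.1.16]{MR41:1976}: that theorem is a Whitney-type extension result for Lebesgue measure on $\rel^\vdim$, and the hypothesis $V \in \RVar_\vdim(U)$ by itself does not let you invoke it directly for the measure $\| V \|$. What the paper does is carry out an explicit chain of reductions. First, using \cite[2.10.19\,(4), 3.2.29]{MR41:1976} and rectifiability, one reduces to the case $\| V \| ( U \without M ) = 0$ for an $\vdim$~dimensional submanifold $M$ of class~$1$. Second, using \cite[2.10.19\,(4)]{MR41:1976} and Allard \cite[3.5\,(1a)]{MR0307015}, one reduces to the case $\lambda^{-1} \leq \density^\vdim ( \| V \|, x ) \leq \lambda$ for $\| V \|$ almost all $x$ for some $1 < \lambda < \infty$, and then, by \cite[2.10.19\,(1)\,(3)]{MR41:1976}, to the case $\density^\vdim ( \| V \|, x ) = 1$ for $\| V \|$ almost all $x$. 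Third, the manifold structure and area formula \cite[3.1.19\,(4), 3.2.3, 2.8.18, 2.9.11]{MR41:1976} transport the problem to $\| V \| = \mathscr{L}^\vdim$, where \cite[3.1.16]{MR41:1976} finally applies. Without this reduction your proof of~\eqref{item:approx_diff:cover} does not go through; the rest of the proposal is sound.
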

\begin{proof}
	Assume $Y = \rel$. Then \eqref{item:approx_diff:measurable} is
	\cite[4.5\,(1)]{snulmenn.decay}.

	In order to prove \eqref{item:approx_diff:cover}, one may reduce the
	problem. Firstly, to the case that $\| V \| ( U \without M ) = 0$ for
	some $\vdim$ dimensional submanifold $M$ of $\rel^\adim$ of class $1$
	by \cite[2.10.19\,(4), 3.2.29]{MR41:1976}. Secondly, to the case that
	for some $1 < \lambda < \infty$ the varifold satisfies additionally
	that $\lambda^{-1} \leq \density^\vdim ( \| V \|, x ) \leq \lambda$
	for $\| V \|$ almost all $x$ by \cite[2.10.19\,(4)]{MR41:1976} and
	Allard \cite[3.5\,(1a)]{MR0307015}, hence thirdly to the case that
	$\density^\vdim ( \| V \|, x ) = 1$ for $\| V \|$ almost all $x$ by
	\cite[2.10.19\,(1)\,(3)]{MR41:1976}. Finally, to the case $\| V \| =
	\mathscr{L}^\vdim$ by \cite[3.1.19\,(4), 3.2.3, 2.8.18,
	2.9.11]{MR41:1976} which may be treated by means of
	\cite[3.1.16]{MR41:1976}.

	\eqref{item:approx_diff:comp} follows from
	\cite[2.10.19\,(4)]{MR41:1976} and implies
	\eqref{item:approx_diff:unrectifiable} by
	\cite[4.5\,(2)]{snulmenn.decay}.
\end{proof}
\begin{theorem} \label{thm:approx_diff}
	Suppose $\vdim$, $\adim$, $U$, and $V$ are as in
	\ref{miniremark:situation_general}, $Y$ is a finite dimensional normed
	vectorspace, and $f \in \trunc (V,Y)$.

	Then $f$ is $( \| V \|, \vdim )$ approximately differentiable with
	\begin{gather*}
		\derivative{V}{f} (a) = ( \| V \|, \vdim ) \ap Df(a) \circ
		\project{\Tan^\vdim ( \| V \|, a )}
	\end{gather*}
	at $\| V \|$ almost all $a$.
\end{theorem}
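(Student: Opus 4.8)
The strategy is to reduce to a local statement around $\|V\|$-almost every point $a$ and then apply the zero-median Sobolev--Poincar\'e inequality \ref{thm:sob_poin_summary}\,\eqref{item:sob_poin_summary:interior:p=1} with a value of $Q$ chosen strictly below the (approximately continuous) density at $a$, so that the smallness hypotheses of that theorem hold on small balls. First I would reduce to the scalar case $Y=\rel$ by composing $f$ with the coordinate functionals of $Y$ and invoking \ref{lemma:integration_by_parts} together with \ref{lemma:approx_diff}\,\eqref{item:approx_diff:comp}; approximate differentiability and the identity $(\|V\|,\vdim)\ap Df(a)=\derivative{V}{f}(a)\circ\project{\Tan^\vdim(\|V\|,a)}$ are tested componentwise. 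Also, by \ref{lemma:basic_v_weakly_diff} and \ref{miniremark:trunc} one may replace $f$ by a truncation $g_s\circ f$, so it suffices to treat bounded $f$ with $\derivative{V}{f}\in\Lploc{1}(\|V\|,\Hom(\rel^\adim,\rel))$; combined with \ref{thm:addition}\,\eqref{item:addition:zero} and H\"older's inequality this gives $f\in\Lploc{1}(\|V\|+\|\delta V\|)$.

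Next I would fix a point $a$ lying in the set of full $\|V\|$ measure where simultaneously: $\density^\vdim(\|V\|,a)$ exists and is $\ge 1$ and $\density^\vdim(\|V\|,\cdot)$ is approximately continuous at $a$; $\Tan^\vdim(\|V\|,a)=:T$ is an $\vdim$-dimensional plane with the usual blow-up of $\|V\|$ to $\density^\vdim(\|V\|,a)\,\mathscr{H}^\vdim\restrict T$; $a$ is a Lebesgue point of $\derivative{V}{f}$ and of $\mathbf{h}(V,\cdot)$ (resp.\ of $\|\delta V\|$) with respect to $\|V\|$; and $a$ is an approximate continuity point of $f$. Choose $Q$ with $1\le Q<\density^\vdim(\|V\|,a)$ (if the density equals $1$ take $Q=1$, which is allowed). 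For small $r$, approximate continuity of the density forces $\|V\|(\cball{a}{r}\cap\{\density^\vdim(\|V\|,\cdot)<Q\})=o(r^\vdim)$, so the second smallness hypothesis of \ref{thm:sob_poin_summary} holds; the first, $\|V\|(\cball{a}{r}\cap E)\le(Q-M^{-1})\unitmeasure{\vdim}r^\vdim$, will require care since it must hold for the support set of the \emph{difference} $f-\ell$ of $f$ with its candidate affine approximation, which need not be small --- I would instead apply the inequality on the superlevel sets produced by truncating $f-\ell$ from above and below at height $\varepsilon r$, composing with the functions from \ref{example:composite}\,\eqref{item:composite:mod} and \eqref{item:composite:1d} so that the resulting function vanishes outside $\{|f-\ell|>\varepsilon r/2\}$, whose $\|V\|$ measure in $\cball{a}{r}$ is bounded using the weak-derivative average. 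Applying \ref{thm:sob_poin_summary}\,\eqref{item:sob_poin_summary:interior:p=1} on $\cball{a}{r}$ (with the ball slightly shrunk so that $A=\{x:\oball{x}{\sigma r}\subset U\}\supset\cball{a}{r/2}$) then bounds $r^{-\vdim}\int_{\cball{a}{r/2}}(|f(x)-\ell(x)|/r)^\beta\ud\|V\|x$ by a constant times $\big(r^{-\vdim}\int_{\cball{a}{r}}\,|\,\derivative{V}{f}-L\,|\ud\|V\| + r^{-\vdim}\int_{\cball{a}{r}}\,|f-\ell|\ud\|\delta V\|\big)$, where $L=\derivative{V}{f}(a)$ and $\ell(x)=f(a)+L(x-a)$.

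The right-hand side tends to $0$ as $r\to0+$: the first term because $a$ is a Lebesgue point of $\derivative{V}{f}$; the second because $\int_{\cball{a}{r}}|f-\ell|\ud\|\delta V\|\le \|\delta V\|(\cball{a}{r})\cdot\eqLpnorm{\|\delta V\|\restrict\cball{a}{r}}{\infty}{f-\ell}$, and $\|\delta V\|(\cball{a}{r})=O(r^{\vdim})$ wherever $\mathbf{h}(V,\cdot)$ exists (or from the $p=1$ case, $O(r^{\vdim-1})$ times a bounded quantity suffices after using \ref{corollary:boundary_controls_interior} to transfer the $\Lploc\infty$ bound for $f-\ell$ from $\|V\|$ to $\|\delta V\|$), together with the fact that $f$ is approximately continuous at $a$ so $\eqLpnorm{\|V\|\restrict\cball{a}{r}}{\infty}{f-\ell}\to 0$ --- here I would use a measure-density argument rather than an $\Lp\infty$ bound directly, replacing $f-\ell$ by $(f-\ell)$ truncated at a slowly vanishing height. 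From the $\Lp\beta$ smallness one extracts, via Chebyshev and \cite[2.9.13]{MR41:1976}-type density arguments, that $\density^\vdim(\|V\|\restrict\{x:|f(x)-\ell(x)|>\varepsilon|x-a|\},a)=0$ for each $\varepsilon>0$, which is exactly $(\|V\|,\vdim)$ approximate differentiability of $f$ at $a$ with differential $L|T$. Finally, \ref{example:lipschitzian} and \ref{lemma:approx_diff}\,\eqref{item:approx_diff:unrectifiable} identify $(\|V\|,\vdim)\ap Df(a)$ on $T$ with $L\circ\project{T}$, and since the approximate differential is by definition only determined on $\Tan^\vdim(\|V\|,a)=T$, this yields the stated formula; in particular the normal component $L|\Nor^\vdim(\|V\|,a)=0$ assertion is the content of $L\circ\project{T}$ agreeing with $L$ on $T$ and being irrelevant off $T$.

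**Main obstacle.** The delicate point is the bookkeeping needed to run \ref{thm:sob_poin_summary}\,\eqref{item:sob_poin_summary:interior:p=1} on the ball $\cball{a}{r}$: that theorem demands a \emph{global} smallness of $\|V\|(\{f\ne0\})$, so one cannot feed it $f-\ell$ directly but must first localize by multiplying with a cutoff (via \ref{thm:addition}\,\eqref{item:addition:mult} and \ref{thm:mult_tg}/\ref{lemma:boundary} to stay in $\trunc_G$) and then truncate $f-\ell$ at a height comparable to $\varepsilon r$ so that its positivity set inside $\cball{a}{r}$ has measure controlled by $\varepsilon^{-1}r^{-1}\int_{\cball{a}{r}}|\derivative{V}{f}|\ud\|V\|=o(r^\vdim)$ --- uniformly small enough to satisfy the hypothesis with the chosen $Q$. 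Getting the dependence of the cutoff scale, the truncation height, and the radius right so that all three error terms vanish in the limit is the crux; everything else is a standard Lebesgue-point and measure-density argument.
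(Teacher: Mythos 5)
Your reduction to the scalar bounded case with $\derivative Vf\in\Lploc1(\|V\|,\Hom(\rel^\adim,\rel))$ via \ref{miniremark:trunc}, \ref{lemma:basic_v_weakly_diff}, and \ref{lemma:integration_by_parts} matches the paper, as does the selection of a Lebesgue/approximate-continuity point $a$ with $Q$ chosen strictly below the (approximately continuous) density. From there, however, the paper's argument is both different and less ambitious than yours: it applies \ref{thm:sob_poin_summary}\,\eqref{item:sob_poin_summary:interior:p=1} to $f_s^\pm$, where $f_s=f-y(s)$ is a \emph{median} translate, only to obtain the bounded-average (Lipschitz-type) estimate $\eqLpnorm{\|V\|\restrict\cball{a}{(\lambda-1)s}}{\beta}{f-f(a)}\lesssim s^\vdim$ at $\|V\|$-almost every $a$, and then invokes the Lusin-type approximation theorem \cite[3.7\,(i)]{snulmenn.isoperimetric} (with $\alpha=q=1$, $r=\infty$) to produce locally Lipschitzian $f_i$ agreeing with $f$ off a $\|V\|$-null set; the full conclusion is transported from $f_i$ to $f$ via \ref{thm:addition}\,\eqref{item:addition:zero}\,\eqref{item:addition:add}, \ref{example:lipschitzian}, and \ref{lemma:approx_diff}\,\eqref{item:approx_diff:comp}.

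You instead attempt to close an $o(r^\vdim)$-type differentiability estimate directly from Sobolev--Poincar\'e applied to $f-\ell$ with $\ell(x)=f(a)+L(x-a)$, $L=\derivative Vf(a)$, and this cannot be made to work as sketched. First, the positivity-set bound $\varepsilon^{-1}r^{-1}\tint{\cball ar}{}|\derivative Vf|\ud\|V\|$ is not $o(r^\vdim)$: at a Lebesgue point one only has $\tint{\cball ar}{}|\derivative Vf|\ud\|V\|=O(r^\vdim)$, so the stated quantity is $O(\varepsilon^{-1}r^{\vdim-1})$, which is off by a diverging factor $r^{-1}$; and the inequality you would actually need is a Poincar\'e bound for $f-\ell$, which is circular. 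Second, and more fundamentally, the Lebesgue-point argument for the right-hand side cannot even start: by \ref{thm:addition}\,\eqref{item:addition:add} and \ref{example:lipschitzian}, $\derivative V{(f-\ell)}(x)=\derivative Vf(x)-L\circ\project{\Tan^\vdim(\|V\|,x)}$, and near $a$ this tends not to zero but to the constant $L\circ\perpproject{T}$ with $T=\Tan^\vdim(\|V\|,a)$, which vanishes precisely when $L$ annihilates $\Nor^\vdim(\|V\|,a)$ --- one half of the very assertion being proved. Your concluding remark that the normal-component part of the identity is ``irrelevant off $T$'' misreads the statement: $F$ in \ref{def:v_weakly_diff} is $\|V\|$-almost uniquely determined as a $\Hom(\rel^\adim,Y)$-valued function, so $\derivative Vf(a)|\Nor^\vdim(\|V\|,a)=0$ is a genuine rigidity theorem. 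In the paper it comes for free from the Lipschitzian comparison functions via \ref{example:lipschitzian}; in your direct scheme there is no source for it, and without it the Lebesgue-point step leaves an irreducible $O(r^\vdim)$ error where $o(r^\vdim)$ is required.
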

\begin{proof}
	In view of \ref{lemma:approx_diff}\,\eqref{item:approx_diff:comp}, one
	employs \ref{miniremark:trunc} and \ref{lemma:basic_v_weakly_diff} to
	reduce the problem to the case that $f$ is bounded and
	$\derivative{V}{f} \in \Lploc{1} ( \| V \|, \Hom ( \rel^\adim,
	Y))$. In particular, one may assume $Y = \rel$ by
	\ref{lemma:integration_by_parts}.  Define $\beta = \infty$ if $\vdim
	= 1$ and $\beta = \vdim/(\vdim-1)$ if $\vdim > 1$. Let $g_a : \dmn f
	\to \rel$ be defined by $g_a(x) = f(x)-f(a)$ for $a,x \in \dmn f$.

	First, \emph{it will be shown that
	\begin{gather*}
		\limsup_{s \to 0+} s^{-1} \| V \| ( \cball as )^{-1}
		\tint{\cball as}{} g_a \ud \| V \| < \infty
		\quad \text{for $\| V \|$ almost all $a$}.
	\end{gather*}}
	For this purpose define $C = \{ (a, \cball{a}{r} ) \with \cball{a}{r}
	\subset U \}$ and consider a point $a$ satisfying for some $M$
	the conditions
	\begin{gather*}
		\sup \{ 4, \adim \} \leq M < \infty, \quad 1 \leq
		\density^\vdim ( \| V \|, a ) \leq M, \quad \eqLpnorm{\| V \|
		+ \| \delta V \|}{\infty}{f} \leq M, \\
		\limsup_{s \to 0+} s^{-\vdim} \big ( \tint{\cball{a}{s}}{} |
		\derivative{V}{f} | \ud \| V \| + \measureball{\| \delta V
		\|}{ \cball{a}{s} } \big ) < M, \\
		\text{$\density^\vdim ( \| V \|, \cdot )$ and $f$ are $(\| V
		\|, C )$ approximately continuous at $a$}
	\end{gather*}
	which are met by $\| V \|$ almost all $a$ by \cite[2.10.19\,(1)\,(3),
	2.8.18, 2.9.13]{MR41:1976}. Define $\Delta =
	\Gamma_{\ref{thm:sob_poin_summary}} (M)$. Choose $1 \leq Q \leq M$ and
	$1 < \lambda \leq 2$ subject to the requirements $\density^\vdim ( \| V
	\|, a ) < 2 \lambda^{-\vdim} ( Q- 1/4)$ and
	\begin{gather*}
		\text{either $Q = \density^\vdim ( \| V \|, a ) = 1$} \quad
		\text{or $Q < \density^\vdim ( \| V \|, a )$}.
	\end{gather*}
	Then pick $0 < r < \infty$ such that $\cball{a}{\lambda r} \subset U$
	and
	\begin{gather*}
		\measureball{\| V \|}{\cball{a}{s}} \geq (1/2)
		\unitmeasure{\vdim} s^\vdim, \quad \measureball{\| V
		\|}{\oball{a}{\lambda s}} \leq 2 (Q-M^{-1}) \unitmeasure{\vdim}
		s^\vdim, \\
		\| V \| ( \classification{\oball{a}{\lambda s}}{x}{
		\density^\vdim ( \| V \|, x ) < Q } ) \leq \Delta^{-1}
		s^\vdim, \\
		\tint{\cball{a}{\lambda s}}{} | \derivative{V}{f} | \ud \| V \| +
		\measureball{\| \delta V \|}{ \cball{a}{\lambda s} } \leq M
		\lambda^\vdim s^\vdim
	\end{gather*}
	for $0 < s \leq r$. Choose $y (s) \in \rel$ such that
	\begin{gather*}
		\| V \| ( \classification{\oball{a}{\lambda s}}{x}{ f ( x ) < y
		(s) } ) \leq (1/2) \measureball{\| V \|}{ \oball{a}{\lambda s} },
		\\
		\| V \| ( \classification{\oball{a}{\lambda s}}{x}{ f ( x ) > y
		(s) } ) \leq (1/2) \measureball{\| V \|}{ \oball{a}{\lambda s} }
	\end{gather*}
	for $0 < s \leq r$, in particular
	\begin{gather*}
		| y (s) | \leq M \quad \text{and} \quad f(a) = \lim_{s \to 0+}
		y(s).
	\end{gather*}
	Define $f_s ( x ) = f(x)-y(s)$ whenever $0 < s \leq r$ and $x \in
	\dmn f$. Recalling \ref{lemma:basic_v_weakly_diff},
	\ref{example:composite}\,\eqref{item:composite:1d}, and
	\ref{remark:trunc}, one applies
	\ref{thm:sob_poin_summary}\,\eqref{item:sob_poin_summary:interior:p=1}
	with $U$, $G$, $f$, and $r$ replaced by $\oball{a}{\lambda s}$,
	$\varnothing$, $f_s^+$ respectively $f_s^-$, and $s$ to infer
	\begin{align*}
		\eqLpnorm{\| V \| \restrict \cball{a}{(\lambda-1)
		s}}{\beta}{f_s} & \leq \Delta \tint{\cball{a}{\lambda s}}{} |
		\derivative V{f_s} | \ud \| V \| + \tint{\cball{a}{\lambda
		s}}{} |f_s| \ud \| \delta V \| \\
		& \leq \kappa s^\vdim
	\end{align*}
	for $0 < s \leq r$, where $\kappa = \Delta M \lambda^\vdim$. Noting
	that
	\begin{gather*}
		\begin{aligned}
			& | y(s)-y(s/2) | \cdot \| V \| ( \cball{a}{(\lambda-1)
			s/2})^{1/\beta} \\
			& \leq \eqLpnorm{\| V \| \restrict
			\cball{a}{(\lambda-1) s/2}}{\beta}{f_{s/2}} +
			\eqLpnorm{\| V \| \restrict \cball{a}{(\lambda-1)
			s}}{\beta}{f_s} \leq 2 \kappa s^\vdim,
		\end{aligned} \\
		| y(s)-y(s/2) | \leq 2^{\vdim+1} \kappa
		\unitmeasure{\vdim}^{-1/\beta} ( \lambda-1 )^{1-\vdim} s,
	\end{gather*}
	one obtains for $0 < s \leq r$ that
	\begin{gather*}
		|y(s)-f(a)| \leq 2^{\vdim+2} \kappa
		\unitmeasure{\vdim}^{-1/\beta} (\lambda-1)^{1-\vdim} s, \\
		\eqLpnorm{\| V \| \restrict \cball{a}{(\lambda-1)s}
		}{\beta}{g_a} \leq \kappa \big ( 1 + 2^{\vdim+3} M (
		\lambda-1 )^{1-\vdim} \big ) s^\vdim.
	\end{gather*}

	Combining the assertion of the preceding paragraph with
	\cite[3.7\,(i)]{snulmenn.isoperimetric} applied with $\alpha$, $q$,
	and $r$ replaced by $1$, $1$, and $\infty$, one obtains a sequence of
	locally Lipschitzian functions $f_i : U \to \rel$ such that
	\begin{gather*}
		{\textstyle \| V \| \big ( U \without \bigcup \{ B_i \with i
		\in \nat \} \big ) = 0, \quad \text{where $B_i = U \cap \{ x
		\with f(x)=f_i(x) \}$}}.
	\end{gather*}
	Since $f-f_i \in \trunc (V)$ with
	\begin{gather*}
		\derivative Vf (a) - \derivative{V}{f_i} (a) =
		\derivative{V}{(f-f_i)} (a) = 0 \quad \text{for $\| V \|$
		almost all $a \in B_i$}
	\end{gather*}
	by
	\ref{thm:addition}\,\eqref{item:addition:zero}\,\eqref{item:addition:add},
	the conclusion follows from of \ref{example:lipschitzian} and
	\ref{lemma:approx_diff}\,\eqref{item:approx_diff:comp}.
\end{proof}
\begin{miniremark} \label{miniremark:tangent_spaces}
	\emph{If $\vdim$, $\adim$, $U$, $V$, $\psi$, and $p$ are as in
	\ref{miniremark:situation_general}, $p = \vdim$, and $a \in U$, then
	\begin{gather*}
		\Tan^\vdim ( \| V \|, a ) = \Tan ( \spt \| V \|, a );
	\end{gather*}}
	in fact, if $0 < r < \infty$, $\oball a{2r} \subset U$, $\psi ( \oball
	a{2r} \without \{ a \} )^{1/\vdim} \leq ( 2 \isoperimetric
	\vdim)^{-1}$, and $x \in \oball ar \cap \spt \| V \|$, then
	$\measureball{\| V \|}{\cball xs} \geq ( 2 \vdim \isoperimetric
	\vdim)^{-\vdim} s^\vdim$ for $0 < s < |x-a|$ by
	\cite[2.5]{snulmenn.isoperimetric}.
\end{miniremark}
\begin{theorem} \label{thm:diff_lebesgue_spaces}
	Suppose $\vdim$, $\adim$, $p$, $U$, $V$ are as in
	\ref{miniremark:situation_general}, $1 \leq q \leq \infty$, $Y$ is a
	finite dimensional normed vectorspace, $f \in \trunc (V,Y)$,
	$\derivative Vf \in \Lploc{q} ( \| V \|, \Hom (\rel^\adim, Y) )$,
	\begin{gather*}
		C = \{ ( a, \cball{a}{r} ) \with \cball{a}{r} \subset U \},
	\end{gather*}
	and $X$ is the set of points in $\spt \| V \|$ at which $f$ is $( \| V
	\|,C )$ approximately continuous.

	Then $\| V \| ( U \without X ) = 0$ and the following four statements
	hold.
	\begin{enumerate}
		\item \label{item:diff_lebesgue_spaces:m>1=p} If $\vdim > 1$,
		$\beta = \vdim/(\vdim-1)$, and $f \in \Lploc{1} ( \| \delta V
		\|, Y )$, then
		\begin{gather*}
			\lim_{r \to 0+} r^{-\vdim} \tint{\cball ar}{} ( |
			f(x)-f(a)- \left <x-a, \derivative Vf (a) \right > | /
			| x-a| )^\beta \ud \| V \| x = 0
		\end{gather*}
		for $\| V \|$ almost all $a$.
		\item \label{item:diff_lebesgue_spaces:m=p=1} If $\vdim = 1$,
		then $f|X$ is differentiable relative to $X$ at $a$ with
		\begin{gather*}
			D (f|X) (a) = \derivative Vf (a) | \Tan^\vdim ( \| V
			\|, a) \quad \text{for $\| V \|$ almost all $a$}.
		\end{gather*}
		\item \label{item:diff_lebesgue_spaces:q<m=p} If $q < \vdim =
		p$ and $\iota = \vdim q/(\vdim-q)$, then
		\begin{gather*}
			\lim_{r \to 0+} r^{-\vdim} \tint{\cball ar}{} ( |
			f(x)-f(a)- \left <x-a, \derivative Vf (a) \right > | /
			| x-a| )^\iota \ud \| V \| x = 0
		\end{gather*}
		for $\| V \|$ almost all $a$.
		\item \label{item:diff_lebesgue_spaces:p=m<q} If $p = \vdim <
		q$, then $f|X$ is differentiable relative to $X$ at $a$ with
		\begin{gather*}
			D (f|X) (a) = \derivative Vf (a) | \Tan^\vdim ( \| V
			\|, a) \quad \text{for $\| V \|$ almost all $a$}.
		\end{gather*}
	\end{enumerate}
\end{theorem}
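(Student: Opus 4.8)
The plan is to derive all four statements from the approximate differentiability result \ref{thm:approx_diff} combined with the local Sobolev Poincar\'e estimate \ref{thm:sob_poin_summary}\,\eqref{item:sob_poin_summary:interior} and the lower density bound. First I would record the preliminary measurability claim $\| V \| ( U \without X ) = 0$: since $V \in \RVar_\vdim(U)$ with $\| \delta V \|$ a Radon measure and $\density^\vdim(\|V\|,x)\geq 1$ for $\|V\|$ almost all $x$, the weight measure $\| V \|$ is (locally) comparable to $\mathscr{H}^\vdim$ restricted to an $(\mathscr{H}^\vdim,\vdim)$ rectifiable set by Allard \cite[3.5]{MR0307015}, so approximate continuity of the $\|V\|$ measurable function $f$ holds $\|V\|$ almost everywhere by \cite[2.9.13]{MR41:1976}, applied along the net $C$. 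Next, by \ref{thm:approx_diff}, $f$ is $(\|V\|,\vdim)$ approximately differentiable at $\|V\|$ almost all $a$ with $\derivative Vf(a) = (\|V\|,\vdim)\ap Df(a)\circ\project{\Tan^\vdim(\|V\|,a)}$; in particular $\derivative Vf(a)$ annihilates $\Nor^\vdim(\|V\|,a)$, which is what makes $\derivative Vf(a)(x-a)$ the right linear candidate. For the purposes of \eqref{item:diff_lebesgue_spaces:m=p=1} and \eqref{item:diff_lebesgue_spaces:p=m<q} I would also invoke \ref{miniremark:tangent_spaces} to identify $\Tan^\vdim(\|V\|,a)$ with $\Tan(\spt\|V\|,a)$ when $p=\vdim$.

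The core of the argument is the quantitative estimate obtained by applying \ref{thm:approx_diff} together with \ref{lemma:approx_diff}\,\eqref{item:approx_diff:cover}: away from a $\|V\|$ null set, $f$ agrees with a countable family of $C^1$ functions $f_i$, and on $B_i = \{x : f(x)=f_i(x)\}$ we have $\derivative Vf(a) = (\|V\|,\vdim)\ap Df_i(a)\circ\project{\Tan^\vdim(\|V\|,a)}$ for $\|V\|$ almost all $a\in B_i$. Fixing such an $a$, set $g_a(x) = f(x) - f(a) - \derivative Vf(a)(x-a)$. Then $g_a \in \trunc(V,Y)$ by \ref{thm:addition}\,\eqref{item:addition:add} (subtracting the affine, hence locally Lipschitzian, function $x\mapsto f(a)+\derivative Vf(a)(x-a)$), with $\derivative V{g_a}(x) = \derivative Vf(x) - \derivative Vf(a)\circ\project{\Tan^\vdim(\|V\|,x)}$ for $\|V\|$ almost all $x$, using \ref{example:lipschitzian}. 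The key point is that at a Lebesgue point $a$ (in the appropriate $L^q$ sense) of the $\|V\|$ measurable function $x\mapsto \derivative Vf(x)$, restricted suitably using the approximate continuity of $x\mapsto\Tan^\vdim(\|V\|,x)$ at points of the rectifiable set, one has
\begin{gather*}
	\lim_{r\to 0+} r^{-\vdim} \tint{\cball ar}{} |\derivative V{g_a}| ^q \ud\|V\| = 0
\end{gather*}
(interpreted as a supremum-essential-limit in the cases $q=\infty$). For each small $r$ one then applies \ref{thm:sob_poin_summary}\,\eqref{item:sob_poin_summary:interior} to $g_a$ on the ball $\oball a{\lambda r}$ with $G=\varnothing$, choosing $Q$ and $\lambda$ as in the proof of \ref{thm:approx_diff} so that the density and smallness hypotheses are met for all small $r$ (this is where the density hypothesis enters, and where the flexibility to take $Q>1$ near points with density $>1$ is used). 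The relevant implications are \eqref{item:sob_poin_summary:interior:p=1} for \eqref{item:diff_lebesgue_spaces:m>1=p} and \eqref{item:diff_lebesgue_spaces:m=p=1}, \eqref{item:sob_poin_summary:interior:q<m=p} for \eqref{item:diff_lebesgue_spaces:q<m=p}, and \eqref{item:sob_poin_summary:interior:p=m<q} for \eqref{item:diff_lebesgue_spaces:p=m<q}; the contribution $\|\delta V\|(g_a)$ appearing in \eqref{item:sob_poin_summary:interior:p=1} is controlled either by the mean curvature hypothesis ($p=\vdim$) or by the additional assumption $f\in\Lploc1(\|\delta V\|,Y)$ together with H\"older's inequality and a standard absorption, exactly as one handles the median term $y(s)$ in the proof of \ref{thm:approx_diff}.

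For the integral-type statements \eqref{item:diff_lebesgue_spaces:m>1=p} and \eqref{item:diff_lebesgue_spaces:q<m=p}, the Sobolev Poincar\'e estimate on $\oball a{\lambda r}$ directly yields $\eqLpnorm{\|V\|\restrict\cball a{(\lambda-1)r}}{\beta}{g_a} = o(r^\vdim)$ (respectively with exponent $\iota$), and since $|x-a|\leq(\lambda-1)r$ on that ball one divides by $|x-a|$ and sums over a dyadic decomposition of annuli to pass from the integral over $\cball a{(\lambda-1)r}$ to the integral of $(|g_a(x)|/|x-a|)^\beta$; the dyadic sum converges because $\beta = \vdim/(\vdim-1) > 1$ (respectively $\iota = \vdim q/(\vdim-q) > 1$), which is the usual Sobolev-to-pointwise passage as in \cite[4.5.15]{MR41:1976}. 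For the differentiability-relative-to-$X$ statements \eqref{item:diff_lebesgue_spaces:m=p=1} and \eqref{item:diff_lebesgue_spaces:p=m<q}, the $\trunc_G$ machinery gives an $\Lp\infty$ bound: \ref{thm:sob_poin_summary}\,\eqref{item:sob_poin_summary:interior:p=m=1} or \eqref{item:sob_poin_summary:interior:p=m<q} forces $\eqLpnorm{\|V\|\restrict\cball a{(\lambda-1)r}}{\infty}{g_a}$ to be $o(r)$, and combining this with the observation that $\spt\|V\|$ has the structure of \ref{miniremark:tangent_spaces} one upgrades the $\|V\|$-essential supremum bound to a genuine pointwise bound on $X\cap\cball a{(\lambda-1)r}$ because $X$ is contained (up to the null set already discarded) in $\spt\|V\|$ and approximate continuity plus the $L^\infty$ smallness pins down the value of $g_a$ at every point of $X$ near $a$. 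The main obstacle I anticipate is precisely this last upgrade — turning an $L^\infty$-in-$\|V\|$ oscillation bound into a pointwise-everywhere-on-$X$ statement — which requires carefully using that every point of $X$ is a point of approximate continuity for $f$ and that $\|V\|$ charges every ball around points of $\spt\|V\|$ (the lower mass bound from \cite[2.5]{snulmenn.isoperimetric} via the mean curvature hypothesis), so that no ``invisible'' subset of $X$ can escape the measure-theoretic estimate; a secondary but routine difficulty is verifying the $L^q$-Lebesgue-point property of $\derivative Vf$ relative to the rectifiable structure, which follows from \cite[2.9.9]{MR41:1976} applied on the submanifold pieces supplied by \ref{lemma:approx_diff}.

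\qed
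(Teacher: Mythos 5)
Your overall strategy — approximate differentiability from \ref{thm:approx_diff}, then quantify via the local Sobolev Poincar\'e inequality \ref{thm:sob_poin_summary}\,\eqref{item:sob_poin_summary:interior}, then pass to the pointwise or relative-differentiability statements via a dyadic annulus decomposition and, for \eqref{item:diff_lebesgue_spaces:m=p=1}\,\eqref{item:diff_lebesgue_spaces:p=m<q}, via \cite[3.1.22]{MR41:1976} and \ref{miniremark:tangent_spaces} — is the right skeleton, and the Lebesgue-point argument you give for $\derivative V{g_a}$, combined with the observation $\derivative Vf(a) = \derivative Vf(a)\circ\project{\Tan^\vdim(\|V\|,a)}$, is correct. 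But you invoke the $C^1$ cover from \ref{lemma:approx_diff}\,\eqref{item:approx_diff:cover} and then do not actually use it: you apply the Sobolev Poincar\'e inequality directly to $g_a(x) = f(x)-f(a)-\derivative Vf(a)(x-a)$, a function which does \emph{not} vanish on a set of density one at $a$, and this is where the route diverges from the paper and where a genuine gap opens.

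The paper instead writes $g_a = g_i + \bigl(f_i - f_i(a) - \langle\cdot - a, \derivative V{f_i}(a)\rangle\bigr)$ with $g_i = f - f_i$. The second summand is handled by a separate ``special case'' argument for functions of class $1$ using the structure of $\Nor^\vdim(\|V\|,a)$, and the first is handled by Sobolev Poincar\'e. Because $g_i$ vanishes on $B_i = \{x \with f(x)=f_i(x)\}$, which has $\|V\|$ density one at $a$, two things come for free that your direct estimate of $g_a$ must struggle to replace. First, the smallness hypothesis $\|V\|(E)\leq(Q-M^{-1})\unitmeasure{\vdim}s^\vdim$ in \ref{thm:sob_poin_summary}\,\eqref{item:sob_poin_summary:interior} is satisfied trivially with $Q = 1$, because $E\subset\oball a{2s}\without B_i$ has $\|V\|$ measure $o(s^\vdim)$ — no median shift, no choice of $Q > 1$, no dependence on $\density^\vdim(\|V\|,a)$ is needed, contrary to what you anticipate. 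Second and more importantly, the Radon measure $\mu_i(A)=\tint A{} |\derivative V{g_i}| \ud\|V\| + \tint A{} |g_i|\ud\|\delta V\|$ (in case \eqref{item:diff_lebesgue_spaces:m>1=p}) satisfies $\mu_i(B_i)=0$, so $\density^\vdim(\mu_i,a)=0$ for $\|V\|$ almost all $a\in B_i$ by \cite[2.10.19\,(4)]{MR41:1976}. That is what makes the whole right-hand side of the Sobolev Poincar\'e inequality $o(s^\vdim)$, including the $\|\delta V\|$ contribution.

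In your version of case \eqref{item:diff_lebesgue_spaces:m>1=p} you must instead show $\tint{\cball a{\lambda s}}{} |g_a - z(s)|\ud\|\delta V\| = o(s^\vdim)$ at $\|V\|$ almost all $a$, with $z(s)$ the median. The linear part and the $|z(s)|$ part are $O(s^{\vdim+1})$, fine; but the remaining term $\tint{\cball a{\lambda s}}{}|f-f(a)|\ud\|\delta V\|$ is not obviously $o(s^\vdim)$ for $\|V\|$ almost all $a$: $\|\delta V\|$ is merely a Radon measure here (no absolute continuity with respect to $\|V\|$), $f$ is merely in $\Lploc 1(\|\delta V\|,Y)$, so the natural Lebesgue-point set is a $\|\delta V\|$ full set, not a $\|V\|$ full set. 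Your proposed fix — ``H\"older's inequality and a standard absorption, exactly as one handles the median term $y(s)$ in the proof of \ref{thm:approx_diff}'' — yields only an $O(s^\vdim)$ bound (this is all that is needed in \ref{thm:approx_diff}, which only seeks approximate differentiability, not a rate), whereas here you need $o(s^\vdim)$. This is a genuine missing step, and the paper avoids it precisely by estimating $g_i$ rather than $g_a$. A secondary issue of the same flavour: for \eqref{item:diff_lebesgue_spaces:m=p=1} you propose to use \ref{thm:sob_poin_summary}\,\eqref{item:sob_poin_summary:interior:p=1}, which carries the $\|\delta V\|(f)$ term, but case \eqref{item:diff_lebesgue_spaces:m=p=1} does not assume $f\in\Lploc1(\|\delta V\|,Y)$; the paper uses \eqref{item:sob_poin_summary:interior:p=m=1} instead, which has no $\|\delta V\|$ term.
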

\begin{proof}
	Clearly, $\| V \| (U \without X) = 0$ by \cite[2.8.18,
	2.9.13]{MR41:1976}.

	Assume $p = q = 1$ in case of \eqref{item:diff_lebesgue_spaces:m>1=p}
	or \eqref{item:diff_lebesgue_spaces:m=p=1}. In case of
	\eqref{item:diff_lebesgue_spaces:p=m<q} also assume $p > 1$ and $q <
	\infty$. Define $\alpha = \beta$ in case of
	\eqref{item:diff_lebesgue_spaces:m>1=p}, $\alpha = \iota$ in case of
	\eqref{item:diff_lebesgue_spaces:q<m=p}, and $\alpha = \infty$ in case
	of \eqref{item:diff_lebesgue_spaces:m=p=1} or
	\eqref{item:diff_lebesgue_spaces:p=m<q}. Moreover, let $h_a : \dmn f
	\to Y$ be defined by
	\begin{gather*}
		h_a(x) = f(x)-f(a) - \left <x-a, \derivative Vf (a) \right > 
	\end{gather*}
	whenever $a \in \dmn f \cap \dmn \derivative Vf$ and $x \in \dmn f$.
	
	The following assertion will be shown. \emph{There holds
	\begin{gather*}
		\lim_{s \to 0+} s^{-1} \eqLpnorm{s^{-\vdim} \| V \| \restrict
		\cball{a}{s}}{\alpha}{h_a} = 0 \quad \text{for $\| V \|$ almost
		all $a$}.
	\end{gather*}}
	 In the special case that $f$ is of class
	$1$, in view of \ref{example:lipschitzian}, it is sufficient to
	prove that
	\begin{gather*}
		\lim_{s \to 0+} s^{-1} \eqLpnorm{s^{-\vdim} \| V \| \restrict
		\cball{a}{s}}{\alpha}{\project{\Nor^\vdim ( \| V \|, a )} (
		\cdot - a )} = 0
	\end{gather*}
	for $\| V \|$ almost all $a$; and if $\varepsilon > 0$ and $\Tan^\vdim
	( \| V \|, a ) \in \grass{\adim}{\vdim}$, then
	\begin{gather*}
		\density^\vdim ( \| V \| \restrict U \cap \{ x \with |
		\project{\Nor^\vdim ( \| V \|, a )} (x-a) | > \varepsilon
		|x-a| \}, a ) = 0
	\end{gather*}
	in case of \eqref{item:diff_lebesgue_spaces:m>1=p} by
	\cite[3.2.16]{MR41:1976} and
	\begin{gather*}
		\cball{a}{s} \cap \spt \| V \| \subset \{ x \with |
		\project{\Nor^\vdim ( \| V \|, a )} (x-a) | \leq \varepsilon
		|x-a| \} \quad \text{for some $s>0$}
	\end{gather*}
	in case of \eqref{item:diff_lebesgue_spaces:m=p=1} or
	\eqref{item:diff_lebesgue_spaces:q<m=p} or
	\eqref{item:diff_lebesgue_spaces:p=m<q} by
	\ref{miniremark:tangent_spaces} and \cite[3.1.21]{MR41:1976}. To treat
	the general case, one obtains a sequence of functions $f_i : U \to
	Y$ of class $1$ such that
	\begin{gather*}
		{\textstyle \| V \| \big ( U \without \bigcup \{ B_i \with i
		\in \nat \} \big ) = 0, \quad \text{where $B_i = U \cap \{ x
		\with f(x)=f_i(x) \}$}}
	\end{gather*}
	from \ref{thm:approx_diff} and
	\ref{lemma:approx_diff}\,\eqref{item:approx_diff:cover}. Define $g_i =
	f - f_i$ and notice that $g_i \in \trunc ( V, Y )$ and
	\begin{gather*}
		\derivative{V}{g_i} (a) = \derivative{V}{f} (a) -
		\derivative{V}{f_i} (a) \quad \text{for $\| V \|$ almost all
		$a$}
	\end{gather*}
	for $i \in \nat$ by \ref{thm:addition}\,\eqref{item:addition:add}.
	Define Radon measures $\mu_i$ over $U$ by
	\begin{gather*}
		\mu_i ( A ) = \tint{A}{\ast} | \derivative{V}{g_i} | \ud \| V
		\| + \tint{A}{\ast} |g_i| \ud \| \delta V \| \quad \text{in
		case of \eqref{item:diff_lebesgue_spaces:m>1=p}}, \\
		\mu_i ( A ) = \tint{A}{\ast} | \derivative{V}{g_i} |^q \ud \|
		V \| \quad \text{in case of
		\eqref{item:diff_lebesgue_spaces:m=p=1} or
		\eqref{item:diff_lebesgue_spaces:q<m=p} or
		\eqref{item:diff_lebesgue_spaces:p=m<q}}
	\end{gather*}
	whenever $A \subset U$ and $i \in \nat$ and notice that $\mu_i ( B_i )
	= 0$ by \ref{thm:addition}\,\eqref{item:addition:zero} (or
	alternately by \ref{thm:approx_diff} and
	\ref{lemma:approx_diff}\,\eqref{item:approx_diff:comp}). The assertion
	will be shown to hold at a point $a$ satisfying for some $i \in \nat$
	that
	\begin{gather*}
		f(a) = f_i (a), \quad \derivative Vf (a) = \derivative V{f_i}
		(a), \\
		\lim_{s \to 0+} s^{-1} \eqLpnorm{s^{-\vdim} \| V \| \restrict
		\cball as }{\alpha}{f_i(\cdot)-f_i(a) - \left < \cdot - a ,
		\derivative V{f_i} (a) \right >} = 0, \\
		\density^\vdim ( \| V \| \restrict
		U \without B_i, a ) = 0, \quad \psi ( \{ a \} ) = 0, \quad
		\density^\vdim ( \mu_i, a ) = 0.
	\end{gather*}
	These conditions are met by $\| V \|$ almost all $a$ in view of the
	special case, \cite[2.10.19\,(4)]{MR41:1976}. Choosing $0 < r <
	\infty$ with $\cball{a}{2r} \subset U$ and
	\begin{gather*}
		\| V \| ( \oball{a}{2s} \without B_i ) \leq (1/2)
		\unitmeasure{\vdim} s^\vdim, \\
		\measureball{\psi}{\oball{a}{2r}} \leq
		\Gamma_{\ref{thm:sob_poin_summary}} ( 2\adim )^{-1} \quad
		\text{in case of \eqref{item:diff_lebesgue_spaces:m=p=1} or
		\eqref{item:diff_lebesgue_spaces:q<m=p} or
		\eqref{item:diff_lebesgue_spaces:p=m<q}}
	\end{gather*}
	for $0 < s \leq r$, one infers from \ref{remark:trunc} and
	\ref{thm:sob_poin_summary}\,\eqref{item:sob_poin_summary:interior}
	with $U$, $M$, $G$, $f$, $Q$, and $r$ replaced by $\oball a{2s}$,
	$2\adim$, $\varnothing$, $g_i$, $1$, and $s$ that
	\begin{gather*}
		\eqLpnorm{\| V \| \restrict \cball{a}{s} }{\alpha}{g_i} \leq
		\Delta \mu_i ( \cball{a}{2s} )^{1/q} \quad \text{in case of
		\eqref{item:diff_lebesgue_spaces:m>1=p} or
		\eqref{item:diff_lebesgue_spaces:m=p=1} or
		\eqref{item:diff_lebesgue_spaces:q<m=p}}, \\
		\eqLpnorm{\| V \| \restrict \cball{a}{s} }{\alpha}{g_i} \leq
		\Delta s^{1-\vdim/q} \mu_i ( \cball{a}{2s} )^{1/q} \quad
		\text{in case of \eqref{item:diff_lebesgue_spaces:p=m<q}}
	\end{gather*}
	for $0 < s \leq r$, where $\Delta =
	\Gamma_{\ref{thm:sob_poin_summary}} ( 2 \adim )$ in case of
	\eqref{item:diff_lebesgue_spaces:m>1=p} or
	\eqref{item:diff_lebesgue_spaces:m=p=1}, $\Delta = (\vdim-q)^{-1}
	\Gamma_{\ref{thm:sob_poin_summary}} ( 2 \adim )$ in case of
	\eqref{item:diff_lebesgue_spaces:q<m=p}, and $\Delta =
	\Gamma_{\ref{thm:sob_poin_summary}} (2 \adim )^{1/(1/\vdim-1/q)}
	\unitmeasure{\vdim}^{1/\vdim-1/q}$ in case of
	\eqref{item:diff_lebesgue_spaces:p=m<q}. Consequently,
	\begin{gather*}
		\lim_{s \to 0+} s^{-1} \eqLpnorm { s^{-\vdim} \| V \|
		\restrict \cball{a}{s} }{\alpha} {g_i} = 0
	\end{gather*}
	and the assertion follows.

	From the assertion of the preceding paragraph one obtains
	\begin{gather*}
		\lim_{s \to 0+} \eqLpnorm{s^{-\vdim} \| V \| \restrict
		\cball{a}{s} }{\alpha}{ | \cdot - a |^{-1} h_a ( \cdot ) } = 0
		\quad \text{for $\| V \|$ almost all $a$};
	\end{gather*}
	in fact, if $\cball{a}{r} \subset U$ and $\kappa = \sup \{ s^{-1}
	\eqLpnorm{s^{-\vdim} \| V \| \restrict \cball{a}{s} }{\alpha}{h_a}
	\with 0 < s \leq r \}$, then one estimates
	\begin{gather*}
		\begin{aligned}
			& \eqLpnorm{s^{-\vdim} \| V \| \restrict \cball{a}{s}
			}{\alpha}{ |\cdot-a|^{-1} h_a} \\
			& \qquad \leq s^{-\vdim/\alpha} \tsum{i=1}{\infty}
			2^{i} s^{-1} \eqLpnorm{\| V \| \restrict (
			\cball{a}{2^{1-i} s} \without \cball{a}{2^{-i}s}
			)}{\alpha}{h_a} \\
			& \qquad \leq 2 \kappa \tsum{i=1}{\infty}
			2^{(1-i)\vdim/\alpha} = 2 \kappa / (1-2^{-\vdim/\alpha})
		\end{aligned}
	\end{gather*}
	in case of \eqref{item:diff_lebesgue_spaces:m>1=p} or
	\eqref{item:diff_lebesgue_spaces:q<m=p} and
	$\eqLpnorm{s^{-\vdim} \| V \| \restrict \cball{a}{s} }{\alpha}{
	|\cdot-a|^{-1} h_a} \leq \kappa$ in case of
	\eqref{item:diff_lebesgue_spaces:m=p=1} or
	\eqref{item:diff_lebesgue_spaces:p=m<q} for $0 < s \leq r$. This
	yields the conclusion in case of
	\eqref{item:diff_lebesgue_spaces:m>1=p} or
	\eqref{item:diff_lebesgue_spaces:q<m=p} and that $f|X$ is
	differentiable relative to $X$ at $a$ with
	\begin{gather*}
		D(f|X) (a) = \derivative Vf (a) | \Tan (X,a) \quad \text{for
		$\| V \|$ almost all $a$}
	\end{gather*}
	in case of \eqref{item:diff_lebesgue_spaces:m=p=1} or
	\eqref{item:diff_lebesgue_spaces:p=m<q} by \cite[3.1.22]{MR41:1976}.
	To complete the proof, note that $X$ is dense in $\spt \| V \|$ and
	hence if $p = \vdim$, then
	\begin{gather*}
		\Tan (X,a) = \Tan ( \spt \| V \|, a ) = \Tan^\vdim ( \| V \|,
		a ) \quad \text{for $a \in U$}
	\end{gather*}
	by \cite[3.1.21]{MR41:1976} and \ref{miniremark:tangent_spaces}.
\end{proof}
\begin{remark}
	The usage of $h_a$ in the last paragraph of the proof is adapted from
	the proof of \cite[4.5.9\,(26)\,(\printRoman{2})]{MR41:1976}.
\end{remark}
\section{Coarea formula} \label{sec:coarea}
In this section rectifiability properties of the distributional boundary of
almost all superlevel sets of real valued weakly differentiable functions are
established, see \ref{corollary:coarea}. The result rests on the approximate
differentiability of such functions, see \ref{thm:approx_diff}. To underline
this fact, it is derived as corollary to a general result for approximately
differentiable functions, see \ref{thm:ap_coarea}.
\begin{theorem} \label{thm:ap_coarea}
	Suppose $\vdim, \adim \in \nat$, $\vdim \leq \adim$, $U$ is an open
	subset of $\rel^\adim$, $V \in \RVar_\vdim ( U )$, $f$ is a $\| V \|$
	measurable real valued function which is $( \| V \|, \vdim )$
	approximately differentiable at $\| V \|$ almost all points, $F$ is a
	$\| V \|$ measurable $\Hom ( \rel^\adim, \rel )$ valued function with
	\begin{gather*}
		F(x) = ( \| V \|, \vdim ) \ap Df(x) \circ \project{\Tan^\vdim
		( \| V \|, x )} \quad \text{for $\| V \|$ almost all $x$},
	\end{gather*}
	and $\tint{K \cap \{ x \with |f(x)| \leq s \}}{} |F| \ud \| V \| <
	\infty$ whenever $K$ is a compact subset of $U$ and $0 \leq s <
	\infty$, and $T \in \mathscr{D}' ( U \times \rel, \rel^\adim )$
	and $S(y) : \mathscr{D} ( U, \rel^\adim ) \to \rel$ satisfy
	\begin{gather*}
		T ( \phi ) = \tint{}{} \left < \phi (x,f(x)), F(x) \right >
		\ud \| V \| x \quad \text{for $\phi \in \mathscr{D} ( U \times
		\rel, \rel^\adim )$}, \\
		S(y)( \theta ) = \lim_{\varepsilon \to 0+} \varepsilon^{-1}
		\tint{\{ x \with y < f(x) \leq y+\varepsilon \}}{} \left <
		\theta, F \right > \ud \| V \| \in \rel \quad \text{for
		$\theta \in \mathscr{D} (U, \rel^\adim )$}
	\end{gather*}
	whenever $y \in \rel$, that is $y \in \dmn S$ if and only if the limit
	exists and belongs to $\rel$ for $\theta \in \mathscr{D} ( U,
	\rel^\adim )$.

	Then the following two statements hold.
	\begin{enumerate}
		\item \label{item:ap_coarea:equation} If $\phi \in \Lp{1} ( \|
		T \|, \rel^\adim )$ and $g$ is an $\overline{\rel}$ valued $\|
		T \|$ integrable function, then
		\begin{gather*}
			T ( \phi )= \tint{}{} S(y) ( \phi (\cdot, y ) ) \ud
			\mathscr{L}^1 y, \quad
			\tint{}{} g \ud \| T \| = \tint{}{} \tint{}{} g(x,y)
			\ud \| S(y) \| x \ud \mathscr{L}^1 y.
		\end{gather*}
		\item \label{item:ap_coarea:structure} There exists an
		$\mathscr{L}^1$ measurable function $W$ with values in
		$\RVar_{\vdim-1} (U)$ endowed with the weak topology such that
		for $\mathscr{L}^1$ almost all $y$ there holds
		\begin{gather*}
			\Tan^{\vdim-1} ( \| W(y) \|, x ) = \Tan^\vdim ( \| V
			\|, x ) \cap \ker F(x) \in \grass{\adim}{\vdim-1}, \\
			\density^{\vdim-1} ( \| W(y) \|, x ) = \density^\vdim
			( \| V \|, x )
		\end{gather*}
		for $\| W(y) \|$ almost all $x$ and
		\begin{gather*}
			S(y)(\theta) = \tint{}{} \left < \theta, |F|^{-1} F
			\right > \ud \| W(y) \| \quad \text{for $\theta \in
			\mathscr{D} (U,\rel^\adim )$}.
		\end{gather*}
	\end{enumerate}
\end{theorem}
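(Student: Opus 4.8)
The statement combines a "functional-analytic" coarea identity (item (1)) with a pointwise rectifiable description of the slicing measures $S(y)$ (item (2)). The plan is to separate the functional-analytic part, which is already packaged in \ref{thm:distribution_on_product}, from the geometric part, which uses the coarea formula for Lipschitzian maps on rectifiable sets together with the approximate differentiability hypothesis. First I would reduce to a tractable setting: using \ref{lemma:approx_diff}\,\eqref{item:approx_diff:cover}, cover $\| V \|$-almost all of $U$ by countably many Borel sets $B_i$ on each of which $f$ agrees with a $C^1$ function $f_i$, and on which $\| V \|$ restricted to $B_i$ may (after a further standard decomposition, cf. \ref{lemma:approx_diff}) be taken comparable to $\mathscr{H}^\vdim \restrict M_i$ for an $\vdim$-dimensional $C^1$ submanifold $M_i$. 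On each such piece $f$ is genuinely differentiable relative to the piece, and one is in the classical situation of the coarea formula \cite[3.2.22]{MR41:1976}.

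Next I would establish item (1). The key observation is that the hypotheses of \ref{thm:distribution_on_product} are met with $J = \rel$, $Z = \rel^\adim$: indeed \ref{lemma:push_on_product} (applied with $\mu = \| V \|$ and $F$ the given function) shows $T$ is representable by integration and identifies $\| T \|$; and $R_\theta$, defined as in \ref{thm:distribution_on_product} by $R_\theta(\omega) = T_{(x,y)}(\omega(y)\theta(x))$, is readily seen to be representable by integration with $\| R_\theta \|$ absolutely continuous with respect to $\mathscr{L}^1$, since for fixed $\theta$ one has $R_\theta(\omega) = \int \omega(f(x)) \langle \theta(x), F(x)\rangle \ud \| V \| x$ and the pushforward $f_\# (\langle \theta, F\rangle \| V \|)$ has no atoms once one discards the $\| V \|$-null set where $f$ is not approximately differentiable — more carefully, the coarea formula on each $B_i$ forces $\int_{f^{-1}\{y\}} |F| \ud \| V \| = 0$ for $\mathscr{L}^1$-almost all $y$ because $F$ restricted to a level set of $f$ annihilates the tangent plane there. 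Granting this, \ref{thm:distribution_on_product}\,\eqref{item:distribution_on_product:absolute} gives exactly the two displayed identities in (1), with $S(y)$ as defined in \ref{thm:distribution_on_product} — and one checks that that $S(y)$ coincides $\mathscr{L}^1$-almost everywhere with the $S(y)$ defined in the present theorem via the $\varepsilon^{-1}\int_{\{y < f \le y+\varepsilon\}}$ limit, using \ref{miniremark:distrib_on_r}.

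For item (2), the $\mathscr{L}^1$-measurability of $y \mapsto S(y)$ into $\mathscr{D}'(U,\rel^\adim)$ with its weak topology is part of \ref{thm:distribution_on_product}; the real content is producing the rectifiable varifold $W(y)$ and identifying its tangent planes and density. Working on each piece $B_i$ with its $C^1$ model $M_i$ and $C^1$ function $f_i$, the classical coarea formula \cite[3.2.22]{MR41:1976} yields, for $\mathscr{L}^1$-almost all $y$, that $M_i \cap f_i^{-1}\{y\}$ is $\mathscr{H}^{\vdim-1}$-rectifiable of dimension $\vdim-1$ with tangent space $\Tan(M_i, x) \cap \ker Df_i(x)$ at $\mathscr{H}^{\vdim-1}$-almost every $x$, and with the density/Jacobian bookkeeping that converts $\int_{\{y<f\le y+\varepsilon\}}|F| \ud \| V \|$, as $\varepsilon \to 0+$, into integration against $\mathscr{H}^{\vdim-1}\restrict f^{-1}\{y\}$ weighted by $\density^\vdim(\| V \|, \cdot)$ — the Jacobian factor being precisely $|F(x)|$ (the approximate coarea factor of $f$ along $\Tan^\vdim(\| V \|, x)$). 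Assembling the pieces, one defines $W(y)$ to be the $(\vdim-1)$-dimensional rectifiable varifold whose weight is $\density^\vdim(\| V \|, \cdot)\,\mathscr{H}^{\vdim-1}\restrict f^{-1}\{y\}$ and whose tangent-plane assignment is $x \mapsto \Tan^\vdim(\| V \|, x)\cap \ker F(x)$, and one reads off the stated formula $S(y)(\theta) = \int \langle \theta, |F|^{-1}F\rangle \ud \| W(y)\|$ from the $\varepsilon$-limit definition of $S(y)$ and the coarea identity; measurability in $y$ of $W(y)$ follows from the already-established measurability of $S(y)$ together with the fact that $\| S(y)\| = \| W(y)\|$ and that $|F|^{-1}F$ determines the tangent planes.

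**Main obstacle.** The delicate point is not the functional analysis but matching the geometric coarea factor: one must check that the Jacobian appearing in \cite[3.2.22]{MR41:1976} applied to $f_i$ on $M_i$ equals $|F(x)|$ for $\| V \|$-almost every $x$, i.e. that $|F(x)|$ — defined abstractly via $(\| V \|, \vdim)\ap Df(x)\circ\project{\Tan^\vdim(\| V \|, x)}$ — is the correct coarea-Jacobian of the restriction of $f$ to the approximate tangent plane, and simultaneously that the level-set tangent planes $\Tan^\vdim(\| V \|, x)\cap\ker F(x)$ are genuinely $(\vdim-1)$-dimensional for $\mathscr{L}^1$-almost all $y$ and $\| W(y)\|$-almost all $x$ (the locus where $F(x)$ vanishes on $\Tan^\vdim(\| V \|, x)$ must be shown to contribute nothing to $\| W(y)\|$ for a.e. $y$, again via the coarea formula applied to the set $\{F|\Tan^\vdim(\| V \|, \cdot) = 0\}$). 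Both are handled by the classical coarea machinery on the $C^1$ pieces, but the reduction to those pieces and the verification that the abstractly-defined $F$ agrees with the piecewise classical differentials — which is exactly \ref{lemma:approx_diff}\,\eqref{item:approx_diff:comp} combined with \ref{thm:approx_diff} — is where care is required.
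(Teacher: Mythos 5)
Your proposal is correct and follows essentially the same route as the paper: reduce via \ref{lemma:approx_diff}\,\eqref{item:approx_diff:cover} and \cite[3.2.29]{MR41:1976} to countably many Borel pieces $B_i$ carried by $C^1$ submanifolds $M_i$ with $C^1$ extensions $f_i$ of $f$, apply the classical coarea formula \cite[3.2.22]{MR41:1976} piecewise, assemble the rectifiable slices $W(y)$, and then feed the resulting disintegration into \ref{lemma:push_on_product} and \ref{thm:distribution_on_product}\,\eqref{item:distribution_on_product:absolute}.

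One place where the exposition should be tightened: when verifying the hypothesis of \ref{thm:distribution_on_product}\,\eqref{item:distribution_on_product:absolute} you claim $\| R_\theta \|$ is absolutely continuous ``since'' $f_\#(\langle\theta,F\rangle\|V\|)$ has no atoms and then give a non-atom argument ($\int_{f^{-1}\{y\}}|F|\ud\|V\|=0$ for almost all $y$); no atoms does not imply absolute continuity. The mechanism you invoke actually gives more: applying \cite[3.2.22]{MR41:1976} to $f_i$ on $B_i$ and summing shows $\int_{f^{-1}(E)}|F|\ud\|V\|=0$ for \emph{every} Borel $E\subset\rel$ with $\mathscr{L}^1(E)=0$, hence $f_\#(|F|\,\|V\|)\ll\mathscr{L}^1$ and thus $\|R_\theta\|\ll\mathscr{L}^1$. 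The paper avoids this small tangle by establishing the $W(y)$-disintegration of $\|T\|$ first and then reading off the absolute continuity of $\|R_\theta\|$ as a byproduct, whereas you verify the hypothesis directly; either order works once the absolute continuity argument is stated in the above form rather than the weaker no-atom form.
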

\begin{proof}
	First, notice that \ref{lemma:push_on_product} with $J = \rel$ implies
	that $T$ is representable by integration and
	\begin{gather*}
		T ( \phi ) = \tint{}{} \left < \phi(x,f(x)), F(x)
		\right > \ud \| V \| x, \quad \| T \| (g) = \tint{}{}
		g(x,f(x)) | F(x) | \ud \| V \| x
	\end{gather*}
	whenever $\phi \in \Lp{1} ( \| T \|, \rel^\adim )$ and $g$ is an
	$\overline{\rel}$ valued $\| T \|$ integrable function.

	Next, the following assertion will be shown. \emph{There exists an
	$\mathscr{L}^1$ measurable function $W$ with values in
	$\RVar_{\vdim-1} (U)$ such that for $\mathscr{L}^1$ almost all $y$
	there holds
	\begin{gather*}
		\Tan^{\vdim-1} ( \| W(y) \|, x ) = \Tan^\vdim ( \| V \|, x )
		\cap \ker F(x) \in \grass{\adim}{\vdim-1}, \\
		\density^{\vdim-1} ( \| W(y) \|, x ) = \density^\vdim ( \| V
		\|, x )
	\end{gather*}
	for $\| W(y) \|$ almost all $x$ and
	\begin{gather*}
		T( \phi ) = \tint{}{} \tint{}{} \left < \phi(x,f(x)),
		|F(x)|^{-1} F(x) \right > \ud \| W(y) \| x \ud \mathscr{L}^1
		y, \\
		\tint{}{} g \ud \| T \| = \tint{}{} \tint{}{} g(x,y) \ud \| W
		(y) \| x \ud \mathscr{L}^1 y
	\end{gather*}
	whenever $\phi \in \Lp{1} ( \| T \|, \rel^\adim )$ and $g$ is an
	$\overline{\rel}$ valued $\| T \|$ integrable function.} For this
	purpose choose a disjoint sequence of Borel subsets $B_i$ of $U$,
	sequences $M_i$ of $\vdim$ dimensional submanifolds of $\rel^\adim$ of
	class $1$ and functions $f_i : M_i \to \rel$ of class $1$ satisfying
	$\| V \| \big ( U \without \bigcup_{i=1}^\infty B_i \big ) = 0$ and
	\begin{gather*}
		B_i \subset M_i, \quad f_i (x) = f(x), \quad Df_i (x) = ( \| V
		\|, \vdim ) \ap Df(x) = F(x) | \Tan^\vdim ( \| V \|, x )
	\end{gather*}
	whenever $i \in \nat$ and $x \in B_i$, see \ref{lemma:approx_diff},
	\cite[2.8.18, 2.9.11, 3.2.17, 3.2.29]{MR41:1976} and Allard
	\cite[3.5\,(2)]{MR0307015}. Let $B = \bigcup_{i=1}^\infty B_i$. If $y
	\in \rel$ satisfies
	\begin{gather*}
		\mathscr{H}^{\vdim-1} ( B  \cap \{ x \with \text{$f(x) = y$
		and $( \| V \|, \vdim ) \ap Df (x)=0$} \}) = 0, \\
		\tint{B \cap K \cap \{ x \with f(x) = y \}}{} \density^\vdim (
		\| V \|, x ) \ud \mathscr{H}^{\vdim-1} x< \infty
	\end{gather*}
	whenever $K$ is a compact subset of $U$, then define $W(y) \in
	\RVar_{\vdim-1} ( U)$ by
	\begin{gather*}
		W(y)(k) = \tint{B \cap \{ x \with f(x) = y \}}{} k (x, \ker \,
		( \| V \|, \vdim ) \ap Df(x) ) \density^{\vdim-1} ( \| V \|, x
		) \ud \mathscr{H}^{\vdim-1} x
	\end{gather*}
	for $k \in \mathscr{K} ( U \times \grass{\adim}{\vdim-1} )$. Applying 
	\cite[3.2.22]{MR41:1976} with $f$ replaced by $f_i$ and summing over
	$i$, one infers that
	\begin{gather*}
		\tint{A \cap \{ x \with s < f(x) < t \}}{} |F| \ud \| V \| =
		\tint{s}{t} \tint{A \cap B \cap \{ x \with f(x) = y \}}{}
		\density^\vdim ( \| V \|, x ) \ud \mathscr{H}^{\vdim-1} x \ud
		\mathscr{L}^1 y
	\end{gather*}
	whenever $A$ is $\| V \|$ measurable and $- \infty < s < t < \infty$,
	hence
	\begin{gather*}
		\tint{}{} g(x,f(x)) | F(x) | \ud \| V \| x = \tint{}{}
		\tint{}{} g (x,y) \ud \| W(y) \| x \ud \mathscr{L}^1 y
	\end{gather*}
	whenever $g$ is an $\overline{\rel}$ valued $\| T \|$ integrable
	function. The remaining parts of the assertion now follow by
	considering appropriate choices of $g$ and recalling
	\ref{example:kx_lusin}.
	
	Consequently, \eqref{item:ap_coarea:equation} is implied by
	\ref{thm:distribution_on_product}\,\eqref{item:distribution_on_product:absolute}
	with $J = \rel$ and $Z = \rel^\adim$. Noting
	\begin{gather*}
		S(y)(\theta) = \lim_{\varepsilon \to 0+} \varepsilon^{-1}
		\tint{y}{y+\varepsilon} \tint{}{} \left < \theta, |F|^{-1} F
		\right > \ud \| W(\upsilon) \| \ud \mathscr{L}^1 \upsilon
		\quad \text{for $\theta \in \mathscr{D} (U, \rel^\adim )$}
	\end{gather*}
	whenever $y \in \dmn S$, \eqref{item:ap_coarea:structure} follows
	using \ref{remark:lusin}, \ref{example:distrib_lusin}, and
	\cite[2.8.17, 2.9.8]{MR41:1976}.
\end{proof}
\begin{corollary} \label{corollary:coarea}
	Suppose $\vdim$, $\adim$, $U$, and $V$ are as in
	\ref{miniremark:situation_general}, $f \in \trunc (V)$, and $E(y) = \{
	x \with f(x) > y \}$ for $y \in \rel$.

	Then there exists an $\mathscr{L}^1$ measurable function $W$ with
	values in $\RVar_{\vdim-1} (U)$ endowed with the weak topology such
	that for $\mathscr{L}^1$ almost all $y$ there holds
	\begin{gather*}
		\Tan^{\vdim-1} ( \| W(y) \|, x ) = \Tan^\vdim ( \| V \|, x )
		\cap \ker \derivative{V}{f} (x) \in \grass{\adim}{\vdim-1}, \\
		\density^{\vdim-1} ( \| W(y) \|, x ) = \density^\vdim ( \| V
		\|, x )
	\end{gather*}
	for $\| W(y) \|$ almost all $x$ and
	\begin{gather*}
		\boundary{V}{E(y)} (\theta) = \tint{}{} \left < \theta,
		|\derivative{V}{f}|^{-1} \derivative{V}{f} \right > \ud \|
		W(y) \| \quad \text{for $\theta \in \mathscr{D} (U,\rel^\adim
		)$}.
	\end{gather*}
\end{corollary}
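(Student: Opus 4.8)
The plan is to derive \ref{corollary:coarea} from \ref{thm:ap_coarea} by checking that the hypotheses of the latter are met with the present $f \in \trunc(V)$ and $F = \derivative{V}{f}$. First I would invoke \ref{thm:approx_diff}, which applies since $\vdim$, $\adim$, $U$, $V$ are as in \ref{miniremark:situation_general} (hence satisfy the general hypothesis and the density hypothesis), to conclude that $f$ is $(\|V\|,\vdim)$ approximately differentiable at $\|V\|$ almost all points with
\begin{gather*}
	\derivative{V}{f}(x) = (\|V\|,\vdim) \ap Df(x) \circ \project{\Tan^\vdim(\|V\|,x)} \quad \text{for $\|V\|$ almost all $x$}.
\end{gather*}
The integrability condition $\tint{K \cap \{x \with |f(x)| \leq s\}}{} |\derivative{V}{f}| \ud \|V\| < \infty$ for compact $K \subset U$ and $0 \leq s < \infty$ is part of the definition \ref{def:v_weakly_diff}\,\eqref{item:v_weakly_diff:int} of membership in $\trunc(V)$. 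Thus the function $F = \derivative{V}{f}$ satisfies all the standing requirements of \ref{thm:ap_coarea}.

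Next I would identify the distribution $T$ of \ref{thm:ap_coarea} with the one of \ref{lemma:meas_fct} (equivalently, \ref{remark:associated_distribution}): in both cases $T(\phi) = \tint{}{} \left< \phi(x,f(x)), \derivative{V}{f}(x) \right> \ud \|V\| x$ for $\phi \in \mathscr{D}(U \times \rel, \rel^\adim)$, so the two definitions coincide. The key point is then to match the distributions $S(y)$: in \ref{thm:ap_coarea} one has $S(y)(\theta) = \lim_{\varepsilon \to 0+} \varepsilon^{-1} \tint{\{x \with y < f(x) \leq y+\varepsilon\}}{} \left< \theta, \derivative{V}{f} \right> \ud \|V\|$, whereas \ref{lemma:level_sets} asserts precisely that this limit equals $\boundary{V}{E(y)}(\theta)$ for every $\theta \in \mathscr{D}(U,\rel^\adim)$ — valid at \emph{every} $y$, by that lemma. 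Hence $S(y) = \boundary{V}{E(y)}$ on $\mathscr{D}(U,\rel^\adim)$ for all $y \in \rel$, and in particular $\dmn S = \rel$ up to the qualification that these are honest distributions.

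Now \ref{thm:ap_coarea}\,\eqref{item:ap_coarea:structure} directly furnishes the $\mathscr{L}^1$ measurable function $W$ with values in $\RVar_{\vdim-1}(U)$, endowed with the weak topology, such that for $\mathscr{L}^1$ almost all $y$
\begin{gather*}
	\Tan^{\vdim-1}(\|W(y)\|,x) = \Tan^\vdim(\|V\|,x) \cap \ker \derivative{V}{f}(x) \in \grass{\adim}{\vdim-1}, \quad \density^{\vdim-1}(\|W(y)\|,x) = \density^\vdim(\|V\|,x)
\end{gather*}
for $\|W(y)\|$ almost all $x$, together with $S(y)(\theta) = \tint{}{} \left< \theta, |\derivative{V}{f}|^{-1} \derivative{V}{f} \right> \ud \|W(y)\|$ for $\theta \in \mathscr{D}(U,\rel^\adim)$. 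Substituting $S(y) = \boundary{V}{E(y)}$ from the previous paragraph yields the claimed identity $\boundary{V}{E(y)}(\theta) = \tint{}{} \left< \theta, |\derivative{V}{f}|^{-1} \derivative{V}{f} \right> \ud \|W(y)\|$, which completes the proof.

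The work here is essentially bookkeeping — aligning the three representations of $T$ and $S(y)$ coming from \ref{lemma:meas_fct}, \ref{remark:associated_distribution}, \ref{lemma:level_sets}, and \ref{thm:ap_coarea} — so the only mild obstacle is making sure the null set where $F \neq (\|V\|,\vdim)\ap Df \circ \project{\Tan^\vdim(\|V\|,\cdot)}$ (controlled by \ref{thm:approx_diff}) does not interfere; but since $\|W(y)\|$ is absolutely continuous with respect to $\mathscr{H}^{\vdim-1} \restrict (\spt\|V\|)$ in the structure of \ref{thm:ap_coarea}, a coarea argument (as internal to the proof of \ref{thm:ap_coarea}) shows that for $\mathscr{L}^1$ almost all $y$ this exceptional set is $\|W(y)\|$ negligible, so the formulas hold for almost all $y$ as stated. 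No further estimate is required.
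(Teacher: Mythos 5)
Your argument is correct and takes exactly the paper's route: the paper's proof reads ``In view of \ref{lemma:level_sets} and \ref{thm:approx_diff}, this is a consequence of \ref{thm:ap_coarea},'' and you have simply unpacked the bookkeeping of matching $T$ and $S(y)$ (the one mild over-caution is that no separate argument about the $\|V\|$-null exceptional set is needed, since \ref{thm:ap_coarea} already only requires $F$ to agree with the approximate-derivative expression $\|V\|$-almost everywhere).
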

\begin{proof}
	In view of \ref{lemma:level_sets} and \ref{thm:approx_diff}, this is a
	consequence of \ref{thm:ap_coarea}.
\end{proof}
\begin{remark} \label{remark:no_rectifiable_structure}
	The formulation of \ref{corollary:coarea} is modelled on similar
	results for sets of locally finite perimeter, see \cite[4.5.6\,(1),
	4.5.9\,(12)]{MR41:1976}. Observe however that the structural
	description of $\boundary{V}{E(y)}$ given here for $\mathscr{L}^1$
	almost all $y$ does not extend to arbitrary $\| V \| + \| \delta V \|$
	measurable sets $E$ such that $\boundary{V}{E}$ is representable by
	integration; in fact, $\| \boundary{V}{E} \|$ does not even need to be
	the weight measure of some member of $\RVar_{\vdim-1} ( U )$. (Using
	\cite[2.10.28]{MR41:1976} to construct $V \in \IVar_1 ( \rel^2 )$ such
	that $\| \delta V \|$ is a nonzero Radon measure with $\| \delta V \|
	( \{ x \} ) = 0$ for $x \in \rel^2$ and $\| V \| ( \spt \delta V) =
	0$, one may take $E = \spt \delta V$.)
\end{remark}
\section{Oscillation estimates} \label{sec:oscillation}
In this section two situations are studied where the oscillation of a
generalised weakly differentiable function may be controlled by its weak
derivative assuming a suitable summability of the mean curvature of the
underlying varifold. In general, such control is necessarily rather weak, see
\ref{thm:mod_continuity} and \ref{remark:mod_continuity}, but under special
circumstances one may obtain H\"older continuity, see
\ref{thm:hoelder_continuity}. The main ingredients are the analysis of the
connectedness structure of a varifold, see \ref{corollary:conn_structure}, and
the Sobolev Poincar{\'e} inequalities with several medians, see
\ref{thm:sob_poin_several_med}.
\begin{theorem} \label{thm:mod_continuity}
	Suppose $\vdim$, $\adim$, $p$, $U$, and $V$ are as in
	\ref{miniremark:situation_general}, $p = \vdim$, $K$ is a compact
	subset of $U$, $0 < \varepsilon \leq \dist ( K, \rel^\adim \without U
	)$, $\varepsilon < \infty$, and either
	\begin{enumerate}
		\item \label{item:mod_continuity:m=1} $1 = \vdim = q$ and
		$\lambda = 2^{\vdim+5} \unitmeasure{\vdim}^{-1}
		\Gamma_{\ref{thm:sob_poin_summary}} ( 2 \adim ) \sup \{
		\density^1 ( \| V \|, a ) \with a \in K \}$, or
		\item \label{item:mod_continuity:m>1} $1 < \vdim < q$ and
		$\lambda = \varepsilon$.
	\end{enumerate}

	Then there exists a positive, finite number $\Gamma$ with the
	following property.

	If $Y$ is a finite dimensional normed vectorspace, $f : \spt \| V \|
	\to Y$ is a continuous function, $f \in \trunc (V,Y)$, and
	$\kappa = \sup \{ \eqLpnorm{\| V \| \restrict
	\oball{a}{\varepsilon}}{q}{ \derivative{V}{f} } \with a \in K \}$,
	then
	\begin{gather*}
		|f(x)-f(\chi)| \leq \lambda \kappa \quad \text{whenever
		$x,\chi \in K \cap \spt \| V \|$ and $|x-\chi| \leq
		\Gamma^{-1}$}.
	\end{gather*}
\end{theorem}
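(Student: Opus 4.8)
The plan is to reduce the oscillation estimate to a statement about the connected components of $\spt \| V \|$ and then apply the Sobolev Poincar\'e inequality with several medians, \ref{thm:sob_poin_several_med}. First I would invoke \ref{corollary:conn_structure} to exploit the structure of $\spt \| V \|$ under the hypotheses of \ref{miniremark:situation_general} with $p = \vdim$: the connected components of $\spt \| V \|$ are relatively open and closed, there are only finitely many meeting the compact set $K$, and each has vanishing distributional $V$ boundary. Since $f$ is continuous on $\spt \| V \|$, it is constant on those components whose intersection with $K$ is nonempty but on which $\derivative{V}{f}$ vanishes; more importantly, the finiteness statement \ref{corollary:conn_structure}\,\eqref{item:conn_structure:finite} together with the lower mass bound from \ref{corollary:density_1d}\,\eqref{item:density_1d:local_lower_bound} (or \ref{miniremark:tangent_spaces} and \cite[2.5]{snulmenn.isoperimetric} for $\vdim > 1$) provides a uniform radius $\varrho > 0$ such that any two points of $K \cap \spt \| V \|$ lying in distinct components are separated by at least $\varrho$; this contributes the constant $\Gamma^{-1}$ in the conclusion. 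Thus one may assume $x$ and $\chi$ lie in the same component $C$.

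Next I would localise to a small ball. Fix $x, \chi \in C \cap K$ with $|x - \chi|$ small and consider the varifold $V$ restricted to a ball $\oball{a}{r}$ with $r$ comparable to $\varepsilon$ and $a$ near $x$, so that $\oball{a}{r} \subset U$. The key is to choose $N$ and $r$ in \ref{thm:sob_poin_several_med} so that the mass hypothesis $\| V \| ( \oball{a}{r} ) \leq (Q - M^{-1})(N+1) \unitmeasure{\vdim} r^\vdim$ holds with $N = 1$: this is exactly where one uses that, by passing to a sufficiently small ball, $V \restrict \oball{a}{r} \times \grass{\adim}{\vdim}$ is ``essentially connected'' with a density ratio close to $\density^\vdim ( \| V \|, a )$, so that only one median is needed. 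Applying \ref{thm:sob_poin_several_med}\,\eqref{item:sob_poin_several_med:p=m=1} in case \eqref{item:mod_continuity:m=1} and \eqref{item:sob_poin_several_med:p=m<q} in case \eqref{item:mod_continuity:m>1}, with a suitable finite set $X$ chosen to contain $x$ (so that the single median is pinned to $f(x)$ since $X$ increases $\card \Upsilon$), one obtains
\begin{gather*}
	\eqLpnorm{\| V \| \restrict A}{\infty}{\dist ( f(\cdot), \{ f(x) \} )} \leq \lambda \kappa
\end{gather*}
for an appropriate $A$ containing a neighbourhood of $x$ relative to $\spt \| V \|$, where the constant $\lambda$ matches the one in the statement (including the factor $2^{\vdim+5} \unitmeasure{\vdim}^{-1} \Gamma_{\ref{thm:sob_poin_summary}}(2\adim) \sup \density^1$ in the one-dimensional case, which comes from the conversion between the $\mathbf{L}_\infty$ estimate and the pointwise bound via the lower density bound). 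Here the role of the nonempty $X$ is precisely the one advertised in the remark following \ref{thm:sob_poin_several_med}.

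The essential-uniqueness of the median forces $|f(\chi) - f(x)| \leq \lambda \kappa$ for all $\chi$ in the relevant neighbourhood, and a chaining argument along a finite path inside $C$ — again using that $C$ is relatively open and that finitely many such balls cover a path in $K \cap C$ — upgrades this to all $\chi \in K \cap \spt \| V \|$ with $|x - \chi| \leq \Gamma^{-1}$ after absorbing the number of balls into $\Gamma$. The hard part will be the bookkeeping in the chaining step: one must verify that the number of overlapping balls needed to connect two nearby points of $K \cap C$ is bounded independently of the points (which relies on the finiteness in \ref{corollary:conn_structure}\,\eqref{item:conn_structure:finite} and the lower mass bound, giving a uniform ``thickness'' of $\spt \| V \|$), and that in each ball the hypotheses of \ref{thm:sob_poin_several_med} with $N = 1$ genuinely hold — i.e. that shrinking $r$ suppresses the presence of more than one ``sheet'', which is where the critical mean curvature hypothesis $p = \vdim$ and the continuity of $\density^\vdim ( \| V \|, \cdot )$ at appropriate points enter. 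In case \eqref{item:mod_continuity:m=1} one additionally uses \ref{corollary:density_1d} to control $\density^1 ( \| V \|, \cdot )$ and to pass from the supremum norm estimate to the pointwise one with the stated explicit constant.
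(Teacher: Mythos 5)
Your high-level intuition (connectedness structure of $\spt\|V\|$ from \ref{corollary:conn_structure} plus the several-medians inequality \ref{thm:sob_poin_several_med}) is the right starting point, but several of the steps you propose would not go through, and the argument as written misses the mechanism that actually yields the stated constant $\lambda$.

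The central gap is the reduction to $N=1$. You claim that shrinking the ball makes the mass hypothesis of \ref{thm:sob_poin_several_med} hold with one median. That is false in general: at a point $a$ with $\density^\vdim(\|V\|,a)\geq 2$ the mass ratio of small balls stays near $\density^\vdim(\|V\|,a)$, so with $Q$ near $1$ one is forced to take $N$ on the order of the density, and taking $Q$ near the density in turn violates the condition $\|V\|(\{\density^\vdim<Q\})\leq\Gamma^{-1}r^\vdim$ near nearby density-one points. The paper does not try to force $N$ small. It lets $N$ be whatever the mass dictates and pays the factor $N+1$ in the estimate; the key observation making this affordable is that for $\vdim>1$ the mean curvature hypothesis $p=\vdim$ gives $\density^{\vdim-\delta}(\|V\|,a)=0$ with $\delta=1-\vdim/q$ by \ref{corollary:density_ratio_estimate}, hence $r^{\delta-\vdim}\|V\|(\oball ar)\to 0$; since $\kappa_i$ is of order $r^\delta\kappa$ and $N+1$ is of order $r^{-\vdim}\|V\|(\oball ar)$, the product $(N+1)\kappa_i$ can be driven below $\varepsilon\kappa$ by choosing the radii $r_i$ small. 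Nothing in your sketch produces this cancellation, and without it the constant $\lambda=\varepsilon$ cannot emerge.

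A second, related gap: you never say how the several-medians conclusion, which only states $f\lIm\oball{a_i}{r_i/2}\rIm\subset\bigcup\{\cball y{\kappa_i}\with y\in\Upsilon\}$ with $\card\Upsilon\leq N+1$, becomes an oscillation bound. The essential use of continuity of $f$ (and of the set $C_i$ being connected, by \ref{corollary:conn_structure}\,\eqref{item:conn_structure:open}) is that $f\lIm C_i\rIm$ is a connected subset of $\rel$, hence lies in a single connected component $I$ of $\bigcup\{\cball y{\kappa_i}\with y\in\Upsilon\}$, and $\mathscr L^1(I)\leq 2(N+1)\kappa_i$. This is the step you should highlight; an appeal to ``essential uniqueness of the median'' does not exist in the statement of \ref{thm:sob_poin_several_med} and would not be true.

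Two smaller points. First, the role of the finite set $X$ in \ref{thm:sob_poin_several_med} in this application is to remove a possible atom of $\psi$ at the ball centre so that the hypothesis $\psi(U\without X)\leq\Gamma^{-1}$ in \eqref{item:sob_poin_several_med:p=m=1} can be met — the paper takes $X=\{a_i\}$, not $\{x\}$; the consequence $f\lIm X\rIm\subset\Upsilon$ is incidental. Second, no chaining is needed. Since $K\cap\spt\|V\|$ is compact it can be covered by finitely many balls $\oball{a_i}{s_i}$ each entirely contained in a single connected component $C_i$ of $\spt\|V\|\cap\oball{a_i}{r_i/2}$; choosing $\Gamma^{-1}$ to be a Lebesgue number for this cover ensures both $x$ and $\chi$ lie in one such $C_i$, and the estimate is obtained in a single application of \ref{thm:sob_poin_several_med}. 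A chaining argument would multiply the bound by the number of chain steps and would not give the explicit $\lambda$ specified in the statement.
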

\begin{proof}
	Let $\psi$ be as in \ref{miniremark:situation_general}.  Abbreviate $A
	= \spt \| V \|$ and $\delta = 1-\vdim/q$. Abbreviate
	\begin{gather*}
		\Delta_1 = 2^{\vdim+3} \unitmeasure{\vdim}^{-1}, \quad
		\Delta_2 = \Gamma_{\ref{thm:sob_poin_summary}} ( 2 \adim ).
	\end{gather*}
	Notice that $\density_\ast^\vdim ( \| V \|, a ) \geq 1/2$ for $a \in
	A$ by \ref{corollary:density_1d}\,\eqref{item:density_1d:lower_bound}
	and \ref{remark:kuwert_schaetzle}. If $\vdim >1$, then
	\begin{gather*}
		\dmn \density^1 ( \| V \|, \cdot ) = U, \quad \Delta_3 = \sup
		\{ \density^1 ( \| V \|, a ) \with a \in K \} < \infty
	\end{gather*}
	by
	\ref{corollary:density_1d}\,\eqref{item:density_1d:upper_bound}\,\eqref{item:density_1d:real}.
	If $\vdim > 1$, then $\density^{\vdim-\delta} (
	\| V \|, a ) = 0$ for $a \in A$ by
	\ref{corollary:density_ratio_estimate}. Moreover, if $a \in A$, $0 < r
	< \infty$, and $\oball{a}{r} \subset U$ then, by
	\ref{corollary:conn_structure}\,\eqref{item:conn_structure:open},
	there exists $0 < s \leq r$ such that $A \cap \oball{a}{s}$ is a
	subset of the connected component of $A \cap \oball{a}{r/2}$ which
	contains $a$. Since $K \cap A$ is compact, one may therefore construct
	$j \in \nat$ and $a_i$, $s_i$ and $r_i$ for $i = 1, \ldots, j$ such
	that $K \cap A \subset \bigcup_{i=1}^j \oball{a_i}{s_i}$ and
	\begin{gather*}
		a_i \in K \cap A, \quad 0 < s_i \leq r_i \leq \varepsilon,
		\quad A \cap \oball{a_i}{s_i} \subset C_i, \quad
		\oball{a_i}{r_i} \subset U, \\
		\psi ( \oball{a_i}{r_i} \without \{ a_i \} ) \leq
		\Delta_2^{-1}, \quad \measureball{\| V \|}{ \oball{a_i}{r_i} }
		\geq (1/2) \unitmeasure{\vdim} (r_i/2)^\vdim, \\
		r_i^{-1} \measureball{\| V \|}{\cball{a_i}{r_i}} \leq 4
		\Delta_3 \quad \text{if $\vdim = 1$}, \\
		\Delta_1
		\Gamma_{\ref{thm:sob_poin_several_med}\,\eqref{item:sob_poin_several_med:p=m<q}}
		( 2\adim )^{\vdim/\delta} r_i^{\delta-\vdim}
		\measureball{\| V \|}{ \oball{a_i}{r_i} } \leq \varepsilon
		\quad \text{if $\vdim > 1$}
	\end{gather*}
	for $i = 1, \ldots, j$, where $C_i$ is the connected component of $A
	\cap \oball{a_i}{r_i/2}$ which contains $a_i$. Since $K \cap A$ is
	compact, there exists a positive, finite number $\Gamma$ with the
	following property. If $x,\chi \in K \cap A$ and $|x-\chi| \leq \Gamma^{-1}$
	then $\{ x,\chi \} \subset \oball{a_i}{s_i}$ for some $i$.

	In order to verify that $\Gamma$ has the asserted property, suppose
	that $Y$, $f$, $\kappa$, $x$, and $\chi$ are related to $\Gamma$ as in
	the body of the theorem.

	Assume $\kappa < \infty$. Since $|y| = \sup \{ \alpha ( y ) \with
	\alpha \in \Hom ( Y, \rel ), \| \alpha \| \leq 1 \}$ for $y \in Y$ by
	\cite[2.4.12]{MR41:1976}, one may also assume $Y = \rel$ by
	\ref{lemma:integration_by_parts}. Choose $i$ such that $\{ x,\chi \}
	\subset C_i$. Choose $N \in \nat$ satisfying
	\begin{gather*}
		(1/2) N \unitmeasure{\vdim} (r_i/2)^\vdim \leq \measureball{\|
		V \|}{ \oball{a_i}{r_i} } \leq (1/2) (N+1) \unitmeasure{\vdim}
		(r_i/2)^\vdim.
	\end{gather*}
	Applying
	\ref{thm:sob_poin_several_med}\,\eqref{item:sob_poin_several_med:p=m=1}\,\eqref{item:sob_poin_several_med:p=m<q}
	with $U$, $M$, $Q$, $r$, and $X$ replaced by $\oball{a_i}{r_i}$,
	$2 \adim$, $1$, $r_i/2$, and $\{ a_i \}$ yields a subset $\Upsilon$ of $\rel$
	such that $\card \Upsilon \leq N +1$ and
	\begin{gather*}
		{\textstyle f \lIm \oball{a_i}{r_i/2} \rIm \subset \bigcup \{
		\cball{y}{\kappa_i} \with y \in \Upsilon \}},
	\end{gather*}
	where $\kappa_i = \Delta_2 \kappa$ if $\vdim = 1$
	and $\kappa_i =
	\Gamma_{\ref{thm:sob_poin_several_med}\,\eqref{item:sob_poin_several_med:p=m<q}}
	( 2 \adim )^{\vdim/\delta} r_i^\delta \kappa$ if $\vdim > 1$. Defining
	$I$ to be the connected component of $\bigcup \{ \cball{y}{\kappa_i}
	\with y \in \Upsilon \}$ which contains $f(a)$, one infers
	\begin{gather*}
		|f(x)-f(\chi)| \leq \diam I = \mathscr{L}^1 (I) \leq 2 (N+1)
		\kappa_i \leq \Delta_1 \kappa_i r_i^{-\vdim} \measureball{\| V
		\|}{ \oball{a_i}{r_i} } \leq \lambda \kappa
	\end{gather*}
	since $\{ f(x), f(\chi) \} \subset f \lIm C_i \rIm \subset I$ and $I$
	is an interval.
\end{proof}
\begin{remark} \label{remark:mod_continuity}
	Considering varifolds corresponding to two parallel planes, it is
	clear that $\Gamma$ may not be chosen independently of $V$.
	Also the continuity hypothesis on $f$ is essential as may be seen
	considering $V$ associated to two transversely intersecting lines.
\end{remark}
\begin{theorem} \label{thm:hoelder_continuity}
	Suppose $1 \leq M < \infty$.

	Then there exists a positive, finite number $\Gamma$ with the
	following property.

	If $\vdim$, $\adim$, $p$, $U$, $V$, and $\psi$ are as in
	\ref{miniremark:situation_general}, $p = \vdim$, $\adim \leq M$,
	$0 < r < \infty$, $A = \{
	x \with \oball xr \subset U \}$,
	\begin{gather*}
		\measureball{\psi}{ \oball{a}{r} } \leq \Gamma^{-1}, \qquad
		\measureball{\| V \|}{ \oball as } \leq (2 - M^{-1} )
		\unitmeasure{\vdim} s^\vdim \quad \text{for $0 < s \leq r$}
	\end{gather*}
	whenever $a \in A \cap \spt \| V \|$, $Y$ is a finite dimensional
	normed vectorspace, $f \in \trunc (V,Y)$, $C = \{ (x,\cball xs ) \with
	\cball xs \subset U \}$, $X$ is the set of $a \in A \cap \spt \| V \|$
	such that $f$ is $(\| V \|,C)$ approximately continuous at $a$, $\vdim
	< q \leq \infty$, $\delta = 1-\vdim/q$, and $\kappa = \sup \{
	\eqLpnorm{ \| V \| \restrict \oball ar}{q}{ \derivative Vf } \with a
	\in A \cap \spt \| V \| \}$, then
	\begin{gather*}
		| f(x)-f(\chi) | \leq \lambda |x-\chi|^\delta \kappa \quad
		\text{whenever $x,\chi \in X$ and $|x-\chi| \leq r/\Gamma$},
	\end{gather*}
	where $\lambda = \Gamma$ if $\vdim = 1$ and $\lambda =
	\Gamma^{1/\delta}$ if $\vdim > 1$.
\end{theorem}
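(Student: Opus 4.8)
The plan is to derive Hölder continuity by removing the a priori continuity hypothesis present in \ref{thm:mod_continuity}\,\eqref{item:mod_continuity:m>1} and simultaneously sharpening the modulus of continuity, at the cost of the quantitative density bound $\measureball{\| V \|}{ \oball as } \leq (2-M^{-1}) \unitmeasure{\vdim} s^\vdim$ which prevents two sheets from occurring near points of $A \cap \spt \| V \|$. First I would fix the constant $\Gamma$ by an explicit formula in terms of $\Gamma_{\ref{thm:sob_poin_several_med}}(M)$, $\Gamma_{\ref{thm:sob_poin_summary}}(2M)$, $\besicovitch{\adim}$, and the dimensional constants $\unitmeasure{i}$, together with a smallness threshold on $\psi$ ensuring the applicability of the monotonicity-type results of Section~\ref{sec:monotonicity}; then suppose $V$, $f$, $X$, $q$, $\delta$, $\kappa$ satisfy the hypotheses with this $\Gamma$, and reduce to $Y = \rel$ using \ref{lemma:integration_by_parts} and \cite[2.4.12]{MR41:1976}, and to $\kappa < \infty$.

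The core estimate I would establish is the following oscillation bound on balls: there is a function $y(\cdot)$ of the centre such that for $a \in A \cap \spt \| V \|$ and $0 < s \leq r/\Gamma$,
\begin{gather*}
	\| V \| \big ( \oball as \cap \{ x \with |f(x)-y(a,s)| > \lambda s^\delta \kappa \} \big ) = 0,
\end{gather*}
obtained by applying \ref{thm:sob_poin_summary}\,\eqref{item:sob_poin_summary:interior:p=m<q} (the $\infty$-norm estimate in the critical mean curvature case) with $M$, $G$, $f$, $Q$ replaced by $2M$, $\varnothing$, the positive and negative parts of $f-y(a,s)$, and $Q=1$; here the density upper bound $\measureball{\| V \|}{ \oball as } \leq (2-M^{-1})\unitmeasure{\vdim} s^\vdim$ is exactly what allows $Q=1$ to be used on the whole ball (so the smallness hypothesis on the set where $\density^\vdim < Q$ in \ref{thm:sob_poin_summary} is vacuous), and the median $y(a,s)$ is chosen so that $\| V \|(\oball as \cap \{f > y(a,s)\})$ and $\| V \|(\oball as \cap \{f < y(a,s)\})$ are each at most half of $\measureball{\| V \|}{\oball as}$. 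Then, as in the proof of \ref{thm:approx_diff}, comparing medians at successive scales $s$ and $s/2$ via a positive lower density bound $\density_\ast^\vdim(\| V \|, a) \geq 1/2$ (from \ref{corollary:density_1d}\,\eqref{item:density_1d:lower_bound} together with \ref{remark:kuwert_schaetzle}) yields $|y(a,s)-y(a,s/2)| \lesssim s^\delta \kappa$, hence a geometric series gives that $y(a,s)$ converges as $s \to 0+$ to a limit which must equal $f(a)$ whenever $a \in X$, with $|y(a,s)-f(a)| \lesssim s^\delta \kappa$.

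With this in hand, for $x, \chi \in X$ with $|x-\chi| \leq r/\Gamma$ I would set $s = |x-\chi|$, apply the ball estimate centred at $x$ at scales $s$ and $2s$, use approximate continuity of $f$ at both $x$ and $\chi$ together with the positive lower density to conclude $|f(x)-y(x,2s)| \lesssim s^\delta \kappa$ and $|f(\chi)-y(x,2s)| \lesssim s^\delta \kappa$ (since $\chi \in \oball x{2s}$ and $\| V \|(\oball x{2s} \cap \cball{\chi}{\varepsilon'}) > 0$ for small $\varepsilon'$ by the lower density bound), and add. The main obstacle will be the bookkeeping in the scale-doubling comparison: one must verify that all the radius constraints $\oball as \subset U$ and the $\psi$-smallness are preserved under the finitely many doublings needed, and that the dimensional constant in $|y(a,s)-y(a,s/2)|$ is summable against $\sum 2^{-i\delta}$ uniformly in $\delta \in (0,1]$, which forces the $\Gamma^{1/\delta}$ form of $\lambda$ when $\vdim > 1$. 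A secondary subtlety is handling $q = \infty$, where $\delta = 1$ and the $q$-norm must be read as an essential supremum throughout; this is routine once the finite-$q$ case is written so that $(\vdim-q)^{-1}$ does not appear (it does not in \eqref{item:sob_poin_summary:interior:p=m<q}). The case $\vdim = 1$ is simpler because \ref{corollary:density_1d} gives pointwise control of $\density^1(\| V \|, \cdot)$ directly, and one may instead invoke \ref{thm:mod_continuity}\,\eqref{item:mod_continuity:m=1}-style reasoning with the sharper Sobolev estimate, yielding the linear dependence $\lambda = \Gamma$.
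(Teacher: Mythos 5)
Your proposal is essentially correct and, once the redundant middle step is stripped out, it coincides with the paper's argument. The two key ideas you identify are exactly the ones the paper uses: the mass bound $\measureball{\| V \|}{\oball as} \leq (2-M^{-1})\unitmeasure{\vdim} s^\vdim$ forces a \emph{single} median on balls of the relevant radius (the paper packages this by invoking \ref{thm:sob_poin_several_med} with $N=1$, $Q=1$, and $M$ replaced by $2M$, which is equivalent to your hand-made median $y(a,s)$ combined with \ref{thm:sob_poin_summary}\,\eqref{item:sob_poin_summary:interior:p=m<q} applied to $(f-y(a,s))^{\pm}$, since that is precisely how \ref{thm:sob_poin_several_med} is derived from \ref{thm:sob_poin_summary}); and membership of $x,\chi$ in $X$ together with $\density_\ast^\vdim(\| V \|, \cdot) > 0$ at those points converts the essential-supremum bound $\eqLpnorm{\| V \| \restrict \oball xs}{\infty}{f_\Upsilon}$ into a pointwise bound on $|f(x)-f(\chi)|$. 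However, the Campanato-type iteration in your second paragraph — comparing medians $y(a,s)$ and $y(a,s/2)$ across dyadic scales and summing a geometric series to show $y(a,s) \to f(a)$ — is superfluous: your own final paragraph already uses approximate continuity and positive lower density to get $|f(x)-y(x,2s)| \lesssim s^\delta\kappa$ \emph{directly at a single scale}, which is all that is needed; the paper correspondingly has a one-shot argument with no iteration. Dropping the iteration also disposes of the bookkeeping concerns you raise about preserving the radius and $\psi$-smallness constraints under repeated doubling, and of the worry about uniformity of $\sum 2^{-i\delta}$ in $\delta$; the constant $\Gamma$ in the paper is a finite algebraic combination of $\Gamma_{\ref{thm:sob_poin_summary}}(2M)$ and $\Gamma_{\ref{thm:sob_poin_several_med}}(2M)$ with no Besicovitch number, and the $\Gamma^{1/\delta}$ shape of $\lambda$ for $\vdim>1$ comes solely from the $\Gamma^{1/(1/\vdim-1/q)}$ factor in \ref{thm:sob_poin_summary}\,\eqref{item:sob_poin_summary:interior:p=m<q}, not from any geometric-series estimate.
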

\begin{proof}
	Define
	\begin{gather*}
		\Delta_1 = (1-1/(4M-1))^{1/M}, \quad \Delta_2 = \sup \{ 1,
		\Gamma_{\ref{thm:sob_poin_several_med}\,\eqref{item:sob_poin_several_med:p=m<q}}
		(2M) \}^M, \\
		\Gamma = 4 (1-\Delta_1)^{-1} \sup \{ 4
		\Gamma_{\ref{thm:sob_poin_summary}} (2M), \Delta_2 \}
	\end{gather*}
	and notice that $\Gamma \geq 4 (1-\Delta_1)^{-1}$.

	In order to verify that $\Gamma$ has the asserted property, suppose
	that $\vdim$, $\adim$, $p$, $U$, $V$, $\psi$, $M$, $r$, $A$, $Y$, $f$,
	$C$, $X$, $q$, $\delta$, $\kappa$, $x$, $\chi$, and $\lambda$ are
	related to $\Gamma$ as in the body of the theorem.

	In view of \ref{lemma:integration_by_parts} and
	\ref{corollary:integrability}\,\eqref{item:integrability:m=1}\,\eqref{item:integrability:m=p<q}
	one may assume $Y = \rel$. Define $s = 2 |x-\chi|$ and $t =
	(1-\Delta_1)^{-1} s$ and notice that $0 < s < t \leq r$ and
	\begin{gather*}
		\begin{aligned}
			& \measureball{\| V \|}{ \oball{x}{t}} \leq (2-M^{-1})
			\unitmeasure{\vdim} (1-\Delta_1)^{-\vdim} s^\vdim \\
			& \qquad \leq \big (2- (2M)^{-1} \big )
			\unitmeasure{\vdim} \Delta_1^\vdim
			(1-\Delta_1)^{-\vdim} s^\vdim = \big (2-(2M)^{-1} \big
			) \unitmeasure{\vdim} (t-s)^\vdim,
		\end{aligned} \\
		\measureball{\psi}{ \oball xt } \leq \Gamma^{-1} \leq
		\Gamma_{\ref{thm:sob_poin_summary}} ( 2M )^{-1}.
	\end{gather*}
	Therefore applying
	\ref{thm:sob_poin_several_med}\,\eqref{item:sob_poin_several_med:p=m=1}\,\eqref{item:sob_poin_several_med:p=m<q}
	with $U$, $M$, $N$, $Q$, $r$, and $X$ replaced by $\oball{x}{t}$,
	$2M$, $1$, $1$, $t-s$, and $\varnothing$ yields a subset $\Upsilon$ of $\rel$
	with $\card \Upsilon = 1$ such that $\eqLpnorm{\| V \| \restrict
	\oball{x}{s}}{\infty}{f_\Upsilon}$ is bounded by, using H{\"o}lder's
	inequality,
	\begin{gather*}
		\Gamma_{\ref{thm:sob_poin_summary}} ( 2M ) \eqLpnorm{\| V \|
		\restrict \oball xt}{1}{ \derivative{V}{f} } \leq 4
		\Gamma_{\ref{thm:sob_poin_summary}} ( 2M ) t^\delta \kappa
		\quad \text{if $\vdim = 1$, and by} \\
		\Gamma_{\ref{thm:sob_poin_several_med}\,\eqref{item:sob_poin_several_med:p=m<q}}
		( 2M )^{\vdim/\delta} t^\delta \eqLpnorm{\| V \| \restrict
		\oball xt}{q}{ \derivative Vf } \leq \Delta_2^{1/\delta}
		t^\delta \kappa \quad \text{if $\vdim > 1$}.
	\end{gather*}
	Noting $|f(x)-f(\chi)| \leq 2 \eqLpnorm{\| V \| \restrict \oball xs
	}{\infty}{f_\Upsilon}$ and $t = 2 (1-\Delta_1)^{-1} |x-\chi|$, the conclusion
	follows.
\end{proof}
\section{Geodesic distance} \label{sec:geodesic_distance}
Reconsidering the support of the weight measure of a varifold whose mean
curvature satisfies a suitable summability of the mean curvature, the
oscillation estimate \ref{thm:mod_continuity} is used to establish in this
section that its connected components agree with the components induced by its
geodesic distance, see \ref{thm:conn_path_finite_length}.
\begin{example} \label{example:space_fill}
	Whenever $1 \leq p < \vdim < \adim$, $U$ is an open subset of
	$\rel^\adim$ and $X$ is an open subset of $U$, there exists $V$
	related to $\vdim$, $\adim$, $U$, and $p$ as in
	\ref{miniremark:situation_general} such that $\spt \| V \|$ equals the
	closure of $X$ relative to $U$ as is readily seen taking the into
	account the behaviour of $\psi$ and $V$ under homotheties; compare
	\cite[1.2]{snulmenn.isoperimetric}.
\end{example}
\begin{theorem} \label{thm:conn_path_finite_length}
	Suppose $\vdim$, $\adim$, $U$, $V$, and $p$ are as in
	\ref{miniremark:situation_general}, $p = \vdim$, and $C$ is a
	connected component of $\spt \| V \|$, and $a,x \in C$.

	Then there exist $- \infty < b \leq y < \infty$ and a Lipschitzian
	function $g : \{ \upsilon \with b \leq \upsilon \leq y \} \to \spt \|
	V \|$ such that $g(b) = a$ and $g(y) = x$.
\end{theorem}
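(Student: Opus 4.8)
The plan is to establish that the relation ``can be joined by a Lipschitzian path in $\spt\|V\|$'' is an equivalence relation whose classes are relatively open in $\spt\|V\|$, so that each such class is also relatively closed and therefore, by connectedness of $C$, exhausts $C$. The path-connectedness classes are automatically symmetric and transitive (concatenate paths and reparametrise), so the only real content is the local statement: \emph{every point $a\in\spt\|V\|$ has a relatively open neighbourhood $N$ in $\spt\|V\|$ such that each $x\in N$ is joined to $a$ by a Lipschitzian curve in $\spt\|V\|$.}

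First I would recall from \ref{corollary:conn_structure}\,\eqref{item:conn_structure:open} that connected components of $\spt\|V\|$ are relatively open; hence it suffices to work inside a single small ball. Fix $a\in\spt\|V\|$, choose $0<r<\infty$ with $\cball{a}{2r}\subset U$ and $\psi(\cball{a}{2r}\without\{a\})$ so small that the monotonicity consequences apply (via \ref{corollary:density_1d}\,\eqref{item:density_1d:local_lower_bound} when $\vdim=1$, or \ref{corollary:density_ratio_estimate} and \ref{remark:kuwert_schaetzle} when $\vdim>1$), giving a uniform lower density ratio bound $\measureball{\|V\|}{\cball{x}{s}}\geq c\,s^\vdim$ for $x\in\cball{a}{r}\cap\spt\|V\|$ and $0<s<r$. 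The key analytic input is the oscillation estimate \ref{thm:mod_continuity}: applied to the coordinate functions (the restriction to $\spt\|V\|$ of the identity map $\rel^\adim\to\rel^\adim$, which is Lipschitzian, hence lies in $\trunc(V,\rel^\adim)$ by \ref{example:lipschitzian}), it furnishes, for a compact $K\subset U$ containing $\cball{a}{r}$, a constant $\Gamma$ so that points of $K\cap\spt\|V\|$ at distance $\leq\Gamma^{-1}$ lie in a common connected component $C'$ of $\spt\|V\|\cap\oball{a_i}{r_i/2}$ — this is exactly the ``local connectedness through controlled oscillation'' packaged in the proof of \ref{thm:mod_continuity}. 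This is the standard metric-space mechanism alluded to in \ref{remark:metric_spaces}: a length space in which small balls are connected is locally path-connected.

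The construction of the actual Lipschitzian curve then proceeds by the classical dyadic chaining argument. Given $x$ with $|x-a|\leq\Gamma^{-1}$, I would inductively select points $x=x_0,x_1,x_2,\dots$ in $\spt\|V\|$ with $x_i\to a$, $|x_i-a|\leq 2^{-i}|x_0-a|$, and $|x_{i+1}-x_i|\leq K_0\,2^{-i}|x_0-a|$ for a fixed geometric constant $K_0$; the existence of each successive point uses that $\spt\|V\|\cap\oball{x_i}{2^{-i-1}|x_0-a|}$ meets $\spt\|V\|\cap\cball{a}{\,\cdot\,}$ in a connected set once the radii are comparable, again by the local connectedness above. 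Concatenating the straight segments $[x_i,x_{i+1}]$ after a suitable reparametrisation of each segment onto a dyadic subinterval of $[0,1]$, and sending $i\to\infty$, produces a curve $g$; the geometric decay of $|x_{i+1}-x_i|$ gives a summable bound on the pieces, whence $g$ extends continuously (indeed Lipschitzianly, after affine rescaling of the parameter interval) with $g(0)=x$, $g(1)=a$, and $g$ taking values in $\spt\|V\|$ since $\spt\|V\|$ is closed and contains every $x_i$ and every segment $[x_i,x_{i+1}]$ — here I must be careful: the segments need not lie in $\spt\|V\|$, so instead of straight segments I would use, for each $i$, a Lipschitzian path within $\spt\|V\|\cap\oball{a}{\,\cdot\,}$ joining $x_i$ to $x_{i+1}$ whose diameter is controlled by $|x_{i+1}-x_i|$, obtained by iterating the local statement at one finer scale. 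Finally, relabel $[0,1]$ as $[b,y]$ with $b=0$, $y=1$ (or any $-\infty<b\leq y<\infty$).

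\textbf{Main obstacle.} The delicate point is producing, at each stage of the chain, a path \emph{inside} $\spt\|V\|$ (not merely a pair of nearby points) with diameter comparable to the distance between its endpoints, uniformly in the scale $2^{-i}$; this requires that the constant $\Gamma$ from \ref{thm:mod_continuity} and the density ratio bounds be scale-invariant on $\cball{a}{r}$, which in turn forces the hypothesis $p=\vdim$ so that $\psi$ scales correctly under homotheties (cf. \ref{example:space_fill}, which shows the statement genuinely fails for $p<\vdim$). Organising the induction so that the accumulated bound on $\sum_i(\text{diam of }i\text{-th piece})$ stays finite while the reparametrisation remains bi-Lipschitz onto $[b,y]$ is the bookkeeping core of the argument; everything else is the routine metric-space passage from ``small balls connected'' to ``locally Lipschitz-path-connected'', combined with the component-exhaustion argument in the first paragraph.
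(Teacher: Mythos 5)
Your plan reduces to a local statement and then proposes a dyadic chaining scheme, but the chaining step is circular: to join consecutive points $x_i, x_{i+1}$ by Lipschitzian paths in $\spt\|V\|$ with diameter controlled by $|x_{i+1}-x_i|$, at every scale, is precisely the local statement you are trying to establish. You acknowledge this as ``the bookkeeping core of the argument'', but there is no mechanism in your outline that produces the first path at any scale, so the induction never starts. Likewise, applying the oscillation estimate \ref{thm:mod_continuity} to the coordinate functions is vacuous: for $f=\id$ restricted to $\spt\|V\|$ the oscillation estimate just bounds $|x-\chi|$, and the local connectedness of balls intersected with $\spt\|V\|$ is already furnished by \ref{corollary:conn_structure}\,\eqref{item:conn_structure:open}, which both you and the paper use; the oscillation estimate applied to $\id$ gives nothing beyond that.

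The missing idea — and the actual content of the proof, which is the Cheeger--Semmes mechanism referenced in \ref{remark:metric_spaces} — is to apply the oscillation estimate not to the identity but to the $\delta$-chain-distance functions $f_\delta : C \to \overline{\rel}$ defined, for a fixed base point $c$, as the infimum of $\sum_{i=1}^j |x_i - x_{i-1}|$ over finite chains $c = x_0, x_1, \ldots, x_j = \chi$ in $C$ with steps $|x_i - x_{i-1}| \leq \delta$. Because $C$ is connected, $f_\delta$ is a finite, locally Lipschitzian function, and since it is $1$-Lipschitz on every set of diameter at most $\delta$, one has $\Lpnorm{\|V\|}{\infty}{\derivative{V}{f_\delta}} \leq 1$ uniformly in $\delta$. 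Feeding this into \ref{thm:mod_continuity} yields a bound $f_\delta(\chi) \leq r$ for all $\chi$ in a fixed neighbourhood of $c$, \emph{uniformly in $\delta$}, without ever having constructed a single path. The passage from uniformly bounded $\delta$-chains to an honest Lipschitzian curve in $\spt\|V\|$ is then a compactness argument (Federer \cite[2.10.21]{MR41:1976}): one builds nearly $1$-Lipschitz maps $g_\delta : [0,r] \to \rel^\adim$ with $\dist(g_\delta(\cdot),C) \leq \delta$ from the $\delta$-chains, and extracts a uniformly convergent subsequence whose limit takes values in the closed set $C$. This avoids any need to produce paths scale by scale, which is where your approach stalls.
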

\begin{proof}
	In view of
	\ref{corollary:conn_structure}\,\eqref{item:conn_structure:piece}
	one may assume $C = \spt \| V \|$. Whenever $c \in C$ denote by $X(c)$
	the set of $\chi \in C$ such that there exist $- \infty < b \leq y <
	\infty$ and a Lipschitzian function $g : \{ \upsilon \with b \leq
	\upsilon \leq y \} \to C$ such that $g(b) = c$ and $g(y) = \chi$.
	Observe that it is sufficient to prove that $c$ belongs to the
	interior of $X(c)$ relative to $C$ whenever $c \in C$.
	
	For this purpose suppose $c \in C$, choose $0 < r < \infty$ with
	$\oball{c}{2r} \subset U$ and let $K = \cball{c}{r}$. If $\vdim = 1$,
	define $\lambda$ as in
	\ref{thm:mod_continuity}\,\eqref{item:mod_continuity:m=1} and notice
	that $\lambda < \infty$ by
	\ref{corollary:density_1d}\,\eqref{item:density_1d:upper_bound}.
	Define $q = 1$ if $\vdim = 1$ and $q = 2 \vdim$ if $\vdim > 1$. Choose
	$0 < \varepsilon \leq r$ such that
	\begin{gather*}
		\lambda \sup \{ \| V \| ( \oball{\chi}{\varepsilon} )^{1/q}
		\with \chi \in K \} \leq r,
	\end{gather*}
	where $\lambda = \varepsilon$ in accordance with
	\ref{thm:mod_continuity}\,\eqref{item:mod_continuity:m>1} if $\vdim >
	1$, and define
	\begin{gather*}
		s = \inf \{ \Gamma_{\ref{thm:mod_continuity}} ( \vdim, \adim,
		\vdim, U, V, K, \varepsilon, q )^{-1}, r \}.
	\end{gather*}

	Next, define functions $f_\delta : C \to \overline{\rel}$ by letting
	$f_\delta ( \chi )$ for $\chi \in C$ and $\delta > 0$ equal the
	infimum of sums
	\begin{gather*}
		\sum_{i=1}^j |x_i-x_{i-1}|
	\end{gather*}
	corresponding to all finite sequences $x_0, x_1, \ldots, x_j$ in $C$
	with $x_0 = c$, $x_j = \chi$ and $| x_i-x_{i-1} | \leq \delta$ for $i
	= 1, \ldots, j$ and $j \in \nat$. Notice that
	\begin{gather*}
		f_\delta (\chi) \leq f_\delta (\zeta) + |\zeta-\chi| \quad
		\text{whenever $\zeta,\chi \in C$ and $|\zeta-\chi| \leq
		\delta$}.
	\end{gather*}
	Since $C$ is connected and $f_\delta (c) = 0$, it follows that
	$f_\delta$ is a locally Lipschitzian real valued function
	satisfying
	\begin{gather*}
		\Lip ( f_\delta | A ) \leq 1 \quad \text{whenever $A
		\subset U$ and $\diam A \leq \delta$},
	\end{gather*}
	in particular $f_\delta \in \trunc (V)$ and $\Lpnorm{\| V
	\|}{\infty}{\derivative{V}{f_\delta}} \leq 1$ by
	\ref{example:lipschitzian}. One infers
	\begin{gather*}
		f_\delta (\chi) \leq r \quad \text{whenever $\chi \in C
		\cap \cball{c}{s}$ and $\delta > 0$}
	\end{gather*}
	from \ref{thm:mod_continuity}. It follows that $C \cap \cball{c}{s}
	\subset X(c)$; in fact, if $\chi \in C \cap \cball{c}{s}$  one readily
	constructs $g_\delta : \{ u \with 0 \leq \upsilon \leq r \} \to
	\rel^\adim$ satisfying
	\begin{gather*}
		g_\delta (0)=c, \quad g_\delta (r) = \chi, \quad \Lip
		g_\delta \leq 1+\delta, \\
		\dist ( g_\delta (\upsilon), C ) \leq \delta \quad
		\text{whenever $0 \leq \upsilon \leq r$},
	\end{gather*}
	hence, noting that $\im g_\delta \subset \cball{c}{(1+\delta)r}$, the
	existence of $g : \{ \upsilon \with 0 \leq \upsilon \leq r \} \to C$
	satisfying $g (0)=c$, $g (r) = \chi$, and $\Lip g \leq 1$ now is a
	consequence of \cite[2.10.21]{MR41:1976}.
\end{proof}
\begin{remark} \label{remark:metric_spaces}
	The deduction of \ref{thm:conn_path_finite_length} from
	\ref{thm:mod_continuity} is adapted from Cheeger \cite[\S
	17]{MR1708448} who attributes the argument to Semmes, see also David
	and Semmes \cite{MR1132876}.
\end{remark}
\begin{remark} \label{remark:topping}
	In view of \ref{example:space_fill} it is not hard to construct
	examples showing that the hypothesis ``$p = \vdim$'' in
	\ref{thm:mod_continuity} may not be replaced by ``$p \geq q$'' for any
	$1 \leq q < \vdim$ if $\vdim < \adim$. Yet, for indecomposable $V$,
	the study of possible extensions of \ref{thm:conn_path_finite_length}
	as well as related questions seems to be most natural under the
	hypothesis ``$p = \vdim-1$'', see Topping \cite{MR2410779}.
\end{remark}
\section{Curvature varifolds} \label{sec:curvature_varifolds}
In this section Hutchinson's concept of curvature varifold is rephrased in
terms of the concept of weakly differentiable function proposed in the present
paper, see \ref{thm:curvature_varifolds}. To indicate possible benefits of
this perspective, a result on the differentiability of the tangent plane map
is included, see \ref{corollary:curv_var_diff}.
\begin{miniremark} \label{miniremark:trace}
	Suppose $\adim \in \nat$ and $Y = \Hom ( \rel^\adim, \rel^\adim ) \cap
	\{ \sigma \with \sigma = \sigma^\ast \}$. Then $T : \Hom ( \rel^\adim,
	Y ) \to \rel^\adim$ denotes the linear map which is given by the
	composition (see \cite[1.1.1, 1.1.2, 1.1.4, 1.7.9]{MR41:1976})
	\begin{align*}
		\Hom ( \rel^\adim, Y ) & \xrightarrow{\subset}
		\Hom ( \rel^\adim, \Hom (\rel^\adim, \rel^\adim)) \\
		& \simeq \Hom ( \rel^\adim, \rel ) \otimes \Hom ( \rel^\adim,
		\rel ) \otimes \rel^\adim \xrightarrow{L \otimes
		\id{\rel^\adim}} \rel \otimes \rel^\adim \simeq \rel^\adim,
	\end{align*}
	where the linear map $L$ is induced by the inner product on $\Hom (
	\rel^\adim, \rel )$, hence $T ( g ) = \sum_{i=1}^\adim \left < u_i,
	g(u_i) \right >$ whenever $g \in \Hom (\rel^\adim, Y )$ and $u_1,
	\ldots, u_\adim$ form an orthonormal basis of $\rel^\adim$.
\end{miniremark}
\begin{miniremark} \label{miniremark:mean_curv}
	Suppose $\adim$, $Y$ and $T$ are as in \ref{miniremark:trace}, $\adim
	\geq \vdim \in \nat$, $M$ is an $\vdim$ dimensional submanifold of
	$\rel^\adim$ of class $2$, and $\tau : M \to Y$ is defined by $\tau(x) =
	\project{\Tan ( M,x)}$ for $x \in M$. Then one computes
	\begin{gather*}
		\mathbf{h} ( M,x ) = T ( D\tau(x) \circ \tau(x) ) \quad
		\text{whenever $x \in M$};
	\end{gather*}
	in fact, differentiating the equation $\tau(x) \circ \tau(x) =
	\tau(x)$ for $x \in M$, one obtains
	\begin{gather*}
		\tau(x) \circ \left < u, D\tau(x) \right > \circ \tau(x) = 0
		\quad \text{for $u \in \Tan(M,x)$}, \\
		T(D\tau(x) \circ \tau(x)) \in \Nor (M,x)
	\end{gather*}
	for $x \in M$ and, denoting by $u_1, \ldots, u_\adim$ an orthonormal
	base of $\rel^\adim$, one computes
	\begin{align*}
		& \tau(x) \bullet ( Dg(x) \circ \tau(x) ) = \tsum{i=1}{\adim}
		\left < \tau(x)(u_i), Dg(x) \right > \bullet \tau(x)(u_i) \\
		& \qquad = - g(x) \bullet \tsum{i=1}{\adim} \left <
		\tau(x)(u_i), D\tau(x) \right > (u_i) = - g(x) \bullet T (
		D\tau(x) \circ \tau(x))
	\end{align*}
	whenever $g : M \to \rel^\adim$ is of class $1$ and $g(x) \in \Nor
	(M,x)$ for $x \in M$.
\end{miniremark}
\begin{lemma} \label{lemma:gen_sym_endo}
	Suppose $\adim$ and $Y$ are as in \ref{miniremark:trace} and $\adim >
	\vdim \in \nat$.

	Then the vectorspace $Y$ is generated by $\{\project{P} \with P \in
	\grass{\adim}{\vdim} \}$.
\end{lemma}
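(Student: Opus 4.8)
The statement asserts that the symmetric endomorphisms of $\rel^\adim$ are spanned by the orthogonal projections onto $\vdim$-dimensional subspaces, when $\vdim<\adim$. The plan is to exhibit enough projections whose span is all of $Y$, using the fact that $Y$ is itself spanned by rank-one symmetric endomorphisms $u\otimes u$ with $|u|=1$ (these being, in the notation available, $\project{\Span\{u\}}$-type objects but of rank one, so not directly in the generating set when $\vdim>1$). So the real task is to write each $u\otimes u$ as a linear combination of $\project P$ with $P\in\grass{\adim}{\vdim}$.

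\textbf{Key steps.} First I would recall the standard fact that $Y=\Hom(\rel^\adim,\rel^\adim)\cap\{\sigma:\sigma=\sigma^\ast\}$ is spanned by $\{\project{\Span\{u\}}:u\in\mathbf S^{\adim-1}\}$; indeed by the spectral theorem every $\sigma\in Y$ is a real linear combination of such rank-one projections. Second, fix a unit vector $u$ and choose an orthonormal basis $u=e_1,e_2,\dots,e_\adim$ of $\rel^\adim$. For each $\vdim$-element subset $I\subset\{1,\dots,\adim\}$ let $P_I=\Span\{e_i:i\in I\}$, so $\project{P_I}=\sum_{i\in I}e_i\otimes e_i\in Y$ is in the generating set. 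Now I would sum over all $I$ containing the index $1$: writing $N$ for the number of such subsets and, for $j\neq 1$, writing $N'$ for the number of such $I$ that also contain $j$ (a fixed combinatorial count independent of $j$ by symmetry), one gets
\begin{gather*}
\sum_{1\in I}\project{P_I}=N\,e_1\otimes e_1+N'\sum_{j=2}^{\adim}e_j\otimes e_j.
\end{gather*}
Likewise $\sum_{I}\project{P_I}=\binom{\adim-1}{\vdim-1}\sum_{j=1}^{\adim}e_j\otimes e_j=\binom{\adim-1}{\vdim-1}\,\id{\rel^\adim}$, and $\id{\rel^\adim}$ is trivially a suitable combination (e.g.\ $\id{\rel^\adim}=\project P+\project{P^\perp}$ when $\adim=2\vdim$, but more simply just note that the average of $\project{P_I}$ over all $I$ is a positive multiple of the identity). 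Combining these two identities lets me solve for $e_1\otimes e_1$ as a linear combination of $\project{P_I}$'s and the identity — this works precisely because $N\neq N'$, which is where $\vdim<\adim$ enters: if $\vdim=\adim$ there is only one subset $I$ and the argument collapses. Since $u=e_1$ was an arbitrary unit vector and the $\project{P_I}$ all lie in the span of $\{\project P:P\in\grass\adim\vdim\}$, every rank-one symmetric projection lies in that span, hence so does all of $Y$.

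\textbf{Main obstacle.} There is no serious obstacle here; the only point requiring care is verifying $N\neq N'$, i.e.\ that $\binom{\adim-1}{\vdim-1}\neq\binom{\adim-2}{\vdim-2}$ whenever $\vdim<\adim$ (equivalently $\binom{\adim-2}{\vdim-1}\neq 0$, which holds iff $\vdim-1\le\adim-2$, i.e.\ $\vdim\le\adim-1$). Given that, the linear algebra is routine. An even slicker alternative I might use instead: observe that $\sigma\mapsto\project{P}$ averaged suitably shows the span is a nonzero subspace of $Y$ invariant under the orthogonal group $\mathrm O(\adim)$ acting by conjugation; since that action on $Y$ decomposes only as (multiples of the identity) $\oplus$ (trace-free symmetric endomorphisms), and $\project P$ for a single $P$ is not a multiple of the identity (as $\vdim<\adim$), the span must be all of $Y$. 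I would likely present the explicit combinatorial computation as the primary proof since it is elementary and self-contained, mentioning the representation-theoretic viewpoint only if space permits.
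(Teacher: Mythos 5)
Your proof is correct and follows essentially the same strategy as the paper: reduce to expressing rank-one orthogonal projections as linear combinations of $\project{P}$ with $P\in\grass{\adim}{\vdim}$ (which is where $\vdim<\adim$ enters), and then use the spectral theorem to conclude these rank-one projections span $Y$. The only difference is bookkeeping: the paper obtains the reduction from a single local identity, $\project{L}+\eqproject{P_{\vdim+1}}=\frac{1}{\vdim}\sum_{i=1}^{\vdim+1}\eqproject{P_i}$ for $\vdim+1$ orthonormal vectors with $P_i$ the span of all but the $i$th, whereas you average $\eqproject{P_I}$ over all $\vdim$-element coordinate subsets $I$ of a full orthonormal basis and solve the resulting linear system, the nondegeneracy $\binom{\adim-1}{\vdim-1}\neq\binom{\adim-2}{\vdim-2}$ playing the role of $\vdim<\adim$.
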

\begin{proof}
	If $u_1, \ldots, u_{\vdim+1}$ are orthonormal vectors in $\rel^\adim$,
	$u_{\vdim+1} \in L \in \grass{\adim}{1}$, and
	\begin{gather*}
		P_i = \Span \big \{ u_j \with \text{$j \in \{ 1,\ldots,\vdim
		+1 \}$ and $j \neq i$} \big \} \in \grass{\adim}{\vdim}
	\end{gather*}
	for $i=1, \ldots, \vdim+1$, then $\project{L} +
	\eqproject{P_{\vdim+1}} = \frac{1}{\vdim} \sum_{i=1}^{\vdim+1}
	\eqproject{P_i}$. Hence one may assume $\vdim=1$ in which case
	\cite[1.7.3]{MR41:1976} yields the assertion.
\end{proof}
\begin{definition} \label{def:curvature_varifold}
	Suppose $\adim$, $Y$ and $T$ are as in \ref{miniremark:trace}, $\adim
	\geq \vdim \in \nat$, $U$ is an open subset of $\rel^\adim$, $V \in
	\IVar_\vdim (U)$, $X = U \cap \{ x \with \Tan^\vdim ( \| V \|, x ) \in
	\grass{\adim}{\vdim} \}$, and $\tau : X \to Y$ satisfies
	\begin{gather*}
		\tau(x) = \project{\Tan^\vdim ( \| V \|, x )} \quad
		\text{whenever $x \in X$}.
	\end{gather*}

	Then the varifold $V$ is called a \emph{curvature varifold} if and
	only if there exists $F \in \Lploc{1} \big ( \| V \|, \Hom (
	\rel^\adim, Y) \big )$ satisfying
	\begin{gather*}
		\tint{}{} \left < (\tau(x)(u), F(x)(u)), D \phi (x,\tau(x))
		\right > + \phi (x,\tau(x)) T(F(x)) \bullet u \ud \| V \| x =
		0
	\end{gather*}
	whenever $u \in \rel^\adim$ and $\phi \in \mathscr{D} ( U \times Y,
	\rel)$.
\end{definition}
\begin{remark} \label{remark:hutchinson_reformulations}
	Using approximation and the fact that $\tau$ is a bounded function,
	one may verify that the same definition results if the equation is
	required for every $\phi : U \times Y \to \rel$ of class $1$ such that
	\begin{gather*}
		\Clos ( U \cap \{ (x,\sigma) \with \text{$\phi(x,\sigma) \neq
		0$ for some $\sigma \in Y$} \} )
	\end{gather*}
	is compact. Extending $T$ accordingly, one may also replace $Y$ by
	$\Hom ( \rel^\adim, \rel^\adim )$ in the definition; in fact, if $F
	\in \Lploc{1} \big ( \| V \|, \Hom ( \rel^\adim, \Hom ( \rel^\adim,
	\rel^\adim) ) \big )$ satisfies the condition of the modified
	definition then $\im F(x) \subset Y$ for $\| V \|$ almost all $x$ as
	may be seen by enlarging the class of $\phi$ as before and considering
	$\phi ( x, \sigma ) = \zeta ( x ) \alpha ( \sigma )$ for $x \in U$,
	$\sigma \in \Hom ( \rel^\adim, \rel^\adim )$, $\zeta \in \mathscr{D} (
	U, \rel)$ and $\alpha : \Hom ( \rel^\adim, \rel^\adim ) \to \rel$ is
	linear with $Y \subset \ker \alpha$, compare Hutchinson
	\cite[5.2.4\,(i)]{MR825628}.

	Consequently, the definition \ref{def:curvature_varifold} is
	equivalent to Hutchinson's definition in \cite[5.2.1]{MR825628}. The
	present formulation is motivated by \ref{lemma:gen_sym_endo}.
\end{remark}
\begin{theorem} \label{thm:curvature_varifolds}
	Suppose $\adim$, $Y$ and $T$ are as in \ref{miniremark:trace},
	$\adim \geq \vdim \in \nat$, $U$ is an open subset of $\rel^\adim$, $V
	\in \IVar_\vdim (U)$, $X = U \cap \{ x \with \Tan^\vdim ( \| V \|, x )
	\in \grass{\adim}{\vdim} \}$, and $\tau : X \to Y$ satisfies
	\begin{gather*}
		\tau(x) = \project{\Tan^\vdim ( \| V \|, x )} \quad
		\text{whenever $x \in X$}.
	\end{gather*}

	Then $V$ is a curvature varifold if and only if $\| \delta V \|$ is a
	Radon measure absolutely continuous with respect to $\| V \|$ and $\tau
	\in \trunc(V, Y )$. In this case, there holds
	\begin{gather*}
		\mathbf{h} ( V,x ) = T ( \derivative{V}{\tau} (x) ) \quad
		\text{for $\| V \|$ almost all $x$}, \\
		( \delta V )_x ( \phi (x,\tau(x)) u ) = \tint{}{} \left <
		( \tau(x)(u), \derivative{V}{\tau}(x)(u) ), D \phi (x,\tau(x))
		\right > \ud \| V \| x
	\end{gather*}
	whenever $\phi : U \times Y \to \rel$ is of class $1$ such that
	\begin{gather*}
		\Clos ( U \cap \{ (x,\sigma) \with \text{$\phi(x,\sigma) \neq 0$
		for some $\sigma \in Y$} \} )
	\end{gather*}
	is compact and $u \in \rel^\adim$.
\end{theorem}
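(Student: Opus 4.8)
The plan is to prove the two implications separately. For the ``only if'' direction, assume $V$ is a curvature varifold with associated $F \in \Lploc{1}(\|V\|, \Hom(\rel^\adim, Y))$ as in \ref{def:curvature_varifold}. First I would recover the first variation: testing the defining equation with $\phi(x,\sigma) = \zeta(x)$ for $\zeta \in \mathscr{D}(U,\rel)$ (constant in $\sigma$) yields $( \delta V )(\zeta u) = -\tint{}{} \zeta(x)\,T(F(x)) \bullet u \ud \|V\| x$, so $\|\delta V\|$ is a Radon measure absolutely continuous with respect to $\|V\|$ with $\mathbf{h}(V,\cdot) = T \circ F$ in $\Lploc{1}(\|V\|,\rel^\adim)$. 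Next I would verify $\tau \in \trunc(V,Y)$ by checking \ref{def:v_weakly_diff}\,\eqref{item:v_weakly_diff:int} and \eqref{item:v_weakly_diff:partial} with $\derivative{V}{\tau} = F$ (using \ref{remark:hutchinson_reformulations} to extend the admissible test functions $\phi$ to class $1$ with the stated support condition, and that $\tau$ is bounded). Given $\theta \in \mathscr{D}(U,\rel^\adim)$ and $\gamma \in \mathscr{E}(Y,\rel)$ with $\spt D\gamma$ compact, one substitutes $\phi(x,\sigma) = \gamma(\sigma)\,\zeta(x)$ with $\theta = \sum \zeta_k u_k$ (summing over an orthonormal basis $u_1,\dots,u_\adim$), applies the chain rule $D_\sigma[\gamma(\sigma)\zeta(x)] = \zeta(x) D\gamma(\sigma)$, sums over the basis, and reorganises using $\project{P} \bullet D\theta(x) = T(\cdot)$-type identities in the spirit of \ref{miniremark:trace} and \ref{miniremark:mean_curv}; this produces exactly the integration-by-parts identity defining $\trunc(V,Y)$. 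The condition \eqref{item:v_weakly_diff:int} is automatic since $F \in \Lploc{1}$ and $\tau$ is bounded (so one may even take $s = \max|\tau| < \infty$).

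For the ``if'' direction, assume $\|\delta V\|$ is a Radon measure absolutely continuous with respect to $\|V\|$ and $\tau \in \trunc(V,Y)$ with weak derivative $\derivative{V}{\tau}$. Since $V$ is integral, \ref{thm:approx_diff} applies (the general and density hypotheses of \ref{miniremark:situation_general} hold: integral varifolds have density $\geq 1$), giving that $\tau$ is $(\|V\|,\vdim)$ approximately differentiable with $\derivative{V}{\tau}(x) = (\|V\|,\vdim)\ap D\tau(x) \circ \project{\Tan^\vdim(\|V\|,x)}$ for $\|V\|$ almost all $x$. Here the key computational input is that $\derivative{V}{\tau}(x)$ factors through the tangent plane, so that $\im \derivative{V}{\tau}(x) \subset Y$ and the composition $\derivative{V}{\tau}(x) \circ \tau(x)$ makes sense. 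Then I would identify $\mathbf{h}(V,x)$ with $T(\derivative{V}{\tau}(x))$: this follows by combining \ref{miniremark:mean_curv} applied locally (on $\mathscr{H}^\vdim$-a.e. piece of a $C^2$ rectification) with the second-order rectifiability result \cite[4.8]{snulmenn.c2} cited in the introduction, which guarantees that at $\|V\|$ almost all $x$ the approximate differential of $\tau$ relates to the generalised mean curvature by exactly the formula $\mathbf{h}(V,x) = T((\|V\|,\vdim)\ap D\tau(x) \circ \tau(x)) = T(\derivative{V}{\tau}(x))$.

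With these identifications in hand, the defining equation of \ref{def:curvature_varifold} is obtained by reversing the manipulation of the first direction. Starting from the integration-by-parts identity for $\tau \in \trunc(V,Y)$, test with $\gamma$ replaced by (the $\sigma$-dependence of) an arbitrary $\phi(x,\sigma)$ of class $1$ with compact ``$x$-support'' — legitimate by \ref{remark:lipschitzian_theta} and \ref{remark:hutchinson_reformulations} since $\tau$ is bounded — expand via the chain rule into $x$-derivative and $\sigma$-derivative parts, and collect the $\project{P} \bullet D\theta$ term together with the $( \delta V )$-term using that $\mathbf{h}(V,\cdot) = T \circ \derivative{V}{\tau}$. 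The algebraic bookkeeping of rewriting $\tint{}{} \project{P} \bullet (D_x\phi(x,\tau(x)) + D_\sigma\phi(x,\tau(x)) \circ \derivative{V}{\tau}(x))\ud V(x,P)$ as $\tint{}{} \langle (\tau(x)(u), \derivative{V}{\tau}(x)(u)), D\phi(x,\tau(x))\rangle \ud \|V\| x$ summed over an orthonormal basis $u$, matching the two $\Hom$-slots of $D\phi$, is the main obstacle — it is purely linear algebra of the type in \ref{miniremark:trace}--\ref{miniremark:mean_curv} but must be carried out carefully to land on Hutchinson's precise form, and one must check that the boundary-type term $\phi(x,\tau(x)) T(\derivative{V}{\tau}(x)) \bullet u$ is produced with the correct sign. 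Finally, the displayed formula $( \delta V )_x(\phi(x,\tau(x))u) = \tint{}{} \langle(\tau(x)(u), \derivative{V}{\tau}(x)(u)), D\phi(x,\tau(x))\rangle \ud\|V\| x$ is just a rearrangement of this same identity, and $\mathbf{h}(V,x) = T(\derivative{V}{\tau}(x))$ was already established.
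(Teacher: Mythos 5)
Your forward direction and your identification of $\mathbf{h}(V,\cdot) = T \circ \derivative V\tau$ in the reverse direction (via \ref{thm:approx_diff}, \cite[4.8]{snulmenn.c2}, and \ref{miniremark:mean_curv}) coincide with the paper's route. The gap lies in the final step of the reverse direction: producing the defining identity of \ref{def:curvature_varifold}, which involves a joint test function $\phi(x,\sigma)$ on $U \times Y$, from the membership $\tau \in \trunc(V,Y)$, whose defining integration-by-parts identity in \ref{def:v_weakly_diff}\,\eqref{item:v_weakly_diff:partial}/\ref{remark:eq_condition_weak_diff} only quantifies over products $\gamma(\sigma)\,\zeta(x)$ with $\gamma$ depending on $\sigma$ alone. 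Your proposal to ``test with $\gamma$ replaced by the $\sigma$-dependence of an arbitrary $\phi(x,\sigma)$ ... expand via the chain rule'' is not justified by the citations you give: \ref{remark:lipschitzian_theta} handles compositions $g \circ \tau$ with $g$ a function on $Y$ alone, not compositions of the form $x \mapsto \phi(x,\tau(x))$, and you explicitly flag the resulting ``algebraic bookkeeping'' as the main obstacle without resolving it.

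The paper's mechanism sidesteps this cleanly: define $f(x) = (x,\tau(x))$, observe that the identity component is locally Lipschitzian so \ref{thm:addition}\,\eqref{item:addition:join} (together with \ref{example:lipschitzian}) yields $f \in \trunc(V,\rel^\adim \times Y)$ with
\[
\derivative Vf(x)(u) = \bigl(\tau(x)(u),\, \derivative V\tau(x)(u)\bigr) \quad \text{for } u \in \rel^\adim,
\]
and then apply \ref{remark:eq_condition_weak_diff} with $Y$ replaced by $\rel^\adim \times Y$ and $\gamma$ replaced by (an extension by zero of) $\phi$. This single application of the $\trunc$-condition for $f$, with $\gamma = \phi$ treated as a function on the product space, produces precisely the curvature-varifold identity with the correct sign, including the term $\phi(x,\tau(x))\,T(\derivative V\tau(x)) \bullet u$ coming from $(\delta V)((\gamma \circ f)\zeta \cdot u)$ once $\mathbf{h}(V,\cdot) = T \circ \derivative V\tau$ has been established. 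Without this device (or an explicit density argument over tensor products $\zeta(x)\gamma(\sigma)$, which you do not spell out), your proof of the reverse implication is incomplete at the crucial step.
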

\begin{proof}
	Suppose $V$ is a curvature varifold and $F$ is as in
	\ref{def:curvature_varifold}. If $\zeta \in \mathscr{D} ( U, \rel)$,
	$\gamma \in \mathscr{E} ( Y, \rel)$ and $u \in \rel^\adim$, one may take
	$\phi (x,\sigma) = \zeta(x) \gamma (\sigma)$ in
	\ref{def:curvature_varifold}, \ref{remark:hutchinson_reformulations}
	to obtain
	\begin{multline*}
		\tint{}{} \gamma ( \tau(x)) \left < (\tau(x)(u),D\zeta(x)
		\right > + \zeta (x) \left < F(x)(u), D \gamma (\tau(x))
		\right > \ud \| V \| x \\
		= - \tint{}{} \zeta (x) \gamma (\tau(x)) T (F(x)) \bullet u
		\ud \| V \| x.
	\end{multline*}
	In particular, taking $\gamma = 1$, one infers that $\| \delta V \|$ is
	a Radon measure absolutely continuous with respect to $\| V \|$ with
	\begin{gather*}
		T(F(x)) = \mathbf{h}(V,x) \quad \text{$\| V \|$ almost all
		$x$}.
	\end{gather*}
	It follows that
	\begin{gather*}
		( \delta V ) ( ( \gamma \circ \tau ) \zeta \cdot u ) = -
		\tint{}{} \zeta (x) \gamma (\tau(x)) T ( F(x) ) \bullet u \ud
		\| V \|x
	\end{gather*}
	for $\zeta \in \mathscr{D} ( U, \rel)$, $\gamma \in \mathscr{E} ( Y,
	\rel )$ and $u \in \rel^\adim$. Together with the first equation this
	implies that $\tau \in \trunc(V,Y)$ with $\derivative{V}{\tau} (x) =
	F(x)$ for $\| V \|$ almost all $x$ by
	\ref{remark:eq_condition_weak_diff}.
	
	To prove the converse, suppose $\| \delta V \|$ is a Radon measure
	absolutely continuous with respect to $\| V \|$ and $\tau \in
	\trunc(V, Y)$. Since $\tau$ is a bounded function,
	$\derivative{V}{\tau} \in \Lploc{1} \big ( \| V \|, \Hom ( \rel^\adim,
	Y ) \big )$ by
	\ref{def:v_weakly_diff}\,\eqref{item:v_weakly_diff:int}. In order to
	prove the equation for the generalised mean curvature vector of $V$,
	in view of \ref{thm:approx_diff} and \cite[4.8]{snulmenn.c2}, it is
	sufficient to prove that
	\begin{gather*}
		\mathbf{h} (M,x) = T \big ( ( \| V \|, \vdim ) \ap D\tau(x)
		\circ \tau(x) \big ) \quad \text{for $\| V \|$ almost all $x
		\in U \cap M$}
	\end{gather*}
	whenever $M$ is an $\vdim$ dimensional submanifold of $\rel^\adim$ of
	class $2$. The latter equation however is evident from
	\ref{lemma:approx_diff}\,\eqref{item:approx_diff:comp} and
	\ref{miniremark:mean_curv} since
	\begin{gather*}
		\Tan (M,x) = \Tan^\vdim ( \| V \|, x ) \quad \text{for $\| V
		\|$ almost all $x \in U \cap M$}
	\end{gather*}
	by \cite[2.8.18, 2.9.11, 3.2.17]{MR41:1976} and Allard
	\cite[3.5\,(2)]{MR0307015}.

	Next, define $f : X \to \rel^\adim \times Y$ by
	\begin{gather*}
		f(x) = (x,\tau(x)) \quad \text{for $x \in \dmn \tau$}
	\end{gather*}
	and notice that $f \in \trunc ( V, \rel^\adim \times Y )$ with
	\begin{gather*}
		\derivative{V}{f} (x) (u) = ( \tau(x)(u),
		\derivative{V}{\tau}(x)(u)) \quad \text{for $u \in
		\rel^\adim$}
	\end{gather*}
	for $\| V \|$ almost all $x$ by
	\ref{thm:addition}\,\eqref{item:addition:join} and
	\ref{example:lipschitzian}. The proof may be concluded by establishing
	the last equation of the postscript. Using approximation
	and the fact that $\tau$ is a bounded function yields that is sufficient
	to consider $\phi \in \mathscr{D} ( U \times Y, \rel )$. Choose $\zeta
	\in \mathscr{D} (U, \rel)$ with
	\begin{gather*}
		\Clos ( U \cap \{ x \with \text{$\phi (x,\sigma) \neq 0$ for
		some $\sigma \in Y$} \} ) \subset \Int \{ x \with \zeta (x) =
		1 \}
	\end{gather*}
	and denote by $\gamma$ the extension of $\phi$ to $\rel^\adim \times
	Y$ by $0$. Now, applying \ref{remark:eq_condition_weak_diff} with $Y$
	replaced by $\rel^\adim \times Y$ yields the equation in question.
\end{proof}
\begin{remark}
	The first paragraph of the proof is essentially contained in
	Hutchinson in \cite[5.2.2, 5.2.3]{MR825628} and is included here for
	completeness.
\end{remark}
\begin{remark}
	If $\vdim = \adim$ then $\density^\vdim ( \| V \|, \cdot )$ is a
	locally constant, integer valued function whose domain is $U$; in
	fact, $\tau$ is constant, hence $V$ is stationary by
	\ref{thm:curvature_varifolds} and the asserted structure follows from
	Allard \cite[4.6\,(3)]{MR0307015}.
\end{remark}
\begin{corollary} \label{corollary:curv_var_diff}
	Suppose $\vdim, \adim \in \nat$, $1 < \vdim < \adim$, $\beta =
	\vdim/(\vdim-1)$, $U$ is an open subset of $\rel^\adim$, $V \in
	\IVar_\vdim (U)$ is a curvature varifold, $X = U \cap \{ x \with
	\Tan^\vdim ( \| V \|, x ) \in \grass{\adim}{\vdim} \}$, and $\tau : X
	\to Y$ satisfies
	\begin{gather*}
		\tau(x) = \project{\Tan^\vdim ( \| V \|, x )} \quad
		\text{whenever $x \in X$}.
	\end{gather*}

	Then $\| V \|$ almost all $a$ satisfy
	\begin{gather*}
		\lim_{r \to 0+} r^{-\vdim} \tint{\cball ar}{} ( |
		\tau(x)-\tau(a)- \left <x-a, \derivative V\tau (a) \right > | /
		| x-a| )^\beta \ud \| V \| x = 0.
	\end{gather*}
\end{corollary}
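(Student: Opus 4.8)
The plan is to derive this statement as a direct application of the differentiability result \ref{thm:diff_lebesgue_spaces}\,\eqref{item:diff_lebesgue_spaces:m>1=p} to the tangent plane map $\tau$. First I would observe that, by \ref{thm:curvature_varifolds}, since $V$ is a curvature varifold, $\| \delta V \|$ is a Radon measure absolutely continuous with respect to $\| V \|$ and $\tau \in \trunc(V,Y)$; moreover the postscript of \ref{thm:curvature_varifolds} gives $\mathbf{h}(V,x) = T(\derivative{V}{\tau}(x))$ for $\| V \|$ almost all $x$. Since $V$ is an integral varifold, $\density^\vdim ( \| V \|, x ) \geq 1$ for $\| V \|$ almost all $x$, so the general hypothesis plus the density hypothesis of \ref{miniremark:situation_general} are met with $p = 1$.

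The crucial point is to upgrade this to the situation of \ref{miniremark:situation_general} with $p = \vdim$, i.e.\ to show that the mean curvature lies in $\Lploc{\vdim}(\|V\|, \rel^\adim)$. This is exactly the content of Hutchinson's structure theory for curvature varifolds: the generalised mean curvature of a curvature varifold is $\Lploc{1}$ by definition, but in fact the defining integration by parts identity, once one knows $\tau \in \trunc(V,Y)$ and applies \ref{def:v_weakly_diff}\,\eqref{item:v_weakly_diff:int}, only yields $\derivative{V}{\tau} \in \Lploc{1}(\|V\|, \Hom(\rel^\adim, Y))$ — which is not yet $\Lploc{\vdim}$. The honest route is to invoke the second order rectifiability result \cite[4.8]{snulmenn.c2} for integral varifolds whose first variation is representable by integration with locally bounded generalised mean curvature: this shows $\tau$ is $(\|V\|,\vdim)$ approximately differentiable with $\derivative{V}{\tau}$ the approximate differential (consistently with \ref{thm:approx_diff}), and Hutchinson's theory \cite[5.2.3]{MR825628} (or the reformulation via \ref{thm:curvature_varifolds}) provides the improved summability $\mathbf{h}(V,\cdot) \in \Lploc{\vdim}$ — indeed for curvature varifolds one even has $\derivative{V}{\tau} \in \Lploc{2}$, which combined with the $\Lploc{1}$ bound and the structure of $T$ gives what is needed. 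Thus $\vdim$, $\adim$, $U$, $V$ satisfy \ref{miniremark:situation_general} with $p = \vdim$ and additionally $\derivative{V}{\tau} \in \Lploc{1}(\|V\|, \Hom(\rel^\adim, Y))$.

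With these inputs in hand, I would apply \ref{thm:diff_lebesgue_spaces}\,\eqref{item:diff_lebesgue_spaces:m>1=p} with the finite dimensional normed vectorspace $Y = \Hom(\rel^\adim, \rel^\adim) \cap \{ \sigma \with \sigma = \sigma^\ast \}$ and $f = \tau$: since $\vdim > 1$, $\beta = \vdim/(\vdim-1)$, $\tau \in \trunc(V,Y)$, and $\tau$ is bounded hence certainly $\tau \in \Lploc{1}(\|\delta V\|, Y)$ (using absolute continuity of $\|\delta V\|$ with respect to $\|V\|$), and $\derivative{V}{\tau} \in \Lploc{1}(\|V\|, \Hom(\rel^\adim, Y))$, all hypotheses are verified. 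The conclusion of that theorem is precisely
\begin{gather*}
	\lim_{r \to 0+} r^{-\vdim} \tint{\cball ar}{} ( | \tau(x)-\tau(a)- \left <x-a, \derivative V\tau (a) \right > | / | x-a| )^\beta \ud \| V \| x = 0
\end{gather*}
for $\| V \|$ almost all $a$, which is the assertion. The main obstacle is the second paragraph: one must be careful that the improved $\Lploc{\vdim}$ (in fact $\Lploc{2}$) summability of the mean curvature for curvature varifolds is genuinely available, and here the cleanest citation is Hutchinson \cite[Chapter 5]{MR825628} together with \ref{thm:curvature_varifolds}; alternatively one observes that $T(\derivative{V}{\tau}) = \mathbf{h}(V,\cdot)$ and $\derivative{V}{\tau}$, being the approximate differential of the bounded map $\tau$ on the second order rectifiable set $\spt \|V\|$, inherits the requisite local $\Lp 2$ summability from the curvature varifold structure. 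Everything else is bookkeeping.
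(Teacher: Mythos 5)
Your third paragraph is essentially the paper's proof: the corollary follows immediately from \ref{thm:curvature_varifolds} and \ref{thm:diff_lebesgue_spaces}\,\eqref{item:diff_lebesgue_spaces:m>1=p}, once one checks that \ref{thm:curvature_varifolds} supplies $\tau \in \trunc(V,Y)$ with $\| \delta V \|$ absolutely continuous with respect to $\| V \|$, and the boundedness of $\tau$ together with \ref{def:v_weakly_diff}\,\eqref{item:v_weakly_diff:int} gives $\tau \in \Lploc{1} ( \| \delta V \|, Y )$ and $\derivative{V}{\tau} \in \Lploc{1} ( \| V \|, \Hom ( \rel^\adim, Y ) )$.

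However, your second paragraph is wrong and entirely unnecessary. You claim that one must ``upgrade'' to the situation of \ref{miniremark:situation_general} with $p = \vdim$ and that the mean curvature of a curvature varifold is automatically in $\Lploc{\vdim}$ (or even $\Lploc{2}$). Neither assertion is correct. First, \ref{thm:diff_lebesgue_spaces}\,\eqref{item:diff_lebesgue_spaces:m>1=p} only requires \ref{miniremark:situation_general} with $p = 1$; indeed the first line of the proof of \ref{thm:diff_lebesgue_spaces} reads ``Assume $p = q = 1$ in case of \eqref{item:diff_lebesgue_spaces:m>1=p} or \eqref{item:diff_lebesgue_spaces:m=p=1}'', so the case in question deliberately does not use any critical summability of $\mathbf{h}(V,\cdot)$. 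Since $V$ is integral, $\density^\vdim (\|V\|,x) \geq 1$ for $\| V \|$ almost all $x$, which is all that \ref{miniremark:situation_general} demands when $p=1$. Second, the definition \ref{def:curvature_varifold} (and Hutchinson's original \cite[5.2.1]{MR825628}) only requires $F = \derivative{V}{\tau} \in \Lploc{1}$, hence $\mathbf{h}(V,\cdot) = T(\derivative{V}{\tau}(\cdot)) \in \Lploc{1}$ and no better. The claim that curvature varifolds have $\Lploc{\vdim}$ or $\Lploc{2}$ mean curvature is a property of curvature varifolds \emph{with $\Lp{p}$ second fundamental form}, which is a strictly stronger hypothesis not assumed here; neither \ref{thm:curvature_varifolds} nor the cited second order rectifiability result \cite[4.8]{snulmenn.c2} delivers such an improvement. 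You should delete the entire ``upgrade'' discussion: it inserts a false claim and would not survive scrutiny, even though the remainder of your argument happens to stand without it.
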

\begin{proof}
	This is an immediate consequence of \ref{thm:curvature_varifolds} and
	\ref{thm:diff_lebesgue_spaces}\,\eqref{item:diff_lebesgue_spaces:m>1=p}.
\end{proof}
\begin{remark} \label{remark:curv_flatness_m=1}
	Using
	\ref{thm:diff_lebesgue_spaces}\,\eqref{item:diff_lebesgue_spaces:m=p=1},
	one may formulate a corresponding result for the case $\vdim = 1$.
\end{remark}
\begin{remark} \label{remark:curv_flatness}
	Notice that one may deduce decay results for height quantities
	from this result by use of \cite[4.11\,(1)]{snulmenn.poincare}.
\end{remark}
\begin{remark} \label{remark:curv_to_do}
	If $1 < p < \vdim$ and $\derivative{V}{\tau} \in \Lploc{p} \big ( \| V
	\| , \Hom ( \rel^\adim, Y ) \big )$, see \ref{miniremark:trace}, one
	may investigate whether the conclusion still holds with $\beta$
	replaced by $\vdim p/(\vdim-p)$.
\end{remark}

\addcontentsline{toc}{section}{\numberline{}References}
\bibliography{UlrichMenne3v3}\bibliographystyle{alpha}

\newcommand{\etalchar}[1]{$^{#1}$}
\newcommand{\noopsort}[1]{} \newcommand{\singleletter}[1]{#1} \def\cprime{$'$}
  \def\cprime{$'$}
\begin{thebibliography}{BBG{\etalchar{+}}95}

\bibitem[ADS96]{MR1441622}
G.~Anzellotti, S.~Delladio, and G.~Scianna.
\newblock B{V} functions over rectifiable currents.
\newblock {\em Ann. Mat. Pura Appl. (4)}, 170:257--296, 1996.

\bibitem[AF03]{MR2424078}
Robert~A. Adams and John J.~F. Fournier.
\newblock {\em Sobolev spaces}, volume 140 of {\em Pure and Applied Mathematics
  (Amsterdam)}.
\newblock Elsevier/Academic Press, Amsterdam, second edition, 2003.

\bibitem[AFP00]{MR2003a:49002}
Luigi Ambrosio, Nicola Fusco, and Diego Pallara.
\newblock {\em Functions of bounded variation and free discontinuity problems}.
\newblock Oxford Mathematical Monographs. The Clarendon Press Oxford University
  Press, New York, 2000.

\bibitem[All72]{MR0307015}
William~K. Allard.
\newblock On the first variation of a varifold.
\newblock {\em Ann. of Math. (2)}, 95:417--491, 1972.

\bibitem[Alm86]{MR855173}
F.~Almgren.
\newblock Optimal isoperimetric inequalities.
\newblock {\em Indiana Univ. Math. J.}, 35(3):451--547, 1986.

\bibitem[Alm00]{MR1777737}
Frederick~J. Almgren, Jr.
\newblock {\em Almgren's big regularity paper}, volume~1 of {\em World
  Scientific Monograph Series in Mathematics}.
\newblock World Scientific Publishing Co. Inc., River Edge, NJ, 2000.
\newblock $Q$-valued functions minimizing Dirichlet's integral and the
  regularity of area-minimizing rectifiable currents up to codimension 2, With
  a preface by Jean E.\ Taylor and Vladimir Scheffer.

\bibitem[AM03]{MR1959769}
Luigi Ambrosio and Simon Masnou.
\newblock A direct variational approach to a problem arising in image
  reconstruction.
\newblock {\em Interfaces Free Bound.}, 5(1):63--81, 2003.

\bibitem[BBG{\etalchar{+}}95]{MR1354907}
Philippe B{\'e}nilan, Lucio Boccardo, Thierry Gallou{\"e}t, Ron Gariepy, Michel
  Pierre, and Juan~Luis V{\'a}zquez.
\newblock An {$L^1$}-theory of existence and uniqueness of solutions of
  nonlinear elliptic equations.
\newblock {\em Ann. Scuola Norm. Sup. Pisa Cl. Sci. (4)}, 22(2):241--273, 1995.

\bibitem[BG72]{MR0308945}
E.~Bombieri and E.~Giusti.
\newblock Harnack's inequality for elliptic differential equations on minimal
  surfaces.
\newblock {\em Invent. Math.}, 15:24--46, 1972.

\bibitem[Bou87]{MR910295}
N.~Bourbaki.
\newblock {\em Topological vector spaces. {C}hapters 1--5}.
\newblock Elements of Mathematics (Berlin). Springer-Verlag, Berlin, 1987.
\newblock Translated from the French by H. G. Eggleston and S. Madan.

\bibitem[Bou98]{MR1726779}
Nicolas Bourbaki.
\newblock {\em General topology. {C}hapters 1--4}.
\newblock Elements of Mathematics (Berlin). Springer-Verlag, Berlin, 1998.
\newblock Translated from the French, Reprint of the 1989 English translation.

\bibitem[Bra78]{MR485012}
Kenneth~A. Brakke.
\newblock {\em The motion of a surface by its mean curvature}, volume~20 of
  {\em Mathematical Notes}.
\newblock Princeton University Press, Princeton, N.J., 1978.

\bibitem[Che99]{MR1708448}
J.~Cheeger.
\newblock Differentiability of {L}ipschitz functions on metric measure spaces.
\newblock {\em Geom. Funct. Anal.}, 9(3):428--517, 1999.

\bibitem[DGA88]{MR1152641}
Ennio De~Giorgi and Luigi Ambrosio.
\newblock New functionals in the calculus of variations.
\newblock {\em Atti Accad. Naz. Lincei Rend. Cl. Sci. Fis. Mat. Natur. (8)},
  82(2):199--210 (1989), 1988.

\bibitem[DS88]{MR90g:47001a}
Nelson Dunford and Jacob~T. Schwartz.
\newblock {\em Linear operators. {P}art {I}}.
\newblock Wiley Classics Library. John Wiley \& Sons Inc., New York, 1988.
\newblock General theory, With the assistance of William G. Bade and Robert G.
  Bartle, Reprint of the 1958 original, A Wiley-Interscience Publication.

\bibitem[DS93]{MR1132876}
Guy David and Stephen Semmes.
\newblock Quantitative rectifiability and {L}ipschitz mappings.
\newblock {\em Trans. Amer. Math. Soc.}, 337(2):855--889, 1993.

\bibitem[Fed69]{MR41:1976}
Herbert Federer.
\newblock {\em Geometric measure theory}.
\newblock Die Grundlehren der ma\-the\-ma\-ti\-schen Wissenschaften, Band 153.
  Springer-Verlag New York Inc., New York, 1969.

\bibitem[Fed70]{MR0260981}
Herbert Federer.
\newblock The singular sets of area minimizing rectifiable currents with
  codimension one and of area minimizing flat chains modulo two with arbitrary
  codimension.
\newblock {\em Bull. Amer. Math. Soc.}, 76:767--771, 1970.

\bibitem[Fed75]{MR0388226}
Herbert Federer.
\newblock A minimizing property of extremal submanifolds.
\newblock {\em Arch. Rational Mech. Anal.}, 59(3):207--217, 1975.

\bibitem[HT00]{MR1803974}
John~E. Hutchinson and Yoshihiro Tonegawa.
\newblock Convergence of phase interfaces in the van der
  {W}aals-{C}ahn-{H}illiard theory.
\newblock {\em Calc. Var. Partial Differential Equations}, 10(1):49--84, 2000.

\bibitem[Hut86]{MR825628}
John~E. Hutchinson.
\newblock Second fundamental form for varifolds and the existence of surfaces
  minimising curvature.
\newblock {\em Indiana Univ. Math. J.}, 35(1):45--71, 1986.

\bibitem[Hut90]{MR1066398}
John Hutchinson.
\newblock Poincar\'e-{S}obolev and related inequalities for submanifolds of
  {${\bf R}^N$}.
\newblock {\em Pacific J. Math.}, 145(1):59--69, 1990.

\bibitem[Kel75]{MR0370454}
John~L. Kelley.
\newblock {\em General topology}.
\newblock Springer-Verlag, New York, 1975.
\newblock Reprint of the 1955 edition [Van Nostrand, Toronto, Ont.], Graduate
  Texts in Mathematics, No. 27.

\bibitem[KS04]{MR2119722}
Ernst Kuwert and Reiner Sch{\"a}tzle.
\newblock Removability of point singularities of {W}illmore surfaces.
\newblock {\em Ann. of Math. (2)}, 160(1):315--357, 2004.

\bibitem[Men09]{snulmenn.isoperimetric}
Ulrich Menne.
\newblock Some applications of the isoperimetric inequality for integral
  varifolds.
\newblock {\em Adv. Calc. Var.}, 2(3):247--269, 2009.

\bibitem[Men10]{snulmenn.poincare}
Ulrich Menne.
\newblock A {S}obolev {P}oincar\'e type inequality for integral varifolds.
\newblock {\em Calc. Var. Partial Differential Equations}, 38(3-4):369--408,
  2010.

\bibitem[Men12a]{snulmenn.decay}
Ulrich Menne.
\newblock Decay estimates for the quadratic tilt-excess of integral varifolds.
\newblock {\em Arch. Ration. Mech. Anal.}, 204(1):1--83, 2012.

\bibitem[Men12b]{snulmenn.mfo1230}
Ulrich Menne.
\newblock A sharp lower bound on the mean curvature integral with critical
  power for integral varifolds, 2012.
\newblock In abstracts from the workshop held July 22--28, 2012, Organized by
  Camillo De Lellis, Gerhard Huisken and Robert Jerrard, Oberwolfach Reports.
  Vol.~9, no.~3.

\bibitem[Men13]{snulmenn.c2}
Ulrich Menne.
\newblock Second order rectifiability of integral varifolds of locally bounded
  first variation.
\newblock {\em J. Geom. Anal.}, 23:709--763, 2013.

\bibitem[ML98]{MR1712872}
Saunders Mac~Lane.
\newblock {\em Categories for the working mathematician}, volume~5 of {\em
  Graduate Texts in Mathematics}.
\newblock Springer-Verlag, New York, second edition, 1998.

\bibitem[Mon14]{MR3148123}
Andrea Mondino.
\newblock Existence of integral {$m$}-varifolds minimizing {$\int\vert A\vert
  ^p$} and {$\int\vert H\vert ^p,\,p>m,$} in {R}iemannian manifolds.
\newblock {\em Calc. Var. Partial Differential Equations}, 49(1-2):431--470,
  2014.

\bibitem[Mos01]{62659}
Roger Moser.
\newblock {A generalization of Rellich's theorem and regularity of varifolds
  minimizing curvature}.
\newblock {Preprint}, {Max Planck Institute for Mathematics in the Sciences},
  2001.

\bibitem[MS73]{MR0344978}
J.~H. Michael and L.~M. Simon.
\newblock Sobolev and mean-value inequalities on generalized submanifolds of
  {$R\sp{n}$}.
\newblock {\em Comm. Pure Appl. Math.}, 26:361--379, 1973.

\bibitem[Oss86]{MR852409}
Robert Osserman.
\newblock {\em A survey of minimal surfaces}.
\newblock Dover Publications, Inc., New York, second edition, 1986.

\bibitem[RT08]{MR2377408}
Matthias R{\"o}ger and Yoshihiro Tonegawa.
\newblock Convergence of phase-field approximations to the {G}ibbs-{T}homson
  law.
\newblock {\em Calc. Var. Partial Differential Equations}, 32(1):111--136,
  2008.

\bibitem[Sch66]{MR0209834}
Laurent Schwartz.
\newblock {\em Th\'eorie des distributions}.
\newblock Publications de l'Institut de Math\'ematique de l'Universit\'e de
  Strasbourg, No. IX-X. Nouvelle \'edition, enti\'erement corrig\'ee, refondue
  et augment\'ee. Hermann, Paris, 1966.

\bibitem[Sch73]{MR0426084}
Laurent Schwartz.
\newblock {\em Radon measures on arbitrary topological spaces and cylindrical
  measures}.
\newblock Published for the Tata Institute of Fundamental Research, Bombay by
  Oxford University Press, London, 1973.
\newblock Tata Institute of Fundamental Research Studies in Mathematics, No. 6.

\bibitem[Shi59]{MR0105613}
Tameharu Shirai.
\newblock Sur les topologies des espaces de {L}. {S}chwartz.
\newblock {\em Proc. Japan Acad.}, 35:31--36, 1959.

\bibitem[Sim83]{MR756417}
Leon Simon.
\newblock {\em Lectures on geometric measure theory}, volume~3 of {\em
  Proceedings of the Centre for Mathematical Analysis, Australian National
  University}.
\newblock Australian National University Centre for Mathematical Analysis,
  Canberra, 1983.

\bibitem[Sta66]{MR0251373}
Guido Stampacchia.
\newblock {\em \`{E}quations elliptiques du second ordre \`a coefficients
  discontinus}.
\newblock S\'eminaire de Math\'ematiques Sup\'erieures, No. 16 (\'Et\'e, 1965).
  Les Presses de l'Universit\'e de Montr\'eal, Montreal, Que., 1966.

\bibitem[Top08]{MR2410779}
Peter Topping.
\newblock Relating diameter and mean curvature for submanifolds of {E}uclidean
  space.
\newblock {\em Comment. Math. Helv.}, 83(3):539--546, 2008.

\bibitem[Wic14]{10.1007/s00526-013-0695-4}
Neshan Wickramasekera.
\newblock A sharp strong maximum principle and a sharp unique continuation
  theorem for singular minimal hypersurfaces.
\newblock {\em Calc. Var. Partial Differential Equations}, 51(3-4):799--812,
  2014.

\end{thebibliography}

\medskip

{\small \noindent Max Planck Institute for Gravitational Physics (Albert
Einstein Institute) \newline Am M{\"u}hlen\-berg 1, 14476 Potsdam, Germany
\newline \texttt{Ulrich.Menne@aei.mpg.de} \medskip \newline University of
Potsdam, Institute for Mathematics, \newline Am Neuen Palais 10, 14469
Potsdam, Germany \newline \texttt{Ulrich.Menne@math.uni-potsdam.de} }
\end{document}